\theoremstyle{plain}
\newtheorem{thm}{Theorem}[section]
\newtheorem{conj}[thm]{Conjecture}
\newtheorem{cor}[thm]{Corollary}
\newtheorem{lem}[thm]{Lemma}
\newtheorem{prop}[thm]{Proposition}
\newtheorem{question}{Question}[section]
\newtheorem{assum}[thm]{Assumption}
\renewcommand{\Re}{\operatorname{Re}}
\renewcommand{\Im}{\operatorname{Im}}
\theoremstyle{definition}
\newtheorem{defn}[thm]{Definition}
\newtheorem{ex}[thm]{Example}
\newtheorem{rmk}[thm]{Remark}
\newenvironment{pf}{\begin{proof}[Proof]}{\end{proof}}
\begin{document}

\title{Stability conditions on the canonical line bundle of $\mathbb{P}^{3}$}
\author{Tianle Mao}

\normalsize

\newpage
\maketitle

\begin{abstract}
	We study the space of stability conditions on the total space of the canonical line bundle over the three dimensional projective space. We construct a family of geometric stability conditions and some subset of the boudary of them, which are algebraic. We also use spherical twists to construct some other stability conditions. 
\end{abstract}

\section{Introduction}
In this paper, we study the space of stability conditions on the derived category of the local $\mathbb{P}^{3}$. Our construction depends on Bogomolov-Gieseker type inequalities and we prove these inequalities in local $\mathbb{P}^{3}$ case.

\subsection{Background and History}

The concept of stability conditions comes from the study of Dirichlet branes in string theory, especially from Douglas's work on $\pi$-stability in \cite{douglas2000d,douglas2002dirichlet}. In mathematics, we are concerned about the space of stability conditions $\mathrm{Stab}(\mathcal{D})$ on a fixed triangulated category $\mathcal{D}$. This was first introduced by Bridgeland in \cite{bridgeland2007stability} and it is a significant object in algebraic geometry. However, the existence of stability condition on the bounded derived category of coherent sheaves on smooth projective varieties is still unknown in general. So far it has been proved that $\mathrm{Stab}(D^{b}(X))$ is not empty when $X$ is a curve or a surface in \cite{okada2006stability,macri2007stability,arcara2012bridgeland}. Moreover, Bayer, Macr\`i and Toda gave a conjectural construction of stability conditions on general threefolds in \cite{bayer2014bridgeland} and they proved that the existence of geometric stability conditions on $D^{b}(\mathbb{P}^{3})$. Besides, stability conditions are known to exist on few families of smooth projective threefolds: Fano threefolds \cite{macri2014generalized,li2018stability}, Abelian threefolds \cite{bayer2016space} and quintic threefolds \cite{li2019stability}.

Meanwhile, we also consider stability conditions on the total space of a vector bundle. Explicitly, suppose that $Y$ is a smooth projective variety and $X$ is the total space of a vector bundle of $Y$ and we are also concerned about the space of stability conditions on the bounded derived category $D^{b}_{0}(X)$ of coherent sheaves supported on the zero-section. The space $\mathrm{Stab}(D^{b}_{0}(X))$ of stability conditions was first studied by Bridgeland in \cite{bridgeland2009spaces}, where $X$ is the total space of the canonical bundle of some Fano variety $Y$. Then, Bayer and Macr\`i put their attention on the case of $X=\omega_{\mathbb{P}^{2}}$ in \cite{bayer2011space}. They explicitly describe
a chamber of geometric stability conditions, and show that its translates via autoequivalences cover a whole connected component. They also explain its relation to autoequivalences of $\mathrm{Stab}(D^{b}_{0}(X))$, and provide a global mirror symmetry picture on it.

\subsection{Classical Bogomolov-Gieseker inequality}

In this paper, we will not only study the stability conditions on the derived category of the local $\mathbb{P}^{3}$, but also discuss the basic settings of Bridgeland stability conditions on the derived category of the general vector bundle. Suppose that $Y$ is a smooth projective variety over $\mathbb{C}$ with dimension three and $X=\mathrm{Tot}_{Y}(\mathcal{E}_{0})$ is the total space of a locally free sheaf $\mathcal{E}_{0}$ on $Y$. We denote by $\mathrm{Coh}_{0}(X)$ the abelian category of coherent sheaves supported on the zero-section and its bounded derived category by $D_{0}^{b}(X)$.

We want to use the same approach as in the article \cite{bayer2014bridgeland} to construct "geometric" stability conditions on $D_{0}^{b}(X)$, which have the property that
all skyscraper sheaves $k(y)$ are stable of the same phase, where $y \in Y$. To begin with, we fix a polarization $H$ of $Y$. For any $E \in \mathrm{Coh}_{0}(X)$, the slope function $\displaystyle\mu(E)=-\frac{H^{2}\mathrm{ch}_{1}(\pi_{*}E)}{H^{3}\mathrm{ch}_{0}(\pi_{*}E)}$ gives a notion of slope stability, where $\pi \colon X \to Y$ is the natural projection. However, one of the biggest problem to construct stability conditions is that the classical Bogomolov-Gieseker inequality may not hold, i.e.
\begin{align*}
	(H^{2}\mathrm{ch}_{1}(\pi_{*}E))^{2}-2H^{3}\mathrm{ch}_{0}(\pi_{*}E)H\mathrm{ch}_{2}(\pi_{*}E) \geqslant 0
\end{align*} may not hold for any semistable sheaf $E \in \mathrm{Coh}_{0}(X)$. Therefore, we need to impose one important assumption on the vector bundle in this paper as follows:
\begin{assum}\label{crucialassumptionintro}(Assumption \ref{crucialassumption})
	A sheaf $E \in \mathrm{Coh}_{0}(X)$ is a slope stable sheaf if and only if it is the pushforward $E=i_{*}E_{0}$ of some slope stable sheaf $E_{0} \in \mathrm{Coh}(Y)$. Here $i \colon Y \hookrightarrow X$ is the zero section of the vector bundle.
\end{assum} 
Under this assumption, we get the classical Bogomolov-Gieseker inequality:
\begin{prop}\label{classicalBGintro}(Propostion \ref{classicalBG})
	If $X$ satisfies Assumption \ref{crucialassumptionintro}, then for any slope semistable sheaf $E \in \mathrm{Coh}_{0}(X)$, we have the inequality:
	\begin{align*}
		(H^{2}\mathrm{ch}_{1}(\pi_{*}E))^{2}-2H^{3}\mathrm{ch}_{0}(\pi_{*}E)H\mathrm{ch}_{2}(\pi_{*}E) \geqslant 0.
	\end{align*}
\end{prop}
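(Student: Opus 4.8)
The plan is to reduce the inequality on $X$ to the classical Bogomolov-Gieseker inequality on the base $Y$, by combining three ingredients: the exactness of $\pi_{*}$ on $\mathrm{Coh}_{0}(X)$, the description of slope-stable objects furnished by Assumption \ref{crucialassumptionintro}, and an elementary convexity estimate for sheaves of a common slope. Before anything else I would dispose of the torsion case: if $\mathrm{ch}_{0}(\pi_{*}E)=0$, then the left-hand side is simply $(H^{2}\mathrm{ch}_{1}(\pi_{*}E))^{2}\geqslant 0$ and there is nothing to prove. So from now on assume $\mathrm{ch}_{0}(\pi_{*}E)\neq 0$, i.e. that $E$ has finite slope.

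Since $\pi$ is an affine morphism, $\pi_{*}$ is exact, so $\mathrm{ch}(\pi_{*}(-))$ is additive on short exact sequences in $\mathrm{Coh}_{0}(X)$. I would then take a Jordan--H\"older filtration of the slope-semistable sheaf $E$, whose graded factors $\mathrm{gr}_{j}$ are slope-stable of the same slope $\mu(E)$. By Assumption \ref{crucialassumptionintro}, each factor has the form $\mathrm{gr}_{j}=i_{*}F_{j}$ for a slope-stable sheaf $F_{j}\in\mathrm{Coh}(Y)$, and since $\pi\circ i=\mathrm{id}_{Y}$ we get $\pi_{*}\mathrm{gr}_{j}=F_{j}$; additivity then yields $\mathrm{ch}(\pi_{*}E)=\sum_{j}\mathrm{ch}(F_{j})$ on $Y$. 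Writing $r_{j}=H^{3}\mathrm{ch}_{0}(F_{j})$, $d_{j}=H^{2}\mathrm{ch}_{1}(F_{j})$ and $c_{j}=H\,\mathrm{ch}_{2}(F_{j})$, the equality of the $X$-slopes forces all the $F_{j}$ to share one common $Y$-slope $m=d_{j}/r_{j}$; because $E$ has finite slope, each $F_{j}$ must have positive rank (a rank-zero factor would make $i_{*}F_{j}$ of infinite slope), hence each $F_{j}$ is torsion-free.

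At this point I invoke the classical Bogomolov-Gieseker inequality on the threefold $Y$: each slope-stable $F_{j}$ satisfies $d_{j}^{2}-2r_{j}c_{j}\geqslant 0$, so $c_{j}\leqslant m^{2}r_{j}/2$. Summing over $j$ and using $d_{j}=m\,r_{j}$, with $r=\sum_{j}r_{j}$, $d=\sum_{j}d_{j}=m\,r$ and $c=\sum_{j}c_{j}$, I obtain
\begin{align*}
	(H^{2}\mathrm{ch}_{1}(\pi_{*}E))^{2}-2H^{3}\mathrm{ch}_{0}(\pi_{*}E)\,H\mathrm{ch}_{2}(\pi_{*}E)=d^{2}-2rc\geqslant m^{2}r^{2}-2r\cdot\frac{m^{2}r}{2}=0,
\end{align*}
which is exactly the claimed inequality.

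The numerical step is elementary; the real content is in the reduction. The step I expect to demand the most care is confirming that the Jordan--H\"older factors of $E$ are genuinely pushforwards $i_{*}F_{j}$ of slope-stable sheaves on $Y$ of one common \emph{finite} slope, which is precisely where Assumption \ref{crucialassumptionintro} is indispensable, together with the observation that it is the equal-slope hypothesis that closes the estimate up to equality: without it, the relevant Cauchy--Schwarz-type bound would point the wrong way, yielding non-positivity rather than the non-negativity we need.
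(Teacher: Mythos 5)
Your proof is correct and follows essentially the same route as the paper: reduce to slope-stable Jordan--H\"older factors, use Assumption \ref{crucialassumptionintro} to identify them as pushforwards $i_{*}F_{j}$ of stable sheaves on $Y$, and invoke the classical Bogomolov--Gieseker inequality on $Y$. The only organizational difference is that the paper first records that $\pi_{*}E$ is itself slope semistable on $Y$ (Proposition \ref{pipushforwardofslopestablesheaf}) and applies the inequality to $\pi_{*}E$ in one step (via the Hodge-index comparison $(H^{n-1}\mathrm{ch}_{1})^{2}\geqslant H^{n}\cdot H^{n-2}\mathrm{ch}_{1}^{2}$), whereas you apply it to each stable factor and close the estimate with the equal-slope summation — which is just the standard derivation of the semistable case from the stable one.
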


\subsection{Sketch of construction and Bogomolov-Gieseker type inequality with $\mathrm{ch}_{3}$}
We now give a sketch of our construction; the details will be given in Section \ref{section3}. For any $\beta \in \mathbb{R}$, we define:
\begin{itemize}
	\item $\mathrm{Coh}^{>\beta}_{0}(X)$ is generated by slope semistable sheaves $E \in \mathrm{Coh}_{0}(X)$ of slope $>\beta$;
	\item $\mathrm{Coh}^{\leqslant \beta}_{0}(X)$ is generated by slope semistable sheaves $E \in \mathrm{Coh}_{0}(X)$ of slope $\leqslant \beta$.
\end{itemize}Following
the classical construction for surfaces and threefolds in \cite{arcara2012bridgeland,bayer2014bridgeland}, we define: 
\begin{align*}
	\mathrm{Coh}^{\beta}_{0}(X)=\langle\mathrm{Coh}^{>\beta}_{0}(X),\mathrm{Coh}^{\leqslant \beta}_{0}(X)[1]\rangle \subset D_{0}^{b}(X).
\end{align*}where $[1]$ denotes the shift and $\langle\cdot\rangle$ the extension-closure. By Proposition \ref{classicalBGintro} and tilting theory, this is the heart of a bounded t-structure in $D_{0}^{b}(X)$, allowing us to define stability function on it.

We then define the following tilt-slope functions on $\mathrm{Coh}^{\beta}_{0}(X)$:
\begin{align*}
	\nu^{\beta,\alpha}(E)=\frac{H\mathrm{ch}_{2}(\pi_{*}E)-\alpha H^{3} \mathrm{ch}_{0}(\pi_{*}E)}{H^{2}\mathrm{ch}_{1}(\pi_{*}E)-\beta H^{3}\mathrm{ch}_{0}(\pi_{*}E)}
\end{align*}for any $\alpha>\displaystyle\frac{1}{2}\beta^{2}$ and $0 \neq E \in \mathrm{Coh}^{\beta}_{0}(X)$. It produces the notion of tilt stability. By the same process, we have a double tilting heart $\mathrm{Coh}_{0}^{\beta,\alpha}(X)$ whose explicit definition will be provided in Subsection \ref{subsectionBG}. Motivated by the construction of $\pi$-stability in string theory, we expect there are some suitable stability functions on $\mathrm{Coh}^{\beta,\alpha}_{0}(X)$. And we propose the following conjecture:
\begin{conj}\label{conj2intro}(Conjecture \ref{conj2})
	For any $\beta,\alpha \in \mathbb{R}$ with $\alpha>\displaystyle\frac{1}{2}\beta^{2}$ and $a>\displaystyle\frac{2\alpha-\beta^{2}}{6}$, the central charge $Z^{\beta,\alpha,a}$ is a stability function on the bounded heart $\mathrm{Coh}^{\beta,\alpha}_{0}(X)$, where
	\begin{align*}
		Z^{\beta,\alpha,a}(E)=-\mathrm{ch}_{3}^{\beta}(\pi_{*}E)+aH^{2}\mathrm{ch}_{1}^{\beta}(\pi_{*}E)+\sqrt{-1}\left(H\mathrm{ch}_{2}^{\beta}(\pi_{*}E)-(\alpha-\frac{\beta^{2}}{2})H^{3}\mathrm{ch}_{0}^{\beta}(\pi_{*}E)\right).
	\end{align*}
\end{conj}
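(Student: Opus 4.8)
The plan is to verify the two defining properties of a \emph{stability function} directly: that $Z^{\beta,\alpha,a}(E)$ lies in the strict upper half-plane $\mathbb{H}$ for every nonzero $E \in \mathrm{Coh}^{\beta,\alpha}_0(X)$ with $\Im Z^{\beta,\alpha,a}(E) \neq 0$, and that $\Re Z^{\beta,\alpha,a}(E) < 0$ whenever $\Im Z^{\beta,\alpha,a}(E) = 0$. The imaginary part $\Im Z^{\beta,\alpha,a}(E) = H\mathrm{ch}_2^{\beta}(\pi_* E) - (\alpha - \tfrac{\beta^2}{2})H^3\mathrm{ch}_0^{\beta}(\pi_* E)$ is, up to a positive multiple and the conventions fixing the second tilt, exactly the function at whose sign the torsion pair defining $\mathrm{Coh}^{\beta,\alpha}_0(X)$ is taken. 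Hence the first property is automatic from the tilting construction: objects of the torsion part $\mathcal{T}^{\beta,\alpha}$ satisfy $\Im Z^{\beta,\alpha,a} \geqslant 0$, objects of $\mathcal{F}^{\beta,\alpha}[1]$ likewise satisfy $\Im Z^{\beta,\alpha,a} \geqslant 0$ (the shift reversing the sign), and extension-closedness propagates $\Im Z^{\beta,\alpha,a} \geqslant 0$ to all of $\mathrm{Coh}^{\beta,\alpha}_0(X)$.

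The content is therefore concentrated on the boundary locus $\Im Z^{\beta,\alpha,a}(E) = 0$. Since $\Im Z^{\beta,\alpha,a} \geqslant 0$ on every subobject and quotient, such an $E$ has all of its tilt-stable Harder--Narasimhan and Jordan--H\"older factors on the same locus, so it suffices to treat $\nu^{\beta,\alpha}$-tilt-semistable objects $F$ with $\nu^{\beta,\alpha}(F)=0$, together with the degenerate factors having $H^2\mathrm{ch}_1^{\beta}(\pi_* F)=0$. For the latter, the standard classification of tilt-semistable objects of vanishing rank shows they are sheaves supported in dimension zero on the zero-section, where $\mathrm{ch}_3^{\beta}(\pi_* F)>0$ is simply positivity of length, giving $\Re Z^{\beta,\alpha,a}(F)<0$ for these un-shifted factors. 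By the tilt convention the semistable objects with $\nu^{\beta,\alpha}=0$ enter the heart as shifts $F[1]$, so the required inequality $\Re Z^{\beta,\alpha,a}(F[1])<0$ translates into $\mathrm{ch}_3^{\beta}(\pi_* F) < a\,H^2\mathrm{ch}_1^{\beta}(\pi_* F)$.

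The whole statement thus rests on a Bogomolov--Gieseker type inequality with $\mathrm{ch}_3$: for every $\nu^{\beta,\alpha}$-tilt-semistable $F \in \mathrm{Coh}^{\beta}_0(X)$ with $\nu^{\beta,\alpha}(F)=0$, one should have $\mathrm{ch}_3^{\beta}(\pi_* F) \leqslant \tfrac{2\alpha-\beta^2}{6}\,H^2\mathrm{ch}_1^{\beta}(\pi_* F)$. Granting this, the hypothesis $a > \tfrac{2\alpha-\beta^2}{6}$ together with $H^2\mathrm{ch}_1^{\beta}(\pi_* F)>0$ (valid for such $F$, whose tilt-slope is finite) yields the strict inequality $\mathrm{ch}_3^{\beta}(\pi_* F) < a\,H^2\mathrm{ch}_1^{\beta}(\pi_* F)$, which is precisely what the boundary analysis requires. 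To establish this $\mathrm{ch}_3$-inequality on $D^{b}_0(X)$ I would transport the problem to $\mathbb{P}^{3}$: under Assumption \ref{crucialassumptionintro} the numerical invariants of tilt-semistable objects are governed by their pushforwards, and the fibrewise grading of $\pi_* F$ (whose graded pieces are sheaves on $\mathbb{P}^{3}$ twisted by powers of $\omega_{\mathbb{P}^{3}}^{-1}$) should reduce the estimate to the generalized Bogomolov--Gieseker inequality for tilt-stable objects on $\mathbb{P}^{3}$ established in \cite{macri2014generalized,bayer2014bridgeland}.

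I expect this last step to be the main obstacle, and it is the reason the statement is phrased as a conjecture: the positivity of the imaginary part and the reduction to the boundary are formal consequences of the tilting formalism, but the sharp $\mathrm{ch}_3$-bound is the genuinely three-dimensional, hardest part of the Bayer--Macr\`i--Toda program. The delicate points are to show that the fibre-direction filtration is compatible with tilt-stability, so that semistability of $F$ on $X$ descends to usable statements on $\mathbb{P}^{3}$, and to control the correction terms in $\mathrm{ch}_3^{\beta}$ produced by the twist $\beta$ and by the non-triviality of $\omega_{\mathbb{P}^{3}}$; securing the sharp constant $\tfrac{2\alpha-\beta^2}{6}$ rather than a weaker one is where the argument is most likely to require the full strength of the $\mathbb{P}^{3}$ inequality.
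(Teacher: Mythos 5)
Your reduction of the statement to a Bogomolov--Gieseker type inequality with $\mathrm{ch}_{3}$ is exactly what the paper does: positivity of the imaginary part on the double-tilted heart is Lemma \ref{imaginarypart2}, the decomposition of an object with $\Im Z^{\beta,\alpha,a}(E)=0$ into a shifted tilt-semistable piece $F[1]$ at the critical slope plus a zero-dimensional sheaf $T$ is the same lemma, and the deduction of $\Re Z^{\beta,\alpha,a}(E)<0$ from $v_{3}^{\beta}(F)\leqslant\frac{2\alpha-\beta^{2}}{6}v_{1}^{\beta}(F)$ together with $a>\frac{2\alpha-\beta^{2}}{6}$ and $v_{1}^{\beta}(F)>0$ is precisely the proof that Conjecture \ref{conj} implies Conjecture \ref{conj2}. (One cosmetic point: with the paper's normalization of $\nu^{\beta,\alpha}$ the critical tilt-slope is $\beta$, not $0$; the locus is the same.) Up to this point your proposal and the paper coincide.

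Where you diverge is in the route you sketch for the $\mathrm{ch}_{3}$-inequality itself, and that route has a genuine gap. An object of $\mathrm{Coh}_{0}(X)$ is a sheaf on $\mathbb{P}^{3}$ equipped with a nilpotent map $\pi_{*}F\to\pi_{*}F\otimes\omega_{\mathbb{P}^{3}}^{-1}$, and the associated ``fibrewise'' filtration has graded pieces that are pushforwards from $\mathbb{P}^{3}$ twisted by powers of $\mathcal{O}(4)$. Three things go wrong if you try to feed this into the $\mathbb{P}^{3}$ inequality of Macr\`i/BMT: tilt-semistability of $F$ on $X$ does not descend to tilt-semistability of these graded pieces (the paper only establishes transfer of \emph{slope} stability under Assumption \ref{crucialassumption}, and transfer of tilt stability in the direction $Y\to X$, not $X\to Y$); the twists by $\mathcal{O}(4)$ displace the pieces to different tilt parameters $(\beta,\alpha)$, so even semistable pieces would satisfy the inequality at the wrong point of $U$; and the inequality is quadratic in the Chern character, so it is not inherited by an extension from inequalities for its factors. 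This is why the paper does something entirely different for $X=\mathrm{Tot}(\omega_{\mathbb{P}^{3}})$: it first reduces to rational $(\beta,\alpha)$ with $-\frac{1}{2}\leqslant\beta\leqslant 0$ and $\sqrt{2\alpha-\beta^{2}}<\frac{1}{2}$ (Propositions \ref{reduce1}--\ref{reduce4}), and then applies Lemma \ref{BGlem} to the algebraic heart $\mathcal{B}=\langle i_{*}\mathcal{O}(1),i_{*}\mathcal{O}[1],i_{*}\mathcal{T}(-2)[2],i_{*}\mathcal{O}(-1)[3]\rangle$, checking that $Z^{\beta,\alpha,a_{0}}(\mathcal{B})$ lies in a closed half-plane and that $F[2]\notin\mathcal{B}$ for the relevant simple objects. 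If you want to complete your proposal for the local $\mathbb{P}^{3}$, you should replace the filtration argument by this exceptional-collection mechanism.
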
 As an analogue of Bogomolov-Gieseker type inequality conjecture for $\mathrm{ch}_{3}$ in \cite{bayer2014bridgeland} and \cite{bayer2016space}, the above conjecture is equivalent to the following:
\begin{conj}(Conjecture \ref{conj})
	For any $\beta,\alpha \in \mathbb{R}$ with $\alpha>\displaystyle\frac{1}{2}\beta^{2}$ and any $\nu^{\beta,\alpha}$-semistable object $E \in \mathrm{Coh}_{0}^{\beta}(X)$ with tilt-slope $\beta$, we have the following Bogomolov-Gieseker type inequality:
	\begin{align*}
		\mathrm{ch}_{3}(\pi_{*}E) \leqslant \frac{2\alpha-\beta^{2}}{6}H^{2}\mathrm{ch}_{1}(\pi_{*}E).
	\end{align*}
\end{conj}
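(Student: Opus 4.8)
The plan is to reduce the inequality to the Bogomolov--Gieseker inequality for $\mathbb{P}^3$ established in \cite{bayer2014bridgeland}, exploiting the fact that every invariant occurring in $\nu^{\beta,\alpha}$ and in the claimed bound is read off from the pushforward $\pi_*E$ on $Y=\mathbb{P}^3$. First I would carry out the standard reductions: both $\mathrm{ch}_3(\pi_*E)$ and $H^2\mathrm{ch}_1(\pi_*E)$ are additive on short exact sequences in $\mathrm{Coh}^\beta_0(X)$, and the Jordan--H\"older factors of a $\nu^{\beta,\alpha}$-semistable object share its tilt-slope, so it suffices to prove the inequality for $\nu^{\beta,\alpha}$-\emph{stable} objects $E$ at the critical tilt-slope.

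Next I would transport the problem to $Y$. Since $\pi$ is an affine morphism, $\pi_*$ is exact on $\mathrm{Coh}_0(X)$, and any sheaf supported on the zero section carries a finite filtration by the powers of the ideal of $Y$ whose subquotients are pushforwards $i_*F_k$ of coherent sheaves $F_k$ on $\mathbb{P}^3$; hence $\mathrm{ch}(\pi_*E)=\sum_k \mathrm{ch}(F_k)$. Assumption \ref{crucialassumptionintro} identifies slope-stable sheaves on $X$ with pushforwards of slope-stable sheaves on $Y$ and so pins down the slope-stable building blocks; I would propagate this identification through the two successive tiltings to argue that the classes $\mathrm{ch}(F_k)$ are realized by tilt-semistable objects on $\mathbb{P}^3$ for the same parameters $(\beta,\alpha)$. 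Applying the $\mathbb{P}^3$ inequality to each such class and summing via additivity would then give $\mathrm{ch}_3(\pi_*E)\leqslant \frac{2\alpha-\beta^2}{6}H^2\mathrm{ch}_1(\pi_*E)$.

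The crux---and the reason the statement is posed only as a conjecture---lies in this second step: neither $\nu^{\beta,\alpha}$-stability nor the critical tilt-slope is known to be inherited by the d\'evissage subquotients $F_k$, and the objects of $\mathrm{Coh}_0(X)$ that are genuinely thickened in the fiber direction (those not pushed forward from the zero section) may contribute to $\mathrm{ch}_3$ in a way the tilt slope does not see. Bounding $\mathrm{ch}_3$ is an intrinsically three-dimensional effect invisible to tilt-stability, so I expect the reduction cannot be pushed through verbatim.

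I would therefore fall back on the deformation strategy of the original programme: fix the numerical class and vary $(\beta,\alpha)$ within the region $\alpha>\frac{1}{2}\beta^2$. Near the large-volume limit the $\nu^{\beta,\alpha}$-stable objects of critical slope should become explicit---shifts of pushforwards of line bundles or twisted ideal sheaves on $\mathbb{P}^3$---where the inequality can be checked directly, using Proposition \ref{classicalBGintro} for the $\mathrm{ch}_{\leqslant 2}$ data and Riemann--Roch for $\mathrm{ch}_3$. The principal remaining obstacle is the wall-crossing analysis: showing the inequality persists as $(\beta,\alpha)$ crosses a wall of tilt-stability, which demands control of the new stable factors appearing there. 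A fully algebraic alternative would use the strong exceptional collection $\mathcal{O},\mathcal{O}(1),\mathcal{O}(2),\mathcal{O}(3)$ on $\mathbb{P}^3$ and the resulting quiver description of $D^b_0(X)$ to reduce the inequality to a finite check on simple objects, the difficulty being to match that algebraic heart with $\mathrm{Coh}^{\beta,\alpha}_0(X)$.
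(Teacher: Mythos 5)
Your proposal is a survey of strategies rather than a proof, and you are right that the statement is posed only as a conjecture for general $X$: the paper itself establishes it only for $X=\mathrm{Tot}(\omega_{\mathbb{P}^{3}})$ (Theorem \ref{thm1}). The genuine gap lies in the two routes you leave open. For the deformation route you have the direction reversed: the paper (following Macr\`i, Proposition \ref{reduce1}) does not verify the inequality near the large-volume limit and propagate it across walls; instead it assumes a counterexample $E_{0}$ at an arbitrary point of $U$, slides it along the curve $\mathcal{C}_{E_{0}}$ toward $\partial U$, replaces it by a destabilizing Jordan--H\"older factor at each wall, and shows the resulting sequence has bounded $(v_{0},v_{1},v_{2})$ while $\omega$ stays bounded below --- a contradiction that reduces everything to small $\omega=\sqrt{2\alpha-\beta^{2}}$, and then (Propositions \ref{reduce2}--\ref{reduce4}) to rational $(\beta,\alpha)$ with $-\frac{1}{2}\leqslant\beta\leqslant 0$. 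The wall-crossing control you flag as ``the principal remaining obstacle'' is exactly what this argument supplies, but running in the opposite direction from the one you propose.

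For the algebraic route, the missing mechanism is Lemma \ref{BGlem}: one never needs to match the algebraic heart with $\mathrm{Coh}^{\beta,\alpha}_{0}(X)$. It suffices to exhibit a bounded heart $\mathcal{B}$ of $D^{b}_{0}(X)$ --- the paper takes Bridgeland's finite-length heart with simples $i_{*}\mathcal{O}(1)$, $i_{*}\mathcal{O}[1]$, $i_{*}\mathcal{T}(-2)[2]$, $i_{*}\mathcal{O}(-1)[3]$, built from the collection $(\mathcal{O}(-1),\mathcal{T}(-2),\mathcal{O},\mathcal{O}(1))$ rather than $(\mathcal{O},\dots,\mathcal{O}(3))$ --- satisfying $\mathcal{B}\subseteq\langle\mathcal{A},\mathcal{A}[1]\rangle$, with $Z^{\beta,\alpha,a_{0}}(\mathcal{B})$ contained in a closed half-plane and $F[2]\notin\mathcal{B}$ for every simple object $F[1]$ of $\mathcal{I}^{\beta,\alpha}_{0}(X)$. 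Then $F[1]$ is forced into the torsion part $\mathcal{B}\cap\mathcal{A}$, and the half-plane condition yields $\Re Z(F)\geqslant 0$, which is precisely the Bogomolov--Gieseker bound. The half-plane condition can only be checked on the four simple objects after the reduction to small $\omega$ and $\beta\in[-\frac{1}{2},0]$, so the two halves of the argument cannot be decoupled as your sketch suggests. Your d\'evissage paragraph is correctly self-refuted and matches the paper's reason for not arguing fiberwise; note also that under Assumption \ref{crucialassumption} the slope-stable objects are already pushforwards from the zero section, so the relevant difficulty is not fiber thickening but the failure of stability to pass to the graded pieces.
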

Meanwhile, by using the method introduced by Macr\`i in \cite{macri2014generalized}, we can reduce the Bogomolov-Gieseker type inequality to the case of small $\omega=\sqrt{2\alpha-\beta^{2}}$ with $\beta,\alpha$ rational.  Moreover, we also use shift functor and relative derived dual functor, which we will introduce explicitly in Appendix \ref{appendix2}, to reduce Bogomolov-Gieseker type inequality to the case of $-\displaystyle\frac{1}{2}\leqslant \beta \leqslant 0$. In conclusion, we can reduce Bogomolov-Gieseker type inequality to the following conjecture:
\begin{conj}(Conjecture \ref{conj3})
	For any $\beta,\alpha \in \mathbb{Q}$ with $\alpha>\displaystyle\frac{1}{2}\beta^{2}$, $-\displaystyle\frac{1}{2}\leqslant \beta \leqslant 0$, $\sqrt{2\alpha-\beta^{2}}<\displaystyle\frac{1}{2}$ and for any $\nu^{\beta,\alpha}$-semistable object $E \in \mathrm{Coh}_{0}^{\beta}(X)$ with tilt-slope $\beta$, we have the following Bogomolov-Gieseker type inequality:
	\begin{align*}
		\mathrm{ch}_{3}(\pi_{*}E) \leqslant \frac{2\alpha-\beta^{2}}{6}H^{2}\mathrm{ch}_{1}(\pi_{*}E).
	\end{align*}
\end{conj}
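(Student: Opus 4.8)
The plan is to transport Macr\`i's proof of the $\mathrm{ch}_{3}$-inequality on $\mathbb{P}^{3}$ \cite{macri2014generalized} into the local category $D^{b}_{0}(X)$, using that the central charge sees only $\mathrm{ch}(\pi_{*}E)$ and that, by Assumption~\ref{crucialassumptionintro}, the slope-stable building blocks come from $\mathbb{P}^{3}$. Write
\begin{align*}
	\overline{\Delta}(\pi_{*}E)=(H^{2}\mathrm{ch}_{1}(\pi_{*}E))^{2}-2H^{3}\mathrm{ch}_{0}(\pi_{*}E)\,H\mathrm{ch}_{2}(\pi_{*}E)\geqslant 0,
\end{align*}
the nonnegativity being Proposition~\ref{classicalBGintro}. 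The first step is to record that, for $(\beta,\alpha)$ in the admissible rational region with $\omega=\sqrt{2\alpha-\beta^{2}}<\tfrac{1}{2}$ and fixed $\mathrm{ch}(\pi_{*}E)$, the $\nu^{\beta,\alpha}$-semistable objects of tilt-slope $\beta$ form a bounded family with only finitely many walls; this is the usual consequence of the existence of the double-tilted heart $\mathrm{Coh}^{\beta,\alpha}_{0}(X)$ together with Proposition~\ref{classicalBGintro}, and it makes an induction on the discrete, nonnegative quantity $\overline{\Delta}(\pi_{*}E)$ legitimate.

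Next I would reduce to tilt-stable objects. Suppose $E$ is a counterexample of minimal discriminant. The maps $E\mapsto\mathrm{ch}_{3}(\pi_{*}E)$ and $E\mapsto H^{2}\mathrm{ch}_{1}(\pi_{*}E)$ are additive on short exact sequences of objects of the same tilt-slope, and the bound $\tfrac{2\alpha-\beta^{2}}{6}$ is a constant; hence if $E$ were strictly $\nu^{\beta,\alpha}$-semistable, some Jordan--H\"older factor, which again has tilt-slope $\beta$ and no larger discriminant, would already violate the inequality, and since there are at least two factors the discriminant of that factor is strictly smaller. This contradicts minimality, so the minimal counterexample must be $\nu^{\beta,\alpha}$-stable.

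It then remains to rule out a $\nu^{\beta,\alpha}$-stable counterexample $E$, and this is where the smallness of $\omega$ and the range $-\tfrac{1}{2}\leqslant\beta\leqslant 0$ enter. The goal is to show that such an $E$ is, up to shift, the pushforward $i_{*}E_{0}$ of a tilt-stable sheaf on $\mathbb{P}^{3}$: the conormal filtration of a sheaf supported on the zero section has graded pieces $i_{*}(F\otimes\mathrm{Sym}^{k}\mathcal{E}_{0}^{\vee})$ pulled back from $Y$, and Assumption~\ref{crucialassumptionintro} forces any genuinely fibre-thickened object to split off such a pushforward subobject or quotient, contradicting stability unless the support is reduced. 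For $E=i_{*}E_{0}$ one has $\pi_{*}E=E_{0}$, and the inequality becomes exactly the Bogomolov--Gieseker inequality for $\mathrm{ch}_{3}$ on $\mathbb{P}^{3}$; the few remaining two-term stable objects, built from the exceptional line bundles $\mathcal{O}$ and $\mathcal{O}(-1)$ singled out by the range of $\beta$, are handled by explicit Chern-character computation together with Proposition~\ref{classicalBGintro}.

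The hardest part will be the last step, namely the claim that a $\nu^{\beta,\alpha}$-stable object of tilt-slope $\beta$ in this region has reduced support and hence descends to $\mathbb{P}^{3}$. This requires translating the tilt-stability of $i_{*}E_{0}$ on $D^{b}_{0}(X)$ into tilt-stability of $E_{0}$ on $\mathbb{P}^{3}$, controlling the cohomology sheaves $\mathcal{H}^{-1}(E),\mathcal{H}^{0}(E)$ in the first tilt, and checking numerically that for $-\tfrac{1}{2}\leqslant\beta\leqslant 0$ and $\omega<\tfrac{1}{2}$ no destabilising twist $\mathcal{O}(k)$ with $k\notin\{0,-1\}$ intervenes. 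Once this structural reduction is in place, the inequality follows from the already-known $\mathbb{P}^{3}$ case, so the entire difficulty is concentrated in the support-reduction and the accompanying numerical exclusions.
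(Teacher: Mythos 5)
Your overall architecture (reduce to tilt-stable objects, then reduce those to objects pulled back from $\mathbb{P}^{3}$, then quote Macr\`i) is not the paper's route, and its central step has a genuine gap.

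The critical unsupported claim is that a $\nu^{\beta,\alpha}$-stable object of tilt-slope $\beta$ in the stated range ``has reduced support and hence descends to $\mathbb{P}^{3}$.'' Assumption \ref{crucialassumptionintro} only concerns \emph{slope-stable sheaves} in $\mathrm{Coh}_{0}(X)$; it says nothing about two-term complexes in the tilted heart $\mathrm{Coh}_{0}^{\beta}(X)$, and tilt-stable objects of tilt-slope exactly $\beta$ are precisely the borderline objects for which no such descent is known. Your proposed mechanism does not close this: the conormal filtration of a sheaf supported on the zero section produces subquotients in $\mathrm{Coh}_{0}(X)$, not subobjects or quotients in $\mathrm{Coh}_{0}^{\beta}(X)$, so a ``fibre-thickened'' tilt-stable object need not split off an $i_{*}$-pushforward in the tilted heart, and no contradiction with tilt-stability results. (Note for instance that $\mathrm{Ext}^{1}_{X}(i_{*}\mathcal{O},i_{*}\mathcal{O}(4))\neq 0$ by Proposition \ref{Homsetofinclusion}, so genuinely non-reduced objects abound; ruling them out among tilt-stable objects of slope $\beta$ is a serious structural statement that the paper never proves and never needs.) There is also a smaller flaw in your first reduction: $\overline{\Delta}$ is a quadratic form, and a Jordan--H\"older factor of a strictly semistable object can have the \emph{same} discriminant as the whole (e.g.\ when another factor has $\overline{\Delta}=0$), so the induction on $\overline{\Delta}$ with strict decrease does not run as stated.

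The paper's actual argument avoids classifying tilt-stable objects altogether. It first shows (Proposition \ref{finitelength}) that the category $\mathcal{I}^{\beta,\alpha}_{0}(X)$ of objects with vanishing imaginary part is of finite length, so it suffices to prove the inequality for $F$ with $F[1]$ simple in $\mathcal{I}^{\beta,\alpha}_{0}(X)$. It then applies Lemma \ref{BGlem} with the algebraic heart $\mathcal{B}=\langle i_{*}\mathcal{O}(1),i_{*}\mathcal{O}[1],i_{*}\mathcal{T}(-2)[2],i_{*}\mathcal{O}(-1)[3]\rangle$ from Bridgeland's exceptional-collection t-structures: after checking via Proposition \ref{tiltstablebobject} that $\mathcal{B}\subseteq\langle\mathrm{Coh}^{\beta,\alpha}_{0}(X),\mathrm{Coh}^{\beta,\alpha}_{0}(X)[1]\rangle$, that $Z^{\beta,\alpha,a_{0}}(\mathcal{B})$ lies in a closed half-plane, and that $F[2]\notin\mathcal{B}$, one gets $\Re Z^{\beta,\alpha,a_{0}}(F)\geqslant 0$, which is exactly the Bogomolov--Gieseker type inequality. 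If you want to salvage your approach you would have to prove the descent statement for tilt-stable objects of tilt-slope $\beta$, which is the entire difficulty and is not a consequence of anything established in the paper.
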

The above three conjectures are all equivalent to each other. Finally, if one of them holds, then there is a family of stability conditions $(Z^{\beta,\alpha,a},\mathrm{Coh}_{0}^{\beta,\alpha}(X))$ on $D_{0}^{b}(X)$.

\subsection{Geometric stability conditions on then local $\mathbb{P}^{3}$}

As a significant example, we focus on  $X=\mathrm{Tot}(\omega_{\mathbb{P}^{3}})$. To begin with, we need to prove Assumption \ref{crucialassumptionintro} holds in this case. The following is our first main theorem:
\begin{thm}(Theorem \ref{thm1})
	Let $X=\mathrm{Tot}(\omega_{\mathbb{P}^{3}})$. Then for any $\beta,\alpha \in \mathbb{R}$ with $\alpha>\displaystyle\frac{1}{2}\beta^{2}$ and $a>\displaystyle\frac{2\alpha-\beta^{2}}{6}$, the central charge $Z^{\beta,\alpha,a}$ is a stability function on the bounded heart $\mathrm{Coh}_{0}^{\beta}(X)$.
\end{thm}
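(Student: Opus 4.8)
The plan is to reduce the whole statement to a single geometric fact, namely that Assumption \ref{crucialassumptionintro} holds for $X=\mathrm{Tot}(\omega_{\mathbb{P}^{3}})$. Granting the Assumption, Proposition \ref{classicalBGintro} furnishes the classical Bogomolov--Gieseker inequality for every slope-semistable object of $\mathrm{Coh}_{0}(X)$, and then the general construction of Section \ref{section3} (modelled on \cite{arcara2012bridgeland,bayer2014bridgeland}) produces the bounded heart $\mathrm{Coh}^{\beta}_{0}(X)$ together with the stability function $Z^{\beta,\alpha,a}$ on it. Thus the only thing special to local $\mathbb{P}^{3}$ that needs proof is the Assumption, and that is where I would concentrate the argument.

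First I would translate $\mathrm{Coh}_{0}(X)$ into Higgs-type data. Because $\pi\colon X\to\mathbb{P}^{3}$ is affine with $\pi_{*}\mathcal{O}_{X}=\mathrm{Sym}^{\bullet}_{\mathcal{O}_{\mathbb{P}^{3}}}(\omega_{\mathbb{P}^{3}}^{\vee})$, the functor $\pi_{*}$ identifies $\mathrm{Coh}_{0}(X)$ with the category of pairs $(F,\theta)$, where $F=\pi_{*}E\in\mathrm{Coh}(\mathbb{P}^{3})$ and $\theta\colon F\to F\otimes\omega_{\mathbb{P}^{3}}$ is the $\mathcal{O}_{\mathbb{P}^{3}}$-linear action of the tautological section, which is nilpotent precisely because $E$ is supported on the zero section. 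Under this dictionary a subobject of $E$ is the same as a $\theta$-invariant subsheaf $S\subseteq F$ (one with $\theta(S)\subseteq S\otimes\omega_{\mathbb{P}^{3}}$), the pushforward $i_{*}E_{0}$ corresponds to $(E_{0},0)$, and the slope $\mu$ on $X$ becomes, up to the global sign in its definition, the Mumford slope $\mu_{\mathbb{P}^{3}}$ of $F$. Hence slope stability of $E$ is exactly stability of the pair $(F,\theta)$ tested against $\theta$-invariant subsheaves.

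The decisive input is that $\omega_{\mathbb{P}^{3}}=\mathcal{O}(-4)$ has strictly negative degree, so $-\otimes\omega_{\mathbb{P}^{3}}$ strictly lowers slopes; this is where the Fano condition enters. I would argue in two steps. If $F$ is not $\mu_{\mathbb{P}^{3}}$-semistable, let $F_{1}$ be its maximal destabilizing subsheaf; then the composite $F_{1}\hookrightarrow F\xrightarrow{\theta}F\otimes\omega_{\mathbb{P}^{3}}\twoheadrightarrow (F/F_{1})\otimes\omega_{\mathbb{P}^{3}}$ goes from a semistable sheaf of slope $\mu_{\max}(F)$ into a sheaf of maximal slope $<\mu_{\max}(F)$, hence is zero, so $F_{1}$ is $\theta$-invariant. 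But $F_{1}$ is then a proper nonzero $\theta$-invariant subsheaf whose slope contradicts the stability of $E$; therefore $F$ must be $\mu_{\mathbb{P}^{3}}$-semistable. For semistable $F$ the map $\theta\colon F\to F\otimes\omega_{\mathbb{P}^{3}}$ is a morphism of semistable sheaves with $\mu_{\mathbb{P}^{3}}(F\otimes\omega_{\mathbb{P}^{3}})=\mu_{\mathbb{P}^{3}}(F)+\deg\omega_{\mathbb{P}^{3}}<\mu_{\mathbb{P}^{3}}(F)$, and such a map vanishes. Thus $\theta=0$ and $E=i_{*}E_{0}$ with $E_{0}=F$; since now every subsheaf of $E_{0}$ is $\theta$-invariant, the stability of $E$ is equivalent to the slope stability of $E_{0}$. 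The reverse implication is immediate: a subobject of $i_{*}E_{0}$ is annihilated by the ideal of the zero section, so it is $i_{*}E_{0}'$ for some $E_{0}'\subseteq E_{0}$, and the slope inequality descends from $\mathbb{P}^{3}$. This proves Assumption \ref{crucialassumptionintro}.

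It then remains to run the construction of Section \ref{section3}. Proposition \ref{classicalBGintro} and its twist by $\beta H$ give the Bogomolov--Gieseker inequality for all slope-semistable objects; the existence of Harder--Narasimhan filtrations for $\mu$ makes $\mathrm{Coh}^{\beta}_{0}(X)=\langle\mathrm{Coh}^{>\beta}_{0}(X),\mathrm{Coh}^{\leqslant\beta}_{0}(X)[1]\rangle$ the heart of a bounded t-structure; and the stability-function axioms for $Z^{\beta,\alpha,a}$ on this heart are verified on the two generating families of the tilt, the required positivity and the boundary condition following from the (twisted) inequality exactly as in \cite{bayer2014bridgeland}. I expect the main obstacle to be the Higgs-field vanishing of the previous paragraph: slope stability on $X$ only constrains $\theta$-invariant subsheaves, so the genuine content is showing that the Harder--Narasimhan filtration of $\pi_{*}E$ is automatically $\theta$-invariant, which is what lets one exploit the negativity of $\omega_{\mathbb{P}^{3}}$ and is the step that truly uses the Fano geometry of $\mathbb{P}^{3}$.
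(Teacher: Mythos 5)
Your reduction of Theorem \ref{thm1} to Assumption \ref{crucialassumption} does not work, and the gap sits exactly where you wave at ``as in \cite{bayer2014bridgeland}''. The Assumption, via Proposition \ref{classicalBG}, controls only $\mathrm{ch}_{0},\mathrm{ch}_{1},\mathrm{ch}_{2}$: it yields the tilted heart $\mathrm{Coh}^{\beta}_{0}(X)$, tilt stability, and (Lemma \ref{imaginarypart2}) the nonnegativity of $\Im Z^{\beta,\alpha,a}$ on the \emph{double}-tilted heart $\mathrm{Coh}^{\beta,\alpha}_{0}(X)$ --- which is the heart on which the theorem actually lives; on the first tilt $\mathrm{Coh}^{\beta}_{0}(X)$ any tilt-semistable object of finite tilt-slope $<\beta$ already has $\Im Z^{\beta,\alpha,a}<0$, so your plan of verifying the axioms ``on the two generating families of the tilt'' cannot even be carried out there. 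What the Assumption does \emph{not} give is the negativity of $\Re Z^{\beta,\alpha,a}$ on objects with vanishing imaginary part: that is precisely the $\mathrm{ch}_{3}$ Bogomolov--Gieseker-type inequality $v^{\beta}_{3}(E)\leqslant\frac{2\alpha-\beta^{2}}{6}v^{\beta}_{1}(E)$ for $\nu^{\beta,\alpha}$-semistable $E$ of tilt-slope $\beta$, i.e.\ Conjecture \ref{conj}. Note the paper's own logic: Assumption \ref{crucialassumption} is a standing hypothesis throughout Section \ref{section3}, and the inequality nevertheless remains conjectural there (Conjectures \ref{conj}, \ref{conj2}, \ref{conj3}); in \cite{bayer2014bridgeland} the analogous statement is likewise a conjecture for general threefolds, not a formal consequence of the classical inequality. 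So the entire content of Theorem \ref{thm1} is what your proposal omits: the reductions of Propositions \ref{reduce1}, \ref{reduce2}, \ref{reduce3} and \ref{reduce4} to rational $(\beta,\alpha)$ with $-\frac{1}{2}\leqslant\beta\leqslant 0$ and $\sqrt{2\alpha-\beta^{2}}<\frac{1}{2}$; the tilt-stability of the four exceptional objects (Proposition \ref{tiltstablebobject}); and the key mechanism of Lemmas \ref{BGlem}/\ref{BGlem1}, comparing $\mathrm{Coh}^{\beta,\alpha}_{0}(X)$ with Bridgeland's finite-length heart $\mathcal{B}=\langle i_{*}\mathcal{O}(1),i_{*}\mathcal{O}[1],i_{*}\mathcal{T}(-2)[2],i_{*}\mathcal{O}(-1)[3]\rangle$, checking that $Z^{\beta,\alpha,a_{0}}(\mathcal{B})$ lies in a closed half-plane and that $F[2]\notin\mathcal{B}$ for the relevant simple objects of $\mathcal{I}^{\beta,\alpha}_{0}(X)$ (whose finite length, Proposition \ref{finitelength}, is itself needed for the lemma to apply). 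None of this is automatic from the classical inequality.

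The one ingredient you do prove --- Assumption \ref{crucialassumption} --- is correct and proved by a genuinely different route from the paper's: you use the dictionary identifying $\mathrm{Coh}_{0}(X)$ with pairs $(F,\theta)$, $\theta\colon F\to F\otimes\omega_{\mathbb{P}^{3}}$ nilpotent, the $\theta$-invariance of the maximal destabilizing subsheaf, and the negativity of $\deg\omega_{\mathbb{P}^{3}}$ to force $\theta=0$, whereas the paper deduces from $\mathrm{Hom}(E,E)\cong\mathbb{C}$ that the scheme-theoretic support of a stable sheaf lies in the fiber over the origin of the contraction $X\twoheadrightarrow\mathbb{C}^{4}/\mathbb{Z}_{4}$. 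Your argument is clean (one should only add a word on the infinite-slope case, where stable objects are skyscrapers and the claim is trivial), but it establishes only the first and easiest step of the theorem, not the theorem itself.
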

On the other hand, the most important idea to prove the above theorem is the following lemma, which we will prove at the end of Section \ref{section3}. It is a variant of \cite[Proposition 8.1.1]{bayer2014bridgeland}.
\begin{lem}(Lemma \ref{BGlem})
Suppose $\mathcal{D}$ is a triangulated category and $\mathcal{A}$ is a bounded heart of $\mathcal{D}$. Let $Z \colon K(\mathcal{D}) \to \mathbb{C}$ be a group homomorphism such that for any $E \in \mathcal{A}$, $\Im Z(E) \geqslant 0$. Let $\mathcal{I}=\{E \in \mathcal{A} \mid \Im Z(E)=0\}$ be the full abelian subcategory of $\mathcal{D}$ which is of finite length, and $F[1]$ be a simple object of $\mathcal{I}$.

Assume that there exists another bounded heart $\mathcal{B}$ of  $\mathcal{D}$ with the following properties:

(1) $\mathcal{B} \subseteq \langle \mathcal{A},\mathcal{A}[1]\rangle$,

(2) there exists $\phi_{0} \in (0,1)$ such that:
\begin{align*}
	Z(\mathcal{B}) \subseteq \{r \mathrm{exp}(\sqrt{-1}\pi\phi) \mid r \geqslant 0, \phi_{0} \leqslant \phi \leqslant \phi_{0}+1\},
\end{align*}

(3) $F[2] \notin \mathcal{B}$. 

Then $\Re Z(F) \geqslant 0$.
\end{lem}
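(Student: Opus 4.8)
The plan is to argue by contradiction: assume $\Re Z(F) < 0$ and derive a contradiction with hypotheses (1)--(3). Since $F[1] \in \mathcal{I} \subseteq \mathcal{A}$, we have $\Im Z(F) = -\Im Z(F[1]) = 0$, so under the contrary assumption $Z(F)$ is a strictly negative real number while $Z(F[1]) = -Z(F)$ is a strictly positive real number. The target is to show that this numerical data, together with the simplicity of $F[1]$, forces $F[1]$ itself into $\mathcal{B}$, which the phase hypothesis (2) then forbids.

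First I would record the two-sided cohomological relationship between the hearts. Hypothesis (1) says every object of $\mathcal{B}$ has $\mathcal{A}$-cohomology concentrated in degrees $-1,0$. Conversely, a standard fact from tilting theory (two bounded hearts with $\mathcal{B} \subseteq \langle \mathcal{A},\mathcal{A}[1]\rangle$ differ by the tilt at a torsion pair, whence $\mathcal{A} \subseteq \langle \mathcal{B}[-1],\mathcal{B}\rangle$) shows every object of $\mathcal{A}$ has $\mathcal{B}$-cohomology in degrees $0,1$. Applying this to $F[1]\in\mathcal{A}$, the object $F$ has $\mathcal{B}$-cohomology only in degrees $1$ and $2$; write $A := H^{1}_{\mathcal{B}}(F)$ and $C := H^{2}_{\mathcal{B}}(F)$, both in $\mathcal{B}$. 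Since $F[2]\in\mathcal{B}$ is equivalent to $H^{-1}_{\mathcal{B}}(F[2]) = A = 0$, hypothesis (3) gives precisely $A \neq 0$.

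The heart of the argument is to take $\mathcal{A}$-cohomology of the rotated truncation triangle $C[-2] \to A \to F[1] \to C[-1]$. Because $A, C \in \mathcal{B}$ have $\mathcal{A}$-cohomology in degrees $-1,0$ while $F[1]\in\mathcal{A}$ sits in degree $0$, the long exact sequence degenerates: in degree $-1$ it forces $H^{-1}_{\mathcal{A}}(A) = 0$, so $A \in \mathcal{A}$, and in degree $0$ it yields a short exact sequence $0 \to A \to F[1] \to H^{-1}_{\mathcal{A}}(C) \to 0$ in $\mathcal{A}$. Now $\mathcal{I}$ is a Serre subcategory of $\mathcal{A}$: since $\Im Z \geqslant 0$ is additive, any subobject or quotient of an object with $\Im Z = 0$ again has $\Im Z = 0$, so this is a short exact sequence in $\mathcal{I}$. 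As $F[1]$ is simple in $\mathcal{I}$ and $A \neq 0$, the inclusion $A \hookrightarrow F[1]$ must be an isomorphism, so $F[1] \cong A \in \mathcal{B}$.

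Finally I would play this against hypothesis (2). Having shown $F[1]\in\mathcal{B}$, the value $Z(F[1])$ must lie in the sector of phases $[\phi_{0},\phi_{0}+1]$. But $Z(F[1])$ is a nonzero positive real number, hence has phase $0 \pmod 2$, and since $\phi_{0}\in(0,1)$ neither $0$ nor $2$ lies in $[\phi_{0},\phi_{0}+1]$; a nonzero positive real therefore cannot lie in that sector. This contradiction establishes $\Re Z(F) \geqslant 0$. I expect the main obstacle to be the bookkeeping in the third step: pinning down the correct two-sided cohomological amplitudes of $\mathcal{A}$ and $\mathcal{B}$, and checking that the long exact sequence genuinely collapses to a short exact sequence of $\mathcal{I}$-objects to which simplicity of $F[1]$ can be applied. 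Once $F[1]\in\mathcal{B}$ is secured, the contradiction with the sector condition is automatic.
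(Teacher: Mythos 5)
Your proof is correct and follows essentially the same route as the paper's: both use hypothesis (1) to realize $\mathcal{B}$ as a tilt of $\mathcal{A}$ at the torsion pair $(\mathcal{A}\cap\mathcal{B},\ \mathcal{A}\cap\mathcal{B}[-1])$, observe that the resulting decomposition of $F[1]$ lies in the Serre subcategory $\mathcal{I}$ so that simplicity together with $F[2]\notin\mathcal{B}$ forces $F[1]\in\mathcal{A}\cap\mathcal{B}$, and then conclude from the sector condition (2). Your short exact sequence $0\to A\to F[1]\to \mathcal{H}^{-1}_{\mathcal{A}}(C)\to 0$ is exactly the torsion decomposition the paper invokes via Proposition \ref{tiltingheart}; you merely derive it by hand from the long exact sequence and phrase the last step as a contradiction rather than directly.
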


We will choose $\mathcal{B}$ to be the full extension-closed subcategory of $D_{0}^{b}(X)$ generated by the following objects: $i_{*}\mathcal{O}(1),i_{*}\mathcal{O}[1],i_{*}\mathcal{T}(-2)[2],i_{*}\mathcal{O}(-1)[3]$. It turns out that $\mathcal{B}$ is a bounded heart of $D_{0}^{b}(X)$ and is useful to prove the Bogomolov-Gieseker type inequality when $\omega=\sqrt{2\alpha-\beta^{2}}$ is small enough.

\subsection{Boundary points}
We will continue to focus on the local $\mathbb{P}^{3}$ case. By  \cite[Proposition 3.3]{bayer2011space}, we know that there is a wall and chamber structure on $\mathrm{Stab}(D_{0}^{b}(X))$ with respect to the skyscraper sheaf $k(y)$ with $y \in Y$. Thus, the set of geometric stability conditions $\mathrm{Stab}^{\mathrm{geo}}_{H}(D_{0}^{b}(X))$ is open in $\mathrm{Stab}(D_{0}^{b}(X))$. Its boundary $\partial \mathrm{Stab}^{\mathrm{geo}}_{H}(D_{0}^{b}(X))$ is given
by a locally finite union of walls. We aim to provide a complete description of these walls. However, this is quite difficult and we have only identified some special points on the boundary.

The key point of this construction is "tilting" an exceptional object of the double tilting heart $\mathrm{Coh}_{0}^{\beta,\alpha}(X)$. For example, if $\beta<0$, then there is a torsion pair $(\mathcal{T},\mathcal{F})$ of $\mathrm{Coh}_{0}^{\beta,\beta^{2}}(X)$ as follows:
\begin{align*}
	& \mathcal{T}=\{i_{*}\mathcal{O}[1]^{\oplus n} \mid n \in \mathbb{N}^{+}\}, \\
	& \mathcal{F}=\{F \mid \mathrm{Hom}(i_{*}\mathcal{O}[1],F)=0\}.
\end{align*}Then we obtain a tilting heart $\mathrm{Coh}_{0}^{\beta,i_{*}\mathcal{O}}(X)[1]$ with respect to the above torsion pair. We can also define the central charge $Z^{\beta,\alpha,a}$ on the heart $\mathrm{Coh}_{0}^{\beta,i_{*}\mathcal{O}}(X)$, which has the same form as in Conjecture \ref{conj2intro}. The following is the second main theorem:
\begin{thm}(Theorem \ref{simplecase} and Corollary \ref{boundary})
	For any rational number $\beta$ with $-\displaystyle\frac{1}{2}<\beta<0$ and $\displaystyle\frac{3\beta^{3}+6\beta^{2}-4}{6(3\beta+2)}<a<\displaystyle\frac{1}{6}\beta^{2}$, the pair $(Z^{\beta,\beta^{2},a},\mathrm{Coh}_{0}^{\beta,i_{*}\mathcal{O}}(X))$ is a stability condition on $\partial\mathrm{Stab}_{H}^{\mathrm{geo}}(D_{0}^{b}(X))$.
\end{thm}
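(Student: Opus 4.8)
The plan is to establish the three axioms of a stability condition for $(Z,\mathcal{H})$ with $Z:=Z^{\beta,\beta^2,a}$ and $\mathcal{H}:=\mathrm{Coh}_0^{\beta,i_*\mathcal{O}}(X)$ — that $Z$ is a stability function on $\mathcal{H}$, the Harder–Narasimhan property, and the support property — and then to place the resulting point on $\partial\mathrm{Stab}^{\mathrm{geo}}_H$. The arithmetic behind the construction is that $\alpha=\beta^2$ is exactly the value for which $\Im Z(i_*\mathcal{O})=H\mathrm{ch}_2(\mathcal{O})-\beta H^2\mathrm{ch}_1(\mathcal{O})=0$; hence $i_*\mathcal{O}$ and every skyscraper $k(y)$ land on the real axis. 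Since $i_*\mathcal{O}[1]$ then lies in $\mathrm{Coh}_0^{\beta,\beta^2}(X)$ with $\Im Z=0$ but, for $a<\tfrac16\beta^2$, with $\Re Z(i_*\mathcal{O}[1])>0$ (the forbidden phase $0$), the heart must be re-tilted. The first step is to confirm that $\mathcal{T}=\langle i_*\mathcal{O}[1]\rangle$ and its right orthogonal $\mathcal{F}$ form a torsion pair in $\mathrm{Coh}_0^{\beta,\beta^2}(X)$ — using that $i_*\mathcal{O}[1]$ is a simple, rigid object whose direct sums admit maximal subobjects — so that $\mathcal{H}=\langle\mathcal{F},i_*\mathcal{O}[2]\rangle$ is the heart of a bounded t-structure, with $i_*\mathcal{O}[2]$ the image of the offending object, now at phase $1$.

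Next I would verify that $Z$ is a stability function on $\mathcal{H}$. Positivity $\Im Z\geq0$ transfers from $\mathrm{Coh}_0^{\beta,\beta^2}(X)$: every object of $\mathcal{H}$ is an extension of an object of $\mathcal{F}$ (where $\Im Z\geq0$) by copies of $i_*\mathcal{O}[2]$ (where $\Im Z=0$). The real content is the inequality $\Re Z(E)<0$ for every nonzero $E$ in $\mathcal{I}=\{\,E\in\mathcal{H}:\Im Z(E)=0\,\}$; by additivity it suffices to treat the simple objects of $\mathcal{I}$. For a skyscraper one has $Z(k(y))=-1<0$ unconditionally, and for the tilted object a direct computation gives
\begin{align*}
\Re Z(i_*\mathcal{O}[2])=\Re Z(i_*\mathcal{O})=\beta\Big(\tfrac{\beta^2}{6}-a\Big)H^3,
\end{align*}
which is negative precisely when $a<\tfrac16\beta^2$; this is the upper bound. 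The remaining, and decisive, task is to prove that $i_*\mathcal{O}[2]$ and the $k(y)$ exhaust the simple objects of $\mathcal{I}$ on the stated range, i.e. that no other $\nu^{\beta,\beta^2}$-semistable object degenerates onto the real axis. This is where the rest of the exceptional collection enters, through the method of Lemma \ref{BGlem} applied with the auxiliary heart $\mathcal{B}$ generated by $i_*\mathcal{O}(1),\,i_*\mathcal{O}[1],\,i_*\mathcal{T}(-2)[2],\,i_*\mathcal{O}(-1)[3]$. The two remaining inequalities are pinned by two of these generators: a computation yields
\begin{align*}
\Re Z(i_*\mathcal{T}(-2))=\Big(\tfrac{\beta^3}{2}+\beta^2-\tfrac23-a(3\beta+2)\Big)H^3,
\end{align*}
so that, since $3\beta+2>0$ on $-\tfrac12<\beta<0$, the condition $\Re Z(i_*\mathcal{T}(-2))<0$ is exactly $a>\tfrac{3\beta^3+6\beta^2-4}{6(3\beta+2)}$, the lower bound; while $\Im Z(i_*\mathcal{O}(-1))=\tfrac12+\beta$ is positive exactly for $\beta>-\tfrac12$, which keeps $i_*\mathcal{O}(-1)$ off the real axis and forces the constraint on $\beta$.

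To upgrade $Z$ to a stability condition I would then establish the Harder–Narasimhan and support properties. Since $Z$ factors through the rank-four lattice of $(\mathrm{ch}_0,H^2\mathrm{ch}_1,H\mathrm{ch}_2,\mathrm{ch}_3)(\pi_*-)$, the function $\Im Z$ has discrete image; combined with Noetherianity of $\mathcal{H}$ inherited from $\mathrm{Coh}_0^{\beta,\beta^2}(X)$, the Harder–Narasimhan property follows from the standard discreteness criterion. For the support property I would transport a quadratic form from the geometric chamber: the classical and tilt Bogomolov–Gieseker inequalities (Proposition \ref{classicalBGintro} and the construction underlying Theorem \ref{thm1}) furnish a form $Q$ nonnegative on all semistable objects there, and since $(Z,\mathcal{H})$ is a limit of those stability conditions with uniformly bounded mass, the same $Q$ persists, with negativity on $\ker Z$ checked directly on the lattice.

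Finally, Corollary \ref{boundary} places $(Z,\mathcal{H})$ on $\partial\mathrm{Stab}^{\mathrm{geo}}_H$: one shows it lies in the closure of, but not inside, the geometric chamber. It is a boundary rather than interior point because $\mathcal{H}$ is no longer of geometric type — the shift of a line bundle $i_*\mathcal{O}[2]$ sits at the maximal phase $1$, on equal footing with the $k(y)$, which does not occur in the interior of the chamber — so $(Z,\mathcal{H})$ lies on the wall across which the heart tilts at $i_*\mathcal{O}$; and it is in the closure because geometric stability conditions, obtained by deforming $\alpha$ slightly above $\beta^2$, converge to it as $\alpha\downarrow\beta^2$. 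The wall-and-chamber description of \cite[Proposition 3.3]{bayer2011space} organizes this and identifies the locus as a genuine boundary wall. I expect the \textbf{main obstacle} to be the step of the second paragraph: proving that throughout the window $\tfrac{3\beta^3+6\beta^2-4}{6(3\beta+2)}<a<\tfrac16\beta^2$, $-\tfrac12<\beta<0$ there are no further tilt-semistable objects on the real axis. This is a complete wall analysis for the four exceptional objects $i_*\mathcal{O}(1),i_*\mathcal{O},i_*\mathcal{T}(-2),i_*\mathcal{O}(-1)$ and their extensions; it is exactly this control — carried out via Lemma \ref{BGlem} and the heart $\mathcal{B}$ — that forces the stated bounds and is the crux of the argument.
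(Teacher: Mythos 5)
Your overall architecture (stability function on the double-tilted heart via Lemma \ref{BGlem}, then HN from discreteness and Noetherianity, then support property, then boundary placement) matches the paper's, and you correctly identify the two numerical constraints: the upper bound $a<\tfrac16\beta^2$ from $\Re Z(i_*\mathcal{O})<0$ and the lower bound from $\Re Z(i_*\mathcal{T}(-2)[2])\leqslant 0$. But there is a concrete gap at the step you yourself flag as decisive: the auxiliary heart $\mathcal{B}=\langle i_*\mathcal{O}(1),i_*\mathcal{O}[1],i_*\mathcal{T}(-2)[2],i_*\mathcal{O}(-1)[3]\rangle$ from Theorem \ref{thm1} cannot be used here. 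One of its simple objects is $S_1=i_*\mathcal{O}[1]$, and at $\alpha=\beta^2$ a direct computation gives $Z^{\beta,\beta^2,a}(i_*\mathcal{O}[1])=-\beta\bigl(\tfrac16\beta^2-a\bigr)$, which is a \emph{strictly positive real number} throughout your range $a<\tfrac16\beta^2$, $\beta<0$. A positive real value has phase $0$, which lies outside every closed half-plane $\{\phi_0\leqslant\phi\leqslant\phi_0+1\}$ with $\phi_0\in(0,1)$, so hypothesis (2) of Lemma \ref{BGlem} fails and the lemma gives nothing. This is not incidental: the entire point of re-tilting at $i_*\mathcal{O}$ is that $i_*\mathcal{O}[1]$ has been rotated out of the heart (replaced by $i_*\mathcal{O}$ at phase $1$), and the algebraic heart $\mathcal{B}$ must be mutated accordingly. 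The paper's Proposition \ref{simplecase} instead takes the full exceptional collection $(\mathcal{O}(-1),\Omega^{2}(2),\Omega(1),\mathcal{O})$, whose associated heart has simples $i_*\mathcal{O}$, $i_*\Omega(1)[1]$, $i_*\Omega^{2}(2)[2]=i_*\mathcal{T}(-2)[2]$, $i_*\mathcal{O}(-1)[3]$; since two of these coincide with yours, your bounds come out right, but the new simple $i_*\Omega(1)[1]$ requires its own tilt-stability analysis (a wall-crossing argument across $-\tfrac12<\beta<0$) and sign check, which your proposal omits entirely.

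Two smaller points. For the support property, "transporting a quadratic form from the geometric chamber by a limit with bounded mass" is not an argument; the paper gets it for free by identifying $(Z^{\beta,\beta^2,a},\mathrm{Coh}_0^{\beta,i_*\mathcal{O}}(X))$ with the algebraic stability condition $(-\sqrt{-1}Z^{\beta,\beta^2,a},\mathcal{B})$ up to the $\widetilde{\mathrm{GL}}_2(\mathbb{R})$-action, using \cite[Lemma 8.11]{bayer2016space}. For the boundary statement, "deform $\alpha$ slightly above $\beta^2$ and let it converge" does not suffice, because the heart is not $\mathrm{Coh}_0^{\beta,\alpha}(X)$ but a further tilt of it; the paper instead exhibits the Jordan--H\"older filtration $0\to i_*\mathcal{O}^{\oplus r}\to \mathbb{C}(x)\to\mathcal{O}^{x}[1]\to 0$ of the skyscraper, proves both factors are stable (Propositions \ref{exceptionalissimple2} and \ref{exceptionalissimple3}), and applies \cite[Lemma 5.9]{bayer2011space} to place the point in the closure of the geometric locus while $\mathbb{C}(x)$ itself is only strictly semistable there.
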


\subsection{Open questions}
There are several questions we have not solved in this paper. For example, we are concerned about the connection between stability conditions on $D^{b}(Y)$ and $D^{b}_{0}(X)$. Therefore, one question arises:
\begin{question}\label{ques1}
	Is there any homeomorphism between $\mathrm{Stab}^{\mathrm{Geo}}(D^{b}(Y))$ with $\mathrm{Stab}^{\mathrm{Geo}}(D_{0}^{b}(X))$?
\end{question}In fact, Question \ref{ques1} is true in the case of the local $\mathbb{P}^{2}$ because we can verify $\mathrm{Stab}^{\mathrm{Geo}}(D^{b}(\mathbb{P}^{2}))$ and $\mathrm{Stab}^{\mathrm{Geo}}(D_{0}^{b}(\mathrm{Tot}(\omega_{\mathbb{P}^{2}})))$ are homomeomorphism to the same topology space.

Another question comes from mirror symmetry. The mirror partner for the local $\mathbb{P}^{2}$ is the universal family over the moduli
space $\mathcal{M}_{\Gamma_{1}(3)}$ of elliptic curves with $\Gamma_{1}(3)$-level structures. Its fundamental group is $\Gamma_{1}(3)$ and let $\widetilde{M}_{\Gamma_{1}(3)}$ be the universal cover. In \cite[Section 9]{bayer2011space}, Bayer and Macr\`i proved that there is an continous embedding $I$ from $\widetilde{M}_{\Gamma_{1}(3)}$ to $\mathrm{Stab}(D_{0}^{b}(\mathrm{Tot}(\omega_{\mathbb{P}^{2}})))$ which is equivariant with respect to the action by $\widetilde{M}_{\Gamma_{1}(3)}$ on both sides. Morevoer, the central charge of the image of $I$ is a solution of the Picard-Fuchs equation. 

\begin{question}
	Is there a similar consequence in the case of $X=\mathrm{Tot}(\omega_{\mathbb{P}^{3}})$?
\end{question}

\subsection{Plan of the paper}
The paper is organized as follows. In Section \ref{section2}, we review some fundamental definitions related to stability conditions on triangualted categories. In Section \ref{section3}, we discuss the basic settings of Bridgeland stability conditions on the derived category of the general vector bundle. In Section \ref{section4}, we focus on the critical case where $X=\mathrm{Tot}_{\mathbb{P}^{3}}(\omega_{\mathbb{P}^{3}})$, and we prove the Bogomolov-Gieseker type inequality in this case. Additionally, we prove Theorem \ref{thm1}. In Section \ref{section5}, we use full exceptional collection in $D^{b}(\mathbb{P}^{3})$ to construct some boundary points of geometric subspace $\mathrm{Stab}^{\mathrm{Geo}}(D_{0}^{b}(\mathrm{Tot}(\omega_{\mathbb{P}^{3}})))$. In Appendix \ref{appendix2}, we introduce the definitions and properties of the relative derived dual functor.

\subsection{Acknowledgement}
I would like to thank my supervisor Yukinobu Toda for his patient guidance and advice throughout the progress of my PhD's research. Without his help, I coundn't finish this paper. Besides, I would also like to thank Zhiyu Liu, Tomohiro Karube, Nantao Zhang, Dongjian Wu for their useful discussion.

\subsection{Notation}
In this paper, all the varieties are defined over $\mathbb{C}$. For a variety $X$, we denote by $\mathrm{Coh}(X)$ and $D^{b}(X)$ the Abelian category of coherent sheaves and its bounded derived category respectively.
For an object $E \in D^{b}(X)$, its support is defined by $\mathrm{Supp}(E)=\displaystyle\bigcup_{{l \in \mathbb{Z}}}\mathrm{Supp}(\mathcal{H}^{l}(E))$. For a sheaf $E \in \mathrm{Coh}(X)$, its dimension is defined by $\dim E=\dim \mathrm{Supp}(E)$. For a triangulated category $\mathcal{D}$, its Grothendieck group is denoted by $K(\mathcal{D})$.

\section{Preliminaries}\label{section2}

In this section, we will review some basic notions of Bridgeland stability conditions in \cite{bridgeland2007stability}.

\begin{defn}
	Let $\mathcal{D}$ be a triangulated category. A \emph{slicing} $\mathcal{P}$ of $\mathcal{D}$ consists of full subcategories $\mathcal{P}(\phi) \subseteq \mathcal{D}$ for each $\phi \in \mathbb{R}$ satisfying the following axioms:
	
	(1) For all $\phi \in \mathbb{R}$, $\mathcal{P}(\phi+1)=\mathcal{P}(\phi)[1]$,
	
	(2) If $\phi_{1}>\phi_{2}$ and $A_{i} \in \mathcal{P}(\phi_{i})$ then $\mathrm{Hom}_{\mathcal{D}}(A_{1},A_{2})=0$,
	
	(3) For any $0 \neq E \in \mathcal{D}$, there is a finite sequence of real numbers:
	$\phi_{1}> \cdots >\phi_{n}$ and a collection of distinguished triangles: $E_{i-1} \to E_{i} \to A_{i} \to E_{i-1}[1]$ in $\mathcal{D}$ with $A_{i} \in \mathcal{P}(\phi_{i})$ for $i \in \{1,\ldots,n\}$ and $E_{0}=0,E_{n}=E$. The decomposition is uniquely defined up to isomorphism.
\end{defn}

	In fact, we can check that $\mathcal{P}(\phi)$ is an extension-closed subcategory for any $\phi \in \mathbb{R}$. And if $I$ is an interval on $\mathbb{R}$, then we denote by $\mathcal{P}(I)$ the extension-closed subcategory of $\mathcal{D}$ generated by $\mathcal{P}(\phi)$, where $\phi \in I$.

In fact, slicing can be viewed as a generalization of bounded t-structure:

\begin{defn}\cite[Lemma 3.2]{bridgeland2007stability}\label{def:bounded heart}
	Let $\mathcal{A} \subseteq \mathcal{D}$ be a full additive subcategory of a triangulated category $\mathcal{D}$. Then we call $\mathcal{A}$ as the \emph{heart} of a bounded t-structure on $\mathcal{D}$ if and only if the following two conditions hold:
	
	(a) If $l_{1}>l_{2}$ are integers, then $\mathrm{Hom}_{\mathcal{D}}(E[l_{1}],F[l_{2}])=0$ for all $E,F$ of $\mathcal{A}$.
	
	(b) For any $0 \neq E \in \mathcal{D}$, there exists a finite sequence of integers $l_{1}> \cdots >l_{n}$ and a collection of triangles $E_{i-1} \to E_{i} \to A_{i} \to E_{i-1}[1]$ in $\mathcal{D}$ with $A_{i} \in \mathcal{A}[l_{i}]$ for $i \in \{1,\ldots,n\}$ and $E_{0}=0,E_{n}=E$. The decomposition is uniquely defined up to isomorphism.
	
	We will often simply say that $\mathcal{A}$ is a bounded heart of $\mathcal{D}$ and note that there is a natural isomorphism of groups: $K(\mathcal{A}) \to K(\mathcal{D})$.
\end{defn}

Given a bounded heart $\mathcal{A}$ of $\mathcal{D}$ and from the definition above, we have a family of cohomology functors $\mathcal{H}^{l}_{\mathcal{A}} \colon \mathcal{D} \to \mathcal{A}$. For example, if $E$ has a filtration which is mentioned in the above definition, then we defined $\mathcal{H}^{-l_{i}}_{\mathcal{A}}(E)=A_{i}[-l_{i}] \in \mathcal{A}$ and $\mathcal{H}^{l}_{\mathcal{A}}(E)=0$ for $l \neq -l_{i}$. 

\begin{prop}
	Let $\mathcal{A}$ be a bounded heart of $\mathcal{D}$ and $F \to E \to G \to F[1]$ be a distinguished triangle in $\mathcal{D}$. Then we have a long exact sequence in $\mathcal{A}$:
	\begin{align*}
		\cdots \to \mathcal{H}^{l}_{\mathcal{A}}(F) \to \mathcal{H}^{l}_{\mathcal{A}}(E) \to \mathcal{H}^{l}_{\mathcal{A}}(G) \to \mathcal{H}^{l+1}_{\mathcal{A}}(F) \to \mathcal{H}^{l+1}_{\mathcal{A}}(E) \to 
		\mathcal{H}^{l+1}_{\mathcal{A}}(G) \to \cdots
	\end{align*}
\end{prop}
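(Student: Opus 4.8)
The plan is to recognize this as the classical statement that the cohomology functors attached to a bounded t-structure assemble into a cohomological functor, and to reduce the whole long exact sequence to a single exactness assertion in the middle degree.

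First I would observe, straight from the filtration in Definition~\ref{def:bounded heart}, that shifting an object shifts its cohomology: $\mathcal{H}^{l}_{\mathcal{A}}(E) \cong \mathcal{H}^{0}_{\mathcal{A}}(E[l])$ for every $l \in \mathbb{Z}$ and every $E \in \mathcal{D}$, since the filtration of $E[l]$ is that of $E$ with the integers $l_{1} > \cdots > l_{n}$ relabelled. Hence it suffices to prove that $\mathcal{H}^{0}_{\mathcal{A}} \colon \mathcal{D} \to \mathcal{A}$ is a cohomological functor, i.e. that for every distinguished triangle $F \to E \to G \to F[1]$ the three-term sequence $\mathcal{H}^{0}_{\mathcal{A}}(F) \to \mathcal{H}^{0}_{\mathcal{A}}(E) \to \mathcal{H}^{0}_{\mathcal{A}}(G)$ is exact at the middle term. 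Granting this, the full long exact sequence results from applying $\mathcal{H}^{0}_{\mathcal{A}}$ to every rotation $F[l] \to E[l] \to G[l] \to F[l+1]$ and splicing: exactness of the total sequence at $\mathcal{H}^{l}_{\mathcal{A}}(F)$, at $\mathcal{H}^{l}_{\mathcal{A}}(E)$, and at $\mathcal{H}^{l}_{\mathcal{A}}(G)$ is precisely the middle-exactness for the three rotations whose middle term is $F[l]$, $E[l]$, $G[l]$ respectively, while the connecting morphism $\mathcal{H}^{l}_{\mathcal{A}}(G) \to \mathcal{H}^{l+1}_{\mathcal{A}}(F)$ is $\mathcal{H}^{0}_{\mathcal{A}}$ applied to the boundary map $G[l] \to F[l+1]$.

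For the middle-exactness I would first dispose of the easy inclusion. Since $F \xrightarrow{u} E \xrightarrow{v} G$ are consecutive maps of a distinguished triangle, $v \circ u = 0$, so $\mathcal{H}^{0}_{\mathcal{A}}(v) \circ \mathcal{H}^{0}_{\mathcal{A}}(u) = \mathcal{H}^{0}_{\mathcal{A}}(v \circ u) = 0$ and $\operatorname{im} \mathcal{H}^{0}_{\mathcal{A}}(u) \subseteq \ker \mathcal{H}^{0}_{\mathcal{A}}(v)$. The substance is the reverse inclusion, and here I would invoke the t-structure determined by $\mathcal{A}$: Definition~\ref{def:bounded heart} presents $\mathcal{A}$ as the bounded heart of a t-structure, so $\mathcal{A}$ is abelian, and writing $\mathcal{D}^{\leq 0}$ (resp. $\mathcal{D}^{\geq 0}$) for the objects with $\mathcal{H}^{l}_{\mathcal{A}}=0$ for $l>0$ (resp. $l<0$) we have $\mathcal{A} = \mathcal{D}^{\leq 0} \cap \mathcal{D}^{\geq 0}$. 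The truncation functors $\tau^{\leq n}, \tau^{\geq n}$ are then functorial, fit into natural triangles $\tau^{\leq n}E \to E \to \tau^{\geq n+1}E \to \tau^{\leq n}E[1]$, and satisfy $\mathcal{H}^{0}_{\mathcal{A}} = \tau^{\geq 0}\tau^{\leq 0} \cong \tau^{\leq 0}\tau^{\geq 0}$.

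The hard part, which I expect to be the main obstacle, is organizing the truncations of $F$, $E$, $G$ compatibly. Functoriality of truncation produces candidate maps between $\tau^{\leq 0}F, \tau^{\leq 0}E, \tau^{\leq 0}G$ and between $\tau^{\geq 0}F, \tau^{\geq 0}E, \tau^{\geq 0}G$, but it is not formal that these extend to distinguished triangles; this requires assembling a $3 \times 3$ diagram of triangles from the octahedral axiom applied to the truncation triangles and to $u, v$. Once such a diagram is available, restricting to the degree-zero row and using that $\mathcal{H}^{0}_{\mathcal{A}}$ fixes every object of $\mathcal{A} = \mathcal{D}^{\leq 0} \cap \mathcal{D}^{\geq 0}$, the central triangle becomes a sequence in the abelian category $\mathcal{A}$ whose exactness at $\mathcal{H}^{0}_{\mathcal{A}}(E)$ falls out after chasing the rows and columns. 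This octahedral bookkeeping is exactly the cohomological-functor property of a t-structure in the sense of Beilinson--Bernstein--Deligne, and it is where all the genuine work resides; every remaining step is the formal reduction described above.
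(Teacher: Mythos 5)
The paper offers no proof of this proposition at all --- it is stated as a standard preliminary fact about bounded t-structures --- so there is nothing of the author's to compare against; I can only assess your outline on its own terms. Your route is the classical one (Beilinson--Bernstein--Deligne, Thm.~1.3.6): the reduction $\mathcal{H}^{l}_{\mathcal{A}}(E)\cong\mathcal{H}^{0}_{\mathcal{A}}(E[l])$ is correct, the splicing of the rotated triangles does yield the full long exact sequence from middle-exactness alone (the sign appearing in the rotated boundary map is harmless for exactness), and the inclusion $\mathrm{im}\,\mathcal{H}^{0}_{\mathcal{A}}(u)\subseteq\ker\mathcal{H}^{0}_{\mathcal{A}}(v)$ is as easy as you say. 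The one place where you assert rather than argue is the reverse inclusion, and your description of it as ``octahedral bookkeeping'' slightly misidentifies the cleanest path: rather than building a full $3\times 3$ diagram, the standard completion is in two stages. First one shows that $\mathcal{H}^{0}_{\mathcal{A}}=\tau^{\geq 0}$ is left exact on triangles of objects in $\mathcal{D}^{\geq 0}$ (using that $\mathrm{Hom}(A,-)$ for $A\in\mathcal{A}$ is left exact on such triangles together with $\mathrm{Hom}(A,X)\cong\mathrm{Hom}(A,\tau^{\geq 0}X)$ and Yoneda), and dually that $\tau^{\leq 0}$ is right exact on triangles in $\mathcal{D}^{\leq 0}$; then a single application of the octahedral axiom to the composite $F\to E\to\tau^{\geq 0}E$ (or to $\tau^{\leq 0}F\to F\to E$) reduces the general middle-exactness to these two cases. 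Since you explicitly flag this step as the classical cohomological-functor property and the paper itself treats the whole proposition as citable, your proposal is acceptable as an outline, but a self-contained write-up would need that two-stage argument spelled out; as it stands the load-bearing step is named, not proved.
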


In fact, for any $\phi \in \mathbb{R}$, we have a bounded heart $\mathcal{P}(\phi,\phi+1]$ of $\mathcal{D}$ and we call $\mathcal{P}(0,1]$ as the heart of slicing $\mathcal{P}$. 

\begin{defn}
	Let $\mathcal{D}$ be a triangulated category. A \emph{weak pre-stability condition} $\sigma=(Z,\mathcal{P})$ on $\mathcal{D}$ consists of a group homomorphism $Z \colon K(\mathcal{D}) \to \mathbb{C}$ called the \emph{central charge} and a slicing $\mathcal{P}$ of $\mathcal{D}$, such that for any $0 \neq E \in \mathcal{P}(\phi)$, $Z(E)=m_{\sigma}(E)e^{\sqrt{-1}\pi\phi}$ for some $m_{\sigma}(E) \in \mathbb{R}^{>0}$ when $\phi \notin \mathbb{Z}$ and $m_{\sigma}(E) \in \mathbb{R}^{ \geqslant 0}$ when $\phi \in \mathbb{Z}$. Moreover, if $m_{\sigma}(E)>0$ for any $0 \neq E \in \mathcal{P}(\phi)$, then we call $\sigma$ as \emph{pre-stability condition}. 
\end{defn}

A weak pre-stability condition $(Z,\mathcal{P})$ gives us a pair $(Z,\mathcal{A})$, where $\mathcal{A}=\mathcal{P}(0,1]$ is a bounded heart of $\mathcal{D}$. This construction gives an equivalent definition of weak pre-stability condition.

\begin{defn}
	(1) Let $\mathcal{A}$ be an abelian category. A group homomorphism $Z \colon K(\mathcal{A}) \to \mathbb{C}$ is called a \emph{weak stability function} on $\mathcal{A}$ if for any $E \in \mathcal{A},\mbox{ we have } Z(E) \in \mathbb{H} \cup \mathbb{R}^{\leqslant 0}$. Moreover, if $Z(E) \in \mathbb{H} \cup \mathbb{R}^{<0} \mbox{ for any } 0 \neq E \in \mathcal{A}$, then we call $Z$ as \emph{stability function} on $\mathcal{A}$.
	
	(2) Given a weak stability function $Z$ on $\mathcal{A}$, we can define:
	\begin{align*}    
		\mu(E)=
		\begin{cases}
			-\displaystyle\frac{\Re Z(E)}{\Im Z(E)}, & \Im Z(E)>0, \\
			+\infty, & \Im Z(E)=0.
		\end{cases}                
	\end{align*}
	for any $0 \neq E \in \mathcal{A}$. We call $\mu(E)$ the \emph{slope} of $E$ and $\displaystyle\frac{1}{\pi}\mathrm{arg}Z(E)$ the \emph{phase} of $E$, denoted by $\phi(E)$. (If $Z(E)=0$, then we define $\phi(E)=1$) Then $\mu(E)=-\cot \pi \phi(E)$ if we assume $+\infty=-\cot \pi$.
	
	(3) For $0 \neq E \in \mathcal{A}$, we call it $\mu$-\emph{semistable} if for any non-trivial subobject $F$ of $E$, we have $\mu(F)\leqslant \mu(E/F)$. It is equivalent to $\phi(F) \leqslant \phi(E/F)$. And we call it $\mu$-\emph{stable} if for any non-trivial subobject $F$ of $E$, we have $\mu(F)<\mu(E/F)$.
	
	(4) We say that a weak stability function $Z$ on $\mathcal{A}$ satisfies the \emph{Harder-Narasimhan property} if for any $0 \neq E \in \mathcal{A}$, there exists a filtration in $\mathcal{A}$:
	\begin{align*}
		0=E_{0} \subseteq E_{1} \subseteq \cdots \subseteq E_{n-1} \subseteq E_{n}=E
	\end{align*}
	such that $E_{i}/E_{i-1}$ is $\mu$-semistable for any $i \in \{1,\ldots,n\}$ and $\mu(E_{i}/E_{i-1})>\mu(E_{i+1}/E_{i})$ for any $i \in \{1,\ldots,n-1\}$. We can verify  that this filtration is unique and is called \emph{Harder-Narasimhan filtration of} $E$.	And we let $\mu_{\max}(E)=\mu(E_{1})$ and $\mu_{\min}(E)=\mu(E_{n}/E_{n-1})$.
\end{defn}

Suppose that we have a pair $(Z,\mathcal{A})$, where $\mathcal{A}$ is a bounded heart of the triangulated category $\mathcal{D}$ and $Z$ is a weak stability function on $\mathcal{A}$ with the Harder-Narasimhan property, then we can induce a canonical slicing of $\mathcal{D}$: for $\phi \in (0,1]$, 
\begin{align*}
	\mathcal{P}(\phi):=\{E \in \mathcal{A} \mid E \mbox{ is } \mu \mbox{-semistable with }\mu(E)=-\cot \pi \phi\} \cup \{0\},
\end{align*}
and other subcategories are defined by $\mathcal{P}(\phi+1)=\mathcal{P}(\phi)[1]$. Thus, we have the following proposition proved by Bridgeland:

\begin{prop}\cite[Prop 5.3]{bridgeland2007stability}
	To give a weak pre-stability condition on a triangulated category $\mathcal{D}$ is equivalent to giving a bounded heart  of $\mathcal{D}$ and a weak stability function on its heart with the Harder-Narasimhan property.
\end{prop}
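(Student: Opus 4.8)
The plan is to establish the stated equivalence by constructing explicit maps in both directions and checking they are mutually inverse. The one genuinely substantive tool is the \emph{see-saw property} of phases: for any short exact sequence $0 \to F \to E \to Q \to 0$ in a heart $\mathcal{A}$ carrying a weak stability function $Z$, additivity of $Z$ forces $\phi(E)$ to lie weakly between $\phi(F)$ and $\phi(Q)$, and moreover $\phi(F) \leqslant \phi(E) \iff \phi(E) \leqslant \phi(Q) \iff \phi(F) \leqslant \phi(Q)$. I would prove this first, treating the degenerate cases $Z(F)=0$, $Z(Q)=0$ or $Z(E)=0$ (where the phase is $1$ by convention) separately; this is exactly where the \emph{weak} nature of $Z$, which permits nonzero objects with $Z=0$, must be handled with care. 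An immediate consequence is that a $\mu$-semistable object $E$ satisfies $\phi(F)\leqslant\phi(E)$ for every nonzero subobject $F$ and $\phi(E)\leqslant\phi(Q)$ for every nonzero quotient $Q$, so a nonzero map between semistables $A_1,A_2$ in $\mathcal{A}$ forces $\phi(A_1)\leqslant\phi(A_2)$ via its image.

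For the direction from a weak pre-stability condition $\sigma=(Z,\mathcal{P})$ to a pair $(Z,\mathcal{A})$, I would set $\mathcal{A}=\mathcal{P}(0,1]$, which is a bounded heart as the heart of the slicing. To see $Z$ is a weak stability function on $\mathcal{A}$, decompose $0\neq E\in\mathcal{A}$ by its slicing filtration into factors $A_i\in\mathcal{P}(\phi_i)$ with $\phi_i\in(0,1]$; each $Z(A_i)=m_i e^{\sqrt{-1}\pi\phi_i}$ lies in $\mathbb{H}\cup\mathbb{R}^{\leqslant 0}$, and since this region is closed under addition, so is $Z(E)$. The slicing factors of $E$ are then exactly its Harder-Narasimhan factors once I verify that $\mathcal{P}(\phi)\cap\mathcal{A}$ coincides with the $\mu$-semistable objects of phase $\phi$, which follows from the Hom-vanishing axiom of the slicing together with the see-saw property.

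For the reverse direction, starting from $(Z,\mathcal{A})$ with the Harder-Narasimhan property, I would define $\mathcal{P}(\phi)$ for $\phi\in(0,1]$ to be the $\mu$-semistable objects of slope $-\cot\pi\phi$, and extend by $\mathcal{P}(\phi+1)=\mathcal{P}(\phi)[1]$; axiom (1) is then immediate. For axiom (3), given $0\neq E\in\mathcal{D}$ I would first apply the bounded t-structure to write $E$ as an iterated extension of the shifted cohomology objects $\mathcal{H}^{l}_{\mathcal{A}}(E)[-l]\in\mathcal{A}[-l]$, and then refine each nonzero factor by the Harder-Narasimhan filtration of $\mathcal{H}^{l}_{\mathcal{A}}(E)$ in $\mathcal{A}$, shifted by $[-l]$. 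Since the factors from $\mathcal{A}[-l]$ have phases in $(-l,-l+1]$, distinct cohomological degrees occupy disjoint phase intervals, the HN filtrations order the factors strictly within each interval, and concatenating in order of increasing $l$ yields the required strictly decreasing filtration. For axiom (2), writing $A_1=B_1[m]$ and $A_2=B_2[n]$ with $B_1,B_2\in\mathcal{A}$ semistable of phases in $(0,1]$, the hypothesis $\phi_1>\phi_2$ forces $m\geqslant n$; when $m>n$ the group $\mathrm{Hom}(B_1,B_2[n-m])=\mathrm{Ext}^{n-m}_{\mathcal{A}}(B_1,B_2)$ vanishes in negative degree, and when $m=n$ it vanishes by the within-heart argument recorded above.

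Finally I would check that the two constructions are mutually inverse and that the compatibility $Z(E)=m(E)e^{\sqrt{-1}\pi\phi}$ holds for $E\in\mathcal{P}(\phi)$, with $m(E)=0$ allowed only at integer $\phi$; this last point is immediate from the definition of the phase as $\tfrac{1}{\pi}\arg Z(E)$ and the convention $\phi=1$ when $Z(E)=0$. The main obstacle is the coincidence of the two notions of semistability, namely that $\mathcal{P}(\phi)\cap\mathcal{A}$ equals the $\mu$-semistable objects of phase $\phi$: this is what makes the two maps inverse to each other, and it rests entirely on the Hom-vanishing axiom and the see-saw property, with the weak-stability degeneracy $Z(E)=0$ requiring the most care throughout.
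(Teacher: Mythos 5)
Your proposal is correct and follows essentially the same route as the paper, which states this result as a citation of \cite[Prop.\ 5.3]{bridgeland2007stability} without reproducing the proof: you take $\mathcal{A}=\mathcal{P}(0,1]$ in one direction, and in the other you build the slicing by refining the cohomology filtration of the bounded t-structure with Harder--Narasimhan filtrations, deducing the Hom-vanishing axiom from semistability via the see-saw property, exactly as in Bridgeland's original argument. Your explicit attention to the degenerate cases where $Z(E)=0$ (phase $1$ by convention, slope $+\infty$) is precisely the only point at which the weak setting requires modification of Bridgeland's proof, and your treatment of it is sound.
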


We introduce a useful criterion to verify Harder-Narasimhan property of a weak stability function:

\begin{prop}\cite[Prop 4.10]{macri2017lectures}\label{HNproperty}
	Let $Z$ be a weak stability function on an abelian category $\mathcal{A}$ with $\mathcal{A}$ is Noetherian and the image of $\Im Z$ is discrete in $\mathbb{R}$. Then $Z$ has the Harder-Narasimhan property.
\end{prop}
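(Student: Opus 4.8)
The plan is to reduce the Harder-Narasimhan property to two chain conditions on $\mathcal{A}$ and then verify each of them separately from the two hypotheses. Following Bridgeland \cite{bridgeland2007stability}, a weak stability function $Z$ has the Harder-Narasimhan property provided that: (a) there is no infinite sequence of subobjects $\cdots \subsetneq E_{j+1} \subsetneq E_{j} \subsetneq \cdots \subsetneq E_{1}$ in $\mathcal{A}$ with $\phi(E_{j+1})>\phi(E_{j})$ for all $j$; and (b) there is no infinite sequence of quotients $E_{1} \twoheadrightarrow E_{2} \twoheadrightarrow \cdots \twoheadrightarrow E_{j} \twoheadrightarrow \cdots$ in $\mathcal{A}$ with $\phi(E_{j})>\phi(E_{j+1})$ for all $j$. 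So it suffices to establish both chain conditions under the assumption that $\mathcal{A}$ is Noetherian and that the image of $\Im Z$ is discrete.

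First I would check condition (b), which is immediate from Noetherianity. Writing $K_{j}=\ker(E_{1} \twoheadrightarrow E_{j})$, the compositions of the surjections give an ascending chain of subobjects $K_{1} \subseteq K_{2} \subseteq \cdots$ of $E_{1}$. Since $\mathcal{A}$ is Noetherian this chain stabilizes, so $E_{j} \cong E_{j+1}$ for $j$ large, contradicting the strict decrease $\phi(E_{j})>\phi(E_{j+1})$.

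Next I would check condition (a), using the discreteness of the image of $\Im Z$ together with the facts that $\Im Z \geqslant 0$ on $\mathcal{A}$ and $\Im Z$ is additive on short exact sequences. Along a descending chain of subobjects the sequence $\Im Z(E_{j})$ is non-increasing and non-negative; its values lie in a discrete subset of the compact interval $[0,\Im Z(E_{1})]$, hence form a finite set, so $\Im Z(E_{j})$ stabilizes to some $c \geqslant 0$ for $j \geqslant N$. If $c=0$, then $\phi(E_{j})=1$ for all $j \geqslant N$, which is incompatible with $\phi(E_{j+1})>\phi(E_{j})$. If $c>0$, then for $j \geqslant N$ each quotient $E_{j}/E_{j+1}$ has $\Im Z(E_{j}/E_{j+1})=0$, so $Z(E_{j}/E_{j+1}) \in \mathbb{R}^{\leqslant 0}$ and therefore $\Re Z(E_{j+1}) \geqslant \Re Z(E_{j})$; on the other hand, with constant positive imaginary part $c$ the phase inequality $\phi(E_{j+1})>\phi(E_{j})$ is equivalent to $-\Re Z(E_{j+1})/c>-\Re Z(E_{j})/c$, i.e. $\Re Z(E_{j+1})<\Re Z(E_{j})$, a contradiction. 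This rules out condition (a).

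The main obstacle is the reduction step itself, namely showing that conditions (a) and (b) guarantee the existence of Harder-Narasimhan filtrations. The idea is to produce, for each $0 \neq E$, a maximal destabilizing subobject: condition (a) ensures that the supremum of phases of subobjects of $E$ is attained, the discreteness of $\Im Z$ limits the imaginary parts that can occur, and Noetherianity lets one select the largest subobject realizing the maximal phase; one then checks this subobject is semistable and that the quotient has strictly smaller maximal phase, so that iterating and invoking condition (b) forces the process to terminate. Some extra care is needed because $Z$ is only a weak stability function, so objects with $Z(E)=0$ (all of phase $1$) may occur; these sit at the bottom of every filtration and must be absorbed into the final semistable factor, but they do not affect the termination argument.
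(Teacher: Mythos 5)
Your proof is correct and takes essentially the same route as the source the paper cites for this statement (the paper gives no proof of its own beyond the reference to \cite[Prop.~4.10]{macri2017lectures}): reduce to Bridgeland's two chain conditions, then verify the condition on quotients from Noetherianity and the condition on subobjects from discreteness of $\Im Z$. Your verifications of both chain conditions are complete and correct, and the reduction step you sketch is exactly the content of \cite[Prop.~2.4]{bridgeland2007stability} adapted to weak stability functions, as in the cited reference.
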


In many cases, the central charge $Z$ may factor through a free abelian group $\Lambda$ with finite rank.

\begin{defn}
	Let $\mathcal{D}$ be a triangulated category and $v \colon K(\mathcal{D}) \to \Lambda$ be a group homomorphism where $\Lambda$ is a free abelian group with finite rank. Let $\mathcal{P}$ be a slicing of $\mathcal{D}$ and $Z \colon \Lambda \to \mathbb{C}$ be a group homomorphism. If the pair $(Z \circ v,\mathcal{P})$ is a weak pre-stability condition on $\mathcal{D}$, then we call the pair $(Z,\mathcal{P})$ as a weak pre-stability condition with respect to $v$.
\end{defn}

If $\sigma=(Z,\mathcal{P})$ is a weak pre-stability condition with respect to $v$, then we define the subgroup $\Lambda_{\sigma}$ of $\Lambda$ which is generated by $\{v(E) \in \Lambda \mid E \in \mathcal{A}\mbox{ with }Z(E)=0\}$. For any $v \in \Lambda$, we denote its image in $\Lambda/\Lambda_{\sigma}$ or $(\Lambda/\Lambda_{\sigma})_{\mathbb{R}}$ by $[v]$.

\begin{defn}
	Suppose that $\sigma=(Z,\mathcal{P})$ is a weak pre-stability condition with respect to $v$. If there exists a real number $C>0$, such that for any $\phi \in \mathbb{R}$, $0 \neq E \in \mathcal{P}(\phi)$ with $Z(E) \neq 0$, we have:
	\begin{align*}
		\|[v(E)]\| \leqslant C|Z(E)|,
	\end{align*}where $\|\cdot\|$ is a fixed norm on the real vector space $(\Lambda/\Lambda_{\sigma})_{\mathbb{R}}$, then we say that $(Z,\mathcal{P})$ has the \textit{support property} and $(Z,\mathcal{P})$ is a \textit{weak stability condition} on $\mathcal{D}$ with respect to $v$. If $(Z,\mathcal{P})$ is a pre-stability condition which has the support property, then we say that $(Z,\mathcal{P})$ is a \textit{stability condition} on $\mathcal{D}$ with respect to $v$.
\end{defn}

We denote the set of all weak stability conditions with respect to $v$ by $\mathrm{Stab}^{w}_{\Lambda}(\mathcal{D})$ and all stability conditions with respect to $v$ by $\mathrm{Stab}_{\Lambda}(\mathcal{D})$. Sometimes we will omit $\lambda$ if the meaning is clear in the context and just denote them by $\mathrm{Stab}^{w}(\mathcal{D})$ and $\mathrm{Stab}(\mathcal{D})$ respectively.

There is another way to give the support property:

\begin{prop}
	Suppose that $\sigma=(Z,\mathcal{A})$ is a weak pre-stability condition with respect to $v$. Then $\sigma$ has the support property if and only if there exists a real quadratic form $Q$ on $(\Lambda/\Lambda_{\sigma})_{\mathbb{R}}$, such that:
	
	(1) For any $0 \neq E \in \mathcal{A}$ which is $\sigma$-semistable, we have $Q([v(E)]) \geqslant 0$;
	
	(2) The quadratic form $Q|_{\ker Z}$ is negative definite on $(\Lambda/\Lambda_{\sigma})_{\mathbb{R}}$. Here $Z$ can be seen as a $\mathbb{R}$-linear map $Z \colon (\Lambda/\Lambda_{\sigma})_{\mathbb{R}} \to \mathbb{C}$ induced by the original one.
\end{prop}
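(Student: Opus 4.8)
The plan is to prove both implications by setting up an explicit dictionary between the support-property constant $C$ and the quadratic form $Q$ on the finite-dimensional real vector space $V:=(\Lambda/\Lambda_{\sigma})_{\mathbb{R}}$. First I would record three preliminary observations. Since $Z$ annihilates $\Lambda_{\sigma}$ by the very definition of $\Lambda_{\sigma}$, it descends to an $\mathbb{R}$-linear map $Z\colon V\to\mathbb{C}$, and both $\|\cdot\|^{2}$ and $|Z(\cdot)|^{2}=(\Re Z)^{2}+(\Im Z)^{2}$ are genuine quadratic forms on $V$ (the latter because $\Re Z$ and $\Im Z$ are linear functionals). Because all norms on a finite-dimensional space are equivalent, I may assume the fixed norm $\|\cdot\|$ comes from an inner product, at the cost of rescaling $C$; this does not change whether the support property holds. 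Finally, since $\mathcal{P}(\phi+1)=\mathcal{P}(\phi)[1]$ and the shift sends $v\mapsto -v$ and $Z\mapsto -Z$, the quantities $\|[v(E)]\|$, $|Z(E)|$ and $Q([v(E)])$ are all invariant under shift; hence it suffices throughout to test semistable objects lying in $\mathcal{A}=\mathcal{P}(0,1]$, which is exactly the range appearing in condition (1).

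For the forward direction (support property $\Rightarrow Q$), assuming $\|[v(E)]\|\leqslant C|Z(E)|$ for all $\sigma$-semistable $E$ with $Z(E)\neq 0$, I would simply set
\[
Q(x)=C^{2}|Z(x)|^{2}-\|x\|^{2},
\]
which is a quadratic form by the remarks above. Squaring the support inequality gives $Q([v(E)])=C^{2}|Z(E)|^{2}-\|[v(E)]\|^{2}\geqslant 0$ for semistable $E\in\mathcal{A}$ (when $Z(E)=0$ one has $v(E)\in\Lambda_{\sigma}$, so $[v(E)]=0$ and the inequality is trivial), establishing condition (1). Restricting to $\ker Z$, where $Z(x)=0$, yields $Q(x)=-\|x\|^{2}$, which is negative definite, giving condition (2). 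This direction is routine.

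For the reverse direction ($Q\Rightarrow$ support property), the real content is a compactness argument. The key preliminary point is that condition (2) forces $\{x\in V: Q(x)\geqslant 0\}\cap\ker Z=\{0\}$, since any nonzero $x\in\ker Z$ has $Q(x)<0$. Consequently, on the unit sphere $S=\{\|x\|=1\}$, every point of the closed set $S\cap\{Q\geqslant 0\}$ satisfies $Z(x)\neq 0$. As $S\cap\{Q\geqslant 0\}$ is compact and $x\mapsto|Z(x)|^{2}$ is continuous and strictly positive on it, it attains a positive minimum $m>0$ (if the set is empty then $\{Q\geqslant 0\}=\{0\}$ and the bound below is vacuous). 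By homogeneity of degree two of $\|x\|^{2}$ and $|Z(x)|^{2}$, together with the scale-invariance of the condition $Q(x)\geqslant 0$, this upgrades to $\|x\|^{2}\leqslant m^{-1}|Z(x)|^{2}$ for every $x$ with $Q(x)\geqslant 0$. Applying this to $x=[v(E)]$ for a $\sigma$-semistable $E$, where $Q([v(E)])\geqslant 0$ by (1) (and by the shift reduction for $E$ in arbitrary phase), produces $\|[v(E)]\|\leqslant C|Z(E)|$ with $C=m^{-1/2}$, which is precisely the support property.

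The only genuine obstacle is this reverse direction, and within it the delicate point is ensuring that $S\cap\{Q\geqslant 0\}$ really avoids the locus $Z=0$, so that the minimum $m$ is strictly positive rather than zero; this is exactly what the negative-definiteness hypothesis (2) buys us, and it is the structural reason the two formulations of the support property are equivalent.
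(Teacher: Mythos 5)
Your proof is correct and is essentially the standard argument: the paper itself does not prove this proposition but defers to \cite[Lemma A.4]{bayer2016space} and \cite[Lemma 2.4]{piyaratne2019moduli}, and your explicit choice $Q(x)=C^{2}|Z(x)|^{2}-\|x\|^{2}$ for the forward direction together with the compactness--homogeneity argument on the unit sphere $S\cap\{Q\geqslant 0\}$ for the converse is precisely the argument given there. The two subtle points --- reducing to $\mathcal{A}=\mathcal{P}(0,1]$ via shift-invariance of $Q$, $\|[v(\cdot)]\|$ and $|Z(\cdot)|$, and noting that a semistable $E\in\mathcal{A}$ with $Z(E)=0$ has $v(E)\in\Lambda_{\sigma}$ so $[v(E)]=0$ --- are both handled correctly.
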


\begin{pf}
	This proof can be checked in \cite[Lemma A.4]{bayer2016space} or \cite[Lemma 2.4]{piyaratne2019moduli}.
\end{pf}

\begin{defn}
	Let $\mathcal{A}$ be an abelian category. A \emph{torsion pair} of $\mathcal{A}$ is a pair of full subcategories $(\mathcal{T},\mathcal{F})$ satisfying two conditions below:
	
	(1) For any $T \in \mathcal{T}, F \in \mathcal{F}$, $\mathrm{Hom}(T,F)=0$.
	
	(2) For any $E \in \mathcal{A}$, there exists a short exact sequence in $\mathcal{A} \colon  0 \to T \to E \to F \to 0$, where $T \in \mathcal{T}$ and $F \in \mathcal{F}$.
\end{defn}

\begin{prop}\cite{happel1996tilting}
	Let $\mathcal{A}$ be a bounded heart of triangulated category $\mathcal{D}$. If $(\mathcal{T},\mathcal{F})$ is a torsion pair of $\mathcal{A}$, then there is a bounded heart of $\mathcal{D}$: 
\begin{align*}
	\mathcal{A}^{\dagger}=\{E \in \mathcal{D} \mid \mathcal{H}^{i}_{\mathcal{A}}(E)=0 \mbox{ for } i \neq 0,-1, \mathcal{H}^{0}_{\mathcal{A}}(E) \in \mathcal{T},\mathcal{H}^{-1}_{\mathcal{A}}(E) \in \mathcal{F}\}.
\end{align*}
\end{prop}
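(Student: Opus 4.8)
The plan is to verify directly the two defining conditions (a) and (b) of a bounded heart from Definition \ref{def:bounded heart} for $\mathcal{A}^{\dagger}$, after first re-expressing it as an extension closure. Concretely, I would set $\mathcal{B} := \langle \mathcal{T}, \mathcal{F}[1]\rangle$, the extension-closed subcategory generated by $\mathcal{T}$ and $\mathcal{F}[1]$, and show $\mathcal{B} = \mathcal{A}^{\dagger}$. The inclusion $\mathcal{A}^{\dagger} \subseteq \mathcal{B}$ is immediate: for $E \in \mathcal{A}^{\dagger}$ the truncation triangle $\mathcal{H}^{-1}_{\mathcal{A}}(E)[1] \to E \to \mathcal{H}^{0}_{\mathcal{A}}(E) \to \mathcal{H}^{-1}_{\mathcal{A}}(E)[2]$ exhibits $E$ as an extension of $\mathcal{H}^{0}_{\mathcal{A}}(E) \in \mathcal{T}$ by $\mathcal{H}^{-1}_{\mathcal{A}}(E)[1] \in \mathcal{F}[1]$. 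For the reverse inclusion I would use the long exact cohomology sequence (the Proposition preceding Definition \ref{def:bounded heart}) together with the standard torsion-pair closure properties, namely that $\mathcal{T}$ is closed under quotients and extensions while $\mathcal{F}$ is closed under subobjects and extensions, to check that the cohomological conditions defining $\mathcal{A}^{\dagger}$ survive extensions. The key point here is that in the relevant long exact sequence the connecting maps land in vanishing cohomology groups, which forces the ambiguous outer terms to be a genuine quotient (respectively a genuine subobject), so one really stays inside $\mathcal{T}$ (respectively $\mathcal{F}$).

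Condition (a), i.e. $\mathrm{Hom}(E[l_1], F[l_2]) = 0$ for $E, F \in \mathcal{A}^{\dagger}$ and $l_1 > l_2$, I would reduce by dévissage, propagating the vanishing through the long exact sequences coming from the extension structure of $\mathcal{B}$, to the four cases in which $E$ and $F$ each range over the generators $\mathcal{T}$ and $\mathcal{F}[1]$. Each case is then elementary: for two objects of $\mathcal{T}$, for two objects of $\mathcal{F}[1]$, or for a $\mathcal{T}$-object mapping into a shift of $\mathcal{F}[1]$ by a total amount $\le -2$, the vanishing is just the negative-$\mathrm{Ext}$ vanishing already available in the heart $\mathcal{A}$; the only boundary case is $\mathrm{Hom}(T, F')$ with $T \in \mathcal{T}$ and $F' \in \mathcal{F}$, which vanishes precisely because $(\mathcal{T}, \mathcal{F})$ is a torsion pair.

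For condition (b) I would start from the $\mathcal{A}$-filtration of an arbitrary $0 \neq E$ guaranteed by Definition \ref{def:bounded heart}(b), with graded pieces $A_i \in \mathcal{A}[l_i]$ and $l_1 > \cdots > l_n$. Splitting each $M_i := A_i[-l_i] \in \mathcal{A}$ by its torsion sequence $0 \to T_i \to M_i \to F_i \to 0$ refines the $i$-th step into a sub piece $T_i[l_i] \in \mathcal{A}^{\dagger}[l_i]$ and a quotient piece $F_i[l_i] \in \mathcal{A}^{\dagger}[l_i - 1]$, using $\mathcal{F}[1] \subseteq \mathcal{A}^{\dagger}$. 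Concatenating these produces a filtration whose successive shifts read $l_1, l_1 - 1, l_2, l_2 - 1, \ldots$; since the $l_i$ are strictly decreasing integers, the only failure of strict descent occurs when $l_i - 1 = l_{i+1}$, in which case I would merge the two offending consecutive pieces, both lying in $\mathcal{A}^{\dagger}[l_{i+1}]$, into a single graded piece using that $\mathcal{A}^{\dagger} = \mathcal{B}$ is extension-closed. Uniqueness of the resulting filtration then follows formally from condition (a) by the standard peeling and orthogonality argument. The main obstacle is exactly this last step: the careful bookkeeping in assembling and merging the refined filtration so that its graded pieces genuinely lie in strictly decreasing shifts of $\mathcal{A}^{\dagger}$, together with the uniqueness argument. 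By contrast, the identification $\mathcal{A}^{\dagger} = \langle \mathcal{T}, \mathcal{F}[1]\rangle$ and the verification of (a) are routine once the torsion-pair closure properties are in hand.
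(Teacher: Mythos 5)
The paper offers no proof of this proposition at all — it is quoted directly from Happel--Reiten--Smal\o\ with the citation \cite{happel1996tilting} — and your proposal faithfully reproduces the standard argument from that source: the identification $\mathcal{A}^{\dagger}=\langle \mathcal{T},\mathcal{F}[1]\rangle$ via the truncation triangle and extension-closure, the reduction of condition (a) to the four generator cases with the single boundary case $\mathrm{Hom}(T,F')=0$ supplied by the torsion-pair axiom, and the refine-and-merge construction of the filtration in (b) with uniqueness deduced formally from (a). Your outline is correct; the only wording worth tightening is the claim that ``the connecting maps land in vanishing cohomology groups'': in the long exact sequence for a triangle $E' \to E \to E''$ with $E',E'' \in \mathcal{A}^{\dagger}$, the middle connecting map $\mathcal{H}^{-1}_{\mathcal{A}}(E'') \to \mathcal{H}^{0}_{\mathcal{A}}(E')$ need not vanish, and what your argument actually uses is the vanishing $\mathcal{H}^{-2}_{\mathcal{A}}(E'')=0=\mathcal{H}^{1}_{\mathcal{A}}(E')$ at the two ends, together with closure of $\mathcal{F}$ under subobjects and of $\mathcal{T}$ under quotients applied to the kernel and cokernel of that middle map.
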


The heart $\mathcal{A}^{\dagger}$ is called \emph{tilting heart} with respect to torsion pair $(\mathcal{T},\mathcal{F})$ of $\mathcal{A}$. In fact, $\mathcal{A}^{\dagger}=\langle \mathcal{F}[1],\mathcal{T} \rangle$. And for any $E \in \mathcal{A}^{\dagger}$, we have a short exact sequence in $\mathcal{A}^{\dagger}$:
\begin{align*}
	0 \to F[1] \to E \to T \to 0,
\end{align*}where $F \in \mathcal{F}$ and $T \in \mathcal{T}$.
 
Here is one useful result about tilting heart.
 
\begin{prop}\cite[Lemma 1.1.2]{polishchuk2007constant}\label{tiltingheart}
 	Let $\mathcal{A},\mathcal{A}^{\dagger}$ be two bounded hearts of $\mathcal{D}$. If $\mathcal{A}^{\dagger} \subseteq \langle \mathcal{A},\mathcal{A}[1] \rangle$, then $\mathcal{A}^{\dagger}$ is a tilting heart with respect to a torsion pair $(\mathcal{T},\mathcal{F})$ of $\mathcal{A}$, where $\mathcal{T}=\mathcal{A}^{\dagger} \bigcap \mathcal{A}$, and $\mathcal{F}=\mathcal{A}^{\dagger}[-1] \bigcap \mathcal{A}$.
\end{prop}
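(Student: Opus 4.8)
The plan is to recognize $\mathcal{A}^{\dagger}$ as the tilt of $\mathcal{A}$ by first pinning down the cohomological amplitude of objects of $\mathcal{A}$ with respect to the $\mathcal{A}^{\dagger}$-t-structure, and then reading the torsion pair directly off the $\mathcal{A}^{\dagger}$-truncation triangles. The hypothesis $\mathcal{A}^{\dagger} \subseteq \langle \mathcal{A},\mathcal{A}[1]\rangle$ says exactly that every $E \in \mathcal{A}^{\dagger}$ satisfies $\mathcal{H}^{l}_{\mathcal{A}}(E)=0$ for $l \notin \{-1,0\}$. The first step I would take is to prove the dual amplitude bound: every $A \in \mathcal{A}$ satisfies $\mathcal{H}^{l}_{\mathcal{A}^{\dagger}}(A)=0$ for $l \notin \{0,1\}$. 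I would argue by contradiction. If some extremal nonzero cohomology $\mathcal{H}^{m}_{\mathcal{A}^{\dagger}}(A)$ occurs with $m \geqslant 2$ (resp. $m \leqslant -1$), the corresponding $\mathcal{A}^{\dagger}$-truncation produces a nonzero morphism $A \to \mathcal{H}^{m}_{\mathcal{A}^{\dagger}}(A)[-m]$ (resp. $\mathcal{H}^{m}_{\mathcal{A}^{\dagger}}(A)[-m] \to A$). But $\mathcal{H}^{m}_{\mathcal{A}^{\dagger}}(A) \in \mathcal{A}^{\dagger}$ has $\mathcal{A}$-amplitude in $\{-1,0\}$, so $\mathcal{H}^{m}_{\mathcal{A}^{\dagger}}(A)[-m]$ has $\mathcal{A}$-cohomology concentrated in degrees $\{m-1,m\}$; for $m \geqslant 2$ this places it in $\mathcal{D}^{\geqslant 1}_{\mathcal{A}}$ and for $m \leqslant -1$ in $\mathcal{D}^{\leqslant -1}_{\mathcal{A}}$, while $A$ lies in $\mathcal{A}$. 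In both cases the orthogonality $\mathrm{Hom}(\mathcal{D}^{\leqslant a}_{\mathcal{A}},\mathcal{D}^{\geqslant b}_{\mathcal{A}})=0$ for $a<b$ forces the morphism to vanish, a contradiction.

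With this amplitude bound in hand, I would set $\mathcal{T}=\mathcal{A}^{\dagger} \cap \mathcal{A}$ and $\mathcal{F}=\mathcal{A}^{\dagger}[-1] \cap \mathcal{A}$ and check the two torsion-pair axioms. For the vanishing $\mathrm{Hom}(T,F)=0$ with $T \in \mathcal{T}$ and $F \in \mathcal{F}$, I would write $F=(F[1])[-1]$ with $F[1] \in \mathcal{A}^{\dagger}$ and use $\mathrm{Hom}(T,F)=\mathrm{Hom}(T[1],F[1])$, which vanishes by axiom (a) of the bounded heart $\mathcal{A}^{\dagger}$ applied to $T,F[1] \in \mathcal{A}^{\dagger}$ since $1>0$. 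For the decomposition axiom, given $A \in \mathcal{A}$, the amplitude bound lets me use the $\mathcal{A}^{\dagger}$-truncation triangle $T \to A \to F[-1] \to T[1]$, where $T=\mathcal{H}^{0}_{\mathcal{A}^{\dagger}}(A)$ and $F=\mathcal{H}^{1}_{\mathcal{A}^{\dagger}}(A)$ both lie in $\mathcal{A}^{\dagger}$. Applying the long exact sequence of $\mathcal{A}$-cohomology to this triangle, the extremal terms $\mathcal{H}^{-1}_{\mathcal{A}}(T)$ and $\mathcal{H}^{0}_{\mathcal{A}}(F)$ get squeezed between zeros and hence vanish; this shows $T \in \mathcal{A}$ and $F[-1] \in \mathcal{A}$, and the middle of the sequence collapses to a short exact sequence $0 \to T \to A \to F[-1] \to 0$ in $\mathcal{A}$ with $T \in \mathcal{T}$ and $F[-1] \in \mathcal{F}$.

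Finally, I would identify the tilt $\langle \mathcal{F}[1],\mathcal{T}\rangle$ with $\mathcal{A}^{\dagger}$. By construction $\mathcal{T} \subseteq \mathcal{A}^{\dagger}$ and $\mathcal{F}[1] \subseteq \mathcal{A}^{\dagger}$, so extension-closedness of the abelian heart $\mathcal{A}^{\dagger}$ gives $\langle \mathcal{F}[1],\mathcal{T}\rangle \subseteq \mathcal{A}^{\dagger}$. Since both are bounded hearts and one is contained in the other, they must coincide: any bounded heart contained in another equals it, by the same extremal-cohomology argument used in the first step. This exhibits $\mathcal{A}^{\dagger}$ as the tilting heart of the torsion pair $(\mathcal{T},\mathcal{F})$.

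I expect the main obstacle to be the cohomological bookkeeping of the first two steps: getting the shifts right so that the amplitude duality comes out as the window $\{0,1\}$ rather than some other range, and then verifying that the long exact sequence genuinely forces $\mathcal{H}^{-1}_{\mathcal{A}}(T)=0$ and $\mathcal{H}^{0}_{\mathcal{A}}(F)=0$, so that the truncation triangle degenerates to an honest short exact sequence in $\mathcal{A}$. Once these vanishings are secured, both torsion-pair axioms and the final identification of hearts are essentially formal.
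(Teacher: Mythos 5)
Your proof is correct, and the paper itself offers no argument for this proposition — it simply cites Polishchuk — so there is nothing in the text to compare against. Your three-step argument (the dual amplitude bound via extremal truncation maps and t-structure orthogonality, the degeneration of the $\mathcal{A}^{\dagger}$-truncation triangle of $A\in\mathcal{A}$ into a short exact sequence exhibiting the torsion pair, and the identification $\langle\mathcal{F}[1],\mathcal{T}\rangle=\mathcal{A}^{\dagger}$ via the fact that a bounded heart contained in another must coincide with it) is precisely the standard proof of this statement, and all the cohomological bookkeeping you flag as the main risk does come out correctly as written.
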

 
%\begin{prop}
 %	Let $\mathcal{A},\mathcal{B}$ be two bounded hearts of  $\mathcal{D}$. If $\mathcal{A} \subseteq \mathcal{B}$, then $\mathcal{A}=\mathcal{B}$.
%\end{prop}
 
%\begin{prop}\cite[Proposition 2.3.2.]{bayer2014bridgeland}
%	Let $\mathcal{A}$ be a bounded heart of $\mathcal{D}$. Given two torsion pairs $(\mathcal{T}_{1},\mathcal{F}_{1})$ and $(\mathcal{T}_{2},\mathcal{F}_{2})$ of $\mathcal{A}$, denote the corresponding tilting heart by $\mathcal{A}^{\dagger}_{1}$ and $\mathcal{A}^{\dagger}_{2}$,respectively. If $\mathcal{T}_{2} \subset \mathcal{T}_{1}$, then there is a torsion pair $(\mathcal{T},\mathcal{F})$ of $\mathcal{A}^{\dagger}_{1}$, where $\mathcal{T}=\langle\mathcal{F}_{1}[1],\mathcal{T}_{2}\rangle$ and $\mathcal{F}=\mathcal{F}_{2} \bigcap \mathcal{T}_{1}$. Moreover, $\mathcal{A}^{\dagger}_{2}$ is obtained as the tilting heart with respect to the torsion pair $(\mathcal{T},\mathcal{F})$ of $\mathcal{A}^{\dagger}_{1}$.
%\end{prop}

Let $\mathrm{Aut}_{\Lambda}(\mathcal{D})$ denote the group of exact equivalences $\Phi \colon \mathcal{D} \to \mathcal{D}$ such that   the induced group isomorphism $\Phi_{*} \colon K(\mathcal{D}) \to K(\mathcal{D})$ is compatible with the map $v \colon K(\mathcal{D}) \to \Lambda$. Explicitly, for any $\Phi \in \mathrm{Aut}_{\Lambda}(\mathcal{D})$, there is a group isomorphism $\Phi_{*} \colon \Lambda \to \Lambda$ such that the following diagram commutes:
\begin{align*}
	\xymatrix{
		K(\mathcal{D}) \ar[d]^{v} \ar[r]^{\Phi_{*}} & K(\mathcal{D}) \ar[d]^{v} \\
		\Lambda \ar[r]^{\Phi_{*}} & \Lambda
	}.
\end{align*}Then, $\mathrm{Stab}_{\Lambda}^{w}(\mathcal{D})$ admits a left group action by $\mathrm{Aut}_{\Lambda}(\mathcal{D})$. Specifically, for any exact equivalence $\Phi \colon \mathcal{D} \to \mathcal{D}$, the action is given by $\Phi(Z,\mathcal{P})=(Z \circ (\Phi_{*})^{-1},\Phi(\mathcal{P}))$. We can verify that if $(Z, \mathcal{P}) \in \mathrm{Stab}_{\Lambda}(\mathcal{D})$, then $\Phi(Z,\mathcal{P}) \in \mathrm{Stab}_{\Lambda}(\mathcal{D})$.

\section{Geometric stability conditions on the vector bundle supported on the zero section}\label{section3}

\subsection{Basic settings}

Suppose that $Y$ is a smooth projective variety over $\mathbb{C}$ with dimension $n$, $X=\mathrm{Tot}_{Y}(\mathcal{E}_{0})$ is the total space of a locally free sheaf $\mathcal{E}_{0}$ with constant rank $r$ on $Y$ and $\pi \colon X \to Y$ is the natural projection. Meanwhile, let $i \colon Y \to X$ be the zero section of $\pi$. We denote by $\mathrm{Coh}_{0}(X) \subset \mathrm{Coh}(X)$ the full subcategory of coherent sheaves on $X$ set-theoretically supported on the zero-section of $\pi$, it is a Serre abelian subcategory. Similarly, we denote by $D_{0}^{b}(X) \subseteq D^{b}(X)$ the full subcategory of the bounded derived category of coherent sheaves on $X$ set-theoretically supported on the zero-section of $\pi$. It is a triangulated subcategory and note that $D_{0}^{b}(X) \cong D^{b}(\mathrm{Coh}_{0}(X))$ as observed in \cite{ishii2005autoequivalences,kashiwara2013sheaves}. Then $\mathrm{Coh}_{0}(X)$ is a bounded heart of $D_{0}^{b}(X)$. 

There are several functors we will use. First, we have an exact functor $i_{*} \colon \mathrm{Coh}(Y) \to \mathrm{Coh}_{0}(X)$ and it induces an exact functor between derived categories: $i_{*} \colon D^{b}(Y) \to D_{0}^{b}(X)$. Besides, we have the functor $\pi_{*}$:

\begin{prop}\label{basicpropofproj}
	For any $E \in \mathrm{Coh}_{0}(X)$, we have $\pi_{*}E \in \mathrm{Coh}(Y)$ and $\mathrm{supp}(E)=i(\mathrm{supp}(\pi_{*}E))$. It means that we have an exact functor: $\pi_{*} \colon \mathrm{Coh}_{0}(X) \to \mathrm{Coh}(Y)$. Moreover, there is a pushforaward functor $\pi_{*} \colon D^{b}_{0}(X) \to D^{b}(Y)$. And for any $E \in D^{b}_{0}(X)$, we still have $\mathrm{supp}(E)=i(\mathrm{supp}(\pi_{*}E))$. In particular, if $\pi_{*}E \cong 0$ in $D^{b}(Y)$ for some $E \in D^{b}_{0}(X)$, then $E \cong 0$.
\end{prop}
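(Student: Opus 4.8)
The plan is to exploit two structural facts about $\pi$: that it is an \emph{affine} morphism, and that a coherent sheaf set-theoretically supported on the zero section is annihilated by a power of the ideal of $i(Y)$, hence actually lives on a \emph{finite} infinitesimal neighborhood of the zero section. First I would record that $X=\underline{\mathrm{Spec}}_Y(\mathrm{Sym}^{\bullet}\mathcal{E}_0^{\vee})$, so $\pi$ is affine; consequently $\pi_*$ is exact on quasi-coherent sheaves and $R\pi_*=\pi_*$ there, which already reduces the derived statements to the abelian-category ones.

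For coherence of $\pi_* E$, let $\mathcal{I}\subseteq\mathcal{O}_X$ be the ideal sheaf of the zero section. Since $\mathrm{supp}(E)\subseteq i(Y)$ set-theoretically and $X$ is Noetherian (it is of finite type over the Noetherian $Y$), quasi-compactness yields a single integer $N$ with $\mathcal{I}^N E=0$, so $E$ is a sheaf on the closed subscheme $X_N=V(\mathcal{I}^N)$. Now $\pi|_{X_N}\colon X_N\to Y$ is \emph{finite}, because under $\pi_*$ the algebra $\mathcal{O}_X/\mathcal{I}^N=\bigoplus_{k=0}^{N-1}\mathrm{Sym}^k\mathcal{E}_0^{\vee}$ is a finite locally free $\mathcal{O}_Y$-module. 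A finite morphism sends coherent sheaves to coherent sheaves, so $\pi_* E\in\mathrm{Coh}(Y)$, and together with affineness this gives an exact functor $\pi_*\colon\mathrm{Coh}_0(X)\to\mathrm{Coh}(Y)$.

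For the support identity I would use that for the finite morphism $\pi|_{X_N}$ one has the stalk formula $(\pi_* E)_y=\prod_{x\in\pi^{-1}(y)}E_x$, whence $\mathrm{supp}(\pi_* E)=\pi(\mathrm{supp}(E))$; since $\mathrm{supp}(E)\subseteq i(Y)$ and $\pi\circ i=\mathrm{id}_Y$, applying $i$ recovers $i(\mathrm{supp}(\pi_* E))=i(\pi(\mathrm{supp}(E)))=\mathrm{supp}(E)$. For the derived version I would invoke $D^b_0(X)\cong D^b(\mathrm{Coh}_0(X))$ and the exactness of $\pi_*$: the functor $\pi_*\colon D^b_0(X)\to D^b(Y)$ is computed on cohomology by $\mathcal{H}^l(\pi_* E)=\pi_*\mathcal{H}^l(E)$, each term coherent with only finitely many nonzero, so the target is indeed $D^b(Y)$; taking unions over $l$ and applying the sheaf-level identity termwise gives $\mathrm{supp}(E)=i(\mathrm{supp}(\pi_* E))$. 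Finally, $\pi_*$ is conservative on $\mathrm{Coh}_0(X)$ — if $\mathrm{supp}(\pi_* F)=\varnothing$ then $\mathrm{supp}(F)=\varnothing$, so $F=0$ — hence $\pi_* E\cong 0$ forces every $\mathcal{H}^l(E)=0$ and thus $E\cong 0$.

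The only genuinely non-formal point, and the step I expect to be the main obstacle, is the coherence of $\pi_* E$: because $\pi$ is not proper, coherence is not automatic, and the argument hinges on producing the \emph{uniform} annihilation $\mathcal{I}^N E=0$ to reduce to the finite morphism $X_N\to Y$. Once affineness and this reduction are in place, exactness, the support identity, and conservativity are all formal consequences.
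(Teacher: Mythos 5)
Your proof is correct; the paper actually states this proposition without any proof, so there is nothing to compare against, and your argument --- using that $\pi$ is affine, producing a uniform $N$ with $\mathcal{I}^{N}E=0$, and reducing to the finite flat morphism $X_{N}=V(\mathcal{I}^{N})\to Y$ with $\pi_{*}\mathcal{O}_{X_{N}}=\bigoplus_{k=0}^{N-1}\mathrm{Sym}^{k}\mathcal{E}_{0}^{\vee}$ --- is the standard and complete route, correctly isolating the one non-formal point (coherence of $\pi_{*}E$ despite $\pi$ not being proper). The only cosmetic imprecision is that the stalk identity $(\pi_{*}E)_{y}=\prod_{x\in\pi^{-1}(y)}E_{x}$ for a finite morphism holds literally only after completing the semi-local fiber ring, but here $\pi^{-1}(y)$ meets $\mathrm{supp}(E)$ in the single point $i(y)$, and in any case the weaker fact that a finite module over a semi-local ring vanishes iff all its localizations at maximal ideals vanish already yields $\mathrm{supp}(\pi_{*}E)=\pi(\mathrm{supp}(E))$, so nothing is affected.
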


\begin{cor}
	There is a group isomorphism between the Grothendieck groups by pushforward: $i_{*} \colon K(Y) \to K(D^{b}_{0}(X))$, its inverse is $\pi_{*} \colon K(D^{b}_{0}(X)) \to K(Y)$.
\end{cor}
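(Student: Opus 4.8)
The plan is to prove the two composite identities $\pi_{*} \circ i_{*} = \mathrm{id}_{K(Y)}$ and $i_{*} \circ \pi_{*} = \mathrm{id}_{K(D_{0}^{b}(X))}$ separately; together they force $i_{*}$ and $\pi_{*}$ to be mutually inverse group isomorphisms. Throughout I would use that $\mathrm{Coh}_{0}(X)$ is a bounded heart of $D_{0}^{b}(X)$, so that the natural map $K(\mathrm{Coh}_{0}(X)) \to K(D_{0}^{b}(X))$ is an isomorphism; in particular $K(D_{0}^{b}(X))$ is generated by the classes $[E]$ of honest sheaves $E \in \mathrm{Coh}_{0}(X)$, and likewise $K(Y)$ is generated by classes of coherent sheaves on $Y$. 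This reduces everything to computations with sheaves.

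First I would verify $\pi_{*} \circ i_{*} = \mathrm{id}$. Since $i$ is a closed immersion, $i_{*}$ is exact, and for any $F \in \mathrm{Coh}(Y)$ the sheaf $i_{*}F$ is supported on the zero section $i(Y)$. The projection restricts to an isomorphism $i(Y) \xrightarrow{\sim} Y$ (this is the scheme-theoretic content of $\pi \circ i = \mathrm{id}_{Y}$), so the derived pushforward $R\pi_{*}(i_{*}F)$ is concentrated in degree zero and equals $F$; equivalently $\pi_{*}i_{*}F = (\pi \circ i)_{*}F = F$ with vanishing higher direct images. Passing to Grothendieck groups gives $\pi_{*}[i_{*}F] = [F]$, and since sheaf classes generate $K(Y)$ we obtain $\pi_{*} \circ i_{*} = \mathrm{id}_{K(Y)}$. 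In particular $i_{*}$ is injective and $\pi_{*}$ is surjective.

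The substance of the corollary is the reverse identity, which I would obtain from an $\mathcal{I}$-adic filtration, where $\mathcal{I} \subseteq \mathcal{O}_{X}$ is the ideal sheaf of the zero section. Given $E \in \mathrm{Coh}_{0}(X)$, its set-theoretic support lies in $V(\mathcal{I})$, and since $X$ is Noetherian and $E$ is coherent, some power annihilates $E$, say $\mathcal{I}^{N}E = 0$. I then consider the finite filtration $E \supseteq \mathcal{I}E \supseteq \cdots \supseteq \mathcal{I}^{N}E = 0$. Each graded piece $\mathcal{I}^{k}E / \mathcal{I}^{k+1}E$ is annihilated by $\mathcal{I}$, hence is an $\mathcal{O}_{X}/\mathcal{I} = i_{*}\mathcal{O}_{Y}$-module, and is therefore of the form $i_{*}F_{k}$ for a unique $F_{k} \in \mathrm{Coh}(Y)$. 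In $K(\mathrm{Coh}_{0}(X)) = K(D_{0}^{b}(X))$ this yields $[E] = \sum_{k}[i_{*}F_{k}] = i_{*}\bigl(\sum_{k}[F_{k}]\bigr)$, so every class lies in the image of $i_{*}$; applying the exact functor $\pi_{*}$ termwise and using $\pi_{*}i_{*}F_{k} = F_{k}$ from the previous step, one in fact reads off $i_{*}\pi_{*}[E] = [E]$ directly. This gives $i_{*} \circ \pi_{*} = \mathrm{id}_{K(D_{0}^{b}(X))}$ and completes the argument.

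I expect the only genuinely delicate point to be the finiteness of the filtration, namely that set-theoretic support on the zero section forces $\mathcal{I}^{N}E = 0$ for some $N$; this is exactly where coherence and Noetherianity of $X$ enter, and it is checked locally on an affine cover, where $E$ becomes a finitely generated module whose support is cut out by finitely many functions, each of which acts nilpotently on $E$. Everything else is formal: it rests on $\pi \circ i = \mathrm{id}_{Y}$, on the exactness of $i_{*}$ and $\pi_{*}$, and on the standard equivalence between $i_{*}\mathcal{O}_{Y}$-modules on $X$ and coherent sheaves on $Y$.
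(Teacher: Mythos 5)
Your argument is correct, and it supplies the standard d\'evissage proof that the paper omits (the corollary is stated there without proof, as an immediate consequence of the exactness of $\pi_{*}$ and $i_{*}$ on the relevant categories): the identity $\pi_{*}\circ i_{*}=\mathrm{id}$ from $\pi\circ i=\mathrm{id}_{Y}$, plus surjectivity of $i_{*}$ on $K$-groups via the finite $\mathcal{I}$-adic filtration whose graded pieces are pushforwards from $Y$. All the ingredients you invoke (exactness of $\pi_{*}$ on $\mathrm{Coh}_{0}(X)$, the identification $K(\mathrm{Coh}_{0}(X))\cong K(D_{0}^{b}(X))$, and nilpotence of $\mathcal{I}$ on a coherent sheaf set-theoretically supported on the zero section) are available in the paper's setup, so no gap remains.
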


We fix an ample divisor class $H \in \mathrm{NS}(Y)$, where $\mathrm{NS}(Y)$ is the Neron-Severi group of $Y$. For any $i \in \{0,\ldots,n\}$, we define a group homomorphism as follows:
$v_{i}^{H} \colon K(D^{b}_{0}(X)) \to \mathbb{Q}, E \mapsto H^{n-i}\mathrm{ch}_{i}(\pi_{*}E)$.
For $\beta \in \mathbb{R}$, we also define $v_{i}^{\beta H}(E)$ as the intersection number of $H^{n-i}$ with $i$-th component of $\mathrm{ch}(\pi_{*}E)e^{-\beta H}$. For example, we have:
\begin{align*}
	& v_{0}^{\beta H}(E)=v_{0}^{H}(E); & v_{1}^{\beta H}(E)=v_{1}^{H}(E)-\beta v_{0}^{H}(E);\\
	& v_{2}^{\beta H}(E)=v_{2}^{H}(E)-\beta v_{1}^{H}(E)+\frac{\beta^{2}}{2}v_{0}^{H}(E); & v_{3}^{\beta H}(E)=v_{3}^{H}(E)-\beta v_{2}^{H}(E)+\frac{\beta^{2}}{2}v_{1}^{H}(E)-\frac{\beta^{3}}{6}v_{0}^{H}(E).
\end{align*}

In our following discussion, we always fix a polarization $H$ and we write $v_{i}$ and $v_{i}^{\beta}$ instead of $v_{i}^{H}$ and $v_{i}^{\beta H}$ respectively. Let $\Lambda$ be the image of the following morphism:
\begin{align*}
	v \colon K(D^{b}_{0}(X)) \to \mathbb{Q}^{\oplus n+1}, E \mapsto (v_{0}(E),\ldots,v_{n}(E)).
\end{align*}Then $\Lambda$ is a free abelian group with rank $n+1$. The space of weak stability conditions on $D_{0}^{b}(X)$ with respect to $\Lambda$ is denoted by $\mathrm{Stab}^{w}_{H}(D_{0}^{b}(X))$ and the space of stability conditions on $D_{0}^{b}(X)$ with respect to $\Lambda$ is denoted by $\mathrm{Stab}_{H}(D_{0}^{b}(X))$.

Before we study the Bridgeland stability conditions on $D_{0}^{b}(X)$, we list some basic propositions we may need:

\begin{prop}
	For any $E_{0} \in D^{b}(Y), F \in D^{b}_{0}(X)$, there is an isomorphism of $\mathbb{C}$-vector spaces:
	\begin{align*}
		\mathrm{Hom}_{D^{b}(X)}(\pi^{*}E_{0},F) \cong \mathrm{Hom}_{D^{b}(Y)}(E_{0},\pi_{*}F);
	\end{align*}
\end{prop}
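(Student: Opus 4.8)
The plan is to recognize this isomorphism as the standard derived adjunction between $L\pi^{*}$ and $R\pi_{*}$, once one checks that the geometry of our situation makes both derived functors agree with their naive counterparts. Two features of $\pi \colon X \to Y$ are essential here. First, $\pi$ is flat, since $X = \mathrm{Tot}_{Y}(\mathcal{E}_{0})$ is the total space of a locally free sheaf; hence $\pi^{*}$ is exact, preserves coherence and boundedness, and $L\pi^{*} = \pi^{*}$ on all of $D^{b}(Y)$. Second, $\pi$ is an affine morphism, as $X = \mathrm{Spec}_{Y}(\mathrm{Sym}^{\bullet}\mathcal{E}_{0}^{\vee})$; hence $R\pi_{*}$ has no higher direct images on quasi-coherent sheaves and is computed by the underived $\pi_{*}$ termwise on complexes. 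Note also that the left-hand Hom must genuinely be taken in $D^{b}(X)$ rather than $D^{b}_{0}(X)$, since $\pi^{*}E_{0}$ is not supported on the zero-section; this is consistent, as $F \in D^{b}_{0}(X) \subseteq D^{b}(X)$.

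First I would pass to the derived categories of quasi-coherent sheaves. Since $Y$ is smooth projective and $X$ is a smooth quasi-projective variety (both Noetherian and separated), the natural functors $D^{b}(\mathrm{Coh}(Y)) \to D(\mathrm{QCoh}(Y))$ and $D^{b}(\mathrm{Coh}(X)) \to D(\mathrm{QCoh}(X))$ are fully faithful, so the Hom-groups in the statement may be computed in the larger categories. There the adjunction $(L\pi^{*}, R\pi_{*})$ supplies a functorial isomorphism
\begin{align*}
	\mathrm{Hom}_{D(\mathrm{QCoh}(X))}(L\pi^{*}E_{0}, F) \cong \mathrm{Hom}_{D(\mathrm{QCoh}(Y))}(E_{0}, R\pi_{*}F)
\end{align*}
for all $E_{0} \in D(\mathrm{QCoh}(Y))$ and $F \in D(\mathrm{QCoh}(X))$. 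It then remains to rewrite both sides in terms of the coherent, bounded functors appearing in the statement. On the left, flatness of $\pi$ gives $L\pi^{*}E_{0} = \pi^{*}E_{0} \in D^{b}(X)$. On the right, the hypothesis $F \in D^{b}_{0}(X)$ is exactly what preserves coherence of the pushforward: although $R\pi_{*}$ of a general coherent sheaf need not be coherent (for instance $\pi_{*}\mathcal{O}_{X} = \mathrm{Sym}^{\bullet}\mathcal{E}_{0}^{\vee}$ is not), Proposition \ref{basicpropofproj} shows that for objects supported on the zero-section the affine pushforward lands in $D^{b}(Y)$ and is computed by the exact functor $\pi_{*} \colon \mathrm{Coh}_{0}(X) \to \mathrm{Coh}(Y)$; in particular $R\pi_{*}F = \pi_{*}F$. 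Substituting these identifications and restricting back along the fully faithful embeddings yields the claimed isomorphism.

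The point requiring the most care is this last step: one must confirm that the abstract quasi-coherent adjunction restricts compatibly to the bounded coherent subcategories. This is precisely where the flatness of $\pi$ (to control $\pi^{*}$) and the support condition on $F$ together with Proposition \ref{basicpropofproj} (to control $\pi_{*}$) are both used. Everything else is the formal adjunction, so I do not expect a genuine obstacle beyond these bookkeeping verifications.
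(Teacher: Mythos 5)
Your argument is correct: the paper states this proposition without proof, and the standard $(L\pi^{*},R\pi_{*})$ adjunction combined with flatness of $\pi$, affineness of $\pi$, and Proposition \ref{basicpropofproj} (to guarantee $\pi_{*}F$ is a bounded coherent complex) is exactly the argument being implicitly invoked. Nothing is missing.
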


\begin{prop}
	For any $E_{0},F_{0} \in D^{b}(Y)$, there is an isomorphism of $\mathbb{C}$-vector spaces:
	\begin{align*}
		\mathrm{Hom}_{D^{b}(X)}(\pi^{*}E_{0},\pi^{*}F_{0}) \cong \mathrm{Hom}_{D^{b}(\mathrm{Qcoh}(Y))}(E_{0},\bigoplus_{p=0}^{\infty}F_{0} \otimes_{Y} \mathrm{Sym}^{p}\mathcal{E}_{0}^{\lor}),
	\end{align*}
\end{prop}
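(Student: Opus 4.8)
The plan is to deduce the isomorphism from the derived adjunction $(\pi^{*},R\pi_{*})$ together with the projection formula, once $R\pi_{*}\mathcal{O}_{X}$ has been identified. Since $X=\mathrm{Tot}_{Y}(\mathcal{E}_{0})=\mathrm{Spec}_{Y}(\mathrm{Sym}^{\bullet}\mathcal{E}_{0}^{\lor})$, the projection $\pi$ is an affine and flat morphism of finite type. First I would pass from the bounded coherent derived category to the quasi-coherent world: because $X$ and $Y$ are Noetherian, the natural functor $D^{b}(\mathrm{Coh}(X)) \to D(\mathrm{Qcoh}(X))$ is fully faithful with essential image the complexes having bounded coherent cohomology, and likewise on $Y$. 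As $\pi$ is flat of finite type, $\pi^{*}E_{0}$ and $\pi^{*}F_{0}$ lie in this image, so $\mathrm{Hom}_{D^{b}(X)}(\pi^{*}E_{0},\pi^{*}F_{0})$ agrees with the corresponding $\mathrm{Hom}$ in $D(\mathrm{Qcoh}(X))$.

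Next I would apply the adjunction between $\pi^{*}=\mathbb{L}\pi^{*}$ (which is exact, since $\pi$ is flat) and $R\pi_{*}$ on quasi-coherent derived categories, and then the projection formula $R\pi_{*}(\pi^{*}F_{0}\otimes^{L}\mathcal{O}_{X})\cong F_{0}\otimes^{L}_{Y}R\pi_{*}\mathcal{O}_{X}$:
\begin{align*}
\mathrm{Hom}_{D(\mathrm{Qcoh}(X))}(\pi^{*}E_{0},\pi^{*}F_{0}) \cong \mathrm{Hom}_{D(\mathrm{Qcoh}(Y))}(E_{0},R\pi_{*}\pi^{*}F_{0}) \cong \mathrm{Hom}_{D(\mathrm{Qcoh}(Y))}(E_{0},F_{0}\otimes^{L}_{Y}R\pi_{*}\mathcal{O}_{X}).
\end{align*}
This reduces everything to computing $R\pi_{*}\mathcal{O}_{X}$.

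Then I would compute $R\pi_{*}\mathcal{O}_{X}$. Because $\pi$ is affine, there are no higher direct images of quasi-coherent sheaves, so $R\pi_{*}\mathcal{O}_{X}=\pi_{*}\mathcal{O}_{X}$, and the relative-Spec description of the total space gives $\pi_{*}\mathcal{O}_{X}=\bigoplus_{p=0}^{\infty}\mathrm{Sym}^{p}\mathcal{E}_{0}^{\lor}$. Each summand $\mathrm{Sym}^{p}\mathcal{E}_{0}^{\lor}$ is locally free, so $F_{0}\otimes^{L}_{Y}\mathrm{Sym}^{p}\mathcal{E}_{0}^{\lor}$ coincides with the ordinary tensor product $F_{0}\otimes_{Y}\mathrm{Sym}^{p}\mathcal{E}_{0}^{\lor}$. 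Hence $R\pi_{*}\pi^{*}F_{0}\cong\bigoplus_{p=0}^{\infty}F_{0}\otimes_{Y}\mathrm{Sym}^{p}\mathcal{E}_{0}^{\lor}$, which has cohomology concentrated in the same finite range of degrees as $F_{0}$ (each cohomology sheaf being an infinite direct sum, quasi-coherent but not coherent), so the right-hand side indeed lies in $D^{b}(\mathrm{Qcoh}(Y))$. Substituting this into the display above yields the claimed formula.

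The only points requiring care are the two reductions to the quasi-coherent setting — checking that the adjunction and projection formula apply to the non-proper (but affine and flat) morphism $\pi$ — and the behaviour of the infinite direct sum: since $Y$ is smooth projective, $E_{0}$ is a perfect, hence compact, object, so $\mathrm{Hom}(E_{0},-)$ commutes with the direct sum and the target is well defined despite $\pi_{*}\mathcal{O}_{X}$ having infinite rank. The single genuinely geometric input is the identification $\pi_{*}\mathcal{O}_{X}=\bigoplus_{p}\mathrm{Sym}^{p}\mathcal{E}_{0}^{\lor}$; everything else is formal.
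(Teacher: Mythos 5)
Your proposal is correct and follows essentially the same route as the paper: the paper's entire proof is the chain of isomorphisms $\pi_{*}\pi^{*}F_{0} \cong \pi_{*}(\pi^{*}F_{0} \otimes_{X} \mathcal{O}_{X}) \cong F_{0} \otimes_{Y} \pi_{*}\mathcal{O}_{X} \cong \bigoplus_{p=0}^{\infty}F_{0} \otimes_{Y} \mathrm{Sym}^{p}\mathcal{E}_{0}^{\lor}$, i.e.\ the projection formula plus affineness of $\pi$ plus the relative-Spec description of $\pi_{*}\mathcal{O}_{X}$, with the adjunction left implicit. Your write-up merely makes explicit the supporting details (the passage to $D(\mathrm{Qcoh})$, flatness of $\pi$, and compactness of $E_{0}$ to handle the infinite direct sum) that the paper omits.
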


\begin{proof}
	$\pi_{*}\pi^{*}F_{0} \cong \pi_{*}(\pi^{*}F_{0} \otimes_{X} \mathcal{O}_{X}) \cong F_{0} \otimes_{Y} \pi_{*}\mathcal{O}_{X} \cong F_{0} \otimes_{Y} \mathrm{Sym}^{\bullet}(\mathcal{E^{\lor}})=\displaystyle\bigoplus_{p=0}^{\infty}F_{0} \otimes_{Y} \mathrm{Sym}^{p}\mathcal{E}_{0}^{\lor}$.
\end{proof}

\begin{prop}\label{Homsetofinclusion}
	For any $E_{0},F_{0} \in D^{b}(Y)$, there is an isomorphism of $\mathbb{C}$-vector spaces:
	\begin{align*}
		\mathrm{Hom}_{D^{b}_{0}(X)}(i_{*}E_{0},i_{*}F_{0}) \cong \bigoplus_{p=0}^{r}\mathrm{Hom}_{D^{b}(Y)}(\bigwedge^{p}\mathcal{E}_{0}^{\lor} \otimes_{Y} E_{0},F_{0}[-p]).
	\end{align*}
\end{prop}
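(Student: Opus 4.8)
The plan is to use the adjunction between the derived pullback $Li^{*}$ and $i_{*}$, together with the self-intersection formula for the zero section. Since $i\colon Y\hookrightarrow X$ is a closed immersion, $i_{*}$ is exact, coincides with its derived functor, and admits $Li^{*}$ as a left adjoint; and since $D_{0}^{b}(X)$ is a full triangulated subcategory of $D^{b}(X)$, the Hom-spaces agree. Thus adjunction gives
\[
	\mathrm{Hom}_{D_{0}^{b}(X)}(i_{*}E_{0},i_{*}F_{0})\cong \mathrm{Hom}_{D^{b}(Y)}(Li^{*}i_{*}E_{0},F_{0}),
\]
so everything reduces to computing $Li^{*}i_{*}E_{0}$.

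The key step is the self-intersection formula. The zero section is a regular embedding of codimension $r$ whose conormal bundle is $\mathcal{E}_{0}^{\lor}$ (this is exactly the degree-one part of $\pi_{*}\mathcal{O}_{X}=\mathrm{Sym}^{\bullet}\mathcal{E}_{0}^{\lor}$). First compute $Li^{*}i_{*}\mathcal{O}_{Y}$ by resolving $i_{*}\mathcal{O}_{Y}$ on $X$ with the Koszul complex
\[
	0\to\pi^{*}\textstyle\bigwedge^{r}\mathcal{E}_{0}^{\lor}\to\cdots\to\pi^{*}\mathcal{E}_{0}^{\lor}\to\mathcal{O}_{X}\to i_{*}\mathcal{O}_{Y}\to 0,
\]
whose differential is contraction with the tautological section $s\in\Gamma(X,\pi^{*}\mathcal{E}_{0})$. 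Because the terms are locally free, $Li^{*}$ is computed by termwise restriction; and since $s$ vanishes identically along the zero section, each differential becomes zero after applying $i^{*}$. Hence the restricted complex has zero differentials and splits, yielding $Li^{*}i_{*}\mathcal{O}_{Y}\cong\bigoplus_{p=0}^{r}\bigwedge^{p}\mathcal{E}_{0}^{\lor}[p]$. To pass to a general $E_{0}$, I would invoke the projection formula $i_{*}E_{0}\cong i_{*}\mathcal{O}_{Y}\otimes^{L}_{X}\pi^{*}E_{0}$ (valid because $Li^{*}\pi^{*}E_{0}\cong E_{0}$, as $\pi\circ i=\mathrm{id}_{Y}$) together with monoidality of $Li^{*}$, to get
\[
	Li^{*}i_{*}E_{0}\cong\bigoplus_{p=0}^{r}\left(\textstyle\bigwedge^{p}\mathcal{E}_{0}^{\lor}\otimes_{Y}E_{0}\right)[p],
\]
the tensor being underived since $\bigwedge^{p}\mathcal{E}_{0}^{\lor}$ is locally free. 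Substituting into the adjunction isomorphism and rewriting each summand via $\mathrm{Hom}(A[p],F_{0})\cong\mathrm{Hom}(A,F_{0}[-p])$ produces exactly the claimed decomposition.

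The main obstacle is the splitting of $Li^{*}i_{*}\mathcal{O}_{Y}$: one must show that this object is not merely an extension with cohomology sheaves $\bigwedge^{p}\mathcal{E}_{0}^{\lor}$ in degree $-p$, but a \emph{formal} direct sum in $D^{b}(Y)$. For an arbitrary regular embedding the analogous complex can carry nontrivial extension data; here the vanishing of the tautological section on the zero section forces all Koszul differentials to restrict to zero, so the complex is literally formal — this is the feature of total-space geometry that makes the argument work. Secondary points to check are the identification of the conormal bundle with $\mathcal{E}_{0}^{\lor}$ and the compatibility of the projection formula with $Li^{*}$, but both are routine once the Koszul resolution is set up.

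Alternatively, one can avoid the monoidality argument by resolving $i_{*}E_{0}$ directly by $\pi^{*}(\bigwedge^{\bullet}\mathcal{E}_{0}^{\lor}\otimes E_{0})$ and combining the preceding adjunction $\mathrm{Hom}_{D^{b}(X)}(\pi^{*}(-),F)\cong\mathrm{Hom}_{D^{b}(Y)}(-,\pi_{*}F)$ with $\pi_{*}i_{*}=\mathrm{id}$; the same vanishing of $s$ then kills the differentials of the resulting $\mathrm{Hom}$-complex and degenerates the hypercohomology spectral sequence, giving the direct sum with no further work.
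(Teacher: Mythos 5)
Your proposal follows essentially the same route as the paper: adjunction $\mathrm{Hom}(i_{*}E_{0},i_{*}F_{0})\cong\mathrm{Hom}(Li^{*}i_{*}E_{0},F_{0})$, the projection formula $i_{*}E_{0}\cong\pi^{*}E_{0}\otimes_{X}i_{*}\mathcal{O}_{Y}$, and the splitting $Li^{*}i_{*}\mathcal{O}_{Y}\cong\bigoplus_{p=0}^{r}\bigwedge^{p}\mathcal{E}_{0}^{\lor}[p]$. The only difference is that you justify the formality of the Koszul complex (the tautological section vanishing along the zero section kills the restricted differentials), a point the paper simply asserts; your argument is correct.
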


\begin{pf}
	First, $\mathrm{Hom}_{D^{b}(X)}(i_{*}E_{0},i_{*}F_{0}) \cong \mathrm{Hom}_{D^{b}(Y)}(Li^{*}i_{*}E_{0},F_{0})$, and
	\begin{align*}
		i_{*}E_{0} \cong i_{*}(Li^{*}\pi^{*}E_{0} \otimes_{Y} \mathcal{O}_{Y}) \cong \pi^{*}E_{0} \otimes_{X} i_{*}\mathcal{O}_{Y}.
	\end{align*}
	And we have
	\begin{align*}
		Li^{*}i_{*}\mathcal{O}_{Y} \cong \bigoplus_{p=0}^{r} \left(\bigwedge^{p}\mathcal{E}_{0}^{\lor}\right)[p].
	\end{align*}
\end{pf}

We know that the canonical bundle of $X$ is isomorphic to $\pi^{*}(\det \mathcal{E}_{0}^{\lor} \otimes_{Y} \omega_{Y})$ and
if $\det \mathcal{E}_{0} \cong \omega_{Y}$, then $\omega_{X}$ is trivial. This is a very important example of non-compact Calabi-Yau manifolds.

\begin{defn}
	A stability condition $\sigma$ on $D_{0}^{b}(X)$ is \textit{geometric} if all skyscraper sheaves $k(y)$ of closed points $y \in Y$ are $\sigma$-stable of the same phase.
\end{defn}

\subsection{slope stability}

\begin{prop}
	The group homomorphism
	\begin{align*}
		\mathcal{Z}=-v_{1}+\sqrt{-1}v_{0} \colon \Lambda \to \mathbb{C},
	\end{align*}
	is a weak stability function on $\mathrm{Coh}_{0}(X)$ which satisfies Harder-Narasimhan property and support property. Moreover, if $\dim Y=1$, then it is a stability function.
\end{prop}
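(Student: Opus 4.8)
The plan is to verify the three required properties in turn. First I would establish that $\mathcal{Z}=-v_1+\sqrt{-1}\,v_0$ is a weak stability function on $\mathrm{Coh}_0(X)$, i.e.\ that $\mathcal{Z}(E)\in\mathbb{H}\cup\mathbb{R}^{\leqslant 0}$ for every $E\in\mathrm{Coh}_0(X)$. By Proposition \ref{basicpropofproj} the functor $\pi_*$ is exact and sends $\mathrm{Coh}_0(X)$ into $\mathrm{Coh}(Y)$, and $\mathcal{Z}(E)$ depends only on $\pi_*E$; concretely $v_0(E)=H^n\mathrm{ch}_0(\pi_*E)\geqslant 0$ since $\pi_*E$ is a coherent sheaf on $Y$ and $H$ is ample, so the imaginary part is always nonnegative. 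The remaining point is that when $v_0(E)=0$, i.e.\ $\pi_*E$ is torsion of dimension $\leqslant n-1$, one has $\Re\mathcal{Z}(E)=-v_1(E)=-H^{n-1}\mathrm{ch}_1(\pi_*E)\leqslant 0$; this is the standard positivity of $H^{n-1}\mathrm{ch}_1$ for a torsion sheaf (supported in codimension $\geqslant 1$, its degree against $H^{n-1}$ is $\geqslant 0$), which I would cite from the classical theory of slope stability on $Y$. This gives the weak stability function property.

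Next I would check the support property. Here I would exhibit the quadratic form using the classical Bogomolov--Gieseker inequality. Since $v$ factors through $\Lambda\cong\mathbb{Q}^{\oplus n+1}$ and $\mathcal{Z}$ only involves $v_0,v_1$, the natural candidate is the discriminant $Q=v_1^2-2v_0 v_2$ (pulled back via $\pi_*$), which is the classical Bogomolov form on $Y$. For $\mathcal{Z}$-semistable objects $Q\geqslant 0$ follows from Proposition \ref{classicalBGintro} together with Assumption \ref{crucialassumptionintro}, which together guarantee the classical Bogomolov--Gieseker inequality holds for slope semistable sheaves on $X$; and $Q$ is negative definite on $\ker\mathcal{Z}$ after passing to $(\Lambda/\Lambda_\sigma)_{\mathbb{R}}$, since on the locus $v_0=v_1=0$ the form reduces to $-2v_0 v_2$ which controls the remaining coordinate. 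I would then invoke the proposition on quadratic forms stated in the excerpt to conclude the support property.

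For the Harder--Narasimhan property I would apply Proposition \ref{HNproperty}: it suffices that $\mathrm{Coh}_0(X)$ is Noetherian and that $\Im\mathcal{Z}=v_0$ has discrete image in $\mathbb{R}$. Noetherianity of $\mathrm{Coh}_0(X)$ follows because it is a Serre subcategory of the Noetherian category $\mathrm{Coh}(X)$ (as noted in the basic settings), and discreteness of the image holds because $v_0(E)=H^n\mathrm{ch}_0(\pi_*E)$ takes values in $H^n\cdot\mathbb{Z}_{\geqslant 0}$, a discrete subset of $\mathbb{R}$. Finally, for the last claim, when $\dim Y=1$ I would upgrade $\mathcal{Z}$ to a genuine stability function by showing $\mathcal{Z}(E)\neq 0$ for every nonzero $E$: if $v_0(E)=0$ then $\pi_*E$ is a nonzero torsion sheaf on a curve, hence has strictly positive length measured by $v_1$, forcing $\Re\mathcal{Z}(E)<0$.

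The main obstacle I expect is the support property, specifically identifying the correct quadratic form and verifying its negative-definiteness on $\ker\mathcal{Z}$ after quotienting by $\Lambda_\sigma$; the positivity of $\mathcal{Z}$ itself and the Harder--Narasimhan property are essentially formal given the exactness of $\pi_*$ and the Noetherian/discreteness input, but pinning down $\Lambda_\sigma$ and checking definiteness on the residual coordinate requires care, and it is here that Assumption \ref{crucialassumptionintro} and Proposition \ref{classicalBGintro} do the real work.
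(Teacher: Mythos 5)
Your verification that $\mathcal{Z}$ is a weak stability function, your Harder--Narasimhan argument (Noetherianity of $\mathrm{Coh}_0(X)$ plus discreteness of $v_0$, then Proposition \ref{HNproperty}), and your treatment of the $\dim Y=1$ case are all fine and agree with the paper where the paper bothers to spell them out. The problem is the support property, which you correctly identify as the delicate point but then get wrong. You propose $Q=v_1^2-2v_0v_2$ and claim it is negative definite on $\ker\mathcal{Z}$ because ``on the locus $v_0=v_1=0$ the form reduces to $-2v_0v_2$.'' On that locus $v_0=0$, so $-2v_0v_2=0$: the discriminant vanishes identically on $\{v_0=v_1=0\}$, and a form that is identically zero on a subspace is negative definite there only if the subspace is trivial. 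As written, your definiteness check fails.

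The way out --- and this is what the paper actually does --- is to notice that $\ker\mathcal{Z}\subseteq\Lambda$ is precisely the subgroup generated by $v(E)$ for $E\in\mathrm{Coh}_0(X)$ with $v_0(E)=v_1(E)=0$, i.e.\ with $\dim E\leqslant n-2$ (Proposition \ref{basicpropofproj}). These are exactly the objects with $\mathcal{Z}(E)=0$, so $\ker\mathcal{Z}=\Lambda_\sigma$ and hence $\ker\mathcal{Z}$ becomes the zero subspace of $(\Lambda/\Lambda_\sigma)_{\mathbb{R}}$. At that point any quadratic form is vacuously negative definite on $\ker\mathcal{Z}$, and the paper simply takes $Q=0$; condition (1) of the support-property criterion is then also trivial. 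Your detour through the Bogomolov--Gieseker inequality is therefore not only unjustified in its definiteness step but also unnecessary, and it quietly imports Assumption \ref{crucialassumption} (via Proposition \ref{classicalBGintro}), which the paper has not yet imposed when this proposition is stated --- the result is meant to hold for an arbitrary total space $X=\mathrm{Tot}_Y(\mathcal{E}_0)$. The missing idea is the identification $\ker\mathcal{Z}=\Lambda_\sigma$; once you have it, the ``real work'' you anticipated evaporates.
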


\begin{pf}
	By the definition of $v_{0}$, we know that the image of $\mathcal{Z}$ is discrete. And $\mathrm{Coh}_{0}(X)$ is Noetherian. Then we have Harder-Narasimhan property by Proposition \ref{HNproperty}.
	
	Finally, we need to prove the support property of this weak pre-stability condition. First, 
	\begin{align*}
		\mathcal{C}_{0}=\{E \in \mathrm{Coh}_{0}(X) \mid v_{1}(E)=v_{0}(E)=0\}=\{E \in \mathrm{Coh}_{0}(X) \mid \dim E \leqslant n-2\}
	\end{align*}by Proposition \ref{basicpropofproj}. Moreover, $\ker \mathcal{Z}$ equals to the subgroup generated by $\mathcal{C}_{0}$ in $\Lambda$ by the definition of $\Lambda$. And we could take a quadratic form $Q=0$ as the support property of this weak pre-stability condition. Thus, $(\mathcal{Z},\mathrm{Coh}_{0}(X))$ is a weak stability condition on $D_{0}^{b}(X)$.
\end{pf} 

In this paper, we denote by $\mu$ the slope function of the above weak stability condition.

In order to continue our construction, we assume that $X$ satisfies the following property:

\begin{assum}\label{crucialassumption}
	A sheaf $E \in \mathrm{Coh}_{0}(X)$ is a slope stable sheaf if and only if it is the pushforward $E=i_{*}E_{0}$ of some slope stable sheaf $E_{0} \in \mathrm{Coh}(Y)$.
\end{assum}

For example, if $X$ is the canonical bundle of $\mathbb{P}^{3}$, then it satisfies Assumption \ref{crucialassumption}. We will give a proof in section 4.

\begin{prop}\label{ipushforwardofslopestablesheaf}
	If $0 \neq E \in \mathrm{Coh}(Y)$ is slope (semi)stable, then $0 \neq i_{*}E \in \mathrm{Coh}_{0}(X)$ is also slope (semi)stable with the same slope.
\end{prop}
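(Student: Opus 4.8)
The plan is to transfer (semi)stability from $Y$ to $X$ through the pushforward functor $\pi_{*}$, exploiting that it is exact and detects nonzero objects, as recorded in Proposition \ref{basicpropofproj}. The conceptual point is that a subobject of $i_{*}E$ in $\mathrm{Coh}_{0}(X)$ need not itself be a pushforward $i_{*}F_{0}$ from $Y$, so one cannot directly compare it with subobjects of $E$; applying $\pi_{*}$ is precisely the device that pushes an arbitrary such subobject down to $Y$ while preserving the data needed for the slope inequalities.

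First I would record the compatibility of the weak stability functions. Since $\pi \circ i = \mathrm{id}_{Y}$, we have $\pi_{*}i_{*}E = E$, whence $v_{j}(i_{*}E) = H^{n-j}\mathrm{ch}_{j}(\pi_{*}i_{*}E) = H^{n-j}\mathrm{ch}_{j}(E)$ for all $j$. Thus the value of $\mathcal{Z}$ at $i_{*}E$ computed on $X$ agrees with the value of the weak stability function at $E$ computed on $Y$, which already gives $\mu(i_{*}E) = \mu(E)$ and hence the ``same slope'' assertion. More generally, for any $F \in \mathrm{Coh}_{0}(X)$ one has $\mathcal{Z}(F) = -H^{n-1}\mathrm{ch}_{1}(\pi_{*}F) + \sqrt{-1}\,H^{n}\mathrm{ch}_{0}(\pi_{*}F)$, so $\mu(F)$ equals the slope of $\pi_{*}F \in \mathrm{Coh}(Y)$.

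Next I would analyze subobjects. Let $0 \neq F \hookrightarrow i_{*}E$ be a subobject in $\mathrm{Coh}_{0}(X)$ with quotient $Q$, giving $0 \to F \to i_{*}E \to Q \to 0$. Applying the exact functor $\pi_{*}$ yields a short exact sequence $0 \to \pi_{*}F \to E \to \pi_{*}Q \to 0$ in $\mathrm{Coh}(Y)$, exhibiting $\pi_{*}F$ as a subobject of $E$. The support identity $\mathrm{supp}(G) = i(\mathrm{supp}(\pi_{*}G))$ of Proposition \ref{basicpropofproj} shows that $G = 0$ if and only if $\pi_{*}G = 0$ for $G \in \mathrm{Coh}_{0}(X)$; hence $F \neq 0$ forces $\pi_{*}F \neq 0$, and $F \subsetneq i_{*}E$ proper forces $\pi_{*}Q \neq 0$, so that $\pi_{*}F \subsetneq E$ is a proper subobject.

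With these observations the two cases follow at once. If $E$ is slope semistable, then for every subobject $F$ of $i_{*}E$ we obtain $\mu(F) = \mu(\pi_{*}F) \leqslant \mu(E) = \mu(i_{*}E)$, so $i_{*}E$ is semistable. If $E$ is slope stable, then for every proper nonzero $F$ the subobject $\pi_{*}F$ is proper and nonzero, whence $\mu(F) = \mu(\pi_{*}F) < \mu(E) = \mu(i_{*}E)$, so $i_{*}E$ is stable. The only step requiring genuine care is the one carried out in the previous paragraph: ensuring that passing to $\pi_{*}$ sends proper nonzero subobjects to proper nonzero subobjects, for which the exactness of $\pi_{*}$ together with the support identity is exactly what is needed; there is no serious further obstacle.
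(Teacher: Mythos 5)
Your proof is correct, but it runs in the opposite direction from the paper's. The paper's entire argument is the observation that every subobject and quotient of $i_{*}E$ in $\mathrm{Coh}_{0}(X)$ is itself of the form $i_{*}F$ for a subobject or quotient $F$ of $E$ (because a subsheaf of $i_{*}E$ is annihilated by the ideal of the zero section, hence scheme-theoretically supported there), after which the slope comparison is immediate. You instead take an arbitrary short exact sequence $0 \to F \to i_{*}E \to Q \to 0$ and push it down with the exact functor $\pi_{*}$, using that $\pi_{*}$ computes the slope by definition of $v_{i}$ and detects nonzero objects via the support identity of Proposition \ref{basicpropofproj}. Both routes are valid. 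Yours is a little longer but has the advantage of not needing the scheme-theoretic support claim for subsheaves of $i_{*}E$ — it uses only the formal properties of $\pi_{*}$ already recorded in the paper, and it is the same mechanism the paper later uses for the converse statement (Proposition \ref{pipushforwardofslopestablesheaf}), where it genuinely cannot be avoided. The paper's route buys a sharper structural fact (the explicit form $i_{*}F$ of all subobjects), which is what makes its proof a one-liner. One cosmetic remark: the paper's definition of (semi)stability is $\mu(F) \leqslant \mu(E/F)$ rather than $\mu(F) \leqslant \mu(E)$; since you also identify $\mu(Q) = \mu(\pi_{*}Q)$, your argument transfers the see-saw form of the inequality just as well, so this does not affect correctness.
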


\begin{pf}
	The proposition follows since the subobjects and quotients of $i_{*}E$ are both of the form $i_{*}F$.
\end{pf}

Now, if $X$ satisfies Assumption \ref{crucialassumption}, then we have:

\begin{prop}\label{pipushforwardofslopestablesheaf}
	If $0 \neq E \in \mathrm{Coh}_{0}(X)$ is slope (semi)stable, then $0 \neq \pi_{*}E \in \mathrm{Coh}(Y)$ is also slope (semi)stable with the same slope. 
\end{prop}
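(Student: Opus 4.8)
The plan is to reduce the statement to its stable counterpart, Proposition \ref{ipushforwardofslopestablesheaf}, together with Assumption \ref{crucialassumption}, by exploiting that $\pi \circ i = \mathrm{id}_Y$ and that $\pi_*$ is an exact functor that reflects zero objects. The starting observation is that slopes are automatically matched: by the very definition of $v_0,v_1$ one has $\mu(E) = -v_1(E)/v_0(E) = -H^{n-1}\mathrm{ch}_1(\pi_*E)/(H^n\mathrm{ch}_0(\pi_*E)) = \mu_Y(\pi_*E)$, so only the preservation of (semi)stability is at issue.

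The stable case is then immediate. If $E$ is slope stable, Assumption \ref{crucialassumption} gives $E \cong i_*E_0$ with $E_0 \in \mathrm{Coh}(Y)$ slope stable, and since $\pi\circ i = \mathrm{id}_Y$ we get $\pi_*E \cong (\pi\circ i)_*E_0 \cong E_0$, which is slope stable of the same slope. For the semistable case, write $\mu = \mu(E)$ and assume first $\mu < +\infty$, i.e. $v_0(E) = H^n\mathrm{ch}_0(\pi_*E) > 0$. The key step is to produce a Jordan--H\"older filtration of $E$ inside the abelian category $\mathcal{C}_\mu$ of slope-$\mu$ semistable sheaves in $\mathrm{Coh}_0(X)$, whose simple objects are exactly the slope-$\mu$ stable sheaves. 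I would first check that $\mathcal{C}_\mu$ has finite length: $v_0$ is additive, and on any stable factor (which, having finite slope $\mu$, has positive rank) of the form $i_*(E_0)_j$ one has $v_0(i_*(E_0)_j) \geq H^n > 0$, so the number of factors is bounded by $v_0(E)/H^n$. This yields a finite filtration $0 = F_0 \subset F_1 \subset \cdots \subset F_m = E$ whose graded pieces are slope-$\mu$ stable, hence of the form $i_*(E_0)_j$ with $(E_0)_j$ slope stable of slope $\mu$ on $Y$ by Assumption \ref{crucialassumption}.

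Applying the exact functor $\pi_*$ (Proposition \ref{basicpropofproj}) to this filtration produces a filtration of $\pi_*E$ in $\mathrm{Coh}(Y)$ whose graded pieces are $\pi_*i_*(E_0)_j \cong (E_0)_j$, each slope stable of slope $\mu$. Since on the projective variety $Y$ the slope-$\mu$ semistable sheaves are closed under extensions, an induction along the filtration shows $\pi_*E$ is slope semistable of slope $\mu$. The residual case $\mu = +\infty$, i.e. $v_0(E) = 0$, is immediate: then $\pi_*E$ has rank $0$ and every rank-$0$ sheaf on $Y$ is slope semistable of slope $+\infty$.

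The main obstacle is the existence of the Jordan--H\"older filtration: unlike the classical situation, $X$ is non-projective, so I cannot directly invoke standard finite-length results on $X$. The resolution is that $\pi_*$ is exact and reflects zero objects (if $\pi_*G = 0$ then $\mathrm{supp}(G) = i(\mathrm{supp}(\pi_*G)) = \emptyset$, so $G = 0$ by Proposition \ref{basicpropofproj}); consequently a strictly increasing chain $F \subsetneq F'$ in $\mathrm{Coh}_0(X)$ satisfies $\pi_*(F'/F) = \pi_*F'/\pi_*F \neq 0$, hence maps to a strictly increasing chain $\pi_*F \subsetneq \pi_*F'$ in $\mathrm{Coh}(Y)$. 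This transports finite length from the projective $Y$ back to $\mathcal{C}_\mu$, after which the extension-closedness argument on $Y$ finishes the proof.
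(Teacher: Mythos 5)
Your proposal is correct and follows essentially the same route as the paper: pass to a Jordan--H\"older filtration of the semistable sheaf $E$, use Assumption \ref{crucialassumption} to identify each stable factor as $i_{*}(E_{0})_{j}$ with $(E_{0})_{j}$ stable (so $\pi_{*}$ of each factor is stable of slope $\mu$), and conclude by exactness of $\pi_{*}$ together with extension-closure of slope-$\mu$ semistable sheaves on $Y$. The only difference is that you supply a justification for the existence and finiteness of the Jordan--H\"older filtration, which the paper takes for granted.
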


\begin{pf}
	If $\mu(E)=+\infty$, then $v_{0}(E)=\mathrm{ch}_{0}(\pi_{*}E)=0$. Thus $\pi_{*}E$ is also slope semistable with slope $+\infty$.
	
	If $E$ is slope semistable with slope $\mu<+\infty$, then $E$ has a Jordan-H\"older filtration whose factors $F_{i}$ are all slope stable sheaves with the same slope $\mu$. But $\pi_{*}$ is an exact functor. Thus, $\pi_{*}E$ is in the extension-closed subcategory of $\pi_{*}F_{i}$, which each $\pi_{*}F_{i}$ is still slope stable with slope $\mu$. Thus, $\pi_{*}E$ is slope semistable with slope $\mu$.
\end{pf}

\subsection{tilt stability}

We assume that $\dim Y \geqslant 2$ and $X$ satisfies Assumption \ref{crucialassumption} from now. We will introduce the concept of tilt stability, which is the same as \cite{bayer2014bridgeland,bayer2016space}.

\begin{defn}
	For any $\beta \in \mathbb{R}$, let $(\mathrm{Coh}^{>\beta}_{0}(X),\mathrm{Coh}^{\leqslant\beta}_{0}(X))$ be the torsion pair of $\mathrm{Coh}_{0}(X)$ determined by:
	\begin{itemize}
		\item $\mathrm{Coh}^{>\beta}_{0}(X)$ is generated by slope semistable sheaves $E \in \mathrm{Coh}_{0}(X)$ of slope $>\beta$;
		\item $\mathrm{Coh}^{\leqslant \beta}_{0}(X)$ is generated by slope semistable sheaves $E \in \mathrm{Coh}_{0}(X)$ of slope $\leqslant \beta$.
	\end{itemize}
	Let $\mathrm{Coh}^{\beta}_{0}(X)$ be the tilting heart of $\mathrm{Coh}_{0}(X)$ with respect to the torsion pair $(\mathrm{Coh}^{>\beta}_{0}(X),\mathrm{Coh}^{\leqslant\beta}_{0}(X))$.
\end{defn}

\begin{prop}\label{classicalBG}
	(Classical Bogomolov-Gieseker inequality of slope semistable sheaf in $\mathrm{Coh}_{0}(X)$): For any slope semistable sheaf $E \in \mathrm{Coh}_{0}(X)$, we have the inequality:
	\begin{align*}
		v_{1}(E)^{2}-2v_{0}(E)v_{2}(E)\geqslant H^{n}.H^{n-2}(\mathrm{ch}_{1}(\pi_{*}E)^{2}-2\mathrm{ch}_{0}(\pi_{*}E)\mathrm{ch}_{2}(\pi_{*}E)) \geqslant 0.
	\end{align*}
\end{prop}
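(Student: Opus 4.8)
The plan is to split the asserted chain of inequalities into two independent classical facts by first isolating a clean algebraic cancellation. Writing $D=\mathrm{ch}_1(\pi_*E)$ and recalling that $v_0(E)=H^n\mathrm{ch}_0(\pi_*E)$, $v_1(E)=H^{n-1}D$, $v_2(E)=H^{n-2}\mathrm{ch}_2(\pi_*E)$, I would first record the identity
\begin{align*}
	\bigl(v_1(E)^2-2v_0(E)v_2(E)\bigr)-H^n\!\cdot\! H^{n-2}\bigl(\mathrm{ch}_1(\pi_*E)^2-2\mathrm{ch}_0(\pi_*E)\mathrm{ch}_2(\pi_*E)\bigr)=(H^{n-1}D)^2-H^n(H^{n-2}D^2),
\end{align*}
in which the two mixed terms involving $\mathrm{ch}_2(\pi_*E)$ cancel exactly. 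Thus the first inequality in the statement is equivalent to $(H^{n-1}D)^2\geq H^n(H^{n-2}D^2)$, while the second inequality is the classical Bogomolov--Gieseker inequality for $\pi_*E$ on $Y$.

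For the first inequality I would invoke the Hodge index (Khovanskii--Teissier) inequality for the ample class $H$ and the divisor class $D$. Concretely, after replacing $H$ by a large multiple (which only rescales both sides by the same power), I would cut down by $n-2$ general members of $|H|$ to a smooth projective surface $S$, on which $H|_S$ is ample and $D|_S$ is a divisor; the classical Hodge index theorem on $S$ gives $(H|_S\cdot D|_S)^2\geq (H|_S)^2(D|_S)^2$, which reads exactly as $(H^{n-1}D)^2\geq H^n(H^{n-2}D^2)$. This step is purely a statement about the ample cone of $Y$ and uses no semistability.

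For the second inequality the key input is that semistability descends along $\pi_*$: since $E$ is slope semistable in $\mathrm{Coh}_0(X)$, Proposition \ref{pipushforwardofslopestablesheaf}, which is precisely where Assumption \ref{crucialassumption} enters, shows that $\pi_*E\in\mathrm{Coh}(Y)$ is slope semistable of the same slope (and nonzero by Proposition \ref{basicpropofproj}). When $\mu(E)<+\infty$ the sheaf $\pi_*E$ is automatically torsion-free, because a nonzero torsion subsheaf would have slope $+\infty>\mu(\pi_*E)$ and violate semistability; the classical Bogomolov inequality on the smooth projective variety $Y$ then yields $H^{n-2}(\mathrm{ch}_1(\pi_*E)^2-2\mathrm{ch}_0(\pi_*E)\mathrm{ch}_2(\pi_*E))\geq 0$, and multiplying by $H^n>0$ gives the claim.

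The step I expect to be the main obstacle is the degenerate case $\mu(E)=+\infty$, i.e. $\mathrm{ch}_0(\pi_*E)=0$, where $\pi_*E$ is torsion and the discriminant reduces to $H^{n-2}\mathrm{ch}_1(\pi_*E)^2$. If $\pi_*E$ is supported in codimension $\geq 2$ then $\mathrm{ch}_1(\pi_*E)=0$ and both sides vanish, so only the divisorial case remains; there $\mathrm{ch}_1(\pi_*E)$ is an effective class and the nonnegativity of $H^{n-2}\mathrm{ch}_1(\pi_*E)^2$ must be read off from the geometry of $Y$. It is immediate when $\mathrm{NS}(Y)$ has rank one, as for $Y=\mathbb{P}^3$, where every effective class is a nonnegative multiple of $H$. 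Reconciling the bookkeeping of this torsion case with the transfer of semistability in Proposition \ref{pipushforwardofslopestablesheaf} is the delicate part; once it is settled, the two displayed inequalities follow formally from Hodge index and classical Bogomolov--Gieseker.
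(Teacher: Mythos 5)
Your proposal follows essentially the same route as the paper: Proposition \ref{pipushforwardofslopestablesheaf} transfers semistability to $\pi_{*}E$, the first inequality reduces (after the cancellation of the $\mathrm{ch}_{2}$-terms you record) to the Hodge index inequality $(H^{n-1}D)^{2}\geqslant H^{n}\cdot H^{n-2}D^{2}$, and the second is the classical Bogomolov--Gieseker inequality for the semistable sheaf $\pi_{*}E$ on $Y$ --- the paper's proof is exactly this three-line chain. Your extra care with the degenerate case $\mathrm{ch}_{0}(\pi_{*}E)=0$ is warranted rather than redundant: there the middle quantity is $H^{n}\cdot H^{n-2}\mathrm{ch}_{1}(\pi_{*}E)^{2}$, whose nonnegativity is not automatic on a general $Y$ admitting effective divisor classes of negative $H^{n-2}$-self-intersection, and the paper's proof passes over this point silently, although it is harmless in the intended application $Y=\mathbb{P}^{3}$ where $\mathrm{NS}(Y)$ has rank one.
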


\begin{pf}
	Let $E$ be a slope semistable sheaf in $\mathrm{Coh}_{0}(X)$, then by Proposition \ref{pipushforwardofslopestablesheaf}, $\pi_{*}E \in \mathrm{Coh}(Y)$ is also slope semistable, and we have the classical Bogomolov-Gieseker inequality:
	\begin{align*}
		v_{1}(E)^{2}-2v_{0}(E)v_{2}(E) & =(H^{n-1}\mathrm{ch}_{1}(\pi_{*}E))^{2}-2H^{n}\mathrm{ch}_{0}(\pi_{*}E)H^{n-2}\mathrm{ch}_{2}(\pi_{*}E)) \\
		& \geqslant H^{n}.H^{n-2}(\mathrm{ch}_{1}(\pi_{*}E)^{2}-2\mathrm{ch}_{0}(\pi_{*}E)\mathrm{ch}_{2}(\pi_{*}E)) \\
		& \geqslant 0.
		\qedhere
	\end{align*}
\end{pf}

We will denote the open subset $\{(\beta,\alpha) \in \mathbb{R}^{2} \mid \alpha>\displaystyle\frac{1}{2}\beta^{2}\}$ of $\mathbb{R}^{2}$ by $U$.

\begin{defn}
	Given $(\beta,\alpha) \in U$, we define the group morphism:
	\begin{align*}
		Z^{\beta,\alpha}=-v_{2}+\alpha v_{0}+\sqrt{-1}(v_{1}-\beta v_{0}) \colon \Lambda \to \mathbb{C}.
	\end{align*}
\end{defn}

\begin{prop}
	For any $(\beta,\alpha) \in U$, the group homomorphism $Z^{\beta,\alpha}$ is a weak stability function on $\mathrm{Coh}^{\beta}_{0}(X)$ and
	\begin{align*}
		\{E \in \mathrm{Coh}^{\beta}_{0}(X) \mid Z^{\beta,\alpha}(E)=0\}=\{E \in \mathrm{Coh}_{0}(X) \mid \dim E \leqslant n-3\}.
	\end{align*}Moreover, if $\dim Y=2$, then $Z^{\beta,\alpha}$ is a stability function.
\end{prop}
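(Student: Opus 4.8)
The plan is to check the two defining properties of a weak stability function directly on the tilted heart: that $\Im Z^{\beta,\alpha}\geqslant 0$ everywhere, and that $\Re Z^{\beta,\alpha}\leqslant 0$ whenever $\Im Z^{\beta,\alpha}=0$; along the way the vanishing locus of $Z^{\beta,\alpha}$ will be identified. Throughout I will use that $\Im Z^{\beta,\alpha}=v_{1}-\beta v_{0}=v_{1}^{\beta}$, that $\Re Z^{\beta,\alpha}=-v_{2}+\alpha v_{0}$, and the basic relation $v_{1}^{\beta}(G)=v_{0}(G)(\mu(G)-\beta)$ for $G$ of finite slope, where $\mu$ is the slope attached to $\mathcal{Z}$.

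First I would record positivity of the imaginary part. Any $E\in\mathrm{Coh}^{\beta}_{0}(X)$ sits in a short exact sequence $0\to \mathcal{H}^{-1}_{\mathrm{Coh}_{0}}(E)[1]\to E\to \mathcal{H}^{0}_{\mathrm{Coh}_{0}}(E)\to 0$ in the heart, with $T:=\mathcal{H}^{0}_{\mathrm{Coh}_{0}}(E)\in\mathrm{Coh}^{>\beta}_{0}(X)$ and $F:=\mathcal{H}^{-1}_{\mathrm{Coh}_{0}}(E)\in\mathrm{Coh}^{\leqslant\beta}_{0}(X)$. Splitting $T$ and $F$ into slope-semistable factors and using the relation above (noting every positive-dimensional torsion factor has $\mu=+\infty>\beta$, hence lands in $T$) gives $v_{1}^{\beta}(T)\geqslant 0$ and $v_{1}^{\beta}(F)\leqslant 0$; since $[E]=[T]-[F]$ we get $\Im Z^{\beta,\alpha}(E)=v_{1}^{\beta}(T)-v_{1}^{\beta}(F)\geqslant 0$.

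Next I would analyze the vanishing $\Im Z^{\beta,\alpha}(E)=0$, which forces both $v_{1}^{\beta}(T)=0$ and $v_{1}^{\beta}(F)=0$. For $T$: any factor of slope $>\beta$ with $v_{0}>0$ would give $v_{1}^{\beta}>0$, so all factors have $v_{0}=0$ and then $v_{1}=v_{1}^{\beta}=0$, whence $T$ is supported in dimension $\leqslant n-2$. For $F$: all factors have finite slope $\leqslant\beta$, so $v_{1}^{\beta}=0$ forces $\mu=\beta$, and $F$ is slope-semistable of slope exactly $\beta$. Now $\Re Z^{\beta,\alpha}(E)=-v_{2}(T)+\bigl(v_{2}(F)-\alpha v_{0}(F)\bigr)$. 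Since $T$ is supported in dimension $\leqslant n-2$ we have $v_{2}(T)\geqslant 0$; and Proposition \ref{classicalBG} applied to the slope-$\beta$-semistable sheaf $F$ gives $v_{2}(F)\leqslant \tfrac{v_{1}(F)^{2}}{2v_{0}(F)}=\tfrac{\beta^{2}}{2}v_{0}(F)<\alpha v_{0}(F)$, using $\alpha>\tfrac12\beta^{2}$. Hence $\Re Z^{\beta,\alpha}(E)\leqslant 0$, finishing the weak stability function check. Moreover $\Re Z^{\beta,\alpha}(E)=0$ forces $F=0$ and $v_{2}(T)=0$, i.e. $T$ supported in dimension $\leqslant n-3$; conversely any such sheaf lies in $\mathrm{Coh}^{>\beta}_{0}(X)$ with $v_{0}=v_{1}=v_{2}=0$, so it is killed by $Z^{\beta,\alpha}$. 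This yields the stated description of the kernel. For the final assertion, when $\dim Y=2$ the condition $\dim\leqslant n-3=-1$ leaves only the zero object, and in the $\Im=0$ analysis any nonzero $E$ has either $T\neq 0$ (so $v_{2}(T)>0$ for a nonzero $0$-dimensional sheaf) or $F\neq 0$ (so $v_{2}(F)-\alpha v_{0}(F)<0$), forcing $\Re Z^{\beta,\alpha}(E)<0$; thus $Z^{\beta,\alpha}$ is a genuine stability function.

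The crux of the argument is the control of $\Re Z^{\beta,\alpha}$ on the torsion-free piece $F$ of slope $\beta$: this is exactly where the classical Bogomolov--Gieseker inequality of Proposition \ref{classicalBG} (equivalently, Assumption \ref{crucialassumption}) enters, combined with the normalization $\alpha>\tfrac12\beta^{2}$ defining $U$. The only other point requiring care is the bookkeeping of which torsion sheaves land in the torsion versus the torsion-free part of the tilt, but that is routine once one observes that all positive-dimensional torsion has infinite slope.
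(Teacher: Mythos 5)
Your proof is correct and follows exactly the argument the paper defers to (it cites \cite[Lemma 3.2.1]{bayer2014bridgeland} without writing details): decompose $E$ via the torsion pair into $T\in\mathrm{Coh}^{>\beta}_{0}(X)$ and $F\in\mathrm{Coh}^{\leqslant\beta}_{0}(X)$, reduce to Harder--Narasimhan factors to get $\Im Z^{\beta,\alpha}\geqslant 0$, and when the imaginary part vanishes use that $T$ has dimension $\leqslant n-2$ together with the classical Bogomolov--Gieseker inequality (Proposition \ref{classicalBG}) and $\alpha>\tfrac12\beta^{2}$ to control $\Re Z^{\beta,\alpha}$. All the steps, including the identification of the kernel and the sharpening to a genuine stability function when $\dim Y=2$, check out.
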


\begin{pf}
	The proof is similar to \cite[Lemma 3.2.1]{bayer2014bridgeland}.
\end{pf}

In this paper, we denote by $\nu^{\beta,\alpha}$ the slope function with respect to the weak stability function $Z^{\beta,\alpha}$.

In order to prove the existence of Harder-Narasimhan property of $Z^{\beta,\alpha}$, we will first discuss it when $\beta,\alpha$ are both rational numbers and then use the deformation property to prove the other case as in \cite{bayer2016space}.

\begin{prop}
	For any $\beta \in \mathbb{Q}$, the abelian category $\mathrm{Coh}^{\beta}_{0}(X)$ is Noetherian.
\end{prop}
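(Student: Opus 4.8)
The plan is to verify the ascending chain condition on subobjects directly. The source of all finiteness is that $\mathrm{Coh}_0(X)$, being a Serre subcategory of the category of coherent sheaves on the Noetherian scheme $X$, is itself Noetherian. Fix any rational $\alpha>\frac{1}{2}\beta^{2}$, so that by the preceding proposition $Z^{\beta,\alpha}$ is a weak stability function on $\mathcal{A}:=\mathrm{Coh}^{\beta}_{0}(X)$ with $\Im Z^{\beta,\alpha}=v_{1}-\beta v_{0}=v_{1}^{\beta}\geqslant 0$. Given an ascending chain $A_{1}\subseteq A_{2}\subseteq\cdots\subseteq E$ in $\mathcal{A}$, I would first exploit that $v_{0}\in H^{n}\mathbb{Z}$ and $v_{1}\in\mathbb{Z}$, so that for $\beta\in\mathbb{Q}$ the value group of $v_{1}^{\beta}$ is discrete in $\mathbb{R}$. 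Since $v_{1}^{\beta}(A_{i})$ is non-decreasing and bounded above by $v_{1}^{\beta}(E)$, it stabilizes, and I may assume that each quotient $C_{i}:=A_{i+1}/A_{i}$ satisfies $v_{1}^{\beta}(C_{i})=0$.

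Second, I would pass to sheaf cohomology for the defining torsion pair $(\mathrm{Coh}^{>\beta}_{0}(X),\mathrm{Coh}^{\leqslant\beta}_{0}(X))$. The inclusions $A_{i}\hookrightarrow A_{i+1}$ give, via the long exact sequence, an ascending chain of subsheaves $H^{-1}(A_{i})\subseteq H^{-1}(A_{i+1})\subseteq H^{-1}(E)$ in $\mathrm{Coh}_{0}(X)$, which stabilizes by Noetherianity; so I may assume $H^{-1}(A_{i})=F$ is constant and work with the sheaf quotients $G_{i}:=H^{0}(A_{i})=A_{i}/F[1]\in\mathrm{Coh}^{>\beta}_{0}(X)$. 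The condition $v_{1}^{\beta}(C_{i})=0$ forces, by splitting $v_{1}^{\beta}(C_{i})=v_{1}^{\beta}(H^{0}C_{i})-v_{1}^{\beta}(H^{-1}C_{i})$ into its non-negative and non-positive halves, that $H^{0}(C_{i})$ is a torsion sheaf supported in dimension $\leqslant n-2$ and that $H^{-1}(C_{i})\in\mathrm{Coh}^{\leqslant\beta}_{0}(X)$ has all of its slope-semistable factors of slope exactly $\beta$ (factors of any other slope would contribute a strictly signed amount to $v_{1}^{\beta}$).

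Third, I would track $v_{0}$: since $H^{-1}$ is now constant and $v_{0}(C_{i})=v_{0}(H^{0}C_{i})-v_{0}(H^{-1}C_{i})=-v_{0}(H^{-1}C_{i})\leqslant 0$, the sequence $v_{0}(G_{i})$ is non-increasing, non-negative, and lies in the discrete set $H^{n}\mathbb{Z}$, hence is eventually constant. Thus eventually $v_{0}(H^{-1}C_{i})=0$; as $H^{-1}(C_{i})$ is generated by slope-$\beta$ semistable sheaves, which have finite slope and therefore positive rank, this forces $H^{-1}(C_{i})=0$. From that point on the maps $G_{i}\to G_{i+1}$ are genuine injections of sheaves with cokernel $C_{i}$ supported in dimension $\leqslant n-2$.

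It remains to show that such a chain of sheaf injections, with successive quotients supported in codimension $\geqslant 2$, terminates; this is the step I expect to be the main obstacle, since on the non-compact total space codimension-two modifications are delicate. I would handle it by applying the exact functor $\pi_{*}$ to land on the smooth projective base $Y$, where each $\pi_{*}G_{i}\hookrightarrow\pi_{*}G_{i+1}$ is an isomorphism at every point of codimension $\leqslant 1$. After separating off the lower-dimensional torsion (which is itself governed by Noetherianity and stabilizes), the torsion-free quotients all share a single reflexive hull $\mathcal{G}=(\pi_{*}G_{1})^{\vee\vee}$ on $Y$, because torsion-free sheaves agreeing in codimension one have the same double dual. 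The whole chain then embeds into the fixed coherent sheaf $\mathcal{G}$, so by Noetherianity of $\mathrm{Coh}(Y)$ the sequence $\pi_{*}G_{i}$ stabilizes; since $\pi_{*}$ is exact and $\pi_{*}C\cong 0$ implies $C\cong 0$ by Proposition \ref{basicpropofproj}, the chain $G_{i}$, and hence $A_{i}$, stabilizes as well. The crux throughout is the interplay between the discreteness forced by $\beta\in\mathbb{Q}$ and the reflexive-hull control of the codimension-two behaviour.
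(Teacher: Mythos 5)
Your reductions are sound up to the point where the problem becomes: show that an ascending chain of sheaves $G_{1}\subseteq G_{2}\subseteq\cdots$ in $\mathrm{Coh}_{0}(X)$ with each nonzero quotient $G_{i+1}/G_{i}$ of dimension $\leqslant n-2$ must terminate. Up to there your bookkeeping is essentially a dual version of the paper's general criterion (Proposition \ref{Noetherian}, applied in Proposition \ref{Noetherian1}): the paper runs the argument on chains of epimorphisms and isolates the chain $\mathcal{H}^{-1}(E_{m})$, which lives in $\mathrm{Coh}^{\leqslant\beta}_{0}(X)$ and is therefore automatically \emph{pure of dimension $n$}; you run it on the subobjects and isolate $G_{i}=\mathcal{H}^{0}(A_{i})$, which lives in $\mathrm{Coh}^{>\beta}_{0}(X)$ and therefore can contain arbitrary torsion. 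That difference is where your proof breaks.

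The final step, as you state it, is false: an ascending chain of coherent sheaves with successive quotients supported in dimension $\leqslant n-2$ need not terminate --- take $G_{i}=k(x)^{\oplus i}$ with the obvious inclusions, whose quotients are all $k(x)$. Noetherianity of $\mathrm{Coh}_{0}(X)$ (or of $\mathrm{Coh}(Y)$ after applying $\pi_{*}$) gives the ascending chain condition only for subsheaves of a \emph{fixed} coherent sheaf; an abstract chain of monomorphisms has no ambient object, so your parenthetical claim that the lower-dimensional torsion ``is itself governed by Noetherianity and stabilizes'' is exactly the unproved, and in general false, point. The reflexive-hull argument does control the torsion-free quotients, since those embed compatibly into the fixed sheaf $(\pi_{*}G_{1})^{\vee\vee}$ (this is the content of Proposition \ref{propositionB1}, which the paper applies directly on $X$ via the relative dual), but it says nothing about the maximal subsheaves of dimension $\leqslant n-1$, and nothing you have retained about the $G_{i}$ rules out their growing forever. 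The repair is to remember the ambient object $E$: the images of $G_{i}$ in $\mathcal{H}^{0}(E)$ stabilize by Noetherianity of $\mathrm{Coh}_{0}(X)$, and after that the growth of $G_{i}$ is identified with the growth of the chain $\mathcal{H}^{-1}(E/A_{i})\subseteq\mathcal{H}^{-1}(E/A_{i+1})\subseteq\cdots$, whose terms lie in $\mathrm{Coh}^{\leqslant\beta}_{0}(X)$, hence are pure of dimension $n$ with quotients of dimension $\leqslant n-2$ once $\mathcal{H}^{-1}(C_{i})=0$; the double-dual embedding then applies and the chain stabilizes. With that extra step your argument closes and becomes a legitimate variant of the paper's proof; without it, the lemma you invoke at the end is simply not true.
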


\begin{pf}
	See Proposition \ref{Noetherian1}.
\end{pf}

Finally we have the following proposition:

\begin{prop}
	There is a continous embedding as follows:
	\begin{align*}
		U \to \mathrm{Stab}^{w}_{H}(D^{b}_{0}(X)), (\beta,\alpha) \mapsto \sigma^{\beta,\alpha}=(Z^{\beta,\alpha},\mathrm{Coh}^{\beta}_{0}(X)). 
	\end{align*}
\end{prop}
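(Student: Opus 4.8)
The plan is to verify that every $\sigma^{\beta,\alpha}$ is a genuine weak stability condition with respect to $v$, and then to deduce continuity and injectivity of the assignment. Since $\mathrm{Coh}^{\beta}_{0}(X)$ is already a bounded heart (being the tilt of $\mathrm{Coh}_{0}(X)$ along the torsion pair defining it) and $Z^{\beta,\alpha}$ is a weak stability function on it by the preceding proposition, the only ingredients that remain are the Harder--Narasimhan property and the support property.

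For the Harder--Narasimhan property I would first treat rational $(\beta,\alpha)$. Here $\mathrm{Coh}^{\beta}_{0}(X)$ is Noetherian by Proposition \ref{Noetherian1}, and $\Im Z^{\beta,\alpha}=v_{1}-\beta v_{0}$ takes values in a discrete subgroup of $\mathbb{R}$, since $\beta$ is rational and $v_{0},v_{1}$ are integral up to a fixed denominator; thus Proposition \ref{HNproperty} applies directly. For irrational $(\beta,\alpha)$ I would deform, approximating by rational parameters and using that the support property (established below), together with Bridgeland's deformation theorem, propagates the Harder--Narasimhan property from the rational locus, exactly as in \cite{bayer2016space}.

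The support property is the heart of the matter. I would take the quadratic form
\[
	Q = v_{1}^{2}-2v_{0}v_{2}
\]
on $(\Lambda/\Lambda_{\sigma})_{\mathbb{R}}$; this is well defined since $\Lambda_{\sigma}$ is generated by classes with $v_{0}=v_{1}=v_{2}=0$. On $\ker Z^{\beta,\alpha}$ one has $v_{1}=\beta v_{0}$ and $v_{2}=\alpha v_{0}$, so $Q=(\beta^{2}-2\alpha)v_{0}^{2}$, which is negative definite precisely because $(\beta,\alpha)\in U$ means $\alpha>\tfrac{1}{2}\beta^{2}$; this is exactly where the defining inequality of $U$ enters. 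The remaining positivity $Q(E)\geqslant 0$ for every $\nu^{\beta,\alpha}$-semistable $E$ is the tilt Bogomolov--Gieseker inequality, and proving it is the main obstacle. I would establish it by the standard wall-crossing argument: fixing $\beta$ and letting $\alpha$ increase, either $E$ remains semistable up to the large-volume limit, where it becomes a shift of a slope-semistable sheaf and $Q(E)\geqslant 0$ follows from the classical inequality of Proposition \ref{classicalBG}, or $E$ is destabilized at some wall by a short exact sequence $0\to A\to E\to B\to 0$ of equal slope, where superadditivity of the discriminant gives $Q(E)\geqslant Q(A)+Q(B)$ and one concludes by induction. Exactness of $\pi_{*}$ lets one carry out all of these intersection-number computations downstairs on $Y$.

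Finally, continuity and injectivity. The assignment $(\beta,\alpha)\mapsto Z^{\beta,\alpha}\in\mathrm{Hom}(\Lambda,\mathbb{C})$ is affine-linear, hence continuous, and injective: comparing imaginary parts forces $\beta=\beta'$, and then comparing real parts forces $\alpha=\alpha'$, using that $v_{0}$ is a nonzero functional for $n\geqslant 2$. Since a weak stability condition determines its central charge, injectivity of $(\beta,\alpha)\mapsto\sigma^{\beta,\alpha}$ follows at once. By Bridgeland's deformation theorem the central-charge map $\mathrm{Stab}^{w}_{H}(D^{b}_{0}(X))\to\mathrm{Hom}(\Lambda,\mathbb{C})$ is a local homeomorphism onto an open subset once the support property holds; combining this with the continuity and injectivity of $(\beta,\alpha)\mapsto Z^{\beta,\alpha}$, and noting that near each point the explicit tilt realises the unique lift of the varying central charge, we conclude that the map is a continuous embedding. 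The genuinely hard step is the tilt Bogomolov--Gieseker inequality underlying the support property; the rest is bookkeeping or an appeal to Bridgeland's deformation machinery.
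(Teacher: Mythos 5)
Your proposal is correct and follows essentially the same route as the paper, whose own proof simply defers to \cite[Appendix B]{bayer2016space}: the weak stability function together with Noetherianity and discreteness of $\Im Z^{\beta,\alpha}$ gives the Harder--Narasimhan property for rational parameters, the discriminant $\overline{\Delta}_{H}=v_{1}^{2}-2v_{0}v_{2}$ furnishes the support property (negative definite on $\ker Z^{\beta,\alpha}$ precisely because $\alpha>\tfrac{1}{2}\beta^{2}$, nonnegative on tilt-semistable objects by the wall-crossing/large-volume argument), and deformation handles irrational parameters and continuity. The only point deserving care, which you do flag, is that the deformation step for \emph{weak} stability conditions is not literally Bridgeland's theorem and one must check, as in that appendix, that the explicit tilted hearts realise the deformed central charges.
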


\begin{pf}
	The proof is similar to \cite[Appendix B]{bayer2016space}. 
\end{pf}

\begin{defn}
	We define the generalized discriminant:
	\begin{align*}
		\overline{\Delta}_{H}(E)=v_{1}(E)^{2}-2v_{0}(E)v_{2}(E)
	\end{align*} for any $E \in D_{0}^{b}(X)$.
\end{defn}

\begin{rmk}
	(1) The generalized discriminant $\overline{\Delta}_{H}$ determine a quadratic form on the $\mathbb{R}$-vector space $(\Lambda/\Lambda_{0})_{\mathbb{R}}$. Moreover we have $\overline{\Delta}_{H}(E) \geqslant 0$ for any slope semistable sheaf $E \in \mathrm{Coh}_{0}(X)$ by Bogomolov-Gieseker type inequality.
	
	(2) Let $x,y,z$ be the coordinate of the vector space $(\Lambda/\Lambda_{0})_{\mathbb{R}}$. Then the equation of $\Delta$ is $y^{2}=2xz$. For any $(\beta,\alpha) \in U$, the kernel of $Z^{\beta,\alpha}$ in $(\Lambda/\Lambda_{0})_{\mathbb{R}}$ is identified with the subspace $\{(x,y,z) \in \mathbb{R}^{3} \mid z=\alpha x, y=\beta x\}$. Meanwhile, the restriction of $\overline{\Delta}_{H}(v)$  on the kernel of $Z^{\beta,\alpha}$ is given by the function $(\beta^{2}-2\alpha)x^{2}$. Note that $(\beta^{2}-2\alpha)x^{2} \leqslant 0$ with equality holds if and only if $x=0$. In other words, $\overline{\Delta}_{H}$ is negative definite on $\ker Z^{\beta,\alpha}$ for all $(\beta,\alpha) \in U$.
	
	(3) For any $\beta \in \mathbb{R}$ and $E \in D_{0}^{b}(X)$, it is easy to check that:
	\begin{align*}
		\overline{\Delta}_{H}(E)=(v_{1}^{\beta}(E))^{2}-2v_{2}^{\beta}(E)v_{0}^{\beta}(E).
	\end{align*}
\end{rmk}

We have the following important property of the generalized discriminant:

\begin{prop}
	For any $(\beta,\alpha) \in U$ and $\nu^{\beta,\alpha}$-semistable obejct $E \in D_{0}^{b}(X)$, we have the following inequality:
	\begin{align*}
		\overline{\Delta}_{H}(E) \geqslant 0.
	\end{align*} 
\end{prop}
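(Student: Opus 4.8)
The plan is to follow the deformation (wall-crossing) strategy of Bayer--Macr\`i--Toda, reducing the statement for tilt-semistable objects to the classical Bogomolov--Gieseker inequality for slope-semistable sheaves (Proposition \ref{classicalBG}). Two structural facts underlie everything. First, as recorded in the preceding remark, $\overline{\Delta}_{H}$ is a quadratic form on $(\Lambda/\Lambda_{0})_{\mathbb{R}} \cong \mathbb{R}^{3}$ of signature $(2,1)$ which is negative definite on $\ker Z^{\beta,\alpha}$ for every $(\beta,\alpha) \in U$. Second, I will use the following \emph{lightcone lemma}: if $v = \sum_{i} v_{i}$ with all $Z^{\beta,\alpha}(v_{i}) \in \mathbb{R}_{>0}\cdot z_{0}$ on a common ray, all $v_{i} \neq 0$, and all $\overline{\Delta}_{H}(v_{i}) \geqslant 0$, then $\overline{\Delta}_{H}(v) \geqslant 0$. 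Indeed the real $2$-plane $P = (Z^{\beta,\alpha})^{-1}(\mathbb{R} z_{0})$ contains $\ker Z^{\beta,\alpha}$, on which $\overline{\Delta}_{H}$ is negative definite; since $P$ carries nonzero vectors with $\overline{\Delta}_{H} \geqslant 0$, the restriction $\overline{\Delta}_{H}|_{P}$ has signature $(1,1)$, and a reverse Cauchy--Schwarz estimate for ``spacelike'' vectors lying on one side of the null cone yields the claim.

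With these tools in place, I would first reduce to rational $(\beta,\alpha)$, using that $\sigma^{\beta,\alpha}=(Z^{\beta,\alpha},\mathrm{Coh}^{\beta}_{0}(X))$ varies continuously and that an object semistable at an irrational parameter (which avoids the algebraic walls) remains semistable at nearby rational parameters, while $\overline{\Delta}_{H}$ depends only on the class. For rational $\beta$, the quantity $v_{1}^{\beta}(E)=v_{1}(E)-\beta v_{0}(E)=\Im Z^{\beta,\alpha}(E)$ takes values in a discrete subset of $\mathbb{R}_{\geqslant 0}$ on $\mathrm{Coh}^{\beta}_{0}(X)$, which lets me induct on $v_{1}^{\beta}(E)$. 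Fix a $\nu^{\beta,\alpha}$-semistable $E$; I may assume $v_{1}^{\beta}(E) > 0$, the boundary case $v_{1}^{\beta}(E)=0$, where $E$ lies in the phase-$1$ subcategory $\{\Im Z^{\beta,\alpha}=0\}$, reducing directly to slope-semistable sheaves and Proposition \ref{classicalBG}. Now deform $\alpha \to +\infty$ with $\beta$ fixed. By local finiteness of walls for the fixed class $v(E)$, only finitely many walls are met. If none destabilizes $E$, then $E$ is tilt-semistable in the large-volume limit, where it is forced to be a slope-semistable sheaf (its $\mathcal{H}^{-1}$ must vanish), so $\overline{\Delta}_{H}(E)\geqslant 0$ by Proposition \ref{classicalBG}. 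Otherwise, at the first wall $E$ becomes strictly semistable, hence an iterated extension of $\nu^{\beta,\alpha}$-semistable factors $E_{i}$ of the same tilt-slope with $0 < v_{1}^{\beta}(E_{i}) < v_{1}^{\beta}(E)$; by the inductive hypothesis each $\overline{\Delta}_{H}(E_{i})\geqslant 0$, and the lightcone lemma gives $\overline{\Delta}_{H}(E)=\overline{\Delta}_{H}\big(\textstyle\sum_{i} v(E_{i})\big)\geqslant 0$. The base case of minimal $v_{1}^{\beta}(E)$ admits no destabilizing wall (there is no room for two factors of smaller positive $v_{1}^{\beta}$) and is therefore covered by the large-volume argument.

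I expect the main obstacle to be the large-volume base case together with the local finiteness of walls. Establishing that a tilt-semistable object surviving to $\alpha \gg 0$ is genuinely a slope-semistable sheaf requires controlling $\mathcal{H}^{-1}(E)$: one must show that a nonzero inclusion $\mathcal{H}^{-1}(E)[1] \hookrightarrow E$ would eventually destabilize $E$ as $\alpha$ grows, which rests on comparing the growth rates of $\Re Z^{\beta,\alpha}$ on the cohomology sheaves and invoking classical Bogomolov--Gieseker to bound $v_{2}$. The local finiteness of walls for a fixed class, needed so that the deformation terminates after finitely many decompositions, is where the support property of the $\sigma^{\beta,\alpha}$ (guaranteed by the continuous embedding $U \to \mathrm{Stab}^{w}_{H}(D^{b}_{0}(X))$) enters decisively, since it bounds the classes of the possible destabilizing subobjects.
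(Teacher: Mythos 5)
Your proposal is correct and is essentially the paper's own proof: the paper simply cites \cite[Theorem 3.5]{bayer2016space}, and your outline (reduction to rational parameters, induction on the discrete quantity $v_{1}^{\beta}(E)=\Im Z^{\beta,\alpha}(E)$, deformation to the large-volume limit where Proposition \ref{classicalBG} applies, and the signature-$(1,1)$ convexity lemma on the plane $(Z^{\beta,\alpha})^{-1}(\mathbb{R}z_{0})$ to handle the Jordan--H\"older factors at a wall) is exactly the argument of that reference. No genuinely different route is taken.
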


\begin{pf}
		The proof is similar to \cite[Theorem 3.5]{bayer2016space}. 
\end{pf}

There is a wall and chamber structure on $U$ associated with some object in $D_{0}^{b}(X)$. The reader can refer to \cite[Proposition 4.1]{feyzbakhsh2021application} for the proof. Before that, we give a useful lemma:

\begin{prop}\label{wallandchamberlem1}
	Suppose that $(\beta_{0},\alpha_{0}) \in U$ and $E \in \mathrm{Coh}^{\beta_{0}}_{0}(X)$ is a $\nu^{\beta_{0},\alpha_{0}}$-semistable object. Then there is a line $l$ in $\mathbb{R}^{2}$ which passes through the point $(\beta_{0},\alpha_{0})$, such that for any $(\beta_{1},\alpha_{1}) \in l \cap U$, $E \in \mathrm{Coh}^{\beta_{1}}_{0}(X)$ and it is $\nu^{\beta_{1},\alpha_{1}}$-semistable with the same slope $\nu^{\beta_{0},\alpha_{0}}(E)$. Moreover, if $\nu^{\beta_{0},\alpha_{0}}(E)=+\infty$, then the equation of line is $\beta=\beta_{0}$; if $\nu^{\beta_{0},\alpha_{0}}(E)=\nu<+\infty$, then the equation of line is $\alpha-\alpha_{0}=\nu(\beta-\beta_{0})$.
\end{prop}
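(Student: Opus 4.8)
The plan is to pin down the line $l$ by a short computation and then to prove heart-membership and preservation of $\nu^{\beta,\alpha}$-semistability together, by exploiting a single point through which every numerical wall of $E$ passes. First I would record the wall equation: writing $(e_0,e_1,e_2)=(v_0(E),v_1(E),v_2(E))$ and similarly $(a_0,a_1,a_2)$ for a class $A$, the locus $\nu^{\beta,\alpha}(A)=\nu^{\beta,\alpha}(E)$ reads $(e_2-\alpha e_0)(a_1-\beta a_0)=(e_1-\beta e_0)(a_2-\alpha a_0)$. Expanding, the $\alpha\beta$-terms cancel, so this is \emph{linear} in $(\beta,\alpha)$; in particular the level set $\{\nu^{\beta,\alpha}(E)=\nu_0\}$ with $\nu_0=\nu^{\beta_0,\alpha_0}(E)$ is a line $l$. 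Solving $v_2(E)-\alpha v_0(E)=\nu_0(v_1(E)-\beta v_0(E))$ gives $\alpha-\alpha_0=\nu(\beta-\beta_0)$ when $\nu_0=\nu<+\infty$ and $\beta=\beta_0$ when $\nu_0=+\infty$, i.e.\ when $v_1(E)-\beta_0v_0(E)=0$; these are exactly the stated equations, and by construction $E$ has slope $\nu_0$ at every point of $l$.

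Next I would exhibit the common point. When $v_0(E)\neq0$, a direct substitution shows that $l$ and every numerical wall $W_{E,A}=\{\nu^{\beta,\alpha}(A)=\nu^{\beta,\alpha}(E)\}$ pass through $p_E=(v_1(E)/v_0(E),\,v_2(E)/v_0(E))$. The key point is that $p_E\notin U$: evaluating $\alpha-\tfrac12\beta^2$ at $p_E$ yields $-\overline{\Delta}_H(E)/(2v_0(E)^2)$, which is $\leqslant0$ since $E$ is $\nu^{\beta_0,\alpha_0}$-semistable and therefore satisfies $\overline{\Delta}_H(E)\geqslant0$. Because two distinct lines through $p_E$ meet only at $p_E$, and $p_E\notin U$, the segment $l\cap U$ is disjoint from every wall $W_{E,A}\neq l$. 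When $v_0(E)=0$ the point $p_E$ recedes to infinity and these walls together with $l$ become parallel, so $l\cap U$ again crosses no wall; the remaining degenerate case $v_0(E)=v_1(E)=0$ is the slope-$+\infty$ case, handled on the vertical line $\beta=\beta_0$, along which $\mathrm{Coh}^{\beta}_0(X)$ does not change.

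Since $U$ is convex and $l$ a line, $l\cap U$ is a connected interval through $(\beta_0,\alpha_0)$. The support property established earlier makes the locus of $l\cap U$ on which $E$ is $\nu^{\beta,\alpha}$-semistable closed. If $E$ were unstable at some $(\beta_1,\alpha_1)\in l\cap U$, then along the segment to $(\beta_0,\alpha_0)$ there would be a last semistable point $(\beta_*,\alpha_*)$, and the support property lets me extract a subobject $A\subset E$ with $\nu^{\beta_*,\alpha_*}(A)=\nu^{\beta_*,\alpha_*}(E)$ but $\nu^{\beta,\alpha}(A)>\nu^{\beta,\alpha}(E)$ just beyond it. The equality places $(\beta_*,\alpha_*)$ on $W_{E,A}$; as $(\beta_*,\alpha_*)\in l\cap U$, the common-point property forces $W_{E,A}=l$, so $\nu^{\beta,\alpha}(A)=\nu^{\beta,\alpha}(E)$ identically on $l$, contradicting the strict inequality. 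Hence $E$ stays semistable on all of $l\cap U$, and since its phase $\tfrac1\pi\arg Z^{\beta,\alpha}(E)$ is the constant value from $(\beta_0,\alpha_0)$, which lies in $(0,1]$, we get $E\in\mathcal P^{\beta,\alpha}(0,1]=\mathrm{Coh}^{\beta}_0(X)$ throughout, so heart-membership comes for free.

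The main obstacle is the extraction in the previous paragraph of a genuine subobject $A\subset E$ realizing equal slope at the last semistable point $(\beta_*,\alpha_*)$: one must invoke the support property to bound the finitely many numerical classes that can destabilize near a fixed stability condition, so that a destabilizer survives in the limit rather than dissolving. Granting this, the common-point geometry carries the argument, since it is precisely what prevents $l$ from ever transversally crossing a wall inside $U$.
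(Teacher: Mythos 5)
Your route is genuinely different from the paper's, and most of it is sound. The paper never mentions the common point $p_{E}=\left(v_{1}(E)/v_{0}(E),v_{2}(E)/v_{0}(E)\right)$ at this stage; instead it (i) proves heart-membership $E\in\mathrm{Coh}_{0}^{\beta_{1}}(X)$ directly, by taking the maximal-slope Harder--Narasimhan factor $F_{0}$ of $\mathcal{H}^{-1}(E)$ and deriving a contradiction between the convexity of $U$, the classical Bogomolov--Gieseker inequality for $F_{0}$ (Proposition \ref{classicalBG}), and the bound $\nu^{\beta_{0},\alpha_{0}}(F_{0}[1])\leqslant\nu_{0}$ coming from semistability at $(\beta_{0},\alpha_{0})$; and (ii) proves persistence of semistability by an open--closed argument on the connected set $l\cap U$, where openness is elementary: a destabilizing sequence $0\to F\to E\to G\to 0$ at a nearby point of $l$ is shown to live in $\mathrm{Coh}_{0}^{\beta_{1}}(X)$ as well, and the explicit substitution $\alpha_{1}-\alpha_{2}=\nu_{0}(\beta_{1}-\beta_{2})$ shows $F$ already destabilizes at $(\beta_{1},\alpha_{1})$. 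Your observation that every numerical wall of $E$ passes through $p_{E}$, that $\alpha-\tfrac{1}{2}\beta^{2}$ evaluates to $-\overline{\Delta}_{H}(E)/(2v_{0}(E)^{2})\leqslant 0$ there, and that $l\cap U$ therefore meets no wall, is correct and is the standard picture that Proposition \ref{wallandchamber1} formalizes; deducing heart-membership afterwards from constancy of the phase is also legitimate (and you correctly need $v_{1}^{\beta}(E)\neq 0$ on $l\cap U$, which again follows from $p_{E}\notin U$). What each approach buys: yours makes the geometry transparent and generalizes immediately to the full wall-and-chamber statement; the paper's is more elementary and self-contained at this point in the development.

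The genuine gap is the step you yourself flag: extracting an actual subobject $A\subseteq E$ with $\nu^{\beta_{*},\alpha_{*}}(A)=\nu^{\beta_{*},\alpha_{*}}(E)$ at the last semistable point. This requires knowing that only finitely many classes $v(A)$ can destabilize $E$ in a neighbourhood of $(\beta_{*},\alpha_{*})$ (so a destabilizer survives passage to the limit), which is exactly the local-finiteness-of-walls input of Proposition \ref{wallandchamber1} --- and in this paper that proposition is proved \emph{after}, and using, the present lemma. So as written your argument is either circular within the paper's logical order or relies on an unproved quantitative consequence of the support property (the analogue of \cite[Appendix A]{bayer2016space}). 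The fix is available and is essentially what the paper does: along $l$ you do not need a limiting destabilizer at all, because a destabilizer at any point of $l\cap U$ transfers verbatim to every other point of $l\cap U$ (the slope comparison $\nu^{\beta,\alpha}(F)>\nu_{0}$ is preserved along $l$ by the same linear substitution), so openness of the semistable locus suffices and no wall-existence statement is needed. A second, minor slip: the case $\nu^{\beta_{0},\alpha_{0}}(E)=+\infty$ is $v_{1}(E)=\beta_{0}v_{0}(E)$, not $v_{0}(E)=v_{1}(E)=0$; your treatment of the vertical line still goes through, but the identification of the degenerate case is wrong as stated.
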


\begin{pf}
	We assume that $E \in \mathrm{Coh}^{\beta_{0}}_{0}(X)$. If $\nu^{\beta_{0},\alpha_{0}}(E)=+\infty$, then $v_{1}^{\beta}(E)=0$ and we get our conclusion.
	
	Now we suppose that $\nu^{\beta_{0},\alpha_{0}}(E)=\nu_{0}<+\infty$. Because $E \in \mathrm{Coh}^{\beta_{0}}_{0}(X)$, then $\mathcal{H}^{-1}(E) \in \mathrm{Coh}^{\leqslant \beta_{0}}_{0}(X)$ and 
	$\mathcal{H}^{0}(E) \in \mathrm{Coh}^{>\beta_{0}}_{0}(X)$. Suppose that $(\beta_{1},\alpha_{1}) \in U$ with $\alpha_{1}-\alpha_{0}=\nu_{0}(\beta_{1}-\beta_{0})$ and $\mathcal{H}^{-1}(E) \notin \mathrm{Coh}^{\leqslant \beta_{1}}_{0}(X)$. Let $F_{0}$ be the factor of the Harder-Narasimhan filtration of $\mathcal{H}^{-1}(E)$ with maximal slope $\mu$. Then $\beta_{1}<\mu \leqslant \beta_{0}$. The point 
	$(\mu,\nu_{0}(\mu-\beta_{0})+\alpha_{0})$ is on the segement between $(\beta_{0},\alpha_{0})$ and $(\beta_{1},\alpha_{1})$ and it must belong to $U$ because $U$ is convex. Thus we have
	\[
		\nu_{0}(\mu-\beta_{0})+\alpha_{0}>\frac{1}{2}\mu^{2}.
	\] But $E$ is $\nu^{\beta_{0},\alpha_{0}}$-semistable and $F_{0}[1]$ can be seen as a subobject of $E$ in $\mathrm{Coh}^{\beta_{0}}_{0}(X)$. Thus $\nu^{\beta_{0},\alpha_{0}}(F_{0}[1]) \leqslant \nu_{0}$. In other words, we have:
	\[
		\frac{v_{2}(F_{0})}{v_{0}(F_{0})} \geqslant \nu_{0}(\mu-\beta_{0})+\alpha_{0}
	\] It contradicts with the above inequality by Proposition \ref{classicalBG}.

	Similarly, we can also prove that $\mathcal{H}^{0}(E) \in \mathrm{Coh}^{>\beta_{1}}_{0}(X)$. Thus $E \in \mathrm{Coh}^{\beta_{1}}_{0}(X)$.
	Now we calculate $\nu^{\beta_{1},\alpha_{1}}(E)$. If $\nu^{\beta_{1},\alpha_{1}}(E)=+\infty$ and $\nu^{\beta_{0},\alpha_{0}}(E)<+\infty$, then $\mathcal{H}^{-1}(E)$ is a slope semistable sheaf with slope $\beta_{1}$. By the same argument again, we have $\displaystyle\frac{1}{2}\beta_{1}^{2} \geqslant \alpha_{0}+\nu_{0}(\beta_{1}-\beta_{0})=\alpha_{1}$ and it is a contradicition. Thus
	\begin{align*}
		\nu^{\beta_{1},\alpha_{1}}(E)=\frac{v_{2}(E)-\alpha_{1}v_{0}(E)}{v_{1}(E)-\beta_{1}v_{0}(E)}=\frac{v_{2}(E)-\alpha_{0}v_{0}(E)-(\alpha_{1}-\alpha_{0})v_{0}(E)}{v_{1}(E)-\beta_{0}v_{0}(E)-(\beta_{1}-\beta_{0})v_{0}(E)}=\nu_{0}.
	\end{align*}
	
	Finally, we want to prove that $E$ is $\nu^{\beta_{1},\alpha_{1}}$-semistable. We denote $V$ to be the subset of the connected subset $l \cap U$:
	\begin{align*}
		V=\{(\beta_{1},\alpha_{1}) \in l \cap U \mid E \mbox{ is } \nu^{\beta_{1},\alpha_{1}}\mbox{-semistable}\}.
	\end{align*} First it is a nonempty closed subset of $l \cap U$. We only need to prove that it is also an open subset of $l \cap U$. Assume that $E \in \mathcal{P}^{\beta_{1},\alpha_{1}}(\phi)$ and take $\epsilon>0$, such that $(\phi-2\epsilon,\phi+2\epsilon) \subseteq (0,1)$. 
	If there exists $(\beta_{2},\alpha_{2}) \in l$ near $\sigma^{\beta_{1},\alpha_{1}}$ with $E$ is not $\nu^{\beta_{2},\alpha_{2}}$-semistable. Then there must be a short exact sequence in $\mathrm{Coh}^{\beta_{2}}_{0}(X)$:
	\begin{align*}
		0 \to F \to E \to G \to 0
	\end{align*} with $\nu^{\beta_{2},\alpha_{2}}(F)>\nu^{\beta_{2},\alpha_{2}}(E)=\nu_{0}$. In fact, the above sequence is also a short exact sequence in $\mathrm{Coh}^{\beta_{1}}_{0}(X)$ because $F,G \in  \mathcal{P}^{\beta_{2},\alpha_{2}}(\phi-\epsilon,\phi+\epsilon) \subset \mathcal{P}^{\beta_{1},\alpha_{1}}(\phi-2\epsilon,\phi+2\epsilon) \subset \mathrm{Coh}^{\beta_{1}}_{0}(X)$. 
	
	Then
	\begin{align*}
		\nu^{\beta_{1},\alpha_{1}}(F)=\frac{v_{2}(F)-\alpha_{1}v_{0}(F)}{v_{1}(F)-\beta_{1}v_{0}(F)}=\frac{v_{2}(F)-\alpha_{2}v_{0}(F)-(\alpha_{1}-\alpha_{2})v_{0}(F)}{v_{1}(F)-\beta_{2}v_{0}(F)-(\beta_{1}-\beta_{2})v_{0}(F)}.
	\end{align*}If $v_{0}(F)=0$, then $\nu^{\beta_{1},\alpha_{1}}(F)=\displaystyle\frac{v_{2}(F)}{v_{1}(F)}=\nu^{\beta_{2},\alpha_{2}}(F)>\nu_{0}$. If $v_{0}(F) \neq 0$, then we still have $\nu^{\beta_{1},\alpha_{1}}(F)>\nu_{0}$. However, it is a contradiction because $E$ is $\nu^{\beta_{1},\alpha_{1}}$-semistable.
	
	Thus $V$ is an open set of $l \cap U$. Thus $V=l \cap U$ and we complete the proof.
\end{pf}

\begin{prop}\cite[Proposition 4.1]{feyzbakhsh2021application}\label{wallandchamber1}
	Fix an object $E \in D^{b}_{0}(X)$ such that $v_{0}(E) \neq 0$ or $v_{1}(E) \neq 0$. (For example, any object in $\mathrm{Coh}_{0}^{\beta}(X)$ with finite tilt-slope satisfies this condition.) Then there exists a wall and chamber structure in $U$: there exists a set of lines $\{l_{i}\}_{i \in I}$ in $\mathbb{R}^{2}$ such that the collection of segments $\{l_{i} \bigcap U\}_{i \in I}$ (called "walls") are locally finite in $U$ and satisfy:
	
	(1) If $v_{0}(E)\neq 0$, then all lines $l_{i}$ pass through the point $\left(\displaystyle\frac{v_{1}(E)}{v_{0}(E)},\frac{v_{2}(E)}{v_{0}(E)}\right)$; 
	
	if $v_{0}(E)=0$, then all lines $l_{i}$ has the same slope $\displaystyle\frac{v_{2}(E)}{v_{1}(E)}$.
	
	(2) The $\nu_{0}^{\beta,\alpha}$-(semi)stability of $E$ is unchanged as $(\beta,\alpha)$ varies within any connected component of $U \setminus \bigcup_{i \in I}(l_{i} \bigcap U)$.
	
	(3) For any wall $l \in I$, there is $k_{i} \in \mathbb{Z}$ and a morphism $f_{i} \colon F_{i} \to E[k_{i}]$ in $D_{0}^{b}(X)$ such that:
	
	(i) For any $(\beta,\alpha) \in l_{i} \bigcap U$, $F_{i},E[k_{i}]$ and the cone $G_{i}$ of $f_{i}$ lie in the heart $\mathrm{Coh}_{0}^{\beta}(X)$. 
	
	(ii) For any $(\beta,\alpha) \in l_{i} \bigcap U$, $F_{i}, E[k_{i}], G_{i}$ are both $\nu^{\beta,\alpha}$-semistable with the same slope.
	
	(iii) $f_{i} \colon F_{i} \to E[k_{i}] $ strictly destabilizes $E[k_{i}]$ for $(\beta,\alpha)$ in one of the two chambers adjacent to the wall $l_{i} \bigcap U$.
\end{prop}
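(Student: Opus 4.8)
The plan is to attach to each lattice class a \emph{numerical wall}, show every numerical wall is a line through a common center, and then prove that only finitely many of them meet any compact region; properties (1)--(3) will then follow formally. Fix $\mathbf v=v(E)$. For $\mathbf w\in\Lambda$ I would define the numerical wall $W_{\mathbf w}\subseteq U$ as the locus where $\mathbf w$ and $\mathbf v$ have equal finite $\nu^{\beta,\alpha}$-slope, i.e. where $\Re Z^{\beta,\alpha}(\mathbf w)\,\Im Z^{\beta,\alpha}(\mathbf v)=\Re Z^{\beta,\alpha}(\mathbf v)\,\Im Z^{\beta,\alpha}(\mathbf w)$. Expanding this equation one checks that the $\alpha^2$- and $\alpha\beta$-terms cancel, so it is affine-linear in $(\beta,\alpha)$ and $W_{\mathbf w}$ is the trace in $U$ of a line $l_{\mathbf w}$. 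Substituting directly shows that when $v_0(E)\neq 0$ each $l_{\mathbf w}$ passes through $(v_1(E)/v_0(E),v_2(E)/v_0(E))$, the unique point where $Z^{\beta,\alpha}(E)=0$, and that when $v_0(E)=0$ each $l_{\mathbf w}$ has the fixed slope $v_2(E)/v_1(E)$. This is exactly property (1).

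The hard part is local finiteness of $\{l_{\mathbf w}\}$, and here I would use the generalized discriminant together with the Bogomolov--Gieseker inequality. Fix a compact $K\subset U$ and suppose $(\beta,\alpha)\in K$ lies on an actual wall, witnessed by a short exact sequence $0\to F\to E\to G\to 0$ of $\nu^{\beta,\alpha}$-semistable objects of equal slope in $\mathrm{Coh}_0^\beta(X)$ (after the shift placing $E$ in the heart). Since $F\hookrightarrow E$ shares the phase of $E$, one has $Z^{\beta,\alpha}(F)=\lambda\,Z^{\beta,\alpha}(E)$ with $\lambda\in[0,1]$, so $Z^{\beta,\alpha}(F)$ stays in a bounded region of $\mathbb{C}$ uniformly over $K$; as the fibres of $Z^{\beta,\alpha}\colon(\Lambda/\Lambda_0)_{\mathbb R}\to\mathbb{C}$ are translates of $\ker Z^{\beta,\alpha}$, the class $v(F)$ is thereby confined to a slab whose only unbounded direction lies in $\ker Z^{\beta,\alpha}$. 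On that direction $\overline\Delta_H$ is negative definite, as recorded in the remark on the generalized discriminant, while Proposition \ref{classicalBG} and the concavity of $\overline\Delta_H$ on the Lorentzian plane $\mathrm{span}(v(E),\ker Z^{\beta,\alpha})$ give $0\leqslant\overline\Delta_H(F)\leqslant\overline\Delta_H(E)$. The upper bound on $\overline\Delta_H(F)$ rules out the unbounded direction, so $v(F)$ lies in a bounded region of $(\Lambda/\Lambda_0)_{\mathbb R}$ uniformly over $K$, hence in finitely many lattice classes; each class yields a single line $l_{\mathbf w}$, proving local finiteness.

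With local finiteness in hand, properties (2) and (3) become formal. For (2), I would argue that if $E$ were $\nu^{\beta,\alpha}$-semistable at $p$ but not at $q$ inside one connected component of $U\setminus\bigcup_i(l_i\cap U)$, then the maximal destabilizing subobject $F$ at $q$ would, by continuity of $\nu^{\beta,\alpha}(F)-\nu^{\beta,\alpha}(E)$ along the segment $[p,q]$, produce an interior point on $W_{v(F)}$, contradicting that the component meets no numerical wall; Proposition \ref{wallandchamberlem1}, which propagates semistability along each ray from the center, guarantees the destabilizer survives along the path. For (3), on an actual wall the strictly semistable $E[k_i]$ admits a subobject $F_i$ of equal slope with cone $G_i$, all three $\nu^{\beta,\alpha}$-semistable of the same slope for every $(\beta,\alpha)\in l_i\cap U$ by the ray-constancy of Proposition \ref{wallandchamberlem1}; the defining linear function of $l_i$ changes sign across the line, so $\nu^{\beta,\alpha}(F_i)-\nu^{\beta,\alpha}(E[k_i])$ changes sign too and $f_i\colon F_i\to E[k_i]$ strictly destabilizes on exactly one of the two adjacent chambers. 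The sole genuine obstacle is the uniform discriminant bound of the second paragraph; everything else reduces to the line lemma and the classical Bogomolov--Gieseker inequality already proved.
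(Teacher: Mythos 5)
The paper offers no internal proof of this statement---it is quoted directly from \cite[Proposition 4.1]{feyzbakhsh2021application}, with Proposition \ref{wallandchamberlem1} as the only in-house ingredient---so the comparison is against the standard argument, and your sketch follows exactly that route: numerical walls are affine lines through the common centre, local finiteness comes from the discriminant quadratic form, and (2)--(3) are formal consequences. Your computation for property (1) and the identification of the centre as the zero locus of $Z^{\beta,\alpha}(E)$ are correct, and your use of Proposition \ref{wallandchamberlem1} to propagate a witnessing sequence along an entire wall (so that the witness may be taken inside the compact set $K$) is the right mechanism, since a wall through the centre of $E$ is precisely a line of constant $\nu^{\beta,\alpha}(E)$.

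Two points need repair. First, the positivity input is $\overline{\Delta}_{H}(F)\geqslant 0$ for \emph{tilt}-semistable objects (the unlabelled proposition quoted from \cite[Theorem 3.5]{bayer2016space}), not Proposition \ref{classicalBG}, which only covers slope-semistable sheaves; and it is this lower bound, not the upper bound $\overline{\Delta}_{H}(F)\leqslant\overline{\Delta}_{H}(E)$, that kills the $\ker Z^{\beta,\alpha}$-direction, because $\overline{\Delta}_{H}$ tends to $-\infty$ along it. Second, and more substantively, your boundedness argument confines $v(F)$ only for subobjects of the \emph{same} slope as $E$ sitting on an actual wall. That is enough for (1) and (3), but for (2) you must also control the classes of the Harder--Narasimhan factors of $E$ at every point of $K$, in order to know that the semistable locus is closed in the chamber and that an equal-slope destabilizing sequence really appears on its boundary (rather than a destabilizer of strictly larger slope whose class escapes to infinity). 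There the constraints ``$v_{1}^{\beta}$ bounded and $0\leqslant\overline{\Delta}_{H}\leqslant C$'' alone do not bound a class in $(\Lambda/\Lambda_{\sigma})_{\mathbb{R}}\cong\mathbb{R}^{3}$ (the set $\{0\leqslant y^{2}-2xz\leqslant C\}\cap\{|y-\beta x|\leqslant c\}$ is unbounded), so you need the usual Harder--Narasimhan polygon argument to bound the real parts $\Re Z^{\beta,\alpha}(A_{i})$ of the factors as well, after which the same slab-plus-discriminant argument applies. With that supplement, and the observation that each chamber is a convex sector of $U$ so your continuity argument along segments is legitimate, the proof closes.
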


There are also some useful methods to give an example of tilt semistable object. Let $\mathfrak{D}$ denote the full subcategory of $D_{0}^{b}(X)$ consisting of $E \in \{E \in D_{0}^{b}(X) \mid \mathcal{H}^{l}_{\mathcal{A}}(E)=0 \mbox{ unless } l=-1,0\}$ satisfying one of the following conditions:

(1) $\mathcal{H}^{-1}(E)=0$ and $v_{0}(\mathcal{H}^{0}(E))>0$ with $\mathcal{H}^{0}(E)$ is slope semistable.

(2) $\mathcal{H}^{-1}(E)=0$ and $v_{0}(\mathcal{H}^{0}(E))=0$, $v_{1}(\mathcal{H}^{0}(E))\neq 0$ with any nonzero subobject $C$ of $\mathcal{H}^{0}(E)$ does not satisfy $\dim C \leqslant n-2$.

(3) $\mathcal{H}^{-1}(E)=0$ and $\dim \mathcal{H}^{0}(E) \leqslant n-2$.

(4) $\mathcal{H}^{-1}(E)$ is slope semistable and $\dim \mathcal{H}^{0}(E) \leqslant n-2$. 

\begin{prop}\cite[Lemma 7.2.1 \& Lemma 7.2.2]{bayer2014bridgeland}\label{smalltiltslope}
	Let $\beta \in \mathbb{Q}$. Suppose that
	\begin{align*}
		c=\min\{v_{1}^{\beta}(E)>0 \mid E \in \mathrm{Coh}_{0}^{\beta}(X)\}.
	\end{align*}
	If $E \in \mathrm{Coh}_{0}^{\beta}(X)$ satisfies $v_{1}^{\beta}(E)=c$, then we have:
	
	(1) If there is $\alpha_{0}>\displaystyle\frac{1}{2}\beta^{2}$, such that $E$ is $\nu^{\beta,\alpha_{0}}$-semistable, then $E$ is $\nu^{\beta,\alpha}$-stable for any $\alpha>\displaystyle\frac{1}{2}\beta^{2}$. Moreover, $E \in \mathfrak{D}$ and $\mathrm{Hom}_{D_{0}^{b}(X)}(C,E)=0$ for any $\dim C \leqslant n-2$.
	
	(2) If $E \in \mathfrak{D}$ and $\mathrm{Hom}_{D_{0}^{b}(X)}(C,E)=0$ for any $\dim C \leqslant n-2$, then $E$ is $\nu^{\beta,\alpha}$-stable for any $\alpha>\displaystyle\frac{1}{2}\beta^{2}$.
\end{prop}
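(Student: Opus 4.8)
The plan is to exploit the minimality of $c=\min\{v_1^{\beta}(E)>0\}$, which is well-defined precisely because $\beta\in\mathbb{Q}$ forces the image of $v_1^{\beta}=\Im Z^{\beta,\alpha}$ to be discrete. The single most useful consequence is a dichotomy: in every short exact sequence $0\to F\to E\to G\to 0$ in $\mathrm{Coh}_0^{\beta}(X)$ with $v_1^{\beta}(E)=c$, the quantities $v_1^{\beta}(F)$ and $v_1^{\beta}(G)$ are both $\geqslant 0$ and sum to $c$, so by minimality one of them vanishes. Thus every proper nonzero subobject $F$ is either of minimal weight $v_1^{\beta}(F)=c$ (with phase-$1$ quotient, $\nu^{\beta,\alpha}(G)=+\infty$) or is itself a phase-$1$ object ($v_1^{\beta}(F)=0$, $\nu^{\beta,\alpha}(F)=+\infty$). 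The whole question of $\nu^{\beta,\alpha}$-stability therefore collapses to controlling the phase-$1$ subobjects of $E$; and since the condition $v_1^{\beta}(F)=0$ is independent of $\alpha$, any stability statement obtained this way holds for every $\alpha>\frac{1}{2}\beta^{2}$ at once. This is the mechanism behind the ``no walls'' phenomenon of Proposition \ref{wallandchamber1}: a genuine wall would need a destabilizing sequence with both terms of finite slope, i.e. both $v_1^{\beta}>0$, which minimality forbids.

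For part (1), I would first note that $\nu^{\beta,\alpha_0}$-semistability rules out all phase-$1$ subobjects: if $0\neq F\subseteq E$ had $v_1^{\beta}(F)=0$ then $\nu^{\beta,\alpha_0}(F)=+\infty>\nu^{\beta,\alpha_0}(E)$ (the latter finite as $c>0$), even when $Z^{\beta,\alpha_0}(F)=0$, contradicting semistability. Hence every proper subobject $F$ satisfies $v_1^{\beta}(F)=c$, its quotient is phase-$1$, and $\nu^{\beta,\alpha}(F)<+\infty=\nu^{\beta,\alpha}(E/F)$ strictly for all $\alpha$; so $E$ is $\nu^{\beta,\alpha}$-stable for every $\alpha>\frac{1}{2}\beta^{2}$. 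The Hom-vanishing is then immediate: any $\dim C\leqslant n-2$ object lies in $\mathrm{Coh}_0^{>\beta}(X)\subseteq\mathrm{Coh}_0^{\beta}(X)$ (since $v_0=v_1=0$ gives slope $+\infty$), and a nonzero map $C\to E$ would have image a nonzero phase-$1$ subobject of $E$, which we just excluded. To place $E$ in $\mathfrak{D}$ I would apply the dichotomy to $0\to\mathcal{H}^{-1}(E)[1]\to E\to\mathcal{H}^{0}(E)\to 0$ with $\mathcal{H}^{-1}(E)\in\mathrm{Coh}_0^{\leqslant\beta}(X)$, $\mathcal{H}^{0}(E)\in\mathrm{Coh}_0^{>\beta}(X)$. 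If $\mathcal{H}^{-1}(E)[1]$ is the phase-$1$ piece it is a phase-$1$ subobject, hence zero, so $E$ is a sheaf with $v_1^{\beta}=c$; a short HN argument as in Proposition \ref{wallandchamberlem1} then forces it to be slope-semistable (case (1)) or, when $v_0=0$, to have no $\dim\leqslant n-2$ subsheaf by the Hom-vanishing (case (2)). If instead $\mathcal{H}^{0}(E)$ is phase-$1$, then $\dim\mathcal{H}^{0}(E)\leqslant n-2$ and the same HN argument makes $\mathcal{H}^{-1}(E)$ slope-semistable, giving case (4).

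For part (2) I would run this in reverse. Assuming $E\in\mathfrak{D}$, $\mathrm{Hom}(C,E)=0$ for all $\dim C\leqslant n-2$, and $v_1^{\beta}(E)=c$, suppose $E$ were not $\nu^{\beta,\alpha}$-stable for some $\alpha$ and pick a destabilizer $F$ with $\nu^{\beta,\alpha}(F)>\nu^{\beta,\alpha}(E)$. The dichotomy gives $v_1^{\beta}(F)=0$ or $v_1^{\beta}(E/F)=0$; the latter makes $\nu^{\beta,\alpha}(E/F)=+\infty$, incompatible with $\nu^{\beta,\alpha}(F)$ being strictly larger, so $F$ must be a phase-$1$ subobject. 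In cases (1)--(3) of $\mathfrak{D}$ one has $\mathcal{H}^{-1}(E)=0$, so the cohomology long exact sequence forces $\mathcal{H}^{-1}(F)=0$ and $F$ is a subsheaf of $E$ in $\mathrm{Coh}_0^{>\beta}(X)$ with $v_1^{\beta}(F)=0$, hence $\dim F\leqslant n-2$, and the inclusion contradicts the Hom-vanishing. In case (4) I would treat the two-term complex directly, resolving $F$ through its cohomology and using slope-semistability of $\mathcal{H}^{-1}(E)$ together with $\dim\mathcal{H}^{0}(E)\leqslant n-2$ to produce either a forbidden map from a low-dimensional sheaf or a violation of that slope-semistability.

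The hard part will be the structural case analysis: placing $E$ in $\mathfrak{D}$ in part (1), and dually excluding phase-$1$ subobjects in case (4) of part (2), where $E$ is a genuine two-term complex and the interaction between the tilted-heart sub/quotient structure and the underlying sheaf cohomology must be tracked carefully via the long exact sequence. A secondary but essential subtlety is that $Z^{\beta,\alpha}$ is only a \emph{weak} stability function when $\dim Y=3$, so objects with $Z^{\beta,\alpha}=0$ (dimension $\leqslant n-3$) are also phase-$1$ and must be swept into the same argument; this is exactly why the hypothesis is phrased for all $C$ with $\dim C\leqslant n-2$ rather than only for classes with $Z^{\beta,\alpha}=0$.
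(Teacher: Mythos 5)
Your argument is correct, and it is essentially the argument of the cited source: the paper itself gives no proof of Proposition \ref{smalltiltslope} beyond the reference to \cite[Lemma 7.2.1 \& Lemma 7.2.2]{bayer2014bridgeland}, and your minimality dichotomy ($v_{1}^{\beta}(F)+v_{1}^{\beta}(G)=c$ with both terms nonnegative forces one to vanish) is exactly the mechanism used there, correctly transplanted to $\mathrm{Coh}_{0}^{\beta}(X)$. The two places you flag as delicate --- placing $E$ in $\mathfrak{D}$ via the sequence $0\to\mathcal{H}^{-1}(E)[1]\to E\to\mathcal{H}^{0}(E)\to 0$, and excluding phase-$1$ subobjects in case (4) of part (2) --- do go through as you describe: in the first, a non-semistable $\mathcal{H}^{-1}(E)$ would contribute a slope-$\beta$ HN factor $F_{1}$ with $F_{1}[1]$ a phase-$1$ subobject of $E$, and in the second, a phase-$1$ subobject $F$ has $\mathcal{H}^{-1}(F)$ semistable of slope exactly $\beta$ inside $\mathcal{H}^{-1}(E)$ of slope $<\beta$ (contradicting its semistability) or else $F=\mathcal{H}^{0}(F)$ of dimension $\leqslant n-2$ (contradicting the Hom-vanishing).
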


\begin{prop}\label{ipushforwardoftiltstableobject}
	We fix $(\beta,\alpha) \in U$. If $0 \neq E_{0} \in \mathrm{Coh}^{\beta}(Y)$ is $\nu^{\beta,\alpha}$-(semi)stable, then $0 \neq i_{*}E_{0} \in \mathrm{Coh}_{0}^{\beta}(X)$ is also $\nu^{\beta,\alpha}$-(semi)stable with the same tilt-slope.
\end{prop}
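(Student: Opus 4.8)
The plan is to reduce the statement to the (semi)stability of $E_{0}$ itself by pushing forward along $\pi$. First I would check that $i_{*}E_{0}$ really lies in the heart $\mathrm{Coh}_{0}^{\beta}(X)$: the functor $i_{*}\colon\mathrm{Coh}(Y)\to\mathrm{Coh}_{0}(X)$ is exact and, by Proposition \ref{ipushforwardofslopestablesheaf}, sends a slope semistable sheaf of slope $>\beta$ (resp. $\leqslant\beta$) to one of the same slope. Hence it carries the torsion pair $(\mathrm{Coh}^{>\beta}(Y),\mathrm{Coh}^{\leqslant\beta}(Y))$ into $(\mathrm{Coh}_{0}^{>\beta}(X),\mathrm{Coh}_{0}^{\leqslant\beta}(X))$, and therefore maps $\mathrm{Coh}^{\beta}(Y)=\langle\mathrm{Coh}^{>\beta}(Y),\mathrm{Coh}^{\leqslant\beta}(Y)[1]\rangle$ into $\mathrm{Coh}_{0}^{\beta}(X)$. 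Since $\pi\circ i=\mathrm{id}_{Y}$ we have $\pi_{*}i_{*}E_{0}\cong E_{0}$, so $v_{i}(i_{*}E_{0})=H^{n-i}\mathrm{ch}_{i}(\pi_{*}i_{*}E_{0})=H^{n-i}\mathrm{ch}_{i}(E_{0})$; in particular $i_{*}E_{0}\neq 0$ and $\nu^{\beta,\alpha}(i_{*}E_{0})=\nu^{\beta,\alpha}(E_{0})$, which already confirms the assertion about the tilt-slope.

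The heart of the argument is that $\pi_{*}$ descends everything back to $Y$. By Proposition \ref{basicpropofproj} the functor $\pi_{*}\colon\mathrm{Coh}_{0}(X)\to\mathrm{Coh}(Y)$ is exact, and by Proposition \ref{pipushforwardofslopestablesheaf} it preserves slope semistability and slopes; exactly as above this shows that $\pi_{*}$ is t-exact for the two tilted t-structures and maps $\mathrm{Coh}_{0}^{\beta}(X)$ into $\mathrm{Coh}^{\beta}(Y)$. Now take any short exact sequence $0\to A\to i_{*}E_{0}\to B\to 0$ in $\mathrm{Coh}_{0}^{\beta}(X)$. Applying the t-exact functor $\pi_{*}$ yields a short exact sequence $0\to\pi_{*}A\to E_{0}\to\pi_{*}B\to 0$ in $\mathrm{Coh}^{\beta}(Y)$. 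Because $v_{i}(M)=H^{n-i}\mathrm{ch}_{i}(\pi_{*}M)$ for every $M\in D_{0}^{b}(X)$, the numerical class of $A$ on $X$ equals that of $\pi_{*}A$ on $Y$, so $\nu^{\beta,\alpha}(A)=\nu^{\beta,\alpha}(\pi_{*}A)$ and likewise for $B$. Semistability of $E_{0}$ then gives $\nu^{\beta,\alpha}(\pi_{*}A)\leqslant\nu^{\beta,\alpha}(E_{0})$, i.e. $\nu^{\beta,\alpha}(A)\leqslant\nu^{\beta,\alpha}(i_{*}E_{0})$, which settles the semistable case.

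For the stable case I would use that $\pi_{*}$ reflects the zero object: by Proposition \ref{basicpropofproj}, $\pi_{*}M\cong 0$ forces $M\cong 0$. Thus a proper nontrivial subobject $A$ (so $A\neq 0$ and $B\neq 0$) descends to a proper nontrivial subobject $\pi_{*}A$ of $E_{0}$, and the strict inequality $\nu^{\beta,\alpha}(\pi_{*}A)<\nu^{\beta,\alpha}(E_{0})$ for stable $E_{0}$ transports back to $\nu^{\beta,\alpha}(A)<\nu^{\beta,\alpha}(i_{*}E_{0})$. The point to watch — and the only place where the tilted setting genuinely differs from the slope case of Proposition \ref{ipushforwardofslopestablesheaf} — is that a subobject of $i_{*}E_{0}$ in $\mathrm{Coh}_{0}^{\beta}(X)$ need not itself be of the form $i_{*}F_{0}$, so one cannot argue directly on $X$. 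The substitute is the exactness of $\pi_{*}$ on the tilted hearts together with the identity $\pi_{*}i_{*}=\mathrm{id}$; verifying this t-exactness (equivalently, that $\pi_{*}$ carries the torsion pair on $X$ to the one on $Y$) is the main technical step, and it follows cleanly from Propositions \ref{basicpropofproj} and \ref{pipushforwardofslopestablesheaf}.
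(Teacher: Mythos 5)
Your proposal is correct and follows essentially the same route as the paper: apply the exact functor $\pi_{*}$ to a short exact sequence $0 \to A \to i_{*}E_{0} \to B \to 0$ in $\mathrm{Coh}_{0}^{\beta}(X)$, use $\pi_{*}i_{*}=\mathrm{id}$ and the equality of numerical invariants to transfer the slope comparison to $E_{0}$ in $\mathrm{Coh}^{\beta}(Y)$. You simply make explicit several steps the paper leaves implicit (that $i_{*}$ and $\pi_{*}$ respect the torsion pairs and hence the tilted hearts, and that $\pi_{*}$ reflects zero for the stable case), which is fine.
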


\begin{pf}
	Consider any exact sequence
	\begin{align*}
		0 \to F \to i_{*}E_{0} \to G \to 0
	\end{align*} in $\mathrm{Coh}^{\beta}_{0}(X)$. Use the functor $\pi_{*}$ on it, the we have the exact sequence in $\mathrm{Coh}^{\beta}(\mathbb{P}^{n})$:
	\begin{align*}
		0 \to \pi_{*}F \to E_{0} \to \pi_{*}G \to 0.
	\end{align*} Because, $E_{0}$ is $\nu^{\beta,\alpha}$-(semi)stable, we have
	\begin{align*}
		\nu^{\beta,\alpha}(F)=\nu^{\beta,\alpha}(\pi_{*}F)<(\leqslant)\nu^{\beta,\alpha}(\pi_{*}G)=\nu^{\beta,\alpha}(G),
	\end{align*} which implies that $E$ is also $\nu^{\beta,\alpha}$-(semi)stable with the same tilt-slope.
\end{pf}

\subsection{Bogomolov-Gieseker type inequality with $\mathrm{ch}_{3}$}\label{subsectionBG}
Now, we will assume that $\dim Y \geqslant 3$. The following definition is a generalization of \cite{bayer2014bridgeland}.

Because the central charge $Z^{\beta,\alpha}$ on $\mathrm{Coh}_{0}^{\beta}(X)$ satisfies the Harder-Narasimhan property on $\mathrm{Coh}^{\beta}_{0}(X)$ for any $(\beta,\alpha) \in U$, we have the following definition:
 
\begin{defn}
	For any $(\beta,\alpha) \in U$, let $(\mathrm{Coh}^{>\beta,\alpha}_{0}(X),\mathrm{Coh}^{\leqslant\beta,\alpha}_{0}(X))$ be the torsion pair of $\mathrm{Coh}_{0}^{\beta}(X)$ determined by:
	\begin{itemize}
		\item $\mathrm{Coh}^{>\beta,\alpha}_{0}(X)$ is the smallest extension-closed subcategory which contains $\nu^{\beta,\alpha}$-semistable objects $E \in \mathrm{Coh}_{0}^{\beta}(X)$ of slope $>\beta$;
		\item $\mathrm{Coh}^{\leqslant \beta,\alpha}_{0}(X)$ is the smallest extension-closed subcategory which contains $\nu^{\beta,\alpha}$-semistable objects $E \in \mathrm{Coh}_{0}^{\beta}(X)$ of slope $\leqslant \beta$.
	\end{itemize}
	Let $\mathrm{Coh}^{\beta,\alpha}_{0}(X)$ be the tilting heart of $\mathrm{Coh}_{0}^{\beta}(X)$ with respect to the torsion pair $(\mathrm{Coh}^{>\beta,\alpha}_{0}(X),\mathrm{Coh}^{\leqslant\beta,\alpha}_{0}(X))$.
\end{defn}

\begin{lem}\label{imaginarypart2}
	Let $(\beta,\alpha) \in U$. For any object $0 \neq E \in \mathrm{Coh}^{\beta,\alpha}_{0}(X)$, we have $v_{2}^{\beta}(E)-(\alpha-\displaystyle\frac{1}{2}\beta^{2})v_{0}^{\beta}(E) \geqslant 0$. Moreover, if $v_{2}^{\beta}(E)-(\alpha-\displaystyle\frac{1}{2}\beta^{2})v_{0}^{\beta}(E)=0$, then $\dim \mathcal{H}^{0}_{\beta}(E) \leqslant n-3$ or $\mathcal{H}^{0}_{\beta}(E)=0$ and $\mathcal{H}^{-1}_{\beta}(E) \in \mathrm{Coh}^{\beta}_{0}(X)$ is a $\nu^{\beta,\alpha}$-semistable object with tilt-slope $\beta$ or $\mathcal{H}^{-1}_{\beta}(E)=0$.
\end{lem}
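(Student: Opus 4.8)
The plan is to recognise the quantity $v_2^\beta(E)-(\alpha-\tfrac12\beta^2)v_0^\beta(E)$ as the imaginary part of the prospective central charge $Z^{\beta,\alpha,a}$ and to prove its nonnegativity on the double tilt by the standard reduction to $\nu^{\beta,\alpha}$-semistable objects, exactly as in the threefold argument of \cite{bayer2014bridgeland}. First I would record the elementary identity
\begin{align*}
v_2^\beta(E)-\left(\alpha-\tfrac12\beta^2\right)v_0^\beta(E)=\left(v_2(E)-\alpha v_0(E)\right)-\beta\, v_1^\beta(E),
\end{align*}
obtained directly from $v_0^\beta=v_0$, $v_1^\beta=v_1-\beta v_0$, $v_2^\beta=v_2-\beta v_1+\tfrac12\beta^2 v_0$. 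Writing $q(E)$ for this common value, the identity gives $q(E)=v_1^\beta(E)\left(\nu^{\beta,\alpha}(E)-\beta\right)$ whenever $v_1^\beta(E)\neq0$, and $q(E)=v_2(E)-\alpha v_0(E)=-\Re Z^{\beta,\alpha}(E)$ when $v_1^\beta(E)=0$. Since $q$ is additive on short exact sequences, the whole problem reduces to controlling the sign of $q$ on the graded pieces of $E$.

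Next I would use the defining short exact sequence of the tilting heart,
\begin{align*}
0\to\mathcal{H}^{-1}_\beta(E)[1]\to E\to\mathcal{H}^0_\beta(E)\to0,
\end{align*}
with $\mathcal{H}^0_\beta(E)\in\mathrm{Coh}^{>\beta,\alpha}_0(X)$ and $\mathcal{H}^{-1}_\beta(E)\in\mathrm{Coh}^{\leqslant\beta,\alpha}_0(X)$, together with the fact that both torsion classes are built from $\nu^{\beta,\alpha}$-semistable objects. By additivity it then suffices to check: (i) $q(F)\geqslant0$ for a $\nu^{\beta,\alpha}$-semistable $F$ of slope $>\beta$, and (ii) $q(F[1])=-q(F)\geqslant0$ for a $\nu^{\beta,\alpha}$-semistable $F$ of slope $\leqslant\beta$. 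In case (i), either $v_1^\beta(F)>0$ and $\nu^{\beta,\alpha}(F)>\beta$, forcing $q(F)>0$, or $v_1^\beta(F)=0$, in which case $F$ has infinite slope and the weak stability function $Z^{\beta,\alpha}$ gives $\Re Z^{\beta,\alpha}(F)\leqslant0$, so $q(F)\geqslant0$. In case (ii) one has $v_1^\beta(F)>0$ automatically, since an object of $\mathrm{Coh}^\beta_0(X)$ with $v_1^\beta=0$ has infinite slope and so lies in the torsion class rather than in $\mathrm{Coh}^{\leqslant\beta,\alpha}_0(X)$; then $\nu^{\beta,\alpha}(F)\leqslant\beta$ gives $q(F)\leqslant0$, hence $q(F[1])\geqslant0$. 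Summing over the filtrations yields $q(E)\geqslant0$.

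For the equality statement I would run this reduction backwards. As $q(\mathcal{H}^0_\beta(E))\geqslant0$ and $q(\mathcal{H}^{-1}_\beta(E)[1])\geqslant0$ sum to $0$, both vanish. On the torsion side, $q(\mathcal{H}^0_\beta(E))=0$ forces $q(F)=0$ on every semistable factor $F$; the strict inequality in case (i) then rules out factors with $v_1^\beta(F)>0$, so each factor has $v_1^\beta(F)=0$ and, by the identity, $v_2(F)-\alpha v_0(F)=0$, i.e.\ $Z^{\beta,\alpha}(F)=0$. Since the kernel of $Z^{\beta,\alpha}$ consists precisely of objects of dimension $\leqslant n-3$, this gives $\dim\mathcal{H}^0_\beta(E)\leqslant n-3$ (or $\mathcal{H}^0_\beta(E)=0$). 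On the free side, $q(\mathcal{H}^{-1}_\beta(E))=0$ with every Harder--Narasimhan factor satisfying $v_1^\beta>0$ and $q\leqslant0$ forces each factor to have slope exactly $\beta$; hence $\mathcal{H}^{-1}_\beta(E)$ is $\nu^{\beta,\alpha}$-semistable of tilt-slope $\beta$ (or zero). This is precisely the stated trichotomy.

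The main obstacle is not the inequality itself, which is mechanical, but the careful bookkeeping of the degenerate locus $v_1^\beta=0$. I must confirm that every object of $\mathrm{Coh}^\beta_0(X)$ with $v_1^\beta=0$ genuinely sits in the torsion class $\mathrm{Coh}^{>\beta,\alpha}_0(X)$ --- so that in case (ii) the free factors really do have strictly positive $v_1^\beta$ --- and that the $Z^{\beta,\alpha}$-null objects are exactly those of dimension $\leqslant n-3$, so that the equality analysis lands on the trichotomy as written rather than on a weaker statement. Once these two points are pinned down, everything else follows from additivity of $q$ and the definition of the second tilt.
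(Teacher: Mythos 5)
Your proposal is correct and follows essentially the same route as the paper: decompose $E$ via the torsion pair $(\mathrm{Coh}^{>\beta,\alpha}_{0}(X),\mathrm{Coh}^{\leqslant\beta,\alpha}_{0}(X))$, use additivity of $q$ and the sign of $\nu^{\beta,\alpha}-\beta$ on each piece (with the weak stability function handling the $v_{1}^{\beta}=0$ case), and identify the equality locus with $\ker Z^{\beta,\alpha}$ on the torsion part and with semistable objects of tilt-slope exactly $\beta$ on the free part. The two bookkeeping points you flag are exactly the ones the paper verifies, so no gap remains.
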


\begin{pf}
	For any object $E \in \mathrm{Coh}^{\beta,\alpha}_{0}(X)$, we have a short exact sequence in $\mathrm{Coh}^{\beta,\alpha}_{0}(X)$ as follows:
	\begin{align*}
		0 \to F[1] \to E \to T \to 0,
	\end{align*}where $T \in \mathrm{Coh}^{>\beta,\alpha}_{0}(X)$ and $F \in \mathrm{Coh}^{\leqslant \beta,\alpha}_{0}(X)$. Then
	\begin{align*}
		v_{2}^{\beta}(E)-(\alpha-\displaystyle\frac{1}{2}\beta^{2})v_{0}^{\beta}(E)=v_{2}^{\beta}(T)-(\alpha-\displaystyle\frac{1}{2}\beta^{2})v_{0}^{\beta}(T)-(v_{2}^{\beta}(F)-(\alpha-\displaystyle\frac{1}{2}\beta^{2})v_{0}^{\beta}(F)).
	\end{align*} For any nonzero object $F \in \mathrm{Coh}^{\leqslant \beta,\alpha}_{0}(X)$, we have $\nu^{\beta,\alpha}(F) \leqslant \nu^{\beta,\alpha}_{\max}(F) \leqslant \beta$. In other words, we have  $v_{2}^{\beta}(F)-(\alpha-\displaystyle\frac{1}{2}\beta^{2})v_{0}^{\beta}(F) \leqslant 0$ with equality holds if and only if $F$ is a $\nu^{\beta,\alpha}$-semistable object with tilt-slope $\beta$. 
	
	Similarly, if $T \in \mathrm{Coh}^{>\beta,\alpha}_{0}(X)$ and $v_{1}(T)-\beta v_{0}(T)>0$, then we have $\nu^{\beta,\alpha}(T) \geqslant \nu^{\beta,\alpha}_{\min}(T)>\beta$, i.e. $v_{2}^{\beta}(T)-(\alpha-\displaystyle\frac{1}{2}\beta^{2})v_{0}^{\beta}(T)>0$. If $v_{1}(T)-\beta v_{0}(T)=0$, then $v_{2}^{\beta}(T)-(\alpha-\displaystyle\frac{1}{2}\beta^{2})v_{0}^{\beta}(T)=v_{2}(T)-\alpha v_{0}(T) \geqslant 0$ by the definition of weak stability condition. In conclusion, we have $v_{2}^{\beta}(T)-(\alpha-\displaystyle\frac{1}{2}\beta^{2})v_{0}^{\beta}(T) \geqslant 0$ with equality holds if and only if $Z^{\beta,\alpha}(T)=0$ if and only if $\dim T \leqslant n-3$ or $T=0$. Thus we get our conclusion.
\end{pf}

For any $\beta,\alpha,a \in \mathbb{R}$ with $\alpha>\displaystyle\frac{1}{2}\beta^{2}$, we could construct a group homomorphism as follows:
\begin{align}
	\label{centralcharge}Z^{\beta,\alpha,a}=-v^{\beta}_{3}+av^{\beta}_{1}+\sqrt{-1}\left(v^{\beta}_{2}-(\alpha-\frac{\beta^{2}}{2})v^{\beta}_{0}\right) \colon \Lambda \to \mathbb{C}.
\end{align}

We also have a conjecture about the Bogomolov-Gieseker type inequality as in \cite{bayer2014bridgeland}:

\begin{conj}\label{conj}
	For any $(\beta,\alpha) \in U$ and any $\nu^{\beta,\alpha}$-semistable object $E \in \mathrm{Coh}_{0}^{\beta}(X)$ with tilt-slope $\beta$, we have the following Bogomolov-Gieseker type inequality:
	\begin{align*}
		v^{\beta}_{3}(E) \leqslant \frac{2\alpha-\beta^{2}}{6}v^{\beta}_{1}(E).
	\end{align*}
\end{conj}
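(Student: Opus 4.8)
The plan is to recast the inequality as the positivity of a real part and then invoke the auxiliary-heart criterion of Lemma~\ref{BGlem}. Setting $a=\frac{2\alpha-\beta^{2}}{6}$, the asserted bound $v_{3}^{\beta}(E)\leqslant a\,v_{1}^{\beta}(E)$ is literally $\Re Z^{\beta,\alpha,a}(E)\geqslant 0$. An object $E\in\mathrm{Coh}_{0}^{\beta}(X)$ of tilt-slope $\beta$ satisfies $v_{2}^{\beta}(E)-(\alpha-\frac{1}{2}\beta^{2})v_{0}^{\beta}(E)=0$, i.e. $\Im Z^{\beta,\alpha,a}(E)=0$; by Lemma~\ref{imaginarypart2} its shift $E[1]$ lies in the double-tilting heart $\mathcal{A}:=\mathrm{Coh}_{0}^{\beta,\alpha}(X)$ and belongs to the finite-length subcategory $\mathcal{I}=\{G\in\mathcal{A}\mid\Im Z^{\beta,\alpha,a}(G)=0\}$. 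Taking a Jordan--H\"older filtration of $E[1]$ in $\mathcal{I}$ and using additivity of $Z^{\beta,\alpha,a}$, I would reduce the claim to the stable factors, that is, to proving $\Re Z^{\beta,\alpha,a}(F)\geqslant 0$ for every $F$ with $F[1]$ a \emph{simple} object of $\mathcal{I}$. This is exactly the conclusion of Lemma~\ref{BGlem}, so the whole problem becomes the verification of that lemma's hypotheses for a well-chosen second heart $\mathcal{B}$.

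Before choosing $\mathcal{B}$ I would first shrink the parameter range. Using Macr\`i's deformation argument together with the wall-and-chamber structure of Proposition~\ref{wallandchamber1}, one may deform $(\beta,\alpha)$ along the line of constant tilt-slope and thereby reduce to arbitrarily small $\omega=\sqrt{2\alpha-\beta^{2}}$ with $\beta,\alpha$ rational. Then tensoring by $\mathcal{O}(1)$, which shifts $\beta\mapsto\beta+1$, and the relative derived dual functor of Appendix~\ref{appendix2}, which reflects $\beta\mapsto-\beta$, are autoequivalences preserving both tilt-stability and the two sides of the inequality; composing them normalizes to the fundamental strip $-\frac{1}{2}\leqslant\beta\leqslant 0$. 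This is precisely the reduction to Conjecture~\ref{conj3}.

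In the target case $X=\mathrm{Tot}(\omega_{\mathbb{P}^{3}})$, I would take $\mathcal{A}=\mathrm{Coh}_{0}^{\beta,\alpha}(X)$, $Z=Z^{\beta,\alpha,a}$ at the boundary value of $a$, and $\mathcal{B}$ the extension-closed subcategory generated by the shifted exceptional objects $i_{*}\mathcal{O}(1)$, $i_{*}\mathcal{O}[1]$, $i_{*}\mathcal{T}(-2)[2]$, $i_{*}\mathcal{O}(-1)[3]$ built from a full exceptional collection on $\mathbb{P}^{3}$. Verifying Lemma~\ref{BGlem}(1) amounts to locating each generator in $\langle\mathcal{A},\mathcal{A}[1]\rangle$ and showing that they assemble into a bounded heart; this rests on the explicit tilt-stability of $i_{*}$ of line bundles and of $i_{*}\mathcal{T}(-2)$, which I would extract from Propositions~\ref{smalltiltslope} and~\ref{ipushforwardoftiltstableobject}. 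For (2) I would compute the four central charges explicitly and check that, once $\omega$ is small enough, all of them lie in a common rotated half-plane $\{r\exp(\sqrt{-1}\pi\phi)\mid\phi_{0}\leqslant\phi\leqslant\phi_{0}+1\}$. With (1) and (2) in hand the lemma yields $\Re Z^{\beta,\alpha,a}(F)\geqslant 0$ as soon as (3) holds.

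The main obstacle I foresee is condition (3), namely $F[2]\notin\mathcal{B}$ for every simple $F[1]\in\mathcal{I}$. This requires tight control over the invariants $v(F)=(v_{0},v_{1},v_{2},v_{3})$ of a tilt-stable object $F$ of tilt-slope $\beta$, enough to rule out that $F[2]$ could be assembled as an iterated extension of the four generators of $\mathcal{B}$. I expect this to follow from combining the numerical description of the classes realized in $\mathcal{B}$ (an explicit cone in $\Lambda$ governed by the Euler form of the exceptional collection) with the generalized discriminant bound $\overline{\Delta}_{H}(F)\geqslant 0$ and the smallness of $\omega$, which together constrain $v(F)$ so severely that the class of $F[2]$ cannot occur in $\mathcal{B}$. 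Establishing (1), i.e. that these four shifted objects genuinely generate a heart inside $\langle\mathcal{A},\mathcal{A}[1]\rangle$, is the second delicate point, since it depends on the precise phase of $i_{*}\mathcal{T}(-2)$ relative to the $i_{*}\mathcal{O}(j)$ as $\omega\to 0$.
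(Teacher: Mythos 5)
Your overall architecture matches the paper's: rewrite the inequality as $\Re Z^{\beta,\alpha,a_{0}}(E)\geqslant 0$ with $a_{0}=\frac{2\alpha-\beta^{2}}{6}$, reduce via the Jordan--H\"older filtration in the finite-length category $\mathcal{I}^{\beta,\alpha}_{0}(X)$ to simple objects $F[1]$, reduce the parameters to small rational $\omega$ and $-\frac{1}{2}\leqslant\beta\leqslant 0$ by Propositions \ref{reduce1}--\ref{reduce4}, and then feed Bridgeland's exceptional heart $\mathcal{B}=\langle i_{*}\mathcal{O}(1),i_{*}\mathcal{O}[1],i_{*}\mathcal{T}(-2)[2],i_{*}\mathcal{O}(-1)[3]\rangle$ into Lemma \ref{BGlem}. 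Conditions (1) and (2) are handled exactly as in the paper (Proposition \ref{tiltstablebobject} plus the explicit computation of the four central charges).

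The genuine gap is your treatment of condition (3). You propose to rule out $F[2]\in\mathcal{B}$ by a purely numerical argument: constrain $v(F)$ via $\overline{\Delta}_{H}(F)\geqslant 0$ and small $\omega$, and show the class of $F[2]$ cannot lie in the cone of effective classes of $\mathcal{B}$. This cannot work as stated. Since $\mathcal{B}$ is the heart of a bounded t-structure, the classes $v(S_{0}),\dots,v(S_{3})$ span $\Lambda$ and the effective cone of $\mathcal{B}$ is a full-dimensional simplicial cone; moreover $v(F[2])=v(F)$, and nothing in the discriminant bound prevents this class from being a nonnegative integral combination of the $v(S_{i})$ --- membership in a heart is simply not detected at the level of $K$-theory classes. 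The paper's argument (Proposition \ref{finalpartproposition}) is instead categorical and short: $\mathcal{B}$ is of finite length with simples $S_{0},\dots,S_{3}$, so any nonzero object of $\mathcal{B}$ admits a nonzero map to some $S_{i}$; one then checks $\mathrm{Hom}_{D^{b}_{0}(X)}(F[2],S_{i})=0$ for each $i$ directly from the positions of $F$ and of $S_{i}$ (or its shifts) inside the single heart $\mathrm{Coh}^{\beta}_{0}(X)$ --- e.g.\ $F,S_{0}\in\mathrm{Coh}^{\beta}_{0}(X)$ kills $\mathrm{Hom}(F[2],S_{0})$, and for $S_{3}$ one uses that $i_{*}\mathcal{O}(-1)[1]$ is $\nu^{\beta,\alpha}$-stable of tilt-slope strictly less than $\beta$ while $F$ has tilt-slope $\beta$. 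You should replace your numerical sketch with this Hom-vanishing argument; without it the proof is incomplete at precisely the step you flagged as the main obstacle.
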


If the above conjecture is true, then we will have:
\begin{conj}\label{conj2}
	For any $(\beta,\alpha) \in U$ and $a>\displaystyle\frac{\omega^{2}}{6}=\frac{2\alpha-\beta^{2}}{6}$, the central charge $Z^{\beta,\alpha,a}$ is a weak stability function on the bounded heart $\mathrm{Coh}^{\beta,\alpha}_{0}(X)$ and 
	\begin{align*}
		\{E \in \mathrm{Coh}^{\beta,\alpha}_{0}(X) \mid Z^{\beta,\alpha,a}(E)=0\}=\{E \in \mathrm{Coh}_{0}(X) \mid \dim E \leqslant n-4\}.
	\end{align*}Moreover, if $\dim Y=3$, then $Z^{\beta,\alpha,a}$ is a stability function.
\end{conj}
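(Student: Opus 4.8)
The plan is to prove the implication claimed in the text: assuming Conjecture \ref{conj} (the $\mathrm{ch}_3$ Bogomolov-Gieseker inequality), the central charge $Z^{\beta,\alpha,a}$ is a weak stability function on $\mathrm{Coh}^{\beta,\alpha}_0(X)$ with the stated kernel, and a genuine stability function when $n=\dim Y=3$. The imaginary part is $\Im Z^{\beta,\alpha,a}(E)=v_2^{\beta}(E)-(\alpha-\frac{\beta^2}{2})v_0^{\beta}(E)$, which is already $\geq 0$ on the whole heart by Lemma \ref{imaginarypart2}. So the weak-stability-function property reduces to showing $\Re Z^{\beta,\alpha,a}(E)=-v_3^{\beta}(E)+av_1^{\beta}(E)\leq 0$ whenever $\Im Z^{\beta,\alpha,a}(E)=0$.

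First I would analyze the locus $\Im Z=0$ via the defining torsion-pair sequence $0\to F[1]\to E\to T\to 0$ with $T\in\mathrm{Coh}^{>\beta,\alpha}_0(X)$ and $F\in\mathrm{Coh}^{\leqslant\beta,\alpha}_0(X)$. Writing $g(-)=v_2^{\beta}(-)-(\alpha-\frac{\beta^2}{2})v_0^{\beta}(-)$, the proof of Lemma \ref{imaginarypart2} gives $g(T)\geqslant 0$ and $g(F)\leqslant 0$, so $\Im Z(E)=g(T)-g(F)$ is a sum of two nonnegative terms and vanishes exactly when $g(T)=0$ and $g(F)=0$. By the equality cases of that lemma this forces $\dim T\leqslant n-3$ (so $v_0^{\beta}(T)=v_1^{\beta}(T)=v_2^{\beta}(T)=0$ and only $v_3$ survives) and forces $F$ to be $\nu^{\beta,\alpha}$-semistable of tilt-slope exactly $\beta$; in particular $v_1^{\beta}(F)>0$, since a nonzero semistable object with $v_1^{\beta}=0$ would have slope $+\infty$ and hence lie in the torsion part $\mathrm{Coh}^{>\beta,\alpha}_0(X)$.

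Next I would split $\Re Z(E)=[-v_3^{\beta}(T)+av_1^{\beta}(T)]+[v_3^{\beta}(F)-av_1^{\beta}(F)]$ along the sequence. The first bracket equals $-v_3(T)$, which is $\leqslant 0$ because $\mathrm{ch}_3(\pi_*T)$ is an effective class for a sheaf of dimension $\leqslant n-3$ and $H$ is ample. The second bracket is where Conjecture \ref{conj} enters: it yields $v_3^{\beta}(F)\leqslant\frac{2\alpha-\beta^2}{6}v_1^{\beta}(F)$, and combined with $a>\frac{2\alpha-\beta^2}{6}$ and $v_1^{\beta}(F)>0$ this gives $v_3^{\beta}(F)-av_1^{\beta}(F)\leqslant(\frac{2\alpha-\beta^2}{6}-a)v_1^{\beta}(F)<0$ for $F\neq 0$. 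Hence $\Re Z(E)\leqslant 0$, proving the weak-stability-function property. For the kernel, $Z(E)=0$ needs both brackets to vanish: the second forces $F=0$, and the first forces $v_3(T)=0$, i.e. $\dim T\leqslant n-4$ (a nonzero pure $(n-3)$-dimensional sheaf has $H^{n-3}\mathrm{ch}_3>0$). Conversely any sheaf of dimension $\leqslant n-4$ has $v_0=v_1=v_2=v_3=0$ and lies in the heart, giving the asserted description of $\ker Z^{\beta,\alpha,a}$.

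Finally, when $\dim Y=3$ the condition $\dim E\leqslant n-4=-1$ is vacuous, so $\ker Z^{\beta,\alpha,a}=0$; moreover in the case $\Im Z(E)=0$ with $E\neq 0$ the two brackets are strictly negative (the $F$-bracket when $F\neq 0$, and the $T$-bracket $-\mathrm{length}(T)<0$ when $F=0$ and $T\neq 0$, as $T$ is then $0$-dimensional), so $\Re Z(E)<0$ and $Z^{\beta,\alpha,a}$ is a stability function. The only genuine obstacle here is the input Conjecture \ref{conj} itself; granting it, the remaining argument is bookkeeping with the torsion-pair sequence together with the equality analysis of Lemma \ref{imaginarypart2}, the one subtle point being that the semistable factor $F$ of finite tilt-slope satisfies $v_1^{\beta}(F)>0$, which is precisely what makes the strict inequality $a>\frac{2\alpha-\beta^2}{6}$ bite.
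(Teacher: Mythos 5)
Your proposal is correct and follows essentially the same route as the paper: the paper proves Conjecture \ref{conj2} conditionally on Conjecture \ref{conj} by invoking Lemma \ref{imaginarypart2} for the imaginary part, decomposing $E$ via the torsion-pair sequence $0\to F[1]\to E\to T\to 0$, applying the $\mathrm{ch}_3$ inequality to $F$ and the effectivity of $v_3(T)$ to $T$. Your write-up is in fact slightly more complete, since you explicitly justify $v_1^{\beta}(F)>0$ and spell out the kernel description and the $\dim Y=3$ case, which the paper's proof leaves implicit.
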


\begin{prop}
	If Conjecture \ref{conj} holds, then the Conjecture \ref{conj2} holds.
\end{prop}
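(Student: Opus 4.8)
The plan is to verify directly the two defining conditions of a weak stability function for $Z^{\beta,\alpha,a}$ on $\mathrm{Coh}_0^{\beta,\alpha}(X)$, then read off its kernel, and finally upgrade it to a genuine stability function when $n=\dim Y=3$. First I would observe that, by the very definition \eqref{centralcharge}, the imaginary part is $\Im Z^{\beta,\alpha,a}(E)=v_2^\beta(E)-(\alpha-\frac{\beta^2}{2})v_0^\beta(E)$, which is exactly the quantity controlled by Lemma \ref{imaginarypart2}. Hence $\Im Z^{\beta,\alpha,a}(E)\geqslant 0$ for every $0\neq E\in\mathrm{Coh}_0^{\beta,\alpha}(X)$, and the remaining task is to show $\Re Z^{\beta,\alpha,a}(E)\leqslant 0$ whenever $\Im Z^{\beta,\alpha,a}(E)=0$.

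So suppose $\Im Z^{\beta,\alpha,a}(E)=0$. Using the canonical short exact sequence $0\to F[1]\to E\to T\to 0$ in $\mathrm{Coh}_0^{\beta,\alpha}(X)$ with $T=\mathcal{H}^0_\beta(E)\in\mathrm{Coh}_0^{>\beta,\alpha}(X)$ and $F=\mathcal{H}^{-1}_\beta(E)\in\mathrm{Coh}_0^{\leqslant\beta,\alpha}(X)$, the vanishing of the imaginary part together with Lemma \ref{imaginarypart2} tells me that $T=0$ or $\dim T\leqslant n-3$, and that $F=0$ or $F$ is $\nu^{\beta,\alpha}$-semistable with tilt-slope $\beta$. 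By additivity $v_i^\beta(E)=v_i^\beta(T)-v_i^\beta(F)$, I would split $\Re Z^{\beta,\alpha,a}(E)=\left[-v_3^\beta(T)+a\,v_1^\beta(T)\right]+\left[v_3^\beta(F)-a\,v_1^\beta(F)\right]$ and bound the two brackets separately. For the $F$-bracket: since the tilt-slope of $F$ is the finite value $\beta$, I have $v_1^\beta(F)=\Im Z^{\beta,\alpha}(F)>0$, so Conjecture \ref{conj} gives $v_3^\beta(F)\leqslant\frac{2\alpha-\beta^2}{6}v_1^\beta(F)<a\,v_1^\beta(F)$, where the strict inequality uses the hypothesis $a>\frac{2\alpha-\beta^2}{6}$; thus this bracket is strictly negative unless $F=0$. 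For the $T$-bracket: the dimension bound $\dim T\leqslant n-3$ forces $v_0(T)=v_1(T)=v_2(T)=0$ (so $v_1^\beta(T)=0$ and $v_3^\beta(T)=v_3(T)$), while $\pi_*T$ being a sheaf of dimension at most $n-3$ makes $\mathrm{ch}_3(\pi_*T)$ the class of an effective cycle, whence $v_3(T)=H^{n-3}\mathrm{ch}_3(\pi_*T)\geqslant 0$ by ampleness of $H$. Therefore the $T$-bracket equals $-v_3(T)\leqslant 0$, and altogether $\Re Z^{\beta,\alpha,a}(E)\leqslant 0$, proving that $Z^{\beta,\alpha,a}$ is a weak stability function.

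For the kernel I would argue that $Z^{\beta,\alpha,a}(E)=0$ forces both brackets above to vanish: the $F$-bracket vanishes only if $F=0$, and the $T$-bracket vanishes only if $v_3(T)=0$, which for a sheaf of dimension $\leqslant n-3$ forces $\dim T\leqslant n-4$. Hence $E=T$ with $\dim E\leqslant n-4$. Conversely, any $E\in\mathrm{Coh}_0(X)$ with $\dim E\leqslant n-4$ has $v_0^\beta(E)=v_1^\beta(E)=v_2^\beta(E)=v_3^\beta(E)=0$, lies in $\mathrm{Coh}_0^{>\beta,\alpha}(X)\subset\mathrm{Coh}_0^{\beta,\alpha}(X)$ (its slope and tilt-slope are both $+\infty$), and satisfies $Z^{\beta,\alpha,a}(E)=0$. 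This identifies the kernel with $\{E\in\mathrm{Coh}_0(X)\mid\dim E\leqslant n-4\}$. When $\dim Y=3$ this set is $\{0\}$, so $Z^{\beta,\alpha,a}(E)\neq 0$ for every $0\neq E$; combined with the weak stability inequality this gives $Z^{\beta,\alpha,a}(E)\in\mathbb{H}\cup\mathbb{R}^{<0}$, i.e. $Z^{\beta,\alpha,a}$ is a stability function.

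I do not anticipate a serious obstacle: the argument is a careful bookkeeping of the imaginary-part trichotomy of Lemma \ref{imaginarypart2} against the positivity input of Conjecture \ref{conj}. The one point demanding genuine care is the torsion factor $T=\mathcal{H}^0_\beta(E)$: I must confirm it contributes a nonnegative $v_3$, which rests on identifying $\mathrm{ch}_3$ of a sheaf of dimension $n-3$ with an effective cycle and intersecting against the ample class $H^{n-3}$, and also on checking that no lower-dimensional sheaf can appear in the torsion-free part $F$ (any such sheaf has tilt-slope $+\infty$ and so lies in $\mathrm{Coh}_0^{>\beta,\alpha}(X)$, hence cannot occur as $\mathcal{H}^{-1}_\beta(E)$). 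Both verifications are routine once the decomposition from Lemma \ref{imaginarypart2} is in place.
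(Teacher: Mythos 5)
Your proof is correct and follows essentially the same route as the paper: the torsion-pair decomposition $0\to F[1]\to E\to T\to 0$, Lemma \ref{imaginarypart2} for the imaginary part, Conjecture \ref{conj} combined with $a>\frac{2\alpha-\beta^{2}}{6}$ to get $\Re Z^{\beta,\alpha,a}(F[1])<0$, and $\Re Z^{\beta,\alpha,a}(T)=-v_{3}(T)\leqslant 0$ for the torsion part. Your additional verification of the kernel description and the upgrade to a genuine stability function when $\dim Y=3$ is a welcome completion of details the paper leaves implicit.
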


\begin{pf}
	By Lemma \ref{imaginarypart2}, we have $\Im Z^{\beta,\alpha,a}(E) \geqslant 0$ for any $E \in \mathrm{Coh}^{\beta,\alpha}_{0}(X)$. And when $\Im Z^{\beta,\alpha,a}(E)=0$, we have a short exact sequence in $\mathrm{Coh}^{\beta,\alpha}_{0}(X)$ as follows:
	\begin{align*}
		0 \to F[1] \to E \to T \to 0,
	\end{align*}where $T$ is a sheaf with $\dim T \leqslant n-3$ or $T=0$ and $F \in \mathrm{Coh}_{0}^{\beta}(X)$ is a $\nu^{\beta,\alpha}$-semistable object with tilt-slope $\beta$ or $F=0$. By Conjecture \ref{conj}, we have $v^{\beta}_{3}(F) \leqslant \displaystyle\frac{\omega^{2}}{6}v_{1}^{\beta}(F)<av_{1}^{\beta}(F)$ when $F \neq 0$. In other words, we have $\Re Z^{\beta,\alpha,a}(F)>0$ for any $F \neq 0$. Meanwhile, $\Re Z^{\beta,\alpha,a}(T)=-v_{3}(T) \leqslant 0$ with equality holds if and only if $\dim T \leqslant n-4$. Thus, $\Re Z^{\beta,\alpha,a}(E)=\Re Z^{\beta,\alpha,a}(T)-\Re Z^{\beta,\alpha,a}(F) \leqslant 0$ for any nonzero object $E$ with $\Im Z^{\beta,\alpha,a}(E)=0$.
\end{pf}

We will prove Conjecture \ref{conj} when $X$ is the canonical bundle of $\mathbb{P}^{3}$. Before that, we will prove more general results.

\subsection{Reduction of the Bogomolov-Gieseker type inequality}

\begin{defn}
	Let $(\beta,\alpha) \in U$, we say that the point $(\beta,\alpha)$ satisfies the Bogomolov-Gieseker type inequality if for any $\nu^{\beta,\alpha}$-semistable object $E \in \mathrm{Coh}_{0}^{\beta}(X)$ with tilt-slope $\beta$, we have the following inequality:
	\begin{align}
	\label{BGinequality} v^{\beta}_{3}(E) \leqslant \frac{2\alpha-\beta^{2}}{6}v^{\beta}_{1}(E).
	\end{align}
\end{defn}

First, in order to prove Conjecture \ref{conj}, we could reduce it to small $\omega=\sqrt{2\alpha-\beta^{2}}$ by Emanuele Macr\`i in \cite[Proposition 2.7]{macri2014generalized}.

\begin{prop}\label{reduce1}
	Assume that there exists $k \in \mathbb{R}^{+}$, such that all $(\beta,\alpha) \in U$ with $\sqrt{2\alpha-\beta^{2}}<k$ satisfiy the Bogomolov-Gieseker type inequality (\ref{BGinequality}). Then Conjecture \ref{conj} holds.
\end{prop}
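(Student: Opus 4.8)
The plan is to argue by contradiction, inducting on the generalized discriminant $\overline{\Delta}_{H}$, and to feed in the hypothesis only at the very end where $\omega$ is small. The guiding principle is that $\overline{\Delta}_{H}$ plays the same role here as in Macr\`i's treatment of $D^{b}(\mathbb{P}^{3})$ in \cite{macri2014generalized}: it is non-negative on $\nu^{\beta,\alpha}$-semistable objects, it governs the locally finite wall structure of Proposition \ref{wallandchamber1}, and it strictly drops under wall-crossing. Since every quantity in (\ref{BGinequality}) is read off from $\pi_{*}E$ through the maps $v_{i}$, Macr\`i's reduction should transfer once one checks its inputs -- the Bogomolov inequality $\overline{\Delta}_{H}\geqslant 0$, Noetherianity of $\mathrm{Coh}_{0}^{\beta}(X)$ for rational $\beta$, and the wall-and-chamber structure -- all of which are already available in $D_{0}^{b}(X)$ from the earlier part of this section.

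Concretely, suppose the statement fails, and among all triples $(\beta_{0},\alpha_{0},E)$ with $(\beta_{0},\alpha_{0})\in U$, $\omega_{0}=\sqrt{2\alpha_{0}-\beta_{0}^{2}}\geqslant k$, and $E$ a $\nu^{\beta_{0},\alpha_{0}}$-semistable object of tilt-slope $\beta_{0}$ violating (\ref{BGinequality}), I would pick one minimizing $\overline{\Delta}_{H}(E)$ (the support property keeps this well-founded). Two clean reductions then present themselves. First, (\ref{BGinequality}) is additive at a fixed point of $U$: if $E$ is strictly semistable, its Jordan--H\"older factors $E_{i}$ all have tilt-slope $\beta_{0}$ and strictly smaller discriminant, so by minimality each satisfies the inequality, and summing $v_{3}^{\beta}(E)=\sum_{i}v_{3}^{\beta}(E_{i})\leqslant\frac{2\alpha_{0}-\beta_{0}^{2}}{6}\sum_{i}v_{1}^{\beta}(E_{i})=\frac{2\alpha_{0}-\beta_{0}^{2}}{6}v_{1}^{\beta}(E)$ contradicts the choice of $E$. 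Hence the minimal counterexample is $\nu^{\beta_{0},\alpha_{0}}$-stable.

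Second, I would exploit that along the parabola $P_{E}=\{(\beta,\alpha)\in U:\nu^{\beta,\alpha}(E)=\beta\}$ -- precisely the locus on which $E$ is an admissible test object -- the inequality (\ref{BGinequality}) reduces to a single monotone linear condition. For $v_{0}(E)\neq 0$, a direct computation of $v_{0}(E)\big(6v_{3}^{\beta}(E)-(2\alpha-\beta^{2})v_{1}^{\beta}(E)\big)$ on $P_{E}$ yields the affine function
\[
g_{E}(\beta)=6\,v_{0}(E)v_{3}(E)-2\,v_{1}(E)v_{2}(E)+2\,\overline{\Delta}_{H}(E)\,\beta,
\]
so that (\ref{BGinequality}) becomes $g_{E}(\beta)\leqslant 0$ (resp. $\geqslant 0$) according to the sign of $v_{0}(E)$, with $\beta$-slope $2\overline{\Delta}_{H}(E)\geqslant 0$; when $v_{0}(E)=0$ the locus $P_{E}$ degenerates and the test is $\omega$-independent, which is even simpler. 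I would then transport the stable object $E$ along $P_{E}$ toward the boundary $\omega\to 0$: the sub-arc joining $(\beta_{0},\alpha_{0})$ to the level $\{\omega=k\}$ is compact, so by Proposition \ref{wallandchamber1} only finitely many walls are met. If one is crossed, $E$ is strictly destabilized and we fall back to the additive case with factors of smaller discriminant; if none is crossed, we reach a point of $P_{E}$ with $\omega<k$ at which $E$ is still semistable of tilt-slope $\beta$, and the hypothesis applies.

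The hard part will be closing this last transport honestly. Because $g_{E}$ is monotone, validity at the small-$\omega$ end of $P_{E}$ does not automatically propagate back to $(\beta_{0},\alpha_{0})$ at the large-$\omega$ end on the same arc; one must therefore rule out a stable counterexample that violates (\ref{BGinequality}) yet survives intact all the way into $\{\omega<k\}$. Resolving this is exactly where the sign of $v_{0}(E)$ must be matched to the correct end of $P_{E}$, and where a large-volume input is needed -- at the opposite end of $P_{E}$ tilt-stability degenerates to (twisted) Gieseker stability, for which the $\mathrm{ch}_{3}$-inequality follows from classical bounds -- so that the minimal stable counterexample is forced to cross a wall before reaching either regime. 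Making this dichotomy precise, in the spirit of \cite[Proposition 2.7]{macri2014generalized}, is the main obstacle; the discreteness needed to start the induction and the locally finite wall structure are the supporting technical points.
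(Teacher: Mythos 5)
Your outline follows the same route as the paper's proof (both are transcriptions of Macr\`i's argument): restrict to the curve $\mathcal{C}_{E}$ on which $E$ has tilt-slope $\beta$, rewrite (\ref{BGinequality}) there as an affine, monotone condition in $\beta$ (your $g_{E}$ is exactly Lemma \ref{reducelem}), move toward $\partial U$, and recurse into Jordan--H\"older factors at walls. But the step you defer as ``the main obstacle'' is the heart of the proof, and the way you propose to close it points in the wrong direction. What has to be shown is not that validity at the small-$\omega$ end propagates back to $(\beta_{0},\alpha_{0})$, but that the \emph{violation} at $(\beta_{0},\alpha_{0})$ propagates forward along $\mathcal{C}_{E}$ as $\omega$ decreases; this follows from the monotonicity you already computed once you check which way $\beta$ travels. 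Along $\mathcal{C}_{E}$ toward its endpoint on $\partial U$ the quantity $\beta/v_{0}(E)$ is increasing (for $v_{0}(E)>0$, $\beta$ increases toward $(v_{1}(E)-\sqrt{\overline{\Delta}_{H}(E)})/v_{0}(E)$; for $v_{0}(E)<0$ it decreases toward $(v_{1}(E)+\sqrt{\overline{\Delta}_{H}(E)})/v_{0}(E)$), so the left-hand side $\beta\overline{\Delta}_{H}(E)/v_{0}(E)$ of the reformulated inequality is non-decreasing while the right-hand side is constant, and in the case $v_{0}(E)=0$ the right-hand side $\frac{2\alpha-\beta^{2}}{6}v_{1}(E)$ simply decreases with $\omega$. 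Hence a violation persists all the way into $\{\omega<k\}$, where the hypothesis forbids a surviving semistable object of tilt-slope $\beta$. No large-volume or Gieseker-stability input at the opposite end of $\mathcal{C}_{E}$ is needed; that end leaves the region relevant to the hypothesis, so the detour you sketch does not close the argument.

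A second gap is termination. You induct on $\overline{\Delta}_{H}(E)$ and assert that it ``strictly drops under wall-crossing,'' but no such strict decrease for Jordan--H\"older factors is established here, and it can fail to be strict (for instance when the complementary factors have vanishing discriminant), so the minimal-counterexample induction is not obviously well-founded. The paper instead uses that $v_{1}^{\beta_{n}}(E_{n})^{2}=\overline{\Delta}_{H}(E_{n})+\omega_{n}^{2}v_{0}(E_{n})^{2}$ is strictly decreasing along the recursion (because the non-negative quantities $v_{1}^{\beta}$ of the factors sum to that of $E_{n}$), combines this with $\omega_{n}\geqslant k$ to bound $\overline{\Delta}_{H}(E_{n})$ and $v_{0}(E_{n})$, then bounds $v_{1}(E_{n})$ and $v_{2}(E_{n})$ via the telescoping estimate $\sum_{j}|\beta_{j+1}-\beta_{j}|\leqslant\omega_{0}$, and concludes by finiteness of the possible classes. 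You would need to substitute an argument of this kind for the bare induction on the discriminant.
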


The proof is the same as the one in \cite[Proposition 2.7]{macri2014generalized}, we just simply rewrite it in our new notations here.

For any $E \in D^{b}_{0}(X)$, we define a curve $\mathcal{C}_{E}$  in $U$ as follows:
\begin{align*}
	\mathcal{C}_{E}=\{(\beta,\alpha) \in U \mid v_{0}(E)\alpha=v_{0}(E)\beta^{2}-v_{1}(E)\beta+v_{2}(E), v_{1}(E)>\beta v_{0}(E)\}.
\end{align*} Note that:
\begin{itemize}
	\item If $v_{0}(E)=0$ and $v_{1}(E) \leqslant 0$, then $\mathcal{C}_{E}=\varnothing$.
	\item If $v_{0}(E)=0$ and $v_{1}(E)>0$, then $\mathcal{C}_{E}$ is the intersection of the line $\beta=\displaystyle\frac{v_{2}(E)}{v_{1}(E)}$ and $U$.
	\item If $v_{0}(E) \neq 0$ and $\overline{\Delta}_{H}(E)<0$, then $\mathcal{C}_{E}=\varnothing$ because of $\alpha>\displaystyle\frac{1}{2}\beta^{2}$.
	\item If $v_{0}(E) \neq 0$ and $\overline{\Delta}_{H}(E) \geqslant 0$, then $\mathcal{C}_{E}$ is a piece of the parabola $\alpha=\displaystyle\beta^{2}-\frac{v_{1}(E)}{v_{0}(E)}\beta+\frac{v_{2}(E)}{v_{0}(E)}$. Explicitly, when $v_{0}(E)>0$, then $(\beta,\alpha) \in \mathcal{C}_{E}$ implies that $\beta<\displaystyle\frac{v_{1}(E)-\sqrt{\overline{\Delta}_{H}(E)}}{v_{0}(E)}$; when $v_{0}(E)<0$, then $(\beta,\alpha) \in \mathcal{C}_{E}$ implies that $\beta>\displaystyle\frac{v_{1}(E)+\sqrt{\overline{\Delta}_{H}(E)}}{v_{0}(E)}$.
\end{itemize}

The reason why we introduce the curve is that if $\beta \in \mathbb{R}$ and $E \in \mathrm{Coh}_{0}^{\beta}(X)$ with $v_{1}(E)>\beta v_{0}(E)$, then $\nu^{\beta,\alpha}(E)=\beta$ if and only if $(\beta,\alpha) \in \mathcal{C}_{E}$. 
\begin{lem}\label{reducelem}
	Let $E \in D_{0}^{b}(X)$ with $\mathcal{C}_{E}$ non-empty and $(\beta,\alpha) \in \mathcal{C}_{E}$. Then $v_{3}^{\beta}(E) \leqslant \displaystyle\frac{2\alpha-\beta^{2}}{6}v_{1}^{\beta}(E)$ if and only if
	\begin{align*}
		& \frac{\beta\overline{\Delta}_{H}(E)}{v_{0}(E)} \leqslant \frac{v_{2}(E)v_{1}(E)}{v_{0}(E)}-3v_{3}(E), & \mbox{ if } v_{0}(E)\neq 0; \\
		& v_{3}(E)-\frac{v_{2}(E)^{2}}{2v_{1}(E)} \leqslant \frac{2\alpha-\beta^{2}}{6}v_{1}(E), & \mbox{ if } v_{0}(E)=0;
	\end{align*}
\end{lem}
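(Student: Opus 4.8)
The plan is to prove both equivalences by direct substitution, reducing everything to the original invariants $v_i = v_i(E)$ via the twist formulas $v_1^\beta = v_1 - \beta v_0$, $v_2^\beta = v_2 - \beta v_1 + \tfrac{\beta^2}{2}v_0$ and $v_3^\beta = v_3 - \beta v_2 + \tfrac{\beta^2}{2}v_1 - \tfrac{\beta^3}{6}v_0$ recorded above, together with the defining relation of $\mathcal{C}_E$, namely $v_0\alpha = v_0\beta^2 - v_1\beta + v_2$. The observation that makes the computation clean is that membership $(\beta,\alpha) \in \mathcal{C}_E$ is exactly the statement $\nu^{\beta,\alpha}(E) = \beta$, equivalently $v_2^\beta(E) - (\alpha - \tfrac{\beta^2}{2})v_0^\beta(E) = 0$, which is precisely the hypothesis under which the Bogomolov--Gieseker type inequality is formulated.

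First I would treat the case $v_0 \neq 0$. Here the curve equation solves to $\alpha = \beta^2 - \tfrac{v_1}{v_0}\beta + \tfrac{v_2}{v_0}$, so $2\alpha - \beta^2 = \beta^2 - \tfrac{2v_1}{v_0}\beta + \tfrac{2v_2}{v_0}$. Substituting this together with the twist formulas into $v_3^\beta(E) \leq \tfrac{2\alpha-\beta^2}{6}v_1^\beta(E)$ and expanding both sides, the terms $\tfrac{\beta^2}{2}v_1$ and $-\tfrac{\beta^3}{6}v_0$ appear on each side and cancel; after clearing the factor $6$ the inequality collapses to $3v_3 \leq 2\beta v_2 - \tfrac{\beta v_1^2}{v_0} + \tfrac{v_1 v_2}{v_0}$. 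Finally I would rewrite this using $\overline{\Delta}_H(E) = v_1^2 - 2v_0 v_2$: since $\tfrac{\beta\overline{\Delta}_H(E)}{v_0} = \tfrac{\beta v_1^2}{v_0} - 2\beta v_2$, the displayed inequality is exactly $\tfrac{\beta\overline{\Delta}_H(E)}{v_0} \leq \tfrac{v_2 v_1}{v_0} - 3v_3$, as claimed.

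For the case $v_0 = 0$, the hypothesis that $\mathcal{C}_E$ is non-empty forces $v_1 > 0$ and $\mathcal{C}_E$ to be the vertical segment $\beta = v_2/v_1$. Here the twist formulas degenerate to $v_1^\beta = v_1$ and $v_3^\beta = v_3 - \beta v_2 + \tfrac{\beta^2}{2}v_1$, and substituting $\beta = v_2/v_1$ gives $v_3^\beta = v_3 - \tfrac{v_2^2}{2v_1}$. Plugging this into $v_3^\beta(E) \leq \tfrac{2\alpha-\beta^2}{6}v_1^\beta(E)$ yields precisely $v_3(E) - \tfrac{v_2(E)^2}{2v_1(E)} \leq \tfrac{2\alpha-\beta^2}{6}v_1(E)$.

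I do not expect a genuine obstacle here: the statement is an equivalence of identities, and the only real risk is bookkeeping in the expansion of the $v_0 \neq 0$ case. To keep the algebra honest I would organize the expansion so that the $\beta^3$ and $\beta^2$ terms are matched and cancelled first, then collect the remaining terms and invoke the definition of $\overline{\Delta}_H$ only at the very end.
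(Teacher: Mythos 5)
Your proposal is correct and matches the paper's approach: the paper's proof is simply the remark ``We can verify them directly,'' and your direct substitution using the twist formulas for $v_{i}^{\beta}$ and the defining equation of $\mathcal{C}_{E}$ is exactly that verification, with the cancellations and the final rewriting via $\overline{\Delta}_{H}(E)=v_{1}^{2}-2v_{0}v_{2}$ all checking out. No gaps.
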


\begin{pf}
	We can verify them directly.
\end{pf}

\begin{pf}(Proposition \ref{reduce1})
	Assume that there exists $(\beta_{0},\alpha_{0}) \in U$, and $\nu^{\beta_{0},\alpha_{0}}$-stable object $E_{0} \in \mathrm{Coh}_{0}^{\beta_{0}}(X)$ with tilt-slope $\beta_{0}$, which does not satisfy the Bogomolov-Gieseker type inequality, in other words, $v_{3}^{\beta_{0}}(E_{0})> \displaystyle\frac{2\alpha_{0}-\beta_{0}^{2}}{6}v_{1}^{\beta_{0}}(E_{0})$. 
	
	Consider the path $\gamma_{0}(t)=(\beta_{0}(t),\alpha_{0}(t))$ from $(\beta_{0},\alpha_{0})$ along the curve $\mathcal{C}_{E_{0}}$ to the end point $(\beta_{E_{0}},\displaystyle\frac{1}{2}\beta_{E_{0}}^{2})$ on $\partial U$, where
	\begin{align*}
		\beta_{E_{0}}=\begin{cases}
			\displaystyle\frac{v_{1}(E_{0})-\sqrt{\overline{\Delta}_{H}(E_{0})}}{v_{0}(E_{0})}, & v_{0}(E_{0})>0;\\
			\displaystyle\frac{v_{1}(E_{0})+\sqrt{\overline{\Delta}_{H}(E_{0})}}{v_{0}(E_{0})}, & v_{0}(E_{0})<0;\\
			\displaystyle\frac{v_{2}(E_{0})}{v_{1}(E_{0})}, & v_{0}(E_{0})=0.
		\end{cases}
	\end{align*}
	
	Claim that: $v_{3}^{\beta_{0}(t)}(E_{0})> \displaystyle\frac{2\alpha_{0}(t)-\beta_{0}(t)^{2}}{6}v_{1}^{\beta_{0}(t)}(E_{0})$ for any $t \in [0,1)$. First we can verify that the continous function $\sqrt{2\alpha_{0}(t)-\beta_{0}(t)^{2}}$ of $t$ decreases directly. When $v_{0}(E_{0})=0$, then by Lemma \ref{reducelem}, we have $\displaystyle v_{3}(E_{0})-\frac{v_{2}(E_{0})^{2}}{2v_{1}(E_{0})}>\frac{2\alpha_{0}-\beta_{0}^{2}}{6}v_{1}(E_{0}) \geqslant \frac{2\alpha_{0}(t)-\beta_{0}(t)^{2}}{6}v_{1}(E_{0})$ for any $t \in [0,1)$, and we get the conclusion. Similarly, we could also prove it when $v_{0}(E_{0}) \neq 0$ by showing that the function $\displaystyle\frac{\beta_{0}(t)}{v_{0}(E_{0})}$ of $t$ increases. Therefore, on the other hand, by our assumption $E_{0}$ cannot be tilt semistable object when $\sqrt{2\alpha_{0}(t)-\beta_{0}(t)^{2}}<k$. Then there exists $t' \in (0,1)$, such that $E_{0}$ is a strictly $\nu^{(\beta_{0}(t'),\alpha_{0}(t'))}$-semistable object by Proporsition \ref{pathandwallcrossing}. We denote $\beta_{0}(t')$ by $\beta_{1}$ and $\alpha_{0}(t')$ by $\alpha_{1}$. Asuume that $E_{0}[l_{0}] \in \mathrm{Coh}_{0}^{\beta_{1}}(X)$ and $\nu^{\beta_{1},\alpha_{1}}(E_{0}[l_{0}])=\beta_{1}$ because $(\beta_{1},\alpha_{1}) \in \mathcal{C}_{E_{0}}$. Take a Jordan-H\"older factor $E_{1}$ of $E_{0}[l_{0}]$ in $\mathrm{Coh}_{0}^{\beta_{1}}(X)$ with $v_{3}^{\beta_{1}}(E_{1})> \displaystyle\frac{2\alpha_{1}-\beta_{1}^{2}}{6}v_{1}^{\beta_{1}}(E_{1})$. 
	 
	We use the same method to get a sequence of pairs $\{(\beta_{n},\alpha_{n})\}_{n=0}^{\infty}$ and a sequence of objects $\{E_{n}\}_{n=0}^{\infty}$ such that $(\beta_{n},\alpha_{n}) \in U$, $E_{n} \in \mathrm{Coh}_{0}^{\beta_{n}}(X)$ which is $\nu^{\beta_{n},\alpha_{n}}$-stable object with tilt-slope $\beta_{n}$ and $v_{3}^{\beta_{n}}(E_{n})> \displaystyle\frac{2\alpha_{n}-\beta_{n}^{2}}{6}v_{1}^{\beta_{n}}(E_{n})$ and $\sqrt{2\alpha_{n-1}-\beta_{n-1}^{2}}>\sqrt{2\alpha_{n}-\beta_{n}^{2}}>k$ for any $n \in \mathbb{N}^{+}$. We assume that $\displaystyle\lim_{n \to \infty}\sqrt{2\alpha_{n}-\beta_{n}^{2}}=K \geqslant k>0$.
	
	Meanwhile, we have $v^{\beta_{n+1}}_{1}(E_{n+1})^{2}< v^{\beta_{n+1}}_{1}(E_{n})^{2}$ because $E_{n+1}$ is a Jordan-H\"older factor of $E_{n}[l_{n}]$ for some $l_{n} \in \mathbb{Z}$ in $\mathrm{Coh}^{\beta_{n+1}}_{0}(X)$. From this inequality, we can get
	\begin{align*}
		&\overline{\Delta}_{H}(E_{n+1})+(2\alpha_{n+1}-\beta_{n+1}^{2})v_{0}(E_{n+1})^{2}=v^{\beta_{n+1}}_{1}(E_{n+1})^{2}\\
		&<v^{\beta_{n+1}}_{1}(E_{n})^{2}=\overline{\Delta}_{H}(E_{n})+(2\alpha_{n+1}-\beta_{n+1}^{2})v_{0}(E_{n})^{2}\\ &\leqslant 	\overline{\Delta}_{H}(E_{n})+(2\alpha_{n}-\beta_{n}^{2})v_{0}(E_{n})^{2}.
	\end{align*}It implies that $\overline{\Delta}_{H}(E_{n})+K^{2}v_{0}(E_{n})^{2} \leqslant \overline{\Delta}_{H}(E_{n})+(2\alpha_{n}-\beta_{n}^{2})v_{0}(E_{n})^{2}< \overline{\Delta}_{H}(E_{0})+(2\alpha_{0}-\beta_{0}^{2})v_{0}(E_{0})^{2}$. It means that $\overline{\Delta}_{H}(E_{n})$ and $v_{0}(E_{n})^{2}$ are all bounded. We will prove $|v_{1}(E_{n})|$ is also bounded in the next step.
	
	We claim that: for any $n \in \mathbb{N}^{+}$, we have:
	\begin{align*}
		|\beta_{n}-\beta_{n+1}| \leqslant \sqrt{2\alpha_{n}-\beta_{n}^{2}}-\sqrt{2\alpha_{n+1}-\beta_{n+1}^{2}}
	\end{align*} Note that $(\beta_{n+1},\alpha_{n+1}) \in \mathcal{C}_{E_{n}}$. If $v_{0}(E_{n})=0$, then $\beta_{n}=\beta_{n+1}$; if $v_{0}(E_{n}) \neq 0$, let $\displaystyle f_{n}(t)=\sqrt{2\alpha_{n}(t)-\beta_{n}(t)^{2}}=\sqrt{\beta_{n}(t)^{2}-2\frac{v_{1}(E_{n})}{v_{0}(E_{n})}\beta_{n}(t)+2\frac{v_{2}(E_{n})}{v_{0}(E_{n})}}$. Then
	\begin{align*}
		\left|\frac{df_{n}}{dt}\right|=\left|\frac{df_{n}}{d\beta_{n}}\right|\left|\frac{d\beta_{n}}{dt}\right|=\left|\frac{\beta_{n}(t)-\displaystyle\frac{v_{1}(E_{n})}{v_{0}(E_{n})}}{\sqrt{\beta_{n}^{2}(t)-2\displaystyle\frac{v_{1}(E_{n})}{v_{0}(E_{n})}\beta_{n}(t)+2\displaystyle\frac{v_{2}(E_{n})}{v_{0}(E_{n})}}}\right|\left|\frac{d\beta_{n}}{dt}\right| \geqslant \left|\frac{d\beta_{n}}{dt}\right|
	\end{align*} because $\overline{\Delta}_{H}(E_{n}) \geqslant 0$.
	Thus, we have:
	\begin{align*}
		\frac{\sqrt{2\alpha_{n}-\beta_{n}^{2}}-\sqrt{2\alpha_{n+1}-\beta_{n+1}^{2}}}{|\beta_{n}-\beta_{n+1}|} \geqslant 1.
	\end{align*}And we prove the claim.
	
	Now, we have:
	\begin{align*}
		& |v_{1}(E_{n})|=|v_{1}^{\beta_{n}}(E_{n})+\beta_{n}v_{0}(E_{n})| \\
		& \leqslant |v_{1}^{\beta_{n}}(E_{n})|+|\beta_{n}||v_{0}(E_{n})|=\sqrt{\overline{\Delta}_{H}(E_{n})+(2\alpha_{n}-\beta_{n}^{2})v_{0}(E_{n})^{2}}+|\beta_{n}||v_{0}(E_{n})| \\
		& \leqslant \sqrt{\overline{\Delta}_{H}(E_{0})+(2\alpha_{0}-\beta_{0}^{2})v_{0}(E_{0})^{2}}+(|\beta_{n}-\beta_{n-1}|+|\beta_{n-1}-\beta_{n-2}|+\cdots+|\beta_{1}-\beta_{0}|+|\beta_{0}|)|v_{0}(E_{n})| \\
		& \leqslant \sqrt{\overline{\Delta}_{H}(E_{0})+(2\alpha_{0}-\beta_{0}^{2})v_{0}(E_{0})^{2}}+(\sqrt{2\alpha_{0}-\beta_{0}^{2}}+|\beta_{0}|)|v_{0}(E_{n})|.
	\end{align*}
	Because $|v_{0}(E_{n})|$ is bounded, $|v_{1}(E_{n})|$ is also bounded.
	
	Finally, we will prove $|v_{2}(E_{n})|$ is also bounded. If $v_{0}(E_{n})=0$, then $v_{2}(E_{n})=\beta_{n}v_{1}(E_{n})$ and 
	\begin{align*}
		|v_{2}(E_{n})| \leqslant (|\beta_{n}-\beta_{n-1}|+|\beta_{n-1}-\beta_{n-2}|+\cdots+|\beta_{1}-\beta_{0}|+|\beta_{0}|)|v_{1}(E_{n})| \leqslant (\sqrt{2\alpha_{0}-\beta_{0}^{2}}+|\beta_{0}|)|v_{1}(E_{n})|.
	\end{align*}
	
	If $v_{0}(E_{n}) \neq 0$, then
	\begin{align*}
		|v_{2}(E_{n})| \leqslant |v_{0}(E_{n})v_{2}(E_{n})| \leqslant |\overline{\Delta}_{H}(E_{n})|+v_{1}(E_{n})^{2}.
	\end{align*}
	
	And because $|v_{1}(E_{n})|$ is bounded, $|v_{2}(E_{n})|$ is also bounded. But it is a contradiction with the sets $\{(v_{0}(E_{n}),v_{1}(E_{n}),v_{2}(E_{n}))\}_{n \in \mathbb{N}}$ is infinite.
	\end{pf}

We can also reduce the general Bogomolov-Gieseker inequality to rational case. Moreover, we have:

\begin{prop}\label{reduce2}
	Let $U'=\{(\beta,\alpha) \in U \mid (\beta,\alpha) \mbox{ does not satisfy the Bogomolov-Gieseker inequality.}\}$. Then for any $(\beta,\alpha) \in U'$, there exists an open neighbourhood $U''$ of $(\beta,\alpha)$ in $U$, such that $U'' \bigcap \mathbb{Q}^{2} \bigcap U' \neq \varnothing$.
\end{prop}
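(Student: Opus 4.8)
The plan is to show that the ``non-BG locus'' $U'$ is always witnessed by an object whose associated curve $\mathcal{C}_E$ has \emph{rational} coefficients, so that rational parameters are dense along it and the witness survives a small perturbation. Fix $(\beta_0,\alpha_0)\in U'$; by definition there is a $\nu^{\beta_0,\alpha_0}$-semistable $E\in\mathrm{Coh}^{\beta_0}_0(X)$ of tilt-slope $\beta_0$ failing (\ref{BGinequality}), i.e. $v_3^{\beta_0}(E)>\tfrac{2\alpha_0-\beta_0^2}{6}v_1^{\beta_0}(E)$.

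First I would reduce to a \emph{stable} witness. Passing to the Jordan--H\"older factors $E_1,\dots,E_m$ of $E$ (each $\nu^{\beta_0,\alpha_0}$-stable of the same finite tilt-slope $\beta_0$, hence with $v_1^{\beta_0}(E_i)>0$), additivity of $v_1^{\beta_0}$ and $v_3^{\beta_0}$ together with a weighted-average argument (if every factor satisfied $v_3^{\beta_0}(E_i)\leq\tfrac{2\alpha_0-\beta_0^2}{6}v_1^{\beta_0}(E_i)$, summing would contradict the failure for $E$) produce an index $i$ with $v_3^{\beta_0}(E_i)>\tfrac{2\alpha_0-\beta_0^2}{6}v_1^{\beta_0}(E_i)$. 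Since $E_i$ is stable of slope $\beta_0$ we have $(\beta_0,\alpha_0)\in\mathcal{C}_{E_i}$; relabelling, I may assume $E$ itself is $\nu^{\beta_0,\alpha_0}$-stable, of tilt-slope $\beta_0$, and violates BG. This reduction is the crux of the argument: semistability is \emph{not} an open condition, since a strictly semistable object sits on a wall and is destroyed on one side, whereas stability is open --- and that is exactly what lets the perturbation go through.

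Next I would invoke openness of stability. As $E$ has finite tilt-slope, $(v_0(E),v_1(E))\neq(0,0)$, so Proposition \ref{wallandchamber1} gives a locally finite wall-and-chamber structure for $E$ in $U$. Being $\nu^{\beta_0,\alpha_0}$-stable, $(\beta_0,\alpha_0)$ lies off every wall, hence in the interior of a chamber $N$ throughout which $E$ stays $\nu^{\beta,\alpha}$-stable. I then note that $\mathcal{C}_E$ is defined over $\mathbb{Q}$: when $v_0(E)\neq0$ it is the parabola $\alpha=\beta^2-\tfrac{v_1(E)}{v_0(E)}\beta+\tfrac{v_2(E)}{v_0(E)}$ with rational coefficients, so every rational $\beta$ gives a point of $\mathcal{C}_E\cap\mathbb{Q}^2$; when $v_0(E)=0$ it is the rational vertical line $\beta=\tfrac{v_2(E)}{v_1(E)}$, and every rational $\alpha$ does. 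In either case rational points of $\mathcal{C}_E$ accumulate at $(\beta_0,\alpha_0)$, and those close enough lie in $N\subseteq U$.

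Finally I would check that the BG violation persists along $\mathcal{C}_E$ near $(\beta_0,\alpha_0)$. By Lemma \ref{reducelem}, for $(\beta,\alpha)\in\mathcal{C}_E$ with $v_0(E)\neq0$ inequality (\ref{BGinequality}) reads $\tfrac{\beta\,\overline{\Delta}_H(E)}{v_0(E)}\leq\tfrac{v_2(E)v_1(E)}{v_0(E)}-3v_3(E)$, whose right-hand side is constant and whose left-hand side is linear in $\beta$; thus its strict failure at $\beta_0$ is open in $\beta$ (and, when $\overline{\Delta}_H(E)=0$, independent of $\beta$, recalling $\overline{\Delta}_H(E)\geq0$ by the discriminant inequality for semistable objects). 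For $v_0(E)=0$ the failure is a strict inequality in $\alpha$ along the fixed rational line, again open. Choosing $U''$ to be a small ball about $(\beta_0,\alpha_0)$ inside $N$ on which this strict failure holds along $\mathcal{C}_E$, any rational point $(\beta_1,\alpha_1)\in\mathcal{C}_E\cap U''$ --- which exists by density --- is $\nu^{\beta_1,\alpha_1}$-stable of tilt-slope $\beta_1$ and violates BG, whence $(\beta_1,\alpha_1)\in U''\cap\mathbb{Q}^2\cap U'$. The only genuine obstacle, flagged above, is the non-openness of semistability, resolved by the Jordan--H\"older reduction to a stable witness; the remainder is density of rational points on a curve defined over $\mathbb{Q}$ together with the openness already encoded in Lemma \ref{reducelem} and Proposition \ref{wallandchamber1}.
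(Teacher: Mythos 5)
Your proof is correct and follows essentially the same route as the paper: openness of stability for $E$ via the wall-and-chamber structure of Proposition \ref{wallandchamber1}, openness of the strict violation of the inequality, and density of rational points on the curve $\mathcal{C}_E$, which is defined over $\mathbb{Q}$. The only differences are cosmetic: you make explicit the Jordan--H\"older reduction from a semistable to a stable witness (which the paper leaves implicit), and you verify persistence of the violation via Lemma \ref{reducelem} along $\mathcal{C}_E$ rather than by direct continuity of $(\beta,\alpha)\mapsto v_3^{\beta}(E)-\tfrac{2\alpha-\beta^{2}}{6}v_1^{\beta}(E)$.
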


\begin{pf}
	 Assume that $(\beta_{0},\alpha_{0}) \in U'$ and there exists $\nu^{\beta_{0},\alpha_{0}}$-stable object $E \in \mathrm{Coh}_{0}^{\beta_{0}}(X)$ with tilt-slope $\beta_{0}$, which does not satisfy the Bogomolov-Gieseker type inequality, in other words, $v_{3}^{\beta_{0}}(E)> \displaystyle\frac{2\alpha_{0}-\beta_{0}^{2}}{6}v_{1}^{\beta_{0}}(E)$. By Proposition \ref{wallandchamber1}, there is an open subset $U_{1}$ of $U$, such that $E$ belong to $\mathrm{Coh}_{0}^{\beta}(X)$ and is $\nu^{\beta,\alpha}$-stable for any $(\beta,\alpha) \in U_{1}$. Meanwhile, functions $(\beta,\alpha) \mapsto v_{3}^{\beta}(E)-\displaystyle\frac{2\alpha-\beta^{2}}{6}v_{1}^{\beta}(E)$ and $(\beta,\alpha) \to v_{1}^{\beta}(E)$ are continuous. Thus we can find a smaller open neighbourhood $U'' \subseteq U_{1}$ of $(\beta_{0},\alpha_{0})$, such that for any $(\beta,\alpha) \in U_{2}$, we have $v_{3}^{\beta}(E)> \displaystyle\frac{2\alpha-\beta^{2}}{6}v_{1}^{\beta}(E)$ and $v_{1}^{\beta}(E)>0$. Fianlly, any point $(\beta,\alpha) \in \mathcal{C}_{E} \bigcap U''$ makes $\nu^{\beta,\alpha}(E)=\beta$. In particular, there are rational points on $\mathcal{C}_{E} \bigcap U''$ and we get our conclusion.
\end{pf}

We can also reduce Bogomolov-Gieseker type inequality to the case of $-\displaystyle\frac{1}{2} \leqslant \beta < \displaystyle\frac{1}{2}$ by acting the tensor functor $-\otimes_{X}\pi^{*}\mathcal{O}_{Y}(H)$. In general, we have the following proposition: 
	
\begin{prop}\label{reduce3}
	Let $(\beta,\alpha) \in U$. Then $(\beta,\alpha) \in U$ satisfies the Bogomolov-Gieseker type inequality if and only if $(\beta+1,\alpha+\beta+\displaystyle\frac{1}{2}) \in U$  satisfies the Bogomolov-Gieseker type inequality.
\end{prop}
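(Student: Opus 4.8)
The plan is to exhibit the tensor functor $\Phi := (-)\otimes_X \pi^*\mathcal{O}_Y(H)$ as the autoequivalence of $D_0^b(X)$ that intertwines the data at $(\beta,\alpha)$ with the data at $(\beta+1,\alpha+\beta+\frac12)$, and then to check that it carries the Bogomolov--Gieseker type inequality at one point to the inequality at the other. First I would note that $\Phi$ is a well-defined exact autoequivalence of $D_0^b(X)$: tensoring by the line bundle $\pi^*\mathcal{O}_Y(H)$ preserves set-theoretic support on the zero section, and its inverse is $(-)\otimes_X\pi^*\mathcal{O}_Y(-H)$. Its effect on numerical invariants is computed by the projection formula $\pi_*\big(E\otimes_X\pi^*\mathcal{O}_Y(H)\big)\cong \pi_*E\otimes_Y\mathcal{O}_Y(H)$, so that $\mathrm{ch}(\pi_*\Phi E)=\mathrm{ch}(\pi_*E)\cdot e^{H}$. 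Unwinding the definition of the twisted invariants $v_i^{\beta}$ then gives the single clean identity
\[
v_i^{\beta}(\Phi E)=v_i^{\beta-1}(E)\qquad(i=0,\dots,n),
\]
which is the computational backbone of the whole argument; in particular $\Phi\in\mathrm{Aut}_\Lambda(D_0^b(X))$.

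Next I would verify that $\Phi$ transports the relevant hearts and stabilities. Since $v_0(\Phi E)=v_0(E)$ and $v_1(\Phi E)=v_1(E)+v_0(E)$, the slope satisfies $\mu(\Phi E)=\mu(E)+1$, so $\Phi$ sends slope semistable sheaves of slope $>\beta$ (resp. $\leqslant\beta$) to slope semistable sheaves of slope $>\beta+1$ (resp. $\leqslant\beta+1$). Hence $\Phi$ maps the torsion pair defining $\mathrm{Coh}_0^{\beta}(X)$ onto the one defining $\mathrm{Coh}_0^{\beta+1}(X)$, and therefore $\Phi\big(\mathrm{Coh}_0^{\beta}(X)\big)=\mathrm{Coh}_0^{\beta+1}(X)$. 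At the level of central charges, the identity above together with the choice $\alpha'=\alpha+\beta+\frac12$ yields, by a direct computation, $Z^{\beta+1,\alpha'}(\Phi E)=Z^{\beta,\alpha}(E)-\Im Z^{\beta,\alpha}(E)$; that is, $\Phi$ acts on the plane by the shear $\left(\begin{smallmatrix}1&-1\\0&1\end{smallmatrix}\right)$, which fixes the imaginary part, preserves the upper half plane, and shifts every tilt slope by one, $\nu^{\beta+1,\alpha'}(\Phi E)=\nu^{\beta,\alpha}(E)+1$. Being an exact, orientation-preserving, slope-monotone equivalence, $\Phi$ therefore identifies $\nu^{\beta,\alpha}$-(semi)stable objects of $\mathrm{Coh}_0^{\beta}(X)$ with $\nu^{\beta+1,\alpha'}$-(semi)stable objects of $\mathrm{Coh}_0^{\beta+1}(X)$, and in particular matches those of tilt slope exactly $\beta$ with those of tilt slope exactly $\beta+1$.

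Finally I would check that the two inequalities coincide under this dictionary. From $v_i^{\beta}(\Phi E)=v_i^{\beta-1}(E)$ we get $v_3^{\beta+1}(\Phi E)=v_3^{\beta}(E)$ and $v_1^{\beta+1}(\Phi E)=v_1^{\beta}(E)$, while a direct substitution gives $\frac{2\alpha'-(\beta+1)^2}{6}=\frac{2\alpha-\beta^2}{6}$. Thus the inequality $v_3^{\beta+1}(\Phi E)\leqslant\frac{2\alpha'-(\beta+1)^2}{6}\,v_1^{\beta+1}(\Phi E)$ is literally the inequality $v_3^{\beta}(E)\leqslant\frac{2\alpha-\beta^2}{6}\,v_1^{\beta}(E)$. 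Since $\Phi$ is a bijection between the tilt-slope-$\beta$ semistable objects at $(\beta,\alpha)$ and the tilt-slope-$(\beta+1)$ semistable objects at $(\beta+1,\alpha')$, with the inverse functor supplying the reverse implication, this establishes the claimed equivalence. The computations are all routine once the key identity is in place; the only point demanding genuine care is the second step, namely confirming that $\Phi$ preserves the tilted hearts and tilt-stability themselves (and not merely the numerics), which follows from $\Phi$ being an exact autoequivalence acting monotonically on slopes, so I expect no real obstacle beyond careful bookkeeping of the sign conventions for $v_i^{\beta}$.
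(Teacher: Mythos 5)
Your proposal is correct and follows essentially the same route as the paper: both use the autoequivalence $\Phi=(-)\otimes_X\pi^*\mathcal{O}_Y(H)$, check that it carries $\mathrm{Coh}_0^{\beta}(X)$ to $\mathrm{Coh}_0^{\beta+1}(X)$ and shifts tilt-slopes by one, and then observe that the identities $v_3^{\beta+1}(\Phi E)=v_3^{\beta}(E)$, $v_1^{\beta+1}(\Phi E)=v_1^{\beta}(E)$ and $2\alpha'-(\beta+1)^2=2\alpha-\beta^2$ make the two inequalities literally coincide. The key identity $v_i^{\beta}(\Phi E)=v_i^{\beta-1}(E)$ and the shear description of the central charge are accurate.
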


\begin{pf}
	First, we can verify that the functor $\Phi= -\otimes_{X}\pi^{*}\mathcal{O}_{Y}(H)$ is an exact autoequivalence of $\mathcal{D}$, and it induces an isomorphism
	between $\Lambda$ as follows:
	\begin{align*}
		\Phi_{*} \colon \Lambda \to \Lambda, (v_{0}(E),\ldots,v_{n}(E)) \mapsto (v_{0}^{-1}(E),\ldots,v_{n}^{-1}(E)).
	\end{align*}Here $v^{-1}_{i}(E)$ means that	the intersection number of $H^{n-i}$ with $i$-th component of $\mathrm{ch}(\pi_{*}E)e^{H}$.
	
	It is not hard to see that $\Phi(\mathcal{Z},\mathrm{Coh}_{0}(X))=(\mathcal{Z}^{1},\mathrm{Coh}_{0}(X))$, where $\mathcal{Z}=-v_{1}+\sqrt{-1}v_{0}$ and $\mathcal{Z}^{1}=-v_{1}^{1}+\sqrt{-1}v_{0}^{1}=-v_{1}+v_{0}+\sqrt{-1}v_{0}$. Then if $E \in \mathrm{Coh}_{0}(X)$ is slope semistable sheaf with slope $\mu$, then $E \otimes_{X} \pi^{*}\mathcal{O}_{Y}(H)$ is also semistable but with slope $\mu+1$. Thus we have 
	\begin{align*}
		& \mathrm{Coh}^{>\beta}_{0}(X) \otimes_{X} \pi^{*}\mathcal{O}_{Y}(H)=\mathrm{Coh}^{>\beta+1}_{0}(X);\\
		& \mathrm{Coh}^{\leqslant \beta}_{0}(X) \otimes_{X} \pi^{*}\mathcal{O}_{Y}(H)=\mathrm{Coh}^{\leqslant \beta+1}_{0}(X).
	\end{align*} and
	\begin{align*}
		\mathrm{Coh}_{0}^{\beta}(X)\otimes_{X} \pi^{*}\mathcal{O}_{Y}(H)=\mathrm{Coh}_{0}^{\beta+1}(X).
	\end{align*}
	
	Meanwhile, we can verify that if $(\beta,\alpha) \in U$, then $(\beta+1,\alpha+\beta+\displaystyle\frac{1}{2}) \in U$ and $\Phi(Z^{\beta,\alpha},\mathrm{Coh}_{0}^{\beta}(X))=(Z',\mathrm{Coh}_{0}^{\beta+1}(X))$, where 
	\begin{align*}
		Z'=-v_{2}+v_{1}+\left(\alpha-\frac{1}{2}\right)v_{0}+\sqrt{-1}(v_{1}-(\beta+1)v_{0}).
	\end{align*}Note that for any $E' \in \mathrm{Coh}_{0}^{\beta+1}(X)$, $E'$ is semistable with respect to the above weak stability condition $(Z',\mathrm{Coh}_{0}^{\beta+1}(X))$ with slope $\nu$ if and only if $E'$ is $\nu^{\beta+1,\alpha+\beta+\frac{1}{2}}$-semistable with tilt-slope $\nu+1$. In particular, if $E \in \mathrm{Coh}_{0}^{\beta}(X)$ is $\nu^{\beta,\alpha}$-semistable with tilt slope $\nu$, then $E \otimes_{X} \pi^{*}\mathcal{O}_{Y}(H) \in \mathrm{Coh}_{0}^{\beta+1}(X)$ is $\nu^{\beta+1,\alpha+\beta+\frac{1}{2}}$-semistable with tilt-slope $\nu+1$.
	
	Now, suppose that $(\beta+1,\alpha+\beta+\displaystyle\frac{1}{2})$ satisfy the Bogomolov-Gieseker type inequality and $(\beta,\alpha) \in U$ doesn't satisfy the Bogomolov-Gieseker type inequality, then there exists a $\nu^{\beta,\alpha}$-semistable object $E$ with tilt-slope $\beta$, such that $v_{3}^{\beta}(E)>\displaystyle\frac{2\alpha-\beta^{2}}{6}v_{1}^{\beta}(E)$. However, $E \otimes_{X} \pi^{*}\mathcal{O}_{Y}(H) \in \mathrm{Coh}_{0}^{\beta+1}(X)$ is $\nu^{\beta+1,\alpha+\beta+\frac{1}{2}}$-semistable with tilt-slope $\beta+1$, then by our assumption, we have
	\begin{align*}
		v_{3}^{\beta+1}(E \otimes_{X} \pi^{*}\mathcal{O}_{Y}(H)) \leqslant \displaystyle\frac{2(\alpha+\beta+\frac{1}{2})-(\beta+1)^{2}}{6}v_{1}^{\beta+1}(E \otimes_{X} \pi^{*}\mathcal{O}_{Y}(H)).
	\end{align*}It is equivalent to the inequality $v_{3}^{\beta}(E) \leqslant \displaystyle\frac{2\alpha-\beta^{2}}{6}v_{1}^{\beta}(E)$ and it is a contradiction. Conversely, we can prove the remain part.
\end{pf}

When $\dim Y=3$ we can also use relative derived dual functor $R\mathcal{H}om(-,\pi^{*}\mathrm{det}\mathcal{E}_{0}^{\lor}[r]) \colon D^{b}_{0}(X) \to D^{b}_{0}(X)$ to reduce the Bogomolov-Gieseker type inequality similarly. We just mention our final consequence here. The basic proposition of relative derived dual functor and some proofs can refer to Appendix \ref{appendix2}. 

\begin{prop}\label{reduce4}
	Suppose that $\dim Y=3$. Let $(\beta,\alpha) \in U$. Then $(\beta,\alpha) \in U$ satisfies the Bogomolov-Gieseker type inequality if and only if $(-\beta,\alpha) \in U$ satisfies the Bogomolov-Gieseker type inequality.
\end{prop}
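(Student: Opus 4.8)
The plan is to mirror the proof of Proposition \ref{reduce3}, replacing the tensor autoequivalence by the relative derived dual functor $\mathbb{D} = R\mathcal{H}om_X(-,\pi^{*}\det\mathcal{E}_{0}^{\lor}[r])$ of Appendix \ref{appendix2}. The structural inputs I would take from the appendix are threefold: first, that $\mathbb{D}$ is an exact anti-autoequivalence of $D_{0}^{b}(X)$ which is involutive, $\mathbb{D}^{2}\cong\mathrm{id}$; second, the relative Grothendieck duality identity $\pi_{*}\mathbb{D}E\cong R\mathcal{H}om_{Y}(\pi_{*}E,\mathcal{O}_{Y})$, whence $\mathrm{ch}_{i}(\pi_{*}\mathbb{D}E)=(-1)^{i}\mathrm{ch}_{i}(\pi_{*}E)$ and so $v_{i}(\mathbb{D}E)=(-1)^{i}v_{i}(E)$; and third, that the shifted functor $\mathbb{G}:=\mathbb{D}(-)[1]$ interchanges the tilted hearts, $\mathbb{G}(\mathrm{Coh}_{0}^{\beta}(X))=\mathrm{Coh}_{0}^{-\beta}(X)$, and carries $\nu^{\beta,\alpha}$-semistable objects to $\nu^{-\beta,\alpha}$-semistable objects (with inclusions of subobjects reversed, as befits a contravariant functor).

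Granting these, the first thing I would record is that $\mathbb{G}$ is an involutive anti-autoequivalence: since $\mathbb{D}$ is contravariant one has $\mathbb{D}(A[1])\cong\mathbb{D}(A)[-1]$, so $\mathbb{G}^{2}(E)=\mathbb{D}(\mathbb{D}E[1])[1]\cong\mathbb{D}^{2}(E)[-1][1]\cong E$. Thus $\mathbb{G}$ is a genuine bijection between the $\nu^{\beta,\alpha}$-semistable objects of $\mathrm{Coh}_{0}^{\beta}(X)$ and the $\nu^{-\beta,\alpha}$-semistable objects of $\mathrm{Coh}_{0}^{-\beta}(X)$. Writing $F:=\mathbb{G}(E)=\mathbb{D}E[1]$, the shift negates the Chern character of $\pi_{*}$, so $v_{i}^{\gamma}(F)=-v_{i}^{\gamma}(\mathbb{D}E)$ for every real twist $\gamma$; combining this with $v_{i}(\mathbb{D}E)=(-1)^{i}v_{i}(E)$ and expanding the $(-\beta)$-twisted classes yields
\begin{align*}
	v_{0}^{-\beta}(F)=-v_{0}^{\beta}(E),\quad v_{1}^{-\beta}(F)=v_{1}^{\beta}(E),\quad v_{2}^{-\beta}(F)=-v_{2}^{\beta}(E),\quad v_{3}^{-\beta}(F)=v_{3}^{\beta}(E).
\end{align*}
In particular $E$ has tilt-slope $\beta$, i.e.\ $v_{2}^{\beta}(E)-(\alpha-\tfrac{\beta^{2}}{2})v_{0}(E)=0$, if and only if $F$ has tilt-slope $-\beta$, which is consistent with the heart-and-stability exchange supplied by the appendix.

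With this dictionary the equivalence of inequalities is immediate. The Bogomolov--Gieseker inequality (\ref{BGinequality}) at $(-\beta,\alpha)$ applied to $F$ reads $v_{3}^{-\beta}(F)\leqslant\tfrac{2\alpha-\beta^{2}}{6}v_{1}^{-\beta}(F)$, and substituting $v_{3}^{-\beta}(F)=v_{3}^{\beta}(E)$ and $v_{1}^{-\beta}(F)=v_{1}^{\beta}(E)$ turns it into precisely the inequality (\ref{BGinequality}) at $(\beta,\alpha)$ for $E$. Hence, if $(-\beta,\alpha)$ satisfies the Bogomolov--Gieseker type inequality, then so does $(\beta,\alpha)$, since every relevant $E$ arises as $\mathbb{G}(F)$ for some $F$; the reverse implication is identical, using that $\mathbb{G}$ is involutive and the configuration is symmetric under $\beta\leftrightarrow-\beta$.

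I expect the only real obstacle to lie in the third input above: verifying that $\mathbb{D}(-)[1]$ genuinely interchanges the tilted hearts $\mathrm{Coh}_{0}^{\beta}(X)$ and $\mathrm{Coh}_{0}^{-\beta}(X)$ and preserves tilt-semistability. This amounts to controlling the cohomological amplitude of $\mathbb{D}E$ for $E$ in the heart and checking that $\mathbb{D}$ sends the defining torsion pair of $\mathrm{Coh}_{0}^{\beta}(X)$ to that of $\mathrm{Coh}_{0}^{-\beta}(X)$ under the slope reflection $\mu\mapsto-\mu$; this is exactly the material I would establish in Appendix \ref{appendix2} via relative Grothendieck--Serre duality, and it is the dual-functor analogue of the elementary bookkeeping carried out for the tensor functor in Proposition \ref{reduce3}. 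The remaining Chern-character computations are routine and can be verified directly.
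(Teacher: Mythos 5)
Your overall route is the same as the paper's: transport the problem from $(-\beta,\alpha)$ to $(\beta,\alpha)$ via the relative derived dual $\mathbb{D}_{X/Y}=R\mathcal{H}om(-,\pi^{*}\det\mathcal{E}_{0}^{\lor}[r])$. However, the key input you assert --- that $\mathbb{D}_{X/Y}(-)[1]$ interchanges the hearts $\mathrm{Coh}_{0}^{\beta}(X)$ and $\mathrm{Coh}_{0}^{-\beta}(X)$, so that $F:=\mathbb{D}_{X/Y}(E)[1]$ is again an object of the opposite heart with $v_{3}^{-\beta}(F)=v_{3}^{\beta}(E)$ exactly --- is false as stated, and this is where your proof has a genuine gap. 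For a tilt-torsionfree $E\in\mathrm{Coh}_{0}^{\beta}(X)$ on a threefold, the complex $\mathbb{D}_{X/Y}(E)$ has cohomology (with respect to $\mathrm{Coh}_{0}^{-\beta}(X)$) in two degrees: the ``tilt-dual'' $E^{\lor_{\beta}}=\mathcal{H}^{1}_{-\beta}(\mathbb{D}_{X/Y}(E))$, and an extra piece $\mathcal{H}^{2}(\mathbb{D}_{X/Y}(E))$, which is a zero-dimensional sheaf $Q$ sitting in the triangle $E^{\lor_{\beta}}[-1]\to\mathbb{D}_{X/Y}(E)\to Q[-2]$. So $\mathbb{D}_{X/Y}(E)[1]$ is generally not in the heart, and the object one must work with is the truncation $E^{\lor_{\beta}}$. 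In K-theory this gives $v_{3}^{-\beta}(E^{\lor_{\beta}})=v_{3}^{\beta}(E)+\mathrm{ch}_{3}(\pi_{*}Q)$, i.e.\ only the inequality $v_{3}^{-\beta}(E^{\lor_{\beta}})\geqslant v_{3}^{\beta}(E)$ (while $v_{1}^{-\beta}(E^{\lor_{\beta}})=v_{1}^{\beta}(E)$ does hold exactly). Your ``immediate'' substitution step therefore does not go through verbatim.

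Fortunately the defect points in the favorable direction, and this is exactly how the paper's Proposition \ref{reduce4B} argues: starting from a $\nu^{-\beta,\alpha}$-semistable $E$ of tilt-slope $-\beta$ violating the inequality, one forms $E^{\lor_{-\beta}}$, invokes Proposition \ref{propositionB2} to see it is $\nu^{\beta,\alpha}$-semistable of tilt-slope $\beta$, applies the inequality at $(\beta,\alpha)$, and then uses $v_{3}^{-\beta}(E)\leqslant v_{3}^{\beta}(E^{\lor_{-\beta}})$ together with $v_{1}^{\beta}(E^{\lor_{-\beta}})=v_{1}^{-\beta}(E)$ to reach a contradiction. So your strategy is salvageable, but you must (i) replace $\mathbb{D}_{X/Y}(E)[1]$ by its heart-cohomology $E^{\lor_{\beta}}$, (ii) prove that duality preserves tilt-semistability at the level of these truncated duals (the paper does this via the double-dual exact sequence $0\to E\to E^{\lor_{\beta}\lor_{-\beta}}\to T\to 0$ with $T$ zero-dimensional, not by exhibiting an anti-autoequivalence of hearts), and (iii) accept the one-sided estimate on $v_{3}$ and check its sign is the one you need. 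Also note that a tilt-semistable object of finite tilt-slope is automatically tilt-torsionfree (any subobject with $v_{1}^{\beta}=0$ would have slope $+\infty$ and destabilize), which is what licenses applying the duality results to the objects relevant for the Bogomolov--Gieseker inequality.
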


\begin{pf}
	See Proposition \ref{reduce4B}.
\end{pf}

Combine the above reduction propositions, we obtain the following conjecture which is equivalent to Conjecture \ref{conj}:

\begin{conj}\label{conj3}
	Suppose that $\dim Y=3$. Then for any $(\beta,\alpha) \in U \bigcap \mathbb{Q}^{2}$ with $-\displaystyle\frac{1}{2}\leqslant \beta \leqslant 0$, $\sqrt{2\alpha-\beta^{2}}<\displaystyle\frac{1}{2}$ and for any $\nu^{\beta,\alpha}$-semistable object $E \in \mathrm{Coh}_{0}^{\beta}(X)$ with tilt-slope $\beta$, we have the following Bogomolov-Gieseker type inequality:
	\begin{align*}
		v_{3}^{\beta}(E) \leqslant \displaystyle\frac{2\alpha-\beta^{2}}{6}v_{1}^{\beta}(E).
	\end{align*}
\end{conj}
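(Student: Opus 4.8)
The plan is to specialize to $Y=\mathbb{P}^{3}$ and $\mathcal{E}_{0}=\omega_{\mathbb{P}^{3}}$ and to deduce the inequality from the abstract criterion of Lemma~\ref{BGlem}. Fix a rational pair $(\beta,\alpha)$ in the stated range and choose any $a>\frac{2\alpha-\beta^{2}}{6}$. Set $\mathcal{A}=\mathrm{Coh}_{0}^{\beta,\alpha}(X)$ and $Z=Z^{\beta,\alpha,a}$. By Lemma~\ref{imaginarypart2} we have $\Im Z(E)=v_{2}^{\beta}(E)-(\alpha-\frac{\beta^{2}}{2})v_{0}^{\beta}(E)\geqslant 0$ for all $E\in\mathcal{A}$, so $\mathcal{A}$ and $Z$ satisfy the standing hypotheses of the lemma. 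The key point is that for an object $F$ of tilt-slope $\beta$ one has $\Re Z(F)=-v_{3}^{\beta}(F)+a\,v_{1}^{\beta}(F)$, so the conclusion $\Re Z(F)\geqslant 0$ of Lemma~\ref{BGlem} reads $v_{3}^{\beta}(F)\leqslant a\,v_{1}^{\beta}(F)$; letting $a\downarrow\frac{2\alpha-\beta^{2}}{6}$ then yields exactly the desired inequality.

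First I would reduce to the case handled by the lemma. Given a $\nu^{\beta,\alpha}$-semistable $E$ of tilt-slope $\beta$, its Jordan--H\"older filtration in the tilt heart has tilt-stable factors of the same tilt-slope, and since $v_{1}^{\beta}$ and $v_{3}^{\beta}$ are additive it suffices to prove the inequality for a tilt-stable $F$. For such $F$, the description of $\mathcal{I}=\{E\in\mathcal{A}\mid\Im Z(E)=0\}$ coming from Lemma~\ref{imaginarypart2} identifies $F[1]$ with a simple object of the finite-length category $\mathcal{I}$. Thus everything comes down to producing an auxiliary heart $\mathcal{B}$ for which the three hypotheses of Lemma~\ref{BGlem} hold.

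I would take $\mathcal{B}$ to be the extension-closure of $i_{*}\mathcal{O}(1),\,i_{*}\mathcal{O}[1],\,i_{*}\mathcal{T}(-2)[2],\,i_{*}\mathcal{O}(-1)[3]$, and verify the following: that $\mathcal{B}$ is a bounded heart (the four underlying sheaves form a full exceptional collection on $\mathbb{P}^{3}$, so $i_{*}$ together with the indicated shifts induces a finite-length heart on $D_{0}^{b}(X)$); hypothesis (1), namely $\mathcal{B}\subseteq\langle\mathcal{A},\mathcal{A}[1]\rangle$, by locating each of the four generators relative to the double tilt from their Chern characters and the tilt-(semi)stability of $\mathcal{O}(\pm 1),\mathcal{O},\mathcal{T}(-2)$ (here Proposition~\ref{smalltiltslope} and Proposition~\ref{ipushforwardoftiltstableobject} are useful); hypothesis (2), by computing $Z^{\beta,\alpha,a}$ on the four generators explicitly and checking that, for $-\frac{1}{2}\leqslant\beta\leqslant 0$ and $\omega=\sqrt{2\alpha-\beta^{2}}$ small, their images lie in a common half-plane $\{r e^{\sqrt{-1}\pi\phi}\mid\phi_{0}\leqslant\phi\leqslant\phi_{0}+1\}$; and hypothesis (3), $F[2]\notin\mathcal{B}$, using that $F[2]\in\mathcal{A}[1]$ together with the phase constraint just established (or a direct $\mathrm{Hom}$-vanishing against the generators of $\mathcal{B}$).

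The main obstacle is hypothesis (2): one must show that the four central charges fit into a single half-plane uniformly over the whole rational range $\beta\in[-\frac{1}{2},0]$, $\omega<\frac{1}{2}$. This is the quantitative core of the argument, and it is precisely what dictates both the peculiar choice of generators of $\mathcal{B}$ and the smallness threshold $\omega<\frac{1}{2}$ built into the statement; the smallness of $\omega$ forces the imaginary parts of the generators to dominate, pinning their phases into the required arc. A secondary but non-trivial point is establishing that $\mathcal{B}$ is genuinely a bounded heart and that hypothesis (1) holds, which rests on the tilt-stability of the tangent-twist $\mathcal{T}(-2)$ at the relevant $(\beta,\alpha)$; once these are in place the conclusion of Lemma~\ref{BGlem} and the limit $a\downarrow\frac{2\alpha-\beta^{2}}{6}$ finish the proof.
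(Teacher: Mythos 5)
Your overall strategy is the one the paper actually uses: this conjecture is only proved in the paper for $X=\mathrm{Tot}(\omega_{\mathbb{P}^{3}})$ (Proposition \ref{finalpartproposition}), by applying Lemma \ref{BGlem} (in the form of Lemma \ref{BGlem1}) to the Bridgeland heart $\mathcal{B}=\langle i_{*}\mathcal{O}(1),i_{*}\mathcal{O}[1],i_{*}\mathcal{T}(-2)[2],i_{*}\mathcal{O}(-1)[3]\rangle$, verifying hypothesis (1) via Proposition \ref{tiltstablebobject}, hypothesis (2) by the explicit computation of $Z^{\beta,\alpha,a_{0}}(S_{i})$, and hypothesis (3) by $\mathrm{Hom}(F[2],S_{i})=0$. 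Two small remarks on presentation: the paper takes $a_{0}=\frac{2\alpha-\beta^{2}}{6}$ exactly, so that $\Re Z(F)\geqslant 0$ is literally the desired inequality and no limit $a\downarrow a_{0}$ is needed (your limit works but forces you to re-verify the half-plane condition for each $a>a_{0}$); and note that the statement as written is a conjecture for a general rank-one bundle over a threefold satisfying Assumption \ref{crucialassumption} — neither you nor the paper proves it in that generality, only for the local $\mathbb{P}^{3}$.

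There is one genuine gap, in your reduction step. You pass to the Jordan--H\"older factors of $E$ with respect to $\nu^{\beta,\alpha}$ in $\mathrm{Coh}_{0}^{\beta}(X)$ and then assert that for a tilt-\emph{stable} $F$ of slope $\beta$ the object $F[1]$ is simple in $\mathcal{I}=\mathcal{I}_{0}^{\beta,\alpha}(X)$. That is false in general: if $0\to F\to G\to T\to 0$ is exact in $\mathrm{Coh}_{0}^{\beta}(X)$ with $T$ a nonzero zero-dimensional sheaf and $G$ still tilt-semistable of slope $\beta$, then rotating gives a short exact sequence $0\to T\to F[1]\to G[1]\to 0$ inside $\mathcal{I}$, so $F[1]$ is not simple there. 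A concrete instance is $F=i_{*}\mathcal{I}_{x}$, $G=i_{*}\mathcal{O}$, $T=\mathbb{C}(x)$ at $\alpha=\beta^{2}$. Since Lemma \ref{BGlem} genuinely needs $F[1]$ simple in $\mathcal{I}$ (simplicity is what forces $F[1]\in\mathcal{T}$ once $F[1]\notin\mathcal{F}$), your reduction as stated does not feed into the lemma. The correct reduction, which is what the paper does tersely after Proposition \ref{finitelength}, is to filter $E[1]$ by its Jordan--H\"older factors \emph{inside the finite-length category} $\mathcal{I}$: those simple factors are either zero-dimensional sheaves $T_{k}$, which satisfy $v_{1}^{\beta}(T_{k})=0$ and $v_{3}^{\beta}(T_{k})\geqslant 0$ and hence contribute with the favourable sign, or objects $F_{j}[1]$ that \emph{are} simple in $\mathcal{I}$, to which Lemma \ref{BGlem} applies; additivity of $v_{1}^{\beta}$ and $v_{3}^{\beta}$ then gives the inequality for $E$. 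With that substitution the rest of your argument goes through exactly as in the paper.
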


\begin{prop}
	Conjecture \ref{conj3} is equivalent to Conjecture \ref{conj}.
\end{prop}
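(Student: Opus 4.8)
The plan is to prove the two implications separately, the nontrivial one being a synthesis of the four reduction results Propositions \ref{reduce1}--\ref{reduce4} (and throughout we work under the hypothesis $\dim Y=3$, as in the statement of Conjecture \ref{conj3}, since Proposition \ref{reduce4} requires it). The forward implication is immediate: Conjecture \ref{conj3} only demands the inequality (\ref{BGinequality}) at a restricted set of points --- rational $(\beta,\alpha)$ with $-\frac{1}{2}\leqslant\beta\leqslant 0$ and $\sqrt{2\alpha-\beta^{2}}<\frac{1}{2}$ --- so it is literally a special case of Conjecture \ref{conj}. It therefore suffices to show that Conjecture \ref{conj3} implies Conjecture \ref{conj}, and I would argue by contraposition.

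Assume Conjecture \ref{conj} fails, i.e. the bad locus $U'=\{(\beta,\alpha)\in U \mid (\beta,\alpha)\text{ does not satisfy }(\ref{BGinequality})\}$ is non-empty. The goal is to exhibit a single point of $U'$ meeting \emph{all} the constraints of Conjecture \ref{conj3}, yielding a contradiction. I would normalize such a point in four steps, checking at each stage that the invariant $\omega=\sqrt{2\alpha-\beta^{2}}$ and the membership in $U'$ are preserved. First, applying the contrapositive of Proposition \ref{reduce1} with $k=\frac{1}{2}$: since $U'\neq\varnothing$, there exists a bad point with $\omega<\frac{1}{2}$. Next, invoke Proposition \ref{reduce2} at this point and shrink the neighbourhood $U''$ so that $\omega<\frac{1}{2}$ persists (legitimate because $\omega$ is continuous), producing a \emph{rational} bad point $(\beta_{0},\alpha_{0})\in U'\cap\mathbb{Q}^{2}$ with $\omega_{0}<\frac{1}{2}$. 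Then use Proposition \ref{reduce3}: the affine map $(\beta,\alpha)\mapsto(\beta+1,\alpha+\beta+\frac{1}{2})$ preserves $U'$, sends $\mathbb{Q}^{2}$ to itself, and preserves $\omega$ since $2(\alpha+\beta+\frac{1}{2})-(\beta+1)^{2}=2\alpha-\beta^{2}$; iterating it and its inverse moves $\beta_{0}$ by an integer into $[-\frac{1}{2},\frac{1}{2})$. Finally, if the resulting $\beta$ lies in $(0,\frac{1}{2})$, apply Proposition \ref{reduce4}: the involution $(\beta,\alpha)\mapsto(-\beta,\alpha)$ again preserves $U'$, rationality, and $\omega$ (as $2\alpha-(-\beta)^{2}=2\alpha-\beta^{2}$), carrying $\beta$ into $(-\frac{1}{2},0)$. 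The outcome is a point of $U'\cap\mathbb{Q}^{2}$ with $-\frac{1}{2}\leqslant\beta\leqslant 0$ and $\omega<\frac{1}{2}$, directly contradicting Conjecture \ref{conj3}.

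The substantive analytic content has already been carried out in Propositions \ref{reduce1}--\ref{reduce4}; the final step is essentially bookkeeping, and its only delicate aspect --- the main obstacle --- is the \emph{order} of the reductions together with the verification that each respects the constraints imposed by the earlier ones. Concretely, I would record explicitly that the translation of Proposition \ref{reduce3} and the dualization of Proposition \ref{reduce4} are both $\omega$-preserving and rationality-preserving, so that the conditions ``$\omega<\frac{1}{2}$'' and ``$(\beta,\alpha)\in\mathbb{Q}^{2}$'' secured from Propositions \ref{reduce1} and \ref{reduce2} are not destroyed when the $\beta$-range is subsequently normalized. Performing the $\omega$-shrinking (Proposition \ref{reduce1}) and rationalization (Proposition \ref{reduce2}) \emph{before} the $\beta$-normalization (Propositions \ref{reduce3} and \ref{reduce4}) is exactly what makes the argument close up.
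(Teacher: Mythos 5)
Your proof is correct and follows exactly the route the paper intends: the paper's own proof is the single line ``By Proposition \ref{reduce1}, \ref{reduce2}, \ref{reduce3} and \ref{reduce4},'' and your write-up supplies precisely the missing bookkeeping (contraposition, the order $\omega$-shrinking $\to$ rationalization $\to$ $\beta$-normalization, and the checks that the translation and reflection preserve $\omega$, rationality, and membership in $U'$). No gaps.
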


\begin{pf}
	By Proposition \ref{reduce1}, \ref{reduce2}, \ref{reduce3} and \ref{reduce4}.
\end{pf}

In the end of this subsection, we use the wall and chamber structure on $U$ to generalize the Bogomolov-Gieseker type inequality (\ref{BGinequality}) in a quadratic form as follows:

\begin{prop}\cite[Theorem 4.2]{bayer2016space}
	Conjecture \ref{conj} holds if and only if for any $(\beta,\alpha) \in U$ and any $\nu^{\beta,\alpha}$-semistable object $E$, we have:
	\begin{align}
	\label{BGinequality2} Q^{\beta,\alpha}(E)=\omega^{2}\overline{\Delta}_{H}(E)+4(v_{2}^{\beta}(E))^{2}-6v^{\beta}_{3}(E)v^{\beta}_{1}(E) \geqslant 0.
	\end{align}Here we denote $\sqrt{2\alpha-\beta^{2}}$ by $\omega$.
\end{prop}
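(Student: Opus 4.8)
The plan is to treat the two implications separately, pivoting on the behaviour of $Q^{\beta,\alpha}$ on the locus of objects whose tilt-slope equals $\beta$. First I would record that if $E$ is $\nu^{\beta,\alpha}$-semistable with tilt-slope $\beta$, then $\nu^{\beta,\alpha}(E)=\beta$ forces $v_2^{\beta}(E)=\frac{\omega^2}{2}v_0^{\beta}(E)$; feeding this into the definition of $Q^{\beta,\alpha}$ and using $\overline{\Delta}_H(E)=(v_1^{\beta}(E))^2-2v_0^{\beta}(E)v_2^{\beta}(E)$ collapses the form to
\begin{align*}
	Q^{\beta,\alpha}(E)=6v_1^{\beta}(E)\left(\frac{2\alpha-\beta^2}{6}v_1^{\beta}(E)-v_3^{\beta}(E)\right).
\end{align*}
As such an $E$ has finite tilt-slope, $v_1^{\beta}(E)=\Im Z^{\beta,\alpha}(E)>0$, so $Q^{\beta,\alpha}(E)\geqslant 0$ is equivalent to the inequality of Conjecture \ref{conj}. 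This yields the implication ``$\Leftarrow$'' at once, since objects of tilt-slope $\beta$ form a special case of the semistable objects for which the quadratic inequality is assumed.

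For the converse I would start from an arbitrary $\nu^{\beta_0,\alpha_0}$-semistable object $E$ and deform it onto this locus. If $\nu^{\beta_0,\alpha_0}(E)=\nu_0$ is finite, Proposition \ref{wallandchamberlem1} supplies the line $l\colon \alpha-\alpha_0=\nu_0(\beta-\beta_0)$ on which $E$ remains $\nu^{\beta,\alpha}$-semistable of constant slope $\nu_0$. The point $(\beta',\alpha')=(\nu_0,\alpha_0+\nu_0(\nu_0-\beta_0))$ lies on $l$, and the identity $\alpha'-\frac{1}{2}\beta'^2=\frac{1}{2}(\nu_0-\beta_0)^2+(\alpha_0-\frac{1}{2}\beta_0^2)>0$ places it in $U$; convexity of $U$ then puts the whole segment joining $(\beta_0,\alpha_0)$ to $(\beta',\alpha')$ inside $l\cap U$. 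At $(\beta',\alpha')$ the tilt-slope of $E$ equals $\nu_0=\beta'$, so Conjecture \ref{conj} together with the identity of the previous paragraph gives $Q^{\beta',\alpha'}(E)\geqslant 0$.

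The crux is to propagate this non-negativity back to $(\beta_0,\alpha_0)$, which I would do by establishing, along $l$ with $\alpha(\beta)=\alpha_0+\nu_0(\beta-\beta_0)$, the differential relation
\begin{align*}
	\frac{d}{d\beta}Q^{\beta,\alpha(\beta)}(E)=-\frac{v_0^{\beta}(E)}{v_1^{\beta}(E)}Q^{\beta,\alpha(\beta)}(E).
\end{align*}
This rests on the elementary derivatives $\frac{d}{d\beta}v_k^{\beta}=-v_{k-1}^{\beta}$ and $\frac{d}{d\beta}\omega^2=2(\nu_0-\beta)$, combined with the slope constraint $(\nu_0-\beta)v_1^{\beta}(E)=v_2^{\beta}(E)-\frac{\omega^2}{2}v_0^{\beta}(E)$ valid on $l$; after substitution the right-hand side reorganizes exactly into $-\frac{v_0^{\beta}}{v_1^{\beta}}Q^{\beta,\alpha}$. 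Since $v_1^{\beta}(E)>0$ throughout the compact segment, this is a linear homogeneous ODE whose solution never changes sign, so $Q^{\beta,\alpha(\beta)}(E)\geqslant 0$ at $(\beta',\alpha')$ forces $Q^{\beta_0,\alpha_0}(E)\geqslant 0$.

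It remains to dispose of the case $\nu^{\beta_0,\alpha_0}(E)=+\infty$, i.e.\ $v_1^{\beta_0}(E)=0$, which is outside the ODE's reach; here the last term of $Q$ drops and $Q^{\beta_0,\alpha_0}(E)=\omega^2\overline{\Delta}_H(E)+4(v_2^{\beta_0}(E))^2$, which is non-negative by the generalized discriminant inequality $\overline{\Delta}_H\geqslant 0$ for tilt-semistable objects established earlier. I expect the main obstacle to be the verification of the differential relation together with the accompanying bookkeeping on the degenerate strata (objects with $v_1^{\beta}=0$ or lying in $\ker Z^{\beta,\alpha}$), since the remainder of the argument is a substitution and an appeal to the wall-and-chamber machinery of Proposition \ref{wallandchamberlem1}.
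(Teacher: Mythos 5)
Your proof is correct, and the overall skeleton matches the paper's: both directions pivot on the observation that for a semistable object of tilt-slope $\beta$ one has $v_{2}^{\beta}(E)=\tfrac{\omega^{2}}{2}v_{0}^{\beta}(E)$, whence $Q^{\beta,\alpha}(E)=6v_{1}^{\beta}(E)\bigl(\tfrac{2\alpha-\beta^{2}}{6}v_{1}^{\beta}(E)-v_{3}^{\beta}(E)\bigr)$ with $v_{1}^{\beta}(E)>0$, and the forward direction slides $E$ along the constant-slope line of Proposition \ref{wallandchamberlem1} to the point $(\beta',\alpha')=(\nu_{0},\alpha_{0}+\nu_{0}(\nu_{0}-\beta_{0}))\in U$ where the slope equals $\beta'$. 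Where you genuinely diverge is the transfer of the inequality back from $(\beta',\alpha')$ to $(\beta_{0},\alpha_{0})$: the paper does this by brute-force substitution, rewriting the Bogomolov--Gieseker inequality at $\beta'$ in terms of $v_{i}^{\beta_{0}}(E)$ via the explicit formula for $\beta_{0}-\beta'$ (displays (\ref{computation1})--(\ref{computation3})) and then simplifying to $Q^{\beta_{0},\alpha_{0}}(E)\geqslant 0$, whereas you package the same computation as the first-order linear ODE
\begin{align*}
	\frac{d}{d\beta}Q^{\beta,\alpha(\beta)}(E)=-\frac{v_{0}^{\beta}(E)}{v_{1}^{\beta}(E)}\,Q^{\beta,\alpha(\beta)}(E)
\end{align*}
along $l$. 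I checked this identity: using $\tfrac{d}{d\beta}v_{k}^{\beta}=-v_{k-1}^{\beta}$, $\tfrac{d}{d\beta}\omega^{2}=2(\nu_{0}-\beta)$ and the constraint $(\nu_{0}-\beta)v_{1}^{\beta}(E)=v_{2}^{\beta}(E)-\tfrac{\omega^{2}}{2}v_{0}^{\beta}(E)$, both sides multiplied by $v_{1}^{\beta}(E)$ reduce to $-\omega^{2}v_{0}^{\beta}(E)\overline{\Delta}_{H}(E)-4v_{0}^{\beta}(E)(v_{2}^{\beta}(E))^{2}+6v_{0}^{\beta}(E)v_{3}^{\beta}(E)v_{1}^{\beta}(E)$, so it holds; and since $v_{1}^{\beta}(E)>0$ on the whole closed segment (the slope stays finite there), the integrating factor is a positive multiple relating $Q^{\beta_{0},\alpha_{0}}(E)$ to $Q^{\beta',\alpha'}(E)$, which is exactly the sign-preservation you need. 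Your treatment of the degenerate case $v_{1}^{\beta_{0}}(E)=0$ via $\overline{\Delta}_{H}(E)\geqslant 0$ also agrees with the paper. The ODE formulation buys a cleaner, more conceptual explanation of \emph{why} the algebra in (\ref{computation1})--(\ref{computation3}) closes up (namely, $Q$ is an eigenfunction of differentiation along the constant-slope line), at the cost of having to verify the derivative identity; the paper's substitution is more elementary but opaque.
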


\begin{pf}
	The proof is the same as the one in \cite[Theorem 4.2]{bayer2016space}, we just simply rewrite it in our new notations here.
	
	Suppose that Conjecture \ref{conj} holds. We show that for any $(\beta_{0},\alpha_{0}) \in U$ and a $\nu^{\beta_0, \alpha_0}$-semistable object $E$, the inequality (3) holds. Let $E$ be a $\nu^{\beta_{0},\alpha_{0}}$-semistable object. If $v_{1}^{\beta_{0}}(E)=0$, then it is obviously that $Q^{\beta_{0},\alpha_{0}}(E)=\omega_{0}^{2}\overline{\Delta}_{H}(E)+4(v_{2}^{\beta_{0}}(E))^{2} \geqslant 0$. If $\nu^{\beta_{0},\alpha_{0}}(E)=\beta'$, then there is a line segment $\alpha-\alpha_{0}=\beta'(\beta-\beta_{0})$ in $U$, such that $E$ is in $\mathrm{Coh}_{0}^{\beta}(X)$ and it is still $\nu^{\beta,\alpha}$-semistable object with tilt-slope $\beta'$ for any $(\beta,\alpha)$ on this line segment by Proposition \ref{wallandchamberlem1}. Let $\alpha'=\beta'(\beta'-\beta_{0})+\alpha_{0}$, we could verify that:
	\begin{align*}
		2\alpha'-\beta'^{2}=2\alpha_{0}-\beta_{0}^{2}+\beta'^{2}-2\beta'\beta_{0}+\beta_{0}^{2}>0.
	\end{align*}It means that $E \in \mathrm{Coh}_{0}^{\beta'}(X)$ is $\nu^{\beta',\alpha'}$-semistable object with tilt-slope $\beta'$, then by Conjecture \ref{conj}, we have:
	\begin{align*}
		v^{\beta'}_{3}(E) \leqslant \frac{2\alpha'-\beta'^{2}}{6}v^{\beta'}_{1}(E).
	\end{align*}
It yields:
\begin{align}\label{computation1}
	\nonumber
	&v_{3}^{\beta_{0}}(E)+(\beta_{0}-\beta')v_{2}^{\beta_{0}}(E)+\frac{1}{2}(\beta_{0}-\beta')^{2}v_{1}^{\beta_{0}}(E)+\frac{1}{6}(\beta_{0}-\beta')^{3}v_{0}^{\beta_{0}}(E)  \\ 
	&\geqslant \frac{2\alpha_{0}-\beta_{0}^{2}+(\beta_{0}-\beta')^{2}}{6}(v_{1}^{\beta_{0}}(E)+(\beta_{0}-\beta')v_{0}^{\beta_{0}}(E)).
\end{align}

And we know $\beta'=\displaystyle\frac{v_{2}(E)-\alpha_{0}v_{0}(E)}{v_{1}^{\beta_{0}}(E)}=\frac{v_{2}^{\beta_{0}}(E)+\beta_{0}v_{1}^{\beta_{0}}(E)-(\alpha_{0}^{2}-\frac{1}{2}\beta_{0}^{2})v_{0}^{\beta_{0}}(E)}{v_{1}^{\beta_{0}}(E)}$. In other words,
\begin{align}\label{computation2}
	\beta_{0}-\beta'=\frac{-v_{2}^{\beta_{0}}(E)+(\alpha_{0}^{2}-\frac{1}{2}\beta_{0}^{2})v_{0}^{\beta_{0}}(E)}{v_{1}^{\beta_{0}}(E)}.
\end{align} Combine (\ref{computation1}) and (\ref{computation2}), we have
\begin{align}\label{computation3}
	v_{3}^{\beta_{0}}(E)+(\beta_{0}-\beta')v_{2}^{\beta_{0}}(E)+\frac{1}{3}(\beta_{0}-\beta')^{2}v_{1}^{\beta_{0}}(E) \geqslant \frac{2\alpha_{0}-\beta_{0}^{2}}{6}(v_{1}^{\beta_{0}}(E)+(\beta_{0}-\beta')v_{0}^{\beta_{0}}(E)).
\end{align} After simply the inequality (\ref{computation3}), we can finally get $Q^{\beta_{0},\alpha_{0}}(E) \geqslant 0$. Conversely, it is obvious that if the inequality (\ref{BGinequality2}) holds for any $(\beta,\alpha) \in U$, then Conjecture \ref{conj} holds.
\end{pf}

\subsection{Noetherian property of the bounded heart}

Besides discussing Bogomolov-Gieseker type inequality, it is also improtant to prove that $\mathrm{Coh}_{0}^{\beta,\alpha}(X)$ is a Noetherian abelian category when $\beta,\alpha$ are rational numbers and $\dim Y=3$. And the proof is similar to the proof in \cite{bayer2014bridgeland},\cite{toda2013bogomolov} and \cite{toda2013stability}.  

\begin{prop}
	For any $(\beta,\alpha) \in U \cap \mathbb{Q}^2$, the abelian category $\mathrm{Coh}^{\beta,\alpha}_{0}(X)$ is Noetherian.
\end{prop}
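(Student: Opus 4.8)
The plan is to reduce the Noetherian property of the double tilt $\mathrm{Coh}_0^{\beta,\alpha}(X)$ to that of the single tilt $\mathrm{Coh}_0^\beta(X)$ (already established above for rational $\beta$) by controlling an arbitrary ascending chain of subobjects with two monotone, discretely-valued numerical invariants, following the template of \cite{bayer2014bridgeland,toda2013bogomolov}. Fix $E \in \mathrm{Coh}_0^{\beta,\alpha}(X)$ and an ascending chain $E_1 \subseteq E_2 \subseteq \cdots \subseteq E$ in $\mathrm{Coh}_0^{\beta,\alpha}(X)$ with quotients $Q_i = E_{i+1}/E_i$. Writing $\mathcal{A} = \mathrm{Coh}_0^\beta(X)$, each $E_i$ has cohomology objects $\mathcal{H}^{-1}_\beta(E_i) \in \mathrm{Coh}^{\leqslant\beta,\alpha}_0(X) =: \mathcal{F}$ and $\mathcal{H}^0_\beta(E_i) \in \mathrm{Coh}^{>\beta,\alpha}_0(X) =: \mathcal{T}$, and every step of the argument exploits the long exact cohomology sequences in $\mathcal{A}$ attached to the triangles $E_i \to E_{i+1} \to Q_i$ and $E_i \to E \to E/E_i$.

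First I would run the imaginary-part reduction. Set $\phi(-) = \Im Z^{\beta,\alpha,a}(-) = v_2^\beta(-) - (\alpha - \frac{1}{2}\beta^2)v_0^\beta(-)$. By Lemma \ref{imaginarypart2} this is nonnegative on $\mathrm{Coh}_0^{\beta,\alpha}(X)$, it is additive on short exact sequences, and, crucially because $\alpha,\beta \in \mathbb{Q}$ and the numbers $v_i(\pi_* -)$ lie in a fixed fractional lattice, its image is discrete in $\mathbb{R}$. Along the chain $\phi(E_i)$ is nondecreasing and bounded above by $\phi(E)$, hence it stabilizes; discarding initial terms, we may assume $\phi(Q_i) = 0$ for all $i$, so every jump $Q_i$ lies in the subcategory $\mathcal{C} = \{F \in \mathrm{Coh}_0^{\beta,\alpha}(X) \mid \phi(F) = 0\}$.

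Next I would stabilize the cohomology pieces. Since $Q_i \in \langle \mathcal{A},\mathcal{A}[1]\rangle$, the map $\mathcal{H}^{-1}_\beta(E_i) \to \mathcal{H}^{-1}_\beta(E_{i+1})$ is injective, so $\{\mathcal{H}^{-1}_\beta(E_i)\}$ is an ascending chain in $\mathcal{A}$ bounded by $\mathcal{H}^{-1}_\beta(E)$; likewise the images $\mathrm{im}(\mathcal{H}^0_\beta(E_i) \to \mathcal{H}^0_\beta(E))$ form an ascending chain bounded by $\mathcal{H}^0_\beta(E)$. As $\mathcal{A} = \mathrm{Coh}_0^\beta(X)$ is Noetherian for rational $\beta$, both chains stabilize, and we may assume that $\mathcal{H}^{-1}_\beta(E_i)$ and $\mathrm{im}(\mathcal{H}^0_\beta(E_i)\to\mathcal{H}^0_\beta(E))$ are independent of $i$.

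The final and hardest step is to deduce that the chain itself terminates from the structure of $\mathcal{C}$. By Lemma \ref{imaginarypart2} (for $\dim Y = 3$), every object of $\mathcal{C}$ is an extension of a zero-dimensional sheaf of $\mathrm{Coh}_0(X)$ (its $\mathcal{H}^0_\beta$-part) by a shift $F[1]$ of a $\nu^{\beta,\alpha}$-semistable object $F$ of tilt-slope exactly $\beta$ (its $\mathcal{H}^{-1}_\beta$-part). The zero-dimensional jumps contribute positively and integrally to $v_3$, which is additive and bounded along the chain, so only finitely many of those can occur; the difficulty lies in the shift pieces. The subtle point is that, even after $\mathcal{H}^{-1}_\beta(E_i)$ has stabilized, the long exact sequence only yields an inclusion $\mathcal{H}^{-1}_\beta(Q_i) \hookrightarrow \mathcal{H}^0_\beta(E_i)$ of a slope-$\leqslant\beta$ object into a slope-$>\beta$ object, which need not be zero since $\mathcal{T}$ is not closed under subobjects. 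Ruling out an infinite accumulation of such slope-$\beta$ shift contributions—by combining the stabilization of the cohomology objects with the finite-length property of the $\nu^{\beta,\alpha}$-semistable locus of tilt-slope $\beta$—is the technical heart of the argument, and I would carry it out exactly as in \cite{bayer2014bridgeland,toda2013bogomolov}. Once this is established, the jumps $Q_i$ vanish for all large $i$, so the chain stabilizes and $\mathrm{Coh}_0^{\beta,\alpha}(X)$ is Noetherian.
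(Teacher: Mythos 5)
Your overall reduction coincides with the paper's: Proposition \ref{Noetherian} in the appendix formalizes exactly your first two steps (discreteness of $v_2^\beta-(\alpha-\tfrac{1}{2}\beta^2)v_0^\beta$ for rational $(\beta,\alpha)$, Noetherianity of $\mathrm{Coh}_0^\beta(X)$, stabilization of the $\mathcal{H}^{-1}_\beta$- and $\mathcal{H}^{0}_\beta$-pieces), and it reduces everything to ruling out an infinite ascending chain $G_1\subseteq G_2\subseteq\cdots$ of objects of $\mathrm{Coh}_0^{\leqslant\beta,\alpha}(X)$ whose successive quotients are nonzero zero-dimensional sheaves. But your treatment of that last step has a genuine gap, and you have the easy and hard cases reversed. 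The ``shift pieces'' you single out are disposed of cheaply: once $\mathcal{H}^{-1}_\beta$ has stabilized, the long exact sequences sandwich $\mathcal{H}^{-1}_\beta(Q_i)$ inside objects of $\mathrm{Coh}_0^{>\beta,\alpha}(X)$ with vanishing imaginary part, i.e.\ zero-dimensional sheaves, and an object of $\mathrm{Coh}_0^{\leqslant\beta,\alpha}(X)$ embedding into a zero-dimensional sheaf is zero because the two halves of a torsion pair meet trivially. The genuinely hard part is the one you dismiss with ``$v_3$ is additive and bounded along the chain'': that bound is not available. The chain of $\mathcal{H}^{-1}_\beta$'s produced by the reduction is not contained in any fixed object, and even for subobjects of a fixed $E$ you would need $v_3(E/E_i)$ bounded below over the double-tilted heart, which is essentially the Bogomolov--Gieseker-type statement the whole construction is building toward and cannot be assumed here.

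The paper closes this gap with the relative-derived-dual (reflexive hull) argument of Appendix \ref{appendix2}: each $G_m$ is tilt-torsionfree, so by Proposition \ref{propositionB2} the natural map $G_1 \to G_1^{\lor_{\beta}\lor_{-\beta}}$ has zero-dimensional cokernel and every quotient $T_m=G_m/G_1$ injects into the \emph{fixed} object $G_1^{\lor_{\beta}\lor_{-\beta}}/G_1$; Noetherianity of $\mathrm{Coh}_0^\beta(X)$ then forces the chain to stabilize. This double-dual embedding is also the mechanism in the references you cite, so pointing at \cite{bayer2014bridgeland,toda2013bogomolov} is pointing at the right literature, but the step you would actually be importing is the reflexive-hull bound on the zero-dimensional quotients, not a $v_3$-estimate nor the finite-length property of the slope-$\beta$ semistable category (the latter only controls the $F[1]$-type jumps, which, as above, are not the obstruction).
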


\begin{pf}
	See Proposition \ref{Noetherian2}.
\end{pf}

\begin{prop}\label{finitelength}
	For any $(\beta,\alpha) \in U \cap \mathbb{Q}^2$, the subcategory $\mathcal{I}^{\beta,\alpha}_{0}(X)=\{E \in \mathrm{Coh}^{\beta,\alpha}_{0}(X) \mid v^{\beta}_{2}(E)=(\alpha-\displaystyle\frac{\beta^{2}}{2})v^{\beta}_{0}(E)\}$ is an abelian category of finite length.
\end{prop}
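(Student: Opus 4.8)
The plan is to verify three properties of $\mathcal{I} := \mathcal{I}^{\beta,\alpha}_0(X)$ in turn: that it is an abelian (Serre) subcategory of $\mathrm{Coh}^{\beta,\alpha}_0(X)$, that it is Noetherian, and that it is Artinian; finite length is then the conjunction of the last two. The single structural input I rely on is Lemma \ref{imaginarypart2}. Writing $\psi(E):=v^{\beta}_2(E)-(\alpha-\frac{\beta^2}{2})v^{\beta}_0(E)$, that lemma asserts $\psi(E)\geqslant 0$ for all $E\in\mathrm{Coh}^{\beta,\alpha}_0(X)$ and describes the objects with $\psi(E)=0$; by definition these are exactly the objects of $\mathcal{I}$.

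I would first show $\mathcal{I}$ is a Serre subcategory. For any short exact sequence $0\to A\to B\to C\to 0$ in $\mathrm{Coh}^{\beta,\alpha}_0(X)$, additivity gives $\psi(B)=\psi(A)+\psi(C)$ with all three terms nonnegative, so $\psi(B)=0$ if and only if $\psi(A)=\psi(C)=0$. Thus $\mathcal{I}$ is closed under subobjects, quotients and extensions, hence abelian, and a sequence in $\mathcal{I}$ is exact precisely when it is exact in $\mathrm{Coh}^{\beta,\alpha}_0(X)$. Noetherianity is then inherited: since $\mathcal{I}$ is closed under subobjects, any ascending chain in $\mathcal{I}$ is one in $\mathrm{Coh}^{\beta,\alpha}_0(X)$, which is Noetherian for rational $(\beta,\alpha)$ by the preceding proposition.

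The real content is the descending chain condition, and here I would introduce the additive invariant $w:=-v^{\beta}_1$. By Lemma \ref{imaginarypart2} (using $\dim Y=3$, so $n-3=0$), every $0\neq E\in\mathcal{I}$ sits in a short exact sequence $0\to F[1]\to E\to T\to 0$ in $\mathcal{I}$, with $F=\mathcal{H}^{-1}_{\beta}(E)$ either zero or $\nu^{\beta,\alpha}$-semistable of tilt-slope $\beta$, and $T=\mathcal{H}^{0}_{\beta}(E)$ either zero or a zero-dimensional sheaf. Since $v^{\beta}_1(T)=0$ and $v^{\beta}_1(F)\geqslant 0$ on $\mathrm{Coh}^{\beta}_0(X)$, we get $w(E)=v^{\beta}_1(F)\geqslant 0$, and $w$ takes values in a discrete subset of $\mathbb{R}$ because $v_0,v_1$ are integer multiples of fixed intersection numbers and $\beta$ is rational. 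Along a descending chain $E_0\supseteq E_1\supseteq\cdots$ in $\mathcal{I}$ the values $w(E_i)$ are nonnegative, non-increasing and discrete, hence eventually constant, say for $i\geqslant N$. For such $i$ the quotient $E_i/E_{i+1}\in\mathcal{I}$ has $w=0$; as a nonzero $\nu^{\beta,\alpha}$-semistable object of finite tilt-slope $\beta$ necessarily has $v^{\beta}_1>0$, the vanishing $w=0$ forces its $F$-part to be zero, so $E_i/E_{i+1}$ is a zero-dimensional sheaf.

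Finally I would run the $\mathrm{Coh}^{\beta}_0(X)$-cohomology long exact sequence of $0\to E_{i+1}\to E_i\to E_i/E_{i+1}\to 0$ for $i\geqslant N$. Because the zero-dimensional sheaf $E_i/E_{i+1}$ lies in degree $0$, we have $\mathcal{H}^{-1}_{\beta}(E_i/E_{i+1})=0$, so the sequence yields $\mathcal{H}^{-1}_{\beta}(E_{i+1})\cong\mathcal{H}^{-1}_{\beta}(E_i)$ together with an inclusion $\mathcal{H}^{0}_{\beta}(E_{i+1})\hookrightarrow\mathcal{H}^{0}_{\beta}(E_i)$. By Lemma \ref{imaginarypart2} each $\mathcal{H}^0_{\beta}(E_i)$ is zero-dimensional, so $\{\mathcal{H}^0_{\beta}(E_i)\}_{i\geqslant N}$ is a descending chain of subsheaves of the finite-length sheaf $\mathcal{H}^0_{\beta}(E_N)$; it stabilizes, whence $E_i/E_{i+1}=0$ and $E_i=E_{i+1}$ for $i$ large. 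This gives the descending chain condition and completes the proof. I expect the main obstacle to be precisely this last step: one cannot control a descending chain through $w$ alone, and it is the structural dichotomy of Lemma \ref{imaginarypart2} — forcing $\mathcal{H}^0_{\beta}$ into dimension $\leqslant 0$ and $\mathcal{H}^{-1}_{\beta}$ onto the tilt-slope-$\beta$ locus — that both makes $w$ discrete and collapses the residual category $\{w=0\}$ to zero-dimensional sheaves, where finite length is classical.
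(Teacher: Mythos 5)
Your proof is correct and follows essentially the same route as the paper's: reduce to the Artinian property via Proposition \ref{Noetherian2}, use Lemma \ref{imaginarypart2} to split each object of $\mathcal{I}^{\beta,\alpha}_{0}(X)$ into a $\nu^{\beta,\alpha}$-semistable part of tilt-slope $\beta$ and a zero-dimensional sheaf, stabilize the $\mathcal{H}^{-1}_{\beta}$-parts of a descending chain first, and then conclude from the finite length of the zero-dimensional $\mathcal{H}^{0}_{\beta}$-parts. The only deviation is that where the paper invokes the finite length of the quasi-abelian category of $\nu^{\beta,\alpha}$-semistable objects of slope $\beta$, you substitute the nonnegativity, additivity and discreteness of $-v_{1}^{\beta}$ on $\mathcal{I}^{\beta,\alpha}_{0}(X)$ to force the successive quotients to be zero-dimensional sheaves, which is a slightly more self-contained way of achieving the same stabilization.
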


\begin{pf}
	By proposition \ref{Noetherian2}, we just need to prove it is Artinian. Suppose we have an infinite chain in $\mathcal{I}^{\beta,\alpha}_{0}(X)$:
	\begin{align*}
		E_{0} \supseteq E_{1} \supseteq E_{2} \supseteq \cdots \supseteq E_{n} \supseteq E_{n+1} \supseteq \cdots.
	\end{align*}
	
	Now, we take the cohomology functor $\mathcal{H}^{l}_{\beta}$ with respect to the heart $\mathrm{Coh}_{0}^{\beta}(X)$ on the following short exact sequences in $\mathcal{I}^{\beta,\alpha}_{0}(X) \subseteq \mathrm{Coh}_{0}^{\alpha,\beta}(X)$:
	\begin{align*}
		0 \to E_{n+1} \to E_{n} \to E_{n}/E_{n+1} \to 0.
	\end{align*}Then we have an infinite chain in $\mathrm{Coh}_{0}^{\beta}(X)$:
	\begin{align*}
		\mathcal{H}^{-1}_{\beta}(E_{0}) \supseteq \mathcal{H}^{-1}_{\beta}(E_{1}) \supseteq \mathcal{H}^{-1}_{\beta}(E_{2}) \supseteq \cdots \supseteq \mathcal{H}^{-1}_{\beta}(E_{n}) \supseteq \mathcal{H}^{-1}_{\beta}(E_{n+1}) \supseteq \cdots
	\end{align*} and exact sequence in $\mathrm{Coh}_{0}^{\beta}(X)$:
	\begin{align*}
		0 \to \mathcal{H}^{-1}_{\beta}(E_{n+1}) \to \mathcal{H}^{-1}_{\beta}(E_{n}) \to \mathcal{H}^{-1}_{\beta}(E_{n}/E_{n+1}) \to \mathcal{H}^{0}_{\beta}(E_{n+1}) \to \mathcal{H}^{0}_{\beta}(E_{n}) \to \mathcal{H}^{0}_{\beta}(E_{n}/E_{n+1}) \to 0.
	\end{align*}
	
	Note that if $E \in \mathcal{I}^{\beta,\alpha}_{0}(X)$, then $\mathcal{H}^{-1}_{\beta}(E)=0$ or it is $\nu^{\beta,\alpha}$-semistable object with tilt-slope $\beta$ and $\mathcal{H}^{0}_{\beta}(E)$ is a sheaf with dimension zero. But we know the quasi-abelian category 
	\begin{align*}
		\{E \in \mathrm{Coh}^{\beta}_{0}(X) \mid E \mbox{ is}\mbox{ } \nu^{\beta,\alpha}\mbox{-semistable with slope } \beta\}
	\end{align*} is of finite length. And the cokernel of $\mathcal{H}^{-1}_{\beta}(E_{n+1}) \to \mathcal{H}^{-1}_{\beta}(E_{n})$ is a subobject of $\mathcal{H}^{-1}_{\beta}(E_{n}/E_{n+1})$, which is still a $\nu^{\beta,\alpha}$-semistable object with tilt-slope $\beta$. Thus, there exists $N_{1} \in \mathbb{N}$, such that when $n>N_{1}$, we have $\mathcal{H}^{-1}_{\beta}(E_{n+1}) \cong \mathcal{H}^{-1}_{\beta}(E_{n})$. In this case, we have an exact sequence in $\mathrm{Coh}_{0}^{\beta}(X)$:
	\begin{align*}
		0 \to \mathcal{H}^{-1}_{\beta}(E_{n}/E_{n+1}) \to \mathcal{H}^{0}_{\beta}(E_{n+1}) \to \mathcal{H}^{0}_{\beta}(E_{n}) \to \mathcal{H}^{0}_{\beta}(E_{n}/E_{n+1}) \to 0.
	\end{align*} Now notice that $v_{1}^{\beta}(\mathcal{H}^{-1}_{\beta}(E_{n}/E_{n+1}))=v_{1}^{\beta}(\mathcal{H}^{0}_{\beta}(E_{n+1}))+v_{1}^{\beta}(\mathcal{H}^{0}_{\beta}(E_{n}/E_{n+1}))-v_{1}^{\beta}(\mathcal{H}^{0}_{\beta}(E_{n}))=0$, thus $\mathcal{H}^{-1}_{\beta}(E_{n}/E_{n+1})=0$. Therefore, we have an exact sequence in $\mathrm{Coh}_{0}^{\beta}(X)$:
	\begin{align*}
		0 \to \mathcal{H}^{0}_{\beta}(E_{n+1}) \to \mathcal{H}^{0}_{\beta}(E_{n}) \to \mathcal{H}^{0}_{\beta}(E_{n}/E_{n+1}) \to 0.
	\end{align*}It is also an exact sequence in $\mathrm{Coh}_{0}(X)$.
	
	Thus, we have an infinite chain of zero dimension sheaf in $\mathrm{Coh}_{0}(X)$:
	\begin{align*}
		\mathcal{H}^{0}_{\beta}(E_{0}) \supseteq \mathcal{H}^{0}_{\beta}(E_{1}) \supseteq \mathcal{H}^{0}_{\beta}(E_{2}) \supseteq \cdots \supseteq \mathcal{H}^{0}_{\beta}(E_{n}) \supseteq \mathcal{H}^{0}_{\beta}(E_{n+1}) \supseteq \cdots.
	\end{align*}And it must be stable. In other words, there exists $N_{2}>N_{1} \in \mathbb{N}$, such that when $n>N_{2}$, $\mathcal{H}^{0}_{\beta}(E_{n}/E_{n+1})=0$. Thus, when $n>N_{2}$, $E_{n}/E_{n+1}=0$ and $E_{n}=E_{n+1}$.
\end{pf}

Note that the simple object in $\mathcal{I}^{\beta,\alpha}_{0}(X)$ must be zero dimensional sheaf or $F[1]$, where $F[1]$ is a $\nu^{\beta,\alpha}$-stable sheaf with slope $\beta$. In fact, we don't need to prove the Bogomolov-Gieseker type inequality for all $F$ which is $\nu^{\beta,\alpha}$-semistable object with tilt-slope $\beta$. We only need to prove it for those $F[1]$ which is a simple object in $\mathcal{I}^{\beta,\alpha}_{0}(X)$.

We have a useful lemma to prove the Bogomolov-Gieseker type inequality:

\begin{lem}\label{BGlem}
	Suppose $\mathcal{D}$ is a triangulated category and $\mathcal{A}$ is a bounded heart of $\mathcal{D}$. Let $Z \colon K(\mathcal{D}) \to \mathbb{C}$ be a group homomorphism such that for any $E \in \mathcal{A}$, $\Im Z(E) \geqslant 0$. Let $\mathcal{I}=\{E \in \mathcal{A} \mid \Im Z(E)=0\}$ be the full abelian subcategory of $\mathcal{D}$ which is of finite length, and $F[1]$ be a simple object of $\mathcal{I}$.
	
	Assume that there exists another bounded heart $\mathcal{B}$ of $\mathcal{D}$ with the following properties:
	
	(1) $\mathcal{B} \subseteq \langle \mathcal{A},\mathcal{A}[1]\rangle$,
	
	(2) there exists $\phi_{0} \in (0,1)$ such that:
	\begin{align*}
		Z(\mathcal{B}) \subseteq \{r \mathrm{exp}(\sqrt{-1}\pi\phi) \mid r \geqslant 0, \phi_{0} \leqslant \phi \leqslant \phi_{0}+1\}.
	\end{align*}
	
	(3) $F[2] \notin \mathcal{B}$. 
	
	Then $\Re Z(F) \geqslant 0$.
\end{lem}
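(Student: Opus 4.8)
The plan is to argue by contradiction, exploiting the fact that $F[1]\in\mathcal{I}$ forces $Z(F[1])$ to be a real number. Since $Z(F)=-Z(F[1])$ and $\Im Z(F[1])=0$, the desired conclusion $\Re Z(F)\geqslant 0$ is equivalent to $Z(F[1])\leqslant 0$. So I would suppose, for contradiction, that $Z(F[1])$ is a strictly positive real number, and aim to contradict the existence of the heart $\mathcal{B}$ with the listed properties.

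First I would invoke Proposition \ref{tiltingheart}: condition (1) says $\mathcal{B}\subseteq\langle\mathcal{A},\mathcal{A}[1]\rangle$, so $\mathcal{B}$ is the tilt of $\mathcal{A}$ with respect to the torsion pair $(\mathcal{T},\mathcal{F})$ given by $\mathcal{T}=\mathcal{B}\cap\mathcal{A}$ and $\mathcal{F}=\mathcal{B}[-1]\cap\mathcal{A}$, with $\mathcal{B}=\langle\mathcal{F}[1],\mathcal{T}\rangle$. Writing the torsion decomposition of $F[1]\in\mathcal{A}$ as a short exact sequence $0\to T\to F[1]\to \Phi\to 0$ in $\mathcal{A}$ with $T\in\mathcal{T}$ and $\Phi\in\mathcal{F}$, the key observation is that condition (3) is precisely the statement $T\neq 0$. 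Indeed, computing $\mathcal{B}$-cohomology one finds $\mathcal{H}^{0}_{\mathcal{B}}(F[1])=T$ and $\mathcal{H}^{1}_{\mathcal{B}}(F[1])=\Phi[1]$, hence $\mathcal{H}^{-1}_{\mathcal{B}}(F[2])=T$; therefore $F[2]\in\mathcal{B}$ if and only if $T=0$, and $F[2]\notin\mathcal{B}$ gives $T\neq 0$.

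Next I would feed the torsion sequence through the central charge. Since $T,\Phi\in\mathcal{A}$ we have $\Im Z(T)\geqslant 0$ and $\Im Z(\Phi)\geqslant 0$, and these add up to $\Im Z(F[1])=0$; so both vanish and $T,\Phi\in\mathcal{I}$. Because $\mathcal{I}$ is closed under subobjects, the inclusion $T\hookrightarrow F[1]$ exhibits $T$ as a nonzero subobject of $F[1]$ inside $\mathcal{I}$, and simplicity of $F[1]$ forces $T=F[1]$ and $\Phi=0$. In particular $F[1]=T\in\mathcal{T}\subseteq\mathcal{B}$.

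Finally I would derive the contradiction from the phase constraint (2). The object $F[1]$ now lies in $\mathcal{B}$, while $Z(F[1])$ is a strictly positive real, whose phase is $\equiv 0 \pmod 2$. But condition (2) confines $Z(\mathcal{B})$ to the half-plane of phases in $[\phi_{0},\phi_{0}+1]\subset(0,2)$, which contains no even integer since $\phi_{0}\in(0,1)$; thus a strictly positive real cannot occur, a contradiction. Hence $Z(F[1])\leqslant 0$ and $\Re Z(F)\geqslant 0$. The step I expect to require the most care is the identification of condition (3) with $T\neq 0$ through the $\mathcal{B}$-cohomology of $F[2]$, together with the verification that the boundary phases $\phi_{0}$ and $\phi_{0}+1$ genuinely exclude the positive real axis; once the torsion pair is in hand, the remainder is a short diagram chase.
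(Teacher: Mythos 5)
Your proof is correct and follows essentially the same route as the paper: both use Proposition \ref{tiltingheart} to realize $\mathcal{B}$ as the tilt of $\mathcal{A}$ at $(\mathcal{T},\mathcal{F})=(\mathcal{A}\cap\mathcal{B},\mathcal{A}\cap\mathcal{B}[-1])$, use simplicity of $F[1]$ in $\mathcal{I}$ together with condition (3) to conclude $F[1]\in\mathcal{T}\subseteq\mathcal{B}$, and then read off the sign of the real number $Z(F[1])$ from the phase constraint (2). Your version merely makes explicit two steps the paper leaves implicit (the $\mathcal{B}$-cohomology computation identifying condition (3) with $T\neq 0$, and the additivity argument placing both torsion pieces in $\mathcal{I}$), and phrases the conclusion as a contradiction rather than directly.
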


\begin{pf}
	Because $\mathcal{B} \subseteq \langle \mathcal{A},\mathcal{A}[1]\rangle$, then we have a torsion pair $(\mathcal{T},\mathcal{F})$ of $\mathcal{A}$ by Proposition \ref{tiltingheart}:
	\begin{align*}
		& \mathcal{T}=\mathcal{A} \bigcap \mathcal{B}; \\
		& \mathcal{F}=\mathcal{A} \bigcap \mathcal{B}[-1].
	\end{align*} such that $\mathcal{B}$ is the tilting heart of $\mathcal{A}$ with respect to $(\mathcal{T},\mathcal{F})$.
	
	Now, if $F[2] \notin \mathcal{B}$, then $F[1] \notin \mathcal{F}$. Because $F[1]$ is a simple object of $\mathcal{I}$, thus $F[1] \in \mathcal{T}$ and then $F[1] \in \mathcal{B}$. By the second condition, $\Re Z(F) \geqslant 0$. 
\end{pf}

	By Taking $\mathcal{D}=D_{0}^{b}(X)$, $\mathcal{A}=\mathrm{Coh}^{\beta,\alpha}_{0}(X)$ and $Z=Z^{\beta,\alpha,a_{0}}$ and Lemma \ref{BGlem}, we have the following lemma:

\begin{lem}\label{BGlem1}
	Let $(\beta,\alpha) \in U \cap \mathbb{Q}^{2}$. Suppose that there exists another bounded heart $\mathcal{B}$ of $D^{b}_{0}(X)$ with the following properties:
	
	(1) $\mathcal{B} \subseteq \langle \mathrm{Coh}^{\beta,\alpha}_{0}(X),\mathrm{Coh}^{\beta,\alpha}_{0}(X)[1]\rangle$,
	
	(2) there exists $\phi_{0} \in (0,1)$ such that:
	\begin{align*}
		Z^{\beta,\alpha,a_{0}}(\mathcal{B}) \subseteq \{r \mathrm{exp}(\sqrt{-1}\pi\phi) \colon r \geqslant 0, \phi_{0} \leqslant \phi \leqslant \phi_{0}+1\},
	\end{align*}where $a_{0}=\displaystyle\frac{2\alpha-\beta^{2}}{6}$.
	
	(3) for any $\nu^{\beta,\alpha}$-semistable object $F \in \mathrm{Coh}^{\beta}_{0}(X)$ with tilt-slope $\beta$ and $F[1]$ is a simple object in $\mathcal{I}^{\beta,\alpha}_{0}(X)$, we have $F[2] \notin \mathcal{B}$.
	
	Then $(\beta,\alpha)$ satisfies the Bogomolov-Gieseker type ineqaulity.
\end{lem}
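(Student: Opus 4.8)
The plan is to deduce the statement from Lemma~\ref{BGlem}, applied with $\mathcal{D}=D_{0}^{b}(X)$, $\mathcal{A}=\mathrm{Coh}^{\beta,\alpha}_{0}(X)$, $Z=Z^{\beta,\alpha,a_{0}}$ and $\mathcal{I}=\mathcal{I}^{\beta,\alpha}_{0}(X)$, after a preliminary reduction of the inequality \eqref{BGinequality} to the simple objects of $\mathcal{I}^{\beta,\alpha}_{0}(X)$. The key algebraic observation is that, since $a_{0}=\frac{2\alpha-\beta^{2}}{6}$, the definition \eqref{centralcharge} gives
\begin{align*}
	\Re Z^{\beta,\alpha,a_{0}}(E)=-v^{\beta}_{3}(E)+a_{0}v^{\beta}_{1}(E),
\end{align*}
so that the Bogomolov-Gieseker type inequality \eqref{BGinequality} for an object $E$ is \emph{exactly} the assertion $\Re Z^{\beta,\alpha,a_{0}}(E)\geqslant 0$. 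Moreover $\Im Z^{\beta,\alpha,a_{0}}(E)=v_{2}^{\beta}(E)-(\alpha-\frac{\beta^{2}}{2})v_{0}^{\beta}(E)$, which is nonnegative on all of $\mathcal{A}$ by Lemma~\ref{imaginarypart2}; thus the standing hypothesis of Lemma~\ref{BGlem} that $\Im Z\geqslant 0$ on $\mathcal{A}$ holds, and $\mathcal{I}^{\beta,\alpha}_{0}(X)=\{E\in\mathcal{A}\mid\Im Z(E)=0\}$ is of finite length by Proposition~\ref{finitelength}.

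First I would perform the reduction to simple objects. Let $F\in\mathrm{Coh}^{\beta}_{0}(X)$ be $\nu^{\beta,\alpha}$-semistable with tilt-slope $\beta$; the equality $\nu^{\beta,\alpha}(F)=\beta$ forces $v_{2}^{\beta}(F)=(\alpha-\frac{\beta^{2}}{2})v_{0}^{\beta}(F)$, hence $\Im Z^{\beta,\alpha,a_{0}}(F[1])=0$ and $F[1]\in\mathcal{I}^{\beta,\alpha}_{0}(X)$. By Proposition~\ref{finitelength} the object $F[1]$ has a finite filtration in $\mathcal{I}^{\beta,\alpha}_{0}(X)$ whose factors are simple, and each simple object of $\mathcal{I}^{\beta,\alpha}_{0}(X)$ is either a zero-dimensional sheaf $T$ or a shift $F'[1]$ of a $\nu^{\beta,\alpha}$-stable object $F'$ of tilt-slope $\beta$. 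Since $Z^{\beta,\alpha,a_{0}}$ is additive on $K$-theory and $[F[1]]=-[F]$, we obtain $\Re Z^{\beta,\alpha,a_{0}}(F)=-\sum_{i}\Re Z^{\beta,\alpha,a_{0}}(S_{i})$ over the simple factors $S_{i}$. A zero-dimensional sheaf $T$ satisfies $v_{0}^{\beta}(T)=v_{1}^{\beta}(T)=v_{2}^{\beta}(T)=0$ and $v_{3}^{\beta}(T)=v_{3}(T)\geqslant 0$, so $\Re Z^{\beta,\alpha,a_{0}}(T)=-v_{3}(T)\leqslant 0$; consequently $\Re Z^{\beta,\alpha,a_{0}}(F)\geqslant 0$ will follow as soon as $\Re Z^{\beta,\alpha,a_{0}}(F')\geqslant 0$ for every factor of the form $F'[1]$. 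In other words, it suffices to establish $\Re Z^{\beta,\alpha,a_{0}}(F')\geqslant 0$ for those $F'$ with $F'[1]$ a simple object of $\mathcal{I}^{\beta,\alpha}_{0}(X)$.

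This last inequality is precisely the conclusion of Lemma~\ref{BGlem}. Indeed, for each such $F'$ the hypotheses of Lemma~\ref{BGlem} are met: the bounded heart $\mathcal{B}$ is the one posited in the statement, conditions (1) and (2) of Lemma~\ref{BGlem} are verbatim the hypotheses (1) and (2) here, and condition (3), namely $F'[2]\notin\mathcal{B}$, is the content of hypothesis (3) applied to the $\nu^{\beta,\alpha}$-stable (in particular semistable) object $F'$ with $F'[1]$ simple in $\mathcal{I}^{\beta,\alpha}_{0}(X)$. Lemma~\ref{BGlem} then yields $\Re Z^{\beta,\alpha,a_{0}}(F')\geqslant 0$, which by the previous paragraph gives $\Re Z^{\beta,\alpha,a_{0}}(F)\geqslant 0$, i.e.\ inequality \eqref{BGinequality}, for every $\nu^{\beta,\alpha}$-semistable $F$ of tilt-slope $\beta$. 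Hence $(\beta,\alpha)$ satisfies the Bogomolov-Gieseker type inequality.

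The argument above is essentially bookkeeping, so within this lemma there is no serious obstacle; the genuine difficulty is external to it, namely exhibiting a heart $\mathcal{B}$ fulfilling hypotheses (1)--(3), which is where the geometry of $X=\mathrm{Tot}(\omega_{\mathbb{P}^{3}})$ must enter (via the candidate generated by $i_{*}\mathcal{O}(1),i_{*}\mathcal{O}[1],i_{*}\mathcal{T}(-2)[2],i_{*}\mathcal{O}(-1)[3]$). The only points demanding care inside the present proof are the sign bookkeeping under the shift $F\mapsto F[1]$ and the verification that $\Im Z^{\beta,\alpha,a_{0}}$ vanishes exactly on $\mathcal{I}^{\beta,\alpha}_{0}(X)$, both of which are routine consequences of Lemma~\ref{imaginarypart2} and Proposition~\ref{finitelength}.
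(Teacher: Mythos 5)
Your proof is correct and follows the same route as the paper: the paper's own justification is a one-line appeal to Lemma~\ref{BGlem} with $\mathcal{D}=D_{0}^{b}(X)$, $\mathcal{A}=\mathrm{Coh}^{\beta,\alpha}_{0}(X)$, $Z=Z^{\beta,\alpha,a_{0}}$, combined with the remark following Proposition~\ref{finitelength} that one only needs the inequality for those $F$ with $F[1]$ simple in $\mathcal{I}^{\beta,\alpha}_{0}(X)$. Your write-up simply makes explicit the bookkeeping (the identification of \eqref{BGinequality} with $\Re Z^{\beta,\alpha,a_{0}}\geqslant 0$, the Jordan--H\"older reduction, and the sign of $\Re Z^{\beta,\alpha,a_{0}}$ on zero-dimensional sheaves) that the paper leaves implicit.
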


\section{Geometric stability conditions on the canonical bundle $\mathrm{Tot}_{\mathbb{P}^{3}}(\omega_{\mathbb{P}^{3}})$ of $\mathbb{P}^{3}$}\label{section4}

Now we will put our attention on the main case when $X=\mathrm{Tot}_{\mathbb{P}^{3}}(\omega_{\mathbb{P}^{3}})$ and choose the polarization $H$ to be the hyperplane class of $\mathbb{P}^{3}$. First, we need to check Assumption \ref{crucialassumption} holds in this case.

\begin{prop}
	A sheaf $E \in \mathrm{Coh}_{0}(X)$ is a slope stable sheaf if and only if it is the pushforward $E=i_{*}E_{0}$ of some slope stable sheaf $E_{0} \in \mathrm{Coh}(\mathbb{P}^{3})$.
\end{prop}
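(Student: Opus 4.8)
The plan is to prove the two implications separately. The direction that $i_*E_0$ is slope stable whenever $E_0$ is slope stable is exactly Proposition \ref{ipushforwardofslopestablesheaf}, whose proof only uses that $i_*E_0$ is scheme-theoretically supported on the zero section, so that every subobject and quotient in $\mathrm{Coh}_0(X)$ is again of the form $i_*(-)$ with matching slope. Hence I focus on the converse: a slope stable $E\in\mathrm{Coh}_0(X)$ is scheme-theoretically supported on the zero section, i.e.\ $E\cong i_*E_0$ with $E_0=\pi_*E$, and then $E_0$ is slope stable.

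Since $\pi\colon X\to\mathbb{P}^3$ is affine, $\pi_*$ identifies $\mathrm{Coh}_0(X)$ with the category of coherent modules $M$ over the sheaf of algebras $A=\pi_*\mathcal{O}_X=\mathrm{Sym}^\bullet_{\mathcal{O}}(\mathcal{E}_0^{\lor})=\bigoplus_{p\geq 0}\mathcal{O}(4p)$ on which $A_{>0}$ acts (locally) nilpotently; here $\mathcal{E}_0=\omega_{\mathbb{P}^3}=\mathcal{O}(-4)$ and $\mathcal{E}_0^{\lor}=\mathcal{O}(4)$. This equivalence is exact, carries subobjects and quotients of $E$ to $A$-submodules and $A$-quotients of $M=\pi_*E$, and preserves the slope $\mu$ by definition. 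Writing $\theta\colon \mathcal{O}(4)\otimes M\to M$ for the degree-one part of the module structure (``multiplication by the fibre coordinate''), the assertion $E\cong i_*E_0$ is precisely $\theta=0$.

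Suppose $\theta\neq 0$ and set $K=\ker(M\to M(-4))$, the maximal submodule killed by $A_{>0}$, and $I=\mathrm{im}\,\theta=A_{>0}M$; both are $A$-submodules of $M$, hence subobjects of $E$. Nilpotence of $A_{>0}$ gives $K\neq 0$, Nakayama gives $I\subsetneq M$, and $\theta\neq 0$ gives $I\neq 0$ and $K\neq M$. Thus $I$ is a proper nonzero subobject and $M/K$ a proper nonzero quotient of $E$, so slope stability forces
\[
	\mu(I)<\mu(E)<\mu(M/K).
\]
The crucial computation is that $\theta$ induces an isomorphism $\mathcal{O}(4)\otimes(M/K)\xrightarrow{\ \sim\ }I$, its kernel being $\mathcal{O}(4)\otimes K$; hence $I\cong(M/K)(4)$ and $\mu(I)=\mu(M/K)+4$ whenever these slopes are finite. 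Combined with the displayed inequalities this gives $\mu(M/K)+4=\mu(I)<\mu(E)<\mu(M/K)$, i.e.\ $\mu(M/K)+4<\mu(M/K)$, a contradiction. The degenerate cases are immediate: if $M/K$ (equivalently $I$) is torsion while $M$ is not, then $\mu(I)=+\infty$ contradicts $\mu(I)<\mu(E)<+\infty$; and if $M$ itself is torsion then $\mu(E)=+\infty$ while $I$ is a proper nonzero torsion subobject with $\mu(I)=+\infty$, violating the strict inequality required by stability. Therefore $\theta=0$, so $E=i_*E_0$ with $E_0=\pi_*E$, and since every subsheaf of $E_0$ pushes forward to a subobject of $E$ of equal slope, stability of $E$ forces stability of $E_0$.

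The whole argument hinges on the sign in $I\cong(M/K)(4)$: the twist is by $\mathcal{E}_0^{\lor}$, which for $X=\mathrm{Tot}(\omega_{\mathbb{P}^3})$ is the ample $\mathcal{O}(4)$, so that passing from the quotient $M/K$ to the submodule $I$ strictly \emph{raises} the slope. This positivity is exactly what produces the contradiction, and it is where the specific geometry — the canonical bundle being a negative line bundle — enters. I expect the one delicate point to be identifying $\mathrm{im}\,\theta$ with the correct twist $(M/K)(4)$ of the co-action's cokernel, with the twisting sheaf $\mathcal{E}_0^{\lor}$ and its sign pinned down unambiguously.
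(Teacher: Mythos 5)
Your proof is correct, but it takes a genuinely different route from the paper's. The paper only uses that a slope stable sheaf is simple: from $\mathrm{Hom}_{\mathrm{Coh}_{0}(X)}(E,E)\cong\mathbb{C}$ it deduces that the scheme-theoretic support $Z$ of $E$ satisfies $H^{0}(Z,\mathcal{O}_{Z})\cong\mathbb{C}$, and then invokes the affinization contraction $X\twoheadrightarrow \mathrm{Spec}\,H^{0}(X,\mathcal{O}_{X})\cong\mathbb{C}^{4}/\mathbb{Z}_{4}$, whose scheme-theoretic central fiber is exactly the zero section, to force $Z\subseteq\mathbb{P}^{3}$. You instead work with the module description $(M,\theta)$, exhibit the explicit subobject $I=\mathrm{im}\,\theta$ and quotient $M/K$ related by $I\cong(M/K)\otimes\mathcal{O}(4)$, and derive a slope contradiction. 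Both arguments hinge on the positivity of $\mathcal{E}_{0}^{\lor}=\mathcal{O}(4)$: the paper through global generation of the ideal of the zero section (which identifies the central fiber of the contraction with the zero section), you through the fact that twisting by $\mathcal{O}(4)$ raises the slope by $4$ (note that the convention you need is the operative one of Section \ref{section3}, namely $\mu=v_{1}/v_{0}$ coming from $\mathcal{Z}=-v_{1}+\sqrt{-1}v_{0}$, not the sign written in the introduction). The paper's argument is shorter and proves the stronger statement that every \emph{simple} object of $\mathrm{Coh}_{0}(X)$ is pushed forward from the zero section, whereas yours uses the full slope inequality; in exchange, your argument is self-contained (no contraction needed), pinpoints exactly where the negativity of $\omega_{\mathbb{P}^{3}}$ enters, and visibly extends to $\mathrm{Tot}_{Y}(\mathcal{L})$ for any line bundle $\mathcal{L}$ of non-positive $H$-degree, since $\mu(I)\geqslant\mu(M/K)$ already contradicts $\mu(I)<\mu(E)<\mu(M/K)$. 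One small point worth writing out in the main case: to run the seesaw on the quotient $M/K$ you also need $v_{0}(K)>0$; but if $v_{0}(K)=0$ then $K$ is a nontrivial subobject of infinite slope, which contradicts stability directly, so the case analysis does close.
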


\begin{pf}
	We claim that: if $E \in \mathrm{Coh}_{0}(X)$ with $\mathrm{Hom}_{\mathrm{Coh}_{0}(X)}(E,E) \cong \mathbb{C}$ as $\mathbb{C}$-algebra, then $E=i_{*}E_{0}$ of some slope stable sheaf $E_{0} \in \mathrm{Coh}(\mathbb{P}^{3})$. Let $Z$ be the scheme-theoretic support of $E$. By definition, its global sections act faithfully on $E$, so $H^{0}(X,\mathcal{O}_{Z}) \cong \mathbb{C}$. Hence $Z$ must be contained scheme-theoretically in the fiber of the origin under the contraction 
	\begin{align*}
		X \twoheadrightarrow \mathrm{Spec}H^{0}(X,\mathcal{O}_{X}) \cong \mathbb{C}^4/\mathbb{Z}_{4},
	\end{align*}as otherwise the image of $H^{0}(X,\mathcal{O}_{X}) \to H^{0}(X,\mathcal{O}_{Z})$ would be non-trivial. But the scheme-theoretic fiber of the origin is exactly $\mathbb{P}^{3}$, and so $E$ is the pushforward $i_{*}E_{0}$ of some sheaf $E_{0} \in \mathrm{Coh}(\mathbb{P}^{3})$.
	
	Finally, we can check $E_{0}$ is a slope stable sheaf in $\mathbb{P}^{3}$ and we finish the proof.
\end{pf}

Now, we consider the exceptional collection
\begin{align*}
	(\mathcal{O}(-1),\mathcal{T}(-2),\mathcal{O},\mathcal{O}(1))
\end{align*} of $\mathbb{P}^{3}$, where $\mathcal{T}$ is the tangent sheaf of $\mathbb{P}^{3}$. Bridgeland proved that there is a bounded heart $\mathcal{B}$ of $D_{0}^{b}(X)$ which is of finite length and with simple objects $S_{0}=i_{*}\mathcal{O}(1)$, $S_{1}=i_{*}\mathcal{O}[1]$, $S_{2}=i_{*}\mathcal{T}(-2)[2]$, $S_{3}=i_{*}\mathcal{O}(-1)[3]$ in \cite[Lemma 4.4]{bridgeland2005t}.

\begin{prop}
	These four sheaves $i_{*}\mathcal{O}(1),i_{*}\mathcal{O},i_{*}\mathcal{T}(-2),i_{*}\mathcal{O}(-1) \in \mathrm{Coh}_{0}(X)$ are all slope stable sheaves. 
\end{prop}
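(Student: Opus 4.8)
The plan is to reduce the whole statement to the slope stability of the underlying sheaves on $\mathbb{P}^{3}$, and then to the classical stability of the tangent bundle. By Proposition \ref{ipushforwardofslopestablesheaf}, the pushforward $i_{*}E_{0}$ of a slope stable sheaf $E_{0} \in \mathrm{Coh}(\mathbb{P}^{3})$ is again slope stable in $\mathrm{Coh}_{0}(X)$, so it suffices to prove that the four sheaves $\mathcal{O}(1),\mathcal{O},\mathcal{T}(-2),\mathcal{O}(-1)$ are slope stable on $\mathbb{P}^{3}$ with respect to $H$. For the three line bundles this is immediate: a line bundle is torsion free of rank one, hence has no nonzero proper subsheaf of smaller positive rank, and any rank-one subsheaf $F \subsetneq \mathcal{O}(k)$ has torsion quotient, so the required inequality $\mu(F)<\mu(\mathcal{O}(k)/F)=+\infty$ holds trivially. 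Since tensoring by the line bundle $\mathcal{O}(-2)$ shifts every slope by the same constant and induces a bijection on subsheaves, $\mathcal{T}(-2)$ is slope stable if and only if $\mathcal{T}=\mathcal{T}_{\mathbb{P}^{3}}$ is. Thus the statement reduces entirely to the slope stability of the tangent bundle $\mathcal{T}_{\mathbb{P}^{3}}$.

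To prove that $\mathcal{T}_{\mathbb{P}^{3}}$ is slope stable I would argue as follows. The Euler sequence $0 \to \mathcal{O} \to \mathcal{O}(1)^{\oplus 4} \to \mathcal{T} \to 0$ gives $\mathrm{rk}\,\mathcal{T}=3$ and $c_{1}(\mathcal{T})=4H$, so $\mu(\mathcal{T})=4/3$. Suppose for contradiction that $\mathcal{F}\subset\mathcal{T}$ is a destabilizing subsheaf; replacing it by its saturation (which only increases the slope) I may assume $\mathcal{F}$ is saturated of rank $p\in\{1,2\}$ with $\mu(\mathcal{F})\geqslant 4/3$. Writing $c=H^{2}\cdot c_{1}(\mathcal{F})\in\mathbb{Z}$, the inequality $c/p\geqslant 4/3$ together with integrality forces $c\geqslant p+1$. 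The inclusion $\mathcal{F}\hookrightarrow\mathcal{T}$ induces a nonzero map $\det\mathcal{F}\to\wedge^{p}\mathcal{T}$, that is, a nonzero global section of $\wedge^{p}\mathcal{T}\otimes(\det\mathcal{F})^{\vee}$. Using the isomorphism $\wedge^{p}\mathcal{T}\cong\Omega^{3-p}_{\mathbb{P}^{3}}(4)$, this sheaf is $\Omega^{3-p}_{\mathbb{P}^{3}}(4-c)$ with $4-c\leqslant 3-p$. Bott's formula gives $H^{0}(\mathbb{P}^{3},\Omega^{q}_{\mathbb{P}^{3}}(k))=0$ whenever $q\geqslant 1$ and $k\leqslant q$; applying it with $q=3-p\geqslant 1$ and $k=4-c\leqslant 3-p=q$ yields $H^{0}=0$, contradicting the existence of the section. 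Hence no destabilizing subsheaf exists and $\mathcal{T}_{\mathbb{P}^{3}}$ is slope stable, completing the argument.

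The main obstacle is precisely this last point, the slope stability of $\mathcal{T}_{\mathbb{P}^{3}}$; the reduction steps and the line-bundle cases are routine. The stability of the tangent bundle of projective space is in fact classical, and one could simply cite it, but the Euler-sequence-plus-Bott argument above makes the proof self-contained. The only step requiring genuine care is the passage from an arbitrary destabilizing subsheaf to the determinant line subsheaf and the correct normalization of $\mu(\mathcal{T})=4/3$; once the twist $\Omega^{3-p}_{\mathbb{P}^{3}}(4-c)$ is identified, the vanishing is automatic from Bott's formula in the range $k\leqslant q$.
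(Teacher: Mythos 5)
Your proposal is correct and follows the same route as the paper: reduce to slope stability on $\mathbb{P}^{3}$ via Proposition \ref{ipushforwardofslopestablesheaf}. The paper simply asserts the classical fact that $\mathcal{O}(1),\mathcal{O},\mathcal{T}(-2),\mathcal{O}(-1)$ are slope stable on $\mathbb{P}^{3}$, whereas you verify it (the line-bundle cases and the Euler-sequence/Bott-vanishing argument for $\mathcal{T}_{\mathbb{P}^{3}}$ are all accurate), which is a legitimate self-contained substitute for the citation.
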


\begin{pf}
	In fact, we know $\mathcal{O}(1),\mathcal{O},\mathcal{T}(-2),\mathcal{O}(-1)$ are all slope stable sheaves in $\mathbb{P}^{3}$ and by Proposition \ref{ipushforwardofslopestablesheaf} we get the conclusion.
\end{pf}

\begin{prop}\label{tiltstablebobject}
	If $(\beta,\alpha) \in U$ satisfies $-\displaystyle\frac{1}{2}\leqslant \beta \leqslant 0$ and $\sqrt{2\alpha-\beta^{2}}<\displaystyle\frac{1}{2}$, then:
	
	(1) The object $i_{*}\mathcal{O}(1) \in \mathrm{Coh}^{\beta}_{0}(X)$ is a $\nu^{\beta,\alpha}$-stable object with tilt-slope $\displaystyle\frac{1-2\alpha}{2-2\beta}>\beta$ and $S_{0}=i_{*}\mathcal{O}(1) \in \mathrm{Coh}^{\beta,\alpha}_{0}(X)$;
	
	(2) When $-\displaystyle\frac{1}{2} \leqslant \beta<0$, the object $i_{*}\mathcal{O} \in \mathrm{Coh}^{\beta}_{0}(X)$ is a $\nu^{\beta,\alpha}$-stable object with tilt-slope $\displaystyle\frac{\alpha}{\beta}$. When $\beta=0$, the object $i_{*}\mathcal{O}[1] \in \mathrm{Coh}^{0}_{0}(X)$ is a $\nu^{0,\alpha}$-stable object with tilt-slope $+\infty$. In both cases, we have $S_{1}[-1]=i_{*}\mathcal{O} \in \mathrm{Coh}^{\beta,\alpha}_{0}(X)$ or $S_{1}=i_{*}\mathcal{O}[1] \in \mathrm{Coh}^{\beta,\alpha}_{0}(X)$;
	
	(3) The object $i_{*}\mathcal{T}(-2)[1] \in \mathrm{Coh}^{\beta}_{0}(X)$ is a $\nu^{\beta,\alpha}$-stable object with tilt-slope $\displaystyle\frac{3\alpha}{2+3\beta}>\beta$ and $S_{2}[-1]=i_{*}\mathcal{T}(-2)[1] \in \mathrm{Coh}^{\beta,\alpha}_{0}(X)$;
	
	(4) The object $i_{*}\mathcal{O}(-1)[1] \in \mathrm{Coh}^{\beta}_{0}(X)$ is a $\nu^{\beta,\alpha}$-stable object with tilt-slope $\displaystyle\frac{-1+2\alpha}{2+2\beta}<\beta$ and $S_{3}[-1]=i_{*}\mathcal{O}(-1)[2] \in \mathrm{Coh}^{\beta,\alpha}_{0}(X)$.
\end{prop}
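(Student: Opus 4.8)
The plan is to reduce every assertion to the corresponding statement on $\mathbb{P}^{3}$ and to split each item into three independent pieces: a numerical computation of the tilt-slope, the tilt-stability of the object, and its placement in the double-tilt heart. First I would record the numerical input. Since $\pi \circ i = \mathrm{id}_{\mathbb{P}^{3}}$ we have $\pi_{*}i_{*} \cong \mathrm{id}$, so for any $E_{0} \in D^{b}(\mathbb{P}^{3})$ the invariants $v_{i}(i_{*}E_{0})$ coincide with the usual $\mathbb{P}^{3}$-invariants $H^{3-i}\mathrm{ch}_{i}(E_{0})$. Using $\mathrm{ch}(\mathcal{O}(d))=e^{dH}$ and the Euler sequence $0 \to \mathcal{O} \to \mathcal{O}(1)^{\oplus 4} \to \mathcal{T} \to 0$, which gives $\mathrm{ch}(\mathcal{T}(-2))=(3,-2H,0,\frac{2}{3}H^{3})$, I would compute $(v_{0},v_{1},v_{2})$ for each object and its shift and check directly that the tilt-slopes are the four stated expressions. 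The comparison with $\beta$ is then easy in three of the four cases: since $\omega<\frac{1}{2}$ and $|\beta|\leqslant\frac{1}{2}$ force $\alpha<\frac{1}{4}$, the object $i_{*}\mathcal{O}(1)$ has tilt-slope $\frac{1-2\alpha}{2-2\beta}>0\geqslant\beta$ and $i_{*}\mathcal{T}(-2)[1]$ has tilt-slope $\frac{3\alpha}{2+3\beta}>0\geqslant\beta$ (both denominators are positive), while $i_{*}\mathcal{O}(-1)[1]$ has negative tilt-slope and the sharper inequality $\frac{-1+2\alpha}{2+2\beta}<\beta$ uses the full strength of $\omega<\frac{1}{2}$ with $-\frac{1}{2}\leqslant\beta\leqslant 0$ (it becomes tight at $\beta=-\frac{1}{2}$).

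Membership in $\mathrm{Coh}^{\beta}_{0}(X)$ is pure slope-stability. Each of $\mathcal{O}(1),\mathcal{O},\mathcal{O}(-1)$ is a line bundle and $\mathcal{T}(-2)$ is slope-stable on $\mathbb{P}^{3}$; by Proposition \ref{ipushforwardofslopestablesheaf} their pushforwards are slope-stable in $\mathrm{Coh}_{0}(X)$ with slopes $1,0,-1,-\frac{2}{3}$. Comparing with $\beta\in[-\frac{1}{2},0]$ shows that $i_{*}\mathcal{O}(1)$ (and $i_{*}\mathcal{O}$ when $\beta<0$) lie in $\mathrm{Coh}^{>\beta}_{0}(X)$ in degree $0$, whereas $\mathcal{T}(-2),\mathcal{O}(-1)$ (and $\mathcal{O}$ when $\beta=0$) have slope $\leqslant\beta$, so it is the shifts $i_{*}\mathcal{T}(-2)[1],i_{*}\mathcal{O}(-1)[1],i_{*}\mathcal{O}[1]$ that lie in $\mathrm{Coh}^{\beta}_{0}(X)$, exactly the representatives in the statement.

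For the tilt-stability I would apply Proposition \ref{ipushforwardoftiltstableobject} to reduce to $\nu^{\beta,\alpha}$-stability on $\mathbb{P}^{3}$ of $\mathcal{O}(1),\mathcal{O},\mathcal{T}(-2),\mathcal{O}(-1)$ up to shift. For the three line bundles this is immediate, since each has $\overline{\Delta}_{H}=0$: a slope-stable sheaf of vanishing discriminant admits no destabilizing wall, because at such a wall every Jordan-Hölder factor would have non-negative discriminant summing (up to a non-negative cross term) to $0$ and a class aligned with $v(\mathcal{O}(d))$, forcing a trivial factor; hence they are $\nu^{\beta,\alpha}$-stable for all $(\beta,\alpha)\in U$. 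The genuinely harder case is $\mathcal{T}(-2)$, for which $\overline{\Delta}_{H}=4>0$ and a wall can a priori occur. Here I would use the wall-and-chamber structure of Proposition \ref{wallandchamber1}: $\mathcal{T}(-2)$ is $\nu^{\beta,\alpha}$-stable in the large-volume chamber, and one must exclude any wall meeting the small region $\{-\frac{1}{2}\leqslant\beta\leqslant 0,\ 0<\omega<\frac{1}{2}\}$. This exclusion is the main obstacle: I would show that a hypothetical destabilizing subobject $F\hookrightarrow i_{*}\mathcal{T}(-2)[1]$ with $0<v_{1}^{\beta}(F)<2+3\beta$ cannot exist, by combining the integrality of $(\mathrm{ch}_{0},\mathrm{ch}_{1},\mathrm{ch}_{2})$ on $\mathbb{P}^{3}$ with the Bogomolov bounds $0\leqslant\overline{\Delta}_{H}(F),\overline{\Delta}_{H}(Q)$ and their alignment at the wall; the smallness $\omega<\frac{1}{2}$ is precisely what reduces the candidate classes to an empty finite list.

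Finally, placement in $\mathrm{Coh}^{\beta,\alpha}_{0}(X)=\langle \mathrm{Coh}^{\leqslant\beta,\alpha}_{0}(X)[1],\mathrm{Coh}^{>\beta,\alpha}_{0}(X)\rangle$ follows formally once stability and the tilt-slope are known: a $\nu^{\beta,\alpha}$-stable object of $\mathrm{Coh}^{\beta}_{0}(X)$ with tilt-slope $>\beta$ lies in $\mathrm{Coh}^{>\beta,\alpha}_{0}(X)$, hence in the double-tilt heart unshifted, while one with tilt-slope $\leqslant\beta$ lies in $\mathrm{Coh}^{\leqslant\beta,\alpha}_{0}(X)$ and enters the heart only after a shift by $[1]$. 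Feeding in the sign computations from the first step yields $S_{0}=i_{*}\mathcal{O}(1)$, $S_{2}[-1]=i_{*}\mathcal{T}(-2)[1]$ and $S_{3}[-1]=i_{*}\mathcal{O}(-1)[2]$ in $\mathrm{Coh}^{\beta,\alpha}_{0}(X)$; for $i_{*}\mathcal{O}$ the correct representative is $i_{*}\mathcal{O}$ or $i_{*}\mathcal{O}[1]=S_{1}$ according to the sign of $\frac{\alpha}{\beta}-\beta$ (equivalently, according to whether $\alpha<\beta^{2}$ or $\alpha>\beta^{2}$), which degenerates to the boundary tilt-slope $\beta$ exactly on the locus $\alpha=\beta^{2}$ used later in Theorem \ref{simplecase}.
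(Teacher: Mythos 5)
Most of your proposal runs parallel to the paper's proof: the numerical computations, the placement of each object (or its shift) in $\mathrm{Coh}^{\beta}_{0}(X)$ via slope stability, the reduction to $\mathbb{P}^{3}$ through Proposition \ref{ipushforwardoftiltstableobject}, the zero-discriminant argument for the three line bundles (this is exactly the content of the cited \cite[Corollary 3.11]{bayer2016space}), and the formal passage to the double-tilt heart are all as in the paper, and your inequality checks (including the tightness of $\frac{-1+2\alpha}{2+2\beta}<\beta$ at $\beta=-\frac{1}{2}$) are correct.

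The genuine gap is in your treatment of $\mathcal{T}(-2)$, which you yourself flag as ``the main obstacle'' but do not resolve. Two problems. First, excluding walls that \emph{meet} the region $\{-\frac{1}{2}\leqslant\beta\leqslant 0,\ \omega<\frac{1}{2}\}$ does not show that this region lies in the large-volume chamber: a wall for $\mathcal{T}(-2)[1]$ is a line segment through the fixed point $\left(\frac{v_{1}}{v_{0}},\frac{v_{2}}{v_{0}}\right)=\left(-\frac{2}{3},0\right)$, and such a wall can separate a point $(\beta_{0},\alpha_{0})$ of your region from the large-volume limit while lying entirely in the locus $\omega\geqslant\frac{1}{2}$ (it crosses the vertical ray $\beta=\beta_{0}$ at some larger $\alpha$). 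Second, the proposed enumeration of destabilizing classes $F$ with $0<v_{1}^{\beta}(F)<2+3\beta$ is not a finite list for irrational $\beta$, since $v_{1}^{\beta}(F)=\mathrm{ch}_{1}(F)-\beta\,\mathrm{ch}_{0}(F)$ ranges over the dense set $\mathbb{Z}+\beta\mathbb{Z}$; the constraints you list do not obviously close this off. The paper avoids both issues with a short geometric argument: by \cite[Corollary 3.11]{macri2014generalized}, $\mathcal{T}(-2)[1]$ is $\nu^{0,\alpha}$-stable for every $\alpha>0$; if it were strictly semistable at some $(\beta_{0},\alpha_{0})$ with $-\frac{1}{2}\leqslant\beta_{0}<0$, the corresponding wall would be the line through $\left(-\frac{2}{3},0\right)$ and $(\beta_{0},\alpha_{0})$, which has positive slope and therefore meets the ray $\{\beta=0,\ \alpha>0\}$ — contradicting stability there, since (semi)stability is constant along a wall by Proposition \ref{wallandchamberlem1}. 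You should replace your enumeration step with this argument (or at least with an exclusion of walls crossing the vertical rays above your region, anchored at a locus where stability is already known).
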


\begin{pf}
	First, we know that for any line bundle $\mathcal{O}(d)$ on $\mathbb{P}^{3}$, it is $\nu^{\beta,\alpha}$-stable, for any $(\beta,\alpha) \in U$.(see also \cite[Corollary 3.11]{bayer2016space}) And $\mathcal{T}(-2)[1]$ is $\nu^{0,\alpha}$-stable obejct for any $\alpha>0$.(see also \cite[Corollary 3.11]{macri2014generalized}). 
	If $\mathcal{T}(-2)[1]$ is strictly $\nu^{\beta_{0},\alpha_{0}}$-semistable for some $(\beta_{0},\alpha_{0}) \in U$ with $-\displaystyle\frac{1}{2} \leqslant \beta_{0}<0$, then there must be a wall $l$ passes through $(\beta_{0},\alpha_{0})$ and $\Pi(\mathcal{T}(-2)[1])=\left(-\displaystyle\frac{2}{3},0\right)$ by Proposition \ref{wallandchamber}. Note that the slope of this line must bigger than $0$ and it must intersect with the ray $\{(0,\alpha) \in U\mid \alpha>0\}$. It is a contradiction. Thus their pushforward by $i_{*}$ is also $\nu^{\beta,\alpha}$-stable by Proposition \ref{ipushforwardoftiltstableobject}.
	
	Now, we only need to verify that when $-\displaystyle\frac{1}{2}\leqslant \beta \leqslant 0$ and $0<2\alpha-\beta^{2}<\displaystyle\frac{1}{4}$, we have
	\begin{align*}
		& 1-2\alpha>2\beta-2\beta^{2}; \\ 
		& 3\alpha>2\beta+3\beta^{2}; \\
		& -1+2\alpha<2\beta+2\beta^{2}.
	\end{align*} All of them are obvious.
\end{pf}

\begin{prop}\label{finalpartproposition}
	If $(\beta,\alpha) \in U$ satisfies $-\displaystyle\frac{1}{2}\leqslant \beta \leqslant 0$ and $\sqrt{2\alpha-\beta^{2}}<\displaystyle\frac{1}{2}$ and $\beta,\alpha$ are rational numbers, then $(\beta,\alpha)$ satisfies Bogomolov-Gieseker type inequality. 
\end{prop}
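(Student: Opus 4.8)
The plan is to deduce the statement from Lemma \ref{BGlem1}, applied at the rational point $(\beta,\alpha)$ with $a_{0}=\frac{2\alpha-\beta^{2}}{6}$ and with $\mathcal{B}$ the finite-length heart of \cite[Lemma 4.4]{bridgeland2005t}, whose simple objects are $S_{0}=i_{*}\mathcal{O}(1)$, $S_{1}=i_{*}\mathcal{O}[1]$, $S_{2}=i_{*}\mathcal{T}(-2)[2]$ and $S_{3}=i_{*}\mathcal{O}(-1)[3]$. Because $(\beta,\alpha)\in U\cap\mathbb{Q}^{2}$, Proposition \ref{finitelength} ensures that $\mathcal{I}^{\beta,\alpha}_{0}(X)$ has finite length, so Lemma \ref{BGlem1} is available and the task reduces to verifying its three hypotheses: (1) $\mathcal{B}\subseteq\langle\mathrm{Coh}^{\beta,\alpha}_{0}(X),\mathrm{Coh}^{\beta,\alpha}_{0}(X)[1]\rangle$; (2) $Z^{\beta,\alpha,a_{0}}(\mathcal{B})$ lies in some half-plane $\{r\exp(\sqrt{-1}\pi\phi)\mid \phi_{0}\leqslant\phi\leqslant\phi_{0}+1\}$ with $\phi_{0}\in(0,1)$; and (3) $F[2]\notin\mathcal{B}$ for every $\nu^{\beta,\alpha}$-semistable $F\in\mathrm{Coh}^{\beta}_{0}(X)$ of tilt-slope $\beta$ with $F[1]$ simple in $\mathcal{I}^{\beta,\alpha}_{0}(X)$.

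Conditions (1) and (3) are the routine ones, and both follow from Proposition \ref{tiltstablebobject}. For (1), since $\mathcal{B}$ is the extension-closure of the $S_{i}$ it suffices to place each $S_{i}$ in $\langle\mathrm{Coh}^{\beta,\alpha}_{0}(X),\mathrm{Coh}^{\beta,\alpha}_{0}(X)[1]\rangle$, and on the range $-\frac{1}{2}\leqslant\beta\leqslant0$, $\omega<\frac{1}{2}$ that proposition gives $S_{0}\in\mathrm{Coh}^{\beta,\alpha}_{0}(X)$, $S_{2},S_{3}\in\mathrm{Coh}^{\beta,\alpha}_{0}(X)[1]$, and $S_{1}\in\mathrm{Coh}^{\beta,\alpha}_{0}(X)$ or $\mathrm{Coh}^{\beta,\alpha}_{0}(X)[1]$ according to the sub-case. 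For (3) I would argue by a phase computation in the tilt-slicing $\mathcal{P}^{\beta,\alpha}$ of $\sigma^{\beta,\alpha}$. The same proposition shows that $S_{0}[-2]$, $S_{1}[-2]=i_{*}\mathcal{O}[-1]$ and $S_{2}[-2]=i_{*}\mathcal{T}(-2)$ all have tilt-phase $\leqslant0$, while $S_{3}[-2]=i_{*}\mathcal{O}(-1)[1]$ is $\nu^{\beta,\alpha}$-stable of tilt-slope $\frac{-1+2\alpha}{2+2\beta}<\beta$; since $F$ has finite tilt-slope $\beta$, its tilt-phase $\psi_{F}\in(0,1)$ strictly exceeds that of every $S_{j}[-2]$, so the slicing axiom forces $\mathrm{Hom}(F,S_{j}[-2])=0$ for all $j$. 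If $F[2]$ lay in $\mathcal{B}$ it would, by finite length, surject onto some simple $S_{j}$, yielding a non-zero element of $\mathrm{Hom}_{D^{b}_{0}(X)}(F[2],S_{j})\cong\mathrm{Hom}(F,S_{j}[-2])$, a contradiction; hence $F[2]\notin\mathcal{B}$.

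The main obstacle is condition (2). Here I would compute each $Z^{\beta,\alpha,a_{0}}(S_{i})$ from the Chern characters of $\mathcal{O}(1),\mathcal{O},\mathcal{T}(-2),\mathcal{O}(-1)$ on $\mathbb{P}^{3}$ (using $H^{3}=1$ and $\mathrm{ch}(\mathcal{T}(-2))=3-2H+\frac{2}{3}H^{3}$ from the Euler sequence), writing $\Re Z^{\beta,\alpha,a_{0}}=-v_{3}^{\beta}+a_{0}v_{1}^{\beta}$ and $\Im Z^{\beta,\alpha,a_{0}}=v_{2}^{\beta}-\frac{\omega^{2}}{2}v_{0}^{\beta}$. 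The subtlety is that the imaginary parts $\Im Z^{\beta,\alpha,a_{0}}(S_{i})$ need not share a sign; for example $\Im Z^{\beta,\alpha,a_{0}}(S_{1})=\alpha-\beta^{2}$ changes sign across $\alpha=\beta^{2}$, so the four points straddle the real axis and one must choose phase representatives modulo $2$. I expect that, after normalising near the phase of $S_{0}$, the four phases fit into an interval of length at most one, so that $\phi_{0}$ may be taken slightly below the phase of $S_{0}$; confirming that this angular spread never exceeds $\pi$ uniformly over $-\frac{1}{2}\leqslant\beta\leqslant0$, $0<\omega<\frac{1}{2}$ (in particular at the corners $\beta\in\{0,-\frac{1}{2}\}$ and $\omega\to\frac{1}{2}$) is exactly where the numerical hypotheses are consumed, and is the step I expect to demand the most care. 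Once (1)--(3) are checked, Lemma \ref{BGlem1} gives directly that $(\beta,\alpha)$ satisfies the Bogomolov-Gieseker type inequality, which is the assertion.
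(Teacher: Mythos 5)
Your strategy is exactly the paper's: apply Lemma \ref{BGlem1} at $(\beta,\alpha)$ with $a_{0}=\frac{2\alpha-\beta^{2}}{6}$ and $\mathcal{B}$ the Bridgeland heart generated by $S_{0}=i_{*}\mathcal{O}(1)$, $S_{1}=i_{*}\mathcal{O}[1]$, $S_{2}=i_{*}\mathcal{T}(-2)[2]$, $S_{3}=i_{*}\mathcal{O}(-1)[3]$. Your treatment of conditions (1) and (3) is sound; for (3) the paper argues slightly more directly (for $S_{0},S_{1},S_{2}$ it uses only the vanishing $\mathrm{Hom}(E[l_{1}],F[l_{2}])=0$ for $l_{1}>l_{2}$ between objects of the heart $\mathrm{Coh}^{\beta}_{0}(X)$, reserving the tilt-phase comparison for $S_{3}$ alone), but your uniform phase argument is an equally valid variant, and you correctly supply the "simple quotient" step that the paper leaves implicit.

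The genuine gap is condition (2), which you announce as an expectation rather than prove. This is not a minor omission: it is the only step where the hypotheses $-\frac{1}{2}\leqslant\beta\leqslant 0$ and $\sqrt{2\alpha-\beta^{2}}<\frac{1}{2}$ are actually consumed, and the proposition is false without some such verification. The paper completes it by computing the four charges in closed form and showing: $z_{0}=\frac{1}{2}(2\beta^{2}-2\beta+1-2\alpha)(\frac{1}{3}(\beta-1)+\sqrt{-1})$ lies in the second quadrant and $z_{3}=-\frac{1}{2}(2\beta^{2}+2\beta+1-2\alpha)(\frac{1}{3}(\beta+1)+\sqrt{-1})$ in the third, using precisely the three inequalities $1-2\alpha>2\beta-2\beta^{2}$, $3\alpha>2\beta+3\beta^{2}$, $-1+2\alpha<2\beta+2\beta^{2}$ already verified in Proposition \ref{tiltstablebobject}; $z_{2}$ lies in the third quadrant because additionally $\frac{1}{2}\beta^{3}+\beta^{2}-\frac{2}{3}<0$ on $[-\frac{1}{2},0]$. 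You correctly identified the dichotomy on the sign of $\alpha-\beta^{2}$ governing $z_{1}=(\alpha-\beta^{2})(\frac{1}{3}\beta+\sqrt{-1})$: when $\alpha\geqslant\beta^{2}$ all four charges have $\Re\leqslant 0$ and one takes $\phi_{0}=\frac{1}{2}$; when $\frac{1}{2}\beta^{2}<\alpha<\beta^{2}$ the point $z_{1}$ falls into the fourth quadrant, and the closing step is the elementary slope comparison $-\frac{1}{3}(\beta-1)<-\frac{1}{3}\beta$ (so in particular $z_{0}$, and a fortiori $z_{2},z_{3}$, lie counterclockwise of $-z_{1}$ within an angle $\pi$), whence $\phi_{0}$ may be taken to be the phase of $-z_{1}$. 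Until these inequalities are written down and checked, the proof is not complete.
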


\begin{pf}
	Denote $\displaystyle\frac{2\alpha-\beta^{2}}{6}$ by $a_{0}$. By Proposition \ref{tiltstablebobject}, we have 
	\begin{align*}
		\mathcal{B} \subseteq \langle \mathrm{Coh}^{\beta,\alpha}_{0}(X),\mathrm{Coh}^{\beta,\alpha}_{0}(X)[1]\rangle
	\end{align*}
	
	After some calculation, we have: 
	\begin{align*}
		& z_{0}=Z^{\beta,\alpha,a_{0}}(S_{0})=\frac{1}{2}(2\beta^{2}-2\beta+1-2\alpha)(\frac{1}{3}(\beta-1)+\sqrt{-1});\\
		&
		z_{1}=Z^{\beta,\alpha,a_{0}}(S_{1})=(\alpha-\beta^{2})(\frac{1}{3}\beta+\sqrt{-1});\\
		&
		z_{2}=Z^{\beta,\alpha,a_{0}}(S_{2})=\frac{1}{2}\beta^{3}+\beta^{2}-\frac{2}{3}-a_{0}(3\beta+2)+\sqrt{-1}(3\beta^{2}+2\beta-3\alpha);\\
		&
		z_{3}=Z^{\beta,\alpha,a_{0}}(S_{3})=-\frac{1}{2}(2\beta^{2}+2\beta+1-2\alpha)(\frac{1}{3}(\beta+1)+\sqrt{-1}).
	\end{align*}
	
	Note that $z_{0}$ stay in the second quadrant with slope $-\displaystyle\frac{1}{3}(\beta-1)$ and $z_{2},z_{3}$ stay in the third quadrant because $\displaystyle\frac{1}{2}\beta^{3}+\beta^{2}-\frac{2}{3}<0$ for $-\displaystyle\frac{1}{2}\leqslant \beta \leqslant 0$. 
	
	When $\alpha \geqslant \beta^{2}$, we have:
	\begin{align*}
		Z^{\beta,\alpha,a_{0}}(\mathcal{B}) \subseteq \{r \mathrm{exp}(\sqrt{-1}\pi\phi) \mid r \geqslant 0, \frac{1}{2}\leqslant \phi \leqslant \frac{3}{2}\}.
	\end{align*}And when $\displaystyle\frac{1}{2}\beta^{2}<\alpha<\beta^{2}$, $z_{1}$ is in the fourth quadrant but the slope of $z_{0}$ is bigger than the slope of $-z_{1}$:
	\begin{align*}
		-\frac{1}{3}(\beta-1)<-\frac{1}{3}\beta.
	\end{align*}If we denote the phase of $-z_{1}$ by $\phi_{0} \in (0,1)$, thus:
	\begin{align*}
		Z^{\beta,\alpha,a_{0}}(\mathcal{B}) \subseteq \{r \mathrm{exp}(\sqrt{-1}\pi\phi) \mid r \geqslant 0, \phi_{0}\leqslant \phi \leqslant \phi_{0}+1\}.
	\end{align*}
	
	Now, we just need to prove that for any $\nu^{\beta,\alpha}$-semistable object $F \in \mathrm{Coh}^{\beta}_{0}(X)$ with tilt-slope $\beta$ and $F[1]$ is a simple object of $\mathcal{I}^{\beta,\alpha}_{0}(X)$, we always have $F[2] \notin \mathcal{B}$. In fact, we claim that $\mathrm{Hom}_{D^{b}_{0}(X)}(F[2],S_{i})=0$ for any $i \in \{0,1,2,3\}$.
	
	Because $F,S_{0} \in \mathrm{Coh}^{\beta}_{0}(X)$, then $\mathrm{Hom}_{D^{b}_{0}(X)}(F[2],S_{0})=0$. For $S_{1}$, we have $S_{1} \in \mathrm{Coh}^{\beta}_{0}(X)$ or $S_{1}[-1] \in \mathrm{Coh}^{\beta}_{0}(X)$, thus $\mathrm{Hom}_{D^{b}_{0}(X)}(F[2],S_{1})=0$. For $S_{2}$, we have $S_{2}[-1] \in \mathrm{Coh}^{\beta}_{0}(X)$ and $\mathrm{Hom}_{D^{b}_{0}(X)}(F[2],S_{2})=\mathrm{Hom}_{D^{b}_{0}(X)}(F[1],S_{2}[-1])=0$. Finally, for $S_{3}$, we have $\mathrm{Hom}_{D^{b}_{0}(X)}(F[2],S_{3})=\mathrm{Hom}_{D^{b}_{0}(X)}(F,i_{*}\mathcal{O}(-1)[1])=0$ because $i_{*}\mathcal{O}(-1)[1] \in \mathrm{Coh}^{\beta}_{0}(X)$ is $\nu^{\beta,\alpha}$-stable with tilt-slope $<\beta$. 
	
	By Lemma \ref{BGlem1}, we get our conclusion
\end{pf}

\begin{thm}\label{thm1}
	Let $X=\mathrm{Tot}(\omega_{\mathbb{P}^{3}})$. Then for any $(\beta,\alpha) \in U$ and $a>\displaystyle\frac{2\alpha-\beta^{2}}{6}$, the central charge $Z^{\beta,\alpha,a}$ is a stability function on the bounded heart $\mathrm{Coh}_{0}^{\beta}(X)$.
\end{thm}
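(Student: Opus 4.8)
The plan is to deduce the theorem directly from the reduction machinery of Section \ref{section3} and the explicit verification carried out in the present section, since all the analytic difficulty has been isolated into the single inequality of Conjecture \ref{conj}. The key point is that $Z^{\beta,\alpha,a}$ is the natural central charge on the double-tilted heart $\mathrm{Coh}_0^{\beta,\alpha}(X)$, and by the proposition showing that Conjecture \ref{conj} implies Conjecture \ref{conj2}, it suffices to establish the Bogomolov--Gieseker type inequality of Conjecture \ref{conj} for $X=\mathrm{Tot}(\omega_{\mathbb{P}^3})$. Granting that inequality, Conjecture \ref{conj2} gives that $Z^{\beta,\alpha,a}$ is a weak stability function with kernel $\{E\in\mathrm{Coh}_0(X)\mid\dim E\leqslant n-4\}$; since here $n=\dim\mathbb{P}^3=3$ this kernel is trivial, so $Z^{\beta,\alpha,a}$ is in fact a stability function, which is the assertion of the theorem.

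First I would reduce Conjecture \ref{conj} to the bounded rational window of Conjecture \ref{conj3} using the four reduction propositions. By Proposition \ref{reduce1} the inequality need only be verified for small $\omega=\sqrt{2\alpha-\beta^2}$; by Proposition \ref{reduce2} it suffices to treat rational $(\beta,\alpha)$; Proposition \ref{reduce3}, via tensoring with $\pi^*\mathcal{O}_{\mathbb{P}^3}(H)$, shifts $\beta$ by $1$ and lets one assume $-\frac{1}{2}\leqslant\beta<\frac{1}{2}$; and Proposition \ref{reduce4}, via the relative derived dual (available because $\dim\mathbb{P}^3=3$), flips the sign of $\beta$ and confines the problem to $-\frac{1}{2}\leqslant\beta\leqslant 0$. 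This is exactly the equivalence of Conjectures \ref{conj} and \ref{conj3}.

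Second, the bounded window of Conjecture \ref{conj3} is precisely Proposition \ref{finalpartproposition}, already established. There one takes Bridgeland's finite-length heart $\mathcal{B}$ with simple objects $S_0=i_*\mathcal{O}(1)$, $S_1=i_*\mathcal{O}[1]$, $S_2=i_*\mathcal{T}(-2)[2]$, $S_3=i_*\mathcal{O}(-1)[3]$; the tilt-stability computations of Proposition \ref{tiltstablebobject} show $\mathcal{B}\subseteq\langle\mathrm{Coh}_0^{\beta,\alpha}(X),\mathrm{Coh}_0^{\beta,\alpha}(X)[1]\rangle$, the explicit values $z_i=Z^{\beta,\alpha,a_0}(S_i)$ are seen to lie in a common half-plane, and the vanishing $\mathrm{Hom}_{D_0^b(X)}(F[2],S_i)=0$ for every simple $F[1]\in\mathcal{I}^{\beta,\alpha}_0(X)$ yields $F[2]\notin\mathcal{B}$. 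Lemma \ref{BGlem1} then produces the inequality, and chaining the three steps completes the proof.

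The hard part is therefore not the assembly above but the content of Propositions \ref{tiltstablebobject} and \ref{finalpartproposition}. The genuine work is two-fold: proving that each $S_i$, suitably shifted, is $\nu^{\beta,\alpha}$-stable and lies in the double-tilted heart throughout the window, which relies on a wall-crossing argument excluding destabilizing walls for $i_*\mathcal{T}(-2)[1]$; and checking the half-plane containment of the $z_i$ uniformly in $(\beta,\alpha)$, the delicate sub-case being $\frac{1}{2}\beta^2<\alpha<\beta^2$, where $z_1$ falls into the fourth quadrant and one must compare the phase of $z_0$ with that of $-z_1$. Once these are secured, Theorem \ref{thm1} follows formally by reading off the chain of equivalences.
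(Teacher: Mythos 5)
Your proposal is correct and follows essentially the same route as the paper: the paper's own proof of Theorem \ref{thm1} simply invokes Proposition \ref{finalpartproposition} together with the reduction propositions, exactly the chain you assemble (reduce Conjecture \ref{conj2} to Conjecture \ref{conj}, reduce that to the rational bounded window of Conjecture \ref{conj3}, and verify the window via the exceptional heart $\mathcal{B}$ and Lemma \ref{BGlem1}). If anything, your write-up is slightly more careful than the paper's one-line proof, which cites only Propositions \ref{reduce1}, \ref{reduce2} and \ref{reduce3} and omits the dual-functor reduction \ref{reduce4} that is needed to pass from $-\frac{1}{2}\leqslant\beta<\frac{1}{2}$ to $-\frac{1}{2}\leqslant\beta\leqslant 0$; you correctly include it.
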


\begin{pf}
	 By Proposition \ref{finalpartproposition}, \ref{reduce1}, \ref{reduce2} and \ref{reduce3}, we can get the conclusion.
\end{pf}

\begin{rmk}
	(1) In fact, by using the method of deformation in \cite[Section 8]{bayer2016space}, we could prove that there is a continous embedding:
	\begin{align*}
		\{(\beta,\alpha,a) \in \mathbb{R}^{3} \mid 2\alpha>\beta^{2},6a>2\alpha-\beta^{2}\}\to \mathrm{Stab}_{H}(D^{b}_{0}(X)), (\beta,\alpha,a) \mapsto \sigma^{\beta,\alpha,a}=(Z^{\beta,\alpha,a},\mathrm{Coh}^{\beta,\alpha}_{0}(X)).
	\end{align*}Moreover, we could construct a simply connected open subset of $\mathrm{Stab}_{H}(D^{b}_{0}(X))$. But we will not use this result, so we don't write detail here.
	
	(2) Our method also works for other locally free sheaf on $\mathbb{P}^{3}$ with two assumptions. Explicitly, if $X$ is the total space of a locally free sheaf $\mathcal{E}_{0}$ on $\mathbb{P}^{3}$, which satisfies:
	\begin{itemize}
		\item (Assumption \ref{crucialassumption})A sheaf $E \in \mathrm{Coh}_{0}(X)$ is a slope stable sheaf if and only if it is the pushforward $E=i_{*}E_{0}$ of some slope stable sheaf $E_{0} \in \mathrm{Coh}(\mathbb{P}^{3})$.
		\item The extension-closed subcategory $\langle i_{*}\mathcal{O}(1),i_{*}\mathcal{O}[1],i_{*}\mathcal{T}(-2)[2],i_{*}\mathcal{O}(-1)[3]\rangle$ is a bounded heart of $D_{0}^{b}(X)$.
	\end{itemize}
	Then there will be a continous embedding:
	\begin{align*}
		\{(\beta,\alpha,a) \in \mathbb{R}^{3} \mid 2\alpha>\beta^{2},6a>2\alpha-\beta^{2}\}\to \mathrm{Stab}_{H}(D^{b}_{0}(X)), (\beta,\alpha,a) \mapsto \sigma^{\beta,\alpha,a}=(Z^{\beta,\alpha,a},\mathrm{Coh}^{\beta,\alpha}_{0}(X)).
	\end{align*}
\end{rmk}

\section{Part of the boundary of the subset of geometric stability conditions on $D_{0}^{b}(\mathrm{Tot}_{\mathbb{P}^{3}}(\omega_{\mathbb{P}^{3}}))$}\label{section5}

In this section, we will construct some stability condition on the boundary of the subset $\mathrm{Stab}_{H}^{\mathrm{geo}}(D_{0}^{b}(X))$ of geometric stability conditions, where $X=\mathrm{Tot}_{\mathbb{P}^{3}}(\omega_{\mathbb{P}^{3}})$. Firstly, we need a lemma to construct new tilting heart:

\begin{lem}\label{boundaryheart}
	Suppose $\mathcal{D}$ is a $\mathbb{C}$-triangulated category of finite type and the pair $\sigma=(Z,\mathcal{A})$ is a weak stability condition on $\mathcal{D}$. Let $\mu$ be the corresponding slope function. Let $E$ be a $\sigma$-stable object in $\mathcal{A}$ with $\mu(E)=\beta<+\infty$ and we denote
	\begin{align*}
		&\mathcal{T}^{E}=\langle T\in \mathcal{A} \mid T \mbox{ is } \sigma\mbox{-semistable with }\mu(T)>\beta, E \rangle, \\
		&\mathcal{F}^{E}=\langle F\in \mathcal{A} \mid F \mbox{ is } \sigma\mbox{-semistable with }\mu(F) \leqslant \beta \mbox{ with }\mathrm{Hom}_{\mathcal{A}}(E,F)=0 \rangle, \\
		& \mathcal{T}^{\beta}=\langle T\in \mathcal{A} \mid T \mbox{ is } \sigma\mbox{-semistable with }\mu(T)>\beta \rangle, \\
		& \mathcal{F}^{\beta}=\langle F\in \mathcal{A} \mid F \mbox{ is } \sigma\mbox{-semistable with }\mu(F) \leqslant \beta \rangle.
	\end{align*} Moreover, we assume that $\mathrm{Hom}_{\mathcal{D}}(E,E[1])=0$ and 
	$\mathrm{Hom}_{\mathcal{D}}(C,E[1])=0$ for any object $C \in \mathcal{A}$ with $Z(C)=0$. Then we have:
	
	(1) The pair $(\mathcal{T}^{E},\mathcal{F}^{E})$ is a torsion pair of $\mathcal{A}$.
	
	(2) Denote the tilting heart of $\mathcal{A}$ with respect to $(\mathcal{T}^{\beta},\mathcal{F}^{\beta})$ by $\mathcal{A}^{\beta}$ and denote the tilting heart of $\mathcal{A}$ with respect to $(\mathcal{T}^{E},\mathcal{F}^{E})$ by $\mathcal{A}^{E}$, then $\mathcal{A}^{E}[1]$ is a tilting heart of $\mathcal{A}^{\beta}$ with respect to a torsion pair $(\mathcal{T},\mathcal{F})$, where $\mathcal{T}=\{E[1]^{\oplus n} \mid n \in \mathbb{N}^{+}\}$ and $\mathcal{F}$ is the right orthogonal subcategory of $\mathcal{T}$ in $\mathcal{A}^{\beta}$.
	
	(3) If $\mathcal{A}^{\beta}$ is a Noetherian abelian category, then $\mathcal{A}^{E}$ is also Noetherian.
\end{lem}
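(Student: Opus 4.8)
The plan is to treat the three parts in order, with an ``$E$-trace'' construction and the rigidity hypotheses on $E$ doing the main work.

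For (1), I would verify the two torsion-pair axioms for $(\mathcal{T}^{E},\mathcal{F}^{E})$ directly. The vanishing $\mathrm{Hom}(\mathcal{T}^{E},\mathcal{F}^{E})=0$ reduces, via the long exact sequences coming from the generating extensions, to a check on generators: for semistable generators it is the usual slope inequality, and the remaining case $\mathrm{Hom}(E,F)=0$ holds by definition of $\mathcal{F}^{E}$ and propagates to all of $\mathcal{F}^{E}$ since $\mathrm{Hom}(E,-)$ is left exact. For the decomposition axiom, given $A\in\mathcal{A}$ I would first split off the $(\mathcal{T}^{\beta},\mathcal{F}^{\beta})$-torsion part, $0\to T_{0}\to A\to F_{0}\to 0$ with $F_{0}\in\mathcal{F}^{\beta}$, and then extract the $E$-part of $F_{0}$. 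Let $G$ be the image of the evaluation map $E^{\oplus n}\to F_{0}$ with $n=\dim\mathrm{Hom}(E,F_{0})$. Since $E$ is stable of slope $\beta$ and $\mu_{\max}(F_{0})\leqslant\beta$, every nonzero map $E\to F_{0}$ is injective; using $\mathrm{Hom}(E,E[1])=0$ one shows every subobject of $E^{\oplus n}$ inside the abelian category of slope-$\beta$ semistable objects is again a direct sum of copies of $E$, so $G\cong E^{\oplus n}$, and the long exact sequence for $\mathrm{Hom}(E,-)$ gives $\mathrm{Hom}(E,F_{0}/G)=0$. As $\mu_{\max}(F_{0}/G)\leqslant\mu_{\max}(F_{0})\leqslant\beta$, the quotient lies in $\mathcal{F}^{E}$; pulling $G$ back to a subobject $T\subseteq A$ yields $0\to T\to A\to F_{0}/G\to 0$ with $T\in\mathcal{T}^{E}$.

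For (2), the strategy is to invoke Proposition \ref{tiltingheart}. I would first check $\mathcal{A}^{E}[1]\subseteq\langle\mathcal{A}^{\beta},\mathcal{A}^{\beta}[1]\rangle$: writing $\mathcal{A}^{E}=\langle\mathcal{F}^{E}[1],\mathcal{T}^{E}\rangle$ with $\mathcal{T}^{E}=\langle\mathcal{T}^{\beta},E\rangle$, one has $\mathcal{F}^{E}[2]\subseteq\mathcal{F}^{\beta}[2]$, $\mathcal{T}^{\beta}[1]\subseteq\mathcal{A}^{\beta}[1]$, and $E[1]\in\mathcal{F}^{\beta}[1]\subseteq\mathcal{A}^{\beta}$, so all of $\mathcal{A}^{E}[1]$ lands in $\langle\mathcal{A}^{\beta},\mathcal{A}^{\beta}[1]\rangle$. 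Proposition \ref{tiltingheart} then presents $\mathcal{A}^{E}[1]$ as a tilt of $\mathcal{A}^{\beta}$ with torsion class $\mathcal{T}=\mathcal{A}^{E}[1]\cap\mathcal{A}^{\beta}$ and torsion-free class $\mathcal{F}=\mathcal{A}^{E}\cap\mathcal{A}^{\beta}$, the latter being automatically the right orthogonal of $\mathcal{T}$. It remains to identify $\mathcal{T}$. Any $Y\in\mathcal{T}$ has its $\mathcal{A}$-cohomology forced into a single degree, so $Y=M[1]$ with $M\in\mathcal{T}^{E}\cap\mathcal{F}^{\beta}$; since $\mu_{\min}(\mathcal{T}^{E})\geqslant\beta$ while objects of $\mathcal{F}^{\beta}$ have slopes $\leqslant\beta$, the slope-$<\beta$ part of $M$ vanishes, and the trace argument of (1) forces $M\cong E^{\oplus n}$. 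Hence $\mathcal{T}=\{E[1]^{\oplus n}\}$.

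For (3), I would pass to the shifted heart and use that $E[1]$ is a simple object of $\mathcal{A}^{\beta}$ with $\mathrm{Ext}^{1}_{\mathcal{A}^{\beta}}(E[1],E[1])=\mathrm{Hom}(E,E[1])=0$: simplicity holds because $E[1]$ has maximal phase in $\mathcal{A}^{\beta}$ and is $\sigma$-stable, so a proper quotient would have strictly larger phase, which is impossible. Thus every subobject of $E[1]^{\oplus n}$ in $\mathcal{A}^{\beta}$ is again $E[1]^{\oplus k}$, so $\mathcal{T}$ is closed under subobjects. Given an ascending chain in $\mathcal{A}^{E}[1]$, the long exact sequence of $\mathcal{A}^{\beta}$-cohomology shows $\mathcal{H}^{-1}_{\mathcal{A}^{\beta}}$ ascends and stabilizes by Noetherianity of $\mathcal{A}^{\beta}$; past that point the induced maps on $\mathcal{H}^{0}_{\mathcal{A}^{\beta}}\in\mathcal{T}$ are injective (their kernels lie in $\mathcal{T}\cap\mathcal{F}=0$), so this chain also stabilizes, and the successive quotients eventually vanish, giving Noetherianity of $\mathcal{A}^{E}[1]\simeq\mathcal{A}^{E}$.

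The main obstacle is the exact identification of the torsion class as $\{E[1]^{\oplus n}\}$ together with the rigidity it demands: showing $\mathcal{T}^{E}\cap\mathcal{F}^{\beta}=\{E^{\oplus n}\}$ and that $E[1]$ is simple with no self-extensions. This is precisely where the two hypotheses enter — $\mathrm{Hom}(E,E[1])=0$ makes the $E$-trace split off as a genuine direct sum and kills self-extensions of $E[1]$, while $\mathrm{Hom}(C,E[1])=0$ for $Z(C)=0$ prevents the kernel-of-$Z$ objects, which live in the same heart $\mathcal{A}^{\beta}$ but at lower phase, from mapping into $E[1]$ and enlarging the torsion class. Once these rigidity facts are secured, parts (2) and (3) follow formally from Proposition \ref{tiltingheart} and the cohomology long exact sequence.
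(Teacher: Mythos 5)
Your overall architecture coincides with the paper's: for (1) you split off the $(\mathcal{T}^{\beta},\mathcal{F}^{\beta})$-torsion part and then take the image of the evaluation map from copies of $E$; for (2) you invoke Proposition \ref{tiltingheart} and identify $\mathcal{T}^{E}\cap\mathcal{F}^{\beta}$; for (3) you run the long exact sequence of $\mathcal{A}^{\beta}$-cohomology. Two steps, however, are asserted exactly where the real content of the proof lies.

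First, in (1) the inequality $\mu_{\max}(F_{0}/G)\leqslant\mu_{\max}(F_{0})$ is not automatic for a \emph{weak} stability condition. If $Q_{1}\subseteq F_{0}/G$ is the maximal destabilizing subobject and $M_{1}\subseteq F_{0}$ its preimage, then $Z(M_{1})=Z(G)+Z(Q_{1})$ with $\mu(G)=\beta$, and the averaging argument only rules out $\mu(Q_{1})>\beta$ when $Z(Q_{1})\neq 0$; the case $Z(Q_{1})=0$ must be excluded separately, and this is precisely where the hypothesis $\mathrm{Hom}_{\mathcal{D}}(C,E[1])=0$ for $Z(C)=0$ enters (it splits the extension $0\to G\to M_{1}\to Q_{1}\to 0$ and embeds $Q_{1}$ into an object of $\mathcal{F}^{\beta}$, which is impossible). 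You correctly sense in your closing paragraph that this hypothesis controls the $Z$-kernel objects, but you locate its use in the torsion-class identification rather than here, and your part (1) as written never uses it. The same corner case is hiding behind your claim that $E[1]$ is simple in $\mathcal{A}^{\beta}$ (``a proper quotient would have strictly larger phase''): a subobject $S\subseteq E[1]$ with $\mathcal{H}^{-1}_{\mathcal{A}}(S)=0$ produces an extension $0\to E\to \mathcal{H}^{-1}_{\mathcal{A}}(Q)\to S\to 0$ with $Z(S)=0$, which is only killed by that same hypothesis.

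Second, in (3) the stabilization of the chain $\mathcal{H}^{0}_{\mathcal{A}^{\beta}}(F_{1})\hookrightarrow \mathcal{H}^{0}_{\mathcal{A}^{\beta}}(F_{2})\hookrightarrow\cdots$ does not follow from Noetherianity of $\mathcal{A}^{\beta}$: these are objects of $\mathcal{T}=\{E[1]^{\oplus n}\}$ with injective transition maps, but they are not subobjects of any fixed object (the long exact sequence shows that $\mathcal{H}^{0}_{\mathcal{A}^{\beta}}(F_{i})$ receives a contribution from $\mathcal{H}^{-1}_{\mathcal{A}^{\beta}}(E_{i})$, not merely from $\mathcal{H}^{0}_{\mathcal{A}^{\beta}}(E_{0})$), so one needs a separate bound on the multiplicities $n_{i}=\dim_{\mathbb{C}}\mathrm{Hom}(E[1],\mathcal{H}^{0}_{\mathcal{A}^{\beta}}(F_{i}))$. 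The paper obtains $n_{i}\leqslant\dim_{\mathbb{C}}\mathrm{Hom}_{\mathcal{D}}(E,Q)$ by applying $\mathrm{Hom}(E[1],-)$ to $0\to Q\to\mathcal{H}^{-1}_{\mathcal{A}^{\beta}}(E_{i})\to\mathcal{H}^{0}_{\mathcal{A}^{\beta}}(F_{i})\to 0$, where $Q$ is the cokernel of the stabilized inclusion $\mathcal{H}^{-1}_{\mathcal{A}^{\beta}}(F_{i})\subseteq\mathcal{H}^{-1}_{\mathcal{A}^{\beta}}(E_{0})$. Without this bound the argument does not close.
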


\begin{pf}
	(1) It is obvious that $\mathrm{Hom}_{\mathcal{A}}(\mathcal{T}^{E},\mathcal{F}^{E})=0$ from the definition. Suppose $E_{0} \in \mathcal{A}$, then we have a short exact sequence in $\mathcal{A}$:
	\begin{align*}
		0 \to T_{0} \to E_{0} \to F_{0} \to 0,
	\end{align*}where $T_{0} \in \mathcal{T}^{\beta} \subseteq \mathcal{T}^{E}$ and $F_{0} \in \mathcal{F}^{\beta}$ by Harder-Narasimhan filtration of $\sigma$. Let $V=\mathrm{Hom}_{\mathcal{A}}(E,F_{0})$, it is a finite-dimensional $\mathbb{C}$-vector space. 
	
	If $V=0$, then we can show that $F_{0} \in \mathcal{F}^{E}$ and the above short exact sequence is our desired decomposition of $E_{0}$. In fact, we will claim that $\mathcal{F}^{E}=\mathcal{F}^{\beta} \bigcap \{F \in \mathcal{A} \mid \mathrm{Hom}_{\mathcal{A}}(E,F)=0\}$. We have a short exact sequence in $\mathcal{A}$ because of the Harder-Narasimhan filtration: 
	\begin{align*}
		0 \to F_{0}^{\beta} \to F_{0} \to F_{0}^{<\beta} \to 0,
	\end{align*} where $F_{0}^{<\beta} \in \langle F\in \mathcal{A} \mid F \mbox{ is } \sigma\mbox{-semistable with }\mu(F)<\beta\rangle$ and it imples that $\mathrm{Hom}_{\mathcal{A}}(E,F_{0}^{<\beta})=0$. Therefore, $\mathrm{Hom}_{\mathcal{A}}(E,F_{0}^{\beta}) \cong \mathrm{Hom}_{\mathcal{A}}(E,F_{0})=0$, and our claim is ture.
	
	If $V \neq 0$, then we have a nonzero canonical morphism in $\mathcal{A}$:
	\begin{align*}
		f \colon V \otimes E \to F_{0}.
	\end{align*} Moreover, it induces a short exact sequence in $\mathcal{A}$:
	\begin{align*}
		0 \to \ker f \to V \otimes E \to \Im f \to 0.
	\end{align*}We assume that $\ker f \neq 0$. Note that $\Im f$ is a nonzero subobject of $F_{0}$, then $\mu(\Im f) \leqslant \mu_{\max}(F_{0}) \leqslant \beta=\mu (V \otimes E)$. But $V \otimes E$ is $\sigma$-semistable since $E$ is $\sigma$-stable, thus $\beta=\mu (V \otimes E)=\mu(\Im f)=\mu(\ker f)$. It implies that $\ker f$ is a $\sigma$-semistable object with slope $\beta$ and all of its Jordan-H\"older factors are $E$. By taking the functor $\mathrm{Hom}_{\mathcal{D}}(E,-)$ on the above short exact sequence, we have $\mathbb{C}$-vector space exact sequences:
	\begin{align*}
		0 \to \mathrm{Hom}_{\mathcal{A}}(E,\ker f) \to \mathrm{Hom}_{\mathcal{A}}(E,V \otimes E) \to \mathrm{Hom}_{\mathcal{A}}(E,\Im  f).
	\end{align*}
	From the definition of $f$ and $\mathrm{Hom}_{\mathcal{A}}(E,E) \cong \mathbb{C}$, we know that the composition of morphism:
	\begin{align*}
		\mathrm{Hom}_{\mathcal{A}}(E,V \otimes E) \to \mathrm{Hom}_{\mathcal{A}}(E,\Im  f) \to \mathrm{Hom}_{\mathcal{A}}(E,F_{0})=V
	\end{align*} is an isomorphism of $\mathbb{C}$-vector space. Meanwhile, we have $\mathrm{Hom}_{\mathcal{A}}(E,\Im  f) \subseteq \mathrm{Hom}_{\mathcal{A}}(E,F_{0})$. Therefore, $\mathrm{Hom}_{\mathcal{A}}(E,V \otimes E) \to \mathrm{Hom}_{\mathcal{A}}(E,\Im  f)$ is an isomorphism of $\mathbb{C}$-vector space and $\mathrm{Hom}_{\mathcal{A}}(E,\ker f)=0$, which is a contradiction. Thus, $\ker f=0$.
	
	Now we have a short exact sequence in $\mathcal{A}$:
	\begin{align*}
		0  \to V \otimes E \to F_{0} \to \mathrm{coker} f \to 0.
	\end{align*}
	Consider the following commutative diagram induced by a pullback square:
	\begin{align*}
		\xymatrix{
			0 \ar[r] & T_{0} \ar@{=}[d] \ar[r] & T \ar[r] \ar[d] & V \otimes E \ar[r] \ar[d] & 0\\
			0 \ar[r] & T_{0} \ar[r] & E_{0} \ar[r] & F_{0} \ar[r] & 0.
		}
	\end{align*} By the snake lemma, we have a short exact sequence in $\mathcal{A}$:
	\begin{align*}
		0 \to T \to E_{0} \to \mathrm{coker} f \to 0.
	\end{align*}Finally, we only need to prove that $\mathrm{coker} f \in  \mathcal{F}^{E}$.
	
	First, we have a long exact sequence of $\mathbb{C}$-vector spaces:
	\begin{align*}
		0 \to \mathrm{Hom}_{\mathcal{A}}(E,V \otimes E) \to \mathrm{Hom}_{\mathcal{A}}(E,F_{0}) \to \mathrm{Hom}_{\mathcal{A}}(E,\mathrm{coker} f) \to \mathrm{Hom}_{\mathcal{A}}(E,V \otimes E[1])=0.
	\end{align*}And the first morphism is an isomorphism, thus $\mathrm{Hom}_{\mathcal{A}}(E,\mathrm{coker} f)=0$.
	
	Meanwhile, we have a short exact sequence in $\mathcal{A}$ because of the Harder-Narasimhan filtration:
	\begin{align*}
		0 \to F_{0}^{\beta} \to F_{0} \to F_{0}^{<\beta} \to 0,
	\end{align*}where $F_{0}^{<\beta} \in \langle F\in \mathcal{A} \mid F \mbox{ is } \sigma\mbox{-semistable with }\mu(F)<\beta\rangle$. Because $V \neq 0$, then $F_{0}^{\beta} \neq 0$. Since $\mathrm{Hom}_{\mathcal{A}}(V \otimes E, F_{0}^{<\beta})=0$, we have a commutative diagram:
	\begin{align*}
		\xymatrix{
			0 \ar[r] & V \otimes E \ar[d] \ar[r] & F_{0} \ar[r] \ar@{=}[d] & \mathrm{coker} f \ar[r] \ar[d] & 0\\
			0 \ar[r] & F_{0}^{\beta} \ar[r] & F_{0} \ar[r] & F_{0}^{<\beta} \ar[r] & 0.
		}
	\end{align*} 
	
	Let $G$ be the cokernel of $V \otimes E \to F_{0}^{\beta}$, it is also isomorphic to the kernel of $\mathrm{coker} f \to F^{<\beta}_{0}$ by the snake lemma. Thus, $\mathrm{Hom}_{\mathcal{A}}(E,G) \subseteq \mathrm{Hom}_{\mathcal{A}}(E,\mathrm{coker} f)=0$. If $G \neq 0$, and we know $\beta=\mu(F_{0}^{\beta}) \leqslant \mu_{\min}(G)$, then there is a short exact sequence in $\mathcal{A}$ because of the Harder-Narasimhan filtration:
	\begin{align*}
		0 \to G^{>\beta} \to G \to G^{\beta} \to 0,
	\end{align*}where $G^{>\beta} \in \langle G\in \mathcal{A} \mid G \mbox{ is } \sigma\mbox{-semistable with }\mu(G)>\beta\rangle$. Note that $Z(F_{0}^{\beta})=Z(V \otimes E)+Z(G)=Z(V \otimes E)+Z(G^{\beta})+Z(G^{>\beta})$ and we conclude that $Z(G^{>\beta})=0$ since other terms are on the same line. Note that $\mathrm{Hom}_{\mathcal{A}}(G^{>\beta},V \otimes E[1])=0$. Then there will be an injection $G^{>\beta} \to F^{\beta}_{0}$, which is a contradiction unless $G^{>\beta}=0$. It means that $G=0$ or $G$ is a $\sigma$-semistable object in $\mathcal{A}$ with slope $\beta$ and $\mathrm{Hom}_{\mathcal{A}}(E,G)=0$. Thus $G \in \mathcal{F}^{E}$ and $\mathrm{coker} f \in \mathcal{F}^{E}$.
	
	(2) Because $\mathcal{T}^{\beta} \subseteq \mathcal{T}^{E}$, then $\mathcal{F}^{E} \subseteq \mathcal{F}^{\beta}$ and $\mathcal{F}^{E}[1] \subseteq \mathcal{F}^{\beta}[1] \subseteq \mathcal{A}^{\beta}$. Meanwhile, $\mathcal{T}^{E} \subseteq \mathcal{A} \subseteq \langle\mathcal{A}^{\beta},\mathcal{A}^{\beta}[-1]\rangle$. Thus, $\mathcal{A}^{E} \subseteq \langle\mathcal{A}^{\beta},\mathcal{A}^{\beta}[-1]\rangle$. By  Proposition \ref{tiltingheart}, we have $\mathcal{A}^{E}[1]$ is a tilting heart of $\mathcal{A}^{\beta}$ with respect to $(\mathcal{T},\mathcal{F})$, where $\mathcal{T}=\mathcal{A}^{E}[1] \bigcap \mathcal{A}^{\beta}$. It is obviously that $E[1] \in \mathcal{A}^{E}[1] \bigcap \mathcal{A}^{\beta}$. If $E' \in \mathcal{A}^{E}[1] \bigcap \mathcal{A}^{\beta}$, then $\mathcal{H}^{-1}(E')= \mathcal{H}^{0}(E'[-1]) \in \mathcal{F}^{\beta} \bigcap \mathcal{T}^{E}$ and $\mathcal{H}^{i}(E')=0$ for any $i \neq -1$, where $\mathcal{H}^{i}$ is the cohomology functor with respect to the bounded heart $\mathcal{A}$. It means that $E'[-1] \in \mathcal{F}^{\beta} \bigcap \mathcal{T}^{E}$.
	Let $V=\mathrm{Hom}_{\mathcal{A}}(E,E'[-1])$, then we have a short exact sequence in $\mathcal{A}$ as in (1):
	\begin{align*}
		0 \to V \otimes E \to E'[-1] \to C \to 0 
	\end{align*} with $C \in \mathcal{F}^{E}$. But $E'[-1] \in \mathcal{T}^{E}$, then $C=0$ and $E'[-1] \cong V \otimes E$. Therefore, We conclude that $\mathcal{T}=\{E[1]^{\oplus n} \mid n \in \mathbb{N}^{+}\}$.
	
	(3) We only need to prove that $\mathcal{A}^{E}[1]$ is Noetherian. Suppose that there is a chain of epimorphisms in $\mathcal{A}^{E}[1]$:
	\begin{align*}
		E_{0} \twoheadrightarrow E_{1} \twoheadrightarrow E_{2} \twoheadrightarrow \cdots
	\end{align*} and for any $i \in \mathbb{N}^{+}$, we denote $\ker(E_{0} \to E_{i}) \in \mathcal{A}^{E}[1]$ by $F_{i}$, thus we have a filtration in $\mathcal{A}^{E}[1]$:
	\begin{align*}
		F_{1} \subseteq F_{2} \subseteq \cdots \subseteq E_{0}.
	\end{align*} Moreover, we have $\ker(E_{i} \to E_{i+1}) \cong F_{i+1}/F_{i}$ for any $i \in \mathbb{N}^{+}$ in $\mathcal{A}^{E}[1]$ by the snake lemma. Take the cohomology functor $\mathcal{H}^{i}_{\mathcal{A}^{\beta}}$ with respect to $\mathcal{A}^{\beta}$ on the following short exact sequences in $\mathcal{A}^{E}[1]$:
	\begin{align*}
		& 0 \to F_{i} \to F_{i+1} \to F_{i+1}/F_{i} \to 0, \\
		& 0 \to F_{i+1}/F_{i} \to E_{i} \to E_{i+1} \to 0.
	\end{align*}Then we have:
	\begin{align*}
		\mathcal{H}^{-1}_{\mathcal{A}^{\beta}}(F_{1}) \subseteq \mathcal{H}^{-1}_{\mathcal{A}^{\beta}}(F_{2}) \subseteq \cdots \subseteq \mathcal{H}^{-1}_{\mathcal{A}^{\beta}}(F_{j}) \subseteq \mathcal{H}^{-1}_{\mathcal{A}^{\beta}}(F_{j+1})  \subseteq \cdots \subseteq \mathcal{H}^{-1}_{\mathcal{A}^{\beta}}(E_{0}). 
	\end{align*} and
	\begin{align*}
		\mathcal{H}^{0}_{\mathcal{A}^{\beta}}(E_{0}) \twoheadrightarrow \mathcal{H}^{0}_{\mathcal{A}^{\beta}}(E_{1}) \twoheadrightarrow \mathcal{H}^{0}_{\mathcal{A}^{\beta}}(E_{2}) \twoheadrightarrow \cdots \twoheadrightarrow \mathcal{H}^{0}_{\mathcal{A}^{\beta}}(E_{j}) \twoheadrightarrow \mathcal{H}^{0}_{\mathcal{A}^{\beta}}(E_{j+1}) \twoheadrightarrow \cdots.
	\end{align*}Because $\mathcal{A}^{\beta}$ is Noetherian, we could assume that for any $i \in \mathbb{N}^{+}$, we have $\mathcal{H}^{-1}_{\mathcal{A}^{\beta}}(F_{i}) \cong \mathcal{H}^{-1}_{\mathcal{A}^{\beta}}(F_{i+1}) \subseteq \mathcal{H}^{-1}_{\mathcal{A}^{\beta}}(E_{0})$ and $\mathcal{H}^{0}_{\mathcal{A}^{\beta}}(E_{i}) \cong \mathcal{H}^{0}_{\mathcal{A}^{\beta}}(E_{i+1})$. We denote the cokernel of $\mathcal{H}^{-1}_{\mathcal{A}^{\beta}}(F_{i}) \subseteq \mathcal{H}^{-1}_{\mathcal{A}^{\beta}}(E_{0})$ by $Q \in \mathcal{A}^{\beta}$.
	
	Then we also have an exact sequence in $\mathcal{A}^{\beta}$ as follows:
	\begin{align*}
		0 \to \mathcal{H}^{-1}_{\mathcal{A}^{\beta}}(F_{i+1}/F_{i}) \to \mathcal{H}^{0}_{\mathcal{A}^{\beta}}(F_{i}) \to \mathcal{H}^{0}_{\mathcal{A}^{\beta}}(F_{i+1}) \to \mathcal{H}^{0}_{\mathcal{A}^{\beta}}(F_{i+1}/F_{i}) \to 0.
	\end{align*} We can divide it into two short exact sequences in $\mathcal{A}^{\beta}$ as follows:
	\begin{align*}
		& 0 \to \mathcal{H}^{-1}_{\mathcal{A}^{\beta}}(F_{i+1}/F_{i}) \to \mathcal{H}^{0}_{\mathcal{A}^{\beta}}(F_{i}) \to I_{i} \to 0,\\
		& 0 \to I_{i} \to \mathcal{H}^{0}_{\mathcal{A}^{\beta}}(F_{i+1}) \to \mathcal{H}^{0}_{\mathcal{A}^{\beta}}(F_{i+1}/F_{i}) \to 0.
	\end{align*}Take the functor $\mathrm{Hom}_{\mathcal{A}^{\beta}}(E[1],-)$ on them, we will have $\mathrm{Hom}_{\mathcal{A}^{\beta}}(E[1],\mathcal{H}^{0}_{\mathcal{A}^{\beta}}(F_{i})) \subseteq \mathrm{Hom}_{\mathcal{A}^{\beta}}(E[1],I_{i})$, and $\mathrm{Hom}_{\mathcal{A}^{\beta}}(E[1],I_{i}) \subseteq \mathrm{Hom}_{\mathcal{A}^{\beta}}(E[1],\mathcal{H}^{0}_{\mathcal{A}^{\beta}}(F_{i+1}))$. If we denote $\dim_{\mathbb{C}} \mathrm{Hom}_{\mathcal{A}^{\beta}}(E[1],\mathcal{H}^{0}_{\mathcal{A}^{\beta}}(F_{i}))$ by $n_{i}$, then we have $n_{i} \leqslant n_{i+1}$.
	
	Meanwhile, consider the short exact sequence in $\mathcal{A}^{\beta}$:
	\begin{align*}
		0 \to Q \to \mathcal{H}_{\mathcal{A}^{\beta}}^{-1}(E_{i}) \to \mathcal{H}_{\mathcal{A}^{\beta}}^{0}(F_{i}) \to 0.
	\end{align*}
	Take the functor $\mathrm{Hom}_{\mathcal{A}^{\beta}}(E[1],-)$ on it, then we have $n_{i} \leqslant \mathrm{Hom}_{\mathcal{D}}(E,Q)$, it means $n_{i}$ is bounded.
	
	Therefore, for $i \gg 0$,  $\mathrm{Hom}_{\mathcal{A}^{\beta}}(E[1],I_{i}) =\mathrm{Hom}_{\mathcal{A}^{\beta}}(E[1],\mathcal{H}^{0}_{\mathcal{A}^{\beta}}(F_{i+1}))$ and $\mathrm{Hom}_{\mathcal{D}}(E[1],I_{i}[1])=0$ because of $I_{i} \in \mathcal{T}$ and $\mathrm{Hom}_{\mathcal{D}}(E,E[1])=0$. Thus, $\mathrm{Hom}_{\mathcal{A}^{\beta}}(E[1],\mathcal{H}^{0}_{\mathcal{A}^{\beta}}(F_{i+1}/F_{i}))=0$ and it means $\mathcal{H}^{0}_{\mathcal{A}^{\beta}}(F_{i+1}/F_{i})=0$ because $\mathcal{H}^{0}_{\mathcal{A}^{\beta}}(F_{i+1}/F_{i}) \in \mathcal{T}$.
	
	Now, $\mathcal{H}^{0}_{\mathcal{A}^{\beta}}(F_{i}) \to \mathcal{H}^{0}_{\mathcal{A}^{\beta}}(F_{i+1})$ is an epimorphism with $n_{i}=n_{i+1}$. Note that every epimorphism of $\mathrm{Hom}_{\mathcal{D}}(E^{\oplus n_{i}},E^{\oplus n_{i}})$ is also a monomorphism because of $\mathrm{Hom}_{\mathcal{D}}(E,E) \cong \mathbb{C}$. Therefore $\mathcal{H}^{-1}_{\mathcal{A}^{\beta}}(F_{i+1}/F_{i})=0$ and $F_{i+1}/F_{i}=0$ for $i \gg 0$. 
\end{pf}

Now, suppose that $\mathcal{E} \in \mathrm{Coh}(\mathbb{P}^{3})$ is an exceptional locally free sheaf of finite rank, then by the theorem \cite[Corollary]{zube199011} proved by Zube, we conclude that $\mathcal{E}$ is a slope stable sheaf. Therefore, if $\beta<\mu(\mathcal{E})$, then $\mathcal{E} \in \mathrm{Coh}_{0}^{\beta}(X)$; if $\beta \geqslant \mu(\mathcal{E})$, then $\mathcal{E}[1] \in \mathrm{Coh}_{0}^{\beta}(X)$.

For any $\beta \in \mathbb{R}$, let \begin{align*}
	& \alpha_{\mathcal{E}}^{\beta}=\beta^{2}-\displaystyle\frac{v_{1}(\mathcal{E})}{v_{0}(\mathcal{E})}\beta+\frac{v_{2}(\mathcal{E})}{v_{0}(\mathcal{E})};\\
	& \mu_{1}(\mathcal{E})=\mu(\mathcal{E})-\displaystyle\frac{\sqrt{\overline{\Delta}_{H}(\mathcal{E})}}{v_{0}(\mathcal{E})}; \\
	& \mu_{2}(\mathcal{E})=\mu(\mathcal{E})+\displaystyle\frac{\sqrt{\overline{\Delta}_{H}(\mathcal{E})}}{v_{0}(\mathcal{E})}.
\end{align*} Note that if $i_{*}\mathcal{E}$ is $\nu^{\beta,\alpha}$-stable for some $(\beta,\alpha) \in U$ with tilt-slope $\beta$, then $\alpha=\alpha_{\mathcal{E}}^{\beta}$ and $\alpha_{\mathcal{E}}^{\beta}>\displaystyle\frac{1}{2}\beta^{2}$. It implies that $\beta<\mu_{1}(\mathcal{E})$ or $\beta>\mu_{2}(\mathcal{E})$.

\begin{lem}
	(1) The object $i_{*}\mathcal{E}$ is a spherical object of $D_{0}^{b}(X)$.
	
	(2) For any $0 \neq C \in \mathrm{Coh}_{0}(X)$ with $\dim C=0$, $\mathrm{Hom}_{D^{b}_{0}(X)}(C,i_{*}\mathcal{E}[l])=0$ for $l=0,1,2$.
\end{lem}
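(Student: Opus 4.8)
The plan is to push all the relevant $\mathrm{Hom}$'s down to $\mathbb{P}^3$ via Proposition \ref{Homsetofinclusion} and then use exceptionality of $\mathcal{E}$ together with Serre duality. The key structural input is that since $\mathcal{E}_0=\omega_{\mathbb{P}^3}$ satisfies $\det\mathcal{E}_0\cong\omega_{\mathbb{P}^3}$, the total space $X$ has trivial canonical bundle; as noted in the excerpt it is a non-compact Calabi--Yau, and because the zero-section is proper, $D_0^b(X)$ is a $4$-Calabi--Yau category, i.e. $\mathrm{Hom}_{D_0^b(X)}(A,B)\cong\mathrm{Hom}_{D_0^b(X)}(B,A[4])^{\vee}$. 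With this convention "spherical" means precisely that $\mathrm{Ext}^\bullet(i_*\mathcal{E},i_*\mathcal{E})$ is $\mathbb{C}$ in degrees $0$ and $4$ and $0$ elsewhere (the auxiliary condition $S\otimes\omega_X\cong S$ being automatic here).

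For (1) I would apply Proposition \ref{Homsetofinclusion} to $i_*\mathcal{E}$ and $i_*\mathcal{E}[i]$, using $r=1$ and $\mathcal{E}_0^{\vee}=\mathcal{O}(4)$, to obtain
\begin{align*}
\mathrm{Ext}^i_{D_0^b(X)}(i_*\mathcal{E},i_*\mathcal{E})\cong\mathrm{Ext}^i_{\mathbb{P}^3}(\mathcal{E},\mathcal{E})\oplus\mathrm{Ext}^{i-1}_{\mathbb{P}^3}(\mathcal{E}(4),\mathcal{E}).
\end{align*}
Exceptionality of $\mathcal{E}$ makes the first summand $\mathbb{C}$ for $i=0$ and $0$ otherwise, while Serre duality on $\mathbb{P}^3$ identifies the second with $\mathrm{Ext}^{4-i}_{\mathbb{P}^3}(\mathcal{E},\mathcal{E})^{\vee}$ (using $\mathcal{E}(4)\otimes\omega_{\mathbb{P}^3}\cong\mathcal{E}$), which is $\mathbb{C}$ exactly when $i=4$. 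Adding the two summands gives the spherical pattern.

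For (2) I would first reduce by d\'evissage: a nonzero $C$ with $\dim C=0$ has a finite filtration whose graded pieces are skyscrapers $k(x)$ at closed points, and since $C$ is supported on the zero-section each such $x$ lies on it, so $k(x)=i_*k(x)$; the long exact sequence obtained by applying $\mathrm{Hom}_{D_0^b(X)}(-,i_*\mathcal{E}[l])$ then reduces the vanishing to the case $C=i_*k(x)$. Proposition \ref{Homsetofinclusion} together with $\mathcal{O}(4)\otimes k(x)\cong k(x)$ gives
\begin{align*}
\mathrm{Hom}_{D_0^b(X)}(i_*k(x),i_*\mathcal{E}[l])\cong\mathrm{Ext}^{l}_{\mathbb{P}^3}(k(x),\mathcal{E})\oplus\mathrm{Ext}^{l-1}_{\mathbb{P}^3}(k(x),\mathcal{E}).
\end{align*}
Because $\mathcal{E}$ is locally free and $k(x)$ has codimension $3$, the sheaves $\mathcal{E}xt^j(k(x),\mathcal{E})$ vanish for $j\neq3$ and are concentrated at the point $x$, so the local-to-global spectral sequence yields $\mathrm{Ext}^j_{\mathbb{P}^3}(k(x),\mathcal{E})=0$ for every $j\neq3$ (equivalently, Serre duality rewrites this group as $\mathrm{Ext}^{3-j}_{\mathbb{P}^3}(\mathcal{E},k(x))^{\vee}$, which vanishes for $j\neq 3$). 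For $l\in\{0,1,2\}$ both exponents $l$ and $l-1$ are at most $2<3$, so both summands vanish.

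Once the two reductions to $\mathbb{P}^3$ are in place the calculations are entirely routine, so I expect no serious analytic difficulty. The one point deserving care is the d\'evissage in (2): I must make sure that a general $0$-dimensional sheaf $C\in\mathrm{Coh}_0(X)$ --- possibly non-reduced and thickened in the fibre direction --- really does filter by skyscrapers whose supports all lie on the zero-section, so that each graded piece is of the form $i_*k(x)$ and Proposition \ref{Homsetofinclusion} is applicable; this holds because $\mathrm{Supp}(C)$ is a finite set of closed points contained in the zero-section and any coherent sheaf with $0$-dimensional support admits a filtration with skyscraper quotients.
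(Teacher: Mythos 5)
Your proposal is correct and follows essentially the same route as the paper: both parts reduce to $\mathbb{P}^{3}$ via Proposition \ref{Homsetofinclusion}, with exceptionality plus Serre duality (using $\mathcal{E}\otimes\omega_{\mathbb{P}^{3}}^{\lor}\otimes\omega_{\mathbb{P}^{3}}\cong\mathcal{E}$) giving the spherical pattern in (1), and the codimension-$3$ vanishing of $\mathrm{Ext}^{j}_{\mathbb{P}^{3}}(k(x),\mathcal{E})$ for $j\neq 3$ giving (2). Your d\'evissage of $C$ into skyscrapers $i_{*}k(x)$ is a worthwhile extra care step that the paper's terse proof of (2) glosses over, since a general zero-dimensional $C\in\mathrm{Coh}_{0}(X)$ need not itself be a pushforward from the zero section.
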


\begin{pf}
	(1) By Proposition \ref{Homsetofinclusion}, we have:
	\begin{align*}
		\mathrm{Hom}_{D^{b}_{0}(X)}(i_{*}\mathcal{E},i_{*}\mathcal{E}[l]) \cong \mathrm{Hom}_{D^{b}(\mathbb{P}^{3})}(\mathcal{E},\mathcal{E}[l]) \oplus \mathrm{Hom}_{D^{b}(\mathbb{P}^{3})}(\omega_{\mathbb{P}^{3}}^{\lor} \otimes_{\mathbb{P}^{3}} \mathcal{E},\mathcal{E}[l-1])
	\end{align*}And by Serre duality, finally, we get:
	\begin{align*}
		\mathrm{Hom}_{D^{b}_{0}(X)}(i_{*}\mathcal{E},i_{*}\mathcal{E}[l]) \cong \mathrm{Hom}_{D^{b}(\mathbb{P}^{3})}( \mathcal{E},\mathcal{E}[l]) \oplus \mathrm{Hom}_{D^{b}(\mathbb{P}^{3})}( \mathcal{E},\mathcal{E}[4-l]) \cong
		\begin{cases}
			\mathbb{C} & l=0,4; \\
			0 & l \neq 0,4.
		\end{cases}
	\end{align*}
	
	(2) Note that we have:
	\begin{align*}						
		\mathrm{Hom}_{D^{b}_{0}(X)}(C,i_{*}\mathcal{E}[l]) \cong \mathrm{Hom}_{D^{b}(\mathbb{P}^{3})}(C,\mathcal{E}[l]) \oplus \mathrm{Hom}_{D^{b}(\mathbb{P}^{3})}( C,\mathcal{E}[l-1])=0
	\end{align*}by Proposition \ref{Homsetofinclusion}. 
\end{pf}

Now, let $\beta$ be a real number, if $\beta<\mu_{1}(\mathcal{E})$ and $i_{*}\mathcal{E} \in \mathrm{Coh}_{0}^{\beta}(X)$ is  $\nu^{\beta,\alpha_{\mathcal{E}}^{\beta}}$-stable. Then there is a torsion pair $(\mathrm{Coh}_{0}^{\beta,i_{*}\mathcal{E},>\beta}(X), \mathrm{Coh}_{0}^{\beta,i_{*}\mathcal{E},\leqslant \beta}(X))$ of $\mathrm{Coh}_{0}^{\beta}(X)$ by Lemma \ref{boundaryheart}, where
\begin{align*}
	&\mathrm{Coh}_{0}^{\beta,i_{*}\mathcal{E},>\beta}(X)=\langle T\in \mathrm{Coh}_{0}^{\beta}(X) \mid T \mbox{ is } \nu^{\beta,\alpha_{\mathcal{E}}^{\beta}}\mbox{-semistable with }\nu^{\beta,\alpha_{\mathcal{E}}^{\beta}}(T)>\beta, i_{*}\mathcal{E} \rangle, \\
	&\mathrm{Coh}_{0}^{\beta,i_{*}\mathcal{E},\leqslant \beta}(X)=\langle F\in \mathrm{Coh}_{0}^{\beta}(X) \mid F \mbox{ is } \nu^{\beta,\alpha_{\mathcal{E}}^{\beta}}\mbox{-semistable with }\nu^{\beta,\alpha_{\mathcal{E}}^{\beta}}(F) \leqslant \beta \mbox{ with }\mathrm{Hom}_{\mathrm{Coh}_{0}^{\beta}(X)}(i_{*}\mathcal{E},F)=0 \rangle.
\end{align*}

Similarly, if $\beta>\mu_{2}(\mathcal{E})$ and $i_{*}\mathcal{E}[1] \in \mathrm{Coh}_{0}^{\beta}(X)$ is  $\nu^{\beta,\alpha_{\mathcal{E}}^{\beta}}$-stable. Then there is a torsion pair $(\mathrm{Coh}_{0}^{\beta,i_{*}\mathcal{E}[1],>\beta}(X), \mathrm{Coh}_{0}^{\beta,i_{*}\mathcal{E}[1],\leqslant \beta}(X))$ of $\mathrm{Coh}_{0}^{\beta}(X)$ by Lemma \ref{boundaryheart}, where
\begin{align*}
	&\mathrm{Coh}_{0}^{\beta,i_{*}\mathcal{E}[1],>\beta}(X)=\langle T\in \mathrm{Coh}_{0}^{\beta}(X) \mid T \mbox{ is } \nu^{\beta,\alpha_{\mathcal{E}}^{\beta}}\mbox{-semistable with }\nu^{\beta,\alpha_{\mathcal{E}}^{\beta}}(T)>\beta, i_{*}\mathcal{E}[1] \rangle, \\
	&\mathrm{Coh}_{0}^{\beta,i_{*}\mathcal{E}[1],\leqslant \beta}(X)=\langle F\in \mathrm{Coh}_{0}^{\beta}(X) \mid F \mbox{ is } \nu^{\beta,\alpha_{\mathcal{E}}^{\beta}}\mbox{-semistable with }\nu^{\beta,\alpha_{\mathcal{E}}^{\beta}}(F) \leqslant \beta \mbox{ with }\mathrm{Hom}_{\mathrm{Coh}_{0}^{\beta}(X)}(i_{*}\mathcal{E}[1],F)=0 \rangle.
\end{align*}

Let $\mathrm{Coh}^{\beta,i_{*}\mathcal{E}}_{0}(X)=\langle\mathrm{Coh}_{0}^{\beta,i_{*}\mathcal{E},\leqslant \beta}(X)[1],\mathrm{Coh}_{0}^{\beta,i_{*}\mathcal{E},>\beta}(X)\rangle$ when $\beta<\mu_{1}(\mathcal{E})$ and
$\mathrm{Coh}^{\beta,i_{*}\mathcal{E}[1]}_{0}(X)=\langle\mathrm{Coh}_{0}^{\beta,i_{*}\mathcal{E}[1],\leqslant \beta}(X)[1],\mathrm{Coh}_{0}^{\beta,i_{*}\mathcal{E}[1],>\beta}(X)\rangle$ when $\beta>\mu_{2}(\mathcal{E})$. 
  
Our goal is to find some suitable real numbers $a$, such that $Z^{\beta,\alpha_{\mathcal{E}}^{\beta},a}$ is a stability function of $\mathrm{Coh}^{\beta,i_{*}\mathcal{E}}_{0}(X)$ or $\mathrm{Coh}^{\beta,i_{*}\mathcal{E}[1]}_{0}(X)$, where $Z^{\beta,\alpha_{\mathcal{E}}^{\beta},a}$ has the same form as (\ref{centralcharge}) in Section \ref{section3}:
\begin{align*}
 	Z^{\beta,\alpha_{\mathcal{E}}^{\beta},a}=-v^{\beta}_{3}+av^{\beta}_{1}+\sqrt{-1}\left(v^{\beta}_{2}-(\alpha_{\mathcal{E}}^{\beta}-\frac{\beta^{2}}{2})v^{\beta}_{0}\right) \colon \Lambda \to \mathbb{C}.
\end{align*}

\begin{prop}\label{imaginarypart3}
	Let $\beta$ be a real number and $\mathcal{E}$ be an exceptional locally free sheaf of finite rank on $\mathbb{P}^{3}$.
	
	(1) If $\beta<\mu_{1}(\mathcal{E})$ and $i_{*}\mathcal{E} \in \mathrm{Coh}_{0}^{\beta}(X)$ is  $\nu^{\beta,\alpha_{\mathcal{E}}^{\beta}}$-stable, then for any $a \in \mathbb{R}$ and $E \in \mathrm{Coh}^{\beta,i_{*}\mathcal{E}}_{0}(X)$, we have $\Im Z^{\beta,\alpha_{\mathcal{E}}^{\beta},a}(E) \geqslant 0$. Moreover, if $\Im Z^{\beta,\alpha_{\mathcal{E}}^{\beta},a}(E)=0$, then there is a filtration of $E$ in $\mathrm{Coh}^{\beta,i_{*}\mathcal{E}}_{0}(X)$:
	\begin{align*}
		0=E_{0} \subseteq E_{1}[1] \subseteq E_{2} \subseteq E_{3}=E,
	\end{align*}where $E_{1} \in \mathrm{Coh}_{0}^{\beta}(X)$ is $\nu^{\beta,\alpha_{\mathcal{E}}^{\beta}}$-semistable with tilt-slope $\beta$ and $\mathrm{Hom}_{\mathrm{Coh}_{0}^{\beta}(X)}(i_{*}\mathcal{E},E_{1})=0$ or $0$, $E_{2}/(E_{1}[1])$ is a zero dimensional sheaf or $0$ and $E_{3}/E_{2} \cong i_{*}\mathcal{E}^{\oplus n}$ for some $n \in \mathbb{N}$.
	
	(2) If $\beta>\mu_{2}(\mathcal{E})$ and $i_{*}\mathcal{E}[1] \in \mathrm{Coh}_{0}^{\beta}(X)$ is  $\nu^{\beta,\alpha_{\mathcal{E}}^{\beta}}$-stable, then for any $a \in \mathbb{R}$ and $E \in \mathrm{Coh}^{\beta,i_{*}\mathcal{E}[1]}_{0}(X)$, we have $\Im Z^{\beta,\alpha_{\mathcal{E}}^{\beta},a}(E) \geqslant 0$. Moreover, if $\Im Z^{\beta,\alpha_{\mathcal{E}}^{\beta},a}(E)=0$, then there is a filtration of $E$ in $\mathrm{Coh}^{\beta,i_{*}\mathcal{E}[1]}_{0}(X)$:
	\begin{align*}
		0=E_{0} \subseteq E_{1}[1] \subseteq E_{2} \subseteq E_{3}=E,
	\end{align*}where $E_{1} \in \mathrm{Coh}_{0}^{\beta}(X)$ is $\nu^{\beta,\alpha_{\mathcal{E}}^{\beta}}$-semistable with tilt-slope $\beta$ and $\mathrm{Hom}_{\mathrm{Coh}_{0}^{\beta}(X)}(i_{*}\mathcal{E}[1],E_{1})=0$ or $0$, $E_{2}/(E_{1}[1])$ is a zero dimensional sheaf or $0$ and $E_{3}/E_{2} \cong i_{*}\mathcal{E}[1]^{\oplus n}$ for some $n \in \mathbb{N}$.
\end{prop}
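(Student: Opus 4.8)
The plan is to realize the new heart $\mathrm{Coh}^{\beta,i_*\mathcal{E}}_0(X)$ as a tilt of the double-tilting heart $\mathcal{A}^\beta:=\mathrm{Coh}^{\beta,\alpha_{\mathcal{E}}^\beta}_0(X)$, on which the sign of the imaginary part is already pinned down by Lemma \ref{imaginarypart2}, and then to transport $\Im Z$ through the resulting short exact sequences. I shall carry out case (1) only; case (2) is strictly parallel, with $i_*\mathcal{E}$ replaced throughout by $i_*\mathcal{E}[1]$. The starting numerical observation is that, since $i_*\mathcal{E}$ is $\nu^{\beta,\alpha_{\mathcal{E}}^\beta}$-stable of tilt-slope $\beta$, one has $v^\beta_2(i_*\mathcal{E})-(\alpha_{\mathcal{E}}^\beta-\tfrac{\beta^2}{2})v^\beta_0(i_*\mathcal{E})=0$, i.e. $\Im Z^{\beta,\alpha_{\mathcal{E}}^\beta,a}(i_*\mathcal{E})=0$ for every $a$.

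Applying Lemma \ref{boundaryheart}(2) with $\mathcal{A}=\mathrm{Coh}^\beta_0(X)$ and the stable object $i_*\mathcal{E}$, I obtain that $\mathrm{Coh}^{\beta,i_*\mathcal{E}}_0(X)[1]$ is the tilt of $\mathcal{A}^\beta$ along the torsion pair with torsion class $\mathcal{T}=\{(i_*\mathcal{E}[1])^{\oplus n}\}$ and torsion-free class $\mathcal{F}=\{G\in\mathcal{A}^\beta\mid \mathrm{Hom}(i_*\mathcal{E}[1],G)=0\}$. Shifting back, every $E\in\mathrm{Coh}^{\beta,i_*\mathcal{E}}_0(X)$ fits into a short exact sequence $0\to F\to E\to (i_*\mathcal{E})^{\oplus n}\to 0$ with $F\in\mathcal{F}\subseteq\mathcal{A}^\beta$. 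Since $\Im Z^{\beta,\alpha_{\mathcal{E}}^\beta,a}$ is additive on short exact sequences and vanishes on $(i_*\mathcal{E})^{\oplus n}$, this gives $\Im Z^{\beta,\alpha_{\mathcal{E}}^\beta,a}(E)=\Im Z^{\beta,\alpha_{\mathcal{E}}^\beta,a}(F)$, which is $\geqslant 0$ by Lemma \ref{imaginarypart2}. This settles the inequality.

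For the equality case, suppose $\Im Z^{\beta,\alpha_{\mathcal{E}}^\beta,a}(E)=0$, hence $\Im Z^{\beta,\alpha_{\mathcal{E}}^\beta,a}(F)=0$. Lemma \ref{imaginarypart2} applied to $F\in\mathcal{A}^\beta$ yields the canonical sequence $0\to\mathcal{H}^{-1}_\beta(F)[1]\to F\to\mathcal{H}^0_\beta(F)\to 0$ in $\mathcal{A}^\beta$, where $E_1:=\mathcal{H}^{-1}_\beta(F)\in\mathrm{Coh}^\beta_0(X)$ is $\nu^{\beta,\alpha_{\mathcal{E}}^\beta}$-semistable of tilt-slope $\beta$ (or $0$) and $\mathcal{H}^0_\beta(F)$ is a zero-dimensional sheaf (or $0$). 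As $\mathcal{F}$ is torsion-free it is closed under subobjects, so $\mathcal{H}^{-1}_\beta(F)[1]\in\mathcal{F}$, and from $\mathrm{Hom}(i_*\mathcal{E}[1],\mathcal{H}^{-1}_\beta(F)[1])=0$ I read off $\mathrm{Hom}(i_*\mathcal{E},E_1)=0$. A direct computation via Proposition \ref{Homsetofinclusion} (taking $E_0=\mathcal{E}$, $F_0=C[-1]$, so that both summands are negative $\mathrm{Ext}$'s) shows $\mathrm{Hom}(i_*\mathcal{E}[1],C)=0$ for any zero-dimensional sheaf $C$, whence $\mathcal{H}^0_\beta(F)\in\mathcal{F}$ as well. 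Because all three terms lie in $\mathcal{F}$, this sequence descends to a sequence in $\mathrm{Coh}^{\beta,i_*\mathcal{E}}_0(X)$, and setting $E_2=F$, $E_3=E$ assembles the filtration $0=E_0\subseteq E_1[1]\subseteq E_2\subseteq E_3=E$ with successive quotients $E_1[1]$, the zero-dimensional sheaf $\mathcal{H}^0_\beta(F)$, and $(i_*\mathcal{E})^{\oplus n}$.

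The genuinely non-formal step — as opposed to the bookkeeping of tilts and shifts — is verifying that \emph{both} pieces of the decomposition of $F$ stay in $\mathcal{F}$, so that they survive as subquotients of the new heart. Orthogonality of the semistable sub-piece to $i_*\mathcal{E}[1]$ is automatic from closure of a torsion-free class under subobjects, but orthogonality of the zero-dimensional quotient is not a torsion-pair formality: it must be checked by hand through the Koszul-type description of $\mathrm{Hom}$-spaces in Proposition \ref{Homsetofinclusion}. I expect this Hom-vanishing to be the crux of the argument.
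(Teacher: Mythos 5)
Your proposal is correct and follows essentially the same route as the paper: Lemma \ref{boundaryheart}(2) gives the short exact sequence $0 \to F \to E \to (i_{*}\mathcal{E})^{\oplus n} \to 0$ with $F$ in the right orthogonal of $i_{*}\mathcal{E}[1]$ inside $\mathrm{Coh}^{\beta,\alpha_{\mathcal{E}}^{\beta}}_{0}(X)$, the vanishing $\Im Z^{\beta,\alpha_{\mathcal{E}}^{\beta},a}(i_{*}\mathcal{E})=0$ reduces everything to Lemma \ref{imaginarypart2} applied to $F$, and closure of the torsion-free class under subobjects handles $E_{1}[1]$. The only point where you diverge is in treating the orthogonality of the zero-dimensional quotient as the ``crux'': in fact $\mathrm{Hom}(i_{*}\mathcal{E}[1],C)=\mathrm{Ext}^{-1}_{\mathrm{Coh}_{0}(X)}(i_{*}\mathcal{E},C)$ vanishes for purely formal reasons (a negative $\mathrm{Ext}$ between objects of a heart), with no need for Proposition \ref{Homsetofinclusion} --- which in any case would require $C$ to be a pushforward from $\mathbb{P}^{3}$, something a general zero-dimensional sheaf in $\mathrm{Coh}_{0}(X)$ need not be.
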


\begin{pf}
	(1) If $\beta<\mu_{1}(\mathcal{E})$, by Lemma \ref{boundaryheart}, for any $E \in \mathrm{Coh}^{\beta,i_{*}\mathcal{E}}_{0}(X)$, there is a short exact sequence in $\mathrm{Coh}^{\beta,i_{*}\mathcal{E}}_{0}(X)$:
	\begin{align*}
		0 \to E_{2} \to E \to i_{*}\mathcal{E}^{\oplus n} \to 0,
	\end{align*}where $E_{2} \in \mathrm{Coh}^{\beta,\alpha_{\mathcal{E}}^{\beta}}_{0}(X)$ with $\mathrm{Hom}_{D_{0}^{b}(X)}(i_{*}\mathcal{E}[1],E_{2})=0$ and
	\begin{align*}
		\Im Z^{\beta,\alpha_{\mathcal{E}}^{\beta},a}(i_{*}\mathcal{E})=v_{2}^{\beta}(i_{*}\mathcal{E})-(\alpha_{\mathcal{E}}^{\beta}-\displaystyle\frac{1}{2}\beta^{2})v_{0}^{\beta}(i_{*}\mathcal{E})=0,
	\end{align*}thus $\Im Z^{\beta,\alpha_{\mathcal{E}}^{\beta},a}(E)=\Im Z^{\beta,\alpha_{\mathcal{E}}^{\beta},a}(E_{2}) \geqslant 0$ by Lemma \ref{imaginarypart2}. And if $\Im Z^{\beta,\alpha_{\mathcal{E}}^{\beta},a}(E)=0$, then $\Im Z^{\beta,\alpha_{\mathcal{E}}^{\beta},a}(E_{2})=0$ and by Lemma \ref{imaginarypart2}, we have a short exact sequence in $\mathrm{Coh}^{\beta,\alpha_{\mathcal{E}}^{\beta}}_{0}(X)$:
	\begin{align*}
		0 \to E_{1}[1] \to E_{2} \to E_{2}/(E_{1}[1]) \to 0
	\end{align*}where $E_{2}/(E_{1}[1])$ is a zero dimensional sheaf or $0$ and $E_{1} \in \mathrm{Coh}_{0}^{\beta}(X)$ is a $\nu^{\beta,\alpha_{\mathcal{E}}^{\beta}}$-semistable object with tilt-slope $\beta$ or $0$. And we have $\mathrm{Hom}_{D_{0}^{b}(X)}(i_{*}\mathcal{E}[1],E_{1}[1]) \subseteq \mathrm{Hom}_{D_{0}^{b}(X)}(i_{*}\mathcal{E}[1],E_{2})=0$. And then the above short exact sequence is in $\mathrm{Coh}^{\beta,i_{*}\mathcal{E}}_{0}(X)$.
	
	(2) The proof follows the same argument as in (1).
\end{pf}

As an analogy of Conjecture \ref{conj}, we propose the following conjecture:

\begin{conj}\label{conj4}
	Let $\beta$ be a real number and $\mathcal{E}$ be an exceptional locally free sheaf with finite rank on $\mathbb{P}^{3}$.
	
	(1) If $\beta<\mu_{1}(\mathcal{E})$ and $i_{*}\mathcal{E} \in \mathrm{Coh}_{0}^{\beta}(X)$ is  $\nu^{\beta,\alpha_{\mathcal{E}}^{\beta}}$-stable, then there exists a real number $a_{0}<\displaystyle\frac{v_{3}^{\beta}(i_{*}\mathcal{E})}{v_{1}^{\beta}(i_{*}\mathcal{E})}$, such that for any $\nu^{\beta,\alpha_{\mathcal{E}}^{\beta}}$-semistable object $E \in \mathrm{Coh}_{0}^{\beta}(X)$ with tilt-slope $\beta$ and $\mathrm{Hom}_{\mathrm{Coh}_{0}^{\beta}(X)}(i_{*}\mathcal{E},E)=0$, we always have the following Bogomolov-Gieseker type inequality:
	\begin{align*}
		v_{3}^{\beta}(E) \leqslant a_{0}v_{1}^{\beta}(E).
	\end{align*}
	
	(2) If $\beta>\mu_{2}(\mathcal{E})$ and $i_{*}\mathcal{E}[1] \in \mathrm{Coh}_{0}^{\beta}(X)$ is  $\nu^{\beta,\alpha_{\mathcal{E}}^{\beta}}$-stable, then there exists a real number $a_{0}<\displaystyle\frac{v_{3}^{\beta}(i_{*}\mathcal{E})}{v_{1}^{\beta}(i_{*}\mathcal{E})}$, such that for any $\nu^{\beta,\alpha_{\mathcal{E}}^{\beta}}$-semistable object $E \in \mathrm{Coh}_{0}^{\beta}(X)$ with tilt-slope $\beta$ and $\mathrm{Hom}_{\mathrm{Coh}_{0}^{\beta}(X)}(i_{*}\mathcal{E}[1],E)=0$, we always have the following Bogomolov-Gieseker type inequality:
	\begin{align*}
		v_{3}^{\beta}(E) \leqslant a_{0}v_{1}^{\beta}(E).
	\end{align*}
\end{conj}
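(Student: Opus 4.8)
The plan is to treat Conjecture \ref{conj4} as the boundary analogue of Theorem \ref{thm1}, running the same machinery that proved Conjecture \ref{conj} in the local $\mathbb{P}^3$ case (Proposition \ref{finalpartproposition}), but now relative to the tilted heart $\mathrm{Coh}_0^{\beta,i_*\mathcal{E}}(X)$ and the spherical object $i_*\mathcal{E}$. By Proposition \ref{imaginarypart3} the central charge $Z^{\beta,\alpha_\mathcal{E}^\beta,a}$ already has nonnegative imaginary part on this heart, and when $\Im Z=0$ an object is built from three kinds of pieces: a shift $E_1[1]$ of a $\nu^{\beta,\alpha_\mathcal{E}^\beta}$-semistable object of tilt-slope $\beta$ with $\mathrm{Hom}(i_*\mathcal{E},E_1)=0$, a zero-dimensional sheaf, and copies of $i_*\mathcal{E}$. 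For the last two the real part is automatically controlled: $\Re Z$ of a zero-dimensional sheaf is $-v_3\le 0$, and the hypothesis $a_0<v_3^\beta(i_*\mathcal{E})/v_1^\beta(i_*\mathcal{E})$ is precisely what forces $\Re Z^{\beta,\alpha_\mathcal{E}^\beta,a_0}(i_*\mathcal{E})<0$, i.e. phase $1$. Using the finite-length argument of Proposition \ref{finitelength} together with Lemma \ref{BGlem1}, the whole problem therefore reduces to showing $\Re Z^{\beta,\alpha_\mathcal{E}^\beta,a_0}(F)\ge 0$, equivalently $v_3^\beta(F)\le a_0 v_1^\beta(F)$, for those $F$ whose shift $F[1]$ is a simple object of the finite-length category $\mathcal{I}$ — exactly the $\nu^{\beta,\alpha_\mathcal{E}^\beta}$-stable objects of tilt-slope $\beta$ with $\mathrm{Hom}(i_*\mathcal{E},F)=0$ appearing in the statement. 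The hypothesis $\mathrm{Hom}(i_*\mathcal{E},F)=0$ is what places $F$ in the torsion-free class $\mathrm{Coh}_0^{\beta,i_*\mathcal{E},\leqslant\beta}(X)$, so that $F[1]$ genuinely lies in $\mathcal{A}=\mathrm{Coh}_0^{\beta,i_*\mathcal{E}}(X)$ and is a simple object of $\mathcal{I}$; this is the mechanism by which excluding $i_*\mathcal{E}$ can buy a better constant.

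To handle these $F$ I would apply Lemma \ref{BGlem} with $\mathcal{A}=\mathrm{Coh}_0^{\beta,i_*\mathcal{E}}(X)$ and a second bounded heart $\mathcal{B}$ obtained by mutating Bridgeland's finite-length heart $\langle S_0,S_1,S_2,S_3\rangle$ along $i_*\mathcal{E}$. Since $i_*\mathcal{E}$ is spherical, the natural candidate for $\mathcal{B}$ is the finite-length heart whose simple objects are $i_*\mathcal{E}$ together with the images of the remaining exceptional objects under the mutation removing $\mathcal{E}$ from the exceptional collection of $\mathbb{P}^3$; for $\mathcal{E}=\mathcal{O}$ this is an explicit modification of $(\mathcal{O}(-1),\mathcal{T}(-2),\mathcal{O},\mathcal{O}(1))$. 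The three hypotheses of Lemma \ref{BGlem} would then be checked as in Proposition \ref{finalpartproposition}: first $\mathcal{B}\subseteq\langle\mathcal{A},\mathcal{A}[1]\rangle$, by locating each simple generator in $\mathrm{Coh}_0^{\beta,i_*\mathcal{E}}(X)$ or its shift via the tilt-stability statements of Proposition \ref{tiltstablebobject}; second the half-plane condition $Z^{\beta,\alpha_\mathcal{E}^\beta,a_0}(\mathcal{B})\subseteq\{r\exp(\sqrt{-1}\pi\phi)\mid \phi_0\leqslant\phi\leqslant\phi_0+1\}$, by computing the central charges of the generators and placing them by quadrant; and third $F[2]\notin\mathcal{B}$, which follows from $\mathrm{Hom}_{D_0^b(X)}(F[2],S_i')=0$ for each simple generator $S_i'$ of $\mathcal{B}$ (these vanishings come from the relative positions of $F$ and the $S_i'$ with respect to $\mathrm{Coh}_0^\beta(X)$ and tilt-semistability, as in the $S_3$ computation of Proposition \ref{finalpartproposition}).

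The admissible range of $a_0$ is dictated by the half-plane step: $a_0$ is pinned between the slope $v_3^\beta(i_*\mathcal{E})/v_1^\beta(i_*\mathcal{E})$ of $i_*\mathcal{E}$ from above and the slope of the mutated generator occupying the adjacent quadrant from below. For $\mathcal{E}=\mathcal{O}$ with $\alpha=\beta^2$ one computes $v_3^\beta(i_*\mathcal{O})/v_1^\beta(i_*\mathcal{O})=\tfrac{1}{6}\beta^2$, and I expect the two-sided constraint to reproduce exactly the interval $\tfrac{3\beta^3+6\beta^2-4}{6(3\beta+2)}<a_0<\tfrac{1}{6}\beta^2$ of Theorem \ref{simplecase}. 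In particular $a_0<\tfrac{2\alpha_\mathcal{E}^\beta-\beta^2}{6}$ is always forced, so the resulting inequality is strictly sharper than the standard Bogomolov-Gieseker bound of Theorem \ref{thm1}: the object $i_*\mathcal{E}$ is the extremal one realizing the old constant, and once it is removed by the condition $\mathrm{Hom}(i_*\mathcal{E},F)=0$, everything orthogonal to it satisfies a strictly better estimate.

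The main obstacle is the construction of the second heart $\mathcal{B}$ for a general exceptional locally free $\mathcal{E}$. One must produce a finite-length bounded heart of $D_0^b(X)$ whose simple objects are simultaneously compatible with the tilting torsion pair defining $\mathrm{Coh}_0^{\beta,i_*\mathcal{E}}(X)$ (condition (1)) and with the geometric half-plane constraint (condition (2)), and it is not clear such a heart exists for every $\mathcal{E}$ and every admissible $\beta$. The delicate part is controlling the quadrants of the central charges of all mutated generators at once, so as to obtain a single $\phi_0$ and a single $a_0$; this is exactly why the statement is phrased as a conjecture, and why only the line-bundle case $\mathcal{E}=\mathcal{O}$ — together with its images under the symmetries of Propositions \ref{reduce3} and \ref{reduce4} — is expected to be accessible by the explicit computation above.
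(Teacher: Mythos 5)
Note first that the statement you are asked to prove is stated in the paper as Conjecture \ref{conj4}, and the paper does not prove it in general: it only establishes a sufficient criterion (Proposition \ref{BGlem2}, which is Lemma \ref{BGlem} applied to $\mathcal{A}=\mathrm{Coh}_{0}^{\beta,i_{*}\mathcal{E}}(X)$ and $Z=Z^{\beta,\alpha_{\mathcal{E}}^{\beta},a_{0}}$), a recipe for producing the auxiliary heart $\mathcal{B}$ from a full exceptional collection $(F_{0},F_{1},F_{2},F_{3}=\mathcal{E})$ under a list of numerical conditions (Proposition \ref{generalproposition}), and the worked cases $\mathcal{E}=\mathcal{O}$ with the collections $(\mathcal{O}(-1),\Omega^{2}(2),\Omega(1),\mathcal{O})$ and $(\mathcal{O}(-3),\mathcal{O}(-2),\mathcal{O}(-1),\mathcal{O})$. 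Your outline reproduces exactly this machinery --- the reduction to simple objects $F[1]$ of the finite-length category via Proposition \ref{imaginarypart3} and Lemma \ref{finitelength}, the role of $\mathrm{Hom}(i_{*}\mathcal{E},F)=0$ in placing $F[1]$ in the tilted heart, the half-plane constraint pinning $a_{0}$ between the slope of $i_{*}\mathcal{E}$ and that of the adjacent mutated generator, and the recovery of the interval $\frac{3\beta^{3}+6\beta^{2}-4}{6(3\beta+2)}<a_{0}<\frac{1}{6}\beta^{2}$ for $\mathcal{E}=\mathcal{O}$ --- and you correctly identify the step that remains open (the existence, for arbitrary exceptional $\mathcal{E}$ and admissible $\beta$, of a finite-length heart $\mathcal{B}$ satisfying all three hypotheses of Lemma \ref{BGlem} simultaneously), which is precisely why the statement is a conjecture rather than a theorem in the paper. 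So your proposal is a faithful account of the paper's partial progress, but like the paper it does not constitute a proof of the general statement, and you should not present it as one.
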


\begin{prop}\label{boundarystabilityfunction}
	If Conjecture \ref{conj4} holds, then $Z^{\beta,\alpha_{\mathcal{E}}^{\beta},a}$ is a stabilty function on $\mathrm{Coh}^{\beta,i_{*}\mathcal{E}}_{0}(X)$ or $\mathrm{Coh}^{\beta,i_{*}\mathcal{E}[1]}_{0}(X)$ for any $a_{0}<a<\displaystyle\frac{v_{3}^{\beta}(i_{*}\mathcal{E})}{v_{1}^{\beta}(i_{*}\mathcal{E})}$.
\end{prop}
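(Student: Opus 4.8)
The plan is to verify directly the defining property of a stability function: for every nonzero $E$ in the relevant heart, $Z^{\beta,\alpha_{\mathcal{E}}^{\beta},a}(E) \in \mathbb{H} \cup \mathbb{R}^{<0}$. Since Proposition \ref{imaginarypart3} already guarantees $\Im Z^{\beta,\alpha_{\mathcal{E}}^{\beta},a}(E) \geqslant 0$, the only remaining point is to show that $\Re Z^{\beta,\alpha_{\mathcal{E}}^{\beta},a}(E) < 0$ whenever $\Im Z^{\beta,\alpha_{\mathcal{E}}^{\beta},a}(E) = 0$ and $E \neq 0$. I will carry out the case $\beta < \mu_{1}(\mathcal{E})$ (the heart $\mathrm{Coh}^{\beta,i_{*}\mathcal{E}}_{0}(X)$) in detail; the case $\beta > \mu_{2}(\mathcal{E})$ is symmetric.

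First I record that $\Re Z^{\beta,\alpha_{\mathcal{E}}^{\beta},a}(E) = -v_{3}^{\beta}(E) + a v_{1}^{\beta}(E)$. By Proposition \ref{imaginarypart3}(1), any $E$ with vanishing imaginary part admits a filtration $0 = E_{0} \subseteq E_{1}[1] \subseteq E_{2} \subseteq E_{3} = E$ whose graded pieces are $E_{1}[1]$, a zero-dimensional sheaf $C = E_{2}/(E_{1}[1])$, and $i_{*}\mathcal{E}^{\oplus n}$. Since $Z^{\beta,\alpha_{\mathcal{E}}^{\beta},a}$ is a group homomorphism on $K(D_{0}^{b}(X))$, additivity gives $\Re Z(E) = \Re Z(E_{1}[1]) + \Re Z(C) + n\,\Re Z(i_{*}\mathcal{E})$, and I will show each summand is $\leqslant 0$ with at least one strictly negative unless $E = 0$. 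For $i_{*}\mathcal{E}$: being tilt-stable of finite tilt-slope $\beta$ forces $v_{1}^{\beta}(i_{*}\mathcal{E}) > 0$, so the upper bound $a < v_{3}^{\beta}(i_{*}\mathcal{E})/v_{1}^{\beta}(i_{*}\mathcal{E})$ yields $\Re Z(i_{*}\mathcal{E}) = -v_{3}^{\beta}(i_{*}\mathcal{E}) + a v_{1}^{\beta}(i_{*}\mathcal{E}) < 0$. For the zero-dimensional sheaf $C$: one has $v_{0}^{\beta}(C) = v_{1}^{\beta}(C) = v_{2}^{\beta}(C) = 0$ and $v_{3}^{\beta}(C) = v_{3}(C) \geqslant 0$, hence $\Re Z(C) = -v_{3}(C) \leqslant 0$ with equality iff $C = 0$. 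For $E_{1}[1]$: from $Z(E_{1}[1]) = -Z(E_{1})$ I get $\Re Z(E_{1}[1]) = v_{3}^{\beta}(E_{1}) - a v_{1}^{\beta}(E_{1})$, and Conjecture \ref{conj4}(1) applies to $E_{1}$ since it is $\nu^{\beta,\alpha_{\mathcal{E}}^{\beta}}$-semistable of tilt-slope $\beta$ with $\mathrm{Hom}_{\mathrm{Coh}_{0}^{\beta}(X)}(i_{*}\mathcal{E},E_{1}) = 0$, giving $v_{3}^{\beta}(E_{1}) \leqslant a_{0} v_{1}^{\beta}(E_{1})$; together with $a_{0} < a$ and $v_{1}^{\beta}(E_{1}) \geqslant 0$ this gives $\Re Z(E_{1}[1]) \leqslant (a_{0}-a)v_{1}^{\beta}(E_{1}) \leqslant 0$, with equality forcing $v_{1}^{\beta}(E_{1}) = 0$ and hence $E_{1} = 0$, because a nonzero tilt-semistable object of finite tilt-slope has $v_{1}^{\beta} > 0$. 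Assembling the three estimates gives $\Re Z(E) \leqslant 0$, and if $E \neq 0$ then at least one of $n > 0$, $C \neq 0$, $E_{1} \neq 0$ holds, so the corresponding summand is strictly negative and $\Re Z(E) < 0$.

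The case $\beta > \mu_{2}(\mathcal{E})$ is handled identically via Proposition \ref{imaginarypart3}(2), the only change being that $i_{*}\mathcal{E}[1]$ replaces $i_{*}\mathcal{E}$ as a graded piece: here $v_{1}^{\beta}(i_{*}\mathcal{E}[1]) = -v_{1}^{\beta}(i_{*}\mathcal{E}) > 0$ (so $v_{1}^{\beta}(i_{*}\mathcal{E}) < 0$), and the bound $a < v_{3}^{\beta}(i_{*}\mathcal{E})/v_{1}^{\beta}(i_{*}\mathcal{E})$ again forces $\Re Z(i_{*}\mathcal{E}[1]) = v_{3}^{\beta}(i_{*}\mathcal{E}) - a v_{1}^{\beta}(i_{*}\mathcal{E}) < 0$. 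I do not anticipate a genuine obstacle here: once Proposition \ref{imaginarypart3} supplies the filtration and Conjecture \ref{conj4} supplies the inequality on the tilt-semistable piece, the statement reduces to elementary bookkeeping of signs, the only mild care being the degenerate subcases in which one or more graded pieces vanish.
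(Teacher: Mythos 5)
Your proposal is correct and follows essentially the same route as the paper: both proofs invoke Proposition \ref{imaginarypart3} to reduce to the graded pieces $E_{1}[1]$, the zero-dimensional sheaf, and $i_{*}\mathcal{E}^{\oplus n}$ (resp.\ $i_{*}\mathcal{E}[1]^{\oplus n}$), and then check that the bound $a_{0}<a<v_{3}^{\beta}(i_{*}\mathcal{E})/v_{1}^{\beta}(i_{*}\mathcal{E})$ together with Conjecture \ref{conj4} forces each real part to have the right sign. Your write-up is in fact slightly more explicit than the paper's about the zero-dimensional piece and the degenerate subcases, but the argument is the same.
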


\begin{pf}
	If $\beta<\mu_{1}(\mathcal{E})$, then by Proposition \ref{imaginarypart3}, we only need to prove that $\Re Z^{\beta,\alpha_{\mathcal{E}}^{\beta},a}(i_{*}\mathcal{E})<0$ and $\Re Z^{\beta,\alpha_{\mathcal{E}}^{\beta},a}(E)>0$ for any $a_{0}<a<\displaystyle\frac{v_{3}^{\beta}(i_{*}\mathcal{E})}{v_{1}^{\beta}(i_{*}\mathcal{E})}$ and $E \in \mathrm{Coh}_{0}^{\beta}(X)$ is a $\nu^{\beta,\alpha_{\mathcal{E}}^{\beta}}$-semistable object with tilt-slope $\beta$ and $\mathrm{Hom}_{\mathrm{Coh}_{0}^{\beta}(X)}(i_{*}\mathcal{E},E)=0$. The above two inequalities are equivalent to $v_{3}^{\beta}(i_{*}\mathcal{E})> av_{1}^{\beta}(i_{*}\mathcal{E})$  and $v_{3}^{\beta}(E) <av_{1}^{\beta}(E)$. And they holds immediately. When $\beta>\mu_{2}(\mathcal{E})$, we can prove it similarly.
\end{pf}

\begin{rmk}
	(1) By Theorem \ref{thm1}, we know that for any $\nu^{\beta,\alpha_{\mathcal{E}}^{\beta}}$-semistable object $E$ with tilt-slope $\beta$, we have:
	\begin{align*}
		\frac{v_{3}^{\beta}(E)}{v_{1}^{\beta}(E)} \leqslant \frac{2\alpha_{\mathcal{E}}^{\beta}-\beta^{2}}{6}.
	\end{align*} Especially, $i_{*}\mathcal{E}$ or $i_{*}\mathcal{E}[1]$ satisfies this inequality. Conjecture \ref{conj4} claims that there exists $a_{0}<\displaystyle\frac{v_{3}^{\beta}(i_{*}\mathcal{E})}{v_{1}^{\beta}(i_{*}\mathcal{E})} \leqslant \frac{2\alpha_{\mathcal{E}}^{\beta}-\beta^{2}}{6}$, such that $\displaystyle\frac{v_{3}^{\beta}(E)}{v_{1}^{\beta}(E)} \leqslant a_{0}$ for  any $\nu^{\beta,\alpha_{\mathcal{E}}^{\beta}}$-semistable object $E$ with tilt-slope $\beta$ and $\mathrm{Hom}_{\mathrm{Coh}_{0}^{\beta}(X)}(i_{*}\mathcal{E},E)=0$ or $\mathrm{Hom}_{\mathrm{Coh}_{0}^{\beta}(X)}(i_{*}\mathcal{E}[1],E)=0$ in different cases. In fact, Conjecture \ref{conj4} is a stronger version of Bogomolov-Gieseker type inequality.
	
	(2) If $\beta<\mu_{1}(\mathcal{E})$ and $i_{*}\mathcal{E} \in \mathrm{Coh}_{0}^{\beta}(X)$ is  $\nu^{\beta,\alpha_{\mathcal{E}}^{\beta}}$-stable, then let
	\begin{align*}
		a_{\mathcal{E}}^{\beta}=\sup \left\{ \frac{v_{3}^{\beta}(E)}{v_{1}^{\beta}(E)} \mid E \mbox{ is }\nu^{\beta,\alpha_{\mathcal{E}}^{\beta}}\mbox{-semistable with tilt-slope } \beta \mbox{ and } \mathrm{Hom}_{\mathrm{Coh}_{0}^{\beta}(X)}(i_{*}\mathcal{E},E)=0\right\}.
	\end{align*}
	 If we can prove
	\begin{align*}
		a_{\mathcal{E}}^{\beta}<\frac{v_{3}^{\beta}(i_{*}\mathcal{E})}{v_{1}^{\beta}(i_{*}\mathcal{E})}.
	\end{align*}then Conjecture \ref{conj4} will hold if we take $a_{0}=a_{\mathcal{E}}^{\beta}$. But it is not easy to find $a_{\mathcal{E}}^{\beta}$. A similar remark applies to the case $\beta>\mu_{2}(\mathcal{E})$.
\end{rmk}

\begin{cor}\label{Noetherian3}
	Let $\beta$ be a rational number and $\mathcal{E}$ be an exceptional locally free sheaf with finite rank on $\mathbb{P}^{3}$.
	
	(1) If $\beta<\mu_{1}(\mathcal{E})$ and $i_{*}\mathcal{E} \in \mathrm{Coh}_{0}^{\beta}(X)$ is  $\nu^{\beta,\alpha_{\mathcal{E}}^{\beta}}$-stable, then the heart $\mathrm{Coh}^{\beta,i_{*}\mathcal{E}}_{0}(X)$ is Noetherian.
	
	(2) If $\beta>\mu_{2}(\mathcal{E})$ and $i_{*}\mathcal{E}[1] \in \mathrm{Coh}_{0}^{\beta}(X)$ is  $\nu^{\beta,\alpha_{\mathcal{E}}^{\beta}}$-stable, then the heart  $\mathrm{Coh}^{\beta,i_{*}\mathcal{E}[1]}_{0}(X)$ is Noetherian.
\end{cor}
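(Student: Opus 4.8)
The plan is to obtain both statements as a direct specialization of Lemma~\ref{boundaryheart}(3). Concretely, I would apply that lemma with $\mathcal{D}=D_{0}^{b}(X)$, with the weak stability condition $\sigma=\sigma^{\beta,\alpha_{\mathcal{E}}^{\beta}}=(Z^{\beta,\alpha_{\mathcal{E}}^{\beta}},\mathrm{Coh}_{0}^{\beta}(X))$ and slope function $\mu=\nu^{\beta,\alpha_{\mathcal{E}}^{\beta}}$, and with the distinguished $\sigma$-stable object $E=i_{*}\mathcal{E}$ in case (1) and $E=i_{*}\mathcal{E}[1]$ in case (2). Under this dictionary the heart $\mathcal{A}$ of the lemma is $\mathrm{Coh}_{0}^{\beta}(X)$; the pair $(\mathcal{T}^{\beta},\mathcal{F}^{\beta})$ is exactly $(\mathrm{Coh}_{0}^{>\beta,\alpha_{\mathcal{E}}^{\beta}}(X),\mathrm{Coh}_{0}^{\leqslant\beta,\alpha_{\mathcal{E}}^{\beta}}(X))$, so that $\mathcal{A}^{\beta}=\mathrm{Coh}_{0}^{\beta,\alpha_{\mathcal{E}}^{\beta}}(X)$; and the pair $(\mathcal{T}^{E},\mathcal{F}^{E})$ is precisely the one used to build $\mathrm{Coh}_{0}^{\beta,i_{*}\mathcal{E}}(X)$ (resp. $\mathrm{Coh}_{0}^{\beta,i_{*}\mathcal{E}[1]}(X)$), so the tilting heart $\mathcal{A}^{E}$ of the lemma equals the heart whose Noetherianity is asserted.

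Next I would check the hypotheses of Lemma~\ref{boundaryheart}. The category $D_{0}^{b}(X)$ is of finite type because the zero-section $\mathbb{P}^{3}$ is proper, and $\sigma^{\beta,\alpha_{\mathcal{E}}^{\beta}}$ is a genuine weak stability condition by the continuous embedding $U\to\mathrm{Stab}_{H}^{w}(D_{0}^{b}(X))$. The chosen $E$ is $\sigma$-stable of finite slope $\mu(E)=\beta$ by hypothesis: indeed, the value $\alpha=\alpha_{\mathcal{E}}^{\beta}$ is precisely the one placing $i_{*}\mathcal{E}$ on the curve $\mathcal{C}_{i_{*}\mathcal{E}}$, i.e. giving tilt-slope $\beta$. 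It then remains to verify $\mathrm{Hom}_{\mathcal{D}}(E,E[1])=0$ and $\mathrm{Hom}_{\mathcal{D}}(C,E[1])=0$ for every $C\in\mathcal{A}$ with $Z^{\beta,\alpha_{\mathcal{E}}^{\beta}}(C)=0$. Here such $C$ are exactly the zero-dimensional sheaves, and both vanishings follow immediately from the earlier lemma asserting that $i_{*}\mathcal{E}$ is a spherical object and that $\mathrm{Hom}_{D_{0}^{b}(X)}(C,i_{*}\mathcal{E}[l])=0$ for zero-dimensional $C$ and $l=0,1,2$: sphericity gives $\mathrm{Hom}(i_{*}\mathcal{E},i_{*}\mathcal{E}[1])=0$ (hence also $\mathrm{Hom}(i_{*}\mathcal{E}[1],i_{*}\mathcal{E}[2])=0$ in case (2)), while the second vanishing is the case $l=1$ (resp. $l=2$) of the zero-dimensional statement applied to $E[1]=i_{*}\mathcal{E}[1]$ (resp. $E[1]=i_{*}\mathcal{E}[2]$). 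The requirement $\mathrm{Hom}_{\mathcal{A}}(E,E)\cong\mathbb{C}$ used inside the lemma is automatic from $\sigma$-stability.

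The only remaining input is the Noetherianity of $\mathcal{A}^{\beta}=\mathrm{Coh}_{0}^{\beta,\alpha_{\mathcal{E}}^{\beta}}(X)$. For this I would show $(\beta,\alpha_{\mathcal{E}}^{\beta})\in U\cap\mathbb{Q}^{2}$: rationality of $\alpha_{\mathcal{E}}^{\beta}=\beta^{2}-\tfrac{v_{1}(\mathcal{E})}{v_{0}(\mathcal{E})}\beta+\tfrac{v_{2}(\mathcal{E})}{v_{0}(\mathcal{E})}$ is clear from $\beta\in\mathbb{Q}$ and the rationality of the intersection numbers $v_{i}(\mathcal{E})$, and the membership $\alpha_{\mathcal{E}}^{\beta}>\tfrac12\beta^{2}$ is exactly the reformulation, noted in the discussion preceding the statement, of the hypothesis $\beta<\mu_{1}(\mathcal{E})$ (resp. $\beta>\mu_{2}(\mathcal{E})$). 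Proposition~\ref{Noetherian2} then gives that $\mathrm{Coh}_{0}^{\beta,\alpha_{\mathcal{E}}^{\beta}}(X)$ is Noetherian, and Lemma~\ref{boundaryheart}(3) delivers the conclusion in both cases. I do not anticipate a serious obstacle here: all the hard work is carried by Lemma~\ref{boundaryheart}, and the corollary is a clean specialization whose only verifications are the two Hom-vanishings and the rational membership in $U$, both of which are routine.
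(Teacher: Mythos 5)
Your proposal is correct and is exactly the paper's argument: the paper's proof of Corollary \ref{Noetherian3} is the one-line "directly follows from Lemma \ref{boundaryheart} and Proposition \ref{Noetherian2}," and you have simply spelled out the routine hypothesis checks (sphericity of $i_{*}\mathcal{E}$ for $\mathrm{Hom}(E,E[1])=0$, the vanishing of $\mathrm{Hom}(C,i_{*}\mathcal{E}[l])$ for zero-dimensional $C$, and rationality plus $(\beta,\alpha_{\mathcal{E}}^{\beta})\in U$ for Proposition \ref{Noetherian2}) that the paper leaves implicit.
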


\begin{pf}
	The corollary directly follows from Lemma \ref{boundaryheart} and Proposition \ref{Noetherian2}.
\end{pf}

\begin{lem}
	Let $\beta$ be a rational number and $\mathcal{E}$ be an exceptional locally free sheaf with finite rank on $\mathbb{P}^{3}$.
	
	(1) If $\beta<\mu_{1}(\mathcal{E})$ and $i_{*}\mathcal{E} \in \mathrm{Coh}_{0}^{\beta}(X)$ is  $\nu^{\beta,\alpha_{\mathcal{E}}^{\beta}}$-stable, then the subcategory
	\begin{align*}
		\mathcal{I}^{\beta,i_{*}\mathcal{E}}_{0}(X)=\{E \in \mathrm{Coh}^{\beta,i_{*}\mathcal{E}}_{0}(X) \mid v^{\beta}_{2}(E)=\displaystyle(\alpha_{\mathcal{E}}^{\beta}-\displaystyle\frac{1}{2}\beta^{2})v_{0}^{\beta}(E)\}
	\end{align*} is an abelian category of finite length. 
	
	(2) If $\beta>\mu_{2}(\mathcal{E})$ and $i_{*}\mathcal{E}[1] \in \mathrm{Coh}_{0}^{\beta}(X)$ is  $\nu^{\beta,\alpha_{\mathcal{E}}^{\beta}}$-stable, then the subcategory
	\begin{align*}
		\mathcal{I}^{\beta,i_{*}\mathcal{E}[1]}_{0}(X)=\{E \in \mathrm{Coh}^{\beta,i_{*}\mathcal{E}[1]}_{0}(X) \mid v^{\beta}_{2}(E)=\displaystyle(\alpha_{\mathcal{E}}^{\beta}-\displaystyle\frac{1}{2}\beta^{2})v_{0}^{\beta}(E)\}
	\end{align*} is an abelian category of finite length. 
\end{lem}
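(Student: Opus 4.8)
The plan is to follow the proof of Proposition \ref{finitelength} almost verbatim, replacing the heart $\mathrm{Coh}^{\beta,\alpha}_{0}(X)$ by the tilted heart $\mathcal{A}^{E}:=\mathrm{Coh}^{\beta,i_{*}\mathcal{E}}_{0}(X)$ and using Proposition \ref{imaginarypart3} where the earlier argument used Lemma \ref{imaginarypart2}. I carry out case (1) in detail; case (2) is identical with $i_{*}\mathcal{E}[1]$ in place of $i_{*}\mathcal{E}$. Write $\alpha=\alpha_{\mathcal{E}}^{\beta}$ and $\mathcal{A}^{\beta}:=\mathrm{Coh}^{\beta,\alpha}_{0}(X)$; since $\beta\in\mathbb{Q}$ and $\mathcal{E}$ is fixed, $\alpha\in\mathbb{Q}$, so $(\beta,\alpha)\in U\cap\mathbb{Q}^{2}$ and both Proposition \ref{finitelength} and Corollary \ref{Noetherian3} apply. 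First I note that $\mathcal{I}:=\mathcal{I}^{\beta,i_{*}\mathcal{E}}_{0}(X)$ is exactly the kernel of $\Im Z^{\beta,\alpha,a}$ on $\mathcal{A}^{E}$. Since $\Im Z^{\beta,\alpha,a}\geqslant 0$ there by Proposition \ref{imaginarypart3}(1), in any short exact sequence a vanishing nonnegative imaginary part forces all three terms to lie in $\mathcal{I}$; hence $\mathcal{I}$ is a Serre subcategory of $\mathcal{A}^{E}$, in particular abelian. As $\mathcal{A}^{E}$ is Noetherian by Corollary \ref{Noetherian3}(1), $\mathcal{I}$ is Noetherian, and it remains only to prove it is Artinian.

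The key device is cohomology with respect to $\mathcal{A}^{\beta}$. By Lemma \ref{boundaryheart}(2), $\mathcal{A}^{E}[1]$ is the tilt of $\mathcal{A}^{\beta}$ at the torsion pair $(\mathcal{T},\mathcal{F})$ with $\mathcal{T}=\{i_{*}\mathcal{E}[1]^{\oplus n}\}$ and $\mathcal{F}=\mathcal{T}^{\perp}$, so every $E\in\mathcal{A}^{E}$ satisfies $\mathcal{H}^{i}_{\mathcal{A}^{\beta}}(E)=0$ for $i\neq 0,1$, with $\mathcal{H}^{0}_{\mathcal{A}^{\beta}}(E)\in\mathcal{F}$ and $\mathcal{H}^{1}_{\mathcal{A}^{\beta}}(E)\in\mathcal{T}$. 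Because $\Im Z^{\beta,\alpha,a}(i_{*}\mathcal{E})=0$ and $\mathcal{H}^{1}_{\mathcal{A}^{\beta}}(E)$ is a sum of copies of $i_{*}\mathcal{E}[1]$, for $E\in\mathcal{I}$ we obtain $\Im Z^{\beta,\alpha,a}(\mathcal{H}^{0}_{\mathcal{A}^{\beta}}(E))=\Im Z^{\beta,\alpha,a}(E)=0$. Thus $\mathcal{H}^{0}_{\mathcal{A}^{\beta}}(E)\in\mathcal{I}^{\beta,\alpha}_{0}(X)\cap\mathcal{F}$, a subcategory of the finite length category $\mathcal{I}^{\beta,\alpha}_{0}(X)$ of Proposition \ref{finitelength}, while $\mathcal{H}^{1}_{\mathcal{A}^{\beta}}(E)$ is a finite direct sum of copies of the simple object $i_{*}\mathcal{E}[1]$ of $\mathcal{I}^{\beta,\alpha}_{0}(X)$. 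Here $\mathcal{T}$ is precisely the $i_{*}\mathcal{E}[1]$-isotypic component, so it is semisimple with one simple, closed under subobjects and quotients, and the number of copies is additive on short exact sequences inside $\mathcal{T}$.

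Now take a descending chain $E^{(0)}\supseteq E^{(1)}\supseteq\cdots$ in $\mathcal{I}$ with quotients $Q^{(k)}\in\mathcal{I}$, and apply the cohomology long exact sequence of $0\to E^{(k+1)}\to E^{(k)}\to Q^{(k)}\to 0$ in $\mathcal{A}^{\beta}$. The $\mathcal{H}^{0}_{\mathcal{A}^{\beta}}$-terms form a descending chain in the finite length category $\mathcal{I}^{\beta,\alpha}_{0}(X)$, hence stabilize for $k\geqslant N_{1}$. Once $\mathcal{H}^{0}_{\mathcal{A}^{\beta}}(E^{(k+1)})\cong\mathcal{H}^{0}_{\mathcal{A}^{\beta}}(E^{(k)})$, the long exact sequence embeds $\mathcal{H}^{0}_{\mathcal{A}^{\beta}}(Q^{(k)})\in\mathcal{F}$ into $\mathcal{H}^{1}_{\mathcal{A}^{\beta}}(E^{(k+1)})\in\mathcal{T}$, so $\mathcal{H}^{0}_{\mathcal{A}^{\beta}}(Q^{(k)})\in\mathcal{T}\cap\mathcal{F}=0$. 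The remaining sequence $0\to\mathcal{H}^{1}_{\mathcal{A}^{\beta}}(E^{(k+1)})\to\mathcal{H}^{1}_{\mathcal{A}^{\beta}}(E^{(k)})\to\mathcal{H}^{1}_{\mathcal{A}^{\beta}}(Q^{(k)})\to 0$ lies entirely in $\mathcal{T}$, so the number of copies of $i_{*}\mathcal{E}[1]$ gives a descending chain of nonnegative integers, which stabilizes for $k\geqslant N_{2}$, forcing $\mathcal{H}^{1}_{\mathcal{A}^{\beta}}(Q^{(k)})=0$. For $k\geqslant N_{2}$ both $\mathcal{A}^{\beta}$-cohomologies of $Q^{(k)}$ vanish, hence $Q^{(k)}=0$ and the chain stabilizes; this proves $\mathcal{I}$ is Artinian and therefore of finite length.

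The main obstacle I anticipate is the bookkeeping around the cohomology functor $\mathcal{H}^{\bullet}_{\mathcal{A}^{\beta}}$: verifying precisely that an object of $\mathcal{I}$ splits into an $\mathcal{I}^{\beta,\alpha}_{0}(X)$-part in degree $0$ and an $i_{*}\mathcal{E}[1]$-isotypic part in degree $1$, and that $\mathcal{T}$ really is the isotypic component of the simple object $i_{*}\mathcal{E}[1]$, so that counting copies is additive and subobjects of $\mathcal{T}$-objects remain in $\mathcal{T}$. Once these structural facts are in place, everything reduces to the finite length of $\mathcal{I}^{\beta,\alpha}_{0}(X)$ from Proposition \ref{finitelength} together with the disjointness $\mathcal{T}\cap\mathcal{F}=0$ of the torsion pair.
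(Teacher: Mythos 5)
Your proof is correct and follows essentially the same strategy as the paper's: take cohomology with respect to $\mathcal{A}^{\beta}=\mathrm{Coh}^{\beta,\alpha_{\mathcal{E}}^{\beta}}_{0}(X)$ along the descending chain, stabilize the degree coming from $\mathcal{F}$ using the finite length of $\mathcal{I}^{\beta,\alpha_{\mathcal{E}}^{\beta}}_{0}(X)$ from Proposition \ref{finitelength}, and then stabilize the $i_{*}\mathcal{E}[1]$-isotypic part by a counting argument. The one step you defer, namely that a subobject of $i_{*}\mathcal{E}[1]^{\oplus n}$ lying in $\mathcal{F}$ must vanish (equivalently, that $\mathcal{T}$ is closed under subobjects), does require justification: it follows from the simplicity of $i_{*}\mathcal{E}[1]$ in the finite-length category $\mathcal{I}^{\beta,\alpha_{\mathcal{E}}^{\beta}}_{0}(X)$ (Proposition \ref{exceptionalissimple1}, whose proof is independent of this lemma, so there is no circularity), whereas the paper sidesteps it by tracking $\dim_{\mathbb{C}}\mathrm{Hom}_{D_{0}^{b}(X)}(i_{*}\mathcal{E}[1],-)$ and invoking $\mathrm{Hom}(i_{*}\mathcal{E},i_{*}\mathcal{E}[1])=0$ exactly as in the proof of Lemma \ref{boundaryheart}(3).
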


\begin{pf}
	(1) We just need to prove $\mathcal{I}^{\beta,i_{*}\mathcal{E}}_{0}(X)[1]$ is Artinian. Suppose we have an infinite chain in $\mathcal{I}^{\beta,i_{*}\mathcal{E}}_{0}(X)[1]$:
	\begin{align*}
		E_{0} \supseteq E_{1} \supseteq E_{2} \supseteq \cdots \supseteq E_{n} \supseteq E_{n+1} \supseteq \cdots.
	\end{align*}
	
	Now, we take the cohomology functor $\mathcal{H}^{i}$ with respect to the heart $\mathrm{Coh}_{0}^{\beta,\alpha_{\mathcal{E}}^{\beta}}(X)$ on the following short exact sequences in $\mathcal{I}^{\beta,i_{*}\mathcal{E}}_{0}(X)[1] \subseteq \mathrm{Coh}^{\beta,i_{*}\mathcal{E}}_{0}(X)[1]$:
	\begin{align*}
		0 \to E_{n+1} \to E_{n} \to E_{n}/E_{n+1} \to 0,
	\end{align*}then we have an infinite chain in $\mathrm{Coh}_{0}^{\beta,\alpha_{\mathcal{E}}^{\beta}}(X)$:
	\begin{align*}
		\mathcal{H}^{-1}(E_{0}) \supseteq \mathcal{H}^{-1}(E_{1}) \supseteq \mathcal{H}^{-1}(E_{2}) \supseteq \cdots \supseteq \mathcal{H}^{-1}(E_{n}) \supseteq \mathcal{H}^{-1}(E_{n+1}) \supseteq \cdots
	\end{align*} and exact sequence in $\mathrm{Coh}_{0}^{\beta,\alpha_{\mathcal{E}}^{\beta}}(X)$:
	\begin{align*}
		0 \to \mathcal{H}^{-1}(E_{n+1}) \to \mathcal{H}^{-1}(E_{n}) \to \mathcal{H}^{-1}(E_{n}/E_{n+1}) \to \mathcal{H}^{0}(E_{n+1}) \to \mathcal{H}^{0}(E_{n}) \to \mathcal{H}^{0}(E_{n}/E_{n+1}) \to 0.
	\end{align*}
	
	Note that if $E \in \mathcal{I}^{\beta,i_{*}\mathcal{E}}_{0}(X)[1]$, then $\mathcal{H}^{-1}(E) \in \mathcal{I}^{\beta,\alpha_{\mathcal{E}}^{\beta}}_{0}(X)$ and $\mathcal{H}^{0}(E) \cong i_{*}\mathcal{E}[1]^{\oplus n}$ for some $n$ by Lemma \ref{boundaryheart} and Proposition \ref{imaginarypart3}.
	
	But we know the abelian category $\mathcal{I}^{\beta,\alpha_{\mathcal{E}}^{\beta}}_{0}(X)$ is of finite length by Lemma \ref{finitelength}. And the cokernel of $\mathcal{H}^{-1}(E_{n+1}) \to \mathcal{H}^{-1}(E_{n})$ is a subobject of $\mathcal{H}^{-1}(E_{n}/E_{n+1})$, and it is still in $\mathcal{I}^{\beta,\alpha_{\mathcal{E}}^{\beta}}_{0}(X)$. Thus, there exists $N_{1} \in \mathbb{N}$, such that when $n>N_{1}$, we have $\mathcal{H}^{-1}(E_{n+1}) \cong \mathcal{H}^{-1}(E_{n})$. In this case, we have an exact sequence in $\mathrm{Coh}_{0}^{\beta,\alpha_{\mathcal{E}}^{\beta}}(X)$:
	\begin{align*}
		0 \to \mathcal{H}^{-1}(E_{n}/E_{n+1}) \to \mathcal{H}^{0}(E_{n+1}) \to \mathcal{H}^{0}(E_{n}) \to \mathcal{H}^{0}(E_{n}/E_{n+1}) \to 0.
	\end{align*} As in the proof of Lemma \ref{boundaryheart}, we can divide it into two short exact sequences in $\mathrm{Coh}_{0}^{\beta,\alpha_{\mathcal{E}}^{\beta}}(X)$ and take the functor $\mathrm{Hom}_{D_{0}^{b}(X)}(i_{*}\mathcal{E}[1],-)$ on both of them and we have: 
	\begin{align*}
		\dim_{\mathbb{C}}\mathrm{Hom}_{D_{0}^{b}(X)}(i_{*}\mathcal{E}[1],\mathcal{H}^{0}(E_{n+1})) \leqslant \dim_{\mathbb{C}}\mathrm{Hom}_{D_{0}^{b}(X)}(i_{*}\mathcal{E}[1],\mathcal{H}^{0}(E_{n})).
	\end{align*}And for $n \gg 0$, it will be an equality. Then $\dim_{\mathbb{C}}\mathrm{Hom}_{D_{0}^{b}(X)}(i_{*}\mathcal{E}[1],\mathcal{H}^{0}(E_{n}/E_{n+1}))=0$ in the same way with the proof of Lemma \ref{boundaryheart}. And it means $E_{n}=E_{n+1}$. 
	
	(2) The proof follows the same argument as in (1).
\end{pf}

We will use the same method to prove the stronger Bogomolov-Gieseker type inequality again, in fact, we will have the following proposition:

\begin{prop}\label{BGlem2}
	Let $\beta$ be a rational number and $\mathcal{E}$ be an exceptional locally free sheaf with finite rank on $\mathbb{P}^{3}$. Assume that $\beta<\mu_{1}(\mathcal{E})$ and $i_{*}\mathcal{E} \in \mathrm{Coh}_{0}^{\beta}(X)$ is  $\nu^{\beta,\alpha_{\mathcal{E}}^{\beta}}$-stable. If there exists another heart $\mathcal{B}$ of a bounded t-structure of $D^{b}_{0}(X)$ with the following properties:
	
	(1) $\mathcal{B} \subseteq \langle \mathrm{Coh}^{\beta,i_{*}\mathcal{E}}_{0}(X),\mathrm{Coh}^{\beta,i_{*}\mathcal{E}}_{0}(X)[1]\rangle$,
	
	(2) there exists $\phi_{0} \in (0,1)$ and $a_{0}<\displaystyle\frac{v_{3}^{\beta}(i_{*}\mathcal{E})}{v_{1}^{\beta}(i_{*}\mathcal{E})}$ such that:
	\begin{align*}
		Z^{\beta,\alpha_{\mathcal{E}}^{\beta},a_{0}}(\mathcal{B}) \subseteq \{r \mathrm{exp}(\sqrt{-1}\pi\phi) \colon r \geqslant 0, \phi_{0} \leqslant \phi \leqslant \phi_{0}+1\},
	\end{align*}
	
	(3) for any $\nu^{\beta,\alpha_{\mathcal{E}}^{\beta}}$-semistable object $F \in \mathrm{Coh}^{\beta}_{0}(X)$ with tilt-slope $\beta$ and $F[1]$ is a simple object in $\mathcal{I}^{\beta,i_{*}\mathcal{E}}_{0}(X)$ with $\mathrm{Hom}_{\mathrm{Coh}^{\beta}_{0}(X)}(i_{*}\mathcal{E},F)=0$, we have $F[2] \notin \mathcal{B}$.

	Then Conjecture \ref{conj4} holds. In other words, for any $\nu^{\beta,\alpha_{\mathcal{E}}^{\beta}}$-semistable object $F \in \mathrm{Coh}_{0}^{\beta}(X)$ with tilt-slope $\beta$ and $\mathrm{Hom}_{\mathrm{Coh}_{0}^{\beta}(X)}(i_{*}\mathcal{E},F)=0$, we always have the following Bogomolov-Gieseker type inequality:
	\begin{align*}
		v_{3}^{\beta}(F) \leqslant a_{0}v_{1}^{\beta}(F).
	\end{align*}
\end{prop}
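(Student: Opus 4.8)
The plan is to deduce this from Lemma \ref{BGlem} applied to $\mathcal{D}=D_{0}^{b}(X)$, the bounded heart $\mathcal{A}=\mathrm{Coh}^{\beta,i_{*}\mathcal{E}}_{0}(X)$ and the homomorphism $Z=Z^{\beta,\alpha_{\mathcal{E}}^{\beta},a_{0}}$, combined with a Jordan--H\"older reduction inside the finite-length category $\mathcal{I}^{\beta,i_{*}\mathcal{E}}_{0}(X)$. First I would check that the data $(\mathcal{A},Z)$ meets the standing hypotheses of Lemma \ref{BGlem}: the positivity $\Im Z^{\beta,\alpha_{\mathcal{E}}^{\beta},a_{0}}(E)\geqslant 0$ for all $E\in\mathrm{Coh}^{\beta,i_{*}\mathcal{E}}_{0}(X)$ is exactly Proposition \ref{imaginarypart3}(1); and since $\Im Z^{\beta,\alpha_{\mathcal{E}}^{\beta},a_{0}}(E)=v_{2}^{\beta}(E)-(\alpha_{\mathcal{E}}^{\beta}-\tfrac{\beta^{2}}{2})v_{0}^{\beta}(E)$, the subcategory $\{E\mid\Im Z(E)=0\}$ coincides with $\mathcal{I}^{\beta,i_{*}\mathcal{E}}_{0}(X)$, which was shown above to be abelian of finite length. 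Conditions (1) and (2) on $\mathcal{B}$ are the hypotheses of the present proposition, so all the global assumptions of Lemma \ref{BGlem} are in place.

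Next I would analyse the simple objects of $\mathcal{I}^{\beta,i_{*}\mathcal{E}}_{0}(X)$. By Proposition \ref{imaginarypart3}(1) together with Lemma \ref{boundaryheart}, every object with $\Im Z=0$ admits a filtration whose factors are of the form $G[1]$ with $G\in\mathrm{Coh}^{\beta}_{0}(X)$ being $\nu^{\beta,\alpha_{\mathcal{E}}^{\beta}}$-semistable of tilt-slope $\beta$ and $\mathrm{Hom}(i_{*}\mathcal{E},G)=0$, a zero-dimensional sheaf, or a direct sum of copies of $i_{*}\mathcal{E}$. Hence the simple objects are precisely the skyscrapers $k(y)$, the object $i_{*}\mathcal{E}$, and the objects $G[1]$ with $G$ being $\nu^{\beta,\alpha_{\mathcal{E}}^{\beta}}$-stable of tilt-slope $\beta$ and $\mathrm{Hom}(i_{*}\mathcal{E},G)=0$ (left-exactness of $\mathrm{Hom}(i_{*}\mathcal{E},-)$ passes the vanishing to subobjects). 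For a simple object of the third type, hypothesis (3) gives $G[2]\notin\mathcal{B}$, so Lemma \ref{BGlem} yields $\Re Z^{\beta,\alpha_{\mathcal{E}}^{\beta},a_{0}}(G)\geqslant 0$, i.e. $\Re Z^{\beta,\alpha_{\mathcal{E}}^{\beta},a_{0}}(G[1])\leqslant 0$. For $k(y)$ one has $\Re Z=-v_{3}^{\beta}(k(y))\leqslant 0$, since a zero-dimensional sheaf has non-negative $v_{3}$. For $i_{*}\mathcal{E}$ I would use that, being a nonzero object of finite tilt-slope in the tilted heart, it has $v_{1}^{\beta}(i_{*}\mathcal{E})>0$, whence $\Re Z^{\beta,\alpha_{\mathcal{E}}^{\beta},a_{0}}(i_{*}\mathcal{E})=v_{1}^{\beta}(i_{*}\mathcal{E})\bigl(a_{0}-\tfrac{v_{3}^{\beta}(i_{*}\mathcal{E})}{v_{1}^{\beta}(i_{*}\mathcal{E})}\bigr)<0$ by the bound $a_{0}<\tfrac{v_{3}^{\beta}(i_{*}\mathcal{E})}{v_{1}^{\beta}(i_{*}\mathcal{E})}$ of hypothesis (2). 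Thus every simple object of $\mathcal{I}^{\beta,i_{*}\mathcal{E}}_{0}(X)$ has non-positive $\Re Z^{\beta,\alpha_{\mathcal{E}}^{\beta},a_{0}}$.

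Finally I would assemble the general statement. Given a $\nu^{\beta,\alpha_{\mathcal{E}}^{\beta}}$-semistable $F\in\mathrm{Coh}^{\beta}_{0}(X)$ of tilt-slope $\beta$ with $\mathrm{Hom}(i_{*}\mathcal{E},F)=0$, the object $F$ lies in $\mathrm{Coh}_{0}^{\beta,i_{*}\mathcal{E},\leqslant\beta}(X)$, so $F[1]\in\mathcal{I}^{\beta,i_{*}\mathcal{E}}_{0}(X)$. Taking a Jordan--H\"older filtration of $F[1]$ and using additivity of $Z$ on $K$-theory together with the sign computation above gives $\Re Z^{\beta,\alpha_{\mathcal{E}}^{\beta},a_{0}}(F[1])\leqslant 0$; since $[F[1]]=-[F]$ this reads $\Re Z^{\beta,\alpha_{\mathcal{E}}^{\beta},a_{0}}(F)=-v_{3}^{\beta}(F)+a_{0}v_{1}^{\beta}(F)\geqslant 0$, which is the desired inequality $v_{3}^{\beta}(F)\leqslant a_{0}v_{1}^{\beta}(F)$. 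The main obstacle is the middle step: correctly classifying the simple objects of $\mathcal{I}^{\beta,i_{*}\mathcal{E}}_{0}(X)$ and verifying that the extra simple object $i_{*}\mathcal{E}$, which is absent in the analogous Lemma \ref{BGlem1}, contributes with the correct strictly negative sign. This is precisely what the stronger bound $a_{0}<v_{3}^{\beta}(i_{*}\mathcal{E})/v_{1}^{\beta}(i_{*}\mathcal{E})$ in hypothesis (2) is designed to guarantee, and it is what distinguishes this argument from the one establishing Conjecture \ref{conj}.
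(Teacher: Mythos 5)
Your proposal is correct and follows the same route as the paper, whose entire proof is the one-line instruction to apply Lemma \ref{BGlem} with $\mathcal{D}=D_{0}^{b}(X)$, $\mathcal{A}=\mathrm{Coh}^{\beta,i_{*}\mathcal{E}}_{0}(X)$ and $Z=Z^{\beta,\alpha_{\mathcal{E}}^{\beta},a_{0}}$. You additionally spell out the two steps the paper leaves implicit -- the classification of the simple objects of $\mathcal{I}^{\beta,i_{*}\mathcal{E}}_{0}(X)$ (including the extra simple $i_{*}\mathcal{E}$, whose strictly negative real part is exactly what the bound $a_{0}<v_{3}^{\beta}(i_{*}\mathcal{E})/v_{1}^{\beta}(i_{*}\mathcal{E})$ ensures) and the Jordan--H\"older reduction from simple $F[1]$ to arbitrary semistable $F$ -- and both are carried out correctly.
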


\begin{pf}
	Take $\mathcal{D}=D_{0}^{b}(X)$, $\mathcal{A}=\mathrm{Coh}^{\beta,i_{*}\mathcal{E}}_{0}(X)$ and $Z=Z^{\beta,\alpha_{\mathcal{E}}^{\beta},a_{0}}$, then it holds by Lemma \ref{BGlem}.
\end{pf}

\begin{rmk}
	There is a similar consequence when $\beta>\mu_{2}(\mathcal{E})$ and 
\end{rmk}

\subsection{One simple case}We want to give stability functions on the Noetherian heart $\mathrm{Coh}^{\beta,i_{*}\mathcal{O}}_{0}(X)$  of $D_{0}^{b}(X)$ in this subsection. Note that
\begin{align*}
	& \alpha_{\mathcal{O}}^{\beta}=\beta^{2}; & \mu_{1}(\mathcal{O})=\mu_{2}(\mathcal{O})=0.
\end{align*} in this case.

\begin{prop}\label{simplecase}
	If $-\displaystyle\frac{1}{2}<\beta<0$ is a rational number and $\displaystyle\frac{3\beta^{3}+6\beta^{2}-4}{6(3\beta+2)}<a<\displaystyle\frac{1}{6}\beta^{2}$ is a real number, then $Z^{\beta,\beta^{2},a}$ is a stability function on $\mathrm{Coh}^{\beta,i_{*}\mathcal{O}}_{0}(X)$ with the Harder-Narasimhan property.
\end{prop}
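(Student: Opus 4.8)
The plan is to reduce the statement to the stronger Bogomolov--Gieseker inequality of Conjecture \ref{conj4}(1) for $\mathcal{E} = \mathcal{O}$, for which $\alpha_{\mathcal{O}}^{\beta} = \beta^2$, $\mu_1(\mathcal{O}) = 0$, and a direct computation gives $v_1^{\beta}(i_*\mathcal{O}) = -\beta > 0$ and $v_3^{\beta}(i_*\mathcal{O}) = -\beta^3/6$, hence $v_3^{\beta}(i_*\mathcal{O})/v_1^{\beta}(i_*\mathcal{O}) = \beta^2/6$. Since $-\tfrac12 < \beta < 0$ we have $\beta < \mu_1(\mathcal{O})$, and by Proposition \ref{tiltstablebobject}(2) the object $i_*\mathcal{O}$ is $\nu^{\beta,\beta^2}$-stable of tilt-slope exactly $\beta$, so the heart $\mathrm{Coh}^{\beta,i_*\mathcal{O}}_0(X)$ and Proposition \ref{boundarystabilityfunction} apply. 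Thus it suffices to verify Conjecture \ref{conj4}(1) with the explicit constant $a_0 = \tfrac{3\beta^3+6\beta^2-4}{6(3\beta+2)}$; Proposition \ref{boundarystabilityfunction} then furnishes the stability function for every $a_0 < a < \beta^2/6$, which is exactly the stated range. The Harder--Narasimhan property is separate and easier: $\mathrm{Coh}^{\beta,i_*\mathcal{O}}_0(X)$ is Noetherian by Corollary \ref{Noetherian3}, and for $\beta \in \mathbb{Q}$ the imaginary part $\Im Z^{\beta,\beta^2,a} = v_2 - \beta v_1$ has discrete image, so Proposition \ref{HNproperty} finishes it.

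To obtain Conjecture \ref{conj4}(1) I would invoke Proposition \ref{BGlem2}, so the task becomes producing a bounded heart $\mathcal{B} \subseteq \langle \mathrm{Coh}^{\beta,i_*\mathcal{O}}_0(X), \mathrm{Coh}^{\beta,i_*\mathcal{O}}_0(X)[1]\rangle$ satisfying its three hypotheses for $a_0 = \tfrac{3\beta^3+6\beta^2-4}{6(3\beta+2)}$. The natural candidate is Bridgeland's finite-length heart $\mathcal{B}_0 = \langle i_*\mathcal{O}(1), i_*\mathcal{O}[1], i_*\mathcal{T}(-2)[2], i_*\mathcal{O}(-1)[3]\rangle$ mutated at the simple $S_1 = i_*\mathcal{O}[1]$, i.e. the backward tilt sending $S_1$ to $i_*\mathcal{O} = S_1[-1]$; concretely $\mathcal{B} = \langle \mathcal{F}, i_*\mathcal{O}\rangle$, where $(\{i_*\mathcal{O}[1]^{\oplus n}\}, \mathcal{F})$ is the torsion pair of $\mathcal{B}_0$ generated by $S_1$. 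Because $\mathrm{Ext}^1(i_*\mathcal{O}, i_*\mathcal{O}) = 0$ (Proposition \ref{Homsetofinclusion}) the torsion class is semisimple and $\mathcal{B}$ is genuinely a bounded heart; computing the Ext-quiver via Proposition \ref{Homsetofinclusion} shows that among $S_0, S_2, S_3$ only $S_0 = i_*\mathcal{O}(1)$ is affected by the mutation (since $\mathrm{Ext}^1(S_1, S_2) = \mathrm{Ext}^1(S_1, S_3) = 0$ but $\mathrm{Ext}^1(S_1, S_0) = H^0(\mathcal{O}(1)) = \mathbb{C}^4$), so the simple objects of $\mathcal{B}$ are $i_*\mathcal{O}$, $S_2$, $S_3$, and one modified object $S_0'$ with $[S_0'] = [i_*\mathcal{O}(1)] - 4[i_*\mathcal{O}]$ built from $i_*\mathcal{O}(1)$ and copies of $i_*\mathcal{O}$.

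Condition (1) I would verify object-by-object using Proposition \ref{tiltstablebobject} and the tilt description $\mathrm{Coh}^{\beta,i_*\mathcal{O}}_0(X) = \langle \mathcal{F}', i_*\mathcal{O}\rangle$: the objects $i_*\mathcal{O}(1)$ and $i_*\mathcal{O}$ lie in $\mathrm{Coh}^{\beta,i_*\mathcal{O}}_0(X)$ (their $i_*\mathcal{O}[1]$-torsion part vanishes as $\mathrm{Hom}(i_*\mathcal{O}[1], i_*\mathcal{O}(1)) = 0$), while $S_2, S_3 \in \mathrm{Coh}^{\beta,i_*\mathcal{O}}_0(X)[1]$ because $S_2[-1], S_3[-1]$ land in the torsion-free part, and extension-closedness then places $S_0'$ there as well. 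Condition (3) transcribes Proposition \ref{finalpartproposition} almost verbatim: for $F$ that is $\nu^{\beta,\beta^2}$-semistable of tilt-slope $\beta$ with $\mathrm{Hom}(i_*\mathcal{O}, F) = 0$, one has $\mathrm{Hom}(F[2], T) = 0$ for each simple $T$, the two new cases reducing to $\mathrm{Ext}^{-2}(F, i_*\mathcal{O}) = 0$ and, for $S_0'$, to the vanishing for $i_*\mathcal{O}(1)$ and $i_*\mathcal{O}$, whence $F[2] \notin \mathcal{B}$. The quantitative heart is condition (2): evaluating $Z^{\beta,\beta^2,a_0}$ on the simples gives $\Im Z(i_*\mathcal{O}) = 0$ with $\Re Z(i_*\mathcal{O}) = -\beta(a_0 - \beta^2/6) < 0$, second-quadrant values for $Z(S_0')$ and $Z(S_3)$, and $\Im Z(S_2) = 2\beta < 0$ with $\Re Z(S_2) = \tfrac{\beta^3}{2} + \beta^2 - \tfrac{2}{3} - a_0(3\beta+2)$; requiring all four to lie in the closed left half-plane $\{\Re \leqslant 0\}$ (so $\phi_0 = \tfrac12$) is equivalent to $\Re Z(S_2) \leqslant 0$, i.e. to $a_0 \geqslant \tfrac{3\beta^3+6\beta^2-4}{6(3\beta+2)}$, and equality gives the sharpest admissible $a_0$, which one checks is $< \beta^2/6$ because $\beta^2 < 1$.

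I expect the principal obstacle to be the structural input behind $\mathcal{B}$: showing that the mutation of $\mathcal{B}_0$ at $i_*\mathcal{O}[1]$ is again a bounded heart with exactly the four simple objects listed, and that it lies inside $\langle \mathrm{Coh}^{\beta,i_*\mathcal{O}}_0(X), \mathrm{Coh}^{\beta,i_*\mathcal{O}}_0(X)[1]\rangle$ --- this couples the torsion-pair tilting bookkeeping for the Ext-quiver of $\mathcal{B}_0$ with the tilt-structure of Lemma \ref{boundaryheart}, and one must in particular confirm that $\Re Z(S_0') = \Re Z(i_*\mathcal{O}(1)) - 4\Re Z(i_*\mathcal{O}) < 0$ throughout the open interval $-\tfrac12 < \beta < 0$ (it degenerates to $0$ at the endpoint $\beta = -\tfrac12$, which is why the interval is open). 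Everything else is either a direct transcription of Proposition \ref{finalpartproposition} or the elementary half-plane computation above.
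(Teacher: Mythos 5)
Your proposal is correct and is, at bottom, the paper's own proof: the same reduction through Proposition \ref{BGlem2} and Proposition \ref{boundarystabilityfunction}, the same Noetherian-plus-discreteness argument for the Harder--Narasimhan property, the same threshold $a_{0}=\tfrac{3\beta^{3}+6\beta^{2}-4}{6(3\beta+2)}$ forced by $\Re Z(S_{2})=0$, and in fact the same auxiliary heart $\mathcal{B}$. The one genuine difference is how $\mathcal{B}$ is produced. You tilt the Section~4 heart at the simple $i_{*}\mathcal{O}[1]$; the paper instead recognises the resulting heart as the one attached by \cite[Proposition 4.1, Lemma 4.4]{bridgeland2005t} to the mutated full exceptional collection $(\mathcal{O}(-1),\Omega^{2}(2),\Omega(1),\mathcal{O})$ --- your $S_{0}'$ is precisely $i_{*}\Omega(1)[1]$, since the evaluation map $\mathcal{O}^{\oplus 4}\to\mathcal{O}(1)$ is surjective with kernel $\Omega(1)$, so $[S_{0}']=[i_{*}\mathcal{O}(1)]-4[i_{*}\mathcal{O}]$ as you computed --- and this disposes of the ``principal obstacle'' you flag (boundedness, finite length, and the list of simples, which you do need in order to check condition (2) on a generating set) by citation. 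In exchange, your treatment of conditions (1) and (3) through the defining triangle $i_{*}\mathcal{O}(1)\to S_{0}'\to i_{*}\mathcal{O}^{\oplus 4}[1]$ is leaner than the paper's: the paper proves that $i_{*}\Omega(1)$ (resp.\ its shift) is $\nu^{\beta,\beta^{2}}$-stable by a wall-crossing argument in order to locate $S_{1}$ exactly inside $\mathrm{Coh}^{\beta,i_{*}\mathcal{O}}_{0}(X)$, whereas for Proposition \ref{BGlem2} the extension-closure containment together with $\mathrm{Hom}(F[2],i_{*}\mathcal{O}(1))=\mathrm{Hom}(F[1],i_{*}\mathcal{O})=0$ already suffices, so that stability analysis can be skipped. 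Two small corrections: $Z(S_{3})$ lies in the third quadrant, not the second, since $\Im Z(S_{3})=-\beta-\tfrac{1}{2}<0$ on your range (harmless, as only $\Re\leqslant 0$ matters); and $\Re Z(S_{3})<0$ is not automatic but follows from $a_{0}<\tfrac{1}{6}\beta^{2}\leqslant\tfrac{1}{6}(\beta+1)^{2}$, which is the paper's computation $\Re z_{3}=(\beta+1)\left(a_{0}-\tfrac{1}{6}(\beta+1)^{2}\right)$.
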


\begin{pf}
	Consider the following full exceptional collection:
	\begin{align*}
		(\mathcal{O}(-1),\Omega^{2}(2),\Omega(1),\mathcal{O})
	\end{align*} of $D^{b}(\mathbb{P}^{3})$. Then there is a bounded heart of $\mathcal{B}$ of $D_{0}^{b}(X)$ which is of finite length with simple objects $S_{0}=i_{*}\mathcal{O}$, $S_{1}=i_{*}\Omega(1)[1]$, $S_{2}=i_{*}\Omega^{2}(2)[2]$, $S_{3}=i_{*}\mathcal{O}(-1)[3]$ by \cite[Proposition 4.1, Lemma 4.4]{bridgeland2005t}
	
	Note that $\mathcal{O}(-1),\Omega^{2}(2),\Omega(1),\mathcal{O}$ are all slope stable sheaves on $\mathbb{P}^{3}$ and we will have similar results as in Proposition \ref{tiltstablebobject}:

	(1) The object $i_{*}\mathcal{O} \in \mathrm{Coh}^{\beta}_{0}(X)$ is $\nu^{\beta,\beta^{2}}$-stable with tilt-slope $\beta$ and $S_{0}=i_{*}\mathcal{O} \in \mathrm{Coh}^{\beta,i_{*}\mathcal{O}}_{0}(X)$;

	(2) When $-\displaystyle\frac{1}{2} \leqslant \beta<-\displaystyle\frac{1}{3}$, then $i_{*}\Omega(1) \in \mathrm{Coh}^{\beta}_{0}(X)$. Note that 
	\begin{align*}
		v^{-\frac{1}{2}}_{1}(i_{*}\Omega(1))=-1+3\cdot\frac{1}{2}=\frac{1}{2}.
	\end{align*}Then by Lemma \ref{smalltiltslope}, $i_{*}\Omega(1)$ is a $\nu^{-\frac{1}{2},\alpha}$-stable object for all $\alpha>\displaystyle\frac{1}{4}$. If $i_{*}\Omega(1)$ is strictly $\nu^{\beta_{0},\beta_{0}^{2}}$-semistable for some $-\displaystyle\frac{1}{2}<\beta_{0}<-\frac{1}{3}$, then there must be a wall $l$ passing through $(\beta_{0},\beta_{0}^{2})$ and $\Pi(i_{*}\Omega(1))=(-\displaystyle\frac{1}{3},-\frac{1}{6})$ by Proposition \ref{wallandchamberlem1}. We can verify that $l$ must pass through $(-\displaystyle\frac{1}{2},\alpha')$ with some $\alpha'>\displaystyle\frac{1}{4}$. However, it is a contradiction. Thus, $i_{*}\Omega(1)$ is $\nu^{\beta,\beta^{2}}$-stable with tilt-slope $\displaystyle\frac{1+6\beta^{2}}{2+6\beta}<\beta$ with 
	\begin{align*}
		\mathrm{Hom}_{\mathrm{Coh}^{\beta}_{0}(X)}(i_{*}\mathcal{O},i_{*}\Omega(1)) \cong \mathrm{Hom}_{D^{b}(\mathbb{P}^{3})}(\mathcal{O},\Omega(1)) \oplus \mathrm{Hom}_{D^{b}(\mathbb{P}^{3})}(\mathcal{O}(4),\Omega(1)[1])=0.
	\end{align*}
	
	When $\beta=-\displaystyle\frac{1}{3}$, $i_{*}\Omega(1)[1] \in \mathrm{Coh}^{\beta}_{0}(X)$ is $\nu^{\beta,\beta^{2}}$-stable with tilt-slope $+\infty$.
	
	When $-\displaystyle\frac{1}{3}<\beta<0$, then $i_{*}\Omega(1)[1] \in \mathrm{Coh}^{\beta}_{0}(X)$. Note that
	\begin{align*}
		v^{0}_{1}(i_{*}\Omega(1)[1])=1.
	\end{align*}Then by Lemma \ref{smalltiltslope}, $i_{*}\Omega(1)[1]$ is a $\nu^{0,\alpha}$-stable object for any $\alpha>0$. If $i_{*}\Omega(1)[1]$ is strictly $\nu^{\beta_{0},\beta_{0}^{2}}$-semistable for some $-\displaystyle\frac{1}{3}<\beta_{0}<0$, then there must be a wall $l$ passes through $(\beta_{0},\beta_{0}^{2})$ and $\Pi(i_{*}\Omega(1)[1])=(-\displaystyle\frac{1}{3},-\frac{1}{6})$ by Proposition \ref{wallandchamberlem1}. We can verify that $l$ must passes through $(-\displaystyle\frac{1}{2},\alpha')$ with some $\alpha'>\displaystyle\frac{1}{4}$. However, it is a contradiction. Thus, $i_{*}\Omega(1)[1]$ is $\nu^{\beta,\beta^{2}}$-stable with tilt-slope $\displaystyle\frac{1+6\beta^{2}}{2+6\beta}>\beta$. In all cases, $S_{1}=i_{*}\Omega(1)[1] \in \mathrm{Coh}^{\beta,i_{*}\mathcal{O}}_{0}(X)$;

	(3) The object $i_{*}\Omega^{2}(2)[1] \in \mathrm{Coh}^{\beta}_{0}(X)$ is $\nu^{\beta,\beta^{2}}$-stable with tilt-slope $\displaystyle\frac{3\beta^{2}}{2+3\beta}>\beta$. In fact $\mathcal{T}(-2) \cong \Omega^{2}(2)$ and the fact which has just been mentioned holds from the proof of Proposition \ref{tiltstablebobject}. In conclusion, we have $S_{2}[-1]=i_{*}\Omega^{2}(2)[1] \in \mathrm{Coh}^{\beta,i_{*}\mathcal{O}}_{0}(X)$.
	
	(4) The object $i_{*}\mathcal{O}(-1)[1] \in \mathrm{Coh}^{\beta}_{0}(X)$ is $\nu^{\beta,\beta^{2}}$-stable with tilt-slope $\displaystyle\frac{-1+2\beta^{2}}{2+2\beta} <\beta$. Besides, we have
	\begin{align*}
		\mathrm{Hom}_{\mathrm{Coh}^{\beta}_{0}(X)}(i_{*}\mathcal{O},i_{*}\mathcal{O}(-1)[1]) \cong \mathrm{Hom}_{D^{b}(\mathbb{P}^{3})}(\mathcal{O},\mathcal{O}(-1)[1]) \oplus \mathrm{Hom}_{D^{b}(\mathbb{P}^{3})}(\mathcal{O}(4),\mathcal{O}(-1))=0.
	\end{align*}Thus, $S_{3}[-1]=i_{*}\mathcal{O}(-1)[2] \in \mathrm{Coh}^{\beta,i_{*}\mathcal{O}}_{0}(X)$. Thus, we have $\mathcal{B} \subseteq \langle \mathrm{Coh}^{\beta,i_{*}\mathcal{O}}_{0}(X),\mathrm{Coh}^{\beta,i_{*}\mathcal{O}}_{0}(X)[1] \rangle$.
	
	After some calculation, we have: 
	\begin{align*}
		& 	z_{0}=Z^{\beta,\beta^{2},a}(S_{0})=\frac{1}{6}\beta^{3}-a\beta;\\
		&
		z_{1}=Z^{\beta,\beta^{2},a}(S_{1})=-\frac{1}{2}\beta^{3}-\frac{1}{2}\beta^{2}+\frac{1}{2}\beta-\frac{1}{6}+a(3\beta+1)+\sqrt{-1}(-\beta+\frac{1}{2});\\
		&
		z_{2}=Z^{\beta,\beta^{2},a}(S_{2})=\frac{1}{2}\beta^{3}+\beta^{2}-\frac{2}{3}-a(3\beta+2)+\sqrt{-1}(2\beta);\\
		&
		z_{3}=Z^{\beta,\beta^{2},a}(S_{3})=-\frac{1}{6}\beta^{3}-\frac{1}{2}\beta^{2}-\frac{1}{2}\beta-\frac{1}{6}+a(\beta+1)+\sqrt{-1}(-\beta-\frac{1}{2});
	\end{align*} Let $a_{0}=\displaystyle\frac{3\beta^{3}+6\beta^{2}-4}{6(3\beta+2)}$, then we can check that $a_{0}<\displaystyle\frac{v_{3}^{\beta}(i_{*}\mathcal{O})}{v_{1}^{\beta}(i_{*}\mathcal{O})}=\displaystyle\frac{1}{6}\beta^{2}$ for any $-\displaystyle\frac{1}{2}<\beta<0$. And we also can check that 
	\begin{align*}
		Z^{\beta,\beta^{2},a_{0}}(S_{i}) \subseteq \{r \mathrm{exp}(\sqrt{-1}\pi\phi) \colon r>0, \frac{1}{2}<\phi \leqslant \frac{3}{2}\}
	\end{align*} for any $i \in \{0,1,2,3\}$. In fact,
	\begin{align*}
		& \Re z_{0}=\beta(\frac{1}{6}\beta^{2}-a_{0})<0;\\
		& \Re z_{1}=-\frac{1}{2}\beta^{3}-\frac{1}{2}\beta^{2}+\frac{1}{2}\beta-\frac{1}{6}+\frac{3\beta^{3}+6\beta^{2}-4}{6(3\beta+2)}(3\beta+1)=\frac{(\beta+2)(\beta-1)(2\beta+1)}{2(3\beta+2)}<0; \\
		&\Re z_{2}=\frac{1}{2}\beta^{3}+\beta^{2}-\frac{2}{3}-\frac{3\beta^{3}+6\beta^{2}-4}{6(3\beta+2)}(3\beta+2)=0; \\
		& \Re z_{3}=-\frac{1}{6}\beta^{3}-\frac{1}{2}\beta^{2}-\frac{1}{2}\beta-\frac{1}{6}+a_{0}(\beta+1)=(\beta+1)(a_{0}-\frac{1}{6}(\beta+1)^{2})<0;
	\end{align*}The last inequality holds because $\beta^{2} \leqslant (\beta+1)^{2}$ when $\beta> -\displaystyle\frac{1}{2}$.
	
	Finally, suppose that $F \in \mathrm{Coh}_{0}^{\beta}(X)$ is $\nu^{\beta,\beta^{2}}$-stable with tilt-slope $\beta$ and $F[1]$ is a simple object in $\mathcal{I}_{0}^{\beta,i_{*}\mathcal{O}}(X)$ and $\mathrm{Hom}_{\mathrm{Coh}_{0}^{\beta}(X)}(i_{*}\mathcal{O},F)=0$. Because $F,S_{0} \in \mathrm{Coh}^{\beta}_{0}(X)$, then $\mathrm{Hom}_{D^{b}_{0}(X)}(F[2],S_{0})=0$. As for $S_{1}$, we have $S_{1} \in \mathrm{Coh}^{\beta}_{0}(X)$ or $S_{1}[-1] \in \mathrm{Coh}^{\beta}_{0}(X)$, thus $\mathrm{Hom}_{D^{b}_{0}(X)}(F[2],S_{1})=0$. For $S_{2}$, we have $S_{2}[-1] \in \mathrm{Coh}^{\beta}_{0}(X)$ and $\mathrm{Hom}_{D^{b}_{0}(X)}(F[2],S_{2})=\mathrm{Hom}_{D^{b}_{0}(X)}(F[1],S_{2}[-1])=0$. Finally, for $S_{3}$, we have $\mathrm{Hom}_{D^{b}_{0}(X)}(F[2],S_{3})=\mathrm{Hom}_{\mathrm{Coh}_{0}^{\beta}(X)}(F,i_{*}\mathcal{O}(-1)[1])=0$ because $i_{*}\mathcal{O}(-1)[1] \in \mathrm{Coh}^{\beta}_{0}(X)$ is $\nu^{\beta,\beta^{2}}$-stable with tilt-slope $<\beta$.
	
	Therefore, by Proposition \ref{BGlem2} and \ref{boundarystabilityfunction}, $Z^{\beta,\beta^{2},a}$ is a stability function on $\mathrm{Coh}^{\beta,i_{*}\mathcal{O}}_{0}(X)$ for any $a_{0}<a<\displaystyle\frac{1}{6}\beta^{2}$. Moreover, because the image of $\mathrm{Im}Z^{\beta,\beta^{2},a}$ is discrete in $\mathbb{R}$ and $\mathrm{Coh}_{0}^{\beta,i_{*}\mathcal{O}}(X)$ is Noetherian by Corollary \ref{Noetherian3}, then $Z^{\beta,\beta^{2},a}$ has the Harder-Narasimhan filtration by proposition \ref{HNproperty}.
\end{pf}

\begin{rmk}
	(1) It implies that $a_{\mathcal{O}}^{\beta} \leqslant \displaystyle\frac{3\beta^{3}+6\beta^{2}-4}{6(3\beta+2)}$ when $-\displaystyle\frac{1}{2}<\beta<0$ and $\beta$ is rational.
	
	(2) In fact, the pair $(Z^{\beta,\beta^{2},a},\mathrm{Coh}_{0}^{\beta,i_{*}\mathcal{O}}(X))$ is a stability condition on $D_{0}^{b}(X)$ when $-\displaystyle\frac{1}{2}<\beta<0$ is rational and $\displaystyle\frac{3\beta^{3}+6\beta^{2}-4}{6(3\beta+2)}<a<\displaystyle\frac{1}{6}\beta^{2}$. We will prove this fact in the next subsection.
\end{rmk}

\subsection{General Case} 

In this subsection, we want to give stability functions on the Noetherian bounded heart $\mathrm{Coh}^{\beta,i_{*}\mathcal{E}}_{0}(X)$ of $D_{0}^{b}(X)$.  It is not easy in general, but we can give one method to make it possible with some assumption. To make things easier, we always assume that $\beta<\mu_{1}(\mathcal{E}) \leqslant \mu(\mathcal{E})$ is a rational number and $i_{*}\mathcal{E} \in \mathrm{Coh}_{0}^{\beta}(X)$ is $\nu^{\beta,\alpha_{\mathcal{E}}^{\beta}}$-stable. 

In order to apply Lemma \ref{BGlem2}, we first need to find a suitable bounded heart $\mathcal{B}$. In fact, we want to assume that it is induced by some full exceptional collection $(F_{0},F_{1},F_{2},F_{3})$ of $D^{b}(\mathbb{P}^{3})$. Explicitly, suppose that there exists a full exceptional collection $(F_{0},F_{1},F_{2},F_{3})$ of $D^{b}(\mathbb{P}^{3})$ with $F_{i}$ are locally free sheaves of finite rank and $F_{3}=\mathcal{E}$. Then there is a bounded heart $\mathcal{B}$ of $D_{0}^{b}(X)$ which is of finite length and with simple objects $S_{0}=i_{*}\mathcal{E}$, $S_{1}=i_{*}F_{2}[1]$, $S_{2}=i_{*}F_{1}[2]$, $S_{3}=i_{*}F_{0}[3]$ by \cite[Proposition4.1, Lemma 4.4]{bridgeland2005t}. We also assume that $i_{*}F_{j} \in D_{0}^{b}(X)$ are $\nu^{\beta,\alpha_{\mathcal{E}}^{\beta}}$-semistable for every $j \in \{0,1,2,3\}$. One of our aim is to establish suitable conditions relating $\beta$ and $(F_{0},F_{1},F_{2},F_{3})$ such that  $\mathcal{B} \subseteq \langle \mathrm{Coh}^{\beta,i_{*}\mathcal{E}}_{0}(X),\mathrm{Coh}^{\beta,i_{*}\mathcal{E}}_{0}(X)[1]\rangle$. Note that we always have $\mu(F_{0})<\mu(F_{1})<\mu(F_{2})<\mu(F_{3})$ by \cite[Lemma 2.8]{gould2022constructive} and we will clarify the inequality of $\beta$ through a case-by-case analysis.

(1) It is not difficult to verify that $S_{0}=i_{*}\mathcal{E} \in \mathrm{Coh}^{\beta,i_{*}\mathcal{E}}_{0}(X)$ when $\beta<\mu_{1}(\mathcal{E})$.

(2) When $\beta<\mu(F_{0})$, then $S_{3}[-3]=i_{*}F_{0} \in \mathrm{Coh}_{0}^{\beta}(X)$ and it implies that $S_{3}[-2]$ or $S_{3}[-3]$ in $\mathrm{Coh}^{\beta,i_{*}\mathcal{E}}_{0}(X)$, it is a contradiction. When $\mu(F_{0}) \leqslant \beta<\mu(F_{3})$, then $S_{3}[-2]=i_{*}F_{0}[1] \in \mathrm{Coh}_{0}^{\beta}(X)$. If $\nu^{\beta,\alpha_{\mathcal{E}}^{\beta}}(i_{*}F_{0}[1])>\beta$, then $S_{3}[-2] \in \mathrm{Coh}^{\beta,i_{*}\mathcal{E}}_{0}(X)$, it is a contradiction. Thus we must have $\nu^{\beta,\alpha_{\mathcal{E}}^{\beta}}(i_{*}F_{0}[1]) \leqslant \beta$ and thus $\mu(F_{0}) \neq \beta$. Moreover, we expect that $\nu^{\beta,\alpha_{\mathcal{E}}^{\beta}}(i_{*}F_{0}[1])<\beta$ for our convenience. 

(3) When $\mu(F_{0})<\beta<\mu(F_{1})$, then  $S_{2}[-2]=i_{*}F_{1} \in \mathrm{Coh}_{0}^{\beta}(X)$. Similarly, if we want to have $S_{2} \in \langle \mathrm{Coh}^{\beta,i_{*}\mathcal{E}}_{0}(X),\mathrm{Coh}^{\beta,i_{*}\mathcal{E}}_{0}(X)[1]\rangle$, then we must have $\nu^{\beta,\alpha_{\mathcal{E}}^{\beta}}(i_{*}F_{1}) \leqslant \beta$. We also expect that $\nu^{\beta,\alpha_{\mathcal{E}}^{\beta}}(i_{*}F_{1})<\beta$ for our convenience. When $\mu(F_{1}) \leqslant \beta<\mu(F_{3})$, then $S_{2}[-1]=i_{*}F_{1}[1] \in \mathrm{Coh}_{0}^{\beta}(X)$, and whether $\nu^{\beta,\alpha_{\mathcal{E}}^{\beta}}(i_{*}F_{1}[1]) \leqslant \beta$ or $\nu^{\beta,\alpha_{\mathcal{E}}^{\beta}}(i_{*}F_{1}[1]) >\beta$, we have $S_{2}=i_{*}F_{1}[2] \in \langle \mathrm{Coh}^{\beta,i_{*}\mathcal{E}}_{0}(X),\mathrm{Coh}^{\beta,i_{*}\mathcal{E}}_{0}(X)[1]\rangle$.

(4) When $\mu(F_{0})<\beta<\mu(F_{2})$, then $S_{1}[-1]=i_{*}F_{2} \in \mathrm{Coh}_{0}^{\beta}(X)$ and whether $\nu^{\beta,\alpha_{\mathcal{E}}^{\beta}}(i_{*}F_{2}) \leqslant \beta$ or $\nu^{\beta,\alpha_{\mathcal{E}}^{\beta}}(i_{*}F_{2}) >\beta$, we always have $S_{1}=i_{*}F_{2}[1] \in \langle \mathrm{Coh}^{\beta,i_{*}\mathcal{E}}_{0}(X),\mathrm{Coh}^{\beta,i_{*}\mathcal{E}}_{0}(X)[1]\rangle$. When $\mu(F_{2}) \leqslant \beta<\mu(F_{3})$, then $S_{1}=i_{*}F_{2}[1] \in \mathrm{Coh}_{0}^{\beta}(X)$. Similarly, if we want that $S_{1} \in \langle \mathrm{Coh}^{\beta,i_{*}\mathcal{E}}_{0}(X),\mathrm{Coh}^{\beta,i_{*}\mathcal{E}}_{0}(X)[1]\rangle$, then we must have $\nu^{\beta,\alpha_{\mathcal{E}}^{\beta}}(i_{*}F_{2}[1]) >\beta$ in this case.

In conclusion, we have the following proposition:

\begin{prop}\label{generalproposition}
	Assume that $\mathcal{E} \in \mathrm{Coh}(\mathbb{P}^{3})$ is an exceptional locally free sheaf of finite rank. Suppose that there exist a rational number $\beta$, a real number $a_{0}$ and a full exceptional collection $(F_{0},F_{1},F_{2},F_{3}=\mathcal{E})$ consisting of finite rank locally free sheaves on $\mathbb{P}^{3}$ such that the objects $i_{*}F_{j} \in D_{0}^{b}(X)$ are $\nu^{\beta,\alpha_{\mathcal{E}}^{\beta}}$-stable for every $j \in \{0,1,2,3\}$. If the numbers $\beta$ and $a_{0}$ satisfies:
	
	(1) $\beta<\mu_{1}(\mathcal{E})$, $\beta>\mu(F_{0})$ and $\displaystyle\frac{v_{2}(F_{0})-\alpha_{\mathcal{E}}^{\beta}v_{0}(F_{0})}{v_{1}^{\beta}(F_{0})}<\beta$.
	
	(2) One of the following conditions holds:
	\begin{itemize}
		\item $\mu(F_{0})<\beta<\mu(F_{1})$, $\displaystyle\frac{v_{2}(F_{1})-\alpha_{\mathcal{E}}^{\beta}v_{0}(F_{1})}{v_{1}^{\beta}(F_{1})}<\beta$;
		\item $\mu(F_{1})\leqslant\beta\leqslant\mu(F_{2})$; 
		\item $\mu(F_{2})<\beta<\mu(F_{3})$,   $\displaystyle\frac{v_{2}(F_{2})-\alpha_{\mathcal{E}}^{\beta}v_{0}(F_{2})}{v_{1}^{\beta}(F_{2})}>\beta$.
	\end{itemize}
	
	(3)
	\begin{align*}
		& v_{3}^{\beta}(F_{0})<\displaystyle\frac{v_{3}^{\beta}(\mathcal{E})}{v_{1}^{\beta}(\mathcal{E})}v_{1}^{\beta}(F_{0});\\
		& v_{3}^{\beta}(F_{1})>\displaystyle\frac{v_{3}^{\beta}(\mathcal{E})}{v_{1}^{\beta}(\mathcal{E})}v_{1}^{\beta}(F_{1});\\
		&
		v_{3}^{\beta}(F_{2})<\displaystyle\frac{v_{3}^{\beta}(\mathcal{E})}{v_{1}^{\beta}(\mathcal{E})}v_{1}^{\beta}(F_{2});
	\end{align*} 

	(4) For any $j \in \{0,1,2,3\}$, we have
	\begin{align*}
		Z^{\beta,\alpha_{\mathcal{E}}^{\beta},a_{0}}(i_{*}F_{j}[3-j]) \in \left\{r \mathrm{exp}(\sqrt{-1}\pi\phi) \mid r>0, \frac{1}{2}<\phi \leqslant \frac{3}{2}\right\}.
	\end{align*}
	
	Then $Z^{\beta,\alpha_{\mathcal{E}}^{\beta},a}$ will be a stability function on $\mathrm{Coh}_{0}^{\beta,i_{*}\mathcal{E}}(X)$ for any $a_{0}<a<\displaystyle\frac{v_{3}^{\beta}(\mathcal{E})}{v_{1}^{\beta}(\mathcal{E})}$. Moreover, $
	\sigma^{\beta,i_{*}\mathcal{E},a}=(Z^{\beta,\alpha_{\mathcal{E}}^{\beta},a},\mathrm{Coh}_{0}^{\beta,i_{*}\mathcal{E}}(X)) \in \mathrm{Stab}_{H}(D_{0}^{b}(X))$ for any $a_{0}<a<\displaystyle\frac{v_{3}^{\beta}(\mathcal{E})}{v_{1}^{\beta}(\mathcal{E})}$.
\end{prop}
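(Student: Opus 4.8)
The plan is to transport the argument of Proposition \ref{simplecase} to the general collection $(F_0,F_1,F_2,F_3=\mathcal{E})$, reducing the assertion that $Z^{\beta,\alpha_{\mathcal{E}}^{\beta},a}$ is a stability function to Lemma \ref{BGlem2} and Proposition \ref{boundarystabilityfunction}, and then to upgrade the output to a genuine stability condition. First I would record that $i_*\mathcal{E}\in\mathrm{Coh}_0^{\beta}(X)$ (since $\beta<\mu_1(\mathcal{E})\le\mu(\mathcal{E})$) is $\nu^{\beta,\alpha_{\mathcal{E}}^{\beta}}$-stable of tilt-slope $\beta$, so that the tilted heart $\mathrm{Coh}_0^{\beta,i_*\mathcal{E}}(X)$ exists and is Noetherian by Lemma \ref{boundaryheart} and Corollary \ref{Noetherian3}. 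By \cite[Proposition 4.1, Lemma 4.4]{bridgeland2005t} the right dual collection produces a finite-length bounded heart $\mathcal{B}$ of $D_0^{b}(X)$ with simple objects $S_0=i_*\mathcal{E}$, $S_1=i_*F_2[1]$, $S_2=i_*F_1[2]$, $S_3=i_*F_0[3]$, and I will verify the three hypotheses of Lemma \ref{BGlem2} for $\mathcal{A}=\mathrm{Coh}_0^{\beta,i_*\mathcal{E}}(X)$, $Z=Z^{\beta,\alpha_{\mathcal{E}}^{\beta},a_0}$ and this $\mathcal{B}$.

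The containment $\mathcal{B}\subseteq\langle\mathrm{Coh}_0^{\beta,i_*\mathcal{E}}(X),\mathrm{Coh}_0^{\beta,i_*\mathcal{E}}(X)[1]\rangle$ is exactly the case-by-case analysis carried out before the statement, which uses $\mu(F_0)<\mu(F_1)<\mu(F_2)<\mu(F_3)$ together with conditions (1) and (2) to locate the shift of each $i_*F_j$. For the phase hypothesis, note that $\{r\exp(\sqrt{-1}\pi\phi)\mid r\ge 0,\ \tfrac12\le\phi\le\tfrac32\}$ is the closed left half-plane $\{\Re\le 0\}$, a convex cone stable under addition; since $\mathcal{B}$ is the extension-closure of the $S_j=i_*F_j[3-j]$, condition (4) places every $Z^{\beta,\alpha_{\mathcal{E}}^{\beta},a_0}(S_j)$ in this cone and hence gives the inclusion required in Lemma \ref{BGlem2}(2) with $\phi_0=\tfrac12$. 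Condition (3) records the companion signs of $\Re Z^{\beta,\alpha_{\mathcal{E}}^{\beta},a}(S_j)$ at the endpoint $a=v_3^{\beta}(\mathcal{E})/v_1^{\beta}(\mathcal{E})$ (where $\Re Z(S_0)=0$), so that by affine-linearity of $\Re Z^{\beta,\alpha_{\mathcal{E}}^{\beta},a}(S_j)$ in $a$ the inclusion persists on the whole interval $a_0<a<v_3^{\beta}(\mathcal{E})/v_1^{\beta}(\mathcal{E})$; in particular the constraint $a_0<v_3^{\beta}(\mathcal{E})/v_1^{\beta}(\mathcal{E})$ of Lemma \ref{BGlem2} holds.

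For the last hypothesis, let $F\in\mathrm{Coh}_0^{\beta}(X)$ be $\nu^{\beta,\alpha_{\mathcal{E}}^{\beta}}$-semistable of tilt-slope $\beta$ with $\mathrm{Hom}(i_*\mathcal{E},F)=0$ and $F[1]$ simple in $\mathcal{I}^{\beta,i_*\mathcal{E}}_0(X)$; as $\mathcal{B}$ is of finite length it suffices to prove $\mathrm{Hom}_{D_0^{b}(X)}(F[2],S_j)=0$ for all $j$, for then the nonzero object $F[2]$ cannot lie in $\mathcal{B}$. Writing $\mathrm{Hom}(F[2],S_j)=\mathrm{Ext}^{j-2}(F,i_*F_{3-j})$, the cases $j=0,1$ are negative Ext-groups inside $\mathrm{Coh}_0^{\beta}(X)$ (using that $i_*F_2$ or $i_*F_2[1]$ lies in the heart) and vanish; for $j=2$, when $\mu(F_1)\le\beta$ the group $\mathrm{Hom}(F,i_*F_1)$ is again a negative Ext, while in the remaining sub-case of (2) the hypothesis $\nu^{\beta,\alpha_{\mathcal{E}}^{\beta}}(i_*F_1)<\beta$ makes $i_*F_1$ a $\nu^{\beta,\alpha_{\mathcal{E}}^{\beta}}$-stable object of tilt-slope $<\beta$, whence $\mathrm{Hom}(F,i_*F_1)=0$ by the slope inequality for semistable objects; for $j=3$, condition (1) makes $i_*F_0[1]\in\mathrm{Coh}_0^{\beta}(X)$ a $\nu^{\beta,\alpha_{\mathcal{E}}^{\beta}}$-stable object of tilt-slope $<\beta$, and the same slope inequality gives $\mathrm{Hom}(F,i_*F_0[1])=0$. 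Lemma \ref{BGlem2} then yields Conjecture \ref{conj4}, and Proposition \ref{boundarystabilityfunction} shows $Z^{\beta,\alpha_{\mathcal{E}}^{\beta},a}$ is a stability function on $\mathrm{Coh}_0^{\beta,i_*\mathcal{E}}(X)$ for every $a_0<a<v_3^{\beta}(\mathcal{E})/v_1^{\beta}(\mathcal{E})$.

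It then remains to promote this to $\sigma^{\beta,i_*\mathcal{E},a}\in\mathrm{Stab}_H(D_0^{b}(X))$. The Harder--Narasimhan property follows from Proposition \ref{HNproperty}, since $\mathrm{Coh}_0^{\beta,i_*\mathcal{E}}(X)$ is Noetherian (Corollary \ref{Noetherian3}) and, $\beta$ being rational, $\Im Z^{\beta,\alpha_{\mathcal{E}}^{\beta},a}=v_2^{\beta}-(\alpha_{\mathcal{E}}^{\beta}-\tfrac{\beta^2}{2})v_0^{\beta}$ has discrete image. The step I expect to be the main obstacle is the support property: one must produce a real quadratic form $Q$ on $(\Lambda/\Lambda_{\sigma})_{\mathbb{R}}$ that is negative definite on $\ker Z^{\beta,\alpha_{\mathcal{E}}^{\beta},a}$ and nonnegative on the classes of all $\sigma^{\beta,i_*\mathcal{E},a}$-semistable objects. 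I would take $Q$ to be the form $Q^{\beta,\alpha}$ of the inequality (\ref{BGinequality2}), whose negative-definiteness on the kernel is a direct computation; the delicate part is the nonnegativity on semistable classes. Here I would exploit that $\sigma^{\beta,i_*\mathcal{E},a}$ has the same central charge as the geometric conditions and is obtained from them by crossing the wall of the geometric chamber along which $i_*\mathcal{E}$ acquires phase $1$ (the single finite tilt at the spherical object $i_*\mathcal{E}$), so that it lies on $\partial\mathrm{Stab}_H^{\mathrm{geo}}(D_0^{b}(X))$; since the geometric conditions satisfy the support property with the continuously varying form $Q^{\beta,\alpha}$ (Theorem \ref{thm1} and the deformation argument in the remark following it), and $Q^{\beta,\alpha}$ remains nondegenerate on the kernel in the limit, the support property passes across the wall to the boundary point, alternatively one checks $Q\ge 0$ directly on the stable factors provided by Proposition \ref{imaginarypart3} (namely $i_*\mathcal{E}$, zero-dimensional sheaves, and tilt-semistable objects of tilt-slope $\beta$). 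Combining the stability-function property, the Harder--Narasimhan property and the support property gives $\sigma^{\beta,i_*\mathcal{E},a}\in\mathrm{Stab}_H(D_0^{b}(X))$ for all $a_0<a<v_3^{\beta}(\mathcal{E})/v_1^{\beta}(\mathcal{E})$.
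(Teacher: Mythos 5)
Your treatment of the stability-function part follows the paper's own route: the heart $\mathcal{B}$ generated by $S_{0}=i_{*}\mathcal{E}$, $S_{1}=i_{*}F_{2}[1]$, $S_{2}=i_{*}F_{1}[2]$, $S_{3}=i_{*}F_{0}[3]$, the containment $\mathcal{B}\subseteq\langle \mathrm{Coh}^{\beta,i_{*}\mathcal{E}}_{0}(X),\mathrm{Coh}^{\beta,i_{*}\mathcal{E}}_{0}(X)[1]\rangle$ from the case analysis preceding the statement, the vanishing of $\mathrm{Hom}_{D^{b}_{0}(X)}(F[2],S_{j})$ via the tilt-slopes of $i_{*}F_{0}[1]$ and $i_{*}F_{1}$, and then Lemma \ref{BGlem2} and Proposition \ref{boundarystabilityfunction}. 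That part is correct, up to the harmless imprecision that the cone in condition (4) is $\{\Re<0\}\cup\{\text{negative imaginary axis}\}$ rather than the full closed left half-plane; it is still additively closed and contained in the $\phi_{0}=\tfrac{1}{2}$ cone, so Lemma \ref{BGlem2}(2) applies. The Harder--Narasimhan step via Proposition \ref{HNproperty} and Corollary \ref{Noetherian3} also matches the paper.

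The genuine gap is in your support-property argument. Your primary route assumes that $\sigma^{\beta,i_{*}\mathcal{E},a}$ ``lies on $\partial\mathrm{Stab}_{H}^{\mathrm{geo}}(D_{0}^{b}(X))$'' and that the support property ``passes across the wall.'' But to speak of this pair as a boundary point of the geometric chamber --- i.e., as a limit of points of $\mathrm{Stab}_{H}(D_{0}^{b}(X))$ --- you must already know it is a stability condition satisfying the support property; the paper only establishes membership in the boundary afterwards, in Corollary \ref{boundary}, and that argument (via \cite[Lemma 5.9]{bayer2011space}) presupposes $\sigma^{\beta,i_{*}\mathcal{E},a}\in\mathrm{Stab}_{H}(D_{0}^{b}(X))$. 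So the argument is circular as stated. Your fallback --- checking $Q^{\beta,\alpha}\geqslant 0$ on the factors of Proposition \ref{imaginarypart3} --- is also insufficient, since that proposition only describes objects with $\Im Z=0$, not the full collection of $\sigma^{\beta,i_{*}\mathcal{E},a}$-semistable classes. The paper closes this step by an entirely different mechanism: conditions (3) and (4) force $\Re Z^{\beta,\alpha_{\mathcal{E}}^{\beta},a}(S_{j})<0$ for all four simples and all $a$ in the interval (this is where the sign information $v_{1}^{\beta}(F_{0})<0$, $v_{1}^{\beta}(F_{1})\geqslant 0$ or $<0$, etc., is actually consumed), so that $(-\sqrt{-1}Z^{\beta,\alpha_{\mathcal{E}}^{\beta},a},\mathcal{B})$ is an algebraic stability condition on the finite-length heart $\mathcal{B}$, for which the support property is automatic; a $\widetilde{\mathrm{GL}}_{2}(\mathbb{R})$-rotation and the uniqueness lemma \cite[Lemma 8.11]{bayer2016space} then identify the rotated algebraic condition with $\sigma^{\beta,i_{*}\mathcal{E},a}$, since both hearts sit inside $\langle\mathcal{B},\mathcal{B}[-1]\rangle$. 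You should replace your wall-crossing paragraph with this identification.
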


\begin{pf}
	Let $\mathcal{B}$ be the full extension-closed subcategory of $D_{0}^{b}(X)$ generated by $S_{0}=i_{*}\mathcal{E}$, $S_{1}=i_{*}F_{2}[1]$, $S_{2}=i_{*}F_{1}[2]$, $S_{3}=i_{*}F_{0}[3]$. By \cite[Proposition4.1, Lemma 4.4]{bridgeland2005t}, it is a bounded heart of $D_{0}^{b}(X)$, with four simple objects $S_{0},S_{1},S_{2},S_{3}$. From our discussion above, we have $\mathcal{B} \subseteq \langle \mathrm{Coh}^{\beta,i_{*}\mathcal{E}}_{0}(X),\mathrm{Coh}^{\beta,i_{*}\mathcal{E}}_{0}(X)[1]\rangle$.
		
	By Proposition \ref{BGlem2}, we only need to prove the following fact: for any $\nu^{\beta,\alpha_{\mathcal{E}}^{\beta}}$-semistable object $F \in \mathrm{Coh}^{\beta}_{0}(X)$ with tilt-slope $\beta$ and $F[1]$ is a simple object in $\mathcal{I}^{\beta,i_{*}\mathcal{E}}_{0}(X)$ with $\mathrm{Hom}_{\mathrm{Coh}^{\beta}_{0}(X)}(i_{*}\mathcal{E},F)=0$, we have $\mathrm{Hom}_{\mathrm{Coh}^{\beta}_{0}(X)}(F[2],S_{i})=0$. It is obvious when $i=0,1$. As for $S_{3}$, we have $\mathrm{Hom}_{D^{b}_{0}(X)}(F[2],S_{3})=\mathrm{Hom}_{\mathrm{Coh}_{0}^{\beta}(X)}(F,i_{*}F_{0}[1])=0$ because $i_{*}F_{0}[1] \in \mathrm{Coh}^{\beta}_{0}(X)$ is $\nu^{\beta,\alpha_{\mathcal{E}}^{\beta}}$-semistable with tilt-slope $<\beta$. Similarly, when $\beta \geqslant \mu(F_{1})$, then $\mathrm{Hom}_{D^{b}_{0}(X)}(F[2],S_{2})=\mathrm{Hom}_{\mathrm{Coh}_{0}^{\beta}(X)}(F[1],i_{*}F_{1}[1])=0$ because $i_{*}F_{1}[1] \in \mathrm{Coh}^{\beta}_{0}(X)$ and when $\beta<\mu(F_{1})$, then $\mathrm{Hom}_{D^{b}_{0}(X)}(F[2],S_{2})=\mathrm{Hom}_{\mathrm{Coh}_{0}^{\beta}(X)}(F,i_{*}F_{1})=0$ because $i_{*}F_{1} \in \mathrm{Coh}^{\beta}_{0}(X)$ is $\nu^{\beta,\alpha_{\mathcal{E}}^{\beta}}$-semistable with tilt-slope $<\beta$. 
	
	Now, because $\beta$ is rational, then the image of $\mathrm{Im}Z^{\beta,\alpha_{\mathcal{E}}^{\beta},a}=v_{2}^{\beta}-(\alpha_{\mathcal{E}}^{\beta}-\displaystyle\frac{\beta^{2}}{2})v_{0}^{\beta}$ is discrete and the heart $\mathrm{Coh}_{0}^{\beta,i_{*}\mathcal{E}}(X)$ is Noetherian, then by Proposition \ref{HNproperty}, we know that the stability function $Z^{\beta,\alpha_{\mathcal{E}}^{\beta},a}$ on $\mathrm{Coh}^{\beta,i_{*}\mathcal{E}}_{0}(X)$ has the Harder-Narasimhan property.
	
	Finally, for any $a_{0}<a<\displaystyle\frac{v_{3}^{\beta}(\mathcal{E})}{v_{1}^{\beta}(\mathcal{E})}$, we have:
	\begin{align*}
		\mathrm{Re}Z^{\beta,\alpha_{\mathcal{E}}^{\beta},a}(S_{3})=-v_{3}^{\beta}(S_{3})+av_{1}^{\beta}(S_{3})=v_{3}^{\beta}(F_{0})-av_{1}^{\beta}(F_{0})<v_{3}^{\beta}(F_{0})-\frac{v_{3}^{\beta}(\mathcal{E})}{v_{1}^{\beta}(\mathcal{E})}v_{1}^{\beta}(F_{0})<0.
	\end{align*}since $v_{1}^{\beta}(F_{0})<0$. When $\mu(F_{0})<\beta\leqslant \mu(F_{1})$, we have:
	\begin{align*}
		\mathrm{Re}Z^{\beta,\alpha_{\mathcal{E}}^{\beta},a}(S_{2})=-v_{3}^{\beta}(S_{2})+av_{1}^{\beta}(S_{2})=-v_{3}^{\beta}(F_{1})+av_{1}^{\beta}(F_{1})\leqslant-v_{3}^{\beta}(F_{1})+\frac{v_{3}^{\beta}(\mathcal{E})}{v_{1}^{\beta}(\mathcal{E})}v_{1}^{\beta}(F_{1})<0.
	\end{align*}since $v_{1}^{\beta}(F_{1}) \geqslant 0$. 
	When $\mu(F_{1})<\beta$, we have:
	\begin{align*}
		\mathrm{Re}Z^{\beta,\alpha_{\mathcal{E}}^{\beta},a}(S_{2})=\mathrm{Re}Z^{\beta,\alpha_{\mathcal{E}}^{\beta},a_{0}}(F_{1})+(a-a_{0})v_{1}^{\beta}(F_{1})<\mathrm{Re}Z^{\beta,\alpha_{\mathcal{E}}^{\beta},a_{0}}(F_{1})<0
	\end{align*} since $v_{1}^{\beta}(F_{1})<0$ and $\mathrm{Re}Z^{\beta,\alpha_{\mathcal{E}}^{\beta},a_{0}}(F_{1})<0$ by condition (4). By the same reason, we can prove that $\mathrm{Re}Z^{\beta,\alpha_{\mathcal{E}}^{\beta},a}(S_{1})\leqslant 0$ for any $a_{0}<a<\displaystyle\frac{v_{3}^{\beta}(\mathcal{E})}{v_{1}^{\beta}(\mathcal{E})}$.
	 
	In conclusion, for any $a_{0}<a<\displaystyle\frac{v_{3}^{\beta}(\mathcal{E})}{v_{1}^{\beta}(\mathcal{E})}$ and $j \in \{0,1,2,3\}$, we have $\mathrm{Re}Z^{\beta,\alpha_{\mathcal{E}}^{\beta},a}(S_{j})<0$. This implies the existence of algebraic stability conditions $\tau^{\beta,\alpha_{\mathcal{E}}^{\beta},a}=(-\sqrt{-1}Z^{\beta,\alpha_{\mathcal{E}}^{\beta},a},\mathcal{B})$ for any $a_{0}<a<\displaystyle\frac{v_{3}^{\beta}(\mathcal{E})}{v_{1}^{\beta}(\mathcal{E})}$. Under the $\widetilde{\mathrm{GL}}_{2}(\mathbb{R})$-action, there exist algebraic stability conditions $\widetilde{\sigma}^{\beta,\alpha_{\mathcal{E}}^{\beta},a}=(Z^{\beta,\alpha_{\mathcal{E}}^{\beta},a},\mathcal{B}^{\beta,\alpha_{\mathcal{E}}^{\beta},a})$, where $\mathcal{B}^{\beta,\alpha_{\mathcal{E}}^{\beta},a} \subseteq \langle \mathcal{B},\mathcal{B}[-1]\rangle$. Then $\sigma^{\beta,i_{*}\mathcal{E},a}=\widetilde{\sigma}^{\beta,\alpha_{\mathcal{E}}^{\beta},a}$ by the following lemma. The latter is an algebraic stability condition and satisfies support property automatically.
\end{pf}

\begin{lem}\cite[Lemma 8.11]{bayer2016space}
	Assume that $\sigma_{1}=(Z,\mathcal{A}_{1})$ and $\sigma_{2}=(Z,\mathcal{A}_{2})$ are two pre-stability conditions on the triangulated category $\mathcal{D}$ with
	the following properties:
	
	(1) Their central charges agree.
	
	(2) There exists a bounded heart $\mathcal{B}$ of $\mathcal{D}$ such that:
	\begin{align*}
		\mathcal{A}_{1},\mathcal{A}_{2} \subseteq \langle \mathcal{B},\mathcal{B}[-1]\rangle
	\end{align*}
	then $\sigma_{1}=\sigma_{2}$.
\end{lem}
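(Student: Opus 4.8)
The plan is to first reduce the equality $\sigma_1=\sigma_2$ to the equality of hearts $\mathcal{A}_1=\mathcal{A}_2$, and then to force this equality out of the shared central charge $Z$. For the reduction, recall that a pre-stability condition is the same datum as a bounded heart equipped with a stability function satisfying the Harder--Narasimhan property, with the slicing reconstructed canonically from this pair (this is the equivalence recorded after the definition of weak pre-stability condition, cf. \cite[Prop. 5.3]{bridgeland2007stability}). Since $\sigma_1$ and $\sigma_2$ have the same $Z$, if their hearts coincide then both slicings are produced by one and the same construction, so $\sigma_1=\sigma_2$. Thus everything comes down to proving $\mathcal{A}_1=\mathcal{A}_2$.

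Next I would realize both hearts as tilts of one common heart. Put $\mathcal{C}=\mathcal{B}[-1]$, which is again a bounded heart, and observe that $\langle\mathcal{B},\mathcal{B}[-1]\rangle=\langle\mathcal{C}[1],\mathcal{C}\rangle=\langle\mathcal{C},\mathcal{C}[1]\rangle$, since the extension closure does not depend on the order of the generators and $\mathcal{B}=(\mathcal{B}[-1])[1]=\mathcal{C}[1]$. Hence $\mathcal{A}_j\subseteq\langle\mathcal{C},\mathcal{C}[1]\rangle$ for $j=1,2$, and Proposition \ref{tiltingheart} presents each $\mathcal{A}_j$ as the tilt of $\mathcal{C}$ at the torsion pair $(\mathcal{T}_j,\mathcal{F}_j)$, where $\mathcal{T}_j=\mathcal{A}_j\cap\mathcal{C}$ and $\mathcal{F}_j=\mathcal{A}_j[-1]\cap\mathcal{C}$. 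Because a tilt is determined by its torsion pair, and a torsion pair by its torsion class, it now suffices to prove $\mathcal{T}_1=\mathcal{T}_2$.

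The central charge separates the two halves of each torsion pair. Since $Z$ is a genuine stability function on $\mathcal{A}_j$, every nonzero $T\in\mathcal{T}_j\subseteq\mathcal{A}_j$ satisfies $Z(T)\in\mathbb{H}\cup\mathbb{R}^{<0}$, whereas every nonzero $F\in\mathcal{F}_j$ has $F[1]\in\mathcal{A}_j$ and therefore $Z(F)=-Z(F[1])\in(-\mathbb{H})\cup\mathbb{R}^{>0}$. Write $R_{+}:=\mathbb{H}\cup\mathbb{R}^{<0}$ and $R_{-}:=(-\mathbb{H})\cup\mathbb{R}^{>0}$, so that $R_{+}\cap R_{-}=\varnothing$. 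I would then use that a torsion class is closed under quotients in $\mathcal{C}$ (any morphism from a torsion object to a torsion-free object vanishes, which forces the torsion-free part of a quotient of a torsion object to be zero). Given $T\in\mathcal{T}_1$, its decomposition $0\to T'\to T\to F'\to 0$ relative to $(\mathcal{T}_2,\mathcal{F}_2)$ exhibits $F'$ as a quotient of $T$ in $\mathcal{C}$, whence $F'\in\mathcal{T}_1\cap\mathcal{F}_2$. If $F'\neq 0$ then $Z(F')$ would lie in $R_{+}\cap R_{-}=\varnothing$, a contradiction; hence $F'=0$ and $T\in\mathcal{T}_2$. By symmetry $\mathcal{T}_1=\mathcal{T}_2$, so $\mathcal{A}_1=\mathcal{A}_2$ and $\sigma_1=\sigma_2$.

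The point on which the whole argument turns is that $Z$ is a stability function rather than merely a weak one: this is exactly what guarantees $Z(E)\neq 0$ for every nonzero object of either heart, and hence the disjointness $R_{+}\cap R_{-}=\varnothing$ that powers the final step. Were $Z$ only weak, one would additionally have to control the objects of $\mathcal{C}$ on which $Z$ vanishes, since those could in principle be distributed differently between $\mathcal{T}_1$ and $\mathcal{T}_2$; handling that kernel is the only place where the argument would need genuine extra work. Everything else is the bookkeeping of Proposition \ref{tiltingheart} and the elementary closure properties of torsion classes.
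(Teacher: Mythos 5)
Your proof is correct. The paper itself gives no argument for this lemma — it is quoted verbatim from \cite[Lemma 8.11]{bayer2016space} — and your route (realize both hearts as tilts of $\mathcal{B}[-1]$ via Proposition \ref{tiltingheart}, then show an object of $\mathcal{T}_{1}\cap\mathcal{F}_{2}$ would force $Z$ to take a value in $(\mathbb{H}\cup\mathbb{R}^{<0})\cap(-\mathbb{H}\cup\mathbb{R}^{>0})=\varnothing$, which is exactly where pre-stability rather than weak pre-stability is used) is essentially the proof given in that reference.
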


\begin{ex}
	Now, let $\mathcal{E}=\mathcal{O}$ and $(F_{0},F_{1},F_{2},F_{3})=(\mathcal{O}(-1),\Omega^{2}(2),\Omega(1),\mathcal{O})$. Then $\alpha_{\mathcal{O}}^{\beta}=\beta^{2}$, $\mu_{1}(\mathcal{O})=0$ and $\displaystyle\frac{v_{3}^{\beta}(\mathcal{O})}{v_{1}^{\beta}(\mathcal{O})}=\frac{1}{6}\beta^{2}$. The condition in Proposition \ref{generalproposition} is as follows:

	(1) $\beta<0$, $\beta>-1$ and $\beta>-\displaystyle\frac{1}{2}$;

	(2) This condition is satisfied automatically;
	
	%When $-1<\beta<-\displaystyle\frac{2}{3}$, we need $\beta<0$ in addition; When $-\displaystyle\frac{1}{3}<\beta<0$, we need $\beta<\displaystyle\frac{1}{2}$ in addition.
	
	(3) $\beta>-\displaystyle\frac{1}{2}$, $-1<\beta<1$, $\beta<\displaystyle\frac{1}{2}$;
	
	(4) As for this condition, first notice that $\mathrm{Re}Z^{\beta,\beta^{2},a_{0}}(F_{3})<0$ and $\mathrm{Re}Z^{\beta,\beta^{2},a_{0}}(F_{0}[3])<0$ by our assumptions and the above three conditions. Now we need:
	\begin{align*}
	 & v_{3}^{\beta}(\Omega^{2}(2)) \geqslant a_{0}v_{1}^{\beta}(\Omega^{2}(2)).
	\end{align*}Note that the above three conditions tell us $\beta>-\displaystyle\frac{1}{2}$. Thus $v_{3}^{\beta}(\Omega^{2}(2)) \geqslant a_{0}v_{1}^{\beta}(\Omega^{2}(2))$ implies $a_{0} \geqslant \displaystyle\frac{v_{3}^{\beta}(\Omega^{2}(2))}{v_{1}^{\beta}(\Omega^{2}(2))}=\frac{3\beta^{3}+6\beta^2-4}{6(3\beta+2)}$. Meahwhile, $\mathrm{Im}Z^{\beta,\beta^{2},a_{0}}(F_{1}[2])=\displaystyle v_{2}^{\beta}(F_{1}[2])-(\alpha_{\mathcal{O}}^{\beta}-\frac{\beta^{2}}{2})v_{0}^{\beta}(F_{1}[2])=2\beta<0$ for any $-\displaystyle\frac{1}{2}<\beta<0$. In other words, when $a_{0} \geqslant \displaystyle\frac{3\beta^{3}+6\beta^2-4}{6(3\beta+2)}$, then $Z^{\beta,\beta^{2},a_{0}}(F_{1}[2]) \in \left\{r \mathrm{exp}(\sqrt{-1}\pi\phi) \mid r>0, \displaystyle\frac{1}{2}<\phi \leqslant \frac{3}{2}\right\}$.
	
	Next, we could verify that the following facts:
	
	When $-\displaystyle\frac{1}{2}<\beta<-\displaystyle\frac{1}{3}$, then 
	\begin{align*}
		\frac{v_{3}^{\beta}(\Omega(1))}{v_{1}^{\beta}(\Omega(1))}=\frac{3\beta^{3}+3\beta^2-3\beta+1}{6(3\beta+1)}< \displaystyle\frac{3\beta^{3}+6\beta^2-4}{6(3\beta+2)}=\frac{v_{3}^{\beta}(\Omega^{2}(2))}{v_{1}^{\beta}(\Omega^{2}(2))} \leqslant a_{0}.
	\end{align*}When $\beta=-\displaystyle\frac{1}{3}$, then $-6v_{3}^{\beta}(\Omega(1))=3\beta^{3}+3\beta^2-3\beta+1|_{\beta=-\frac{1}{3}}=\displaystyle\frac{20}{9}>0$. When $-\displaystyle\frac{1}{3}<\beta<0$, then  \begin{align*}
	\frac{v_{3}^{\beta}(\Omega(1))}{v_{1}^{\beta}(\Omega(1))}=\frac{3\beta^{3}+3\beta^2-3\beta+1}{6(3\beta+1)}> \displaystyle\frac{1}{6}\beta^{2}=\frac{v_{3}^{\beta}(\mathcal{O})}{v_{1}^{\beta}(\mathcal{O})}>a_{0}.
	\end{align*}In conclusion, $\mathrm{Re}Z^{\beta,\beta^{2},a_{0}}(F_{2}[1])<0$. Therefore, by Proposition \ref{generalproposition}, we get Proposition \ref{simplecase} again.
\end{ex}

\begin{ex}
	If $\mathcal{E}=\mathcal{O}$, and $(F_{0},F_{1},F_{2},F_{3})=(\mathcal{O}(-3),\mathcal{O}(-2),\mathcal{O}(-1),\mathcal{O})$. Then $\alpha_{\mathcal{O}}^{\beta}=\beta^{2}$, $\mu_{1}(\mathcal{O})=0$ and $\displaystyle\frac{v_{3}^{\beta}(\mathcal{O})}{v_{1}^{\beta}(\mathcal{O})}=\frac{1}{6}\beta^{2}$. The condition in Proposition \ref{generalproposition} is as follows:
	 
	(1) $\beta<0$, $\beta>-3$ and $\beta>-\displaystyle\frac{3}{2}$;
	
	(2) This condition is satidfied automatically;
	
	(3) $\beta>-\displaystyle\frac{3}{2}$, $-2<\beta<-1$, $\beta<-\displaystyle\frac{1}{2}$;
	
	(4) As for this condition, first notice that $\mathrm{Re}Z^{\beta,\beta^{2},a_{0}}(F_{3})<0$ and $\mathrm{Re}Z^{\beta,\beta^{2},a_{0}}(F_{0}[3])<0$ by our assumptions and the above three conditions. Now we need:
	\begin{align*}
		& v_{3}^{\beta}(\mathcal{O}(-2)) \geqslant a_{0}v_{1}^{\beta}(\mathcal{O}(-2)).
	\end{align*}Note that the above three conditions tell us $\beta>-\displaystyle\frac{3}{2}$. Thus $v_{3}^{\beta}(\mathcal{O}(-2)) \geqslant a_{0}v_{1}^{\beta}(\mathcal{O}(-2))$ implies $a_{0} \geqslant \displaystyle\frac{v_{3}^{\beta}(\mathcal{O}(-2))}{v_{1}^{\beta}(\mathcal{O}(-2))}=\frac{1}{6}(\beta+2)^{2}$. Meahwhile, $\mathrm{Im}Z^{\beta,\beta^{2},a_{0}}(F_{1}[2])=\displaystyle v_{2}^{\beta}(F_{1}[2])-(\alpha_{\mathcal{O}}^{\beta}-\frac{\beta^{2}}{2})v_{0}^{\beta}(F_{1}[2])=2+2\beta<0$ for any $-\displaystyle\frac{3}{2}<\beta<-1$. In other words, when $a_{0} \geqslant \displaystyle\frac{1}{6}(\beta+2)^{2}$, then $Z^{\beta,\beta^{2},a_{0}}(F_{1}[2]) \in \left\{r \mathrm{exp}(\sqrt{-1}\pi\phi) \mid r>0, \displaystyle\frac{1}{2}<\phi \leqslant \frac{3}{2}\right\}$.
	
	We could verify that  
	\begin{align*}
		\frac{v_{3}^{\beta}(\mathcal{O}(-1))}{v_{1}^{\beta}(\mathcal{O}(-1))}=\frac{1}{6}(\beta+1)^{2}< \frac{1}{6}(\beta+2)^{2}=\frac{v_{3}^{\beta}(\mathcal{O}(-2))}{v_{1}^{\beta}(\mathcal{O}(-2))} \leqslant a_{0}.
	\end{align*} when $-\displaystyle\frac{3}{2}<\beta<-1$. In conclusion, $\mathrm{Re}Z^{\beta,\beta^{2},a_{0}}(F_{2}[1])<0$. Therefore, by Proposition \ref{generalproposition}, we conclude that $(Z^{\beta,\alpha_{\mathcal{O}}^{\beta},a},\mathrm{Coh}_{0}^{\beta,i_{*}\mathcal{O}}(X))$ will be a stability condition for any rational number $-\displaystyle\frac{3}{2}<\beta<-1$ and  $\displaystyle\frac{1}{6}(\beta+2)^{2}<a<\frac{1}{6}\beta^{2}$.
\end{ex}

\begin{rmk}
	Some of conditions in Proposition \ref{generalproposition} are somewhat redundant, but it is currently difficult to remove them because we do not yet fully understand full exceptional collections of $D^{b}(\mathbb{P}^{3})$.
\end{rmk}

\subsection{Boundary point}

In this subsection, we will prove that the stability conditions we have constructed above are on the boundary of the subset $\mathrm{Stab}_{H}^{\mathrm{geo}}(D_{0}^{b}(X))$ of geometric stability conditions.

\begin{prop}\label{exceptionalissimple1}
	Let $\beta \in \mathbb{R}$ and $\mathcal{E}$ be an exceptional locally free sheaf with finite rank on $\mathbb{P}^{3}$. Assume that $\beta<\mu_{1}(\mathcal{E})$ and $i_{*}\mathcal{E}$ is $\nu^{\beta,\alpha_{\mathcal{E}}^{\beta}}$-stable.
	Then the object $i_{*}\mathcal{E}[1] \in \mathrm{Coh}^{\beta,\alpha_{\mathcal{E}}^{\beta}}_{0}(X)$ is a $\sigma^{\beta,\alpha_{\mathcal{E}}^{\beta},a}$-stable object for any $a>\displaystyle\frac{2\alpha_{\mathcal{E}}^{\beta}-\beta^{2}}{6}$.
\end{prop}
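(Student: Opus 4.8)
The plan is to locate $i_{*}\mathcal{E}[1]$ inside the double-tilted heart, identify its phase as maximal, and then reduce $\sigma^{\beta,\alpha_{\mathcal{E}}^{\beta},a}$-stability to a simplicity statement in the finite-length heart $\mathcal{I}^{\beta,\alpha_{\mathcal{E}}^{\beta}}_{0}(X)$. First I would record that the hypotheses $\beta<\mu_{1}(\mathcal{E})$ and $i_{*}\mathcal{E}$ being $\nu^{\beta,\alpha_{\mathcal{E}}^{\beta}}$-stable say precisely that $(\beta,\alpha_{\mathcal{E}}^{\beta})\in\mathcal{C}_{i_{*}\mathcal{E}}$, i.e. $i_{*}\mathcal{E}$ is tilt-stable of tilt-slope $\nu^{\beta,\alpha_{\mathcal{E}}^{\beta}}(i_{*}\mathcal{E})=\beta$. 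Hence $i_{*}\mathcal{E}\in\mathrm{Coh}^{\leqslant\beta,\alpha_{\mathcal{E}}^{\beta}}_{0}(X)$ and so $i_{*}\mathcal{E}[1]\in\mathrm{Coh}^{\beta,\alpha_{\mathcal{E}}^{\beta}}_{0}(X)$. Tilt-slope $\beta$ means $\Im Z^{\beta,\alpha_{\mathcal{E}}^{\beta},a}(i_{*}\mathcal{E})=v_{2}^{\beta}(i_{*}\mathcal{E})-(\alpha_{\mathcal{E}}^{\beta}-\tfrac{\beta^{2}}{2})v_{0}^{\beta}(i_{*}\mathcal{E})=0$, so $\Im Z^{\beta,\alpha_{\mathcal{E}}^{\beta},a}(i_{*}\mathcal{E}[1])=0$ as well, i.e. $i_{*}\mathcal{E}[1]\in\mathcal{I}^{\beta,\alpha_{\mathcal{E}}^{\beta}}_{0}(X)$. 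Because $Z^{\beta,\alpha_{\mathcal{E}}^{\beta},a}$ is a genuine stability function on the heart by Theorem \ref{thm1}, $Z^{\beta,\alpha_{\mathcal{E}}^{\beta},a}(i_{*}\mathcal{E}[1])\neq 0$, and an object of $\mathcal{I}$ with nonzero central charge must map to a negative real; thus $i_{*}\mathcal{E}[1]$ has phase $1$. (Equivalently, the Bogomolov--Gieseker inequality $v_{3}^{\beta}(i_{*}\mathcal{E})\leqslant\tfrac{2\alpha_{\mathcal{E}}^{\beta}-\beta^{2}}{6}v_{1}^{\beta}(i_{*}\mathcal{E})$, together with $v_{1}^{\beta}(i_{*}\mathcal{E})>0$ since $\mu(\mathcal{E})>\beta$ and the hypothesis $a>\tfrac{2\alpha_{\mathcal{E}}^{\beta}-\beta^{2}}{6}$, gives $\Re Z^{\beta,\alpha_{\mathcal{E}}^{\beta},a}(i_{*}\mathcal{E})>0$.)

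Since $\Im Z^{\beta,\alpha_{\mathcal{E}}^{\beta},a}\geqslant 0$ is additive on the heart, the subcategory $\mathcal{I}^{\beta,\alpha_{\mathcal{E}}^{\beta}}_{0}(X)$ is closed under subobjects, and it is of finite length by Proposition \ref{finitelength}. Every nonzero object of $\mathcal{I}$ has phase exactly $1$, so any proper nonzero subobject of a phase-$1$ object again lies in $\mathcal{I}$ and has phase $1$; consequently a phase-$1$ object is $\sigma^{\beta,\alpha_{\mathcal{E}}^{\beta},a}$-stable if and only if it admits no proper nonzero subobject, i.e. if and only if it is a simple object of $\mathcal{I}^{\beta,\alpha_{\mathcal{E}}^{\beta}}_{0}(X)$. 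The entire problem therefore reduces to showing that $i_{*}\mathcal{E}[1]$ is simple in $\mathcal{I}^{\beta,\alpha_{\mathcal{E}}^{\beta}}_{0}(X)$.

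To prove simplicity I would take an arbitrary short exact sequence $0\to A\to i_{*}\mathcal{E}[1]\to B\to 0$ in $\mathcal{I}^{\beta,\alpha_{\mathcal{E}}^{\beta}}_{0}(X)$ and apply the cohomology functors $\mathcal{H}^{\bullet}_{\beta}$ relative to $\mathrm{Coh}^{\beta}_{0}(X)$. By Lemma \ref{imaginarypart2} every object of $\mathcal{I}$ has $\mathcal{H}^{-1}_{\beta}$ either zero or $\nu^{\beta,\alpha_{\mathcal{E}}^{\beta}}$-semistable of tilt-slope $\beta$, and $\mathcal{H}^{0}_{\beta}$ a zero-dimensional sheaf; for $i_{*}\mathcal{E}[1]$ itself $\mathcal{H}^{-1}_{\beta}=i_{*}\mathcal{E}$ and $\mathcal{H}^{0}_{\beta}=0$. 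The long exact sequence forces $\mathcal{H}^{0}_{\beta}(B)=0$ and produces an inclusion $\mathcal{H}^{-1}_{\beta}(A)\hookrightarrow i_{*}\mathcal{E}$ of a tilt-slope-$\beta$ semistable object into the tilt-stable $i_{*}\mathcal{E}$, so by stability $\mathcal{H}^{-1}_{\beta}(A)$ is either $0$ or all of $i_{*}\mathcal{E}$. If $\mathcal{H}^{-1}_{\beta}(A)=0$, then $A=\mathcal{H}^{0}_{\beta}(A)$ is a zero-dimensional sheaf mapping into $i_{*}\mathcal{E}[1]$; but the spherical-object lemma gives $\mathrm{Hom}_{D^{b}_{0}(X)}(C,i_{*}\mathcal{E}[1])=0$ for every zero-dimensional $C$, so this monomorphism is zero and $A=0$. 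If $\mathcal{H}^{-1}_{\beta}(A)=i_{*}\mathcal{E}$, the inclusion $\mathcal{H}^{-1}_{\beta}(A)\hookrightarrow i_{*}\mathcal{E}$ is a nonzero endomorphism of $i_{*}\mathcal{E}$ in $\mathrm{Coh}^{\beta}_{0}(X)$, hence a nonzero scalar since $\mathrm{End}(i_{*}\mathcal{E})\cong\mathbb{C}$, hence an isomorphism; the long exact sequence then gives $\mathcal{H}^{-1}_{\beta}(B)\cong\mathcal{H}^{0}_{\beta}(A)$, whose left side is tilt-slope $\beta$ or zero and whose right side is zero-dimensional (tilt-slope $+\infty$ if nonzero), forcing both to vanish and thus $B=0$. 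In either case $A=0$ or $B=0$, so $i_{*}\mathcal{E}[1]$ is simple and the proof is complete.

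The only genuinely delicate point is the bookkeeping in this final case analysis: one must apply the cohomological description of $\mathcal{I}^{\beta,\alpha_{\mathcal{E}}^{\beta}}_{0}(X)$ from Lemma \ref{imaginarypart2} uniformly to all three terms of the sequence and invoke the two vanishing inputs — tilt-stability of $i_{*}\mathcal{E}$ and the $\mathrm{Hom}$-vanishing against zero-dimensional sheaves — in exactly the right subcases. Everything else is formal once the phase-$1$ placement and the finite-length structure are established.
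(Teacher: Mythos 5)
Your overall strategy coincides with the paper's: reduce stability of the phase-one object $i_{*}\mathcal{E}[1]$ to simplicity in the finite-length category $\mathcal{I}^{\beta,\alpha_{\mathcal{E}}^{\beta}}_{0}(X)$, take cohomology with respect to $\mathrm{Coh}^{\beta}_{0}(X)$, and run a case analysis on $\mathcal{H}^{-1}_{\beta}$ of the subobject. The reduction itself, the treatment of the case $\mathcal{H}^{-1}_{\beta}(A)=0$ (where your direct use of $\mathrm{Hom}(C,i_{*}\mathcal{E}[1])=0$ for zero-dimensional $C$ is in fact a little cleaner than the paper's splitting argument), and the case $\mathcal{H}^{-1}_{\beta}(A)\cong i_{*}\mathcal{E}$ are all fine.

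There is, however, a genuine gap at the pivot of the argument: the claim that ``by stability'' a nonzero $\nu^{\beta,\alpha_{\mathcal{E}}^{\beta}}$-semistable subobject of tilt-slope $\beta$ of the tilt-stable $i_{*}\mathcal{E}$ must be all of $i_{*}\mathcal{E}$. This is false as a general principle: the kernel $\mathcal{E}^{x}$ of $i_{*}\mathcal{E}^{\oplus r}\to\mathbb{C}(x)$ (used later in Proposition \ref{exceptionalissimple3}) gives a proper nonzero subobject of $i_{*}\mathcal{E}^{\oplus r}$, and already $\mathcal{E}^{x}\cap i_{*}\mathcal{E}$-type subsheaves of a single copy, which are $\nu^{\beta,\alpha_{\mathcal{E}}^{\beta}}$-semistable of the \emph{same} tilt-slope $\beta$; strict stability is not violated because the quotient is a zero-dimensional sheaf with $Z^{\beta,\alpha_{\mathcal{E}}^{\beta}}=0$, i.e.\ of tilt-slope $+\infty>\beta$. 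So you must separately rule out the possibility that the cokernel $C$ of $\mathcal{H}^{-1}_{\beta}(A)\hookrightarrow i_{*}\mathcal{E}$ is a nonzero object with $v_{1}^{\beta}(C)=0$. The paper does exactly this: from the long exact sequence, $C$ is a subobject of $\mathcal{H}^{-1}_{\beta}(B)$, which lies in $\mathrm{Coh}^{\leqslant\beta,\alpha_{\mathcal{E}}^{\beta}}_{0}(X)$ because $B$ belongs to the double-tilted heart; an object of that torsion-free part has $\nu^{\beta,\alpha_{\mathcal{E}}^{\beta}}_{\max}\leqslant\beta$, so it admits no nonzero subobject of slope $+\infty$, forcing $C=0$ or $\nu^{\beta,\alpha_{\mathcal{E}}^{\beta}}(C)<+\infty$. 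Only then does the seesaw property plus strict stability of $i_{*}\mathcal{E}$ yield the dichotomy you asserted. With that one step inserted, your proof closes.
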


\begin{pf}
	 Notice that $i_{*}\mathcal{E}[1] \in \mathcal{I}_{0}^{\beta,\alpha_{\mathcal{E}}^{\beta}}(X)$ and we just need to prove there are no non-trivial short exact sequences $0 \to F \to i_{*}\mathcal{E}[1] \to G \to 0$ in the abelian category $\mathcal{I}_{0}^{\beta,\alpha_{\mathcal{E}}^{\beta}}(X)$. Otherwise, we can take the cohomology functor $\mathcal{H}^{i}_{\beta}$ with respect to the heart $\mathrm{Coh}^{\beta}_{0}(X)$. Then we have a long exact sequence in $\mathrm{Coh}^{\beta}_{0}(X)$:
	\begin{align*}
		0 \to \mathcal{H}^{-1}_{\beta}(F) \to i_{*}\mathcal{E} \to \mathcal{H}^{-1}_{\beta}(G) \to \mathcal{H}^{0}_{\beta}(F) \to 0 \to \mathcal{H}^{0}_{\beta}(G) \to 0.
	\end{align*}
	
	Divide the exact sequence into two short exact sequences in $\mathrm{Coh}^{\beta}_{0}(X)$:
	\begin{align*}
		& 0 \to \mathcal{H}^{-1}_{\beta}(F) \to i_{*}\mathcal{E} \to C \to 0,\\
		& 0 \to C \to \mathcal{H}^{-1}_{\beta}(G) \to \mathcal{H}^{0}_{\beta}(F) \to 0.
	\end{align*}
	Because $C$ is a subobject of $\mathcal{H}^{-1}_{\beta}(G) \in \mathcal{F}^{\beta,\alpha_{\mathcal{E}}^{\beta}}_{0}(X) \subseteq \mathrm{Coh}^{\beta}_{0}(X)$, we have $C \in \mathcal{F}^{\beta,\alpha_{\mathcal{E}}^{\beta}}_{0}(X)$ and either $C=0$ or $\nu^{\beta,\alpha_{\mathcal{E}}^{\beta}}(C)<+\infty$.
	
	Meanwhile, note that $i_{*}\mathcal{E}$ is a $\nu^{\beta,\alpha_{\mathcal{E}}^{\beta}}$-stable object in $\mathrm{Coh}^{\beta}_{0}(X)$ with slope $\beta$, and $\mathcal{H}^{-1}_{\beta}(F)$ is also a $\nu^{\beta,\alpha_{\mathcal{E}}^{\beta}}$-semistable object in $\mathrm{Coh}^{\beta}_{0}(X)$ with slope $\beta$. Then $\mathcal{H}^{-1}_{\beta}(F)=0$ or $\mathcal{H}^{-1}_{\beta}(F) \cong i_{*}\mathcal{E}$. If $\mathcal{H}^{-1}_{\beta}(F) \cong i_{*}\mathcal{E}$, then $\mathcal{H}^{-1}_{\beta}(G) \cong \mathcal{H}^{0}_{\beta}(F)$ and both of them are $0$ and $G=0$.

	 If $\mathcal{H}^{-1}_{\beta}(F)=0$, we have $\mathrm{Hom}_{D_{0}^{b}(X)}(\mathcal{H}^{0}_{\beta}(F),i_{*}\mathcal{E}[1])=0$ since $\mathcal{H}^{0}_{\beta}(F)$ is a zero-dimensional sheaf. Therefore, $\mathcal{H}^{-1}_{\beta}(G)$ splits into the direct sum $ i_{*}\mathcal{E} \oplus \mathcal{H}^{0}_{\beta}(F)$, it contradicts to the fact $\mathcal{H}^{-1}_{\beta}(G)$ is a $\nu^{\beta,\alpha_{\mathcal{E}}^{\beta}}$-semistable object in $\mathrm{Coh}^{\beta}_{0}(X)$ if $\mathcal{H}^{0}_{\beta}(F) \neq 0$. Therefore $\mathcal{H}_{\beta}^{0}(F)=0$ and we have $F=0$.
\end{pf}

\begin{prop}\label{exceptionalissimple2}
	Let $\beta \in \mathbb{R}$ and $\mathcal{E}$ be an exceptional locally free sheaf with finite rank on $\mathbb{P}^{3}$. Assume that $\beta<\mu_{1}(\mathcal{E})$ and $i_{*}\mathcal{E}$ is $\nu^{\beta,\alpha_{\mathcal{E}}^{\beta}}$-stable. Then the object $i_{*}\mathcal{E} \in \mathcal{I}_{0}^{\beta,i_{*}\mathcal{E}}(X) \subset \mathrm{Coh}^{\beta,i_{*}\mathcal{E}}_{0}(X)$ is a simple object.
\end{prop}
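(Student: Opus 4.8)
The plan is to read off the structure of $\mathrm{Coh}^{\beta,i_*\mathcal{E}}_0(X)$ from Lemma \ref{boundaryheart}(2). Write $\mathcal{A}'=\mathrm{Coh}^{\beta,\alpha_{\mathcal{E}}^\beta}_0(X)$ (the heart denoted $\mathcal{A}^\beta$ in that lemma, obtained from $\mathrm{Coh}^\beta_0(X)$ by tilting at tilt-slope $\beta$). The lemma tells us that $\mathrm{Coh}^{\beta,i_*\mathcal{E}}_0(X)[1]$ is the tilt of $\mathcal{A}'$ at the torsion pair $(\mathcal{T},\mathcal{F})$ with torsion class $\mathcal{T}=\{i_*\mathcal{E}[1]^{\oplus n}\mid n\in\mathbb{N}^+\}$ and $\mathcal{F}$ the right orthogonal of $\mathcal{T}$ in $\mathcal{A}'$, so in particular $\mathrm{Coh}^{\beta,i_*\mathcal{E}}_0(X)\subseteq\langle\mathcal{A}',\mathcal{A}'[-1]\rangle$. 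Consequently, for every object $E$ of $\mathrm{Coh}^{\beta,i_*\mathcal{E}}_0(X)$ the cohomology $\mathcal{H}^i_{\mathcal{A}'}(E)$ vanishes outside $i=0,1$, with $\mathcal{H}^0_{\mathcal{A}'}(E)\in\mathcal{F}$ and $\mathcal{H}^1_{\mathcal{A}'}(E)\in\mathcal{T}$, i.e. $\mathcal{H}^1_{\mathcal{A}'}(E)\cong i_*\mathcal{E}[1]^{\oplus n}$. I record two facts about $i_*\mathcal{E}[1]$: it is a simple object of $\mathcal{A}'$ (by Proposition \ref{exceptionalissimple1} it is $\sigma^{\beta,\alpha_{\mathcal{E}}^\beta,a}$-stable, and any sub/quotient of $i_*\mathcal{E}[1]$ is forced into $\mathcal{I}^{\beta,\alpha_{\mathcal{E}}^\beta}_0(X)$ since $\Im Z\geqslant 0$ vanishes on it), and $\mathrm{End}(i_*\mathcal{E}[1])\cong\mathbb{C}$ because $\mathcal{E}$ is exceptional. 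Hence the full subcategory of $\mathcal{A}'$ on the objects $i_*\mathcal{E}[1]^{\oplus n}$ is semisimple and equivalent to finite-dimensional $\mathbb{C}$-vector spaces, so morphisms between such objects have well-defined rank, kernel and cokernel.

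The main computation is then short. Since $i_*\mathcal{E}[1]\in\mathcal{T}\subseteq\mathcal{A}'$, we have $\mathcal{H}^0_{\mathcal{A}'}(i_*\mathcal{E})=0$ and $\mathcal{H}^1_{\mathcal{A}'}(i_*\mathcal{E})\cong i_*\mathcal{E}[1]$. Let $0\to A\to i_*\mathcal{E}\to B\to 0$ be a short exact sequence in $\mathcal{I}^{\beta,i_*\mathcal{E}}_0(X)$; note that any subobject $A$ of $i_*\mathcal{E}$ in $\mathrm{Coh}^{\beta,i_*\mathcal{E}}_0(X)$ automatically lies in $\mathcal{I}^{\beta,i_*\mathcal{E}}_0(X)$, because $\Im Z^{\beta,\alpha_{\mathcal{E}}^\beta,a}\geqslant 0$ on the heart (Proposition \ref{imaginarypart3}) and vanishes on $i_*\mathcal{E}$. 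Taking the long exact sequence of $\mathcal{H}^\bullet_{\mathcal{A}'}$ and using the degree constraints above yields $\mathcal{H}^0_{\mathcal{A}'}(A)=0$ together with an exact sequence in $\mathcal{A}'$
\begin{align*}
	0\to \mathcal{H}^0_{\mathcal{A}'}(B)\to \mathcal{H}^1_{\mathcal{A}'}(A)\xrightarrow{\;\phi\;} i_*\mathcal{E}[1]\to \mathcal{H}^1_{\mathcal{A}'}(B)\to 0,
\end{align*}
where $\mathcal{H}^1_{\mathcal{A}'}(A)\cong i_*\mathcal{E}[1]^{\oplus a}$ and $\mathcal{H}^1_{\mathcal{A}'}(B)\cong i_*\mathcal{E}[1]^{\oplus b}$ lie in $\mathcal{T}$, while $\mathcal{H}^0_{\mathcal{A}'}(B)\cong\ker\phi$ lies in $\mathcal{F}$.

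The last step is purely the linear algebra of $\phi$. Its kernel is a summand $i_*\mathcal{E}[1]^{\oplus(a-\mathrm{rk}\,\phi)}$, and it equals $\mathcal{H}^0_{\mathcal{A}'}(B)\in\mathcal{F}$; but $\mathcal{F}$ is right orthogonal to $i_*\mathcal{E}[1]$, so $\mathrm{Hom}(i_*\mathcal{E}[1],\ker\phi)=0$ forces $\ker\phi=0$, whence $a=\mathrm{rk}\,\phi\leqslant 1$ (the target has rank $1$). If $a=0$ then $\mathcal{H}^0_{\mathcal{A}'}(A)=\mathcal{H}^1_{\mathcal{A}'}(A)=0$, so $A=0$; if $a=1$ then $\phi$ is a nonzero endomorphism of $i_*\mathcal{E}[1]$, hence an isomorphism by Schur, so $\mathcal{H}^1_{\mathcal{A}'}(B)=\mathrm{coker}\,\phi=0$ and $\mathcal{H}^0_{\mathcal{A}'}(B)=\ker\phi=0$, giving $B=0$. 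Either way $i_*\mathcal{E}$ has no proper nonzero subobject, so it is simple. The crux I expect to need the most care with is the bookkeeping in the first paragraph: pinning down that the torsion class of the tilt is exactly the multiples of $i_*\mathcal{E}[1]$ and that $\mathcal{F}$ is their orthogonal, since it is precisely the combination of this rigidity with $\mathrm{End}(i_*\mathcal{E}[1])=\mathbb{C}$ that collapses $\phi$. The argument is the post-tilt analogue of the simplicity of $i_*\mathcal{E}[1]$ established in Proposition \ref{exceptionalissimple1}.
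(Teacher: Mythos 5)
Your proof is correct and is essentially the paper's own argument in unshifted form: both take the long exact sequence of cohomology with respect to the intermediate heart $\mathrm{Coh}_0^{\beta,\alpha_{\mathcal{E}}^{\beta}}(X)$, use the torsion pair $(\mathcal{T},\mathcal{F})$ from Lemma \ref{boundaryheart}(2) to place the cohomologies, and then combine the simplicity of $i_{*}\mathcal{E}[1]$ (Proposition \ref{exceptionalissimple1}) with $\mathrm{Hom}(i_{*}\mathcal{E}[1],i_{*}\mathcal{E}[1])\cong\mathbb{C}$ to collapse the connecting map. Your write-up is somewhat more explicit about the linear algebra over $\mathrm{End}(i_{*}\mathcal{E}[1])$, but the route is the same.
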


\begin{pf}
	We just need to prove there are no non-trivial short exact sequences $0 \to F \to i_{*}\mathcal{E}[1] \to G \to 0$ in the abelian category $\mathcal{I}_{0}^{\beta,i_{*}\mathcal{E}}(X)[1]$. Otherwise, we can take the cohomology functor $\mathcal{H}^{i}$ with respect to the heart $\mathrm{Coh}^{\beta,\alpha}_{0}(X)$. Then we have a long exact sequence in $\mathrm{Coh}^{\beta,\alpha_{\mathcal{E}}^{\beta}}_{0}(X)$:
	\begin{align*}
		0 \to \mathcal{H}^{-1}(F) \to 0 \to \mathcal{H}^{-1}(G) \to \mathcal{H}^{0}(F) \to i_{*}\mathcal{E}[1] \to \mathcal{H}^{0}(G) \to 0.
	\end{align*}Then $\mathcal{H}^{0}(G)=0$ or $i_{*}\mathcal{E}[1] \cong \mathcal{H}^{0}(G)$ by Proposition \ref{exceptionalissimple1}. If $i_{*}\mathcal{E}[1] \cong \mathcal{H}^{0}(G)$, then $\mathcal{H}^{-1}(G) \cong \mathcal{H}^{0}(F)$ and both of them are $0$. Otherwise, there is a short exact sequence in $\mathrm{Coh}^{\beta,\alpha_{\mathcal{E}}^{\beta}}_{0}(X)$
	\begin{align*}
		0 \to \mathcal{H}^{-1}(G) \to \mathcal{H}^{0}(F) \to i_{*}\mathcal{E}[1] \to 0
	\end{align*} We can take the $\mathrm{Hom}(i_{*}\mathcal{E}[1],-)$ functor on it and we will have $\mathcal{H}^{0}(F)=0$ or $\mathcal{H}^{0}(F) \cong i_{*}\mathcal{E}[1]$, it means that $F=0$ or $G=0$.
\end{pf}

Now, suppose $\mathcal{E}$ is an exceptional vector bundle on $\mathbb{P}^{3}$ with rank $r$ and $x \in \mathbb{P}^{3}$. We also write $\mathcal{E}^{x}$ for the kernel of the natural map $i_{*}\mathcal{E}^{\oplus r} \to \mathbb{C}(x)$.

\begin{prop}\label{exceptionalissimple3}
	Let $\beta \in \mathbb{R}$ and $\mathcal{E}$ be an exceptional locally free sheaf with finite rank on $\mathbb{P}^{3}$. Assume that $\beta<\mu_{1}(\mathcal{E})$ and $i_{*}\mathcal{E}$ is $\nu^{\beta,\alpha_{\mathcal{E}}^{\beta}}$-stable. Denote the kernel of the natural map $i_{*}\mathcal{E}^{\oplus r} \to \mathbb{C}(x)$ by $\mathcal{E}^{x}$. Then the object $\mathcal{E}^{x}[1] \in \mathcal{I}_{0}^{\beta,i_{*}\mathcal{E}}(X) \subset \mathrm{Coh}^{\beta,i_{*}\mathcal{E}}_{0}(X)$ is a simple object.
\end{prop}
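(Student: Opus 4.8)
The plan is to argue exactly as in the proofs of Proposition \ref{exceptionalissimple1} and Proposition \ref{exceptionalissimple2}: first verify that $\mathcal{E}^{x}[1]$ genuinely lies in $\mathcal{I}_{0}^{\beta,i_{*}\mathcal{E}}(X)$, and then exclude proper sub/quotients by passing to cohomology with respect to the intermediate heart $\mathrm{Coh}_{0}^{\beta,\alpha_{\mathcal{E}}^{\beta}}(X)$. Abbreviate $\alpha=\alpha_{\mathcal{E}}^{\beta}$ and work from the defining short exact sequence $0\to\mathcal{E}^{x}\to i_{*}\mathcal{E}^{\oplus r}\to\mathbb{C}(x)\to 0$ in $\mathrm{Coh}_{0}(X)$.

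First I would record the invariants and the $\mathrm{Hom}$-vanishings that drive everything. Since $\mathbb{C}(x)$ has $v_{0}=v_{1}=v_{2}=0$, the class of $\mathcal{E}^{x}$ equals $r[i_{*}\mathcal{E}]$ in the relevant quotient, so $\mathcal{E}^{x}$ again has tilt-slope $\beta$; being a subsheaf of the $\nu^{\beta,\alpha}$-semistable object $i_{*}\mathcal{E}^{\oplus r}$ of the same slope, every subobject of $\mathcal{E}^{x}$ in $\mathrm{Coh}_{0}^{\beta}(X)$ has tilt-slope $\leqslant\beta$, so $\mathcal{E}^{x}$ is $\nu^{\beta,\alpha}$-semistable of tilt-slope $\beta$ and $\mathcal{E}^{x}[1]\in\mathrm{Coh}_{0}^{\beta,\alpha}(X)$ with $\mathrm{Im}\,Z^{\beta,\alpha,a}=0$. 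Applying $\mathrm{Hom}_{D_{0}^{b}(X)}(i_{*}\mathcal{E},-)$ to the defining sequence, using that $i_{*}\mathcal{E}$ is spherical (so $\mathrm{Ext}^{1}(i_{*}\mathcal{E},i_{*}\mathcal{E})=0$) and that the evaluation map induces an isomorphism $\mathrm{Hom}(i_{*}\mathcal{E},i_{*}\mathcal{E}^{\oplus r})\xrightarrow{\sim}\mathrm{Hom}(i_{*}\mathcal{E},\mathbb{C}(x))$ (both are $\mathbb{C}^{r}$, by Proposition \ref{Homsetofinclusion}), I obtain $\mathrm{Hom}(i_{*}\mathcal{E},\mathcal{E}^{x})=\mathrm{Ext}^{1}(i_{*}\mathcal{E},\mathcal{E}^{x})=0$, i.e. $\mathrm{Hom}(i_{*}\mathcal{E},\mathcal{E}^{x}[1])=0$. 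This places $\mathcal{E}^{x}[1]$ in the subcategory $\mathcal{F}$ of Lemma \ref{boundaryheart}, so $\mathcal{E}^{x}[1]\in\mathcal{I}_{0}^{\beta,i_{*}\mathcal{E}}(X)$. Rotating the defining triangle gives a short exact sequence $0\to\mathbb{C}(x)\to\mathcal{E}^{x}[1]\to(i_{*}\mathcal{E}[1])^{\oplus r}\to 0$ in $\mathcal{I}_{0}^{\beta,\alpha}(X)$, identifying the composition factors of $\mathcal{E}^{x}[1]$ there as one copy of $\mathbb{C}(x)$ and $r$ copies of $i_{*}\mathcal{E}[1]$; since $\mathrm{Hom}(i_{*}\mathcal{E}[1],\mathcal{E}^{x}[1])=0$, no copy of $i_{*}\mathcal{E}[1]$ is a subobject, so $\mathbb{C}(x)$ is the unique simple subobject.

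For simplicity, suppose $0\to F\to\mathcal{E}^{x}[1]\to G\to 0$ in $\mathcal{I}_{0}^{\beta,i_{*}\mathcal{E}}(X)$. As $\mathcal{E}^{x}[1]$ lies in the torsion class $\mathcal{F}$ of $\mathrm{Coh}_{0}^{\beta,i_{*}\mathcal{E}}(X)$, so does its quotient $G$; hence $G\in\mathrm{Coh}_{0}^{\beta,\alpha}(X)$ and $\mathrm{Hom}(i_{*}\mathcal{E}[1],G)=0$. Taking cohomology with respect to $\mathrm{Coh}_{0}^{\beta,\alpha}(X)$, together with $\mathcal{H}^{1}(F)=(i_{*}\mathcal{E}[1])^{\oplus m}$ (the shape of any object of the tilted heart, by Lemma \ref{boundaryheart}), gives a four-term exact sequence $0\to\mathcal{H}^{0}(F)\to\mathcal{E}^{x}[1]\xrightarrow{\psi}G\to(i_{*}\mathcal{E}[1])^{\oplus m}\to 0$. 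Now $\mathcal{H}^{0}(F)\subseteq\mathcal{E}^{x}[1]$ is either $0$ or contains the socle $\mathbb{C}(x)$. If $\mathcal{H}^{0}(F)=0$ then $F\cong(i_{*}\mathcal{E})^{\oplus m}$ embeds into $\mathcal{E}^{x}[1]$, forcing $\mathrm{Hom}(i_{*}\mathcal{E},\mathcal{E}^{x}[1])\neq 0$ unless $m=0$, so $F=0$. If $\mathbb{C}(x)\subseteq\mathcal{H}^{0}(F)$ then $\mathrm{im}\,\psi$ is a quotient of $(i_{*}\mathcal{E}[1])^{\oplus r}$, hence a direct sum of copies of $i_{*}\mathcal{E}[1]$; being a subobject of $G\in\mathcal{F}$ it must vanish, so $\psi=0$, whence $G\cong(i_{*}\mathcal{E}[1])^{\oplus m}$, and $\mathrm{Hom}(i_{*}\mathcal{E}[1],G)=0$ forces $G=0$. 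In either case the sequence is trivial, so $\mathcal{E}^{x}[1]$ is simple.

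The main obstacle is the bookkeeping of the two successive tiltings: the category $\mathcal{F}$ is the torsion-\emph{free} class inside $\mathrm{Coh}_{0}^{\beta,\alpha}(X)$ but the torsion class inside $\mathrm{Coh}_{0}^{\beta,i_{*}\mathcal{E}}(X)$, so one must invoke the correct closure property at each step ($\mathcal{F}$ closed under subobjects in the first heart, under quotients in the second); conflating the two collapses the argument. The enabling computation is the pair of vanishings $\mathrm{Hom}(i_{*}\mathcal{E},\mathcal{E}^{x})=\mathrm{Ext}^{1}(i_{*}\mathcal{E},\mathcal{E}^{x})=0$, which follows entirely from $i_{*}\mathcal{E}$ being spherical together with the evaluation map being an isomorphism on the relevant $\mathrm{Hom}$-spaces; once these and the socle/composition-factor description of $\mathcal{E}^{x}[1]$ are established, the case analysis is routine.
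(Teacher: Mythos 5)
Your argument is correct in substance but follows a genuinely different route from the paper's. The paper attacks a quotient $G$ of $\mathcal{E}^{x}[1]$ by forming the kernel $K$ of the composition $\mathbb{C}(x)\to\mathcal{E}^{x}[1]\to G$ coming from the exact sequence $0\to i_{*}\mathcal{E}^{\oplus r}\to\mathbb{C}(x)\to\mathcal{E}^{x}[1]\to 0$ in $\mathcal{I}_{0}^{\beta,i_{*}\mathcal{E}}(X)$, and then descends all the way to $\mathcal{H}^{\bullet}_{\beta}$-cohomology in $\mathrm{Coh}_{0}^{\beta}(X)$, running a fairly long chase with torsion decompositions, pullback squares and the snake lemma. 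You instead stay one level up, in $\mathrm{Coh}_{0}^{\beta,\alpha_{\mathcal{E}}^{\beta}}(X)$, and exploit two inputs the paper does not use in this proof: the extra vanishing $\mathrm{Ext}^{1}(i_{*}\mathcal{E},\mathcal{E}^{x})=0$ (which does follow from sphericity plus the evaluation map being an isomorphism on $\mathrm{Hom}(i_{*}\mathcal{E},-)$, exactly as you say) and the already-proved simplicity of $i_{*}\mathcal{E}[1]$ in $\mathcal{I}_{0}^{\beta,\alpha_{\mathcal{E}}^{\beta}}(X)$ (Proposition \ref{exceptionalissimple1}), which makes $(i_{*}\mathcal{E}[1])^{\oplus r}$ semisimple and lets you read off the socle of $\mathcal{E}^{x}[1]$ from the sequence $0\to\mathbb{C}(x)\to\mathcal{E}^{x}[1]\to(i_{*}\mathcal{E}[1])^{\oplus r}\to 0$. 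Your two-case analysis on $\mathcal{H}^{0}(F)$ is then much shorter than the paper's; what the paper's lower-level argument buys is that it never needs the $\mathrm{Ext}^{1}$-vanishing nor the socle description, only $\mathrm{Hom}$-level information.

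One step is asserted without proof and is genuinely needed: that $\mathbb{C}(x)$ is \emph{simple} in $\mathcal{I}_{0}^{\beta,\alpha_{\mathcal{E}}^{\beta}}(X)$. Without it, "the composition factors are one copy of $\mathbb{C}(x)$ and $r$ copies of $i_{*}\mathcal{E}[1]$" and "$\mathcal{H}^{0}(F)$ either vanishes or contains the socle $\mathbb{C}(x)$" both fail, since a nonzero $\mathcal{H}^{0}(F)$ would then only be guaranteed to meet $\mathbb{C}(x)$ in a proper subobject, and the conclusion that $\mathrm{im}\,\psi$ is a quotient of $(i_{*}\mathcal{E}[1])^{\oplus r}$ would not follow. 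The fact is true and easy to supply: for a subobject $A\subseteq\mathbb{C}(x)$ in $\mathrm{Coh}_{0}^{\beta,\alpha_{\mathcal{E}}^{\beta}}(X)$ with quotient $B$, the long exact sequence of $\mathcal{H}^{\bullet}_{\beta}$ forces $\mathcal{H}^{-1}_{\beta}(A)=0$, and Lemma \ref{imaginarypart2} makes $\mathcal{H}^{0}_{\beta}(A)$ zero-dimensional, so $v_{1}^{\beta}(\mathcal{H}^{-1}_{\beta}(B))=0$ and hence $\mathcal{H}^{-1}_{\beta}(B)=0$; then $\mathcal{H}^{0}_{\beta}(A)$ injects into the simple sheaf $\mathbb{C}(x)$ and $A$ is $0$ or all of $\mathbb{C}(x)$. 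You should also make explicit the small bridging step that any simple subobject $S$ of $\mathcal{E}^{x}[1]$ composes to zero with the projection to $(i_{*}\mathcal{E}[1])^{\oplus r}$ (else $S\cong i_{*}\mathcal{E}[1]$, contradicting $\mathrm{Hom}(i_{*}\mathcal{E}[1],\mathcal{E}^{x}[1])=0$), so that $S$ lands inside the fixed copy of $\mathbb{C}(x)$ and equals it. With those two sentences added, the proof is complete.
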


\begin{pf}
	First, $\mathcal{E}^{x}$ is a slope semistable sheaf with the slope $\mu(\mathcal{E})$, thus $\mathcal{E}^{x} \in \mathrm{Coh}^{\beta}_{0}(X)$ for $\beta<\mu(\mathcal{E})$. Then 
	\begin{align*}
		0 \to \mathcal{E}^{x} \to i_{*}\mathcal{E}^{\oplus r} \to \mathbb{C}(x) \to 0
	\end{align*} is also an exact sequence in $\mathrm{Coh}^{\beta}_{0}(X)$. Thus, $\mathcal{E}^{x}$ is also a $\nu^{\beta,\alpha_{\mathcal{E}}^{\beta}}$-semistable object of the same tilt-slope as $i_{*}\mathcal{E}$, and it is $\beta$.
	
	Take the $\mathrm{Hom}$ functor $\mathrm{Hom}(i_{*}\mathcal{E},-)$ on the short exact sequence above, and we have 
	\begin{align*}
		\mathrm{Hom}(i_{*}\mathcal{E},i_{*}\mathcal{E}^{\oplus r}) \cong \mathrm{Hom}(i_{*}\mathcal{E},\mathbb{C}(x))
	\end{align*} from the definition of the natural map, then we know $\mathrm{Hom}(i_{*}\mathcal{E},\mathcal{E}^{x})=0$. It means $\mathcal{E}^{x} \in \mathcal{F}^{\beta,i_{*}\mathcal{E}}_{0}(X)$ and $\mathcal{E}^{x}[1] \in \mathcal{I}_{0}^{\beta,i_{*}\mathcal{E}}(X) \subseteq \mathrm{Coh}^{\beta,i_{*}\mathcal{E}}_{0}(X)$. We claim that  there are no non-trivial short exact sequences $0 \to F \to \mathcal{E}^{x}[1] \to G \to 0$ in the abelian category $\mathcal{I}_{0}^{\beta,i_{*}\mathcal{E}}(X)$.
	
	Notice that we have a short exact sequence in $\mathcal{I}_{0}^{\beta,i_{*}\mathcal{E}}(X)$:
	\begin{align*}
		0 \to i_{*}\mathcal{E}^{\oplus r} \to \mathbb{C}(x) \to \mathcal{E}^{x}[1] \to 0.
	\end{align*}
	
	Let $K$ be the kernel of the composition $\mathbb{C}(x) \to \mathcal{E}^{x}[1] \to G$ in $\mathcal{I}_{0}^{\beta,i_{*}\mathcal{E}}(X)$ and we take the cohomology functor $\mathcal{H}^{i}_{\beta}$ with respect to the heart $\mathrm{Coh}_{0}^{\beta}(X)$:
	\begin{align*}
		0 \to \mathcal{H}^{-1}_{\beta}(K) \to 0 \to \mathcal{H}^{-1}_{\beta}(G) \to \mathcal{H}^{0}_{\beta}(K) \to \mathbb{C}(x) \to \mathcal{H}^{0}_{\beta}(G) \to 0
	\end{align*}
	
	We can divide it into two short exact sequences in $\mathrm{Coh}_{0}^{\beta}(X)$:
	\begin{align*}
		& 0 \to \mathcal{H}^{-1}_{\beta}(G) \to \mathcal{H}^{0}_{\beta}(K) \to T \to 0;\\
		& 0 \to T \to \mathbb{C}(x) \to \mathcal{H}^{0}_{\beta}(G) \to 0.
	\end{align*}
	
	In fact, we always have a decomposition of $T$ in $\mathrm{Coh}_{0}^{\beta}(X)$ as follows:
	\begin{align*}
		0 \to T' \to T \to i_{*}\mathcal{E}^{\oplus r'} \to 0,
	\end{align*}where $r'$ is a nonnegative integer and $T' \in \mathcal{T}^{\beta,\alpha_{\mathcal{E}}^{\beta}}_{0}(X)$. Since $v^{\beta}_{2}(\mathcal{H}^{0}_{\beta}(G))=(\alpha-\displaystyle\frac{1}{2}\beta^{2})v^{\beta}_{0}(\mathcal{H}^{0}_{\beta}(G))$, then we will have $v^{\beta}_{2}(T')=(\alpha-\displaystyle\frac{1}{2}\beta^{2})v^{\beta}_{0}(T')$ and $T'$ is a zero diemensional sheaf or zero.
	
	Now we have an injection $T' \to \mathbb{C}(x)$ in $\mathrm{Coh}_{0}^{\beta}(X)$, and it means $T'=0$ or $T' \cong T \cong \mathbb{C}(x)$.
	
	If $T'=0$, then we have a short exact sequence in $\mathrm{Coh}_{0}^{\beta}(X)$ as follows:
	\begin{align*}
		0 \to i_{*}\mathcal{E}^{\oplus r'} \to \mathbb{C}(x) \to \mathcal{H}^{0}_{\beta}(G) \to 0.
	\end{align*}However, both of them are sheaves and $\mathbb{C}(x)$ is a simple object in $\mathrm{Coh}_{0}(X)$. Then $r'=0$, $T=0$, $\mathcal{H}^{-1}_{\beta}(G) \cong \mathcal{H}^{0}_{\beta}(K)=0$ and $K=0$, it implies that $G \cong \mathbb{C}(x)$ and $G \cong \mathcal{E}^{x}[1]$.
	
	If $T \cong \mathbb{C}(x)$, then $\mathcal{H}^{0}_{\beta}(G)=0$ and we have a short exact sequence in $\mathrm{Coh}_{0}^{\beta}(X)$:
	\begin{align*}
		0 \to \mathcal{H}^{-1}_{\beta}(G) \to \mathcal{H}^{0}_{\beta}(K) \to \mathbb{C}(x) \to 0.
	\end{align*}
	
	We also have a decomposition of $\mathcal{H}^{0}_{\beta}(K)$ in $\mathrm{Coh}_{0}^{\beta}(X)$ as follows:
	\begin{align*}
		0 \to T'' \to \mathcal{H}^{0}_{\beta}(K) \to i_{*}\mathcal{E}^{\oplus r''} \to 0,
	\end{align*}where $r''$ is a nonnegative integer and $T'' \in \mathcal{T}^{\beta,\alpha}_{0}(X)$ is a zero dimensional sheaf or zero. Note that $\mathcal{H}^{0}_{\beta}(K)$ is a sheaf and then $\mathcal{H}^{-1}_{\beta}(G)$ is also a sheaf.
	
	We consider the pullback of $\mathcal{H}^{-1}_{\beta}(G) \to \mathcal{H}^{0}_{\beta}(K)$ and $T'' \to \mathcal{H}^{0}_{\beta}(K)$ in $\mathrm{Coh}_{0}(X)$ and it will be zero. Then we will have a commutative diagram in $\mathrm{Coh}_{0}(X)$ as follows:
	\begin{align*}
		\xymatrix{
			0 \ar[r] & 0 \ar[d] \ar[r] & T'' \ar@{=}[r] \ar[d] & T'' \ar[r] \ar[d] & 0\\
			0 \ar[r] & \mathcal{H}^{-1}_{\beta}(G) \ar[r] & \mathcal{H}^{0}_{\beta}(K) \ar[r] & \mathbb{C}(x) \ar[r] & 0.
		}
	\end{align*} 
	Notice that $T'' \to \mathbb{C}(x)$ should be an injection in $\mathrm{Coh}_{0}(X)$ by the property of pullback. Then $T''=0$ or $T'' \cong \mathbb{C}(x)$. Moreover, by the snake lemma, we have a short exact sequence 
	$0 \to \mathcal{H}^{-1}_{\beta}(G) \to i_{*}\mathcal{E}^{\oplus r''} \to \mathrm{coker}(T'' \to \mathbb{C}(x)) \to 0$ in $\mathrm{Coh}_{0}(X)$.
	
	If $T'' \cong \mathbb{C}(x)$, then $\mathcal{H}^{-1}_{\beta}(G) \cong i_{*}\mathcal{E}^{\oplus r''}$ and it implies that $\mathcal{H}^{-1}_{\beta}(G)=0$ and $r''=0$. In this case, $G=0$.
	
	If $T''=0$ and we assume that $r''>0$, by taking $\mathrm{Hom}(i_{*}\mathcal{E},-)$ functor on
	\begin{align*}
		0 \to \mathcal{H}^{-1}_{\beta}(G) \to i_{*}\mathcal{E}^{\oplus r''} \to \mathbb{C}(x) \to 0,
	\end{align*}
	we could know that $r'' \leqslant r$. On the other hand, we have a commutative diagram in $\mathcal{I}_{0}^{\beta,i_{*}\mathcal{E}}(X)$ as follows:
	\begin{align*}
		\xymatrix{
			0 \ar[r] & i_{*}\mathcal{E}^{\oplus r} \ar[d] \ar[r] & \mathbb{C}(x) \ar[r] \ar[d] & \mathcal{E}^{x}[1] \ar[r] \ar[d] & 0\\
			0 \ar[r] & 0 \ar[r] & \mathcal{H}^{-1}_{\beta}(G)[1] \ar@{=}[r] & \mathcal{H}^{-1}_{\beta}(G)[1] \ar[r] & 0.
		}
	\end{align*}By the snake lemma, we have a short exact sequence in $\mathcal{I}_{0}^{\beta,i_{*}\mathcal{E}}(X)$:
	\begin{align*}
	0 \to i_{*}\mathcal{E}^{\oplus r} \to i_{*}\mathcal{E}^{\oplus r''} \to F \to 0.
	\end{align*}It implies that $r''=r$ and thus $F=0$. In conclusion, $\mathcal{E}^{x}[1] \in \mathcal{I}_{0}^{\beta,i_{*}\mathcal{E}}$ is a simple object.
\end{pf}

\begin{lem}\cite[Lemma 5.9]{bayer2011space}
	Let $\mathcal{D}$ be a triangulated category and $v \colon K(\mathcal{D}) \to \lambda$ be a group homomorphism and $E \in \mathcal{D}$. Suppose that $\sigma \in \mathrm{Stab}_{\Lambda}(\mathcal{D})$ is a stability condition such that $E$ is $\sigma$-semistable and if:
	
	(1) there is a Jordan-H\"older filtration 
	\begin{align*}
		0 \to F^{\oplus r} \to E \to G \to 0
	\end{align*} of $E$ such that $F,G$ are $\sigma$-stable;
	
	(2) $\mathrm{Hom}(E,F)=0$;
	
	(3) $v(E)$ and $v(F)$ are linearly independant in $\Lambda$.
	
	Then $\sigma$ is in the closure of the set of stability conditions where $E$ is stable.
\end{lem}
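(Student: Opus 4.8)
The plan is to prove the statement by an explicit deformation of $\sigma$ that separates the phases of the two Jordan--H\"older factors. First I would record the numerics. Since $0 \to F^{\oplus r} \to E \to G \to 0$ is a Jordan--H\"older filtration, the factors $F$ and $G$ are $\sigma$-stable of the same phase $\phi$ as $E$, and in $K(\mathcal{D})$ we have $v(E) = r\,v(F) + v(G)$. In particular hypothesis (3), that $v(E)$ and $v(F)$ are linearly independent, is equivalent to $v(F)$ and $v(G)$ being linearly independent.

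The first real step is to produce the deformation. By Bridgeland's deformation theorem the forgetful map $\sigma \mapsto Z$ is a local homeomorphism from $\mathrm{Stab}_{\Lambda}(\mathcal{D})$ onto an open subset of $\mathrm{Hom}(\Lambda,\mathbb{C})$, so for every small perturbation $Z_{t}$ of $Z$ there is a nearby stability condition $\sigma_{t}$ with central charge $Z_{t}$. Because $v(F)$ and $v(G)$ are linearly independent, the pair $(Z_{t}(F),Z_{t}(G)) \in \mathbb{C}^{2}$ can be prescribed independently for small $t$; I would therefore choose the path $Z_{t}$ so that $\phi_{t}(F) < \phi_{t}(G)$ for all small $t>0$, while $F$ and $G$ remain $\sigma_{t}$-stable (stable objects stay stable under small deformations). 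Since $Z_{t}(E) = r\,Z_{t}(F) + Z_{t}(G)$, the phase of $E$ then lies strictly between them, $\phi_{t}(F) < \phi_{t}(E) < \phi_{t}(G)$.

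The heart of the argument is to show $E$ is $\sigma_{t}$-stable for such $t$. After possibly shrinking $t$, every Harder--Narasimhan factor of $E$ with respect to $\sigma_{t}$ is an iterated extension of the two stable objects $F$ and $G$ only: this uses the support property together with local finiteness, which forces the $\sigma_{t}$-stable objects whose classes lie near the rank-two sublattice $\langle v(F),v(G)\rangle$ to be deformations of the $\sigma$-stable objects of phase $\phi$, namely $F$ and $G$. Since $\phi_{t}(F) \neq \phi_{t}(G)$, any $\sigma_{t}$-semistable object of a single phase is then a self-extension of $F$ (phase $\phi_{t}(F)$) or of $G$ (phase $\phi_{t}(G)$). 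Let $A \hookrightarrow E$ be a maximal destabilizing subobject, so $A$ is $\sigma_{t}$-semistable with $\phi_{t}(A) \geqslant \phi_{t}(E)$. If $A$ is built from $F$ then $\phi_{t}(A) = \phi_{t}(F) < \phi_{t}(E)$, a contradiction; hence $A$ is built from $G$, and comparing classes with $v(E) = r\,v(F) + v(G)$ forces $A \cong G$. The composite $A \cong G \hookrightarrow E \twoheadrightarrow G$ is then either an isomorphism or zero: an isomorphism splits the sequence as $E \cong G \oplus F^{\oplus r}$, contradicting hypothesis (2) that $\mathrm{Hom}(E,F)=0$; while the zero map realizes $G$ as a subobject of $F^{\oplus r}$, forcing $\phi_{t}(G) \leqslant \phi_{t}(F)$, again a contradiction. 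Thus no such $A$ exists, $E$ is $\sigma_{t}$-stable, and $\sigma = \lim_{t \to 0}\sigma_{t}$ lies in the closure of the locus where $E$ is stable.

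The main obstacle I anticipate is the control step at the start of the last paragraph: verifying that, for small $t$, the only $\sigma_{t}$-stable objects relevant to destabilizing $E$ are the deformations $F$ and $G$, so that each Harder--Narasimhan factor is an extension of these two. This is where the support property and the local finiteness of walls must be invoked carefully. Once that structural fact is in place, the phase inequality $\phi_{t}(F) < \phi_{t}(E) < \phi_{t}(G)$ together with $\mathrm{Hom}(E,F)=0$ makes the remaining destabilization analysis purely formal.
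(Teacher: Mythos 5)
The paper does not actually prove this lemma---it is quoted verbatim from Bayer--Macr\`i---so I am measuring your argument against the standard proof of that result, whose overall strategy (deform $Z$ so that $\phi_t(F)<\phi_t(G)$, confine destabilizers of $E$ to the finite-length category generated by its Jordan--H\"older factors, then run a case analysis) you have correctly identified. The gap is in the case analysis. You claim that because $\phi_t(F)\neq\phi_t(G)$, a $\sigma_t$-semistable object built from $F$'s and $G$'s must be an iterated self-extension of $F$ alone or of $G$ alone. That is false: in an extension $0\to F^{\oplus a}\to A\to G\to 0$ the subobject has the \emph{smaller} phase and the quotient the larger one, which is exactly the non-destabilizing configuration, so such an $A$ can perfectly well be $\sigma_t$-semistable of intermediate phase---$E$ itself is an object of this shape and is supposed to become $\sigma_t$-stable. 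As a result you never rule out destabilizing subobjects $A\subseteq E$ of class $a\,v(F)+v(G)$ with $0<a<r$. These satisfy $\phi_t(A)>\phi_t(E)$ (removing copies of $Z_t(F)$ from $Z_t(E)$ raises the phase, since $\phi_t(F)<\phi_t(G)$), so they really would destabilize $E$, and neither phase comparison nor ``comparing classes'' excludes them. The correct way to kill them---and the actual role of hypothesis (2)---is to note that $E/A$ is then a nonzero object of $\mathcal{P}(\phi)$ whose Jordan--H\"older factors are all isomorphic to $F$, hence admits a surjection onto $F$, giving a nonzero composite $E\twoheadrightarrow E/A\twoheadrightarrow F$ and contradicting $\mathrm{Hom}(E,F)=0$. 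This argument covers your case $A\cong G$ uniformly. Your case split is exhaustive only when $r=1$, whereas the lemma is applied in this paper with $r=\operatorname{rk}\mathcal{E}$ arbitrary.

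A smaller issue: your ``control step'' is argued in terms of all $\sigma_t$-stable objects with class near the sublattice $\langle v(F),v(G)\rangle$ being deformations of $F$ and $G$; this is not quite the right statement, since $\mathcal{P}(\phi)$ may contain many stable objects besides $F$ and $G$. What one actually shows, using the support property and local finiteness, is that for $t$ small any $\sigma_t$-destabilizing subobject $A\subseteq E$ and its quotient $E/A$ are forced to lie in $\mathcal{P}(\phi)$ itself (their $\sigma$-phases are pinched between the sub/quotient constraints coming from the $\sigma$-semistability of $E$ and the fact that their central charges must have phase tending to $\phi$), whence their Jordan--H\"older factors form sub-multisets of the factors of $E$. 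You flagged this step yourself and the idea is recoverable, but as written it does not deliver the statement your case analysis needs.
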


\begin{cor}\label{boundary}
	Let the hypotheses be as in Proposition \ref{generalproposition}. Then the stability condition $\sigma^{\beta,i_{*}\mathcal{E},a}=(Z^{\beta,\alpha_{\mathcal{E}}^{\beta},a},\mathrm{Coh}^{\beta,i_{*}\mathcal{E}}_{0}(X))$ is on the boundary of the set of geometric stability conditions. In particular, if $-\displaystyle\frac{1}{2}<\beta<0$ is a rational number and $\displaystyle\frac{3\beta^{3}+6\beta^{2}-4}{6(3\beta+2)}<a<\displaystyle\frac{1}{6}\beta^{2}$ is a real number, then $(Z^{\beta,\beta^{2},a},\mathrm{Coh}^{\beta,i_{*}\mathcal{O}}_{0}(X)) \in \partial\mathrm{Stab}_{H}^{\mathrm{geo}}(D_{0}^{b}(X))$.
\end{cor}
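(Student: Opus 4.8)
The plan is to apply the criterion \cite[Lemma 5.9]{bayer2011space} stated just above to the skyscraper sheaf $E = \mathbb{C}(x)$ for an arbitrary closed point $x \in \mathbb{P}^{3}$. First I would record that at $\sigma^{\beta,i_{*}\mathcal{E},a}$ the object $\mathbb{C}(x)$ lies in the finite-length subcategory $\mathcal{I}_{0}^{\beta,i_{*}\mathcal{E}}(X)$ of phase-$1$ objects, namely those with $\Im Z^{\beta,\alpha_{\mathcal{E}}^{\beta},a} = 0$ and hence $Z \in \mathbb{R}^{<0}$. The defining short exact sequence
\[
0 \to i_{*}\mathcal{E}^{\oplus r} \to \mathbb{C}(x) \to \mathcal{E}^{x}[1] \to 0
\]
lies in $\mathcal{I}_{0}^{\beta,i_{*}\mathcal{E}}(X)$, and by Propositions \ref{exceptionalissimple2} and \ref{exceptionalissimple3} both $i_{*}\mathcal{E}$ and $\mathcal{E}^{x}[1]$ are simple objects of this finite-length category, hence $\sigma$-stable of phase $1$. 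Thus $\mathbb{C}(x)$ is strictly semistable at $\sigma$, with Jordan--H\"older factors $i_{*}\mathcal{E}$ of multiplicity $r = \mathrm{rank}\,\mathcal{E}$ and $\mathcal{E}^{x}[1]$; this already shows $\sigma$ is not geometric.

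Next I would verify the three hypotheses of the lemma with $F = i_{*}\mathcal{E}$ and $G = \mathcal{E}^{x}[1]$. Condition (1) is the displayed sequence together with the stability of the factors just noted. Condition (2), $\mathrm{Hom}(\mathbb{C}(x), i_{*}\mathcal{E}) = 0$, is the $l = 0$ case of the Hom-vanishing computed earlier for zero-dimensional sheaves $C$ against $i_{*}\mathcal{E}[l]$. Condition (3) is immediate from the numerical classes: since $\pi_{*}\mathbb{C}(x)$ is a length-one skyscraper we have $v(\mathbb{C}(x)) = (0,0,0,1)$, whereas $v_{0}(i_{*}\mathcal{E}) = r\,H^{3} \neq 0$, so $v(\mathbb{C}(x))$ and $v(i_{*}\mathcal{E})$ are linearly independent in $\Lambda$. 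The lemma then places $\sigma$ in the closure of the locus of stability conditions in which $\mathbb{C}(x)$ is stable.

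It remains to identify this closure with $\partial\mathrm{Stab}_{H}^{\mathrm{geo}}(D_{0}^{b}(X))$. The point $x$ was arbitrary and the entire construction is equivariant under the autoequivalences induced by $\mathrm{Aut}(\mathbb{P}^{3})$, which act transitively on closed points; so a deformation that stabilizes one skyscraper simultaneously stabilizes all of them, and transitivity forces a common phase. Hence the nearby region produced by the lemma consists of geometric stability conditions, while $\sigma$ itself, at which $\mathbb{C}(x)$ is strictly semistable, lies on the corresponding wall and is not geometric; combining these gives $\sigma^{\beta,i_{*}\mathcal{E},a} \in \partial\mathrm{Stab}_{H}^{\mathrm{geo}}(D_{0}^{b}(X))$. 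The special case follows by taking $\mathcal{E} = \mathcal{O}$, $r = 1$, for which the hypotheses of Proposition \ref{generalproposition} were checked in Proposition \ref{simplecase} and the subsequent example, yielding $(Z^{\beta,\beta^{2},a}, \mathrm{Coh}_{0}^{\beta,i_{*}\mathcal{O}}(X)) \in \partial\mathrm{Stab}_{H}^{\mathrm{geo}}(D_{0}^{b}(X))$ in the stated ranges.

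The main obstacle I expect is precisely this last identification: the lemma only guarantees that $\mathbb{C}(x)$ becomes stable in an adjacent region, and upgrading this to the assertion that the adjacent chamber is genuinely the geometric one --- all skyscrapers stable of a single common phase --- requires the transitivity and equivariance argument, together with an appeal to the wall-and-chamber description of $\mathrm{Stab}^{\mathrm{geo}}$ relative to $\mathbb{C}(x)$ from \cite[Proposition 3.3]{bayer2011space}. Everything else is bookkeeping with the numerical invariants and the already-established simplicity of $i_{*}\mathcal{E}$ and $\mathcal{E}^{x}[1]$.
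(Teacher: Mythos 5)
Your proposal is correct and follows essentially the same route as the paper: the short exact sequence $0 \to i_{*}\mathcal{E}^{\oplus r} \to \mathbb{C}(x) \to \mathcal{E}^{x}[1] \to 0$ in $\mathcal{I}_{0}^{\beta,i_{*}\mathcal{E}}(X)$, the stability of the two factors via Propositions \ref{exceptionalissimple2} and \ref{exceptionalissimple3}, and an application of \cite[Lemma 5.9]{bayer2011space}. You are in fact more careful than the paper, which does not explicitly verify hypotheses (2) and (3) of that lemma nor address the final step of identifying the adjacent locus where $\mathbb{C}(x)$ becomes stable with the geometric chamber --- the gap you flag at the end is real and is left implicit in the paper's three-line proof.
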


\begin{pf}
	Notice that we have a short exact sequence in $\mathcal{I}_{0}^{\beta,i_{*}\mathcal{E}}(X)$:
	\begin{align*}
		0 \to i_{*}\mathcal{E}^{\oplus r} \to \mathbb{C}(x) \to \mathcal{E}^{x}[1] \to 0.
	\end{align*}And $i_{*}\mathcal{E},\mathcal{E}^{x}[1]$ are both $\sigma^{\beta,i_{*}\mathcal{E},a}$-stable by Proposition \ref{exceptionalissimple2} and \ref{exceptionalissimple3}. Therefore, by Lemma \cite[Lemma 5.9]{bayer2011space} above, we get our conclusion.
\end{pf}

Finally, we have:
\begin{prop}
	If $-\displaystyle\frac{1}{2}<\beta<0$ is a rational number and $\displaystyle\frac{3\beta^{3}+6\beta^{2}-4}{6(3\beta+2)}<a<\displaystyle\frac{1}{6}\beta^{2}$ is a real number, then $\sigma^{\beta,i_{*}\mathcal{O},a}=(Z^{\beta,\beta^{2},a},\mathrm{Coh}^{\beta,i_{*}\mathcal{O}}_{0}(X)) \in \overline{\mathrm{Stab}_{H}^{\mathrm{geo}}(D_{0}^{b}(X))} \cap \overline{\mathrm{ST}_{i_{*}\mathcal{O}}(\mathrm{Stab}_{H}^{\mathrm{geo}}(D_{0}^{b}(X))})$.
\end{prop}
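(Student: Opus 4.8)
The membership in the first closure is immediate: applying Corollary \ref{boundary} with $\mathcal{E}=\mathcal{O}$ places $\sigma^{\beta,i_*\mathcal{O},a}$ on $\partial\mathrm{Stab}_H^{\mathrm{geo}}(D_0^b(X))$, hence in $\overline{\mathrm{Stab}_H^{\mathrm{geo}}(D_0^b(X))}$. The entire content is therefore to show that $\sigma:=\sigma^{\beta,i_*\mathcal{O},a}$ lies in $\overline{\mathrm{ST}_{i_*\mathcal{O}}(\mathrm{Stab}_H^{\mathrm{geo}}(D_0^b(X)))}$. The plan is to transport the argument of Corollary \ref{boundary} across the spherical twist.

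Since $i_*\mathcal{O}$ is a spherical object of $D_0^b(X)$, the twist $\mathrm{ST}_{i_*\mathcal{O}}$ is an autoequivalence; its action on $K(D_0^b(X))$ is $[E]\mapsto[E]-\chi(i_*\mathcal{O},E)\,[i_*\mathcal{O}]$, and because the Euler pairing factors through $v$ by Hirzebruch--Riemann--Roch, $\mathrm{ST}_{i_*\mathcal{O}}\in\mathrm{Aut}_\Lambda(D_0^b(X))$ and acts as a homeomorphism of $\mathrm{Stab}_H(D_0^b(X))$. As homeomorphisms commute with closure, $\overline{\mathrm{ST}_{i_*\mathcal{O}}(\mathrm{Stab}_H^{\mathrm{geo}})}=\mathrm{ST}_{i_*\mathcal{O}}(\overline{\mathrm{Stab}_H^{\mathrm{geo}}})$. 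Moreover $\{\mathrm{ST}_{i_*\mathcal{O}}(\mathbb{C}(x))\text{ stable}\}=\mathrm{ST}_{i_*\mathcal{O}}(\{\mathbb{C}(x)\text{ stable}\})$, and since every skyscraper has the same class $v(\mathbb{C}(x))$ (the point class), intersecting over all $x$ identifies this locus with $\mathrm{ST}_{i_*\mathcal{O}}(\mathrm{Stab}_H^{\mathrm{geo}})$; so it suffices to show that $\sigma$ lies in the closure of the locus where the single object $\mathrm{ST}_{i_*\mathcal{O}}(\mathbb{C}(x))$ is stable.

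The heart of the matter is to identify $\mathrm{ST}_{i_*\mathcal{O}}(\mathbb{C}(x))$ and apply \cite[Lemma 5.9]{bayer2011space} at $\sigma$. Starting from $0\to\mathcal{O}^x\to i_*\mathcal{O}\to\mathbb{C}(x)\to 0$ together with the Calabi--Yau computations $\mathrm{RHom}(i_*\mathcal{O},i_*\mathcal{O})=\mathbb{C}\oplus\mathbb{C}[-4]$ and $\mathrm{RHom}(i_*\mathcal{O},\mathbb{C}(x))=\mathbb{C}\oplus\mathbb{C}[-1]$ (via Proposition \ref{Homsetofinclusion} and Serre duality on the fourfold $X$), I would evaluate the twist triangle in two stages: coning off the degree-zero evaluation gives $\mathcal{O}^x[1]$, and coning off the remaining degree-one class produces a triangle $\mathcal{O}^x[1]\to\mathrm{ST}_{i_*\mathcal{O}}(\mathbb{C}(x))\to i_*\mathcal{O}\to\mathcal{O}^x[2]$ whose connecting class lies in $\mathrm{Ext}^2(i_*\mathcal{O},\mathcal{O}^x)=\mathbb{C}$ and is nonzero. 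This exhibits $\mathrm{ST}_{i_*\mathcal{O}}(\mathbb{C}(x))$ as the reversed, nonsplit extension
\begin{align*}
	0\to\mathcal{O}^x[1]\to\mathrm{ST}_{i_*\mathcal{O}}(\mathbb{C}(x))\to i_*\mathcal{O}\to 0
\end{align*}
in $\mathcal{I}_0^{\beta,i_*\mathcal{O}}(X)$, i.e.\ with the two stable factors $i_*\mathcal{O}$ and $\mathcal{O}^x[1]$ of Propositions \ref{exceptionalissimple2} and \ref{exceptionalissimple3} occurring in the order opposite to that of $\mathbb{C}(x)$. As these factors have equal phase at $\sigma$, the extension is $\sigma$-semistable, and \cite[Lemma 5.9]{bayer2011space} applies with $E=\mathrm{ST}_{i_*\mathcal{O}}(\mathbb{C}(x))$, $F=\mathcal{O}^x[1]$, $G=i_*\mathcal{O}$, $r=1$: one checks $\mathrm{Hom}(E,\mathcal{O}^x[1])=0$ from the sequence above using $\mathrm{Ext}^1(i_*\mathcal{O},\mathcal{O}^x)=0$ and the nonvanishing of the connecting map, while $v(E)$ and $v(\mathcal{O}^x[1])$ are linearly independent because $v(i_*\mathcal{O})$ and $v(\mathbb{C}(x))$ are. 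This gives $\sigma\in\overline{\{\mathrm{ST}_{i_*\mathcal{O}}(\mathbb{C}(x))\text{ stable}\}}$, hence the second membership.

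The main obstacle is the twist computation above. Because $X$ is Calabi--Yau of dimension four, $\mathrm{RHom}(i_*\mathcal{O},\mathbb{C}(x))$ is genuinely two-dimensional and $\mathrm{ST}_{i_*\mathcal{O}}(i_*\mathcal{O})=i_*\mathcal{O}[-3]$, so $\mathrm{ST}_{i_*\mathcal{O}}(\mathbb{C}(x))$ is an iterated cone rather than a single mapping cone; the delicate points are to confirm that the evaluation map meets both the degree-zero and the degree-one class, that the resulting object lands in the heart precisely as the reversed extension (and not a further shift or a more complicated iterated extension), and that the connecting map in $\mathrm{Ext}^2(i_*\mathcal{O},\mathcal{O}^x)$ is nonzero. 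Once these cohomological points are settled, the remaining verifications for \cite[Lemma 5.9]{bayer2011space} are routine and exactly parallel to those in Corollary \ref{boundary}.
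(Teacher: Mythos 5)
Your proposal is correct and follows essentially the same route as the paper: both hinge on computing the twist triangle $\mathcal{O}^{x}[1] \to \mathrm{ST}_{i_{*}\mathcal{O}}(\mathbb{C}(x)) \to i_{*}\mathcal{O} \to \mathcal{O}^{x}[2]$ from $\mathrm{RHom}(i_{*}\mathcal{O},\mathbb{C}(x)) \cong \mathbb{C} \oplus \mathbb{C}[-1]$, invoking the stability of $i_{*}\mathcal{O}$ and $\mathcal{O}^{x}[1]$ from Propositions \ref{exceptionalissimple2} and \ref{exceptionalissimple3}, and applying \cite[Lemma 5.9]{bayer2011space}. The only (cosmetic, by equivariance of the group action) difference is that you keep the stability condition $\sigma$ fixed and twist the object, whereas the paper applies $\mathrm{ST}_{i_{*}\mathcal{O}}^{-1}$ to $\sigma$ and tests the untwisted object $\mathbb{C}(x)$ against the resulting $\eta^{\beta,i_{*}\mathcal{O},a}$, using $\mathrm{ST}_{i_{*}\mathcal{O}}^{-1}(i_{*}\mathcal{O})=i_{*}\mathcal{O}[3]$.
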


\begin{pf}
Note that\begin{align*}
	\mathrm{ST}_{i_{*}\mathcal{O}}(\mathbb{C}(x)) \cong \mathrm{Cone}(\mathrm{RHom}_{D_{0}^{b}(X)}(i_{*}\mathcal{O},\mathbb{C}(x)) \otimes i_{*}\mathcal{O} \to \mathbb{C}(x))
\end{align*}

But $\mathrm{RHom}_{D_{0}^{b}(X)}(i_{*}\mathcal{O},\mathbb{C}(x)) \cong \bigoplus_{l \in \mathbb{Z}}\mathrm{Hom}_{D_{0}^{b}(X)}(i_{*}\mathcal{O},\mathbb{C}(x)[l])[-l] \cong\mathbb{C} \oplus \mathbb{C}[-1]$ and we have an exact sequence in $\mathrm{Coh}_{0}(X)$:
\begin{align*}
	&0 \to \mathcal{H}^{-1}(\mathrm{ST}_{i_{*}\mathcal{O}}(\mathbb{C}(x))) \to \mathrm{Hom}_{\mathrm{Coh}_{0}(X)}(i_{*}\mathcal{O},\mathbb{C}(x)) \otimes i_{*}\mathcal{O} \to \mathbb{C}(x) \to \mathcal{H}^{0}(\mathrm{ST}_{i_{*}\mathcal{O}}(\mathbb{C}(x))) \to \\ &\mathrm{Hom}_{\mathrm{Coh}_{0}(X)}(i_{*}\mathcal{O},\mathbb{C}(x)[1]) \otimes i_{*}\mathcal{O} \to 0.
\end{align*}

Thus $\mathcal{H}^{-1}(\mathrm{ST}_{i_{*}\mathcal{O}}(\mathbb{C}(x))) \cong \mathcal{O}^{x}$ and $\mathcal{H}^{0}(\mathrm{ST}_{i_{*}\mathcal{O}}(\mathbb{C}(x))) \cong \mathrm{Hom}_{\mathrm{Coh}_{0}(X)}(i_{*}\mathcal{O},\mathbb{C}(x)[1]) \otimes i_{*}\mathcal{O} \to 0 \cong i_{*}\mathcal{O}$ by the definition of $\mathcal{O}^{x}$. Then we have a distinguished triangle:
\begin{align*}
	\mathcal{O}^{x}[1] \to \mathrm{ST}_{i_{*}\mathcal{O}}(\mathbb{C}(x)) \to i_{*}\mathcal{O} \to \mathcal{O}^{x}[2].
\end{align*}
Take the inverse of $\mathrm{ST}_{i_{*}\mathcal{O}}$ on the above triangle:
\begin{align*}
	\mathrm{ST}_{i_{*}\mathcal{O}}^{-1}(\mathcal{O}^{x}[1]) \to \mathbb{C}(x) \to \mathrm{ST}_{i_{*}\mathcal{O}}^{-1}(i_{*}\mathcal{O}) \to \mathrm{ST}_{i_{*}\mathcal{O}}^{-1}(\mathcal{O}^{x}[2]).
\end{align*}
Note that: $\mathrm{ST}_{i_{*}\mathcal{O}}(i_{*}\mathcal{O})=i_{*}\mathcal{O}[-3]$, thus:
\begin{align*}
\mathrm{ST}_{i_{*}\mathcal{O}}^{-1}(\mathcal{O}^{x}[1]) \to \mathbb{C}(x) \to i_{*}\mathcal{O}[3] \to \mathrm{ST}_{i_{*}\mathcal{O}}^{-1}(\mathcal{O}^{x}[2]).
\end{align*}
Let $\eta^{\beta,i_{*}\mathcal{O},a}=\mathrm{ST}_{i_{*}\mathcal{O}}^{-1}\cdot\sigma^{\beta,i_{*}\mathcal{O},a} \in \mathrm{Stab}_{H}(D_{0}^{b}(X))$, then $\mathrm{ST}_{i_{*}\mathcal{O}}^{-1}(\mathcal{O}^{x}[1])$ and $i_{*}\mathcal{O}[3]$ are  $\eta^{\beta,i_{*}\mathcal{O},a}$-stable object. By \cite[Lemma 5.9]{bayer2011space}, we have $\eta^{\beta,i_{*}\mathcal{O},a} \in \partial\mathrm{Stab}_{H}^{\mathrm{geo}}(D_{0}^{b}(X))$, thus $\sigma^{\beta,i_{*}\mathcal{O},a} \in \overline{\mathrm{Stab}_{H}^{\mathrm{geo}}(D_{0}^{b}(X))} \cap \overline{\mathrm{ST}_{i_{*}\mathcal{O}}(\mathrm{Stab}_{H}^{\mathrm{geo}}(D_{0}^{b}(X))})$.
\end{pf}

\appendix

\section{Relative derived dual functor}\label{appendix2}

In this appendix, we introduce the notion and some properties of relative derived dual functor.

\subsection{Basic definitions and properties}

First, for any smooth quasi-projective variety $W$ over  $\mathbb{C}$ with dimension $n$, we define its \textit{derived dual functor} as usual: 
\begin{align*}
	\mathbb{D}_{W} \triangleq R\mathcal{H}om(-,\mathcal{O}_{W}) \colon D^{b}(W) \to D^{b}(W).
\end{align*} Note that we have the following basic proposition of this functor and we can find the proof in  \cite[Section1.1]{huybrechts2010geometry} written by Huybrechts and Lehn:

(1) We have $\mathbb{D}_{W} \circ \mathbb{D}_{W} \cong 1_{D^{b}(W)}$, where $1_{D^{b}(W)}$ means the identity functor of $D^{b}(W)$.

(2) Let $E$ be a nonzero coherent sheaf on $W$ and the dimension of $E$ is $d$, then we have:
\begin{align*}
	\mathcal{H}^{l}(\mathbb{D}_{W}(E))=\mathcal{E}xt^{l}_{W}(E,\mathcal{O}_{W})=0 
\end{align*} unless $l \in \{n-d,\ldots,n\}$. And the dimension of the sheaf $\mathcal{H}^{l}(\mathbb{D}_{W}(E))$ is less than or equal to $n-l$ when $l \in \{n-d,\ldots,n\}$. 

Moreover, if $E$ is a pure sheaf of dimension $d$, then:

(1) For any $l \notin \{n-d,\ldots,n-1\}$, we have $\mathcal{H}^{l}(\mathbb{D}_{W}(E))=0$.

(2) The dimension of sheaf $\mathcal{H}^{l}(\mathbb{D}_{W}(E))$ is less than or equal to $n-l-1$ when $l \in \{n-d+1,\ldots,n-1\}$, and the sheaf $\mathcal{H}^{n-d}(\mathbb{D}_{W}(E))$ is pure of dimension $d$ with the same support set of $E$. Moreover, if $E$ is just a nonzero coherent sheaf on $W$ with dimension $d$, then $\mathcal{H}^{n-d}(E)$ is still pure of dimension $d$.

(3) If $W$ is projective, then we have:
\begin{align*}
	\mathrm{ch}_{i}(\mathbb{D}_{W}(E))=(-1)^{i}\mathrm{ch}_{i}(E)
\end{align*} for any $i \in \{0,\ldots,n\}$ and $E \in \mathrm{Coh}(W)$.

Now we introduce the notation of relative derived dual functor. We still use the notion we have used in the main body of this paper. Explicitly, suppose that $Y$ is a smooth projective variety over  $\mathbb{C}$ with dimension $n$ and $X=\mathrm{Tot}_{Y}(\mathcal{E}_{0})$ is the total space of a locally free sheaf $\mathcal{E}_{0}$ with constant rank $r$ on $Y$. Let $\pi \colon X \to Y$ be the natural projection and $i \colon Y \to X$ be the zero section of $\pi$. We always fix a polarization $H$ of $Y$ and use $v_{i}(E)$ to represent the number
$H^{n-i}\mathrm{ch}_{i}(\pi_{*}E)$ for any $E \in D_{0}^{b}(X)$.

We define the relative derived dual functor with respect to $\pi \colon X \to Y$ as follows:
\begin{align*}
	\mathbb{D}_{X/Y} \triangleq R\mathcal{H}om(-,\pi^{*}\mathrm{det}\mathcal{E}_{0}^{\lor}[r]) \colon D^{b}_{0}(X) \to D^{b}_{0}(X).
\end{align*} One can quickly check that $\mathbb{D}_{X/Y} \circ \mathbb{D}_{X/Y} \cong 1_{D^{b}_{0}(X)}$ and $\mathbb{D}_{X/Y}$ is the composition of $\mathbb{D}_{X}$ and tensoring with the object $\pi^{*}\mathrm{det}\mathcal{E}_{0}^{\lor}[r]$.

\begin{prop}
	For any $E \in D_{0}^{b}(X)$, we have 
	\begin{align*}
		\pi_{*}\mathbb{D}_{X/Y}(E) \cong \mathbb{D}_{Y}(\pi_{*}E).
	\end{align*} in $D^{b}(Y)$. Thus, for any $i \in \{0,\ldots,n\}$, $v_{i}(\mathbb{D}_{X/Y}(E))=(-1)^{i}v_{i}(E)$.
\end{prop}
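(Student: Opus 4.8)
The plan is to recognize $\mathbb{D}_{X/Y}$ as the functor $R\mathcal{H}om_X(-,\pi^{!}\mathcal{O}_Y)$ coming from Grothendieck's relative duality, and then to apply the duality isomorphism for $\pi$. First I would observe that $\pi$ is smooth of relative dimension $r$, being the total space of a rank-$r$ bundle, so that $\pi^{!}\mathcal{O}_Y\cong L\pi^{*}\mathcal{O}_Y\otimes\omega_{X/Y}[r]\cong\omega_{X/Y}[r]$. Since $\Omega_{X/Y}\cong\pi^{*}\mathcal{E}_0^{\lor}$ (the relative differentials are generated by the fibrewise-linear coordinates), one gets $\omega_{X/Y}=\det\Omega_{X/Y}\cong\pi^{*}\det\mathcal{E}_0^{\lor}$, hence $\pi^{!}\mathcal{O}_Y\cong\pi^{*}\det\mathcal{E}_0^{\lor}[r]$ and therefore $\mathbb{D}_{X/Y}(E)=R\mathcal{H}om_X(E,\pi^{!}\mathcal{O}_Y)$.

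The key step is then the relative Grothendieck--Serre duality isomorphism
\[
	R\pi_{*}R\mathcal{H}om_X(E,\pi^{!}\mathcal{O}_Y)\cong R\mathcal{H}om_Y(R\pi_{*}E,\mathcal{O}_Y)=\mathbb{D}_Y(R\pi_{*}E),
\]
which I would combine with the identity $R\pi_{*}E=\pi_{*}E$. The latter holds because $\pi$ is affine, so that $R^{l}\pi_{*}$ vanishes for $l>0$ on coherent sheaves; equivalently, $\pi_{*}$ is exact on $\mathrm{Coh}_0(X)$ by Proposition \ref{basicpropofproj}. Combining these two facts yields $\pi_{*}\mathbb{D}_{X/Y}(E)\cong\mathbb{D}_Y(\pi_{*}E)$ at once.

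The main obstacle is that $\pi$ is not proper, so the duality isomorphism is not immediately available in the stated generality. This is precisely where the support hypothesis defining $D_0^{b}(X)$ enters: every $E\in D_0^{b}(X)$ has $\mathrm{Supp}(E)$ contained set-theoretically in the zero section $i(Y)$, and $\pi|_{i(Y)}$ is the inverse of the closed immersion $i$, hence an isomorphism and in particular proper; thus $\pi$ is proper on $\mathrm{Supp}(E)$, and the duality statement applies to such $E$. Alternatively, I would secure this by compactifying the fibres, replacing $X$ by $\bar X=\mathbb{P}_Y(\mathcal{E}_0\oplus\mathcal{O}_Y)$ with its proper projection $\bar\pi\colon\bar X\to Y$: objects of $D_0^{b}(X)$ extend by zero across the divisor at infinity, the relative dualizing complex of $\bar\pi$ restricts over $X$ to that of $\pi$, and duality for the proper morphism $\bar\pi$ descends to the desired isomorphism.

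Finally, the Chern-character identity is a formal consequence. Applying the stated property $\mathrm{ch}_i(\mathbb{D}_Y(F))=(-1)^{i}\mathrm{ch}_i(F)$ of the absolute derived dual on the projective variety $Y$ to $F=\pi_{*}E$, I would compute
\[
	v_i(\mathbb{D}_{X/Y}(E))=H^{n-i}\mathrm{ch}_i(\pi_{*}\mathbb{D}_{X/Y}(E))=H^{n-i}\mathrm{ch}_i(\mathbb{D}_Y(\pi_{*}E))=(-1)^{i}H^{n-i}\mathrm{ch}_i(\pi_{*}E)=(-1)^{i}v_i(E)
\]
for every $i\in\{0,\dots,n\}$, which completes the argument.
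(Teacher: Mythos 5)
Your proposal is correct and follows essentially the same route as the paper, which simply invokes Grothendieck--Verdier duality for $\pi$; you have identified the right dualizing complex $\pi^{!}\mathcal{O}_Y\cong\pi^{*}\det\mathcal{E}_0^{\lor}[r]$, used that $\pi$ is affine to replace $R\pi_*$ by $\pi_*$, and correctly noted the one subtlety the paper leaves implicit, namely that $\pi$ is not proper but is proper on the support of any $E\in D_0^b(X)$, which is what makes the duality isomorphism applicable. The concluding Chern-character computation matches the paper's intent as well.
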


\begin{pf}
	We conclude this proposition directly by the Grothendieck-Verdier duality. 
\end{pf}

If $E \in \mathrm{Coh}_{0}(X)$ is a sheaf with dimension $d$, then the sheaf $\mathcal{H}^{n-d}(\mathbb{D}_{X/Y}(E)) \in \mathrm{Coh}_{0}(X)$ is a pure sheaf of dimension $d$. We call the sheaf $\mathcal{H}^{n-d}(\mathbb{D}_{X/Y}(E))$ as the relative dual sheaf of $E$ and still denote it by $E^{\lor}$. We may get confused about whether the notion $E^{\lor}$ represents $\mathcal{H}^{n-d}(\mathbb{D}_{X/Y}(E))$ or $\mathcal{H}^{n+r-d}(\mathbb{D}_{X}(E))$. But in this appendix, without special statement, $E^{\lor}$ always represents $\mathcal{H}^{n-d}(\mathbb{D}_{X/Y}(E))$ where $E \in \mathrm{Coh}_{0}(X)$. 

Before our further discussion, there is a useful lemma about Bridgeland stability condition:

\begin{lem}\label{importantlemmaB1}
	Let $\mathcal{D}$ be a triangulated category and $\sigma=(Z,\mathcal{A})$ be a weak pre-stability condition on $\mathcal{D}$ with slope function $\mu$. Suppose that there is a short exact sequence in $\mathcal{A}$:
	\begin{align*}
		0 \to F \to E \to G \to 0,
	\end{align*}where $F,E \neq 0$ and $Z(G)=0$.
	
	(1) We have $\mu_{\min}(F)=\mu_{\min}(E)$, and if $\mathrm{Hom}(C,E)=0$ for any $C \in \mathcal{A}$ with $Z(C)=0$, then $\mu_{\max}(F)=\mu_{\max}(E)$.
	
	(2) \cite[Sublemma 2.16]{piyaratne2019moduli} If $F$ is $\sigma$-semistable and $\mathrm{Hom}(C,E)=0$ for any $C \in \mathcal{A}$ with $Z(C)=0$, then $E$ is $\sigma$-semistable with slope $\mu(F)$.
\end{lem}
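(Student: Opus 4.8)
The plan is to prove everything at the level of Harder--Narasimhan extremes, exploiting repeatedly that $Z(G)=0$ forces $Z(E)=Z(F)$ and that $G$ is $\sigma$-semistable of slope $+\infty$ (indeed every subquotient $G'$ of $G$ has $\Im Z(G')\geq 0$ and $\Im Z(G/G')\geq 0$ summing to $0$, and likewise for $-\Re Z$, so $Z(G')=0$ and $\mu(G')=+\infty$). The two nontrivial inequalities to establish are $\mu_{\min}(E)\leq\mu_{\min}(F)$ and $\mu_{\max}(E)\leq\mu_{\max}(F)$; the opposite inequalities $\mu_{\min}(F)\leq\mu_{\min}(E)$ (seesaw, using $\mu_{\min}(G)=+\infty$) and $\mu_{\max}(F)\leq\mu_{\max}(E)$ (subobjects never raise $\mu_{\max}$) are standard.

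First I would prove $\mu_{\min}(E)=\mu_{\min}(F)$. Assume $t:=\mu_{\min}(F)<+\infty$, otherwise $Z(F)=0=Z(E)$ and both sides equal $+\infty$. Let $F\twoheadrightarrow P$ be the last Harder--Narasimhan factor of $F$, so $P$ is semistable with $\mu(P)=t$, and form the quotient $W:=E/\ker(F\twoheadrightarrow P)$. Pushing out $0\to F\to E\to G\to 0$ along $F\twoheadrightarrow P$ yields $0\to P\to W\to G\to 0$; in particular $Z(W)=Z(P)$, so $\mu(W)=t$. Since $W$ is a quotient of $E$ we have $\mu_{\min}(E)\leq\mu_{\min}(W)$, and since the minimal slope of any object is at most its total slope, $\mu_{\min}(W)\leq\mu(W)=t$. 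Combining, $\mu_{\min}(E)\leq t=\mu_{\min}(F)$, which is exactly the missing inequality.

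Next I would prove $\mu_{\max}(E)=\mu_{\max}(F)$ under the hypothesis $\mathrm{Hom}(C,E)=0$ for all $C$ with $Z(C)=0$; this hypothesis says precisely that $E$ has no nonzero subobject with $Z=0$, equivalently $\mu_{\max}(E)<+\infty$. Let $E_{1}\subseteq E$ be the maximal destabilizing subobject, semistable of finite slope $\mu_{\max}(E)$, and consider the pullback $E_{1}\cap F\subseteq E_{1}$. The second isomorphism theorem gives $E_{1}/(E_{1}\cap F)\cong(E_{1}+F)/F\hookrightarrow E/F=G$, so $Z(E_{1}/(E_{1}\cap F))=0$ and hence $Z(E_{1}\cap F)=Z(E_{1})\neq 0$; in particular $E_{1}\cap F\neq 0$, and being a subobject of the semistable $E_{1}$ with the same class it is semistable with $\mu(E_{1}\cap F)=\mu_{\max}(E)$. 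As $E_{1}\cap F\subseteq F$, this forces $\mu_{\max}(F)\geq\mu_{\max}(E)$. Finally, part (2) is then immediate: if $F$ is $\sigma$-semistable then $\mu_{\max}(F)=\mu_{\min}(F)=\mu(F)$, and transporting these equalities through part (1) gives $\mu_{\max}(E)=\mu_{\min}(E)=\mu(F)$, i.e.\ $E$ is $\sigma$-semistable of slope $\mu(F)$ (consistent with $Z(E)=Z(F)$).

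The main obstacle is the $\mu_{\min}$ identity: for a general subobject $F\subseteq E$ one only has $\mu_{\min}(F)\leq\mu_{\min}(E)$, and the reverse inequality genuinely uses that the quotient is $Z$-trivial. The crux is therefore the pushout step, where replacing $F$ by its lowest-slope factor $P$ and invoking $Z(G)=0$ pins $\mu(W)=\mu(P)$; the dual pullback computation drives the $\mu_{\max}$ statement, and the hypothesis on $\mathrm{Hom}(C,E)$ enters only to exclude the degenerate case $\mu_{\max}(E)=+\infty$ (where a $Z$-trivial subobject of $E$ could be the part becoming $G$). I expect the only remaining points to require care are the existence and uniqueness of Harder--Narasimhan filtrations (guaranteed since $\sigma$ is a weak pre-stability condition) together with the elementary monotonicity facts $\mu_{\min}(\text{quotient})\geq\mu_{\min}$ and $\mu_{\min}(\cdot)\leq\mu(\cdot)\leq\mu_{\max}(\cdot)$.
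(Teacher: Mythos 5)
Your proof is correct, but it takes a genuinely different route from the paper. The paper argues by induction on the length of the Harder--Narasimhan filtration of $E$: it composes $F \to E \to E_k$ with the minimal HN factor $E_k$, shows $\operatorname{Im} f$ is semistable of slope $\mu_{\min}(E)$, and passes to the kernel sequence $0 \to F' \to E' \to G' \to 0$ (again with $Z(G')=0$) via the snake lemma to run the induction; both extremes are handled inside this one induction. You instead treat the two extremes by separate, non-inductive one-step arguments: a pushout of the sequence along the minimal HN quotient $F \twoheadrightarrow P$ pins $\mu_{\min}(E) \leqslant \mu(P)$, and a pullback of the maximal destabilizing subobject $E_1$ against $F$ gives a semistable subobject $E_1 \cap F \subseteq F$ of slope $\mu_{\max}(E)$. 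Your version is shorter and makes transparent exactly where the hypothesis $\mathrm{Hom}(C,E)=0$ enters (to force $Z(E_1) \neq 0$, hence $E_1 \cap F \neq 0$), at the cost of invoking a few standard monotonicity facts that the paper's induction re-derives along the way. One small imprecision: for a \emph{weak} stability function, the hypothesis that $E$ has no nonzero subobject with $Z=0$ is not equivalent to $\mu_{\max}(E)<+\infty$, since a semistable subobject with $Z(E_1) \in \mathbb{R}^{<0}$ has slope $+\infty$ but $Z(E_1)\neq 0$; your argument never actually uses finiteness of $\mu_{\max}(E)$, only $Z(E_1)\neq 0$, so nothing breaks, but the parenthetical claim should be dropped.
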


\begin{pf}
	(1) We do induction on the length $m$ of Harder-Narasimhan filtration of $E$.
	
	If $m=1$, it means that $E$ is $\sigma$-semistable. For any nontrivial subobject $F'$ of $F$, it is also a subobject of $E$, then $\mu(F') \leqslant \mu(E)$. Meanwhile, $Z(G)=0$ implies that $\mu(E)=\mu(F)$, then $\mu(F') \leqslant \mu(F)$. Thus, $F$ is $\sigma$-semistable with slope $\mu(E)$.
	
	Assume that our conclusion holds if $m<k$. When $m=k>1$, we consider the Harder-Narasimhan factor $E_{k}$ of $E$ with the minimum slope. Let $f$ be the composition of $F \to E \to E_{k}$, then there will be a commutative diagram in $\mathcal{A}$ with two rows are both exact:
	\begin{align*}
	\xymatrix{
			0 \ar[r] & F \ar[d] \ar[r] & E \ar[r] \ar[d] & G  \ar[r] \ar[d] & 0\\
			0 \ar[r] & \Im f \ar[r] & E_{k} \ar[r] & E_{k}/\Im f \ar[r] & 0.
			}
	\end{align*}
	By the snake lemma, the morpshism $G \to E_{k}/\Im f$ is an epimorphism in $\mathcal{A}$ and then $Z(E_{k}/\Im f)=0$. Note that $\mathrm{Im}f \neq 0$, otherwise, we will have $Z(E_{k})=0$ and it is a contradiction. Thus by our proof in the first paragraph, we have $\Im f$ is also $\sigma$-semistable with $\mu_{\min}(E)=\mu(\mathrm{Im}f)$.
	
	On the other hand, we denote the kernel of $E \to E_{k}$ by $E'$, the kernel of $F \to \Im f$ by $F'$ and the kernel of $G \to E_{n}/\Im f$ by $G'$, then $0 \to F' \to E' \to G' \to 0$ is an exact sequence in $\mathcal{A}$ with $Z(G')=0$ by the snake lemma. If $F'=0$, then $\mu_{\min}(F)=\mu(F)=\mu(\mathrm{Im}f)=\mu_{\min}(E)$. If $F' \neq 0$, then by the assumption of induction, we have $\mu_{\min}(F')=\mu_{\min}(E')>\mu(E_{n})=\mu(\Im f)$, thus $\mu_{\min}(F)=\mu(\Im f)=\mu_{\min}(E)$.
	
	Moreover, if $\mathrm{Hom}(C,E)=0$ for any $C \in \mathcal{A}$ with $Z(C)=0$, $F'$ will not be $0$, because it implies $Z(E')=0$ and it is a contradiction. Meanwhile, we conclude that $\mathrm{Hom}(C,E')=0$ for any $C \in \mathcal{A}$ with $Z(C)=0$ and by our assumption, $\mu_{\max}(F')=\mu_{\max}(E')$.
	Thus $\mu_{\max}(F)=\mu_{\max}(E)$.
	
	(2) It is a direct corollary of (1).
\end{pf}

\begin{prop}\label{propositionB1}
	Suppose that and $E \in \mathrm{Coh}_{0}(X)$ is pure of dimension $n$, then:
	
	(1) We have a short exact sequence in $\mathrm{Coh}_{0}(X)$:
	\begin{align*}
		0 \to E \stackrel{\iota_{E}}{\longrightarrow} E^{\lor\lor} \to E^{\lor\lor}/E \to 0,
	\end{align*}where $E^{\lor\lor}/E$ is $0$ or the dimension of $E^{\lor\lor}/E$ is less than or equal to $n-2$. In other words, $v_{0}(E^{\lor\lor}/E)=v_{1}(E^{\lor\lor}/E)=0$.
	
	(2) If $E$ is slope semistable, then $E^{\lor\lor}$ is also slope semistable with the same slope. 
	
	(3) If $E$ is slope semistable, then $E^{\lor}$ is also slope semistable with slope $-\mu(E)$.
	
	(4) We always have $\mu_{\min}(E^{\lor})=-\mu_{\max}(E)$ and $\mu_{\max}(E^{\lor})=-\mu_{\min}(E)$.
	
	(5) Moreover, suppose that there is a short exact sequence in $\mathrm{Coh}_{0}(X)$:
	\begin{align*}
		0 \to F \stackrel{f}{\longrightarrow} E \to G \to 0
	\end{align*}with $F \neq 0$ and $G$ is $0$ or the dimension of $G$ is less that or equal to $n-2$. Then there is a commutative diagram in $\mathrm{Coh}_{0}(X)$:
	\begin{align*}
		\xymatrix{0 \ar[r] & F \ar^{\iota_{F}}[r] \ar_{f}[d] & F^{\lor\lor} \ar^{f^{\lor\lor}}[d] \\
			0 \ar[r] & E \ar_{\iota_{E}}[r] & E^{\lor\lor}
		}
	\end{align*}with $f^{\lor\lor}$ is an isomorphism.
\end{prop}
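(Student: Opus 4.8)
The whole proposition rests on three facts already recorded: the biduality $\mathbb{D}_{X/Y}\circ\mathbb{D}_{X/Y}\cong 1_{D^{b}_{0}(X)}$, the vanishing and dimension bounds for the sheaves $\mathcal{H}^{l}(\mathbb{D}_{X/Y}(-))$ (so that for $E$ pure of dimension $n$ one has $\mathcal{H}^{l}=0$ for $l<0$, $\mathcal{H}^{0}=E^{\lor}$ pure of dimension $n$, and $\dim\mathcal{H}^{l}\leqslant n-l-1$ for $l\geqslant 1$), and the identity $v_{i}(\mathbb{D}_{X/Y}(E))=(-1)^{i}v_{i}(E)$. For (1), the plan is to separate $E^{\lor}$ from the higher cohomology by the truncation triangle $E^{\lor}\to\mathbb{D}_{X/Y}(E)\to C\to E^{\lor}[1]$, where $C=\tau_{\geqslant 1}\mathbb{D}_{X/Y}(E)$ has all cohomology, hence support, of dimension $\leqslant n-2$. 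Applying the contravariant functor $\mathbb{D}_{X/Y}$ and biduality turns this into $\mathbb{D}_{X/Y}(C)\to E\to\mathbb{D}_{X/Y}(E^{\lor})\to\mathbb{D}_{X/Y}(C)[1]$. Passing to cohomology sheaves and using that $\mathbb{D}_{X/Y}(C)$ is supported in dimension $\leqslant n-2$ yields $0\to\mathcal{H}^{0}(\mathbb{D}_{X/Y}(C))\to E\xrightarrow{\iota_{E}}E^{\lor\lor}\to\mathcal{H}^{1}(\mathbb{D}_{X/Y}(C))$; purity of $E$ forces the dimension-$\leqslant n-2$ subsheaf $\mathcal{H}^{0}(\mathbb{D}_{X/Y}(C))$ to vanish, so $\iota_{E}$ is injective with cokernel embedded in $\mathcal{H}^{1}(\mathbb{D}_{X/Y}(C))$, of dimension $\leqslant n-2$. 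I would identify $\iota_{E}$ with the canonical biduality morphism through the naturality of $\mathbb{D}_{X/Y}^{2}\cong 1$.

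For (2) I would feed the sequence $0\to E\to E^{\lor\lor}\to E^{\lor\lor}/E\to 0$ from (1) into Lemma \ref{importantlemmaB1}(2) with the weak stability function $\mathcal{Z}=-v_{1}+\sqrt{-1}v_{0}$: its right-hand term has $\mathcal{Z}=0$, $E$ is semistable by hypothesis, and $\mathrm{Hom}(C,E^{\lor\lor})=0$ for every $C$ with $\mathcal{Z}(C)=0$ because $E^{\lor\lor}=\mathcal{H}^{0}(\mathbb{D}_{X/Y}(E^{\lor}))$ is pure of dimension $n$ and so receives no nonzero map from a sheaf of dimension $\leqslant n-2$. This gives $E^{\lor\lor}$ semistable of slope $\mu(E)$. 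For (3), since the higher cohomology of $\mathbb{D}_{X/Y}(E)$ has dimension $\leqslant n-2$ one has $v_{0}(E^{\lor})=v_{0}(E)$ and $v_{1}(E^{\lor})=-v_{1}(E)$, whence $\mu(E^{\lor})=-\mu(E)$. To get semistability I would argue by contradiction: a saturated destabilizing subsheaf $G\subset E^{\lor}$ with $\mu(G)>-\mu(E)$ produces a pure quotient $Q=E^{\lor}/G$ of strictly smaller slope, and dualizing $E^{\lor}\twoheadrightarrow Q$ gives a subsheaf $Q^{\lor}\hookrightarrow E^{\lor\lor}$ with $\mu(Q^{\lor})=-\mu(Q)>\mu(E)$, contradicting the semistability of $E^{\lor\lor}$ from (2).

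Part (4) is the technical heart. I would first note that, because $E$ is pure of dimension $n$, the Harder--Narasimhan factors all have finite slope and are themselves pure of dimension $n$ (a factor of slope $+\infty$ would be a subsheaf of dimension $\leqslant n-1$), and extensions of pure dimension-$n$ sheaves are pure, so every term $E_{i}$ and every quotient $E/E_{i}$ of the filtration $0=E_{0}\subset\cdots\subset E_{m}=E$ is pure of dimension $n$ and dualizes cleanly. Dualizing the quotient maps $E\twoheadrightarrow E/E_{i}$ and taking $\mathcal{H}^{0}$ produces a nested chain $0=(E/E_{m})^{\lor}\subset(E/E_{m-1})^{\lor}\subset\cdots\subset(E/E_{0})^{\lor}=E^{\lor}$; dualizing the short exact sequences $0\to\mathrm{gr}_{i}\to E/E_{i-1}\to E/E_{i}\to 0$ shows that each successive quotient agrees with $(\mathrm{gr}_{i})^{\lor}$ up to a sheaf of dimension $\leqslant n-2$, hence, by (3) and Lemma \ref{importantlemmaB1}(1) together with purity of $(\mathrm{gr}_{i})^{\lor}$, is semistable of slope $-\mu(\mathrm{gr}_{i})$. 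As the slopes $-\mu(\mathrm{gr}_{m})>\cdots>-\mu(\mathrm{gr}_{1})$ are strictly decreasing, uniqueness of HN filtrations identifies this chain as the HN filtration of $E^{\lor}$, giving $\mu_{\max}(E^{\lor})=-\mu_{\min}(E)$ and $\mu_{\min}(E^{\lor})=-\mu_{\max}(E)$.

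For (5) the decisive observation is that a sheaf $G$ of dimension $\leqslant n-2$ has $\mathcal{H}^{l}(\mathbb{D}_{X/Y}(G))=0$ for $l\leqslant 1$, since its cohomology starts in degree $n-\dim G\geqslant 2$. Thus in the dual triangle of $0\to F\to E\to G\to 0$ the term $\mathbb{D}_{X/Y}(G)$ contributes nothing in degrees $0$ and $1$, so $f^{\lor}\colon E^{\lor}\to F^{\lor}$ is an isomorphism; dualizing once more shows $f^{\lor\lor}\colon F^{\lor\lor}\to E^{\lor\lor}$ is an isomorphism, and the square commutes by naturality of $\iota$. I expect the main obstacle to be managing the failure of exactness of the derived dual: because duals of sheaves are genuinely complexes, every slope statement must be extracted at the level of $\mathcal{H}^{0}$, and one must repeatedly check that the auxiliary higher cohomology sheaves have dimension $\leqslant n-2$ so that they affect neither $v_{0},v_{1}$ nor the Mumford slopes $\mu_{\max},\mu_{\min}$; the cleanest safeguard throughout is purity of the relevant $\mathcal{H}^{0}$'s combined with Lemma \ref{importantlemmaB1}. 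A secondary point of care is confirming that the naturally arising maps in (1) and (5) are indeed the canonical morphisms $\iota_{E}$ and $f^{\lor\lor}$.
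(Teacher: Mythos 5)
Your proposal is correct, and for parts (1), (2), (3) and (5) it follows essentially the same route as the paper: truncate $\mathbb{D}_{X/Y}(E)$ at $E^{\lor}=\mathcal{H}^{0}$, dualize the resulting triangle, and use the dimension bounds $\dim\mathcal{H}^{l}(\mathbb{D}_{X/Y}(-))\leqslant n-l$ (with the improvement to $n-l-1$ for pure sheaves) together with Lemma \ref{importantlemmaB1}; your observation that $\mathcal{H}^{0}(\mathbb{D}_{X/Y}(C))$ dies because it is a dimension-$\leqslant n-2$ subsheaf of the pure sheaf $E$ is a slightly slicker way of getting what the paper extracts degree by degree from the filtration of $E'$ by its cohomology sheaves, and in (3) your explicit saturation gives the equality $\mu(Q^{\lor})=-\mu(Q)$ where the paper makes do with the inequality $\mu(B^{\lor})\geqslant-\mu(B)$.

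The one place you genuinely diverge is (4). The paper peels off the minimal-slope Harder--Narasimhan factor $E_{k}$, dualizes the single extension $0\to E'\to E\to E_{k}\to 0$, and inducts on the length of the filtration, using Lemma \ref{importantlemmaB1} to control $\mu_{\min}$ and $\mu_{\max}$ of the middle term. You instead dualize the whole filtration at once, obtaining the chain $(E/E_{m})^{\lor}\subseteq\cdots\subseteq(E/E_{0})^{\lor}=E^{\lor}$, show each graded piece is semistable of slope $-\mu(\mathrm{gr}_{i})$ up to a $Z$-trivial correction, and invoke uniqueness of HN filtrations. Both arguments rest on the same three inputs --- part (3), Lemma \ref{importantlemmaB1}, and the bound $\dim\mathcal{H}^{1}(\mathbb{D}_{X/Y}(-))\leqslant n-2$ for pure sheaves --- so the difference is organizational; your version produces the entire HN filtration of $E^{\lor}$ explicitly (which is more information than the two extreme slopes), at the price of having to justify that every quotient $E/E_{i}$ is pure of dimension $n$. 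Your justification of that point is sound (HN factors of finite slope admit no subsheaf with $v_{0}=0$, hence are pure, and purity is closed under extensions), and since $E$ pure of dimension $n$ forces all HN slopes to be finite, the argument closes. No gaps.
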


\begin{pf}
	(1) Because $\mathcal{H}^{l}(\mathbb{D}_{X/Y}(E))=0$ for any $l<0$, then we have a natural morphism: $\phi \colon E^{\lor} \to \mathbb{D}_{X/Y}(E)$, such that $\mathcal{H}^{0}(\phi)$ is an isomorphism. Consider $E'$ as the cone of this morphism, then we have a distinguished triangle in $D^{b}_{0}(X)$:
	\begin{align}
	\label{dualsequence1}
		E^{\lor} \to \mathbb{D}_{X/Y}(E) \to E' \to E^{\lor}[1].
	\end{align}After taking cohomology and note that $\mathcal{H}^{n}(\mathbb{D}_{X/Y}(E))=0$, we have:
	\begin{align*}
		\mathcal{H}^{l}(E') \cong \begin{cases}
			0 & l \leqslant 0 \mbox{ or } l \geqslant n; \\
		\mathcal{H}^{l}(\mathbb{D}_{X/Y}(E)) & 0<l<n.
		\end{cases}
	\end{align*}
	Thus, when $0<l<n$, then $\dim \mathcal{H}^{l}(E')=\dim \mathcal{H}^{l}(\mathbb{D}_{X/Y}(E))=\dim \mathcal{H}^{r+l}(\mathbb{D}_{X}(E)) \leqslant n-l-1$.
	
	We take the relatived dual functor of the sequence (\ref{dualsequence1}) and we have a distinguished triangle in $D_{0}^{b}(X)$ as follows:
	\begin{align*}
		\mathbb{D}_{X/Y}(E') \to E \to \mathbb{D}_{X/Y}(E^{\lor}) \to \mathbb{D}_{X/Y}(E')[1].
	\end{align*}After taking cohomology with respect to the heart $\mathrm{Coh}_{0}(X)$, we have an exact sequence in $\mathrm{Coh}_{0}(X)$:
	\begin{align*}
		\mathcal{H}^{0}(\mathbb{D}_{X/Y}(E')) \to E \to E^{\lor\lor} \to \mathcal{H}^{1}(\mathbb{D}_{X/Y}(E')) \to 0
	\end{align*}Finally, we just need to claim that $\mathcal{H}^{0}(\mathbb{D}_{X/Y}(E'))=0$ and $\dim \mathcal{H}^{1}(\mathbb{D}_{X/Y}(E')) \leqslant n-2$.
	
	If $n=1$, then we find that $E'=0$ and then $E \cong E^{\lor\lor}$. If $n \geqslant 2$, we consider a filtration of $E'$ as follows:
	\begin{align*}
		\xymatrix{
		0=E_{0}' \ar[r]  & E_{1}' \ar[r]\ar[d]  & \cdots \ar[r]  & E_{n-2}' \ar[r]  & E_{n-1}'=E'\ar[d]\\
		& \mathcal{H}^{1}(E')[-1]\ar@{-->}[lu]  & & & \mathcal{H}^{n-1}(E')[1-n]\ar@{-->}[lu]
		}
	\end{align*}In other words, there is a series of distinguished triangle here: $E_{l-1}' \to E_{l}' \to \mathcal{H}^{l}(E')[-l] \to E_{l-1}[1]$ for any $l \in \{1,\ldots,n-1\}$. After taking the relatived dual functor, we have a series of distinguished triangle in $D_{0}^{b}(X)$: $\mathbb{D}_{X/Y}(\mathcal{H}^{l}(E'))[l] \to \mathbb{D}_{X/Y}(E_{l}') \to \mathbb{D}_{X/Y}(E_{l-1}') \to \mathbb{D}_{X/Y}(\mathcal{H}^{l}(E'))[l+1]$. After taking cohomology with respect to the heart $\mathrm{Coh}_{0}(X)$, we have an exact sequence in $\mathrm{Coh}_{0}(X)$:
	\begin{align*}
	 \cdots	\to & \mathcal{H}^{l}(\mathbb{D}_{X/Y}(\mathcal{H}^{l}(E'))) \to \mathcal{H}^{0}(\mathbb{D}_{X/Y}(E_{l}')) \to \mathcal{H}^{0}(\mathbb{D}_{X/Y}(E_{l-1}')) \to \\ & \mathcal{H}^{l+1}(\mathbb{D}_{X/Y}(\mathcal{H}^{l}(E'))) \to \mathcal{H}^{1}(\mathbb{D}_{X/Y}(E_{l}')) \to \mathcal{H}^{1}(\mathbb{D}_{X/Y}(E_{l-1}')) \to \cdots.
	\end{align*}Note that we only need to prove that: $\mathcal{H}^{l}(\mathbb{D}_{X/Y}(\mathcal{H}^{l}(E')))=0$ and $\dim \mathcal{H}^{l+1}(\mathbb{D}_{X/Y}(\mathcal{H}^{l}(E'))) \leqslant n-2$ for any $l \in \{1,\ldots,n-1\}$.
	
	In fact, $\dim \mathcal{H}^{l}(E') \leqslant n-1-l$, then $l< n-\dim \mathcal{H}^{l}(E')$. It means $\mathcal{H}^{l}(\mathbb{D}_{X/Y}(\mathcal{H}^{l}(E')))=0$. Meanwhile, if $\dim \mathcal{H}^{l}(E')<n-1-l$, then $\mathcal{H}^{l+1}(\mathbb{D}_{X/Y}(\mathcal{H}^{l}(E')))=0$ for the same reason. If $\dim \mathcal{H}^{l}(E')=n-1-l$, then $\dim \mathcal{H}^{l+1}(\mathbb{D}_{X/Y}(\mathcal{H}^{l}(E'))) \leqslant \dim \mathcal{H}^{l}(E')=n-1-l \leqslant n-2$. And we finish the proof.
	
	(2) It is a direct conclusion of Lemma \ref{importantlemmaB1} and (1).
	
	(3) First, note that $(-1)^{i}v_{i}(E)=v_{i}(\mathbb{D}_{X/Y}(E))=v_{i}(E^{\lor})+v_{i}(E')$. And when $i=0$ or $1$, we have $v_{i}(E')=\displaystyle\sum_{l=1}^{n-1}(-1)^{l}v_{i}(\mathcal{H}^{l}(E'))=0$ because $\dim \mathcal{H}^{l}(E') \leqslant n-1-l \leqslant n-2$ for any $l \in \{1,\ldots,n-1\}$. Thus, $v_{0}(E^{\lor})=v_{0}(E)$, $v_{1}(E^{\lor})=-v_{1}(E)$ and $\mu(E^{\lor})=-\mu(E)$. Moreover, if $H \in \mathrm{Coh}_{0}(X)$ is a nonzero sheaf with dimension $n$, then there is a torsion decomposition in $\mathrm{Coh}_{0}(X)$:
	\begin{align*}
		0 \to H_{t} \to H \to H_{tf} \to 0
	\end{align*}where $H_{tf} \neq 0$ is pure of dimension $n$ and $\dim H_{t} \leqslant n-1$. It means that:
	\begin{align*}
		\mu(H^{\lor})=\mu(H_{tf}^{\lor})=-\mu(H_{tf})=-\frac{v_{1}(H_{tf})}{v_{0}(H_{tf})}=-\frac{v_{1}(H)-v_{1}(H_{t})}{v_{0}(H)} \geqslant -\mu(H).
	\end{align*}
	
	Now, suppose that $E^{\lor}$ is not slope semistable, then there is a short exact sequence in $\mathrm{Coh}_{0}(X)$:
	\begin{align*}
		0 \to A \to E^{\lor} \to B \to 0
	\end{align*}with $\mu(A)>\mu(E^{\lor}) \geqslant \mu(B)$ and $A,B \neq 0$. Because $E^{\lor}$ is pure of dimension $n$, then $A$ is also pure of dimension $n$ and $+\infty>\mu(A)>\mu(E^{\lor})>\mu(B)$. After taking the relative dual functor and the cohomology functor, we have an exact sequence in $\mathrm{Coh}_{0}(X)$:
	\begin{align*}
		0 \to B^{\lor} \to E^{\lor\lor} \to A^{\lor}
	\end{align*}Now $E^{\lor\lor}$ is $\sigma$-semistable by (2) and $B^{\lor}$ is a nonzero subsheaf of $E^{\lor\lor}$. Thus
	\begin{align*}
		-\mu(B) \leqslant \mu(B^{\lor}) \leqslant \mu(E^{\lor\lor})=\mu(E).
	\end{align*}It is a contradiction. 
	
	(4) We do induction on the length $m$ of Harder-Narasimhan filtration of $E$. If $m=1$, then it is the case in (3). Assume that our conclusion holds if $m<k$. When $m=k>1$, we consider the Harder-Narasimhan factor $E_{k}$ of $E$ with the minimum slope and then we have a short exact sequence in $\mathrm{Coh}_{0}(X)$:
	\begin{align*}
		0 \to E' \to E \to E_{k} \to 0
	\end{align*} After taking relative dual functor and the cohomology functor, we have an exact sequence in $\mathrm{Coh}_{0}(X)$:
	\begin{align*}
		0 \to E_{k}^{\lor} \to E^{\lor} \to E'^{\lor} \to \mathcal{H}^{1}(\mathbb{D}_{X/Y}(E_{k})).
	\end{align*}Divide it into two short exact sequences:
	\begin{align*}
		& 0 \to E_{k}^{\lor} \to E^{\lor} \to H \to 0 \\
		& 0 \to H \to E'^{\lor} \to H' \to 0,
	\end{align*}where $H'$ is a subsheaf of $\mathcal{H}^{1}(\mathbb{D}_{X/Y}(E_{k}))$. 
	
	Note that $\dim H' \leqslant n-2$ and by Lemma \ref{importantlemmaB1}, we have $\mu_{\min}(H)=\mu_{\min}(E'^{\lor})$ and $\mu_{\max}(H)=\mu_{\max}(E'^{\lor})$. By our assumption of  induction, there are $\mu_{\min}(E'^{\lor})=-\mu_{\max}(E')$ and $\mu_{\max}(E'^{\lor})=-\mu_{\min}(E')$. Meanwhile, we have $\mu(E_{k}^{\lor})=-\mu(E_{k})>-\mu_{\min}(E')=\mu_{\max}(H)$. It means that $\mu_{\max}(E^{\lor})=\mu(E_{k}^{\lor})=-\mu(E_{k})=-\mu_{\min}(E)$ and $\mu_{\min}(E^{\lor})=\mu_{\min}(H)=\mu_{\min}(E'^{\lor})=-\mu_{\max}(E')=-\mu_{\max}(E)$. Thus we get our conclusion.
	
	(5) It is not hard to see that $\iota_{E} \circ f=f^{\lor\lor} \circ \iota_{F}$. By taking the functor $\mathcal{H}^{0}(\mathbb{D}_{X/Y}(-))$, we know that $\mathcal{H}^{0}(\mathbb{D}_{X/Y}(G))=0$. It implies $f^{\lor}$ is an isomorphism and $f^{\lor\lor}$ is also an isomorphism. 
\end{pf}

\begin{rmk}
	For any pure sheaf $E$, if the morphism $E \to E^{\lor\lor}$ which we construct in the above proposition is an isomorphism, then we call that $E$ is reflexive. In fact, if $E$ is reflexive with dimension $d$, then the dimension of the sheaf $\mathcal{H}^{l}(\mathbb{D}_{X/Y}(E))$ is less than or equal to $n-l-2$ when $l \in \{n-d+1,\ldots,n-2\}$. The proof is the same as the proof in \cite{huybrechts2010geometry}.
\end{rmk}

In fact, there is a similar proposition in the first tilting herat $\mathrm{Coh}^{\beta}_{0}(X)$ for any $\beta \in \mathbb{R}$ when $n=2,3$.

\begin{defn}
	Let $\beta \in \mathbb{R}$ and $0 \neq E \in \mathrm{Coh}^{\beta}_{0}(X)$. If $v_{1}^{\beta}(H)>0$ for any nonzero subobject $H$ of $E$ in $\mathrm{Coh}^{\beta}_{0}(X)$, then we call $E$ is tilt-torsionfree. 
\end{defn}

\begin{lem}\label{importantlemmaB2}
	Let $\beta \in \mathbb{R}$ and $0 \neq E \in \mathrm{Coh}^{\beta}_{0}(X)$, then for any $l<0$ and $l>n$, we have $\mathcal{H}^{l}(\mathbb{D}_{X/Y}(E))=0$. Moreover, for any $l \in \{2,\ldots,n\}$, we have $\dim \mathcal{H}^{l}(\mathbb{D}_{X/Y}(E)) \leqslant n-l$.
\end{lem}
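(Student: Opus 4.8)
The plan is to reduce the statement about the object $E$ to statements about its two cohomology sheaves with respect to the standard heart, and then propagate the estimates through a long exact sequence. Write $A=\mathcal{H}^{0}(E)$ and $B=\mathcal{H}^{-1}(E)$ for the cohomologies taken in $\mathrm{Coh}_{0}(X)$. The definition of the tilt gives a short exact sequence $0\to B[1]\to E\to A\to 0$ in $\mathrm{Coh}^{\beta}_{0}(X)$, hence a distinguished triangle $B[1]\to E\to A\to B[2]$ in $D^{b}_{0}(X)$, with $A\in\mathrm{Coh}^{>\beta}_{0}(X)$ and $B\in\mathrm{Coh}^{\leqslant\beta}_{0}(X)$. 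The one structural fact I would extract first is that $B$ is either zero or pure of dimension $n$: since $\mathrm{Coh}^{\leqslant\beta}_{0}(X)$ is the torsion-free class of a torsion pair it is closed under subobjects, so $\mu_{\max}(B)\leqslant\beta<+\infty$; but any nonzero subsheaf $T\subseteq B$ of dimension $<n$ has $v_{0}(T)=0$ by Proposition \ref{basicpropofproj}, whence $\mu(T)=+\infty>\beta$, a contradiction. Thus $B$ has no lower-dimensional subsheaf. (If $B=0$ then $E=A$ is a sheaf and everything below specializes to the sheaf case.)

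Next I would apply the contravariant functor $\mathbb{D}_{X/Y}$ to the triangle, obtaining $\mathbb{D}_{X/Y}(A)\to\mathbb{D}_{X/Y}(E)\to\mathbb{D}_{X/Y}(B)[-1]\to\mathbb{D}_{X/Y}(A)[1]$, and read off the long exact sequence
\begin{align*}
\cdots\to\mathcal{H}^{l}(\mathbb{D}_{X/Y}(A))\to\mathcal{H}^{l}(\mathbb{D}_{X/Y}(E))\to\mathcal{H}^{l-1}(\mathbb{D}_{X/Y}(B))\to\mathcal{H}^{l+1}(\mathbb{D}_{X/Y}(A))\to\cdots.
\end{align*}
The inputs are the sheaf-level estimates recalled at the start of this appendix, applied on the smooth variety $X$ of dimension $n+r$ and transported to $\mathbb{D}_{X/Y}$ through the identification $\mathcal{H}^{l}(\mathbb{D}_{X/Y}(S))\cong\mathcal{H}^{l+r}(\mathbb{D}_{X}(S))\otimes\pi^{*}\mathrm{det}\mathcal{E}_{0}^{\lor}$, the line-bundle twist being harmless for both vanishing and dimension. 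For the arbitrary sheaf $A$ of dimension $\leqslant n$ this gives $\mathcal{H}^{l}(\mathbb{D}_{X/Y}(A))=0$ for $l<0$ and $l>n$, together with $\dim\mathcal{H}^{l}(\mathbb{D}_{X/Y}(A))\leqslant n-l$. For the pure sheaf $B$ of dimension $n$ the sharper purity estimates give $\mathcal{H}^{l}(\mathbb{D}_{X/Y}(B))=0$ outside $\{0,\ldots,n-1\}$ and $\dim\mathcal{H}^{l}(\mathbb{D}_{X/Y}(B))\leqslant n-l-1$ for $l\in\{1,\ldots,n-1\}$.

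With these in hand the conclusion is a matter of chasing the sequence. For $l<0$ both neighbouring terms vanish, so $\mathcal{H}^{l}(\mathbb{D}_{X/Y}(E))=0$. For $l>n$ the term $\mathcal{H}^{l}(\mathbb{D}_{X/Y}(A))$ vanishes, and $\mathcal{H}^{l-1}(\mathbb{D}_{X/Y}(B))$ vanishes too: for $l>n+1$ trivially, and precisely at $l=n+1$ because purity of $B$ kills $\mathcal{H}^{n}(\mathbb{D}_{X/Y}(B))$; hence $\mathcal{H}^{l}(\mathbb{D}_{X/Y}(E))=0$ for all $l>n$. For the dimension bound with $l\in\{2,\ldots,n\}$ I would note that $\mathcal{H}^{l}(\mathbb{D}_{X/Y}(E))$ is an extension of a subsheaf of $\mathcal{H}^{l-1}(\mathbb{D}_{X/Y}(B))$ by a quotient of $\mathcal{H}^{l}(\mathbb{D}_{X/Y}(A))$; the first has dimension $\leqslant n-(l-1)-1=n-l$ by the purity estimate, legitimate exactly because $l-1\in\{1,\ldots,n-1\}$, and the second has dimension $\leqslant n-l$, so the extension has dimension $\leqslant n-l$.

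The part requiring genuine care, rather than bookkeeping, is the purity of $B=\mathcal{H}^{-1}(E)$: it is what upgrades the generic bound $\dim\mathcal{H}^{l-1}(\mathbb{D}_{X/Y}(B))\leqslant n-l+1$ to $\leqslant n-l$ and what forces $\mathcal{H}^{n}(\mathbb{D}_{X/Y}(B))=0$, so that the index $l=n+1$ in the vanishing range and the lower endpoint $l=2$ of the dimension range come out exactly on the nose. The only remaining risk is the index accounting in passing between $\mathbb{D}_{X}$ on the $(n+r)$-dimensional space $X$ and the relative functor $\mathbb{D}_{X/Y}$, which is governed entirely by the shift $[r]$ and should be verified once and cited thereafter.
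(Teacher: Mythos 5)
Your proof is correct and follows essentially the same route as the paper: split $E$ into $\mathcal{H}^{-1}(E)[1]$ and $\mathcal{H}^{0}(E)$, dualize the resulting triangle, and chase the long exact sequence using the general sheaf bounds for $\mathcal{H}^{0}(E)$ and the sharper purity bounds for $\mathcal{H}^{-1}(E)$. The only addition is that you justify the purity of $\mathcal{H}^{-1}(E)$ (and the index shift between $\mathbb{D}_{X}$ and $\mathbb{D}_{X/Y}$), both of which the paper asserts without comment; your justifications are correct.
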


\begin{pf}
	We denote $\mathcal{H}^{-1}(E)$ by $F$ and $\mathcal{H}^{0}(E)$ by $T$. Note that if $F \neq 0$ then it must be a pure sheaf of dimension $n$.
	
	We have a short exact sequence in $\mathrm{Coh}_{0}^{\beta}(X)$ as follows:
	\begin{align*}
		0 \to F[1] \to E \to T \to 0.
	\end{align*} Then we have a distinguished triangle in $D_{0}^{b}(X)$ after taking the relative dual functor:
	\begin{align}
		\label{dualsequence2}	\mathbb{D}_{X/Y}(T) \to \mathbb{D}_{X/Y}(E) \to \mathbb{D}_{X/Y}(F)[-1] \to \mathbb{D}_{X/Y}(T)[1].
	\end{align}
	
	By taking cohomology functor on the above distinguished triangle (\ref{dualsequence2}) with respect to the heart $\mathrm{Coh}_{0}(X)$, we have a long exact sequence:
	\begin{align*}
		\cdots \to \mathcal{H}^{l}(\mathbb{D}_{X/Y}(T)) \to \mathcal{H}^{l}(\mathbb{D}_{X/Y}(E)) \to \mathcal{H}^{l-1}(\mathbb{D}_{X/Y}(F)) \to  \cdots
	\end{align*}Now, if $l<0$ or $l>n$, then $\mathcal{H}^{l}(\mathbb{D}_{X/Y}(T))=0$ and $\mathcal{H}^{l-1}(\mathbb{D}_{X/Y}(F))=0$; Moreover, if $l \in \{2,\ldots,n\}$, then $\dim \mathcal{H}^{l}(\mathbb{D}_{X/Y}(T)) \leqslant n-l$ and $\dim \mathcal{H}^{l-1}(\mathbb{D}_{X/Y}(F)) \leqslant n-l$. And then $\dim \mathcal{H}^{l-1}(\mathbb{D}_{X/Y}(E)) \leqslant n-l$ 
\end{pf}

\begin{lem}\label{importantlemmaB3}
	Let $\beta \in \mathbb{R}$ and suppose that $0 \neq E \in \mathrm{Coh}^{\beta}_{0}(X)$ is tilt-torsionfree, then:
	
	(1) The sheaf $\mathcal{H}^{0}(\mathbb{D}_{X/Y}(E)) \in \mathrm{Coh}^{<-\beta}_{0}(X)$ and $\mathcal{H}^{1}(\mathbb{D}_{X/Y}(E)) \in  \mathrm{Coh}^{>-\beta}_{0}(X)$.
	
	(2) The sheaf $\mathcal{H}^{n}(\mathbb{D}_{X/Y}(E))=0$.
	
	(3) When $n \geqslant 3$, we have $\dim \mathcal{H}^{n-1}(\mathbb{D}_{X/Y}(E))=0$.
	
	%(4) When $n \geqslant 4$, we have $\dim \mathcal{H}^{r+n-2}(\mathbb{D}_{X/Y}(E)) \leqslant 1$.
\end{lem}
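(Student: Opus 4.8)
The plan is to compute the cohomology sheaves of $\mathbb{D}_{X/Y}(E)$ from the torsion decomposition of $E$ in the tilting heart and to feed in one clean consequence of the tilt-torsionfree hypothesis. Write $\mathbb{D}=\mathbb{D}_{X/Y}$, $F=\mathcal{H}^{-1}(E)$ and $T=\mathcal{H}^{0}(E)$, so that $F\in\mathrm{Coh}^{\leqslant\beta}_0(X)$ is either $0$ or pure of dimension $n$, $T\in\mathrm{Coh}^{>\beta}_0(X)$, and there is a short exact sequence $0\to F[1]\to E\to T\to 0$ in $\mathrm{Coh}^\beta_0(X)$. Applying $\mathbb{D}$ gives the distinguished triangle $\mathbb{D}T\to\mathbb{D}E\to\mathbb{D}F[-1]\to\mathbb{D}T[1]$ already used in Lemma \ref{importantlemmaB2}, and I would exploit its long exact cohomology sequence together with the facts recorded at the start of the appendix ($v_i(\mathbb{D}E)=(-1)^iv_i(E)$, the degree ranges, and the dimension bounds for $\mathbb{D}$ of a sheaf and of a pure sheaf). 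The hypothesis enters in exactly one place, which I would isolate first: if $E$ is tilt-torsionfree then $\mu_{\max}(F)<\beta$ and $E$ has no nonzero subobject of dimension $\leqslant n-2$. Indeed, a slope-$\beta$ destabilizing subsheaf $G\subseteq F$, if present, shifts to a subobject $G[1]\hookrightarrow F[1]\hookrightarrow E$ with $v_1^\beta(G[1])=-v_1^\beta(G)=0$, and any dimension $\leqslant n-2$ subobject $C$ has $v_1^\beta(C)=0$; both contradict tilt-torsionfreeness.

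For part (1) the triangle gives $\mathcal{H}^0(\mathbb{D}E)\cong\mathcal{H}^0(\mathbb{D}T)$, which is the relative dual $T_n^\vee$ of the pure $n$-dimensional quotient $T_n$ of $T$; since $\mu_{\min}(T_n)\geqslant\mu_{\min}(T)>\beta$, Proposition \ref{propositionB1}(4) yields $\mu_{\max}(T_n^\vee)<-\beta$, so $\mathcal{H}^0(\mathbb{D}E)\in\mathrm{Coh}^{<-\beta}_0(X)$. For the other sheaf the triangle produces a short exact sequence $0\to\mathcal{H}^1(\mathbb{D}T)\to\mathcal{H}^1(\mathbb{D}E)\to\ker\delta\to0$ with $\ker\delta\subseteq\mathcal{H}^0(\mathbb{D}F)=F^\vee$ and $F^\vee/\ker\delta$ of dimension $\leqslant n-2$. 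Here $\mathcal{H}^1(\mathbb{D}T)$ has dimension $\leqslant n-1$, hence is torsion and lies in $\mathrm{Coh}^{>-\beta}_0(X)$, while $F^\vee$ is pure of dimension $n$ with $\mu_{\min}(F^\vee)=-\mu_{\max}(F)>-\beta$ by the key consequence, so $F^\vee\in\mathrm{Coh}^{>-\beta}_0(X)$. A short slope computation (using that $F^\vee/\ker\delta$ kills $v_0$ and $v_1$, so that any slope-$\leqslant-\beta$ quotient of $\ker\delta$ would force one on $F^\vee$) shows $\ker\delta\in\mathrm{Coh}^{>-\beta}_0(X)$, and since this category is extension-closed, $\mathcal{H}^1(\mathbb{D}E)\in\mathrm{Coh}^{>-\beta}_0(X)$.

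For (2) and (3) I would argue by duality rather than by chasing $\mathcal{E}xt$'s. Since $\mathbb{D}$ is a contravariant auto-equivalence with $\mathbb{D}\circ\mathbb{D}\cong\mathrm{id}$, for any $C\in\mathrm{Coh}_0(X)$ we have $\mathrm{Hom}(C,E)\cong\mathrm{Hom}(\mathbb{D}E,\mathbb{D}C)$. When $C$ is zero-dimensional, $\mathbb{D}C\cong C^\vee[-n]$, and since $\mathbb{D}E$ lives in degrees $[0,n]$ this reduces to $\mathrm{Hom}(\mathcal{H}^n(\mathbb{D}E),C^\vee)$. If $\mathcal{H}^n(\mathbb{D}E)\neq0$ (it is automatically zero-dimensional), taking $C=\mathcal{H}^n(\mathbb{D}E)^\vee$ gives a nonzero map $C\to E$, whose image is a nonzero zero-dimensional subobject of $E$, contradicting tilt-torsionfreeness; this proves (2). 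For (3), assuming $n\geqslant3$ and using (2) to cap $\mathbb{D}E$ at degree $n-1$, the same equivalence yields $\mathrm{Hom}(C,E)\cong\mathrm{Hom}(\mathcal{H}^{n-1}(\mathbb{D}E),C^\vee)$ for $C$ of dimension $\leqslant1$; a one-dimensional part of $\mathcal{H}^{n-1}(\mathbb{D}E)$ would then produce a nonzero subobject of $E$ of dimension $\leqslant1\leqslant n-2$, again with $v_1^\beta=0$, contradicting the hypothesis, so $\dim\mathcal{H}^{n-1}(\mathbb{D}E)\leqslant0$.

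The genuinely delicate points, and where I expect the real work to lie, are twofold. First, the slope bookkeeping for $\ker\delta$ in (1) is subtle precisely because $\mathrm{Coh}^{>-\beta}_0(X)$ is a torsion class and hence not closed under subobjects, so one cannot simply inherit membership from $F^\vee$. Second, and most importantly, in (2) and (3) one must verify that the nonzero morphism $C\to E$ produced by duality really has image an \emph{honest} subobject of $E$ of the asserted small dimension: concretely, that the image $I$ satisfies $\mathcal{H}^{-1}(I)=0$, i.e. that no positive-rank piece of the pure sheaf $F$ is absorbed into $I$. Establishing this cleanly, by combining the purity of $F$ with the dimension bounds on $C$ and the $v_i$-additivity, is the crux of the argument.
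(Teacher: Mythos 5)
Your proposal matches the paper's proof in all essentials: part (1) uses the same dual triangle, the same reduction to $\mu_{\max}(F)<\beta$ via a slope-$\beta$ subobject $G[1]\hookrightarrow E$, and the same splitting of the connecting map (the paper invokes Lemma \ref{importantlemmaB1}(1) for the $\mu_{\min}$ bookkeeping on $\ker\delta$, which is your ``short slope computation''), while parts (2) and (3) are exactly the paper's dualize-the-truncation argument. The only remark worth making is that your closing ``crux'' is not actually delicate: tilt-torsionfreeness is phrased via $v_1^{\beta}$ of subobjects, so one never needs $\mathcal{H}^{-1}$ of the image to vanish --- additivity of $v_1^{\beta}$ and its nonnegativity on the heart already force $v_1^{\beta}=0$ on the image of any nonzero map from a sheaf of dimension $\leqslant n-2$, which is all the contradiction requires.
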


\begin{pf}

We denote $\mathcal{H}^{-1}(E)$ by $F$ and $\mathcal{H}^{0}(E)$ by $T$.

(1) We still take the cohomology functor on (\ref{dualsequence2}). Then we get $\mathcal{H}^{0}(\mathbb{D}_{X/Y}(T)) \cong \mathcal{H}^{0}(\mathbb{D}_{X/Y}(E))$ and an exact sequence in $\mathrm{Coh}_{0}(X)$:
\begin{align}
	\label{dualsequence3}
	0 \to \mathcal{H}^{1}(\mathbb{D}_{X/Y}(T)) \to \mathcal{H}^{1}(\mathbb{D}_{X/Y}(E)) \to \mathcal{H}^{0}(\mathbb{D}_{X/Y}(F)) \to \mathcal{H}^{2}(\mathbb{D}_{X/Y}(T)).
\end{align}

If $T=0$ or $\dim T<n$, then $\mathcal{H}^{0}(\mathbb{D}_{X/Y}(T))=0$ and it means that $\mathcal{H}^{0}(\mathbb{D}_{X/Y}(E)) \in \mathrm{Coh}^{\leqslant -\beta}_{0}(X)$. Otherwise, if $\dim T=n$, then there is a short exact sequence of $\mathrm{Coh}_{0}(X)$:
\begin{align*}
	0 \to T_{t} \to T \to T_{tf} \to 0,
\end{align*}where $T_{tf} \neq 0$ is a pure sheaf of dimension $n$ and $\dim T_{t} \leqslant n-1$. Then $\mu_{\min}(T_{tf})=\mu_{\min}(T)>\beta$ and by Proposition \ref{propositionB1}, we have:
\begin{align*}
	\mu_{\max}(\mathcal{H}^{0}(\mathbb{D}_{X/Y}(E)))=\mu_{\max}(\mathcal{H}^{0}(\mathbb{D}_{X/Y}(T)))=\mu_{\max}(T_{tf}^{\lor})=-\mu_{\min}(T_{tf})<-\beta.
\end{align*}

On the other hand, we split the exact sequence (\ref{dualsequence3}) into two short exact sequneces as follows:
\begin{align*}
	& 0 \to \mathcal{H}^{1}(\mathbb{D}_{X/Y}(T)) \to \mathcal{H}^{1}(\mathbb{D}_{X/Y}(E)) \to H_{1} \to 0. \\
	& 0 \to H_{1} \to \mathcal{H}^{0}(\mathbb{D}_{X/Y}(F)) \to H_{2} \to 0,
\end{align*}where $H_{2}$ is a subsheaf of $\mathcal{H}^{2}(\mathbb{D}_{X/Y}(T))$ and $\dim H_{2} \leqslant n-2$ if it is nonzero. By Lemma \ref{importantlemmaB1}, we have:
\begin{align*}
	\mu_{\min}(H_{1})=\mu_{\min}(\mathcal{H}^{0}(\mathbb{D}_{X/Y}(F)))=-\mu_{\max}(F)\geqslant-\beta
\end{align*} if $F$ is nonzero. Moreover, we have $\mu_{\max}(F)<\beta$, otherwise, there is a subobject $F_{\max}$ of $F$ which has slope $\beta$ and $F_{\max}[1]$ is also a nonzero subobject of $E$. However, $v_{1}^{\beta}(F[1])=0$, it is a contradiction. Thus $H_{1} \in \mathrm{Coh}_{0}^{>-\beta}(X)$. On the other hand, $\dim \mathcal{H}^{1}(\mathbb{D}_{X/Y}(T)) \leqslant n-1$ if it is nonzero, thus  $\mathcal{H}^{1}(\mathbb{D}_{X/Y}(T)) \in \mathrm{Coh}_{0}^{>-\beta}(X)$. Therefore, $\mathcal{H}^{1}(\mathbb{D}_{X/Y}(E)) \in \mathrm{Coh}_{0}^{>-\beta}(X)$.

(2) If $\mathcal{H}^{n}(\mathbb{D}_{X/Y}(E)) \neq 0$, then it should be a zero dimensional sheaf. Now we have a nonzero morphism $\phi \colon \mathbb{D}_{X/Y}(E) \to \mathcal{H}^{n}(\mathbb{D}_{X/Y}(E))[-n]$ and we can take the relative derived dual functor of it to get a nonzero morphism $\mathbb{D}_{X/Y}(\phi) \colon A \to E$ for some zero dimensional sheaf $A$. However, we know that $\mathrm{Hom}_{D_{0}^{b}(X)}(A,E)=0$ because $E$ is tilt-torsionfree.

(3) By Lemma \ref{importantlemmaB2}, we know that $\dim \mathcal{H}^{n-1}(\mathbb{D}_{X/Y}(E)) \leqslant 1$. If $\dim \mathcal{H}^{n-1}(\mathbb{D}_{X/Y}(E))=1$, then we denote $\mathcal{H}^{n-1}(\mathbb{D}_{X/Y}(E))$ by $B_{1}$. Note that we always have an epimorphism $\phi_{1} \colon B_{1} \to B_{2}$, where $B_{2}$ is a pure sheaf of dimension one. Now we have a nonzero morphism $\phi_{2} \colon \mathbb{D}_{X/Y}(E) \to B_{1}[-n+1]$, where $H^{n-1}(\phi_{2}) $ is an isomorphism. Therefore, $\phi_{1}[-n+1] \circ \phi_{2}$ is a nonzero morphism from $\mathbb{D}_{X/Y}(E)$ to $B_{2}[-n+1]$. Thus $\mathrm{Hom}_{D_{0}^{b}(X)}(B_{3},E)=\mathrm{Hom}_{D_{0}^{b}(X)}(\mathbb{D}_{X/Y}(E),B_{2}[-n+1]) \neq 0$, where $B_{3}= \mathcal{H}^{n-1}(\mathbb{D}_{X/Y}(B_{2}))$ is a pure sheaf of dimension one. But it is a contradiction because $E$ is tilt-torsionfree.
\end{pf}

Note that for any object $0 \neq E \in \mathrm{Coh}_{0}^{\beta}(X)$, we have a filtration of $\mathbb{D}_{X/Y}(E)$ in $D_{0}^{b}(X)$:
\begin{align}\label{dualfiltration}
	\xymatrix{
		0=E_{0} \ar[r]  & E_{1} \ar[r]\ar[d]  & \cdots \ar[r]  & E_{n-1} \ar[r]  & E_{n}=\mathbb{D}_{X/Y}(E)\ar[d]\\
		& \mathcal{H}^{0}(\mathbb{D}_{X/Y}(E))\ar@{-->}[lu]  & & & \mathcal{H}^{n-1}(\mathbb{D}_{X/Y}(E))[-n+1]\ar@{-->}[lu]
	}
\end{align} and there is a distinguished triangle in $D_{0}^{b}(X)$ here:
\begin{align*}
	E_{2} \to \mathbb{D}_{X/Y}(E) \to E_{2}' \to E_{2}[1].
\end{align*}Here, $E_{2}$ is a two-term complex with the distinguished triangle:
\begin{align*}
	\mathcal{H}^{0}(\mathbb{D}_{X/Y}(E)) \to E_{2} \to \mathcal{H}^{1}(\mathbb{D}_{X/Y}(E))[-1] \to \mathcal{H}^{0}(\mathbb{D}_{X/Y}(E))[1]
\end{align*} 

If $E$ is tilt-torsionfree, then by Lemma \ref{importantlemmaB3},  we know that $E_{2}[1] \in \mathrm{Coh}_{0}^{-\beta}(X)$. We define $E_{2}[1]$ as the tilt-dual object of $E$ and denote it by $E^{\lor_{\beta}}$. In fact, by Lemma \ref{importantlemmaB2}, for any $l \in \{2,\ldots,n-1\}$, we have $\dim \mathcal{H}^{l}(\mathbb{D}_{X/Y}(E)) \leqslant n-l \leqslant n-2$. It means that $\mathcal{H}^{l}(\mathbb{D}_{X/Y}(E)) \in \mathrm{Coh}_{0}^{-\beta}(X)$ and the filtration (\ref{dualfiltration}) is a Harder-Narasimhan filtration with respect to the bounded heart $\mathrm{Coh}_{0}^{-\beta}(X)$ if we consider that $E_{2}$ is a object in $\mathrm{Coh}_{0}^{-\beta}(X)[-1]$. Explicitly, we have: 
\begin{align}\label{tiltcohomologyofdual}
	\mathcal{H}^{l}_{-\beta}(\mathbb{D}_{X/Y}(E)) \cong \begin{cases}
		0 & l \leqslant 0 \mbox{ or } l \geqslant n; \\
		E^{\lor_{\beta}}                    &  l=1;   \\
		\mathcal{H}^{l}(\mathbb{D}_{X/Y}(E)) & 1<l<n,
	\end{cases}
\end{align}where $\mathcal{H}^{l}_{-\beta}$ is the cohomology functor with respect to the bounded heart $\mathrm{Coh}_{0}^{-\beta}(X)$. 

Before our discussion, we have an important lemma:
\begin{lem}
	If $E \in \mathrm{Coh}_{0}^{\beta}(X)$ with $v_{1}^{\beta}(E)=0$. Then for any $l \leqslant 1$, we have $\mathcal{H}^{l}_{-\beta}(\mathbb{D}_{X/Y}(E))=0$.
\end{lem}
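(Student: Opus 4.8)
The plan is to reduce the statement to a bookkeeping of cohomological degrees relative to the two hearts $\mathrm{Coh}^{\beta}_{0}(X)$ and $\mathrm{Coh}^{-\beta}_{0}(X)$, starting from the torsion-pair decomposition of $E$. First I would write the canonical short exact sequence in $\mathrm{Coh}^{\beta}_{0}(X)$,
\begin{align*}
	0 \to F[1] \to E \to T \to 0, \qquad F=\mathcal{H}^{-1}(E) \in \mathrm{Coh}^{\leqslant \beta}_{0}(X), \quad T=\mathcal{H}^{0}(E) \in \mathrm{Coh}^{>\beta}_{0}(X),
\end{align*}
and exploit the hypothesis $v_{1}^{\beta}(E)=0$. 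Since $v_{1}^{\beta}=\Im Z^{\beta,\alpha}$ is additive and nonnegative on the heart $\mathrm{Coh}^{\beta}_{0}(X)$, we have $v_{1}^{\beta}(F[1])=-v_{1}^{\beta}(F)\geqslant 0$ and $v_{1}^{\beta}(T)\geqslant 0$, so $v_{1}^{\beta}(E)=0$ forces $v_{1}^{\beta}(T)=0$ and $v_{1}^{\beta}(F)=0$ separately. Running through the Harder--Narasimhan factors of $T$ (all of slope $>\beta$) and of $F$ (all of slope $\leqslant\beta$) then shows that $T$ is supported in dimension $\leqslant n-2$, i.e. $v_{0}(T)=v_{1}(T)=0$, while $F$ is slope-semistable of slope exactly $\beta$, and in particular pure of dimension $n$.

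Next I would dualise. Applying $\mathbb{D}_{X/Y}$ gives the distinguished triangle $\mathbb{D}_{X/Y}(T) \to \mathbb{D}_{X/Y}(E) \to \mathbb{D}_{X/Y}(F)[-1] \to \mathbb{D}_{X/Y}(T)[1]$, which is exactly (\ref{dualsequence2}) for this $E$. Using the basic properties of the relative derived dual recorded at the start of the appendix together with $\pi_{*}\mathbb{D}_{X/Y}=\mathbb{D}_{Y}\pi_{*}$: since $\dim T \leqslant n-2$, the sheaves $\mathcal{H}^{l}(\mathbb{D}_{X/Y}(T))$ vanish for $l\leqslant 1$ and have dimension $\leqslant n-2$ for $l\geqslant 2$; since $F$ is pure of dimension $n$, one has $\mathcal{H}^{0}(\mathbb{D}_{X/Y}(F))=F^{\lor}$ with $\dim \mathcal{H}^{l}(\mathbb{D}_{X/Y}(F)) \leqslant n-l-1 \leqslant n-2$ for $1\leqslant l \leqslant n-1$. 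Crucially, by Proposition \ref{propositionB1}(3) the dual sheaf $F^{\lor}$ is again slope-semistable, of slope $-\beta$.

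Then I would convert these statements about the \emph{standard} cohomology into statements about $\mathcal{H}^{\bullet}_{-\beta}$. A sheaf of dimension $\leqslant n-2$ has slope $+\infty>-\beta$, so it lies in the torsion part $\mathrm{Coh}^{>-\beta}_{0}(X)$ and contributes to tilt-degree $0$; whereas $F^{\lor}$, being slope-semistable of slope exactly $-\beta$, lies entirely in $\mathrm{Coh}^{\leqslant -\beta}_{0}(X)$ and therefore contributes only to tilt-degree $1$. Running this through the truncation triangles shows that $\mathbb{D}_{X/Y}(T)$ has $\mathcal{H}^{l}_{-\beta}=0$ for $l\leqslant 1$ (its standard cohomology lives in degrees $\geqslant 2$ and entirely in the torsion part), while $\mathbb{D}_{X/Y}(F)$ has $\mathcal{H}^{l}_{-\beta}=0$ for $l\leqslant 0$ (the only degree-$0$ standard cohomology $F^{\lor}$ is pushed to tilt-degree $1$), hence $\mathbb{D}_{X/Y}(F)[-1]$ has $\mathcal{H}^{l}_{-\beta}=0$ for $l\leqslant 1$. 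The long exact sequence of $\mathcal{H}^{\bullet}_{-\beta}$ attached to the triangle then sandwiches $\mathcal{H}^{l}_{-\beta}(\mathbb{D}_{X/Y}(E))$ between two vanishing groups for every $l\leqslant 1$, which yields the claim.

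The main obstacle will be the precise placement of $F^{\lor}$ in the tilted heart: the argument at $l=1$ only works because $F^{\lor}$ sits in tilt-degree $1$ rather than $0$, and this is exactly where the hypothesis $v_{1}^{\beta}(E)=0$ is indispensable. It is this vanishing that forces the free part $F$ to be slope-semistable of slope \emph{exactly} $\beta$, so that Proposition \ref{propositionB1}(3) delivers a dual of slope exactly $-\beta$ with no torsion or higher-slope summand; any such summand would drop $F^{\lor}$ into $\mathrm{Coh}^{>-\beta}_{0}(X)$, producing a nonzero $\mathcal{H}^{0}_{-\beta}(\mathbb{D}_{X/Y}(F))$ and breaking the vanishing at $l=1$. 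The remaining care is the routine but slightly fiddly passage from ``standard cohomology lies in the torsion part and in degrees $\geqslant k$'' to ``tilt cohomology lies in degrees $\geqslant k$'', which I would justify by induction on the length of the standard-cohomology filtration via truncation triangles.
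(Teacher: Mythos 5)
Your proposal is correct and follows essentially the same route as the paper: decompose $E$ via the torsion pair to get $T$ of dimension $\leqslant n-2$ and $F$ slope-semistable of slope exactly $\beta$, dualise, and use Proposition \ref{propositionB1}(3) together with the dimension bounds on $\mathcal{H}^{l}(\mathbb{D}_{X/Y}(-))$ to place everything in tilt-degrees $\geqslant 2$. The only (immaterial) difference is that you run the tilt-cohomology long exact sequence on the dual triangle, whereas the paper first computes the standard cohomologies of $\mathbb{D}_{X/Y}(E)$ itself and then reads off the vanishing from the filtration (\ref{dualfiltration}).
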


\begin{pf}
	We denote $\mathcal{H}^{-1}(E)$ by $F$ and $\mathcal{H}^{0}(E)$ by $T$.

	Because $v_{1}^{\beta}(E)=0$, we know that $\dim T \leqslant n-2$ or $T=0$ and $F$ is slope semistable with slope $\beta$ or $F=0$. Thus, after taking the cohomology functor of (\ref{dualsequence2}), we have: $\mathcal{H}^{0}(\mathbb{D}_{X/Y}(E))=0$ and an exact sequence in $\mathrm{Coh}_{0}(X)$:
	\begin{align*}
		0 \to \mathcal{H}^{1}(\mathbb{D}_{X/Y}(E)) \to \mathcal{H}^{0}(\mathbb{D}_{X/Y}(F)) \to \mathcal{H}^{2}(\mathbb{D}_{X/Y}(T)).
	\end{align*}
	If $F=0$, it is trivial. If $F \neq 0$, then $F^{\lor}=\mathcal{H}^{0}(\mathbb{D}_{X/Y}(F))$ is also slope semistable with slope $-\beta$ by propositon \ref{propositionB1}. Thus, $F^{\lor} \in \mathrm{Coh}_{0}^{\leqslant -\beta}(X)$ and $\mathcal{H}^{1}(\mathbb{D}_{X/Y}(E))$ is a subobject of $F^{\lor}$ in $\mathrm{Coh}_{0}(X)$. Therefore, $\mathcal{H}^{1}(\mathbb{D}_{X/Y}(E))[-1] \in \mathrm{Coh}_{0}^{-\beta}(X)[-2]$. Combine the filtration (\ref{dualfiltration}), we know that for any $l \leqslant 1$, we have $\mathcal{H}^{l}_{-\beta}(\mathbb{D}_{X/Y}(E))=0$.
\end{pf}

Now, suppose that $E \in \mathrm{Coh}_{0}^{\beta}(X)$ with $v_{1}^{\beta}(E)>0$, then we have a decomposition in $\mathrm{Coh}_{0}^{\beta}(X)$:
\begin{align*}
	0 \to E_{t} \to E \to E_{tf} \to 0,
\end{align*}where $E_{tf} \neq 0$ is tilt-torsionfree and $v_{1}^{\beta}(E_{t})=0$. Note that $\mathcal{H}^{1}_{-\beta}(\mathbb{D}_{X/Y}(E)) \cong \mathcal{H}^{1}_{-\beta}(\mathbb{D}_{X/Y}(E_{tf}))=E_{tf}^{\lor_{\beta}}$ by the above lemma. In this case, we define the tilt-dual object of $E$ as $\mathcal{H}^{1}_{-\beta}(\mathbb{D}_{X/Y}(E))\in \mathrm{Coh}_{0}^{-\beta}(X)$ and denote it by $E^{\lor_{\beta}}$.

\begin{lem}
	Let $\beta \in \mathbb{R}$ and suppose that $0 \neq E \in \mathrm{Coh}^{\beta}_{0}(X)$ is tilt-torsionfree, then $E^{\lor_{\beta}} \in \mathrm{Coh}_{0}^{-\beta}(X)$ is also tilt-torsionfree.
\end{lem}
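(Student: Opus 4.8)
The plan is to argue by contradiction, producing a hypothetical nonzero subobject of $E^{\lor_\beta}$ with vanishing imaginary central charge and ruling it out using the involutivity of $\mathbb{D}_{X/Y}$ together with orthogonality of the standard $t$-structure. First I would record the two structural inputs. Since every $G\in\mathrm{Coh}_0^{-\beta}(X)$ satisfies $v_1^{-\beta}(G)=\Im Z^{-\beta,\alpha}(G)\geqslant 0$, the object $E^{\lor_\beta}$ is tilt-torsionfree exactly when it has no nonzero subobject $H$ with $v_1^{-\beta}(H)=0$. On the other hand, by Lemma \ref{importantlemmaB3}(1) and the description (\ref{tiltcohomologyofdual}), shifting the defining triangle of $E_2$ gives a short exact sequence in $\mathrm{Coh}_0^{-\beta}(X)$,
\begin{align*}
0 \to \mathcal{H}^0(\mathbb{D}_{X/Y}(E))[1] \to E^{\lor_\beta} \to \mathcal{H}^1(\mathbb{D}_{X/Y}(E)) \to 0,
\end{align*}
with $\mathcal{H}^0(\mathbb{D}_{X/Y}(E))\in\mathrm{Coh}^{<-\beta}_0(X)$ and $\mathcal{H}^1(\mathbb{D}_{X/Y}(E))\in\mathrm{Coh}^{>-\beta}_0(X)$; in particular $\mathcal{H}^{-1}(E^{\lor_\beta})=\mathcal{H}^0(\mathbb{D}_{X/Y}(E))$ and $\mathcal{H}^0(E^{\lor_\beta})=\mathcal{H}^1(\mathbb{D}_{X/Y}(E))$ for the standard heart $\mathrm{Coh}_0(X)$.

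Next I would pin down the shape of a bad subobject. Let $H\hookrightarrow E^{\lor_\beta}$ be nonzero with $v_1^{-\beta}(H)=0$. Its torsion decomposition $0\to\mathcal{H}^{-1}(H)[1]\to H\to\mathcal{H}^0(H)\to 0$ has $v_1^{-\beta}(\mathcal{H}^0(H))\geqslant 0$ and $v_1^{-\beta}(\mathcal{H}^{-1}(H))\leqslant 0$, so both vanish; hence $\mathcal{H}^{-1}(H)$ is slope-semistable of slope $-\beta$ and $\mathcal{H}^0(H)$ is a sheaf of dimension $\leqslant n-2$. Taking standard cohomology of the monomorphism gives $\mathcal{H}^{-1}(H)\hookrightarrow\mathcal{H}^{-1}(E^{\lor_\beta})=\mathcal{H}^0(\mathbb{D}_{X/Y}(E))\in\mathrm{Coh}^{<-\beta}_0(X)$, forcing $\mu_{\max}(\mathcal{H}^{-1}(H))<-\beta$; this is incompatible with $\mathcal{H}^{-1}(H)$ being slope-semistable of slope $-\beta$ unless $\mathcal{H}^{-1}(H)=0$. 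Therefore $H$ is a nonzero sheaf of dimension $\leqslant n-2$, i.e. $v_0(H)=v_1(H)=0$.

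The key step is to contradict the existence of such $H$. By (\ref{tiltcohomologyofdual}) the lowest cohomology of $\mathbb{D}_{X/Y}(E)$ for the $\mathrm{Coh}_0^{-\beta}(X)$-$t$-structure lies in degree $1$ and equals $E^{\lor_\beta}$, which furnishes a canonical map $E^{\lor_\beta}[-1]\to\mathbb{D}_{X/Y}(E)$ inducing an isomorphism on $\mathcal{H}^1_{-\beta}$. Composing with the shifted inclusion $H[-1]\hookrightarrow E^{\lor_\beta}[-1]$ yields a morphism $H[-1]\to\mathbb{D}_{X/Y}(E)$ that is nonzero, since after applying $\mathcal{H}^1_{-\beta}$ it recovers the monomorphism $H\hookrightarrow E^{\lor_\beta}$. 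Applying the contravariant involution $\mathbb{D}_{X/Y}$ then gives
\begin{align*}
\mathrm{Hom}_{D_0^b(X)}(H[-1],\mathbb{D}_{X/Y}(E)) \cong \mathrm{Hom}_{D_0^b(X)}(E,\mathbb{D}_{X/Y}(H)[1]).
\end{align*}
Because $\dim H\leqslant n-2$, the sheaves $\mathcal{H}^l(\mathbb{D}_{X/Y}(H))=\mathcal{E}xt^{l+r}_X(H,\mathcal{O}_X)\otimes\pi^{*}\mathrm{det}\mathcal{E}_{0}^{\lor}$ vanish for $l<n-\dim H$, so $\mathbb{D}_{X/Y}(H)\in D^{\geqslant 2}$ and hence $\mathbb{D}_{X/Y}(H)[1]\in D^{\geqslant 1}$; since $E\in\mathrm{Coh}_0^{\beta}(X)\subseteq D^{\leqslant 0}$, the right-hand group vanishes by orthogonality of the standard $t$-structure, contradicting the nonvanishing of the left-hand group. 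Thus no such $H$ exists and $E^{\lor_\beta}$ is tilt-torsionfree; the same computation applied with $H=E^{\lor_\beta}$ also confirms $E^{\lor_\beta}\neq 0$.

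I expect the main obstacle to be this last step: the torsion part $\mathcal{H}^1(\mathbb{D}_{X/Y}(E))$ of $E^{\lor_\beta}$ genuinely fails to be pure, as it can carry an $(n-1)$-dimensional subsheaf originating from $\mathcal{H}^1(\mathbb{D}_{X/Y}(\mathcal{H}^0(E)))$, so one cannot exclude low-dimensional subsheaves by a purity statement. The point is instead that dropping $\dim H$ to $\leqslant n-2$ pushes $\mathbb{D}_{X/Y}(H)$ into degrees $\geqslant 2$, which is precisely what makes the orthogonality vanishing go through. Care will be needed to verify that the constructed map $H[-1]\to\mathbb{D}_{X/Y}(E)$ is nonzero and that the duality identity is applied with the correct contravariant shift conventions.
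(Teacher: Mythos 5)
Your proposal is correct and follows essentially the same route as the paper's proof: reduce the destabilizing subobject $H$ to a nonzero sheaf of dimension $\leqslant n-2$ using Lemma \ref{importantlemmaB3}, transport the resulting nonzero morphism through $\mathbb{D}_{X/Y}$ to an element of $\mathrm{Hom}(E,\mathbb{D}_{X/Y}(H)[1])$, and kill it because $\mathbb{D}_{X/Y}(H)$ is concentrated in cohomological degrees $\geqslant 2$. The only cosmetic differences are that you produce the nonzero map by explicit composition (checked on $\mathcal{H}^{1}_{-\beta}$) and phrase the final vanishing via the standard $t$-structure rather than a tilted heart, which is, if anything, slightly cleaner.
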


\begin{pf}
	If $E^{\lor_{\beta}}$ is not tilt-torsionfree, then we have a nonzero subobject $A$ of $E^{\lor_{\beta}}$ in $\mathrm{Coh}_{0}^{-\beta}(X)$ with $v_{1}^{\beta}(A)=0$. 
	If $\mathcal{H}^{-1}(A) \neq 0$, then on one hand, $\mathcal{H}^{-1}(A) \in \mathrm{Coh}_{0}(X)$ is a slope semistable sheaf with slope $-\beta$. On the other hand $\mathcal{H}^{-1}(A)$ is a subsheaf of $\mathcal{H}^{-1}(E^{\lor_{\beta}}) \cong \mathcal{H}^{0}(\mathbb{D}_{X/Y}(E)) \in \mathrm{Coh}^{<-\beta}_{0}(X)$ by Lemma \ref{importantlemmaB3} and it is a contradiction, Thus $\mathcal{H}^{-1}(A)=0$ and $A$ is a nonzero sheaf with $\dim A \leqslant n-2$.
	
	By (\ref{tiltcohomologyofdual}), we have a distinguished triangle here:
	\begin{align*}
		E^{\lor_{\beta}} \to \mathbb{D}_{X/Y}(E)[1] \to E'[1] \to E^{\lor_{\beta}}[1]
	\end{align*} and $\mathcal{H}^{l}(E') \cong  \mathcal{H}^{l}(\mathbb{D}_{X/Y}(E)) \in \mathrm{Coh}_{0}^{-\beta}(X)$ when $1<l<n$ thus
	\begin{align*}
		E'[1] \in \langle \mathrm{Coh}_{0}^{-\beta}(X)[-1],\ldots, \mathrm{Coh}_{0}^{-\beta}(X)[-1+n] \rangle.
	\end{align*}Then we have:
	\begin{align*}
		0 \neq \mathrm{Hom}_{D_{0}^{b}(X)}(A,E^{\lor_{\beta}}) \cong \mathrm{Hom}_{D_{0}^{b}(X)}(A,\mathbb{D}_{X/Y}(E)[1])
	\end{align*} by taking Hom functor. However, $\mathrm{Hom}_{D_{0}^{b}(X)}(A,\mathbb{D}_{X/Y}(E)[1]) \cong \mathrm{Hom}_{D_{0}^{b}(X)}(E,\mathbb{D}_{X/Y}(A)[1])$.
	Because $\dim A \leqslant n-2$, then $\mathcal{H}^{l}_{-\beta}(\mathbb{D}_{X/Y}(A)) \cong \mathcal{H}^{l}(\mathbb{D}_{X/Y}(A))$ for any $l \in \{2,\ldots,n\}$ and thus
	$\mathbb{D}_{X/Y}(A) \in \langle \mathrm{Coh}_{0}^{-\beta}(X)[-2],\ldots, \mathrm{Coh}_{0}^{-\beta}(X)[-n] \rangle$. Thus $\mathrm{Hom}_{D_{0}^{b}(X)}(A,\mathbb{D}_{X/Y}(E)[1])=0$ and it is a contradiction.
\end{pf}

\begin{lem}
	Let $\beta \in \mathbb{R}$ and $E \in \mathrm{Coh}_{0}^{\beta}(X)$ with $v_{1}^{\beta}(E)>0$, then $v_{1}^{-\beta}(E^{\lor_{\beta}})=v_{1}^{\beta}(E)>0$ and $\nu^{-\beta,\alpha}(E^{\lor_{\beta}}) \geqslant -\nu^{\beta,\alpha}(E)$.
\end{lem}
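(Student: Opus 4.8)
The plan is to reduce the whole statement to $K$-theoretic bookkeeping for $\mathbb{D}_{X/Y}$, exploiting the dimension bounds on its cohomology sheaves already proved in Lemmas \ref{importantlemmaB2} and \ref{importantlemmaB3}. First I would treat the case where $E$ is tilt-torsionfree. Writing $\mathcal{H}^l := \mathcal{H}^l(\mathbb{D}_{X/Y}(E))$, the defining triangle for $E^{\lor_\beta}$ together with (\ref{tiltcohomologyofdual}) yields in $K(D_0^b(X))$ the identity $[E^{\lor_\beta}] = [\mathcal{H}^1] - [\mathcal{H}^0]$, hence $[\mathbb{D}_{X/Y}(E)] = -[E^{\lor_\beta}] + \sum_{l \geq 2}(-1)^l[\mathcal{H}^l]$. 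Combining this with the duality identity $v_i(\mathbb{D}_{X/Y}(E)) = (-1)^i v_i(E)$ and the dimension estimate $\dim \mathcal{H}^l \leq n-l$ (so that $v_0(\mathcal{H}^l)=v_1(\mathcal{H}^l)=0$ for $l\geq 2$ and $v_2(\mathcal{H}^l)=0$ for $l \geq 3$), reading off coefficients for $i=0,1,2$ gives $v_0(E^{\lor_\beta}) = -v_0(E)$, $v_1(E^{\lor_\beta}) = v_1(E)$, and $v_2(E^{\lor_\beta}) = -v_2(E) + v_2(\mathcal{H}^2)$.

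The first assertion is then immediate: $v_1^{-\beta}(E^{\lor_\beta}) = v_1(E^{\lor_\beta}) + \beta v_0(E^{\lor_\beta}) = v_1(E) - \beta v_0(E) = v_1^\beta(E) > 0$. For the slope inequality, I would observe that both $\nu^{\beta,\alpha}(E)$ and $\nu^{-\beta,\alpha}(E^{\lor_\beta})$ share the same positive denominator $D := v_1^\beta(E) = v_1^{-\beta}(E^{\lor_\beta})$, so I compute the numerator of their sum directly; substituting the formulas above, the $v_0$-contributions and the $v_2(E)$-terms cancel, leaving
\[
\nu^{-\beta,\alpha}(E^{\lor_\beta}) + \nu^{\beta,\alpha}(E) = \frac{v_2(\mathcal{H}^2(\mathbb{D}_{X/Y}(E)))}{D}.
\]
Since $\mathcal{H}^2(\mathbb{D}_{X/Y}(E))$ is a sheaf of dimension $\leq n-2$, its invariant $v_2 = H^{n-2}\mathrm{ch}_2(\pi_*\mathcal{H}^2)$ is the nonnegative degree of an effective cycle of complementary dimension, so $v_2(\mathcal{H}^2) \geq 0$ and the inequality follows (for $n=2$ this term is absent and one gets equality).

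Finally I would drop the tilt-torsionfree hypothesis. For general $E$ with $v_1^\beta(E)>0$, take the decomposition $0 \to E_t \to E \to E_{tf} \to 0$ in $\mathrm{Coh}_0^\beta(X)$ with $v_1^\beta(E_t)=0$ and $E_{tf}$ tilt-torsionfree; by definition $E^{\lor_\beta} = E_{tf}^{\lor_\beta}$ and $v_1^\beta(E)=v_1^\beta(E_{tf})=D$. The torsionfree case applied to $E_{tf}$ gives $\nu^{-\beta,\alpha}(E^{\lor_\beta}) \geq -\nu^{\beta,\alpha}(E_{tf})$, so it remains to check $\nu^{\beta,\alpha}(E_{tf}) \leq \nu^{\beta,\alpha}(E)$. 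As $v_1^\beta(E_t)=0$, the weak stability function property of $Z^{\beta,\alpha}$ forces $Z^{\beta,\alpha}(E_t) \in \mathbb{R}^{\leq 0}$, i.e. $v_2(E_t) - \alpha v_0(E_t) \geq 0$; additivity of $v_0,v_2$ then gives $\nu^{\beta,\alpha}(E) - \nu^{\beta,\alpha}(E_{tf}) = (v_2(E_t)-\alpha v_0(E_t))/D \geq 0$, which closes the argument. The only delicate point is the careful identification of exactly which intermediate cohomology sheaf survives into the numerator and the sign bookkeeping in $K$-theory; once the degree structure of (\ref{tiltcohomologyofdual}) is pinned down, the rest is routine.
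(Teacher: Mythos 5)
Your proposal is correct and follows essentially the same route as the paper: first the tilt-torsionfree case, reading off $v_0,v_1,v_2$ of $E^{\lor_\beta}$ from the cohomology filtration of $\mathbb{D}_{X/Y}(E)$ via the dimension bounds of Lemma \ref{importantlemmaB2} and the identity $v_i(\mathbb{D}_{X/Y}(E))=(-1)^iv_i(E)$, with the defect term $v_2(\mathcal{H}^2(\mathbb{D}_{X/Y}(E)))\geqslant 0$ accounting for the inequality; then the reduction to the torsionfree quotient $E_{tf}$ using $v_1^\beta(E_t)=0$ and $v_2(E_t)-\alpha v_0(E_t)\geqslant 0$. The only difference is cosmetic: you phrase the bookkeeping as an explicit $K$-theory identity, whereas the paper states the same relations directly.
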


\begin{pf}
	First of all, we assume that $E$ is tilt-torsionfree. Then by Lemma \ref{importantlemmaB2}, we know that for any $l \in \{2,\ldots,n\}$, $\dim \mathcal{H}^{l}(\mathbb{D}_{X/Y}(E)) \leqslant n-l \leqslant n-2$. Thus we have
	\begin{align*}
		& v_{2}(\mathbb{D}_{X/Y}(E))=v_{2}(\mathcal{H}^{2}(\mathbb{D}_{X/Y}(E)))-v_{2}(E^{\lor_{\beta}}) \geqslant -v_{2}(E^{\lor_{\beta}}); \\
		& v_{1}(\mathbb{D}_{X/Y}(E))=-v_{1}(E^{\lor_{\beta}}); \\
		& v_{0}(\mathbb{D}_{X/Y}(E))=-v_{0}(E^{\lor_{\beta}}).
	\end{align*}by (\ref{tiltcohomologyofdual}). Meanwhile, we know that $v_{i}(\mathbb{D}_{X/Y}(E))=(-1)^{i}v_{i}(E)$ for $i \in \{0,1,2\}$. Then
	\begin{align*}
		v_{1}^{-\beta}(E^{\lor_{\beta}})=v_{1}(E^{\lor_{\beta}})+\beta v_{0}(E^{\lor_{\beta}})=v_{1}(E)-\beta v_{0}(E)=v_{1}^{\beta}(E)>0
	\end{align*} and
	\begin{align*}
		v_{2}(E^{\lor_{\beta}})-\alpha v_{0}(E^{\lor_{\beta}}) \geqslant -v_{2}(E)+\alpha v_{0}(E)
	\end{align*}Thus $\nu^{-\beta,\alpha}(E^{\lor_{\beta}}) \geqslant -\nu^{\beta,\alpha}(E)$.
	
	If $E$ is not tilt-torsionfree, then we have a decomposition in $\mathrm{Coh}_{0}^{\beta}(X)$:
	\begin{align*}
		0 \to E_{t} \to E \to E_{tf} \to 0,
	\end{align*}where $E_{tf} \neq 0$ is tilt-torsionfree and $v_{1}^{\beta}(E_{t})=0$. Then
	\begin{align*}
		v_{1}^{-\beta}(E^{\lor_{\beta}})=v_{1}^{-\beta}(E_{tf}^{\lor_{\beta}})=v_{1}^{\beta}(E_{tf})=v_{1}^{\beta}(E).
	\end{align*} and
	\begin{align*}
		\nu^{-\beta,\alpha}(E^{\lor_{\beta}})=\nu^{-\beta,\alpha}(E_{tf}^{\lor_{\beta}}) \geqslant -\nu^{\beta,\alpha}(E_{tf})=-\frac{(v_{2}(E)-\alpha v_{0}(E))-(v_{2}(E_{t})-\alpha v_{0}(E_{t}))}{v_{1}(E)-\beta v_{0}(E)} \geqslant -\nu^{\beta,\alpha}(E)
	\end{align*}because $v_{1}^{\beta}(E_{t})=0$ and $v_{2}(E_{t})-\alpha v_{0}(E_{t}) \geqslant 0$.
\end{pf}

\begin{rmk}
	Let $\beta \in \mathbb{R}$ and suppose that $0 \neq E \in \mathrm{Coh}^{\beta}_{0}(X)$ is tilt-torsionfree, we expect the following property below: For any $1<l<n$, $\dim \mathcal{H}^{l}(\mathbb{D}_{X/Y}(E)) \leqslant n-l-1$. If this conjecture holds, then we have $\nu^{-\beta,\alpha}(E^{\lor_{\beta}})=-\nu^{\beta,\alpha}(E)$.

	If this conjecture holds, then we can see the concept of tilt-torsionfree is a variation of pureness. In fact, when $n=2$ and $n=3$, the conjecture holds. But we still don't know the consequence when $n \geqslant 4$.
\end{rmk}

If $n=2$, then $\mathbb{D}_{X/Y}(E) \cong E^{\lor_{\beta}}[-1]$ for any tilt-torsionfree object $E \in \mathrm{Coh}_{0}^{\beta}(X)$. Then $E \cong \mathbb{D}_{X/Y}(E^{\lor_{\beta}})[1]$ and by definition $E^{\lor_{\beta}\lor_{-\beta}} \cong \mathcal{H}^{1}_{\beta}(\mathbb{D}_{X/Y}(E^{\lor_{\beta}})) \cong E$.

If $n=3$ and $0 \neq E \in \mathrm{Coh}_{0}^{\beta}(X)$ is a tilt-torsionfree object, then we have a distinguished triangle:
\begin{align}
	\label{dualsequence4}E^{\lor_{\beta}}[-1] \to \mathbb{D}_{X/Y}(E) \to \mathcal{H}^{2}(\mathbb{D}_{X/Y}(E))[-2] \to E^{\lor_{\beta}},
\end{align}where $\mathcal{H}^{2}(\mathbb{D}_{X/Y}(E))$ is a zero dimensional sheaf.

\begin{prop}\label{propositionB2}
	Let $\beta \in \mathbb{R}$ and $0 \neq E \in \mathrm{Coh}_{0}^{\beta}(X)$ is tilt-torsionfree. If $\dim Y=3$, then:
	
	(1) We have a short exact sequence in $\mathrm{Coh}_{0}^{\beta}(X)$:
	\begin{align*}
		0 \to E \stackrel{\iota_{E}}{\longrightarrow} E^{\lor_{\beta}\lor_{-\beta}} \to E^{\lor_{\beta}\lor_{-\beta}}/E \to 0,
	\end{align*}where $E^{\lor_{\beta}\lor_{-\beta}}/E$ is $0$ or $\dim E^{\lor_{\beta}\lor_{-\beta}}/E=0$.
	
	(2) If $E$ is $\nu^{\beta,\alpha}$-semistable, then $E^{\lor_{\beta}\lor_{-\beta}}$ is $\nu^{\beta,\alpha}$-semistable.
	
	(3) If $E$ is $\nu^{\beta,\alpha}$-semistable, then $E^{\lor_{\beta}}$ is $\nu^{-\beta,\alpha}$-semistable with tilt-slope $-\nu^{\beta,\alpha}(E)$.
	
	(4) Moreover, suppose that there is a short exact sequence in $\mathrm{Coh}_{0}^{\beta}(X)$:
	\begin{align*}
		0 \to F \stackrel{f}{\longrightarrow} E \to G \to 0
	\end{align*}with $F \neq 0$ and $G$ is $0$ or $\dim G=0$. Then there is a commutative diagram in $\mathrm{Coh}_{0}^{\beta}(X)$:
	\begin{align*}
		\xymatrix{0 \ar[r] & F \ar^{\iota_{F}}[r] \ar_{f}[d] & F^{\lor_{\beta}\lor_{-\beta}} \ar^{f^{\lor_{\beta}\lor_{-\beta}}}[d] \\
			0 \ar[r] & E \ar_{\iota_{E}}[r] & E^{\lor_{\beta}\lor_{-\beta}}
		}
	\end{align*}with $f^{\lor_{\beta}\lor_{-\beta}}$ is an isomorphism.
\end{prop}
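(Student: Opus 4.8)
The plan is to transplant the four assertions of Proposition \ref{propositionB1} from pure sheaves to the tilted heart $\mathrm{Coh}_0^\beta(X)$, exploiting that in dimension $n=3$ the tilt-dual $E\mapsto E^{\lor_\beta}$ is governed by the short three-term filtration. The structural inputs I would lean on are the involutivity $\mathbb{D}_{X/Y}\circ\mathbb{D}_{X/Y}\cong 1$ together with $v_i(\mathbb{D}_{X/Y}(E))=(-1)^iv_i(E)$, and the fact, established in the lemma immediately preceding the proposition, that $E^{\lor_\beta}\in\mathrm{Coh}_0^{-\beta}(X)$ is again tilt-torsionfree whenever $E$ is. Because $\dim Y=3$, Lemma \ref{importantlemmaB3} collapses the dual complex into the triangle (\ref{dualsequence4}), whose only correction term $\mathcal{H}^2(\mathbb{D}_{X/Y}(E))$ is zero-dimensional; this is what keeps every step finite.

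For (1) I would apply the contravariant involution $\mathbb{D}_{X/Y}$ to the defining triangle (\ref{dualsequence4}) for $E$. Using $\mathbb{D}_{X/Y}^2\cong 1$ and the elementary computation that the relative dual of a zero-dimensional sheaf $C$ is $C^{\lor}[-3]$ with $C^{\lor}$ again zero-dimensional (so $v_0=v_1=v_2=0$ and $Z^{\beta,\alpha}(C^{\lor})=0$), this produces a distinguished triangle $C^{\lor}[-1]\to E\to\mathbb{D}_{X/Y}(E^{\lor_\beta})[1]\to C^{\lor}$. Taking the long exact sequence of the tilt-cohomology functors $\mathcal{H}^\bullet_\beta$, and reading off the cohomology of $\mathbb{D}_{X/Y}(E^{\lor_\beta})$ from (\ref{tiltcohomologyofdual}) applied to the tilt-torsionfree object $E^{\lor_\beta}$ (so that $\mathcal{H}^1_\beta=E^{\lor_\beta\lor_{-\beta}}$), the sequence degenerates to $0\to E\to E^{\lor_\beta\lor_{-\beta}}\to C^{\lor}\to 0$ with $C^{\lor}$ zero-dimensional, which is precisely (1); as a byproduct the top correction term $\mathcal{H}^2(\mathbb{D}_{X/Y}(E^{\lor_\beta}))$ vanishes.

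Statement (2) is then immediate from Lemma \ref{importantlemmaB1}(2): in the sequence of (1) the quotient $C^{\lor}$ has $Z^{\beta,\alpha}=0$, the sub $E$ is $\nu^{\beta,\alpha}$-semistable, and since $E^{\lor_\beta\lor_{-\beta}}$ is again tilt-torsionfree by the same lemma it admits no nonzero subobject $C$ with $Z^{\beta,\alpha}(C)=0$, whence $\mathrm{Hom}(C,E^{\lor_\beta\lor_{-\beta}})=0$ for all such $C$; thus $E^{\lor_\beta\lor_{-\beta}}$ is $\nu^{\beta,\alpha}$-semistable of slope $\nu^{\beta,\alpha}(E)$. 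For (3) I would first upgrade the slope lemma preceding the proposition to the equality $\nu^{-\beta,\alpha}(E^{\lor_\beta})=-\nu^{\beta,\alpha}(E)$: the inequality there becomes an equality exactly because the correction sheaf in (\ref{dualsequence4}) is zero-dimensional by Lemma \ref{importantlemmaB3}(3) and therefore contributes $v_2=0$. Semistability is then argued by contradiction in the style of \ref{propositionB1}(3): a $\nu^{-\beta,\alpha}$-destabilizing subobject $A\hookrightarrow E^{\lor_\beta}$ is itself tilt-torsionfree, so applying $\mathbb{D}_{X/Y}$ and $\mathcal{H}^\bullet_\beta$ to $0\to A\to E^{\lor_\beta}\to B\to 0$ yields an inclusion $B^{\lor_{-\beta}}\hookrightarrow E^{\lor_\beta\lor_{-\beta}}$, and comparing its slope against the semistable object $E^{\lor_\beta\lor_{-\beta}}$ through the slope equality contradicts $\nu^{-\beta,\alpha}(A)>\nu^{-\beta,\alpha}(E^{\lor_\beta})$.

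Finally, (4) follows by applying the tilt-dual to $0\to F\to E\to G\to 0$: since $\dim G=0$ gives $v_1^\beta(G)=0$, the lemma computing $\mathcal{H}^{\leqslant 1}_{-\beta}(\mathbb{D}_{X/Y}(G))=0$ forces $f^{\lor_\beta}\colon E^{\lor_\beta}\to F^{\lor_\beta}$ to be an isomorphism, hence $f^{\lor_\beta\lor_{-\beta}}$ is an isomorphism, and the square commutes by naturality of the biduality morphism $\iota$ produced in (1). I expect the main obstacle to be (3): one must run the destabilize-and-dualize argument entirely inside the two tilted hearts while bookkeeping the contravariance of $\mathbb{D}_{X/Y}$ and the zero-dimensional correction sheaves that appear whenever the tilt-dual fails to be exact, and it is precisely here that the hypothesis $\dim Y=3$ enters, through the vanishing $\dim\mathcal{H}^2(\mathbb{D}_{X/Y}(E))=0$ of Lemma \ref{importantlemmaB3}(3).
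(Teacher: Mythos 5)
Your proposal is correct and follows essentially the same route as the paper's proof: dualizing the triangle (\ref{dualsequence4}) and taking tilt-cohomology for (1), invoking Lemma \ref{importantlemmaB1} for (2), the destabilize-and-dualize contradiction (using that the correction terms are zero-dimensional, hence the slope inequality is an equality for $n=3$) for (3), and the vanishing of $\mathcal{H}^{\leqslant 2}_{-\beta}(\mathbb{D}_{X/Y}(G))$ for a zero-dimensional $G$ for (4). The only cosmetic difference is that you make explicit the slope equality and the vanishing of $\mathcal{H}^{2}(\mathbb{D}_{X/Y}(E^{\lor_{\beta}}))$, which the paper leaves implicit.
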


\begin{pf}
	(1) After taking relative dual functor of (\ref{dualsequence4}), we have a distinguished triangle in $D_{0}^{b}(X)$:
	\begin{align*}
		E'[-1] \to E \to \mathbb{D}_{X/Y}(E^{\lor_{\beta}})[1] \to E',
	\end{align*}where $E'$ is a zero dimensional sheaf or $0$. Then we take the cohomology functor $\mathcal{H}_{\beta}^{l}$ on the above sequence, and there is an exact sequence in $\mathrm{Coh}_{0}^{\beta}(X)$:
	\begin{align*}
		0 \to E \to E^{\lor_{\beta}\lor_{-\beta}} \to E' \to 0
	\end{align*} and we get the conclusion.
	
	(2) It is a direct conclusion of Lemma \ref{importantlemmaB1} and (1).
	
	(3) Suppose that $E^{\lor_{\beta}}$ is not $\nu^{-\beta,\alpha}$-semistable, then there is a short exact sequence in $\mathrm{Coh}_{0}^{-\beta}(X)$:
	\begin{align*}
		0 \to A \to E^{\lor_{\beta}} \to B \to 0
	\end{align*}with $+\infty>\nu^{-\beta,\alpha}(A)>\nu^{-\beta,\alpha}(E^{\lor_{\beta}})>\nu^{-\beta,\alpha}(B)$ and $A,B \neq 0$ since $E^{\lor_{\beta}}$ is tilt-torsionfree. After taking the relative dual functor and the cohomology functor, we have an exact sequence in $\mathrm{Coh}_{0}(X)$:
	\begin{align*}
		0 \to B^{\lor_{-\beta}} \to E^{\lor_{\beta}\lor_{-\beta}} \to A^{\lor_{-\beta}}
	\end{align*}Now $E^{\lor_{\beta}\lor_{-\beta}}$ is $\nu^{\beta,\alpha}$-semistable by (2) and $B^{\lor_{-\beta}}$ is a nonzero subobject of $E^{\lor_{\beta}\lor_{-\beta}}$. Thus
	\begin{align*}
		-\nu^{-\beta,\alpha}(B) \leqslant \nu^{\beta,\alpha}(B^{\lor_{-\beta}}) \leqslant \nu^{\beta,\alpha}(E^{\lor_{\beta}\lor_{-\beta}})=-\nu^{-\beta,\alpha}(E^{\lor_{\beta}})
	\end{align*}and this is a contradiction. 
	
	(4) It is not hard to see that $\iota_{E} \circ f=f^{\lor_{\beta}\lor_{-\beta}} \circ \iota_{F}$. By taking the functor $\mathcal{H}^{1}_{-\beta}(\mathbb{D}_{X/Y}(-))$, we know that $\mathcal{H}^{1}_{-\beta}(\mathbb{D}_{X/Y}(G))=0$. It implies $f^{\lor}$ is an isomorphism and $f^{\lor_{\beta}\lor_{-\beta}}$ is also an isomorphism. 
\end{pf}

\subsection{Some applications}

We will give the proof about Notherian property of some tilting hearts. But before that, we will introduce some basic techniques about it.

Let $\mathcal{D}$ be a triangulated category. Suppose that $\sigma=(Z,\mathcal{A})$ is a weak pre-stability condition on $\mathcal{D}$  and $\mu$ is the corresponding slope function. For any $\beta \in \mathbb{R}$, let 
\begin{align*}
	&\mathcal{T}^{\beta}=\langle E\in \mathcal{A} \mid E \mbox{ is } \sigma\mbox{-semistable with }\mu(E)>\beta \rangle, \\
	&\mathcal{F}^{\beta}=\langle E\in \mathcal{A} \mid E \mbox{ is } \sigma\mbox{-semistable with }\mu(E) \leqslant \beta \rangle.
\end{align*} Then $(\mathcal{T}^{\beta},\mathcal{F}^{\beta})$ is a torsion pair of $\mathcal{A}$ by the existence of Harder-Narasimhan filtration. Let $\mathcal{A}^{\beta}=\langle\mathcal{F}^{\beta}[1],\mathcal{T}^{\beta}\rangle$, then it is a bounded heart of $\mathcal{D}$.

\begin{prop}\label{Noetherian}
	Let $\beta \in \mathbb{R}$ and we assume that:
	
	(1) the set $\{-\mathrm{Re}Z(E)-\beta\mathrm{Im}Z(E) \in \mathbb{R} \mid E \in \mathcal{A}^{\beta}\}$ is discrete in $\mathbb{R}$;
	
	(2) the abelian category $\mathcal{A}$ is Noetherian;
	
	(3) there is no infinite sequence in $\mathcal{A}$:
	\begin{align*}
		0 \neq G_{1} \subseteq G_{2} \subseteq \cdots \subseteq G_{n} \subseteq G_{n+1} \subseteq \cdots
	\end{align*} with $G_{n} \in \mathcal{F}^{\beta}$, $0 \neq G_{n+1}/G_{n} \in \mathcal{C}$ for any $n \in \mathbb{N}^{+}$. Then the abelian category $\mathcal{A}^{\beta}$ is Noetherian.
\end{prop}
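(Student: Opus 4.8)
The plan is to prove that every ascending chain of subobjects $E_1\subseteq E_2\subseteq\cdots$ of a fixed object $E\in\mathcal{A}^{\beta}$ stabilizes. The key device is the additive real function $W(-):=-\Re Z(-)-\beta\Im Z(-)$. First I would check that $W$ is non-negative on $\mathcal{A}^{\beta}$: on a generator of $\mathcal{T}^{\beta}$ that is $\sigma$-semistable of finite slope $s>\beta$ one has $W=(s-\beta)\Im Z\geqslant 0$, and on one of slope $+\infty$ one has $W=-\Re Z\geqslant 0$; on a generator $F[1]$ of $\mathcal{F}^{\beta}[1]$ with $\mu(F)\leqslant\beta$ one computes $W(F[1])=(\beta-\mu(F))\Im Z(F)\geqslant 0$. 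Additivity then gives $W\geqslant 0$ on all of $\mathcal{A}^{\beta}$. Since $W(E_i)$ is non-decreasing and bounded above by $W(E)$, assumption (1) forces it to be eventually constant, so the successive quotients $C_i:=E_{i+1}/E_i$ satisfy $W(C_i)=0$ for $i\gg 0$. Applying $W\geqslant 0$ to the canonical sequence $0\to \mathcal{H}^{-1}(C_i)[1]\to C_i\to \mathcal{H}^{0}(C_i)\to 0$ (cohomology taken with respect to $\mathcal{A}$), the vanishing $W(C_i)=0$ forces both summands to vanish: $A_i:=\mathcal{H}^{-1}(C_i)$ is either $0$ or $\sigma$-semistable of slope exactly $\beta$, and $B_i:=\mathcal{H}^{0}(C_i)\in\mathcal{T}^{\beta}$ has $Z(B_i)=0$, i.e. $B_i\in\mathcal{C}$.

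Next I would pass to the quotient objects $Q_i:=E/E_i$, which form a tower of surjections $Q_i\twoheadrightarrow Q_{i+1}$ with kernels $C_i$. Because $\mathcal{A}^{\beta}$ is a heart, the long exact sequences give surjections $\mathcal{H}^{0}(E)\twoheadrightarrow\mathcal{H}^{0}(Q_i)\twoheadrightarrow\mathcal{H}^{0}(Q_{i+1})$, so the kernels of $\mathcal{H}^{0}(E)\to\mathcal{H}^{0}(Q_i)$ form an ascending chain of subobjects of $\mathcal{H}^{0}(E)$ in $\mathcal{A}$; by assumption (2) this chain stabilizes, hence $\mathcal{H}^{0}(Q_i)\xrightarrow{\sim}\mathcal{H}^{0}(Q_{i+1})$ for $i\gg 0$. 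Feeding this isomorphism into the long exact cohomology sequence of $0\to C_i\to Q_i\to Q_{i+1}\to 0$ collapses it to the four-term exact sequence
\[
0 \to A_i \to \mathcal{H}^{-1}(Q_i) \to \mathcal{H}^{-1}(Q_{i+1}) \to B_i \to 0
\]
in $\mathcal{F}^{\beta}$. The slope-$\beta$ term $A_i$ is removed as follows: from the sequence, $\Im Z(\mathcal{H}^{-1}(Q_{i+1}))=\Im Z(\mathcal{H}^{-1}(Q_i))-\Im Z(A_i)$ since $\Im Z(B_i)=0$, so $\Im Z(\mathcal{H}^{-1}(Q_i))$ is a non-increasing sequence of non-negative reals in the discrete image of $\Im Z$; it therefore stabilizes, forcing $\Im Z(A_i)=0$ for $i\gg 0$, and a $\sigma$-semistable object of finite slope $\beta$ with $\Im Z=0$ must be zero. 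With $A_i=0$ the sequence becomes $0\to\mathcal{H}^{-1}(Q_i)\to\mathcal{H}^{-1}(Q_{i+1})\to B_i\to 0$, i.e. an honest ascending chain in $\mathcal{F}^{\beta}$ whose successive quotients are $B_i\in\mathcal{C}$. Assumption (3) precisely forbids such a chain from having infinitely many nonzero quotients, so $B_i=0$ for $i\gg 0$. Combining, $C_i=0$ eventually, and the chain $\{E_i\}$ stabilizes.

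The main obstacle is the coordination of the two kinds of $W$-trivial increments. The Noetherian property (2) cleanly controls the $\mathcal{H}^{0}$-side and, together with discreteness of $\Im Z$, the slope-$\beta$ part $A_i$; but the charge-zero growth recorded by $B_i\in\mathcal{C}$ on the torsion-free side is invisible both to $W$ (it contributes nothing to $-\Re Z-\beta\Im Z$) and to the ascending-chain condition in $\mathcal{A}$ by itself, since it appears inside quotients rather than as subobjects of a fixed object. This is exactly why the separate hypothesis (3), ruling out infinite $\mathcal{C}$-layered extensions within $\mathcal{F}^{\beta}$, is indispensable, and getting the long exact sequence to isolate this phenomenon as a genuine chain of subobjects (via the stabilization of $\mathcal{H}^{0}(Q_i)$) is the delicate point of the argument.
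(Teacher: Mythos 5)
Your overall strategy is the same as the paper's: eventual constancy of $W=-\Re Z-\beta\Im Z$ along the chain via hypothesis (1), Noetherianity of $\mathcal{A}$ to stabilize one cohomology, and hypothesis (3) to rule out the remaining $\mathcal{C}$-layered growth inside $\mathcal{F}^{\beta}$; you merely phrase it with ascending chains of subobjects where the paper works with descending chains of quotients and their kernels. However, there is a genuine gap in the step where you eliminate $A_i=\mathcal{H}^{-1}(C_i)$. You claim that $\Im Z(\mathcal{H}^{-1}(Q_i))$ is a non-increasing sequence of non-negative reals ``in the discrete image of $\Im Z$'' and therefore stabilizes. Discreteness of the image of $\Im Z$ is not among the hypotheses: assumption (1) only makes the image of the single functional $-\Re Z-\beta\Im Z$ discrete, and since $A_i$ is $\sigma$-semistable of slope exactly $\beta$ one has $W(A_i[1])=(\beta-\beta)\Im Z(A_i)=0$ identically, so (1) gives no control on $\Im Z(A_i)$. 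A non-increasing bounded-below sequence of reals need not stabilize, so as written you cannot conclude $A_i=0$ for $i\gg 0$. (In the two applications in the appendix $\Im Z$ does happen to be discrete, but the proposition is stated in general and your argument does not prove it.)

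The hole can be closed by transplanting the paper's device into your setup. Once $W(E_i)$ is constant for $i\geqslant N$, replace $E$ by $E/E_N$ and $E_i$ by $E_i/E_N$, so that without loss of generality $W(E_i)=0$ for every $i$; then not just the layers $C_i$ but the subobjects themselves satisfy $W=0$, which forces $\mathcal{H}^{0}(E_i)\in\mathcal{C}$ and $\mathcal{H}^{-1}(E_i)$ to be $\sigma$-semistable of slope $\beta$ or zero. Stabilize the ascending chain $\mathcal{H}^{-1}(E_i)\subseteq\mathcal{H}^{-1}(E)$ using Noetherianity of $\mathcal{A}$; the long exact sequence of $0\to E_i\to E_{i+1}\to C_i\to 0$ then exhibits $A_i$ as a subobject of $\mathcal{H}^{0}(E_i)\in\mathcal{C}$. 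Since $\mathcal{C}$ is closed under subobjects while every nonzero object of $\mathcal{F}^{\beta}$ has $\Im Z>0$, one gets $A_i\in\mathcal{C}\cap\mathcal{F}^{\beta}=0$ with no discreteness of $\Im Z$ needed. This is exactly how the paper kills $\mathcal{H}^{-1}(F_{n+1}/F_n)$. The rest of your argument (stabilization of $\mathcal{H}^{0}(Q_i)$, the four-term exact sequence, and the appeal to hypothesis (3), with the minor caveat that one must pass to the subsequence of indices with $B_i\neq 0$ and start the chain at a nonzero term before invoking (3)) is correct.
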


\begin{pf}
		Suppose that there is an infinite sequence of epimorphisms in $\mathcal{A}^{\beta}$:
	\begin{align*}
		E_{0} \twoheadrightarrow E_{1} \twoheadrightarrow E_{2} \twoheadrightarrow \cdots \twoheadrightarrow E_{n} \twoheadrightarrow E_{n+1} \twoheadrightarrow \cdots.
	\end{align*} Then we have
	\begin{align*}
		-\Re Z(E_{0})-\beta\Im Z(E_{0}) \geqslant -\Re Z(E_{1})-\beta\Im Z(E_{1}) \geqslant -\Re Z(E_{2})-\beta\Im Z(E_{2}) \geqslant \cdots \geqslant 0.
	\end{align*}However, $\{-\Re Z(E)-\beta\Im Z(E) \in \mathbb{R} \mid E \in \mathcal{A}^{\beta}\}$ is discrete in $\mathbb{R}$. Thus, when $n \gg 0$, we have $-\Re Z(E_{n})-\beta\Im Z(E_{n})=-\Re Z(E_{n+1})-\beta\Im Z(E_{n+1})=\cdots$. Without loss of generality, we assume that $-\Re Z(E_{n})-\beta\Im Z(E_{n})$ is constant.
	
	Without loss of generality, we assume that $E_{n} \neq E_{n+1}$ for any $n \in \mathbb{N}$ and give a contradiction of it.
	
	For any $n \in \mathbb{N}^{+}$, let $F_{n}=\ker(E_{0} \to E_{n}) \in \mathcal{A}^{\beta}$, then there is an infinite sequence in $\mathcal{A}^{\beta}$:
	\begin{align*}
		F_{1} \subseteq F_{2} \subseteq \cdots \subseteq E_{0}.
	\end{align*}By using the snake lemma, we have $\ker(E_{n} \to E_{n+1}) \cong F_{n+1}/F_{n}$ for any $n \in \mathbb{N}^{+}$ in $\mathcal{A}^{\beta}$. And we note that $\Re Z(F_{n})+\beta\Im Z(F_{n})=0$. Because $E_{n} \neq E_{n+1}$, then $F_{n} \subsetneq F_{n+1}$.
	
	After taking the cohomology functor with respect to the bounded heart $\mathcal{A}$, we have:
	\begin{align*}
		\mathcal{H}^{-1}(F_{1}) \subseteq \mathcal{H}^{-1}(F_{2}) \subseteq \cdots \subseteq \mathcal{H}^{-1}(F_{n}) \subseteq \mathcal{H}^{-1}(F_{n+1})  \subseteq \cdots \subseteq \mathcal{H}^{-1}(E_{0}). 
	\end{align*} and
	\begin{align*}
		\mathcal{H}^{0}(E_{0}) \twoheadrightarrow \mathcal{H}^{0}(E_{1}) \twoheadrightarrow \mathcal{H}^{0}(E_{2}) \twoheadrightarrow \cdots \twoheadrightarrow \mathcal{H}^{0}(E_{n}) \twoheadrightarrow \mathcal{H}^{0}(E_{n+1}) \twoheadrightarrow \cdots.
	\end{align*} Becasue $\mathcal{A}$ is Noetherian, these two filtrations must be stable. Thus we assume that $\mathcal{H}^{-1}(F_{n}) \cong \mathcal{H}^{-1}(F_{n+1}) \subseteq \mathcal{H}^{-1}(E_{0})$ and $\mathcal{H}^{0}(E_{n-1}) \cong \mathcal{H}^{0}(E_{n})$ for any $n \in \mathbb{N}^{+}$. Let $Q=\mathcal{H}^{-1}(E_{0})/\mathcal{H}^{-1}(F_{n})$. 
	
	Now, we will take the cohomology functor $\mathcal{H}^{l}$ with respect to $\mathcal{A}$ on the following short exact sequences in $\mathcal{A}^{\beta}$ for any $n \in \mathbb{N}^{+}$:
	\begin{align*}
		& 0 \to F_{n} \to E_{0} \to E_{n} \to 0,\\
		& 0 \to F_{n} \to F_{n+1} \to F_{n+1}/F_{n} \to 0,\\
		& 0 \to F_{n+1}/F_{n} \to E_{n} \to E_{n+1} \to 0.
	\end{align*}
	Then we get exact sequences in $\mathcal{A}$ for any $n \in \mathbb{N}^{+}$:
	\begin{align*}
		& 0 \to \mathcal{H}^{-1}(F_{n}) \to  \mathcal{H}^{-1}(E_{0}) \to   \mathcal{H}^{-1}(E_{n}) \to \mathcal{H}^{0}(F_{n}) \to 0, \\
		& 0 \to \mathcal{H}^{-1}(F_{n+1}/F_{n}) \to  \mathcal{H}^{0}(F_{n}) \to   \mathcal{H}^{0}(F_{n+1}) \to \mathcal{H}^{0}(F_{n+1}/F_{n}) \to 0,\\
		& 0 \to \mathcal{H}^{-1}(F_{n+1}/F_{n}) \to  \mathcal{H}^{-1}(E_{n}) \to   \mathcal{H}^{-1}(E_{n+1}) \to \mathcal{H}^{0}(F_{n+1}/F_{n}) \to 0.
	\end{align*}
	
	Because $\Re Z(F_{n})+\beta\Im Z(F_{n})=0$, then  $\mathcal{H}^{-1}(F_{n})$ is a $\sigma$-semistable object with slope $\beta$ or $\mathcal{H}^{-1}(F_{n})=0$ and $\mathcal{H}^{0}(F_{n}) \in \mathcal{C}$ by Lemma \ref{imaginarypart}. By the second exact sequence, the obejct $\mathcal{H}^{-1}(F_{n+1}/F_{n})$ also need belong to $\mathcal{C}$, and obviously $\mathcal{H}^{-1}(F_{n+1}/F_{n}) \in \mathcal{F}^{\beta}$. Then $\mathcal{H}^{-1}(F_{n+1}/F_{n})=0$ for any $n \in \mathbb{N}^{+}$. Meanwhile, $\mathcal{H}^{0}(F_{n+1}/F_{n})$ is a quotient object of $\mathcal{H}^{0}(F_{n+1})$ and then $0 \neq F_{n+1}/F_{n} \cong \mathcal{H}^{0}(F_{n+1}/F_{n}) \in \mathcal{C}$ for any $n \in \mathbb{N}^{+}$.
	
	We also have a short exact sequence in $\mathcal{A}$ as follows:
	\begin{align*}
		0 \to Q \to \mathcal{H}^{-1}(E_{n}) \to \mathcal{H}^{0}(F_{n}) \to 0.
	\end{align*} If $Q=0$, it means for any $n \in \mathbb{N}^{+}$, $\mathcal{H}^{0}(F_{n}) \cong \mathcal{H}^{-1}(E_{n})=0$ and it will be contradiction because $0 \neq F_{n+1}/F_{n} \cong \mathcal{H}^{0}(F_{n+1}/F_{n})$. Thus, $\mathcal{H}^{-1}(E_{n}) \neq 0$ for any $n \in \mathbb{N}^{+}$.
	
	Finally, let $G_{n}=\mathcal{H}^{-1}(E_{n}) \in \mathcal{F}^{\beta}$ ,then we will have an infinite sequence in $\mathcal{A}$:
	\begin{align*}
		0 \neq G_{1} \subseteq G_{2} \subseteq \cdots \subseteq G_{n} \subseteq G_{n+1} \subseteq \cdots
	\end{align*} with $G_{n} \in \mathcal{F}^{\beta}$, and $0 \neq F_{n+1}/F_{n} \cong G_{n+1}/G_{n} \in \mathcal{C}$ for any $n \in \mathbb{N}^{+}$. It is a contradiction with our condition on $\mathcal{A}$. Thus, $E_{n}=E_{n+1}$ for $n \gg 0$.
\end{pf}

\begin{prop}\label{Noetherian1}
	For any $\beta \in \mathbb{Q}$, the abelian category $\mathrm{Coh}^{\beta}_{0}(X)$ is Noetherian.
\end{prop}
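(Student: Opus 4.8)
The plan is to apply the abstract Noetherianity criterion of Proposition \ref{Noetherian} to the weak stability condition $\sigma=(\mathcal{Z},\mathrm{Coh}_{0}(X))$ with $\mathcal{Z}=-v_{1}+\sqrt{-1}v_{0}$, taking $\mathcal{A}=\mathrm{Coh}_{0}(X)$. For this $\sigma$ the associated torsion pair $(\mathcal{T}^{\beta},\mathcal{F}^{\beta})$ is exactly $(\mathrm{Coh}^{>\beta}_{0}(X),\mathrm{Coh}^{\leqslant\beta}_{0}(X))$, so the tilted heart $\mathcal{A}^{\beta}=\langle\mathcal{F}^{\beta}[1],\mathcal{T}^{\beta}\rangle$ coincides with $\mathrm{Coh}^{\beta}_{0}(X)$. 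It therefore suffices to verify the three hypotheses $(1)$–$(3)$ of Proposition \ref{Noetherian}.

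First I would check the discreteness hypothesis $(1)$. Since $\Re\mathcal{Z}=-v_{1}$ and $\Im\mathcal{Z}=v_{0}$, for any $E$ we have $-\Re\mathcal{Z}(E)-\beta\Im\mathcal{Z}(E)=v_{1}(E)-\beta v_{0}(E)=v_{1}^{\beta}(E)$. The quantities $v_{0}(E)=H^{n}\mathrm{ch}_{0}(\pi_{*}E)$ and $v_{1}(E)=H^{n-1}\mathrm{ch}_{1}(\pi_{*}E)$ are integers, being intersection numbers of integral classes on $Y$; hence if $\beta=p/q$ with $q\in\mathbb{N}^{+}$, then $v_{1}^{\beta}(E)\in\frac{1}{q}\mathbb{Z}$, which is discrete in $\mathbb{R}$. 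This is the only place where the assumption $\beta\in\mathbb{Q}$ is used, and it is the entire arithmetic content of the statement. For hypothesis $(2)$, the category $\mathcal{A}=\mathrm{Coh}_{0}(X)$ is Noetherian, as already noted in Section \ref{section3}: it is a Serre abelian subcategory of $\mathrm{Coh}(X)$ on the Noetherian scheme $X$.

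Finally, hypothesis $(3)$ asks for the non-existence of an infinite chain $0\neq G_{1}\subseteq G_{2}\subseteq\cdots$ in $\mathcal{A}$ with $G_{n}\in\mathcal{F}^{\beta}$ and $0\neq G_{n+1}/G_{n}$ lying in the subcategory of $\mathcal{Z}$-null objects. The condition $0\neq G_{n+1}/G_{n}$ forces every inclusion to be strict, so such a chain would be an infinite strictly ascending chain in $\mathcal{A}$; but by $(2)$ the category $\mathrm{Coh}_{0}(X)$ admits no infinite strictly ascending chain at all, so $(3)$ holds vacuously. With all three hypotheses verified, Proposition \ref{Noetherian} yields that $\mathrm{Coh}^{\beta}_{0}(X)$ is Noetherian. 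I do not expect a genuine obstacle: the result is a direct corollary of the abstract criterion, and the only step requiring any care is the integrality bookkeeping in $(1)$, where rationality of $\beta$ guarantees that the values of $v_{1}^{\beta}$ lie in a fixed scaled lattice; everything else reduces to the already-established Noetherianity of $\mathrm{Coh}_{0}(X)$.
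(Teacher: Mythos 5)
Your reduction to Proposition \ref{Noetherian} is the same as the paper's, and your verifications of hypotheses $(1)$ and $(2)$ are correct (including the observation that rationality of $\beta$ enters only through the discreteness of the values of $v_{1}^{\beta}$). However, your verification of hypothesis $(3)$ contains a genuine gap: you claim the chain $0\neq G_{1}\subseteq G_{2}\subseteq\cdots$ cannot exist because "$\mathrm{Coh}_{0}(X)$ admits no infinite strictly ascending chain at all." That is not what Noetherianity of an abelian category says. Noetherianity forbids infinite strictly ascending chains of subobjects \emph{of a fixed object}; it does not forbid an infinite strictly increasing system of monomorphisms $G_{1}\hookrightarrow G_{2}\hookrightarrow\cdots$ whose targets keep growing (e.g.\ $\mathcal{O}\hookrightarrow\mathcal{O}(1)\hookrightarrow\mathcal{O}(2)\hookrightarrow\cdots$ in the Noetherian category of coherent sheaves). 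The chain in hypothesis $(3)$ is exactly of this latter kind --- in the proof of Proposition \ref{Noetherian} the objects $G_{m}=\mathcal{H}^{-1}(E_{m})$ are not a priori subobjects of any fixed object --- so $(3)$ does not hold vacuously and is in fact the entire nontrivial content of the proposition.

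The paper closes this gap with a duality argument. Each $G_{m}$ lies in $\mathrm{Coh}^{\leqslant\beta}_{0}(X)$ and the successive quotients $G_{m+1}/G_{m}$ are nonzero of dimension $\leqslant n-2$, so each $G_{m}$ is pure of dimension $n$; setting $T_{m}=G_{m}/G_{1}$, one gets an ascending chain $0=T_{1}\subseteq T_{2}\subseteq\cdots$ of sheaves of dimension $\leqslant n-2$, and Proposition \ref{propositionB1} (parts $(1)$ and $(5)$, via the reflexive hull $G_{1}^{\lor\lor}$) produces compatible injections $T_{m}\hookrightarrow G_{1}^{\lor\lor}/G_{1}$ into a \emph{fixed} object. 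Only at that point does Noetherianity of $\mathrm{Coh}_{0}(X)$ apply, forcing the $T_{m}$ to stabilize and giving the contradiction. You need this (or some substitute that traps the chain inside a fixed object) for the proof to go through.
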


\begin{pf}
	By Proposition \ref{Noetherian}, we only need to prove the following: there is no infinite sequence in $\mathrm{Coh}_{0}(X)$:
	\begin{align*}
		0 \neq G_{1} \subseteq G_{2} \subseteq \cdots \subseteq G_{m} \subseteq G_{m+1} \subseteq \cdots
	\end{align*} with $G_{m} \in \mathrm{Coh}^{\leqslant \beta}_{0}(X)$, and the sheaf $G_{m+1}/G_{m}$ is nonzero for any $m \in \mathbb{N}^{+}$ with $\dim G_{m+1}/G_{m} \leqslant n-2$. Note that the sheaf $G_{m}$ is pure of dimension $n$. 
	
	We denote the cokernel of $G_{1} \to G_{m}$ by $T_{m}$. We have another infinite sequence:
	\begin{align}
		\label{Noetherianseq1} 0=T_{1} \subseteq T_{2} \subseteq \cdots \subseteq T_{m} \subseteq T_{m+1} \subseteq \cdots
	\end{align} with $T_{m+1}/T_{m} \cong G_{m+1}/G_{m} \neq 0$ for any $m \in \mathbb{N}^{+}$ and $\dim T_{m} \leqslant n-2$ by the snake lemma. By Lemma \ref{propositionB1}, we have injections $T_{m} \to G_{1}^{\lor\lor}/G_{1}$ in $\mathrm{Coh}_{0}(X)$. By the Noetherian property of $\mathrm{Coh}_{0}(X)$, $T_{m}=T_{m+1}$ for $m \gg 0$. and thus the sequence (\ref{Noetherianseq1}) must be stable and it is a contradiction.
\end{pf}

\begin{prop}\label{Noetherian2}
	For any $(\beta,\alpha) \in U \cap \mathbb{Q}^2$, the abelian category $\mathrm{Coh}^{\beta,\alpha}_{0}(X)$ is Noetherian.
\end{prop}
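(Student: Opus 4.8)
The plan is to deduce the statement from the general Noetherianity criterion Proposition \ref{Noetherian}, applied to the \emph{first} tilt. Concretely, I would take $\mathcal{D}=D^b_0(X)$, $\mathcal{A}=\mathrm{Coh}^\beta_0(X)$, and the weak stability condition $\sigma=(Z^{\beta,\alpha},\mathrm{Coh}^\beta_0(X))$ with slope function $\nu^{\beta,\alpha}$, tilting at the threshold $\beta$. Then the torsion pair $(\mathcal{T}^\beta,\mathcal{F}^\beta)$ of Proposition \ref{Noetherian} is exactly $(\mathrm{Coh}^{>\beta,\alpha}_0(X),\mathrm{Coh}^{\leqslant\beta,\alpha}_0(X))$, so the resulting heart $\mathcal{A}^\beta$ is precisely $\mathrm{Coh}^{\beta,\alpha}_0(X)$, and $\mathcal{C}=\ker Z^{\beta,\alpha}$ consists of the zero-dimensional sheaves (here $n=\dim Y=3$, so $\dim E\leqslant n-3=0$). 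It then remains to check the three hypotheses of that proposition.

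Hypotheses (1) and (2) are routine. For (1), a direct computation gives $-\Re Z^{\beta,\alpha}(E)-\beta\,\Im Z^{\beta,\alpha}(E)=v_2^\beta(E)-(\alpha-\tfrac{\beta^2}{2})v_0^\beta(E)$, which is the imaginary part of the $\mathrm{ch}_3$-central charge; since $\beta,\alpha\in\mathbb{Q}$ and the $v_i$ take values in the fixed lattice $\Lambda$, this quantity lies in $\tfrac1N\mathbb{Z}$ for a fixed integer $N$, hence is discrete in $\mathbb{R}$. Hypothesis (2), that $\mathrm{Coh}^\beta_0(X)$ is Noetherian, is Proposition \ref{Noetherian1}, which applies because $\beta\in\mathbb{Q}$.

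The main obstacle is hypothesis (3): ruling out an infinite chain $0\neq G_1\subseteq G_2\subseteq\cdots$ in $\mathrm{Coh}^\beta_0(X)$ with every $G_n\in\mathcal{F}^\beta=\mathrm{Coh}^{\leqslant\beta,\alpha}_0(X)$ and every quotient $G_{n+1}/G_n$ a nonzero zero-dimensional sheaf. I would treat this with the relative derived dual of Appendix \ref{appendix2}. Since $\mathcal{F}^\beta$ is the torsion-free part of a torsion pair it is closed under subobjects, and every nonzero $\nu^{\beta,\alpha}$-semistable generator of tilt-slope $\leqslant\beta$ has finite slope and hence $v_1^\beta>0$; thus every nonzero subobject of $G_n$ has $v_1^\beta>0$, i.e. each $G_n$ is tilt-torsionfree. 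Because each $G_{n+1}/G_n$ lies in $\mathcal{C}$, all the $G_n$ share the same invariants $v_0,v_1,v_2$, and $G_n/G_1$ is zero-dimensional (a successive extension of zero-dimensional sheaves). Proposition \ref{propositionB2}(1) then gives a canonical inclusion $\iota_{G_n}\colon G_n\hookrightarrow G_n^{\lor_\beta\lor_{-\beta}}$ with zero-dimensional cokernel, and Proposition \ref{propositionB2}(4), applied to the short exact sequence $0\to G_1\to G_n\to G_n/G_1\to 0$, shows that the inclusion $G_1\hookrightarrow G_n$ induces an isomorphism $G_1^{\lor_\beta\lor_{-\beta}}\xrightarrow{\sim}G_n^{\lor_\beta\lor_{-\beta}}$ compatible with the $\iota$'s.

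Consequently the whole chain embeds compatibly into the single fixed object $G_1^{\lor_\beta\lor_{-\beta}}$, and the induced increasing family of quotients $G_n/G_1$ becomes a chain of subobjects of the zero-dimensional sheaf $G_1^{\lor_\beta\lor_{-\beta}}/G_1$. Such a chain stabilizes (zero-dimensional sheaves have finite length, or invoke Noetherianity of $\mathrm{Coh}^\beta_0(X)$ from Proposition \ref{Noetherian1}), forcing $G_n=G_{n+1}$ for $n\gg 0$ and contradicting the assumption that the quotients are nonzero. This verifies hypothesis (3), and Proposition \ref{Noetherian} then yields that $\mathrm{Coh}^{\beta,\alpha}_0(X)$ is Noetherian. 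The delicate points on which I expect to spend the most care are the identification $\mathcal{C}=\{\text{zero-dimensional sheaves}\}$ and the compatibility of the double-dual isomorphisms with the inclusions, both of which rest on the $\dim Y=3$ hypothesis built into Proposition \ref{propositionB2}.
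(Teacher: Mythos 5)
Your proposal is correct and follows essentially the same route as the paper: reduce via the general criterion of Proposition \ref{Noetherian} to excluding an infinite chain in $\mathcal{F}^{\beta}=\mathrm{Coh}^{\leqslant\beta,\alpha}_{0}(X)$ with zero-dimensional quotients, observe that the $G_{n}$ are tilt-torsionfree, and then use Proposition \ref{propositionB2} to embed the increasing family of quotients $G_{n}/G_{1}$ into the fixed zero-dimensional sheaf $G_{1}^{\lor_{\beta}\lor_{-\beta}}/G_{1}$, forcing stabilization. The paper's proof is exactly this argument (with the quotients denoted $T_{m}$), so no further comparison is needed.
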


\begin{pf}
	By proposition \ref{Noetherian}, we only need to prove the following: there is no infinite sequence in $\mathrm{Coh}_{0}^{\beta}(X)$ for any $\beta$:
	\begin{align*}
		0 \neq G_{1} \subseteq G_{2} \subseteq \cdots \subseteq G_{m} \subseteq G_{m+1} \subseteq \cdots
	\end{align*} with $G_{m} \in \mathrm{Coh}^{\leqslant \beta,\alpha}_{0}(X)$, and the object $G_{m+1}/G_{m}$ is a nonzero sheaf for any $m \in \mathbb{N}$ with $\dim G_{m+1}/G_{m}=0$. Note that the object $G_{m}$ is tilt-torsionfree.  
	
	We denote the cokernel of $G_{1} \to G_{m}$ by $T_{m}$. We have another infinite sequence:
	\begin{align}
		\label{seq2} 0=T_{1} \subseteq T_{2} \subseteq \cdots \subseteq T_{m} \subseteq T_{m+1} \subseteq \cdots
	\end{align} with $T_{m+1}/T_{m} \cong G_{m+1}/G_{m} \neq 0$ and $T_{m}$ is a nonzero dimensional sheaf by the snake lemma for any $m \in \mathbb{N}^{+}$. By Lemma \ref{propositionB2}, we have injections $T_{m} \to G_{1}^{\lor_{\beta}\lor_{-\beta}}/G_{1}$ in $\mathrm{Coh}_{0}^{\beta}(X)$. By the Noetherian property of $\mathrm{Coh}_{0}^{\beta}(X)$, we have $T_{m}=T_{m+1}$ for $m \gg 0$. and thus the sequence (\ref{seq2}) must be stable and it is a contradiction.
\end{pf}

\begin{prop}\label{reduce4B}
	Suppose that $\dim Y=3$ and $(\beta,\alpha) \in U$ satisfies the Bogomolov-Gieseker type inequality. Then $(-\beta,\alpha) \in U$ also satisfies the Bogomolov-Gieseker type inequality.
\end{prop}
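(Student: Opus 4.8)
The plan is to transport a putative counterexample at $(-\beta,\alpha)$ to one at $(\beta,\alpha)$ via the relative tilt-dual functor, exploiting that $2\alpha-\beta^2 = 2\alpha-(-\beta)^2$, so that the two points carry identical slope bounds and lie in $U$ simultaneously. First I would fix a $\nu^{-\beta,\alpha}$-semistable object $E' \in \mathrm{Coh}_0^{-\beta}(X)$ with tilt-slope $-\beta$, i.e. $\nu^{-\beta,\alpha}(E')=-\beta$, and observe that $E'$ is automatically tilt-torsionfree: every nonzero subobject $H$ in $\mathrm{Coh}_0^{-\beta}(X)$ has $v_1^{-\beta}(H)\geqslant 0$, and $v_1^{-\beta}(H)=0$ would force $\nu^{-\beta,\alpha}(H)=+\infty>-\beta$, contradicting semistability. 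Hence the tilt-dual $E'^{\lor_{-\beta}}$ is defined and, since $-(-\beta)=\beta$, it lies in $\mathrm{Coh}_0^{\beta}(X)$. By Proposition \ref{propositionB2}(3) (applied with $-\beta$ in place of $\beta$, using $\dim Y = 3$), the object $E := E'^{\lor_{-\beta}}$ is $\nu^{\beta,\alpha}$-semistable with tilt-slope $-\nu^{-\beta,\alpha}(E') = \beta$.

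This places $E$ exactly among the objects governed by the Bogomolov-Gieseker inequality at $(\beta,\alpha)$, so by hypothesis $v_3^\beta(E)\leqslant \tfrac{2\alpha-\beta^2}{6} v_1^\beta(E)$. It then remains to translate the two Chern-character quantities back to $E'$. For the $v_1$-term this is immediate from the lemma giving $v_1^{-\beta}(F^{\lor_\beta})=v_1^\beta(F)$: reading it with $-\beta$ in place of $\beta$ yields $v_1^\beta(E)=v_1^{-\beta}(E')$. For the $v_3$-term I would use the defining triangle (\ref{dualsequence4}) for the tilt-torsionfree object $E'$ in the case $n=3$, namely $E'^{\lor_{-\beta}}[-1]\to \mathbb{D}_{X/Y}(E')\to \mathcal{H}^2(\mathbb{D}_{X/Y}(E'))[-2]\to E'^{\lor_{-\beta}}$ with $\mathcal{H}^2(\mathbb{D}_{X/Y}(E'))$ zero-dimensional, together with the twisted Chern-character identity $v_i^{\beta}(\mathbb{D}_{X/Y}(E'))=(-1)^i v_i^{-\beta}(E')$ (the untwisted version is the proposition opening the appendix, and the twist $e^{-\beta H}$ respects it). Comparing the class of $\mathbb{D}_{X/Y}(E')$ in $K$-theory read off from the triangle with this identity in degree $3$ gives $v_3^\beta(E)=v_3^{-\beta}(E')+v_3\bigl(\mathcal{H}^2(\mathbb{D}_{X/Y}(E'))\bigr)\geqslant v_3^{-\beta}(E')$, since the length of a zero-dimensional sheaf is nonnegative and its degree-$3$ class is unaffected by the twist.

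Chaining the three relations gives $v_3^{-\beta}(E')\leqslant v_3^\beta(E)\leqslant \tfrac{2\alpha-\beta^2}{6} v_1^\beta(E)=\tfrac{2\alpha-\beta^2}{6} v_1^{-\beta}(E')$, which is precisely the Bogomolov-Gieseker inequality at $(-\beta,\alpha)$ for $E'$; as $E'$ was arbitrary, $(-\beta,\alpha)$ satisfies the inequality, proving Proposition \ref{reduce4B}.

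The main obstacle I anticipate is the $v_3$ bookkeeping: the relative dual does not simply negate $\mathrm{ch}_3(\pi_*-)$, because the extra zero-dimensional summand $\mathcal{H}^2(\mathbb{D}_{X/Y}(E'))$ enters and must be handled with the correct sign. The key observation that makes the argument go through is that this discrepancy contributes with a favorable sign (its degree-$3$ class is a nonnegative length), so it only strengthens the inequality in the direction needed; the remaining care is merely to confirm the twisted identity $v_i^{\beta}(\mathbb{D}_{X/Y}(-))=(-1)^i v_i^{-\beta}(-)$ and that $E'$ is genuinely tilt-torsionfree, so that Proposition \ref{propositionB2} applies.
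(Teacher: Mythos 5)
Your proposal is correct and follows essentially the same route as the paper: dualize a $\nu^{-\beta,\alpha}$-semistable object of tilt-slope $-\beta$ via $(-)^{\lor_{-\beta}}$, invoke Proposition \ref{propositionB2}(3) to land on a $\nu^{\beta,\alpha}$-semistable object of tilt-slope $\beta$, and transfer the inequality using $v_{1}^{\beta}(E^{\lor_{-\beta}})=v_{1}^{-\beta}(E)$ and $v_{3}^{-\beta}(E)\leqslant v_{3}^{\beta}(E^{\lor_{-\beta}})$. The only difference is that you spell out the verifications the paper leaves implicit (tilt-torsionfreeness of a semistable object of finite slope, and the sign bookkeeping for $v_{3}$ coming from the zero-dimensional sheaf $\mathcal{H}^{2}(\mathbb{D}_{X/Y}(E))$), and these details are correct.
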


\begin{pf}
	Assume that there exists $\nu^{-\beta,\alpha}$-semistable object $E \in \mathrm{Coh}_{0}^{-\beta}(X)$ with tilt-slope $-\beta$, which does not satisfy the Bogomolov-Gieseker type inequality, in other words, $v_{3}^{-\beta}(E)> \displaystyle\frac{2\alpha-\beta^{2}}{6}v_{1}^{-\beta}(E)$. However, by \ref{propositionB2}, $E^{\lor_{-\beta}}$ is a $\nu^{\beta,\alpha}$-semistable object with tilt-slope $\beta$.
	Then 
	\begin{align*}
		v_{3}^{\beta}(E^{\lor_{-\beta}}) \leqslant  \displaystyle\frac{2\alpha-\beta^{2}}{6}v_{1}^{\beta}(E^{\lor_{-\beta}}).
	\end{align*} We can verify that $v_{3}^{-\beta}(E) \leqslant v_{3}^{\beta}(E^{\lor_{-\beta}})$ and $v_{1}^{\beta}(E^{\lor_{-\beta}})=v_{1}^{-\beta}(E)$. Then we have a contradiction.
\end{pf}

\bibliographystyle{alpha}

\newcommand{\etalchar}[1]{$^{#1}$}

\end{document}